\documentclass[a4paper,10pt]{extarticle} 
\usepackage[utf8]{inputenc}
\usepackage [english]{babel}
\usepackage{a4wide}
\usepackage{tocloft}
\usepackage{amssymb}
\usepackage{amsmath}
\usepackage{amsthm}
\usepackage{amsbsy}
\usepackage{mathrsfs}
\usepackage{amsfonts}
\usepackage{mathtools}
\usepackage{comment}
\usepackage{tikz}
\usepackage{tikz-cd}
\usetikzlibrary{cd}
\usepackage{amscd}
\interdisplaylinepenalty=2500
\usepackage{hyperref}
\usepackage{xcolor}
\hypersetup{
    colorlinks,
    linkcolor={red!50!black},
    citecolor={blue!50!black},
    urlcolor={blue!80!black}
}
\usepackage[normalem]{ulem}
\usepackage{cleveref}   
\usepackage[all]{xy}
\usepackage{xcolor}
\theoremstyle{plain}
\usepackage[margin=1 in, a4paper]{geometry}
\usepackage{relsize}
\usepackage{lipsum}
 \usepackage[autostyle]{csquotes}
\usepackage{relsize}
\usepackage{enumitem}  
\usepackage{titlesec}
\usepackage{setspace}
\setstretch{1.1}  

\allowdisplaybreaks
\titleformat{\section}
{\centering\large\sc}{\thesection.}{1em}{}
\titleformat{\subsection}
{\centering\normalfont\bfseries}{\thesubsection.}{1em}{}
\titleformat{\subsubsection}[runin]
{\normalfont\normalsize\bfseries}{\thesubsubsection.}{1em}{}

\newcommand{\gl}{\operatorname{GL}}

\newcommand{\ind}{\operatorname{ind}}

\newcommand{\N}{\mathbb{N}}
\newcommand{\Z}{\mathbb{Z}}
\newcommand{\Q}{\mathbb{Q}}
\newcommand{\f}{\mathbb{F}_{p}}
\newcommand{\fstar}{\mathbb{F}_{p}^{\ast}}
\newcommand{\Sp}{\Sigma_{p}}

\newtheorem{theorem}{Theorem}[section]
\newtheorem*{theorem*}{Theorem}
\newtheorem{corollary}[theorem]{Corollary}
\newtheorem{lemma}[theorem]{Lemma}
\newtheorem*{lemma*}{Lemma}
\newtheorem{proposition}[theorem]{Proposition}

\theoremstyle{definition}
\newtheorem{remark}[theorem]{Remark}

\numberwithin{equation}{section}		

\newcommand\keywords[1]{%
	\begingroup
	\let\and \\
	\par
	\noindent\textbf{Keywords } #1\par
	\endgroup
}

\newcommand\subjclass[2]{%
	\begingroup
	\let\and \\
	\par
	\noindent\textbf{Mathematics Subject Classification (2010)} #1\par
	\endgroup
}

\begin{document}
	
\title{\textbf{\large{The Monomial Lattice  in  Modular Symmetric Power 
Representations}}}

\author{Eknath Ghate and Ravitheja Vangala}



\date{}
 
 \maketitle    
 
\begin{abstract}
Let $p$ be a prime. We study the structure of and the inclusion relations 
among the terms in the  monomial lattice in the modular symmetric power representations 
of $\mathrm{GL}_2(\mathbb{F}_p)$. We also determine the structure of certain related quotients of the 
symmetric power representations 
which arise when studying the reductions of local Galois representations of slope at most $p$. In particular, 
we show that
these quotients are periodic and depend only on the congruence class modulo $p(p-1)$. Many of our results 
are stated in terms
of the sizes of various sums of digits in base $p$-expansions and in terms of the vanishing or non-vanishing
of certain binomial coefficients modulo $p$.
\end{abstract}

\keywords{Modular representations of $ \gl_{2}(\mathbb{F}_{p}) $, 
	structure of monomial submodules,
	reductions of  crystalline representations}	
\subjclass{20C33, 20C20, 11T06, 11F80}
        

\section{Introduction}

Let $V_r$, for $r \geq 0$, be the $r^{\mathrm{th}}$-symmetric power 
representation over the field $k$, of the general linear group 
$\Gamma = \mathrm{GL}_2(k)$. This paper studies the monomial lattice 
in $V_r$ when $k = \mathbb{F}_p$ is the finite field with $p$ elements, 
for $p$ a prime number. It also studies related quotients of $V_r$ which 
arise in number theoretic problems involving Galois representations.

For a general field $k$,  the  symmetric power representations $V_r$ have 
models over $k$ consisting of homogeneous polynomials $F(X,Y)$ in two 
variables of degree $r$ defined over $k$, with action given by:
\begin{align}\label{G action}
	\begin{pmatrix} a & b \\ c & d	\end{pmatrix} \cdot F(X,Y) = 
	F(aX+cY,bX+dY), \quad  \forall \ 
	\begin{pmatrix} a & b \\ c & d \end{pmatrix} \in \Gamma = \gl_{2}(k).
\end{align}
%
When $k = \mathbb{C}$ is the field of complex numbers,
 the representations $V_r$  are  irreducible.
But the $V_r$ are not 
always irreducible if $k$ has characteristic $p$.
In this introduction we consider two special cases. We assume that 
$k  = {\mathbb{F}}_p$, so that $\Gamma =  \mathrm{GL}_2(\mathbb{F}_p)$ is a 
finite group (of Lie type), or  $k = \bar{\mathbb F}_p$,  the algebraic closure 
of $\mathbb{F}_p$, so that $\Gamma =  \mathrm{GL}_2(\bar{\mathbb{F}}_p)$ 
is an algebraic group (more precisely, the $\bar{\mathbb{F}}_p$-valued points 
of the algebraic group $\mathrm{GL}_2$). In both these cases it is well known
that the representations $V_r$ of $\Gamma$ are irreducible if and only if 
$r \leq p-1$. Thus, it is natural to ask what the structure of various 
$\Gamma$-submodules of these representations are.
A particularly important class of $\Gamma$-submodules are
those generated by the monomials $X^{r-i}Y^i \in V_r$, for $0 \leq i \leq r$. 
Set
\[
   X_{r-i} := \langle X^{r-i}Y^i \rangle \subset V_r,
\]
for $0 \leq i \leq r$, which we also denote by $X_{r-i,\,r}$ when we wish to 
specify the ambient space $V_r$. 
We refer to the lattice of  submodules in $V_r$ defined by the
monomial submodules $X_{r-i}$, with partial order determined by
inclusion, as the {\it monomial lattice}.

It is not difficult to see (Lemma~\ref{first row filtration}) 
that the monomial lattice starts off as an
increasing filtration
\begin{eqnarray}
  \label{first row}
  X_r \subset X_{r-1} \subset \cdots \subset X_{r-(p-1)} 
\end{eqnarray}
and so by the Weyl involution 
$w = \left( \begin{smallmatrix} 0 & 1 \\ 1 & 0 \end{smallmatrix} \right)$,
which flips $X$ and $Y$, ends as a decreasing filtration
\begin{eqnarray*}
 X_{p-1} \supset X_{p-2} \supset \cdots \supset X_1 \supset X_0.
\end{eqnarray*}
However, the other inclusion relations between these two extremes 
are not well understood. In fact, the very next monomial submodule 
$X_{r-p}$ behaves erratically with respect to the filtration \eqref{first row}, 
in the sense that there are infinite families of $r$ 
(e.g., $r = p^m + p$, for $m \geq 2$) such that $X_{r-p}$ does not 
contain $X_{r-(p-1)}$. Similarly,  there are other infinite 
families of $r$ (e.g., $r = p^m+(p-1)$, for $m \geq 2$) such that $X_{r-p}$ 
is not contained in $X_{r-(p-1)}$.

When we are in the algebraic group case, so that $\Gamma =
{\mathrm{GL}}_2(k)$ with $k = \bar{\mathbb F}_p$, S. Doty has given 
an elegant description of all inclusion relations among the monomial submodules 
in terms of {\it carry patterns}, even for the general linear groups 
$\mathrm{GL}_n(k)$, for $n \geq 2$, of higher rank \cite{Doty} (see also \cite{DW96}). 
We describe his result in the present setting ($n= 2$). Let $0 \leq i \leq r$. 
Each of the three numbers involved in the identity 
\begin{eqnarray}
  \label{carry}
  r = i + (r-i),
\end{eqnarray} 
has a base $p$-expansion.
However (as every schoolchild will attest to when $p$ is the non-prime
$10$), the sum of the individual $p$-adic digits of $i$ and $r-i$ 
do not necessarily coincide with the corresponding $p$-adic digits of $r$.
The discrepancy is measured by a sequence of $0$s and $1$s, called the
carry pattern (of $i$).
Doty proved that, for $0 \leq i,j \leq r$, there is an inclusion $X_{r-j}
\subset X_{r-i}$ of monomial submodules if and only if the 
carry pattern of $j$ is less than the carry pattern of $i$ with respect 
to the lexicographic ordering on the set of
all sequences of $0$s and $1$s, thereby setting up an isomorphism of posets
between the monomial lattice and the poset of carry patterns with their
lexicographic ordering. Moreover, Doty showed that any $\Gamma$-submodule
of the symmetric power
representation $V_r$ is a sum of monomial submodules essentially
reducing the study of arbitrary submodules of $V_r$ to the monomial
submodules of $V_r$.
A crucial role in his arguments is played by the fact that $k =
\bar{\mathbb F}_p$ is an infinite field.\footnote{In fact, Doty's results
 also hold if $k = {\mathbb F}_q$ is a finite field of cardinality $q > r$. 
However, in the applications we have in mind, one wishes to treat all $r$ 
at the same time for the fixed finite field $k = \mathbb{F}_p$, so this 
more refined version of his result becomes of limited use.}

In this paper, we study the monomial lattice when $k = \mathbb{F}_p$, 
which we assume to be the case from now on. 
We will focus on the first $p$ monomial submodules in the filtration 
\eqref{first row} where there are already many new phenomena to be discovered. 
It seems to be generally acknowledged 
(see, e.g., the book of Humphreys \cite[\S 19.2]{Humph}) 
that the study of modular representations of algebraic groups is in general 
easier than the study of modular representations of finite groups of Lie type. 
This in certainly borne out in the study of the monomial lattice 
as one moves from $k = \bar{\mathbb F}_p$ to $k = {\mathbb F}_p$. 
For instance, while the  fact that the carry pattern of $j$ should be 
less than the carry pattern of $i$ is clearly still a necessary condition 
for the inclusion $X_{r-j } \subset X_{r-i}$, it is no longer a sufficient 
condition, not even for the first two monomial submodules in 
\eqref{first row} above! Indeed, if  the constant term $r_0$ in the base 
$p$-expansion of $r$ is non-zero, then the carry patterns
with respect to the equation \eqref{carry} for $i = 0$ and $i = 1$ 
are the same (a string of $0$s) even though it is easily checked that 
the inclusion $X_r \subsetneq X_{r-1}$ 
is strict, for $r \geq p$ \cite[Lemma 4.1]{BG15}.

For an integer $n \geq 0$, let  $\Sigma_p(n)$ denote the sum of the 
$p$-adic digits in the base $p$-expansion of $n$. The first result 
of this paper that we state here gives necessary and sufficient conditions
for all inclusion relations among the first $p$ monomial submodules in 
the filtration \eqref{first row} in terms of 
additional data involving such sums. 

\begin{theorem}
   Let $p \geq 2$, $0 \leq j < i \leq p-1$, $r \geq p$ and $r_0$ be 
   the constant term in the base $p$-expansion of $r$. 
   Then the monomial submodules $X_{r-j} = X_{r-i}$ are equal if and only if
   $i$ and $j$ have the same carry patterns, $\Sigma_p(r-j) \leq p-1$
   and $\Sigma_p(r-r_0) \leq j$.
\end{theorem}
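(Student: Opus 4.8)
The plan is to reduce $X_{r-j}=X_{r-i}$ to the single membership $X^{r-i}Y^i\in X_{r-j}$, to recast that membership as the solvability of an explicit linear system over $\mathbb{F}_p$ whose coefficients are binomial coefficients, and then to read off the solvability criterion as the three stated conditions using Kummer's and Lucas' theorems.

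By the first-row filtration (Lemma~\ref{first row filtration}) we have $X_{r-j}\subseteq X_{r-i}$ whenever $0\le j<i\le p-1$, so $X_{r-j}=X_{r-i}$ is equivalent to $X^{r-i}Y^i\in X_{r-j}$. For the necessity of the carry-pattern condition one can base change to $\bar{\mathbb{F}}_p$: the $\mathrm{GL}_2(\bar{\mathbb{F}}_p)$-submodule of $V_r\otimes_{\mathbb{F}_p}\bar{\mathbb{F}}_p$ generated by a monomial contains the base change of the corresponding $\mathrm{GL}_2(\mathbb{F}_p)$-submodule, so $X_{r-j}=X_{r-i}$ forces mutual inclusions over $\bar{\mathbb{F}}_p$, whence the carry patterns of $i$ and $j$ coincide by Doty's theorem \cite{Doty}. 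For $0\le j<i\le p-1$ this happens precisely when $i\le r_0$ (Case~A) or $r_0<j$ (Case~B); in Case~A both carry patterns are trivial, and in Case~B both equal the string with $c$ ones in the lowest $c$ positions, where $c\ge 1$ is the position of the lowest non-zero base-$p$ digit of $r$ lying above the units digit. By Kummer's theorem $\Sigma_p(r-j)=\Sigma_p(r)-j+(p-1)c$, with the same $c$ for $i$; and a one-line check (using $r_0\le p-1$ in Case~A and $c\ge1$ in Case~B) shows that, granting the carry-pattern condition, $\Sigma_p(r-r_0)\le j$ implies $\Sigma_p(r-j)\le p-1$ in Case~A, while $\Sigma_p(r-j)\le p-1$ implies $\Sigma_p(r-r_0)\le j$ in Case~B. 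It therefore remains to prove, assuming $i$ and $j$ have the same carry pattern, that $X^{r-i}Y^i\in X_{r-j}$ if and only if $\Sigma_p(r-j)\le p-1$ and $\Sigma_p(r-r_0)\le j$.

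For the linear-algebra reduction, the Bruhat decomposition shows that $X_{r-j}$ is spanned by the monomials $X^{r-l}Y^l$, $0\le l\le j$ (which lie in $X_{r-j}$ by the filtration, and which are recovered individually from the upper-unipotent translates of $X^{r-j}Y^j$ since $j\le p-1$), together with the lower-unipotent averages
\[
E_l:=\sum_{s\in\mathbb{F}_p^{\ast}}s^{-(i-l)}\,\bigl(X+sY\bigr)^{r-l}Y^l=-\sum_{m\equiv i-l\ (\mathrm{mod}\ p-1)}\binom{r-l}{m}X^{r-l-m}Y^{l+m},\qquad 0\le l\le j,
\]
each of which lies in $X_{r-j}$ as an $\mathbb{F}_p$-combination of $\mathrm{GL}_2(\mathbb{F}_p)$-translates of $X^{r-l}Y^l$. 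Every monomial of $E_l$ has $Y$-degree $\equiv i\pmod{p-1}$ and in fact $\ge i>j$, except in the boundary case $(j,i)=(0,p-1)$ where $E_0$ additionally contains $X^r\in X_{r-j}$. Hence, passing to the quotient of $V_r$ by the span of $\{X^{r-l}Y^l:l\le j\}$, the membership $X^{r-i}Y^i\in X_{r-j}$ becomes the assertion that the vector $(1,0,0,\dots)$ lies in the $\mathbb{F}_p$-span of $v_l:=\bigl(\binom{r-l}{\,i-l+q(p-1)\,}\bigr)_{q\ge0}$, $0\le l\le j$, the $q$-th entry being the coefficient of the ``tail'' monomial $X^{r-i-q(p-1)}Y^{i+q(p-1)}$. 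For the ``if'' direction I would produce, from the two digit inequalities, an explicit solution $(\lambda_l)$ of $\sum_l\lambda_l v_l=(1,0,0,\dots)$: by Lucas' theorem the inequality $\Sigma_p(r-j)\le p-1$ confines the non-vanishing tail entries ($q\ge1$) of the $v_l$ to a block of positions small enough to be cleared by the $j+1$ available rows, and $\Sigma_p(r-r_0)\le j$ then makes the $q=0$ combination a unit mod $p$; the cancellation mechanism is already visible in the small cases $p=5$, $r\in\{7,9\}$, $(j,i)=(1,2)$.

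For the converse, assume $i$ and $j$ have the same carry pattern but the binding inequality fails (so $\Sigma_p(r-r_0)>j$ in Case~A, or $\Sigma_p(r-j)>p-1$ in Case~B). Then, again by Lucas, the failing digit condition forces a position $q\ge1$ at which the $v_l$ are ``over-constrained'', so the $v_l$ no longer span a subspace containing $(1,0,0,\dots)$: concretely one exhibits a left-annihilator $\mu$ of the matrix $(v_l)$ with $\mu_0\ne0$, equivalently a linear functional $\phi\in V_r^{\ast}$, supported on the monomials singled out by the failing digit condition, with $\phi(X_{r-j})=0$ but $\phi(X^{r-i}Y^i)\ne0$ (the case $p=5$, $r=9$, $(j,i)=(0,1)$ is the model). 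I expect this converse — identifying exactly which tail constraints become active when the digit condition fails, and ruling out accidental solutions of the over-determined system — to be the main obstacle, alongside the bookkeeping separating Cases~A and~B and handling the boundary case $(j,i)=(0,p-1)$; the rest is routine manipulation of binomial coefficients via Kummer's and Lucas' theorems.
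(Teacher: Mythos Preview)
Your strategy is broadly sound and lands on the same case split the paper uses (your Case~A is $r_0\ge i$, your Case~B is $r_0<j$), but two points of execution diverge from the paper, and one of them hides the real difficulty.

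First, invoking Doty via base change is correct but unnecessary. The paper dispatches the carry-pattern condition in one line (proof of Lemma~\ref{final X_r-i = X_r-j}): if $r_0\in\{j,\dots,i-1\}$ then $\binom{r-l}{i}\equiv 0\pmod p$ for every $0\le l\le j$ by Lucas, so the coefficient of $X^{r-i}Y^i$ in every element of the spanning set $\{X^l(kX+Y)^{r-l},\,X^{r-l}Y^l:0\le l\le j,\ k\in\mathbb F_p\}$ of $X_{r-j}$ (Lemma~\ref{Basis of X_r-i}) vanishes, whence $X^{r-i}Y^i\notin X_{r-j}$. No appeal to the algebraic-group theory is needed.

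Second, and more substantively, your ``if'' direction is not really a plan. You propose to produce an explicit solution of $\sum_l\lambda_lv_l=(1,0,\dots)$ from the digit inequalities, but this is exactly what the paper avoids: it is not clear the solution has a clean closed form, and your two test cases do not suggest one. The paper argues structurally instead. In Case~A it sandwiches $X_{r-j}\subseteq X_{r-i}$ between $X_{r-\Sigma_p(r-r_0)}$ and $X_{r-r_0}$ and shows these have equal dimension, using the surjection $\phi_i:X_{r-i,r-i}\otimes V_i\twoheadrightarrow X_{r-i,r}$ and the known dimension of $X_{r-i,r-i}$ (Lemma~\ref{terms equal in filtration r0 >i} via Proposition~\ref{not equal and not full 1}). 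In Case~B it inducts on $i$, using the surjection from $X_{r-i,\,r-1}\otimes V_1$ to reduce to $V_{r-1}$, where one is back in Case~A (Lemma~\ref{r-i small}). Your ``only if'' direction is closer in spirit to the paper's (assume membership, compare coefficients, reach a contradiction), but the heart of the matter---identifying which tail positions $q$ to test and why the resulting square system is nonsingular---is precisely the content of Lemma~\ref{choice of s} (producing integers $s$ with prescribed digit sum and $\binom{r}{s}\not\equiv 0$) together with the binomial determinant evaluations in Proposition~\ref{matrix det}. These are not guessed from $p=5$; they are the substance of Lemmas~\ref{X_{r-i}=X_{r-j}} and~\ref{X_r-i = X_ r-j}, and your proposal does not yet contain a substitute for them.
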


\noindent 
The theorem is proved in several lemmas leading up to 
Lemma~\ref{final X_r-i = X_r-j}, 
noting that the cases $p = 2$ and $j = 0$ are trivially true. 
It may be viewed as a refinement of Doty's first result recalled
above when one moves from $k = \bar{\mathbb F}_p$ to $k = \mathbb{F}_p$, 
at least for the first $p$ monomial submodules
\eqref{first row} in the monomial lattice.


We now remark that Doty's second result mentioned above also fails when 
one moves from $k = \bar{\mathbb F}_p$ to $k = {\mathbb F}_p$.  
Let $$\theta(X,Y) = X^pY-XY^p$$ be the Dickson polynomial of degree $p+1$, 
and let $V_r^{(1)} \subset V_r$ denote the $\Gamma$-submodule of $V_r$ 
consisting of all polynomials $F(X,Y)$ divisible by $\theta(X,Y)$ studied by
Glover \cite{Glover}, in what was perhaps the first thorough
study of the symmetric power representations $V_r$ when 
$\Gamma = \mathrm{GL}_2({\mathbb F}_p)$.  
Since $\Gamma$ acts on $\theta(X,Y)$ via the determinant character 
(this really uses the fact that one is working over ${\mathbb F}_p$), 
the submodule $V_r^{(1)}$ is indeed stable 
by $\Gamma$. 
But the sum of the coefficients of a polynomial in $V_r^{(1)}$ vanishes, 
hence $V_r^{(1)}$ does not contain any monomials 
and, in particular, $V_r^{(1)}$  cannot be spanned by monomial submodules.

Nonetheless, there are many interesting subquotients of $V_r$ involving
monomial submodules, so it is important to describe the structure of these
submodules when $k = \mathbb{F}_p$. The submodule $X_r$ generated by the 
highest monomial $X^r$ was essentially described by Glover himself \cite{Glover} 
and the structure of the next submodule $X_{r-1}$ generated by the second 
highest monomial $X^{r-1}Y$ was described completely in \cite{BG15}, for $p \geq 3$.
There does not seem to be any general literature beyond these results on
the other submodules in the monomial lattice when 
$\Gamma =\mathrm{GL}_2(\mathbb{F}_p)$.

The first main goal of this paper is to extend the aforementioned results
to give the structure of the first $p$ monomial submodules $X_{r-i}$ in 
\eqref{first row} in the monomial lattice. To this end, we note that there
is a surjection of $\Gamma$-modules
$\phi_i : X_{r-i,\,r-i} \otimes V_i \twoheadrightarrow X_{r-i,\,r}$ given by 
multiplication (Lemma~\ref{surjection1}), where $X_{r-i,\,r-i}$ is the submodule  
generated by the highest monomial $X^{r-i}$ in $V_{r-i}$.
We prove that generically this map is an isomorphism. More precisely, we have

\begin{theorem}
  Let $p \geq 3$, $0 \leq i \leq p-1$, $r \geq (i+1)(p+1)$ and $r_0$ be the 
  constant term in the base $p$-expansion of $r$. If $\Sigma_p(r-r_0) \geq p$,
  then
  \begin{eqnarray*}
    X_{r-i,\,r} \cong X_{r-i,\,r-i} \otimes V_i.
  \end{eqnarray*}
If $\Sigma_p(r - r_0) < p$, the structure of $X_{r-i,\,r}$ can also be described 
in terms of explicit extensions of tensor products of irreducible and principal
series representations.
\end{theorem}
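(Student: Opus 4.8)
The plan is to study the multiplication map $\phi_i : X_{r-i,\,r-i} \otimes V_i \twoheadrightarrow X_{r-i,\,r}$ from Lemma~\ref{surjection1} and determine exactly when it is an isomorphism. Since $\phi_i$ is surjective, it is an isomorphism precisely when the two sides have the same dimension, so the first step is to compute $\dim X_{r-i,\,r-i}$ and $\dim X_{r-i,\,r}$, or at least to show their difference vanishes under the hypothesis $\Sigma_p(r-r_0)\geq p$. For the source, $X_{r-i,\,r-i}=X_{s}$ with $s=r-i$ is the submodule generated by the top monomial $X^{s}$ in $V_{s}$; its structure (and hence dimension) is known by Glover's work, and in the regime $s\geq (i+1)(p+1)-i$ with $\Sigma_p(s-\text{(constant term)})$ large it should be ``as big as possible.'' The real content is the dimension of the target $X_{r-i,\,r}$, and here the natural tool is to identify the kernel of $\phi_i$: an element of $X_{r-i,\,r-i}\otimes V_i$ maps to zero iff the corresponding polynomial relation holds in $V_r$, and since $X_{r-i,\,r-i}$ is cut out by divisibility/Dickson-type conditions (via $\theta$ and the submodule $V_\bullet^{(1)}$), the kernel should be expressible in terms of multiples of $\theta(X,Y)$. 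Concretely I expect $\ker\phi_i$ to be controlled by $V_{r-(p+1)}^{(1)}$-type pieces inside $X_{r-i,\,r-i}\otimes V_i$, and the hypothesis $\Sigma_p(r-r_0)\geq p$ should force this potential kernel to contribute nothing new beyond what is already accounted for on the source side.

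The cleaner route, which I would actually carry out, is inductive on $i$, using the short exact sequences relating consecutive terms of the filtration \eqref{first row}. One has $X_{r-i,\,r}\supset X_{r-(i-1),\,r}$ (by Lemma~\ref{first row filtration}) and correspondingly $X_{r-i,\,r-i}\otimes V_i$ contains a copy of $X_{r-(i-1),\,r-(i-1)}\otimes V_{i-1}$ via multiplication by $Y$; comparing the two compatible surjections $\phi_i$ and $\phi_{i-1}$ reduces the isomorphism claim to understanding the induced map on the quotients $X_{r-i,\,r}/X_{r-(i-1),\,r}$, which by the theory recalled in the introduction (Glover, \cite{BG15}) and the preceding lemmas of the paper is an extension built from irreducible representations $V_a\otimes D^b$ and principal series $\mathrm{ind}(\ldots)$ whose constituents are governed by the base-$p$ digits of $r$. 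The condition $\Sigma_p(r-r_0)\geq p$ is exactly the threshold (appearing already in the first displayed theorem, where $\Sigma_p(r-r_0)\leq j$ is the obstruction to collapse) that guarantees no ``extra'' collapsing occurs, i.e. that each successive quotient is exactly $X^{r-i}\otimes(\text{the }i\text{-th graded piece of }V_i)$ with no relations, so the tensor product structure propagates up the filtration.

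For the second assertion, when $\Sigma_p(r-r_0)<p$, the plan is to run the same filtration argument but now track precisely which constituents degenerate. Writing $r-r_0 = p\cdot t$ with $\Sigma_p(t)=\Sigma_p(r-r_0)<p$, the submodule $X_{r-i,\,r-i}$ is itself no longer the full ``expected'' module but rather sits in a short exact sequence $0\to (\text{submodule involving }\theta)\to X_{r-i,\,r-i}\to (\text{tensor of irreducibles})\to 0$ — this is the content of the preceding lemmas and of \cite{Glover}. One then pushes this out along $\phi_i$ and along the filtration in $i$, obtaining $X_{r-i,\,r}$ as an iterated extension whose subquotients are explicitly tensor products $V_{a_k}\otimes D^{b_k}$ of irreducibles and principal series representations $\mathrm{ind}_B^\Gamma(\chi)$, with the $a_k,b_k$ read off from the digits $r_0,r_1,\ldots$ of $r$ in base $p$; the extension classes are pinned down (as in \cite{BG15}) by computing the action of the unipotent and of $w$ on explicit polynomial generators. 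The main obstacle, and where the bulk of the work lies, will be this last point: showing the extensions are non-split in exactly the right pattern and genuinely identifying them, since over $\mathbb{F}_p$ the relevant $\mathrm{Ext}^1$ groups between $\Gamma$-modules are nonzero and one must do honest cocycle computations with monomials modulo $\theta$, keeping careful track of binomial coefficients $\binom{r-j}{k}\bmod p$ — precisely the Lucas-type digit conditions flagged in the abstract — to see which of them vanish. The $p\geq 3$ hypothesis enters here because the $p=2$ analysis of $\theta$ and of $\mathrm{GL}_2(\mathbb{F}_2)\cong S_3$ is genuinely different and must be (and is, per the remark after the first theorem) handled separately.
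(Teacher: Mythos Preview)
Your overall strategy --- use the surjection $\phi_i$, reduce to a dimension comparison, and proceed inductively on $i$ via the successive quotients $X_{r-i,\,r}/X_{r-(i-1),\,r}$ --- is the same as the paper's. But several of the concrete mechanisms you propose are not the ones that actually work, and you are missing a structural dichotomy that organizes the entire argument.

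First, the paper splits everything on whether $r_0\geq i$ or $r_0<i$. This is not cosmetic: when $r_0\geq i$ one has $\Sigma_p(r-i)=\Sigma_p(r-r_0)+(r_0-i)$ and the digits behave tamely, whereas for $r_0<i$ a borrow occurs and $\Sigma_p(r-i)$ jumps by roughly $p-1$ (Lemma~\ref{Sp(r-i), Sp(r-j)}). The two regimes require genuinely different arguments (Theorems~\ref{Structure r_0 >i} and \ref{Structure r_0<i}), and your plan does not acknowledge this.

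Second, the dimension count is not done by analyzing $\ker\phi_i$ via $\theta$-divisibility. Instead the paper works directly with the spanning set $\{X^l(kX+Y)^{r-l},\,X^{r-l}Y^l:0\leq l\leq i,\,k\in\f\}$ of Lemma~\ref{Basis of X_r-i} and shows it is linearly independent. The key technical input is that certain square matrices of the form $\bigl(\binom{r_0-n}{j-m}\bigr)$ and $\bigl(\binom{r-n}{m}\binom{a-m-n}{j-m}\bigr)$ are invertible mod $p$ (Proposition~\ref{matrix det}), combined with a trick of dividing a putative relation by $X$ to reduce from $(r,i)$ to $(r-1,i-1)$ (Proposition~\ref{dim large 1}). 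Your proposal does anticipate Lucas-type binomial conditions, but not that they enter as determinants of explicit matrices; the ``kernel via multiples of $\theta$'' heuristic does not lead anywhere clean here.

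Third, for $\Sigma_p(r-r_0)<p$ the paper does no $\mathrm{Ext}^1$ or cocycle computations. It simply determines each successive quotient $X_{r-j,\,r}/X_{r-(j-1),\,r}$ as $0$, a single irreducible, or a full principal series, according to the size of $\Sigma_p(r-j)$ (Corollary~\ref{successive quotients r_0 < i} and its analogue for $r_0\geq i$), and then assembles these into the short exact sequences of Theorem~\ref{Structure r_0<i}. The paper does \emph{not} resolve whether those extensions split; ``explicit extension'' means the sub and quotient are named, not that the extension class is identified. So your anticipated ``bulk of the work'' in pinning down non-splitness is not part of what is being claimed.
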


\noindent Here the principal series
representations are by definition $\mathrm{ind}_B^\Gamma (a^{m}d^n)$, 
where $B$ is the Borel subgroup of upper triangular matrices
$\{ \left( \begin{smallmatrix} a & b \\ 0 & d \end{smallmatrix} \right) 
: a,d \in \f^\ast, b \in \f  \} \subset \Gamma$ and $m$, $n$ are integers. 
If $r_0 \geq i$, then $\Sigma_p(r-i) = \Sigma_p(r-r_0) + (r_0 - i)$ and 
the theorem follows from Theorem~\ref{Structure r_0 >i}.
If $r_0 < i$, it follows from Theorem~\ref{Structure r_0<i}. 
We refer the reader to these theorems for more details.
The proofs of these theorems 
reduce the study of the structure of $X_{r-i}$ to that of $X_{r-(i-1)}$, 
by noting that the quotients $X_{r-i}/X_{r-(i-1)}$ are homomorphic images of 
the principal series representation $\mathrm{ind}_B^\Gamma (a^{r-i}d^i)$, 
for $1 \leq i \leq p-1$ (Corollary~\ref{induced and successive}).
In any case, these theorems allow us to state the following explicit
dimension formulas for the first $p$ monomial submodules $X_{r-i}$ in 
\eqref{first row} (see Corollaries~\ref{dimension r_0 > i} and 
\ref{dimension r_0 < i}).

\begin{corollary}
  Let $ p \geq 3$, $0 \leq i \leq p-1$, $r \geq (i+1)(p+1)$ and $r_0$ be 
  the constant term in the base $p$-expansion of $r$. 

  \noindent If $r_{0} \geq i$, then
  \begin{align*}
     \dim X_{r-i} = 
     \begin{cases}
        (r_{0}+1)(\Sp(r-r_{0})+1) , & \mathrm{if} ~ \Sp(r-i) \leq r_{0},  \\
        (i+1)\left( \Sp(r-i) +1 \right), &\mathrm{if} ~ 
        r_{0} \leq \Sp(r-i) \leq p,\\
        (i+1)(p+1), & \mathrm{if} ~ \Sp(r-i) \geq p.
     \end{cases}     
  \end{align*}
   
   \noindent If $r_{0}<i$, then 
   \begin{align*}
     \dim X_{r-i} =
     \begin{cases}
       (p+r_{0}+1) \Sp(r-r_{0}),
        &~\mathrm{if}~ \Sp(r-i) < p, \\
       (i - r_{0})(p+1)+ (\Sp(r-r_{0})+1)(r_{0}+1), &
        ~ \mathrm{if} ~  \Sp(r-i) \geq p, ~ \Sp(r-r_{0}) \leq p, \\
        (i+1)(p+1), & ~ \mathrm{if} ~ \Sp(r-i) \geq p, ~ \Sp(r-r_{0}) \geq p. \\
     \end{cases}
   \end{align*}
\end{corollary}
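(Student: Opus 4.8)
The plan is to deduce the dimension formulas from the structure theorems, Theorem~\ref{Structure r_0 >i} when $r_0\ge i$ and Theorem~\ref{Structure r_0<i} when $r_0<i$, using only that $\dim_{\mathbb F_p}$ is additive along short exact sequences. Those theorems present $X_{r-i,\,r}$ as an iterated extension of three types of elementary $\Gamma$-modules whose dimensions are immediate: determinant twists of irreducibles $V_m$ with $0\le m\le p-1$, of dimension $m+1$; principal series $\mathrm{ind}_B^\Gamma(a^md^n)$, of dimension $[\Gamma:B]=p+1$ independently of the character; and tensor products of these with $V_i$, which scale dimension by $i+1$. So the procedure is: read off from the relevant structure theorem the list of graded pieces of $X_{r-i,\,r}$, substitute their dimensions, and sum. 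One convenient way to organise the summation is telescopically: the filtration~\eqref{first row} gives $X_{r-(i-1)}\subseteq X_{r-i}$, Corollary~\ref{induced and successive} realises $X_{r-i}/X_{r-(i-1)}$ as an image of $\mathrm{ind}_B^\Gamma(a^{r-i}d^i)$ (whose submodule lattice for $\mathrm{GL}_2(\mathbb F_p)$ is classical), the structure theorems pin down exactly which image, and one accumulates $\dim X_{r-i}=\dim X_r+\sum_{k=1}^{i}\dim(X_{r-k}/X_{r-(k-1)})$ from Glover's value of the base case $\dim X_r$ \cite{Glover}.

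For $r_0\ge i$ I would use the digit-sum identity $\Sigma_p(r-i)=\Sigma_p(r-r_0)+(r_0-i)$ recorded above to rewrite the three regimes of the corollary in terms of $\Sigma_p(r-r_0)$ and match them onto the cases of Theorem~\ref{Structure r_0 >i}. In the lowest regime $\Sigma_p(r-i)\le r_0$ one gets $\Sigma_p(r-r_0)\le i$, so Lemma~\ref{final X_r-i = X_r-j} collapses $X_{r-i}$ onto the stabilised submodule $X_{r-\Sigma_p(r-r_0)}$ and the count reduces to $\Sigma_p(r-r_0)+1$ pieces of dimension $r_0+1$, giving $(r_0+1)(\Sigma_p(r-r_0)+1)$; in the middle regime the pieces contribute $\Sigma_p(r-i)+1$ summands of dimension $i+1$, giving $(i+1)(\Sigma_p(r-i)+1)$; and once $\Sigma_p(r-i)\ge p$ a full principal series appears at each step and one obtains $i+1$ copies of a $(p+1)$-dimensional module. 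In the sub-range $\Sigma_p(r-r_0)\ge p$ this last value is also immediate from the surjection $X_{r-i,\,r-i}\otimes V_i\twoheadrightarrow X_{r-i,\,r}$ of Lemma~\ref{surjection1}, which is an isomorphism there, together with $\dim X_{r-i,\,r-i}=p+1$ (valid since $\Sigma_p(r-i)\ge\Sigma_p(r-r_0)\ge p$).

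The case $r_0<i$ is handled the same way, but with the subtler digit-sum identity forced by the borrow incurred in passing from $r-r_0$ (a multiple of $p$) to $r-i$. The $i-r_0$ \emph{extra} principal series factors produced by that borrow account for the term $(i-r_0)(p+1)$ in the middle formula, to which one adds the $(\Sigma_p(r-r_0)+1)(r_0+1)$ coming from the $X_{r-r_0}$-type contribution; the first formula $(p+r_0+1)\,\Sigma_p(r-r_0)$ and the last $(i+1)(p+1)$, valid as soon as $\Sigma_p(r-r_0)\ge p$ (which already forces $\Sigma_p(r-i)\ge p$), come out of the analogous accounting of the pieces listed in Theorem~\ref{Structure r_0<i}.

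The main obstacle is not any individual computation but the combinatorics of matching the case divisions: one must verify that the inequalities defining the regimes in the corollary translate, via the two digit-sum identities, exactly onto the hypotheses of the structure theorems with no gap or overlap, and, in the borrow case, that the tally of principal series versus irreducible graded pieces is precisely as claimed, so that the telescoping sum is consistent at the boundary values $\Sigma_p(r-i)=p$ and $\Sigma_p(r-r_0)=p$ where several of the formulas must agree. Once this bookkeeping is secured, additivity of dimension finishes the proof.
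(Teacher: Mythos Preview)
Your proposal is correct and matches the paper's approach: both cases are deduced directly from the structure theorems (Theorem~\ref{Structure r_0 >i} and Theorem~\ref{Structure r_0<i}) together with the known dimension of $X_{r-i,\,r-i}$ from Lemma~\ref{dimension formula for X_{r}}, summing dimensions along the exact sequences. Your telescoping organisation via the successive quotients $X_{r-k}/X_{r-(k-1)}$ is an equivalent repackaging of the same data, and your identification of the main obstacle as the case-matching bookkeeping (including the boundary agreements at $\Sigma_p(r-i)=p$ and $\Sigma_p(r-r_0)=p$, and the implication $\Sigma_p(r-r_0)\ge p\Rightarrow\Sigma_p(r-i)\ge p$ noted in Remark~\ref{r - i vs r - r_0}) is exactly what the paper handles.
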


\noindent We remark that the first dimension formula is continuous at both the 
boundaries, whereas the second formula is not continuous at the first boundary. 


Although this was not part of the initial goal of this paper, we decided to 
also include in the final section, as an example of things to come, the structure 
of the next submodule $X_{r-p}$ in the monomial lattice, generated by $X^{r-p}Y^p$.
The module $X_{r-p}$ behaves more erratically with respect to the filtration 
\eqref{first row} when $k = \mathbb{F}_p$ than when $k = \bar{\mathbb F}_p$, 
since there are infinite families of $r$ (e.g., $r= p^m+p-1$, for $m \geq 2$) 
such that $X_{r-p}$ {\it simultaneously} neither contains nor is contained in 
$X_{r-(p-1)}$. In spite of this, we show that $X_{r-p}$ has a relatively 
simple structure when $k = {\mathbb F}_p$, namely, it is isomorphic to the 
monomial submodule $X_{s-1,\,s} \subset V_s$ generated by the second highest 
monomial, for some $s$, which is often though not always equal to $r$.
More precisely, we establish the following trichotomy in 
Propositions~\ref{p divides r}, \ref{BG Proposition 3.3}, 
\ref{remaining case r = 1 mod p-1}, \ref{BG Lemma 4.5}, 
\ref{4 JH factors} and \ref{BG Lemma 4.8}.
\begin{theorem}
  Let $p \geq 3$ and $r \geq 2p+1$. Then  
  \begin{eqnarray*}
     X_{r-p} \simeq 
     \begin{cases}
        X_{r/p-1, \,r/p},  & \text{if } \Sigma_p(r-p) < \Sigma_p(r-1), \\
        X_{r-1, \,r},      & \text{if } \Sigma_p(r-p) = \Sigma_p(r-1), \\
        X_{rp-1, \,rp},    & \text{if } \Sigma_p(r-p) > \Sigma_p(r-1). \\
      \end{cases}
  \end{eqnarray*}
\end{theorem}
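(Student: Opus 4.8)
The plan is to reduce the analysis of $X_{r-p}$ to that of the already-understood submodule $X_{r'-1,\,r'}$ for a suitable $r'$, by exploiting the surjection $\phi_p : X_{r-p,\,r-p}\otimes V_p \twoheadrightarrow X_{r-p,\,r}$ from Lemma~\ref{surjection1} together with the structure results for the first $p$ monomial submodules (Theorems~\ref{Structure r_0 >i}, \ref{Structure r_0<i} and the dimension corollary) specialized to $i=p$. The first step is to separate cases according to the residue of $r$ modulo $p$ and modulo $p-1$, which is exactly what governs the relative sizes of $\Sigma_p(r-p)$ and $\Sigma_p(r-1)$. Concretely: if $p\mid r$, write $r = p r'$ and one expects $X_{r-p}=X_{p(r'-1)}$ to collapse onto the image of the ``highest monomial in $V_{r'}$ raised to the $p$-th power'' construction, giving $X_{r/p-1,\,r/p}$ — this is the first branch, and it is where $\Sigma_p(r-p)=\Sigma_p(r-1)-1<\Sigma_p(r-1)$ since subtracting $p$ kills a digit while subtracting $1$ forces borrowing (here $r_0=0$). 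If $r\equiv 1\pmod{p}$ but $r\not\equiv 1 \pmod{p-1}$, subtracting $1$ from $r$ and subtracting $p$ from $r$ have comparable digit sums, landing in the middle branch $X_{r-1,\,r}$. The remaining case, when the top digits of $r$ are small enough that subtracting $p$ triggers a long borrow, is the third branch, where one passes to $V_{rp}$.

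Next I would make the identifications precise using the Frobenius twist. The key structural input is that for $F(X,Y)\in V_{r'}$, the polynomial $F(X,Y)^p = F(X^p,Y^p)$ (using $\mathbb{F}_p$-coefficients, the Frobenius on polynomials), so multiplication by $\theta(X,Y)=X^pY-XY^p$, or by appropriate powers of $X,Y$, intertwines monomial submodules of $V_{r'}$ with monomial submodules of $V_r$ up to the twist by the determinant. This is how $X_{r/p-1,\,r/p}$ and $X_{rp-1,\,rp}$ enter: $X^{r-p}Y^p = X^{r-p}Y^p$ is, after factoring, related to $(X^{r/p-1}Y)^p$ when $p\mid r$, and to a monomial in $V_{rp}$ after multiplying by $\theta$ in the third case. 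One then checks that the resulting map of $\Gamma$-modules is an isomorphism by comparing dimensions: the left-hand side has dimension computed from the $i=p$ specialization of the corollary (with the appropriate $\Sigma_p$ inequality), and the right-hand side has dimension given by the $p\ge 3$, $i=1$ formulas already in the literature (\cite{BG15}). Matching these dimension formulas across the three digit-sum regimes is the bulk of the bookkeeping.

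The main obstacle I anticipate is the third branch, $X_{r-p}\simeq X_{rp-1,\,rp}$: here one is \emph{enlarging} the degree, so the natural multiplication map goes the wrong way and one must instead exhibit an explicit injection $X_{r-p,\,r}\hookrightarrow V_{rp}$ (twisted by a power of $\det$) landing inside $X_{rp-1,\,rp}$, and argue it is surjective onto that submodule. This requires understanding precisely which Jordan–Hölder factors survive — and the reference to ``\ref{4 JH factors}'' in the excerpt signals that in a subcase there are four Jordan–Hölder constituents to track, so the socle/radical filtration must be compared on both sides rather than merely the dimensions. A secondary subtlety is the boundary behaviour: since the second dimension formula in the corollary is \emph{not} continuous at its first boundary, the case division $\Sigma_p(r-p)\lessgtr\Sigma_p(r-1)$ must be aligned exactly with the discontinuity so that no value of $r$ is double-counted or missed; handling the small cases ($r=p^m+p-1$ and the like, where $X_{r-p}$ is incomparable with $X_{r-(p-1)}$) will need to be done by hand to confirm they fall into the claimed branch. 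The degenerate possibilities ($r\le 2p$, or $r$ with very few nonzero digits) are excluded by the hypothesis $r\ge 2p+1$, but I would still verify the edge case $r=2p+1$ separately as a sanity check.
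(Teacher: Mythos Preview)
Your core strategy has a genuine gap: you propose to use the surjection $\phi_p:X_{r-p,\,r-p}\otimes V_p\twoheadrightarrow X_{r-p,\,r}$ ``from Lemma~\ref{surjection1}'' and to specialize Theorems~\ref{Structure r_0 >i}, \ref{Structure r_0<i} and their dimension corollaries to $i=p$. But all of those statements are proved only for $0\le i\le p-1$; the arguments (and in particular the induction in Proposition~\ref{dim large 1} and the matrix lemmas) do not extend to $i=p$. Moreover, the map $X_{r-p,\,r-p}\otimes V_p\to X_{r-p,\,r}$ by ordinary multiplication gives no useful dimension bound: its source has dimension up to $(p+1)^2$, far too large to pin down $X_{r-p,\,r}$ by comparing with $X_{s-1,\,s}$. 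So the ``specialize to $i=p$'' plan does not get off the ground.

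What the paper actually uses is a different, Frobenius-twisted surjection
\[
\phi_p:X_{r-p,\,r-p}\otimes V_1\longrightarrow X_{r-p,\,r},\qquad u\otimes v\longmapsto u\,v^{\,p},
\]
which is $M$-linear because $v\mapsto v^p$ is, and which immediately yields the sharp bound $\dim X_{r-p,\,r}\le 2(p+1)$. With that bound in hand, the proof constructs explicit $\mathbb{F}_p$-linear maps $\eta:X_{s-1,\,s}\to X_{r-p,\,r}$ (for $s=r$, $r/p$, or $rp$ as appropriate) defined on the spanning set $\{X(kX+Y)^{s-1},\,(X+lY)^{s-1}Y\}$, checks directly that they are $M$-linear via the characterization in Lemma~\ref{M-linearity of map}, and then shows bijectivity by dimension count: the lower bound on $\dim X_{r-p,\,r}$ comes from exhibiting enough Jordan--H\"older factors (e.g.\ $V_{p-a+1}\otimes D^{a-1}$ and $V_{a-2}\otimes D$ via explicit polynomials), and the upper bound is the $2p+2$ just mentioned. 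You had the right instinct that the Frobenius twist is the mechanism for the first branch, but it is in fact the key to all three, entering through $\phi_p$. A couple of smaller points: the case split is governed by whether $r_0=0$, $r_1=0$, or neither (Lemma~\ref{Equality of sum of p-adic digits of (r-1),(r-p)}), not by $r\equiv 1\pmod p$; and in the third branch the explicit map runs as a surjection $X_{rp-1,\,rp}\to X_{r-p,\,r}$, not as an injection in the other direction.
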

\noindent Since the structure of the submodule $X_{s-1, \,s}$ was determined 
in \cite[\S 2, \S 3]{BG15}, the theorem also gives the structure of the submodule 
$X_{r-p}$, for $p \geq 3$ (see Theorem~\ref{Main theorem part 1 and 2} and 
Theorem~\ref{Main theorem part 4} for the precise structure).\footnote{
Though we caution the reader that there 
are infinite families of $r$ (e.g., $r=p^m+p+1$, for $m \geq 2$) 
for which one is in the middle case of the theorem, so that 
$X_{r-1} \cong X_{r-p}$, 
yet neither $X_{r-p}$ contains nor is contained in $X_{r-1}$.}  
We remark that in the first case we have $p \mid r$, by 
Lemma~\ref{Equality of sum of p-adic digits of (r-1),(r-p)}. As a corollary,
we obtain:

\begin{corollary}
	Let $p\geq 3$ and $r \geq p(2p+1)$.   
    Write $r=p^{n}u$,  
	with $p \nmid u$. Set $\delta =1$ if $n \geq 2$ or 
	$\Sigma_{p}(r-p) > \Sigma_{p}(r-1)$ and $0$ otherwise. 
	Then
	\begin{align*}
       \mathrm{dim} \ X_{r-p} =
        \begin{cases}	                              
    	    2 \Sigma_p(r) + \delta(p+2-\Sigma_p(r)), 
    	    \ & \text{if} \ \Sigma_p(r) \leq p, \\
    	    2p+2,  &   \text{if} \ \Sigma_p(r) > p. 
	    \end{cases}
	\end{align*}
\end{corollary}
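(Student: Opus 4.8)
The plan is to deduce the corollary directly from the preceding Theorem (the trichotomy for $X_{r-p}$) together with the known dimension of the monomial submodule $X_{s-1,\,s}$ generated by the second highest monomial, which was determined in \cite[\S 2, \S 3]{BG15}. First I would recall that for $s \geq p$ one has a dimension formula for $X_{s-1,\,s}$ expressed in terms of $\Sigma_p(s-1)$ (equivalently $\Sigma_p(s)$ once one tracks whether $p \mid s$); the shape of that formula is: roughly $2\,\Sigma_p(s) + c$ when $\Sigma_p(s) \leq p$ for an explicit small constant $c$ depending on the residue of $s$ mod $p$ and mod $p-1$, and the maximal value $2p+2 = \dim(\mathrm{ind}_B^\Gamma(\ldots))$ when $\Sigma_p(s) > p$. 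This is the input I would quote verbatim.

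Next I would go case by case through the trichotomy. In the middle case $\Sigma_p(r-p) = \Sigma_p(r-1)$ we have $X_{r-p} \simeq X_{r-1,\,r}$, so $\dim X_{r-p}$ is the $BG15$ formula evaluated at $s = r$; here $p \nmid r$ would force $n = 0$, and one checks (using Lemma~\ref{Equality of sum of p-adic digits of (r-1),(r-p)} and its converse) exactly when this middle case versus the first case occurs, pinning down $\delta$. In the first case $\Sigma_p(r-p) < \Sigma_p(r-1)$ we have $p \mid r$ (as remarked after the Theorem) and $X_{r-p} \simeq X_{r/p-1,\,r/p}$, so $\dim X_{r-p}$ is the $BG15$ formula at $s = r/p$; the key arithmetic point is that $\Sigma_p(r/p) = \Sigma_p(r)$ since dividing by $p$ just strips a trailing zero digit, so the leading term $2\,\Sigma_p(r)$ is unchanged and only the constant adjusts, which is where the $+\delta(p+2-\Sigma_p(r))$ correction and the definition $\delta = 1$ when $n \geq 2$ come from. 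In the third case $\Sigma_p(r-p) > \Sigma_p(r-1)$ we have $X_{r-p} \simeq X_{rp-1,\,rp}$, and again $\Sigma_p(rp) = \Sigma_p(r)$; here one must check that the $BG15$ formula at $s = rp$ produces the value recorded for $\delta = 1$. Finally, in all three cases, once $\Sigma_p(r) > p$ one is in the ``generic'' regime where $X_{r-p}$ is (a quotient isomorphic to) the full principal series of dimension $2p+2$, matching the second branch of the stated formula; the hypothesis $r \geq p(2p+1)$ guarantees we are past all the low-degree exceptional behaviour and that $r/p \geq 2p+1$ so the $BG15$ formula applies to $s = r/p$ as well.

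The routine but slightly delicate bookkeeping — and the step I expect to be the main obstacle — is reconciling the three different constant terms coming from $s = r$, $s = r/p$, $s = rp$ in the $BG15$ formula into the single uniform expression $2\,\Sigma_p(r) + \delta(p+2-\Sigma_p(r))$. This requires (i) verifying that $\Sigma_p(r-1)$, $\Sigma_p(r/p - 1)$ and $\Sigma_p(rp-1)$ all differ from $\Sigma_p(r)$ in a controlled way (a trailing block of digits $a\,0^k$ becomes $(a-1)\,(p-1)^k$ under subtracting $1$, which changes the digit sum by $k(p-1) - 1$ or similar), (ii) identifying precisely which residue classes of $s$ mod $p$ and mod $p-1$ arise, so that the ``small constant'' in the $BG15$ formula is correctly evaluated in each branch, and (iii) checking that these evaluations collapse to $2p+2-\Sigma_p(r)$ exactly in the cases packaged by $\delta = 1$ (namely $n \geq 2$ or the third case of the trichotomy) and to $0$ otherwise. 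I would organize this as a short lemma extracting the needed digit-sum identities, then a three-line verification per case; nothing deep is needed beyond carrying base-$p$ digits carefully.
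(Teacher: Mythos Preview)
Your approach is correct and is exactly the paper's: the paper's proof reads ``follows from the theorem and \cite[Corollary 1.6]{BG15} (or \Cref{dimension formula for X_{r-1}}), by checking various cases'', and your proposal spells out precisely that case check. One simplification: the BG15 formula (restated in the paper as \Cref{dimension formula for X_{r-1}}) already has the exact shape $2\Sigma_p(s) + \delta_s(p+2-\Sigma_p(s))$ with $\delta_s = 1$ iff $p \mid s$, so since $\Sigma_p(r) = \Sigma_p(r/p) = \Sigma_p(rp)$ you only need to read off $\delta_s$ in each branch ($s = r/p$, $r$, $rp$) and match it to the corollary's $\delta$; the ``slightly delicate bookkeeping'' you anticipate in (i)--(iii) collapses to this single observation, and there is no dependence on $s$ mod $(p-1)$ to track.
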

%
\noindent	The proof follows from the theorem  and \cite[Corollary 1.6]{BG15} 
(or \Cref{dimension formula for X_{r-1}}), by checking various cases. 

The second main goal of this paper is to give a complete description of
certain quotients of the symmetric power representations $V_r$ involving 
the monomial submodules $X_{r-i}$ in \eqref{first row}. 
Let $V_r^{(m)}$, for $m \geq 0$, be the $\Gamma$-submodule of $V_r$
consisting of all polynomials $F(X,Y)$ divisible by $\theta(X,Y)^m$.  
The submodules $V_r^{(m)}$ cut out an exhaustive decreasing filtration of $V_r$:
\begin{eqnarray}
\label{theta filtration}
  V_r \supset  V_r^{(1)}  \supset V_r^{(2)} \supset
  \cdots \supset V_r^{(m)} \supset \cdots \supset 0.
\end{eqnarray}
This filtration is much better understood than the monomial filtration 
\eqref{first row}. For one, the structure of each of the individual terms in 
this filtration is easy to write down. By what we have said above, we have 
$V_r^{(m)} \cong V_{r-m(p+1)} \otimes D^m$, 
for $D : \Gamma \rightarrow {\mathbb F}_p^\ast$ the determinant character.
Moreover, for all (but possibly the last) non-zero subquotients in the filtration
\eqref{theta filtration}, we have
\begin{eqnarray*}
  V_r^{(m)} / V_r^{(m+1)} \cong \mathrm{ind}_B^\Gamma (a^md^{r-m})
\end{eqnarray*} 
are  principal series representations (Lemma~\ref{induced and star}) depending 
only on the congruence classes of $r$ modulo $(p-1)$ and $m$ modulo $(p-1)$, 
so extensions of length two, which split if and only if $r \equiv 2m \mod (p-1)$ 
(Lemma~\ref{Breuil map}).
%
Now, consider the  quotients $Q(i)$ and $P(i)$ of $V_r$ involving the monomial 
submodules $X_{r-i}$, defined by
\begin{eqnarray*}
  Q(i) & := & \dfrac{V_r}{X_{r-i} + V_r^{(i+1)}},  \\
\end{eqnarray*}
for $i \geq 0$, and 
\begin{eqnarray*}
  P(i) & := & \dfrac{V_r}{X_{r-(i-1)} + V_r^{(i+1)}},  \\
\end{eqnarray*}
for $i > 0$.
%
These quotients are of number theoretic importance. 
They arise when one is trying to compute the reductions of crystalline 
two-dimensional representations of the Galois group of ${\mathbb Q}_p$ of 
Hodge-Tate weights $(0, r+1)$ and positive slope, using the mod $p$ Local 
Langlands Correspondence. The quotient $P(i)$ arises for integral slopes 
$i > 0$ and $Q(i)$ for fractional slopes in the interval $(i, i+1)$ with 
$i \geq 0$, by \cite[Remark 4.4]{BG09}. Thus, it is important to know the 
structure of the quotients $P(i)$ and $Q(i)$ as $\Gamma$-modules. 
In this paper, we will restrict to discussing the structure of these quotients, 
deferring the number theoretic applications to future works, except for an 
example at the end of this introduction (Corollary~\ref{cor reduction}). 
Moreover, since $P(i)$ is closely related to $Q(i-1)$, for $i > 0$, by the 
discussion around the exact sequence \eqref{Q i-1 and P i} below, 
in this paper we will restrict to essentially discussing the structure of the 
quotients $Q(i)$, for $i \geq 0$.

The quotient $Q(0)$ is irreducible \cite{BG09}, whereas the
quotient $Q(1)$ has at most three Jordan-H\"older (JH) factors \cite{BG15}.
In this paper, we prove several results which in principle allow us to deduce 
the JH factors of the quotients $Q(i)$, for all $1 \leq i \leq p-1$. We summarize 
these results now. For an integer $n$, let $[n] \in  \{1, 2, \ldots, p-1\}$ be 
such that $n \equiv [n] \mod (p-1)$.  Let $r \equiv a \mod (p-1)$, for 
$a \in \{1, 2, \ldots, p-1\}$, so that $a = [r]$. 
We will also need to consider certain intervals of the congruence
classes $\{0,1, \ldots, p-1 \}$ mod $p$.

We first treat the case when $i$ is neither $a$ nor $p-1$. 
\begin{theorem}\label{i not a nor p - 1}
Let $p \geq 3$, $1 \leq i \leq p-1$, $r \geq i(p+1)+p$, $r \equiv a \mod (p-1)$
with $a \in \{1, 2, \ldots, p-1\}$ and $r_0$ be the
constant term in the base $p$-expansion of $r$.
If $i \neq a$, $p-1$ and $j = \min \{i, [a-i] \}$, then
there is an exact sequence of $\Gamma$-modules
\begin{eqnarray}
   \label{intro i vs j-1}
   0 \rightarrow W \rightarrow Q(i) \rightarrow Q(j-1) \rightarrow 0,
\end{eqnarray}
where $W$ is an explicit subquotient of $V_r^{(j)}/V_r^{(i+1)}$, completely
determined by the relationship between $r_0$ and certain explicit interval(s) 
of the congruence classes mod $p$ which depend only on $a$ and $i$
(and, in some cases, a comparison between $[a-r_0]$  and $r_0$).
\end{theorem}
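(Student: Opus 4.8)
\textbf{Proof proposal for Theorem~\ref{i not a nor p - 1}.}

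The plan is to construct the exact sequence \eqref{intro i vs j-1} by comparing the two denominators $X_{r-i} + V_r^{(i+1)}$ and $X_{r-(j-1)} + V_r^{(j)}$ inside $V_r$, where $j = \min\{i,[a-i]\}$. The first observation is that $X_{r-(j-1)} + V_r^{(j)}$ surjects onto $X_{r-i} + V_r^{(i+1)}$ is the wrong direction; rather, since $j \le i$ we have $X_{r-(j-1)} \subseteq X_{r-i}$ by the first-row filtration \eqref{first row}, and $V_r^{(i+1)} \subseteq V_r^{(j)}$ by \eqref{theta filtration}, so $X_{r-(j-1)} + V_r^{(i+1)} \subseteq X_{r-i} + V_r^{(i+1)}$. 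Dividing $V_r$ by the smaller submodule and then by the larger gives a tautological short exact sequence
\begin{eqnarray*}
  0 \rightarrow W' \rightarrow \frac{V_r}{X_{r-(j-1)} + V_r^{(i+1)}} \rightarrow Q(i) \rightarrow 0,
\end{eqnarray*}
where $W' = (X_{r-i} + V_r^{(i+1)})/(X_{r-(j-1)} + V_r^{(i+1)})$. So the first key step is to identify the middle term with $Q(j-1) = V_r/(X_{r-(j-1)}+V_r^{(j)})$ possibly after accounting for the difference between $V_r^{(i+1)}$ and $V_r^{(j)}$, and to identify $W'$ with the claimed subquotient of $V_r^{(j)}/V_r^{(i+1)}$; the roles of $W$ and the quotient in \eqref{intro i vs j-1} are then as stated once one checks the sequence is actually in the right order, i.e.\ that $Q(i)$ is the \emph{quotient}, which forces $W$ to be the kernel of $Q(i) \to Q(j-1)$, dual to what I wrote. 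I would reorganize: map $Q(i) \twoheadrightarrow Q(j-1)$ is induced by the inclusion of denominators $X_{r-i}+V_r^{(i+1)} \supseteq X_{r-(j-1)}+V_r^{(j)}$, which holds precisely because $X_{r-(j-1)} \subseteq X_{r-i}$ and $V_r^{(j)} \supseteq V_r^{(i+1)}$ \emph{provided} $j \le i$; the kernel $W$ is then $(X_{r-(j-1)}+V_r^{(j)})/(X_{r-i}+V_r^{(i+1)})$ wait — this is nonzero only if the larger is on top, so in fact $W = (X_{r-(j-1)}+V_r^{(j)} \cap (\text{stuff}))$... The clean way: $W$ is the image in $Q(i)$ of $V_r^{(j)}$, i.e.\ $W = (V_r^{(j)} + X_{r-i} + V_r^{(i+1)})/(X_{r-i}+V_r^{(i+1)}) \cong V_r^{(j)}/(V_r^{(j)} \cap (X_{r-i}+V_r^{(i+1)}))$, and $Q(i)/W = V_r/(V_r^{(j)}+X_{r-i}+V_r^{(i+1)}) = V_r/(V_r^{(j)}+X_{r-i})$. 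So the real content of the theorem is the non-obvious claim
\begin{eqnarray*}
  V_r^{(j)} + X_{r-i} = V_r^{(j)} + X_{r-(j-1)},
\end{eqnarray*}
which would give $Q(i)/W = V_r/(V_r^{(j)}+X_{r-(j-1)}) = Q(j-1)$, together with the identification of $W$ as the indicated subquotient of $V_r^{(j)}/V_r^{(i+1)}$.

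The second key step, then, is establishing that equality of submodules. The containment $\supseteq$ is immediate from $X_{r-(j-1)} \subseteq X_{r-i}$. For $\subseteq$ one must show $X^{r-i}Y^i \in V_r^{(j)} + X_{r-(j-1)}$, equivalently that $X^{r-i}Y^i$ maps into the $\Gamma$-submodule generated by $X^{r-(j-1)}Y^{j-1}$ inside the quotient $V_r/V_r^{(j)}$. This is where the case analysis on $r_0$ versus intervals mod $p$, and the definition $j = \min\{i,[a-i]\}$, enters. I expect to use the description from Theorem~2 / Corollary~1 (the structure of the $X_{r-i}$ in terms of extensions of principal series and irreducibles) applied inside $V_r/V_r^{(j)}$, which by the remarks around \eqref{theta filtration} has all its subquotients being known principal series $\mathrm{ind}_B^\Gamma(a^m d^{r-m})$ for $0 \le m < j$, depending only on $r \bmod (p-1)$. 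The combinatorial heart is: in the successive quotients $X_{r-i}/X_{r-(i-1)}$, which are homomorphic images of $\mathrm{ind}_B^\Gamma(a^{r-i}d^i)$ (Corollary~\ref{induced and successive}), one tracks which of these $p-1$-periodic principal series actually survive modulo $V_r^{(j)}$, and the symmetry $i \leftrightarrow [a-i]$ coming from the central/determinant twist (Dickson polynomial acting by $D$) collapses the index $i$ down to $j = \min\{i,[a-i]\}$ — once $i$ exceeds $[a-i]$, the extra monomial submodules $X_{r-([a-i]+1)}, \dots, X_{r-i}$ contribute nothing new modulo $V_r^{(j)}$ because the relevant principal series have already appeared, or have been killed, by the $\theta$-filtration. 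Making this precise for each residue of $r_0$ mod $p$ is the bulk of the work.

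The third step is the explicit identification of $W = V_r^{(j)}/(V_r^{(j)} \cap (X_{r-i}+V_r^{(i+1)}))$ as a subquotient of $V_r^{(j)}/V_r^{(i+1)}$: since $V_r^{(i+1)} \subseteq V_r^{(j)} \cap (X_{r-i}+V_r^{(i+1)})$, we have $W = (V_r^{(j)}/V_r^{(i+1)}) / ((V_r^{(j)} \cap (X_{r-i}+V_r^{(i+1)}))/V_r^{(i+1)})$, a genuine quotient of $V_r^{(j)}/V_r^{(i+1)}$, whose JH factors are among the known principal series $\mathrm{ind}_B^\Gamma(a^m d^{r-m})$ for $j \le m \le i$. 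Determining exactly which factors appear, and whether the extensions split (governed by $r \equiv 2m \bmod (p-1)$, Lemma~\ref{Breuil map}), reduces to computing $X_{r-i} \cap V_r^{(j)} \pmod{V_r^{(i+1)}}$, i.e.\ which multiples of $\theta^j$ lie in the $\Gamma$-module generated by $X^{r-i}Y^i$; again this is read off from the structure theorems and comes out as the stated dependence on $r_0$ versus explicit intervals mod $p$ (and the comparison of $[a-r_0]$ with $r_0$ in the boundary cases).

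\textbf{Main obstacle.} The main difficulty is the proof of $V_r^{(j)} + X_{r-i} = V_r^{(j)} + X_{r-(j-1)}$ and the parallel computation of $X_{r-i}\cap V_r^{(j)}$ modulo $V_r^{(i+1)}$ — i.e.\ pinning down, for each congruence class of $r_0$ mod $p$ (with $1 \le i \le p-1$, $i \ne a, p-1$), exactly which principal series survive. This is a finite but intricate case analysis that rests on having sufficiently precise control of the structure of each $X_{r-i}$ from the earlier theorems and on the periodicity of the $\theta$-filtration subquotients; the elegance claimed (the collapse to $j = \min\{i,[a-i]\}$) has to be extracted by hand in each range, and the non-generic cases where $\Sigma_p(r-r_0) < p$ require the explicit extension descriptions rather than the clean tensor-product form.
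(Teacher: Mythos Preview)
Your overall architecture is right: you correctly isolate the two tasks, namely (a) proving $X_{r-i}+V_r^{(j)} = X_{r-(j-1)}+V_r^{(j)}$ so that $Q(i)/W \cong Q(j-1)$, and (b) identifying $W = V_r^{(j)}/(X_{r-i}^{(j)}+V_r^{(i+1)})$ as an explicit subquotient of $V_r^{(j)}/V_r^{(i+1)}$. Your computation of $W$ via the second isomorphism theorem is also correct and matches the paper.

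The gap is in step (a). You propose to establish $X_{r-i} \subseteq X_{r-(j-1)} + V_r^{(j)}$ by invoking the structure theorems for $X_{r-i}$ and then performing a case analysis on $r_0 \bmod p$; you call this ``the bulk of the work''. This is a misdiagnosis. The equality in (a) holds \emph{uniformly}, independent of $r_0$, and does not use the structure theorems at all. The paper's argument (Lemma~\ref{reduction} and Corollary~\ref{reduction corollary}) is: for any level $j'$ with $j' \ne i,[a-i]$, one has $X_{r-i}^{(j')}/X_{r-i}^{(j'+1)} = X_{r-(i-1)}^{(j')}/X_{r-(i-1)}^{(j'+1)}$, because otherwise the subquotient $Y_{i,j'}$ would be a common nonzero subquotient of $V_r^{(j')}/V_r^{(j'+1)} \cong \ind_B^\Gamma(\chi_1^{j'}\chi_2^{r-j'})$ and of $X_{r-i}/X_{r-(i-1)}$, which is a quotient of $\ind_B^\Gamma(\chi_1^{r-i}\chi_2^{i})$; but two principal series share a JH factor only when their characters agree or are Weyl-conjugate (Corollary~\ref{Common JH factor}), forcing $j' \in \{i,[a-i]\}$. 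This immediately gives $X_{r-i} = X_{r-(i-1)} + X_{r-i}^{(i')}$ with $i' = \min\{i,[a-i]\}$, and iterating over $[a-i] \le k \le i$ (noting $\min\{k,[a-k]\} \ge [a-i]$ for all such $k$) yields $X_{r-i}+V_r^{(j)} = X_{r-(j-1)}+V_r^{(j)}$. Your intuition that ``the relevant principal series have already appeared, or have been killed'' is pointing at this, but the mechanism is the common-JH-factor obstruction, not a determinant-twist symmetry, and it requires no input from $r_0$.

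The $r_0$-dependence and the interval conditions enter only in step (b), in computing $X_{r-i}^{(j')}/X_{r-i}^{(j'+1)}$ for the exceptional levels $j' \in \{i,[a-i]\}$ (Propositions~\ref{singular quotient i < [a-i]}, \ref{singular i= [a-i]}, \ref{singular i>r-i}). There the work is genuinely case-by-case, but it is confined to at most two levels of the $\theta$-filtration, not spread across the whole argument as your proposal suggests. Separating these two steps cleanly is what makes the proof tractable.
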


The explicit intervals depend on the vanishing (or non-vanishing) of certain binomial coefficients modulo $p$. 
More precisely, if $i$ is neither $a$ nor $p-1$, then we have the following  explicit version of Theorem~\ref{i not a nor p - 1} 
which deal with the cases (i) $i < [a-i]$, (ii) $i = [a-i]$ and (iii) $i > [a-i]$ separately.

\begin{theorem}
        \label{intro i < [a-i]}
	Let  $p \geq 3$, $r \equiv a \mod (p-1)$ with $1 \leq a \leq p-1$,
	$r \equiv r_{0}~\mathrm{mod}~ p$ with $0 \leq r_{0} \leq p-1$ and let 
	$1 \leq i < [a-i] < p-1$. If 
	$r \geq i(p+1)+p $, then we have the following exact sequence
	\begin{align*}
	0 \rightarrow W \rightarrow Q(i) \rightarrow Q(i-1) \rightarrow 0,
	\end{align*}
	where $W= V_{p-1-[a-2i]} \otimes D^{a-i}$ if 
	$\binom{r-[a-i]-1}{i} \not \equiv 0 \mod p$ and 
	$W= V_{r}^{(i)}/V_{r}^{(i+1)}$ otherwise.
\end{theorem}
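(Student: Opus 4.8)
The plan is to deduce the theorem from the general exact sequence of Theorem~\ref{i not a nor p - 1}, making its kernel $W$ completely explicit when $i<[a-i]$. Since $1\le i<[a-i]<p-1$ we have $j:=\min\{i,[a-i]\}=i$, so Theorem~\ref{i not a nor p - 1} (equivalently, \eqref{intro i vs j-1}) already provides an exact sequence
\[
 0\longrightarrow W\longrightarrow Q(i)\longrightarrow Q(i-1)\longrightarrow 0,
\]
with $W$ a subquotient of $V_r^{(i)}/V_r^{(i+1)}$. First I would sharpen this. The surjection $Q(i)\twoheadrightarrow Q(i-1)$ amounts to $X_{r-i}+V_r^{(i+1)}\subseteq X_{r-(i-1)}+V_r^{(i)}$, hence to $X_{r-i}\subseteq X_{r-(i-1)}+V_r^{(i)}$; together with $X_{r-(i-1)}\subseteq X_{r-i}$ from \eqref{first row} this gives $X_{r-(i-1)}+V_r^{(i)}=X_{r-i}+V_r^{(i)}$, and so the second isomorphism theorem identifies
\[
 W\;\cong\;\frac{X_{r-i}+V_r^{(i)}}{X_{r-i}+V_r^{(i+1)}}\;\cong\;\frac{V_r^{(i)}}{V_r^{(i)}\cap\bigl(X_{r-i}+V_r^{(i+1)}\bigr)},
\]
which is in particular a \emph{quotient} of $V_r^{(i)}/V_r^{(i+1)}$.

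Next I would use the known structure of this principal series. As $[a-2i]=[a-i]-i\in\{1,\dots,p-2\}$ (using $i<[a-i]<p-1$), we have $r\not\equiv 2i\pmod{p-1}$, so by Lemma~\ref{induced and star} and Lemma~\ref{Breuil map} the module $V_r^{(i)}/V_r^{(i+1)}\cong\mathrm{ind}_B^\Gamma(a^id^{r-i})$ is a \emph{non-split} extension
\[
 0\longrightarrow V_{[a-2i]}\otimes D^{i}\longrightarrow V_r^{(i)}/V_r^{(i+1)}\longrightarrow V_{p-1-[a-2i]}\otimes D^{a-i}\longrightarrow 0,
\]
where the identification of the socle and cosocle (and their determinant twists) is a short Frobenius reciprocity computation. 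A nonzero quotient of a non-split length-two module is either the whole module or its irreducible quotient, and $W\ne 0$ (this is part of Theorem~\ref{i not a nor p - 1}, and can also be seen from the known dimensions of $X_{r-i}$ and $X_{r-(i-1)}$); hence $W=V_{p-1-[a-2i]}\otimes D^{a-i}$ if the socle $\sigma:=V_{[a-2i]}\otimes D^{i}$ of $V_r^{(i)}/V_r^{(i+1)}$ is contained in the image $\overline{X}_{r-i}$ of $X_{r-i}$ in $V_r/V_r^{(i+1)}$, and $W=V_r^{(i)}/V_r^{(i+1)}$ otherwise.

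It therefore remains to decide whether $\sigma\subseteq\overline{X}_{r-i}$; since $\sigma$ is irreducible, this holds iff one chosen nonzero vector of $\sigma$ lies in $\overline{X}_{r-i}$. Multiplication by $\theta^i$ induces a $\Gamma$-equivariant isomorphism $V_r^{(i)}/V_r^{(i+1)}\cong\bigl(V_s/V_s^{(1)}\bigr)\otimes D^i$ with $s=r-i(p+1)$, under which $\sigma$ corresponds to the socle of $V_s/V_s^{(1)}\cong\mathrm{ind}_B^\Gamma(d^{s})$, twisted by $D^i$; a generator of that socle lifts to an explicit polynomial $\theta^iG_0\in V_r^{(i)}$ with $G_0\in V_s$ coming from Glover's description of the submodule $X_{s,s}\subseteq V_s$ generated by $X^s$. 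I would then have to decide whether $\theta^iG_0\in X_{r-i}+V_r^{(i+1)}$. Expanding $\theta^i=(X^pY-XY^p)^i=\sum_{k=0}^i(-1)^k\binom{i}{k}X^{pi+k(1-p)}Y^{i+k(p-1)}$, reducing $\theta^iG_0$ modulo $V_r^{(i+1)}$, and comparing the result with the explicitly computed $\Gamma$-orbit of $X^{r-i}Y^i$ that spans $X_{r-i}$ (whose members are sums of monomials with binomial-coefficient coefficients, so that the membership question becomes a finite linear system over $\mathbb{F}_p$), one finds — after a Lucas'-theorem bookkeeping of base-$p$ digits — that $\theta^iG_0\in X_{r-i}+V_r^{(i+1)}$ exactly when $\binom{r-[a-i]-1}{i}\not\equiv 0\pmod p$. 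By Lucas, this non-vanishing is precisely the assertion that the last base-$p$ digit of $r-[a-i]-1$ is at least $i$, i.e.\ the interval condition on $r_0$ modulo $p$ implicit in Theorem~\ref{i not a nor p - 1}.

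The main obstacle is this last step: identifying the correct lift $\theta^iG_0$ of a socle generator, and then carrying out the delicate digit-and-carry bookkeeping that turns ``$\theta^iG_0\in X_{r-i}+V_r^{(i+1)}$'' into the clean condition $\binom{r-[a-i]-1}{i}\not\equiv 0\pmod p$; since forming $r-[a-i]-1$ may or may not require a borrow in the last base-$p$ digit, one is forced to treat the cases $r_0\ge i$ and $r_0<i$ separately, exactly as in Theorems~\ref{Structure r_0 >i} and \ref{Structure r_0<i}. A more uniform alternative would be to bypass the principal-series discussion entirely: insert the explicit descriptions of $X_{r-i}$ and $X_{r-(i-1)}$ from those theorems, together with $V_r^{(i)}\cong V_s\otimes D^i$, directly into the definitions of $Q(i)$ and $Q(i-1)$, compute both quotients, and read off $W$ and the role of $\binom{r-[a-i]-1}{i}$ by comparing Jordan-H\"older factors; this rests on the same digit computations, merely reorganized around the already-established extension data.
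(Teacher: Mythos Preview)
Your high-level reduction is exactly the paper's: take $j=i$ in diagram~\eqref{commutative diagram}, use Corollary~\ref{reduction corollary}(ii) to identify the right-hand quotient with $Q(i-1)$, recognise $W$ as the cokernel of $X_{r-i}^{(i)}/X_{r-i}^{(i+1)}\hookrightarrow V_r^{(i)}/V_r^{(i+1)}$, invoke the non-split length-two structure of the target, and finally translate the interval condition on $r_0$ into the binomial condition via Lemma~\ref{interval and binomial}(i). All the content is in computing $X_{r-i}^{(i)}/X_{r-i}^{(i+1)}$ (this is Proposition~\ref{singular quotient i < [a-i]}), and that is precisely where your proposal remains a sketch.

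The paper handles this computation differently from your membership test, and the difference matters. When $r_0\notin\mathcal I(a,i)$ (Lemma~\ref{Large Cong class Quotient non zero}) the paper does not check whether a fixed socle lift lies in $X_{r-i}+V_r^{(i+1)}$; it \emph{constructs} an element $F=\sum_{n=0}^{i}C_n\sum_{k\in\f^\ast}k^{i+n-a}X^n(kX+Y)^{r-n}\in X_{r-i}$ lying in $V_r^{(i)}\smallsetminus V_r^{(i+1)}$ by choosing $(C_n)$ to solve a linear system whose coefficient matrix $A(a,i,i,r)$ is invertible exactly on the complement of $\mathcal I(a,i)$ (Corollary~\ref{A(a,i,j,r) invertible}, Proposition~\ref{matrix det}). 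The inequality $X_{r-i}^{(i)}/X_{r-i}^{(i+1)}\subsetneq V_r^{(i)}/V_r^{(i+1)}$ (your $W\neq 0$, which does \emph{not} follow from Theorem~\ref{i not a nor p - 1} as stated) comes from a dimension bound using that $W_{i,r}$ already sits in $(X_{r-i}^{([a-i])}+X_{r-(i-1)})/X_{r-(i-1)}$, a strictly deeper level. When $r_0\in\mathcal I(a,i)$ (Lemma~\ref{exceptional case 1}) the paper again avoids a direct obstruction: it uses the periodicity of $X_{r-i}^{(i)}/X_{r-i}^{(i+1)}$ in $r$ modulo $p(p-1)$, established via an embedding into $\ind_{B(R)}^{G(R)}(\chi_2^r)$ over $R=\f[\epsilon]/(\epsilon^m)$ (Corollary~\ref{arbitrary quotient periodic}), to replace $r$ by an explicit small $s$ for which $X_{s-i}=X_{s-(i-1)}$ holds by Lemma~\ref{final X_r-i = X_r-j}, forcing the quotient to vanish. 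Your approach of testing a single socle generator against the spanning set of Lemma~\ref{Basis of X_r-i} is in principle sound, but the negative direction (showing non-membership when $r_0\in\mathcal I(a,i)$) is exactly where such a direct linear-algebra attack is least tractable, and the periodicity-plus-equality-criterion argument is the paper's substitute for the ``delicate digit-and-carry bookkeeping'' you flag as the main obstacle.
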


\begin{theorem}
        \label{intro i = [a-i]}
	Let $p \geq 3$, $r \equiv a ~\mathrm{mod}~(p-1)$ with $1 \leq a  \leq p-1$,
	$r \equiv r_{0} ~\mathrm{mod}~ p$ with $0 \leq r_{0}  \leq p-1$ and 
	let $1 \leq i < p-1$ with $i=[a-i]$. If $ r \geq i(p+1)+p$, then
	\[
	0 \rightarrow W \rightarrow Q(i) \rightarrow Q(i-1) \rightarrow 0,
	\]
	where
	\begin{align*}
	W  \cong
	\begin{cases}
	0, 
	&\mathrm{if}~ \binom{r-i+1}{i+1} \not \equiv 0 \mod p ,  \\
	V_{p-1} \otimes D^{i},  & \mathrm{if}~ r_{0}=i, \\
	V_{0} \otimes D^{i},  & \mathrm{if}~r_{0}=i-1, \\
	V_{r}^{(i)}/V_{r}^{(i+1)},  &\mathrm{otherwise}.
	\end{cases}
	\end{align*}       
\end{theorem}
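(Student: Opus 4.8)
The plan is to treat this as the diagonal case of the general strategy behind Theorem~\ref{i not a nor p - 1}, specializing $j = \min\{i, [a-i]\} = i$, so that the target of the short exact sequence is $Q(i-1)$. The starting point is the tautological surjection $Q(i) \twoheadrightarrow Q(i-1)$ coming from the inclusion $X_{r-(i-1)} \subset X_{r-i}$ (the first-row filtration \eqref{first row}), whose kernel is
\[
  W \;=\; \frac{X_{r-i} + V_r^{(i+1)}}{X_{r-(i-1)} + V_r^{(i+1)}}
       \;\cong\; \frac{X_{r-i} + V_r^{(i+1)}}{\bigl(X_{r-i}\cap(X_{r-(i-1)}+V_r^{(i+1)})\bigr) + V_r^{(i+1)}}.
\]
By Corollary~\ref{induced and successive} the quotient $X_{r-i}/X_{r-(i-1)}$ is a homomorphic image of the principal series $\ind_B^\Gamma(a^{r-i}d^i)$; since $i = [a-i]$ means $r - i \equiv i \pmod{p-1}$, this principal series is reducible with JH factors $V_{p-1}\otimes D^i$ (or its twist) and $V_0 \otimes D^i = D^{i}$, and it splits iff $r \equiv 2i \pmod{p-1}$ — which holds here. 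So $W$ is a quotient of $V_{p-1}\otimes D^i \oplus D^i$, and the whole problem reduces to deciding, for each of these two candidate JH factors, whether it survives in $W$ or is killed by the image of $V_r^{(i+1)}$ (equivalently, whether the relevant generator of $X_{r-i}$ already lies in $X_{r-(i-1)} + V_r^{(i+1)}$).

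First I would set up the explicit polynomial bookkeeping. The module $X_{r-i}$ is generated by $X^{r-i}Y^i$; applying lower-triangular unipotents and scaling produces, modulo $X_{r-(i-1)}$, the "new" vector, and one extracts from it a socle-type element and a cosocle-type element detecting the two JH factors above. The determinant twist $D^i$ accounts for the factor of $\theta^{\,i}$ that one expects to divide out, and the condition governing whether $V_r^{(i+1)}$ — i.e. divisibility by $\theta^{i+1}$ — absorbs a given factor is exactly a $\theta$-divisibility (equivalently, Dickson-invariant) congruence. I expect this to collapse to the single binomial coefficient $\binom{r-i+1}{i+1} \bmod p$: by a Lucas-type computation this coefficient is nonzero precisely when the base-$p$ digits of $r$ in the low positions are large enough that the leading coefficient of the candidate vector, after the substitutions forced by \eqref{G action}, is a unit. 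When $\binom{r-i+1}{i+1}\not\equiv 0$, both would-be generators of $W$ turn out to already lie in $X_{r-(i-1)}+V_r^{(i+1)}$ (one checks the top factor is killed using the analogue of the argument in \cite[Lemma 4.1]{BG15}, and the trivial-type factor then goes along for free because $i=[a-i]$ forces a coincidence of carry patterns), giving $W = 0$.

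The residual cases, when $\binom{r-i+1}{i+1}\equiv 0 \bmod p$, are then pinned down by the constant term $r_0$. Here I would argue: the top factor $V_{p-1}\otimes D^i$ survives in $W$ exactly when $r_0 = i$ (so that $\Sigma_p(r-i) = \Sigma_p(r-r_0) + (r_0 - i)$ behaves as in Theorem~\ref{Structure r_0 >i} with no loss), and only the trivial factor $V_0 \otimes D^i$ survives when $r_0 = i-1$ (the boundary value where one digit drops and the socle collapses, cf. the discontinuity remark after the dimension corollary); for all other $r_0$ the binomial vanishing forces the full successive quotient $V_r^{(i)}/V_r^{(i+1)}$ to embed in $W$, because neither reduction step can occur and $W$ is as large as possible. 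Each of these three sub-claims is an explicit divisibility check on $\theta$-powers, carried out via the $p$-digit/$\Sigma_p$ formalism already developed in the earlier lemmas; I would cite Theorem~\ref{Structure r_0 >i} and Theorem~\ref{Structure r_0<i} to identify the JH content on the nose. The main obstacle will be the bookkeeping in the vanishing case $\binom{r-i+1}{i+1}\equiv 0$: distinguishing $r_0 = i$ from $r_0 = i-1$ from the generic sub-case requires tracking a possible carry out of the units digit and its effect on $\Sigma_p(r-r_0)$ versus $\Sigma_p(r-i-1)$, and getting the trivial-factor versus full-quotient dichotomy exactly right — this is precisely where the finite-field phenomenon (failure of Doty's sufficiency) bites, and where the stated range $r \geq i(p+1)+p$ is used to guarantee that $V_r^{(i)}/V_r^{(i+1)}$ is genuinely a principal series (not the truncated last term of \eqref{theta filtration}).
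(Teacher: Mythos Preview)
There is a basic error in the setup. The inclusion $X_{r-(i-1)} \subset X_{r-i}$ induces a surjection in the \emph{opposite} direction, from $P(i)=V_r/(X_{r-(i-1)}+V_r^{(i+1)})$ onto $Q(i)$; it does not give $Q(i)\twoheadrightarrow Q(i-1)$. The latter surjection is not tautological: since $Q(i-1)=V_r/(X_{r-(i-1)}+V_r^{(i)})$, one needs $X_{r-i}+V_r^{(i+1)}\subseteq X_{r-(i-1)}+V_r^{(i)}$, which requires Corollary~\ref{reduction corollary}~(ii) (that $X_{r-i}=X_{r-(i-1)}+X_{r-i}^{(i)}$ when $i=[a-i]$). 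Once this is established, the actual kernel is
\[
  W \;=\; \frac{X_{r-(i-1)}+V_r^{(i)}}{X_{r-i}+V_r^{(i+1)}}
     \;\cong\; \frac{V_r^{(i)}/V_r^{(i+1)}}{X_{r-i}^{(i)}/X_{r-i}^{(i+1)}},
\]
the \emph{cokernel} of $X_{r-i}^{(i)}/X_{r-i}^{(i+1)}\hookrightarrow V_r^{(i)}/V_r^{(i+1)}$. The module you wrote, $(X_{r-i}+V_r^{(i+1)})/(X_{r-(i-1)}+V_r^{(i+1)})$, is (via \eqref{Y i,j} and $X_{r-(i-1)}^{(i)}/X_{r-(i-1)}^{(i+1)}=0$, cf.\ Corollary~\ref{reduction corollary 2}) isomorphic to $X_{r-i}^{(i)}/X_{r-i}^{(i+1)}$ itself. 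So you are computing the \emph{complement} of $W$ inside $V_0\otimes D^i\oplus V_{p-1}\otimes D^i$, and your whole case analysis is inverted: e.g.\ when $\binom{r-i+1}{i+1}\not\equiv 0\pmod p$ your module is the full $V_r^{(i)}/V_r^{(i+1)}$, not $0$, while in the ``otherwise'' case it is $0$.

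The paper's route is: obtain the exact sequence from diagram~\eqref{commutative diagram} with $j=i$ (Theorem~\ref{Structure of Q i=[a-i]}), determine $X_{r-i}^{(i)}/X_{r-i}^{(i+1)}$ directly via Proposition~\ref{singular i= [a-i]} (whose proof constructs explicit polynomials in Lemmas~\ref{socle term singular}, \ref{a=2i 1-dim is JH factor}, \ref{a=2i 1-dim is not JH} witnessing or excluding each of $V_0\otimes D^i$ and $V_{p-1}\otimes D^i$), take the cokernel, and then translate the interval condition $r_0\in\mathcal J(a,i)$ into $\binom{r-i+1}{i+1}\equiv 0$ via Lemma~\ref{interval and binomial}~(iv). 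Your appeals to Theorems~\ref{Structure r_0 >i} and~\ref{Structure r_0<i} are misplaced: those describe $X_{r-i}$ as a whole, not the $\theta$-graded piece $X_{r-i}^{(i)}/X_{r-i}^{(i+1)}$, and the substantive work here lies entirely in the divisibility lemmas just cited.
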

\begin{theorem}
        \label{intro i > [a-i]}
	Let $p \geq 3$, $r \equiv a ~ \mathrm{mod}~(p-1)$ with $1 \leq a \leq p-1$ and
	let $r \equiv r_{0}~ \mathrm{mod}~p$ with $0 \leq r_{0} \leq p-1$.
	Let  $1 \leq [a-i]<i < p-1$ and $i(p+1)+p \leq r$. 
	Then we have an exact sequence of $\Gamma$-modules
	\begin{align*}
	0 \rightarrow W \rightarrow Q(i) \rightarrow Q([a-i]-1) \rightarrow 0,
	\end{align*}
	where
	\begin{enumerate}[label= \emph{(\roman*)}]
		\item If $ \binom{r-[a-i]+1}{i+1} \not \equiv 0 \mod p$,
		then $W= 0$.
		\item  If $\binom{r-[a-i]+1}{i+1} \equiv 0 \mod p$ and 
		$\binom{r-i-1}{[a-i]} \not \equiv 0 \mod p$, then we have
		\begin{enumerate}            
			\item[\em{(a)}]   If $[a-r_{0}] < r_{0}+1$, then
			$
			0 \rightarrow V_{r}^{([a-r_{0}]+1)}/V_{r}^{(i+1)} \rightarrow W
			\rightarrow V_{[a-2r_{0}]} \otimes D^{r_{0}} \rightarrow 0.
			$         
			\item[\em{(b)}]  If    $[a-r_{0}] = r_{0} +1 $, then
			$W= V_{r}^{([a-r_{0}])}/ V_{r}^{(i+1)}$.
			\item[\em{(c)}] If  $  [a-r_{0}] >  r_{0}+1 $,
			then    
			$
			0 \rightarrow V_{r}^{([a-r_{0}])}/V_{r}^{(i+1)}  \rightarrow W
			\rightarrow V_{p-1-[2r_{0}+2-a]} \otimes D^{r_{0}+1} \rightarrow 0.
			$           
		\end{enumerate}                                                 
		\item  If $\binom{r-i-1}{[a-i]}  \equiv 0 \mod p$ and
		$r \not \equiv [a-i]+i ~\mathrm{mod}~p$,
		then $W= V_{r}^{([a-i])}/V_{r}^{(i+1)}$.
	\end{enumerate}
\end{theorem}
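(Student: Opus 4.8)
\textbf{Proof proposal for Theorem~\ref{intro i > [a-i]}.}
The plan is to follow the same strategy used for the cases $i<[a-i]$ and $i=[a-i]$, reducing the computation of $Q(i)$ to a smaller quotient via the monomial filtration \eqref{first row} and the $\theta$-filtration \eqref{theta filtration}. Since $[a-i]<i$, the relevant auxiliary index is $j=\min\{i,[a-i]\}=[a-i]$, so the target quotient is $Q([a-i]-1)$, exactly as predicted by Theorem~\ref{i not a nor p-1}. Concretely, I would first establish the exact sequence
\[
0 \to W \to Q(i) \to Q([a-i]-1) \to 0,
\]
by identifying $W = (X_{r-([a-i]-1)} + V_r^{([a-i])})/(X_{r-i}+V_r^{(i+1)})$ as a subquotient of $V_r^{([a-i])}/V_r^{(i+1)}$, using that $X_{r-i}\cap V_r^{([a-i])}$ sits inside $V_r^{([a-i])}$ appropriately and that the $X_{r-k}/X_{r-(k-1)}$ are quotients of principal series (Corollary~\ref{induced and successive}). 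The bulk of the work is then to pin down $W$ inside the explicit principal-series filtration of $V_r^{([a-i])}/V_r^{(i+1)}$, whose graded pieces $V_r^{(m)}/V_r^{(m+1)}\cong\mathrm{ind}_B^\Gamma(a^m d^{r-m})$ are understood by Lemma~\ref{induced and star} and Lemma~\ref{Breuil map}.

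Next I would carry out the case division according to the vanishing of binomial coefficients mod $p$. The key computational input is to determine which monomials of $V_r$ map into which graded pieces of the $\theta$-filtration, and the obstruction to pushing $X^{r-i}Y^i$ (or a suitable combination) down one more step in \eqref{theta filtration} is governed by a binomial coefficient; here these are $\binom{r-[a-i]+1}{i+1}$ and $\binom{r-i-1}{[a-i]}$ modulo $p$. In case (i), when $\binom{r-[a-i]+1}{i+1}\not\equiv 0$, the relevant map is surjective onto the top graded piece and one shows inductively that $X_{r-i}+V_r^{(i+1)} = X_{r-([a-i]-1)}+V_r^{([a-i])}$, forcing $W=0$. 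In case (iii), when $\binom{r-i-1}{[a-i]}\equiv 0$ but $r\not\equiv[a-i]+i\bmod p$, the obstruction persists all the way and $W$ is the full $V_r^{([a-i])}/V_r^{(i+1)}$. Case (ii) is the intricate one: here $\binom{r-[a-i]+1}{i+1}\equiv 0$ but $\binom{r-i-1}{[a-i]}\not\equiv 0$, and one must track the image one step further, at which point the internal structure of a single principal series $\mathrm{ind}_B^\Gamma(a^{r_0}d^{r-r_0})$ enters; its submodule structure (the sub/quotient being $V_{[a-2r_0]}\otimes D^{r_0}$ versus $V_{p-1-[2r_0+2-a]}\otimes D^{r_0+1}$, with the split/nonsplit dichotomy of Lemma~\ref{Breuil map}) produces the three subcases (a), (b), (c) according to the comparison of $[a-r_0]$ with $r_0+1$.

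The main obstacle I anticipate is case (ii): there one is not merely comparing $X_{r-i}$ with $X_{r-(i-1)}$ but descending two steps in the monomial filtration past the erratic behavior near $X_{r-p}$, and simultaneously resolving the extension structure of a principal series whose constituents depend delicately on $r_0$ and $a$. Keeping the bookkeeping of determinant twists consistent (every passage through a copy of $\theta$ twists by $D$, so the pieces carry $D^{r_0}$ or $D^{r_0+1}$ and one must verify these match the Frobenius-twist conventions of the earlier lemmas) is the place where sign and off-by-one errors are most likely. I would handle this by first isolating a clean lemma computing the image of the multiplication-by-$\theta^{[a-i]}$ map on the span of $X^{r-i}Y^i$ modulo $V_r^{(i+1)}$, expressing it in terms of the two binomial coefficients above, and only then feeding that into the principal-series structure theory. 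The remaining routine step, verifying the hypothesis $r\geq i(p+1)+p$ suffices for all the intermediate monomial submodules to have their generic structure (via Theorems~\ref{Structure r_0>i} and~\ref{Structure r_0<i}), I would dispatch at the end.
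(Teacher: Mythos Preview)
Your high-level outline is correct and matches the paper: one takes $j=[a-i]$ in diagram~\eqref{commutative diagram}, shows $X_{r-i}+V_r^{([a-i])}=X_{r-([a-i]-1)}+V_r^{([a-i])}$ (so the right-hand quotient is $Q([a-i]-1)$), and then identifies $W$ as the cokernel of $X_{r-i}^{([a-i])}/X_{r-i}^{(i+1)}\hookrightarrow V_r^{([a-i])}/V_r^{(i+1)}$. The binomial-coefficient conditions are exactly the interval conditions $r_0\notin\mathcal{J}(a,i)$, $r_0\in\mathcal{I}(a,[a-i])$, etc., translated via Lemma~\ref{interval and binomial}.

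Where your proposal diverges from the paper is in the mechanism for computing $W$. You propose a single ``multiplication-by-$\theta^{[a-i]}$'' lemma tracking the image of $X^{r-i}Y^i$; the paper instead determines \emph{every} successive quotient $X_{r-i}^{(j)}/X_{r-i}^{(j+1)}$ for $[a-i]\le j\le i$ separately. The key device you are missing is the reduction Lemma~\ref{reduction}: for each such $j$ one has $X_{r-i}^{(j)}/X_{r-i}^{(j+1)}=X_{r-j'}^{(j)}/X_{r-j'}^{(j+1)}$ with $j'=\max\{j,[a-j]\}$, so one only needs Proposition~\ref{singular i>r-i} (and its corollary), which is proved by exhibiting explicit polynomials $F(X,Y)\in X_{r-j'}^{(j)}$ built as $\mathbb{F}_p^\times$-averages of translates of monomials, then checking divisibility via Lemma~\ref{divisibility1} and Lemma~\ref{binomial sum}. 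In case~(ii) this produces a \emph{break-point} index $l$ near $[a-r_0]$ (not $r_0$, as you wrote): below $l$ one has $X_{r-i}^{([a-i])}/X_{r-i}^{(l)}=V_r^{([a-i])}/V_r^{(l)}$, above $l$ one has $X_{r-i}^{(l+1)}/X_{r-i}^{(i+1)}=0$, and at level $l$ the quotient $X_{r-i}^{(l)}/X_{r-i}^{(l+1)}$ is either the full principal series, its socle $V_{[a-2l]}\otimes D^l$, or nothing, according to whether $r_0=[a-l]-1$ or $r_0=[a-l]$ in Proposition~\ref{singular i>r-i}. The three subcases (a),(b),(c) record exactly this trichotomy, and the twists $D^{r_0}$, $D^{r_0+1}$ arise because the cosocle of $V_r^{([a-r_0])}/V_r^{([a-r_0]+1)}$ carries $D^{a-[a-r_0]}=D^{r_0}$. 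Your reference to Theorems~\ref{Structure r_0 >i} and~\ref{Structure r_0<i} at the end is a red herring: those structure theorems for $X_{r-i}$ itself are not used here.
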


On the other hand, when $i = a$ or $i = p-1$ we have the following theorem.

\begin{theorem}\label{i = a or p - 1}
  Let $p \geq 3$, $r \equiv a \mod (p-1)$ with $a \in \{1, 2, \ldots, p-1\}$. Let 
  $i = a$ or $p-1$. If $r \geq i(p+1)+p$, then there is an exact sequence of 
  $\Gamma$-submodules 
  \begin{eqnarray*}
     0 \rightarrow W' \rightarrow P(i) \rightarrow Q(i) \rightarrow 0,
  \end{eqnarray*} 
where $W'$ is explicitly determined by whether $r_0$ lies in an explicit 
interval of the congruence classes mod $p$ which depends only on $a$.
\end{theorem}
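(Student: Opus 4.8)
The plan is to exhibit a surjection $P(i) \twoheadrightarrow Q(i)$ and identify its kernel $W'$. Recall $P(i) = V_r/(X_{r-(i-1)} + V_r^{(i+1)})$ and $Q(i) = V_r/(X_{r-i} + V_r^{(i+1)})$; since $X_{r-(i-1)} \subset X_{r-i}$ by the first-row filtration \eqref{first row} (valid as $i \leq p-1$), the identity on $V_r$ induces the desired surjection, and
\[
  W' = \frac{X_{r-i} + V_r^{(i+1)}}{X_{r-(i-1)} + V_r^{(i+1)}}
      \cong \frac{X_{r-i}}{X_{r-(i-1)} + (X_{r-i}\cap V_r^{(i+1)})}.
\]
So the whole problem reduces to understanding the quotient $X_{r-i}/X_{r-(i-1)}$ and then modding out by the image of $X_{r-i}\cap V_r^{(i+1)}$. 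For the first piece I would invoke Corollary~\ref{induced and successive}: the quotient $X_{r-i}/X_{r-(i-1)}$ is a homomorphic image of the principal series $\mathrm{ind}_B^\Gamma(a^{r-i}d^i)$. Because $i = a$ (so $r \equiv 2i \bmod (p-1)$, as $a=[r]$ and $i=a$ forces $2i \equiv a + i \equiv r$ only when... — more carefully, $r-i \equiv a-i \equiv 0$, so the character $a^{r-i}d^i$ is, up to the determinant twist $D^i$, the trivial-type character $a^0 d^0$) or $i = p-1$ (where $a^{r-i}d^i = a^{r-(p-1)}d^{p-1}$ has $r-(p-1) \equiv a - (p-1) \equiv a+1 \bmod(p-1)$ and $d^{p-1}$ is trivial), the relevant principal series is one of the two "degenerate" ones whose submodule structure is classical (it is the reducible length-two principal series, or splits, depending on the congruence $r \equiv 2i \bmod (p-1)$, by Lemma~\ref{Breuil map}). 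This pins down a short list of possible JH factors for $X_{r-i}/X_{r-(i-1)}$, each an irreducible $V_* \otimes D^*$ or a length-two extension thereof.

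Next I would compute the contribution of $V_r^{(i+1)}$, i.e. determine the image of $X_{r-i}\cap V_r^{(i+1)}$ inside $X_{r-i}/X_{r-(i-1)}$. The key tool here is the $\theta$-filtration: $V_r^{(i+1)} \cong V_{r-(i+1)(p+1)}\otimes D^{i+1}$, and the successive quotients $V_r^{(m)}/V_r^{(m+1)} \cong \mathrm{ind}_B^\Gamma(a^m d^{r-m})$ are explicit principal series. I would intersect the known generators/structure of $X_{r-i}$ (from Theorem~\ref{Structure r_0 >i} or Theorem~\ref{Structure r_0<i}, which apply since $r \geq i(p+1)+p \geq (i+1)(p+1)$ when $i \leq p-1$... one should check this inequality, but $i(p+1)+p = (i+1)(p+1) - 1 < (i+1)(p+1)$, so actually one needs the slightly weaker hypotheses of those structure theorems, which hold in the stated range) against $\theta^{i+1}\mid F$. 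Whether $X_{r-i}\cap V_r^{(i+1)}$ already contains the "extra" JH factor of $X_{r-i}/X_{r-(i-1)}$ — thereby killing it and making $W' = 0$ — or not, comes down to a binomial-coefficient non-vanishing condition $\binom{r-?}{i} \not\equiv 0 \bmod p$, which by Lucas's theorem translates into the digit condition on $r_0$, i.e. whether $r_0$ lies in the explicit interval of residues mod $p$ depending on $a$. This is exactly the dichotomy asserted in the statement.

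The main obstacle will be the second step: carefully tracking which monomials (equivalently, which weight vectors) in $X_{r-i}$ become divisible by $\theta^{i+1}$, and translating the resulting $p$-adic digit constraints into the clean interval condition on $r_0$. This requires combining the explicit spanning sets for $X_{r-i}$ coming from the surjection $\phi_i\colon X_{r-i,\,r-i}\otimes V_i \twoheadrightarrow X_{r-i,\,r}$ with the $\theta$-divisibility test, and then applying Lucas/Kummer to the relevant binomial coefficients. The case split $i = a$ versus $i = p-1$ (and the sub-case $i = a = p-1$) will each need the appropriate form of Lemma~\ref{Breuil map} and Lemma~\ref{induced and star} to decide whether the length-two principal series involved splits; once the structure of $X_{r-i}/X_{r-(i-1)}$ is known and the $\theta^{i+1}$-image is computed, assembling $W'$ and verifying it depends only on $r_0 \bmod p$ (through $a$) is bookkeeping. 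I expect the argument to conclude by citing the already-established structure theorems for the relevant $X_{r-i}$ and reading off $W'$ in each case, so that no genuinely new computation beyond the binomial-coefficient analysis is needed.
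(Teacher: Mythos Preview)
Your setup is correct and matches the paper exactly: the surjection $P(i)\twoheadrightarrow Q(i)$ exists by the first-row filtration, and $W'\cong X_{r-i}/(X_{r-(i-1)}+X_{r-i}^{(i+1)})$ (this is the paper's exact sequence \eqref{Q and P exact sequence}). The idea of controlling $W'$ through the principal-series surjection $\psi_i$ onto $X_{r-i}/X_{r-(i-1)}$ is also the right starting point.

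The gap is in your plan for computing the image of $X_{r-i}^{(i+1)}$. The structure theorems you cite (Theorems~\ref{Structure r_0 >i} and~\ref{Structure r_0<i}) describe $X_{r-i}$ only up to abstract $M$-isomorphism (e.g., as $X_{r-i,\,r-i}\otimes V_i$); they do not carry any information about how $X_{r-i}$ sits inside the $\theta$-filtration of $V_r$, so you cannot read off $X_{r-i}\cap V_r^{(i+1)}$ from them. (Your inequality check already flags that the hypothesis $r\geq (i+1)(p+1)$ of those theorems is not even met.) The paper's route is different and more refined: one filters $W'$ itself by the chain $X_{r-(i-1)}+X_{r-i}^{(j)}$ for $0\leq j\leq i+1$, whose successive quotients are the modules $Y_{i,j}$ of \eqref{Y i,j}. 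Each $Y_{i,j}$ is simultaneously a subquotient of the principal series $V_r^{(j)}/V_r^{(j+1)}$ and of $X_{r-i}/X_{r-(i-1)}$, so a JH-factor comparison (this is the content of Lemma~\ref{reduction}) forces $Y_{i,j}=0$ unless $j\in\{0,i,[a-i]\}$. For $i=a$ this leaves only $Y_{a,0}$ and $Y_{a,a}$ (and $Y_{a,p-1}$ when $a=p-1$); for $i=p-1$ only $Y_{p-1,0}$, $Y_{p-1,a}$, $Y_{p-1,p-1}$. The remaining pieces are then pinned down not by citing earlier structure theorems but by explicit polynomial constructions: one shows that the generator $G_{i,r}(X,Y)=X^iG_{r-i}(X,Y)-\delta_{[a-i],p-1}X^r$ of $W_{i,r}$ already lies in $X_{r-(i-1)}+X_{r-i}^{(n)}$ for a specific $n$ (Lemmas~\ref{i=a smaller quotients}, \ref{i=a, p-1 quotients}, \ref{i=p-1 exceptional case}, \ref{i=p-1, full}), and the depth $n$ is governed by whether $\binom{r-i}{[a-i]}\equiv 0\bmod p$, which via Lucas becomes the interval condition on $r_0$. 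These polynomial computations are the substantive new work (Propositions~\ref{singular i=a}, \ref{singular i=p-1}); they are not consequences of the structure of $X_{r-i}$ as an abstract module.

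A smaller point: for $i=a$ the kernel $W'$ is never zero (it always contains $Y_{a,0}\cong V_{p-1-a}\otimes D^a$), so the dichotomy is not ``$W'=0$ or not'' but rather whether an extra factor $V_a$ (or $V_{p-1}$ when $a=p-1$) appears on top; only for $i=p-1$, $a<p-1$ can $W'$ vanish.
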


\noindent Explicit versions of this theorem can be found in Theorem~\ref{Structure of Q(i) if i = a} for $i = a$ and
Theorem~\ref{Structure of Q(i) if i = p - 1} for $i = p-1$. 

The proof of Theorem~\ref{i not a nor p - 1} uses several auxiliary results 
which may be of independent interest. 
The key point is to determine $W$ explicitly in the exact sequence \eqref{intro i vs j-1}. To this end, we note that there is
an exact sequence 
(this is the first column in \eqref{commutative diagram} below)
\begin{equation}
\label{W exact sequence intro}
  0  \rightarrow X_{r-i}^{(j)}/X_{r-i}^{(i+1)} \rightarrow 
  V_{r}^{(j)}/V_{r}^{(i+1)} \rightarrow W \rightarrow 0,
 \end{equation}
for $0 \leq j \leq i \leq p-1$, so the proof of Theorem~\ref{i not a nor p - 1}
reduces to determining the quotients 
\begin{equation}\label{successive quotients intro}
 X_{r-i}^{(j)}/X_{r-i}^{(j+1)},
\end{equation}
for all $0 \leq i, j \leq p-1$. A novel argument involving principal series 
representations coming from the $R$-valued points of $\mathrm{GL}_2$, where 
$R$ is the ring of dual numbers $\f[\epsilon]$ with $\epsilon^2 = 0$ and higher 
generalizations of this ring, allows us to show that $Q(i)$ and all the terms in 
\eqref{W exact sequence intro} and \eqref{successive quotients intro} 
are periodic in $r$ modulo $p(p-1)$ (\Cref{arbitrary quotient periodic}). 
Determining the quotients \eqref{successive quotients intro} in the case 
$j = 0$ is easy (\Cref{Structure X(1)}). One may reduce the case of a given 
$j \geq 1$ to three  special values of $i$, namely $i = j$, $[a-j]$ and $0$ 
(\Cref{reduction}). The subcase $i = 0$ is treated in 
Proposition~\ref{singular quotient X_{r}}. The subcases
$j = i$ and $[a-i]$ are treated in Propositions~\ref{singular quotient i < [a-i]},
\ref{singular i= [a-i]} and \ref{singular i>r-i}, by dividing our discussion
into the three cases mentioned above, namely (i) $i < [a-i]$, (ii) $i = [a-i]$ and (iii) $i > [a-i]$. 
The answers are determined by explicit intervals of the congruence classes modulo $p$.
This determines $W$ explicitly in terms of these intervals. 
By the exact sequence \eqref{intro i vs j-1}, we obtain 
the structure of $Q(i)$ in terms of $W$  and $Q(j-1)$ as in Theorems~\ref{Structure of Q(i) if i<[a-i]}, \ref{Structure of Q i=[a-i]}
and \ref{Structure of Q(i) i>[a-i]}. This proves Theorem~\ref{i not a nor p - 1}. The more explicit versions,  
Theorems~\ref{intro i < [a-i]}, \ref{intro i = [a-i]} and \ref{intro i > [a-i]},
follow immediately using a criterion for membership (or lack thereof) in these intervals in terms of the vanishing
(or non-vanishing) of certain binomial coefficients mod $p$ (cf. Lemma~\ref{interval and binomial}). 
The proof of Theorem~\ref{i = a or p - 1} is simpler and uses similar ideas.
The module $W'$ is determined in Propositions~\ref{singular i=a} and 
\ref{singular i=p-1}, proving the theorem (see 
Theorems~\ref{Structure of Q(i) if i = a}, \ref{Structure of Q(i) if i = p - 1}).

We now illustrate how (the explicit versions) of Theorem~\ref{i not a nor p - 1}
and \ref{i = a or p - 1} above can  
in principle be used to recursively 
determine all the JH factors of $Q(i)$, for all $1 \leq i \leq p-1$, reducing the computation to $Q(0)$ or $Q(1)$.
This also allows us to introduce the explicit intervals mentioned above.  
We first treat the case $i \neq a$, $p-1$. 
We divide our discussion according to the three cases (i), (ii), (iii) mentioned above. 

(i) Assume that $a$ is strictly larger than $2i$, that is, $i < a-i = [a-i]$, so $i < a$. 
By Theorem~\ref{i not a nor p - 1}, we see that
$Q(i)$ is determined in terms of $W$ and $Q(i-1)$.
In this case  
the interval of residue classes modulo $p$ mentioned in the statement of Theorem~\ref{i not a nor p - 1} 
is $${\mathcal I}(a,i) = \{a-i+1, a-i+2, \ldots, a-1,a \},$$
and $W$ is all of (respectively, the cosocle of) $V_r^{(i)}/V_r^{(i+1)}$ if $r_0 \in  {\mathcal I}(a,i)$
(respectively, if $r_0 \not\in {\mathcal I}(a,i)$). Indeed, the interval above is precisely 
the residue classes of $r$ modulo $p$ for which the binomial coefficient in Theorem~\ref{intro i < [a-i]} vanishes modulo $p$. 
Applying Theorem~\ref{intro i < [a-i]} recursively, we see that $Q(i)$ has 
all the cosocle JH factors of $V_r^{(j)}/V_r^{(j+1)}$, for $0 \leq j \leq i_0-1$, and all the JH factors
of $V_r^{(i_0)}/V_r^{(i + 1)}$, where $1 \leq i_0 \leq i$ is the smallest integer such that $r_0 \in {\mathcal I}(a,i_0)$ if it exists,
else $i_0 = i+1$.  
%
%
If $i < [a-i]$, but with $i > a$ instead, 
then $i - 1$ still satisfies these inequalities if $i-1 > a$, so we can again recursively apply Theorems~\ref{i not a nor p - 1} and \ref{intro i < [a-i]}, 
this time with the interval
$$\mathcal{I}(a,i) = \{a, a+1, \ldots, [a-i]-1, [a-i] \}^c,$$
where $c$ denotes the complement in the residue classes $\{0,1,\ldots,p-1\}$ mod $p$,
to determine the JH factors of $Q(i)$ in terms of the $W$ and $Q(a)$. We may   
then apply Theorem~\ref{i = a or p - 1} to determine the JH factors of $Q(a)$ and therefore of $Q(i)$.

(ii) When $i \geq [a-i]$ with $i \neq a$, $p-1$, we also need to consider the intervals of residue classes modulo $p$ 
\begin{eqnarray*}
  \mathcal{J}(a,i) =  \begin{cases}
                         \{a-i-1, a-i, \ldots, a-2, a-1\}, & \text{if } i < a, \\ 
                         \{a-1, a, \ldots, [a-i]-3, [a-i]-2\}^c, & \text{if  } i > a.
                      \end{cases}
\end{eqnarray*}
If we are at the boundary of the cases treated in (i), namely $i = [a-i]$, with $i \neq a$, $p-1$, then
it is the interval $\mathcal{J}(a,i)$ that plays a role in determining $W$ in Theorem~\ref{i not a nor p - 1},
since $\mathcal{J}(a,i)$ is precisely the residue classes of $r$ modulo $p$ for which the binomial coefficient in Theorem~\ref{intro i = [a-i]} 
vanishes modulo $p$.
Applying Theorem~\ref{intro i = [a-i]}, 
we are reduced to determining the structure of $Q(i')$ with $i'=i-1$.
If $i'=0$, we are done, else $i'$ satisfies $1 \leq i' < [a-i']$ and we can apply the arguments in (i) to determine $Q(i')$, 
unless $i' = a$, in which case we apply Theorem~\ref{i = a or p - 1} instead.

(iii) Finally, if $[a-i] < i < p-1$ (the hardest case), then 
$j = [a-i]$ in Theorem~\ref{i not a nor p - 1}, and 
both the intervals ${\mathcal I}(a,[a-i])$ 
and $\mathcal{J}(a,i)$ (along with the size of $[a-r_0]$ compared to $r_0$, in some cases) 
play a role in determining $W$, by Theorem~\ref{intro i > [a-i]}. 
So we are reduced to determining $Q(i')$, for $i' = [a-i]-1$. As in case (ii), if $i'=0$ we are done, else $i'$ satisfies 
$1 \leq i' < [a-i']$, and we are reduced to case (i), unless $i' = a$, in which
case we apply Theorem~\ref{i = a or p - 1} instead.

We now make some remarks about determining the JH factors of $Q(i)$ when 
$i = a$ and $p-1$. If $i = a$, then Theorem~\ref{i = a or p - 1} determines 
$W'$, so the JH factors of $Q(a)$ can be determined from those of $P(a)$, 
hence by what we have said above, from those of $Q(a-1)$. If $a = 2$, we are
reduced to $Q(1)$ and are done, else $i' = a-1$ satisfies $[a-i']<i'$, so 
applying Theorem~\ref{i not a nor p - 1} with $j = \min\{i',[a-i']\} = 1$, we 
are reduced to $Q(0)$. Finally, if $i = p-1$ and $i \neq a$, then $Q(p-1)$ 
is determined by $P(p-1)$ and $W'$ by Theorem~\ref{i = a or p - 1}, hence by 
$Q(p-2)$, hence by $Q(a)$ if $a = p-2$, and again by $Q(a)$ if $a \leq p-3$,
applying Theorem~\ref{i not a nor p - 1} with $j = \min\{p-2, [a-(p-2)]\} = a +1$. 
But we have just determined the JH factors of $Q(a)$ in all cases, so we are again 
done.

As an example of the strategy outlined above, we now determine all cases for 
which the quotient $Q(i)$ is irreducible, for $1 \leq i \leq p-1$
(see Theorem~\ref{irreducible Q(i)}).

\begin{theorem}\label{irreducible}
  Let $p \geq 3$, $1 \leq i \leq p-1$, $r \geq i(p+1)+p$, $r \equiv a \mod (p-1)$ 
  with $a \in \{1, 2, \ldots, p-1\}$ and let $r_0$ be the constant term in the 
  base $p$-expansion of $r$. Then the quotient $Q(i)$ of $V_r$ is irreducible 
  if and only if either
  \begin{itemize}
     \item $i = a-1$ or $a$, and $r_0 \in \{a, a+1, \ldots p-1\}$, or, 
     \item $i = p-1$, $a = 1$ and $r_0 = 0$.
  \end{itemize}
\end{theorem}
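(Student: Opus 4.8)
The plan is to apply the recursive machinery summarized in the introduction, tracking at each step exactly which Jordan--H\"older factors survive and when the subquotient $W$ (or $W'$) becomes trivial. The base cases are known: $Q(0)$ is irreducible by \cite{BG09}, and the JH structure of $Q(1)$ is known by \cite{BG15}, so $Q(1)$ is irreducible precisely when the extra factor coming from $V_r^{(1)}/V_r^{(2)}$ fails to appear, which happens exactly when $r_0$ lies in the relevant interval --- this will recover the $i=1$ instance of the first bullet (note $a-1$ or $a$ equals $1$ forces $a\in\{1,2\}$, and one checks the interval condition $r_0 \in \{a,\dots,p-1\}$ matches). So the real content is the inductive step for $2 \le i \le p-1$.

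First I would dispose of the cases $i = a$ and $i = p-1$ using Theorem~\ref{i = a or p - 1}: there $Q(i)$ sits in $0 \to W' \to P(i) \to Q(i) \to 0$, and separately $P(i)$ is built from $Q(i-1)$ via the exact sequence \eqref{Q i-1 and P i}. For $Q(i)$ to be irreducible we need $Q(i-1)$ to contribute exactly one factor (or for the $P(i)\to Q(i)$ map to collapse everything else), and the explicit description of $W'$ in Theorems~\ref{Structure of Q(i) if i = a} and \ref{Structure of Q(i) if i = p - 1} tells us precisely for which $r_0$ this occurs. This should isolate: $i = a$ with $r_0 \in \{a,\dots,p-1\}$ (second part of bullet one), and $i = p-1$ which, combined with $i \ne a$ forcing a further descent to $Q(a)$, only survives with a single factor when $a = 1$ and $r_0 = 0$ (bullet two) --- here one must chase that $Q(a) = Q(1)$ is itself irreducible and that no new factor is introduced at any intermediate stage, which pins down $r_0 = 0$.

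Next, for $i \ne a, p-1$ I would run the trichotomy (i)/(ii)/(iii) from the introduction. In case (i) with $i < a$ (so $i < [a-i]$), Theorem~\ref{intro i < [a-i]} gives $0 \to W \to Q(i) \to Q(i-1) \to 0$ with $W = V_{p-1-[a-2i]}\otimes D^{a-i}$ (a nonzero module, hence a JH factor) unless $\binom{r-[a-i]-1}{i}\equiv 0$, i.e. unless $r_0 \notin \mathcal{I}(a,i)$, in which case $W$ is all of $V_r^{(i)}/V_r^{(i+1)}$ (even more factors). So irreducibility of $Q(i)$ forces $W = 0$, which never happens in case (i) when $i \ge 2$ --- \emph{except} that if $i - 1 = a - 1 = $ \dots one must be careful: actually the clean statement is that in case (i) with $i < a$, $W$ is always nonzero, so $Q(i)$ has strictly more factors than $Q(i-1) \supseteq Q(0)$, forcing $i \le 1$; hence no new irreducible $Q(i)$ arises here for $i \ge 2$. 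The subcase $i < [a-i]$ but $i > a$ reduces via Theorem~\ref{i = a or p - 1} to $Q(a)$ with the complementary interval, and similarly one argues the extra factor $W$ is present unless we are in the already-found situation. Cases (ii) $i = [a-i]$ and (iii) $[a-i] < i$ are handled the same way: Theorems~\ref{intro i = [a-i]} and \ref{intro i > [a-i]} give $W = 0$ only for very restricted $r_0$ (the binomial $\binom{r-i+1}{i+1}$ resp. $\binom{r-[a-i]+1}{i+1}$ nonzero mod $p$), and in those cases one descends to $Q(i-1)$ resp. $Q([a-i]-1)$ and iterates; irreducibility propagates only when at \emph{every} stage $W$ vanishes and the terminal module is $Q(0)$ or an irreducible $Q(1)$. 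Bookkeeping which $(i,a,r_0)$ allow an unbroken chain of vanishings down to the base case is exactly what produces the condition ``$i = a-1$, $r_0 \in \{a,\dots,p-1\}$'': when $i = a - 1$ we have $[a-i] = 1$, so case (iii) applies with $j = 1$, we descend directly to $Q(0)$, and $W = 0$ iff the relevant binomial (and, via the intervals $\mathcal{I}(a,1)$, $\mathcal{J}(a,a-1)$) conditions hold, which unwind to $r_0 \in \{a, a+1,\dots,p-1\}$.

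The main obstacle will be the third case, $[a-i] < i < p-1$, where Theorem~\ref{intro i > [a-i]} has a genuinely three-way nested structure in part (ii) depending on the comparison of $[a-r_0]$ with $r_0 + 1$, and one must verify that in \emph{none} of subcases (ii)(a),(b),(c) can $W$ be zero or even a single irreducible --- and that subcase (iii) of that theorem ($\binom{r-i-1}{[a-i]}\equiv 0$, $r \not\equiv [a-i]+i \bmod p$) gives $W = V_r^{([a-i])}/V_r^{(i+1)}$, which has length $\ge i + 1 - [a-i] \ge 2$ factors since $[a-i] < i$, hence is never irreducible. The only surviving possibility in case (iii) is part (i) of Theorem~\ref{intro i > [a-i]}, $W = 0$, which requires $\binom{r-[a-i]+1}{i+1}\not\equiv 0 \bmod p$; translating this (via Lemma~\ref{interval and binomial}) into a condition on $r_0$ and then checking it is compatible with irreducibility of $Q([a-i]-1)$ --- which recursively lands us back in case (i) or at $Q(0)$ --- is where the careful interval arithmetic lives. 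I would organize this as a downward induction on the "reduction depth" and at each step record the multiset of JH factors as a disjoint union $(\text{factors of } W) \sqcup (\text{factors of the smaller }Q)$, so that irreducibility is equivalent to: $W = 0$ at every step and the terminal quotient is irreducible; then the theorem's two bullets are exactly the enumeration of starting data $(i, a, r_0)$ for which this holds, with the equivalence of the $r_0$-condition and the vanishing conditions supplied by Lemma~\ref{interval and binomial}.
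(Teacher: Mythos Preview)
Your overall strategy matches the paper's: recursively unwind $Q(i)$ using the explicit Theorems~\ref{intro i < [a-i]}, \ref{intro i = [a-i]}, \ref{intro i > [a-i]} and Theorem~\ref{i = a or p - 1}, and check that the intermediate pieces $W$ (or $W'$) vanish exactly in the listed cases. The paper organizes this as a sequence of short lemmas in \S4.3: first $Q(i)\neq 0$ for $0\le i<p-1$; then $Q(i)$ reducible for $1\le i<p-2$, $i\neq a-1,a$; then the two cases $i=a-1,a$ via Lemma~\ref{irred Q(a), Q(a-1)}; then $i=p-2$ and $i=p-1$ separately. Your version is essentially a repackaging of this. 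Two points need correction.

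First, you never establish that $Q(j)\neq 0$ for $0\le j<p-1$. Your argument in case (i) reads ``$W$ is always nonzero, so $Q(i)$ has strictly more factors than $Q(i-1)$'', but this only gives reducibility if $Q(i-1)\neq 0$. The paper proves this nonvanishing as a standalone lemma (using that $X_{r-i}^{(j)}/X_{r-i}^{(j+1)}\subsetneq V_r^{(j)}/V_r^{(j+1)}$ for $j=0$ or $j=a$), and you should insert it.

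Second, and more seriously, your sketch of the case $i=p-1$, $a=1$, $r_0=0$ is wrong. You write that one must ``chase that $Q(a)=Q(1)$ is itself irreducible'', but in fact $Q(1)$ is \emph{not} irreducible here: by your own first bullet, $Q(1)$ is irreducible for $a=1$ only when $r_0\in\{1,\dots,p-1\}$, so for $r_0=0$ one has $Q(1)\cong V_r^{(1)}/V_r^{(2)}$ of length two. The correct mechanism, which the paper carries out explicitly, is that Theorem~\ref{Structure of Q(i) if i = p - 1}(ii) gives $W'\cong V_r^{(1)}/V_r^{(2)}$ (also length two), while $P(p-1)$ sits in $0\to V_{p-2}\otimes D\to P(p-1)\to Q(p-2)\to 0$ with $Q(p-2)\cong Q(1)\cong V_r^{(1)}/V_r^{(2)}$; so $P(p-1)$ has three JH factors and the two coming from $W'$ cancel, leaving $Q(p-1)\cong V_{p-2}\otimes D$. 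The irreducibility here comes from a length-two \emph{cancellation}, not from $Q(1)$ being irreducible. Your parenthetical ``or for the $P(i)\to Q(i)$ map to collapse everything else'' is exactly the right idea, but you then abandon it for the incorrect claim.
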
  
This result is of special number theoretic interest since it immediately
solves the reduction problem for the Galois representations mentioned above 
in the cases that $Q(i)$ is irreducible and does not have dimension $p-1$. 
We state the result, assuming some familiarity with the notation.
Let $k \geq 2$ be an integer and $a_p \in \bar{\Q}_p$ have positive $p$-adic 
valuation $v(a_p)$, where $v$ is normalized so that $v(p) = 1$. Let $V_{k,a_p}$ 
be the unique two-dimensional $p$-adic crystalline representation 
defined over $\bar{\Q}_p$ of the Galois group of $\Q_p$ attached to this data, 
having Hodge-Tate weights $(0, r+1)$, for $r =k-2$, and slope $v(a_p)>0$.
\begin{corollary} \label{cor reduction}
  Let $r = k-2 \equiv a \mod (p-1)$, for $a \in \{1, 2, \ldots, p-1\}$, 
  and assume that the constant term $r_0$ lies in the range 
  $\{a, a+1, \ldots, p-1 \}$.
  If the slope $v(a_p)$ is fractional, with either
  \begin{itemize}
    \item[$\bullet$] $v(a_p) \in (a-1, a)$ for $2 \leq a \leq p-1$, or
    \item[$\bullet$] $v(a_p) \in  (a, a+1)$ for $a \neq p-2$,
    \end{itemize}
  then the reduction of the crystalline  representation $\bar{V}_{k,a_p}$ of $V_{k,a_p}$ is {\em irreducible}. 
\end{corollary}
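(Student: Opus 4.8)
The plan is to deduce Corollary~\ref{cor reduction} from Theorem~\ref{irreducible} together with the mod $p$ Local Langlands Correspondence and the dictionary relating reductions of $V_{k,a_p}$ to the $\Gamma$-modules $P(i)$ and $Q(i)$, following \cite[Remark 4.4]{BG09}. First I would recall that $\bar V_{k,a_p}$ is computed, via compatibility of mod $p$ and $p$-adic Local Langlands, from the reduction mod $p$ of a lattice in the locally algebraic representation $\sym^r \bar\Q_p^2 \otimes (\text{unramified twist})$ of $\gl_2(\Q_p)$; in the setup of \cite{BG09} the relevant mod $p$ object is built from $\sym^r \f^2 = V_r$ by quotienting out the submodule of $\gl_2(\Z_p)$-invariant (equivalently, $\Gamma$-invariant) vectors that are killed by the relevant power of the Hecke operator, and these quotients are exactly $P(i)$ for integral slope $i = v(a_p)$ and $Q(i)$ for fractional slope in $(i,i+1)$. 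The key mechanism is that if this quotient of $V_r$ is \emph{irreducible} as a $\Gamma$-module and its Barsotti--Tate/LLC image corresponds to a supersingular (equivalently supercuspidal) representation, then $\bar V_{k,a_p}$ is irreducible; irreducible $\Gamma$-modules of dimension $\neq p-1$ give rise, under the mod $p$ LLC for $\gl_2(\Q_p)$, to irreducible (supersingular) Galois representations, whereas the $(p-1)$-dimensional Steinberg-type constituent is the one that can produce a reducible (principal series) Galois representation — this is why the hypothesis excludes dimension $p-1$.

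Second I would match the hypotheses. By hypothesis $r \equiv a \bmod (p-1)$ with $a \in \{1,\dots,p-1\}$ and $r_0 \in \{a, a+1, \dots, p-1\}$. For fractional slope $v(a_p) \in (i, i+1)$ the relevant quotient is $Q(i)$ (by \cite[Remark 4.4]{BG09}), and for the two listed slope ranges we have $i = a-1$ (when $v(a_p) \in (a-1,a)$) or $i = a$ (when $v(a_p) \in (a,a+1)$). In both cases $i \in \{a-1, a\}$ and $r_0 \in \{a,\dots,p-1\}$, so Theorem~\ref{irreducible} applies and tells us $Q(i)$ is irreducible. The bound $r \geq i(p+1)+p$ needed in Theorem~\ref{irreducible} is implied, for the slopes under consideration, by $k \geq 2$ being large enough relative to the slope; more precisely one notes $v(a_p) < i+1$ forces, via the Hodge polygon lying above the Newton polygon (admissibility), a lower bound $r + 1 \geq$ something in terms of $i$ and $p$, and one checks this gives $r \geq i(p+1) + p$ in each case (the side conditions $2 \leq a \leq p-1$ in the first bullet and $a \neq p-2$ in the second are there precisely to stay inside the irreducibility range of Theorem~\ref{irreducible} and away from degenerate boundary cases). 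I would then verify that the irreducible $Q(i)$ occurring here has dimension $\neq p-1$: from the explicit description of the JH factors of $Q(i)$ summarized in the introduction (and the dimension formulas in the Corollaries following Theorems~\ref{Structure r_0 >i} and \ref{Structure r_0<i}), an irreducible $Q(i)$ with $r_0 \in \{a, \ldots, p-1\}$ and $i \in \{a-1,a\}$ is of the form $V_m \otimes D^n$ with $0 \le m \le p-2$, hence has dimension $m+1 \le p-1$, and one checks strict inequality (the value $m = p-2$, giving dimension $p-1$, is exactly the excluded Steinberg situation, which is why it does not arise under our hypotheses — this uses $i \ne p-1$, or in the second bullet $a \ne p-2$).

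Finally I would invoke the mod $p$ Local Langlands Correspondence: an irreducible $\Gamma = \gl_2(\f)$-module of dimension strictly less than $p-1$ (equivalently, not the twist of $\sym^{p-2}$, which is the "Steinberg $\otimes$ character" case via $\gl_2(\Z_p) \twoheadrightarrow \gl_2(\f)$) inflates and compact-induces to a supersingular representation of $\gl_2(\Q_p)$, whose associated Galois representation under the semisimple mod $p$ LLC is irreducible. Feeding this into the Breuil--Paškūnas/BG machinery that computes $\bar V_{k,a_p}$ as the LLC-image of the appropriate subquotient of $\sym^r$, and using that $Q(i)$ is precisely that subquotient in the fractional-slope case, we conclude $\bar V_{k,a_p}$ is irreducible. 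The main obstacle is bookkeeping rather than conceptual: one must carefully check, case by case over the two slope ranges, that (a) $r$ satisfies the numerical hypothesis $r \geq i(p+1)+p$ of Theorem~\ref{irreducible}, which requires translating the slope bound $v(a_p) < i+1$ into a lower bound on $r$ via weak admissibility of $V_{k,a_p}$, and (b) the resulting irreducible $Q(i)$ genuinely has dimension $< p-1$ so that the LLC output is supersingular rather than a principal series — both of which are where the excluded boundary values $a = 1$ (first bullet) and $a = p-2$ (second bullet) and $i \ne p-1$ enter.
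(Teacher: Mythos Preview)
Your plan matches the paper's: Theorem~\ref{irreducible} plus the mod~$p$ Local Langlands machinery of \cite[Remark~4.4]{BG09}, with the dimension-$(p-1)$ exclusion handled by the side conditions on $a$. The paper's own ``proof'' is just the remark following the corollary, so at the level of strategy you are aligned.

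Two of your supporting arguments are wrong, however. First, weak admissibility cannot produce the bound $r \geq i(p+1)+p$: it only gives $v(a_p) \leq k-1 = r+1$, hence $r > i-1$, which is far too weak. There is no Hodge--Newton argument that gets you a bound involving $p$; the inequality $r \geq i(p+1)+p$ is simply an implicit standing hypothesis inherited from Theorem~\ref{irreducible}, and the paper does not argue it either. Second, your dimension check is hand-wavy where it can be one line: by Lemma~\ref{irred Q(a), Q(a-1)} one has $Q(a-1) \cong V_{p-1-a}\otimes D^{a}$ of dimension $p-a$, and $Q(a)\cong V_{a}$ of dimension $a+1$, so the conditions $a\geq 2$ (first bullet) and $a\neq p-2$ (second bullet) are exactly what force dimension $\neq p-1$. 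A terminological correction: $V_{p-2}$ is not ``Steinberg-type'' --- the Steinberg representation of $\Gamma$ is $V_{p-1}$, of dimension $p$. The obstruction at dimension $p-1$ is not that the automorphic side becomes principal series, but that in this borderline case the \cite{BG09} method only yields the conditional statement recorded by the paper in the sentence after the corollary.
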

\noindent In fact, one checks that the reduction $\bar{V}_{k,a_p}$ is 
isomorphic to the induced representation $\ind(\omega_2^{a+1})$, where $\omega_2$ 
is the fundamental character of level $2$ of the Galois group of
the quadratic unramified extension of $\Q_p$. When $i = a = p-2$ or $i = p-1$, 
$a = 1$, the quotient $Q(i)$ is irreducible but has dimension $p-1$ and one may 
only conclude that $\bar{V}_{k,a_p} \cong  \ind(\omega_2^{a+1})$ {\it if} it is 
irreducible. 
%
\section{Preliminaries}

The aim of this section is to recall some basic results concerning the 
symmetric power representations $V_{r}$ and the principal series representations
of $\gl_{2}(\f)$, and to prove some explicit results involving binomial 
coefficients in characteristic $p$. 

\textbf{Notation}: We fix a prime number $p$. We write $\Q_p$ (resp. $\Z_p$) 
for the $p$-adic completion of $\Q$ (resp. $\Z$), $\f$ for the field with 
$p$ elements, $\bar{\mathbb{F}}_{p}$ for a fixed algebraic closure of $\f$. 
We let $M := \mathrm{M}_{2}(\f)$, $\Gamma := \gl_{2}(\mathbb{F}_{p})$, 
$B \subset \Gamma$ the subgroup of upper triangular matrices, $U \subset B$ 
the subgroup of unipotent matrices and $H \subset B $ the subgroup of diagonal
matrices. 

For a positive integer $r$, let $\Sigma_{p}(r)$ denote the sum of digits in
the base $p$-expansion of $r$. It is easy to see that 
$\Sp(r) \equiv r \mod (p-1)$, for every $r \in \mathbb{N}$.
Also $\Sigma_{p}(p^{n}r) = \Sigma_{p}(r)$, for all $n$, $r \geq 0$ and 
$\Sigma_{p}(r-1) = \Sigma_{p}(r)-1$ if $p \nmid r$. We will be considering 
the base $p$-expansion of $r$ quite often which we denote by
\begin{align} \label{base p expansion of r}
       r= r_{m}p^{m}+ \cdots +r_{1}p+r_{0},
\end{align}
where $r_{m} \neq 0$ and $0 \leq r_{j} \leq p-1$. The constant term $r_0$
and the linear term $r_1$ will play key roles. 

For $n \in \mathbb{Z}$, define $[n] \in \lbrace 1,\ldots, p-1 \rbrace$ by 
$n \equiv [n] $ mod ($p-1$). Note that $[[m]-[n]] = [m-[n]] = [[m]- n] = [m-n]$,
$\forall$ $m$, $n \in \mathbb{Z}$. We finally recall the Kronecker delta 
function: if $S$ is any set, and $s_{1}$, $s_{2} \in S$, then  we define
\begin{align*}
    \delta_{s_{1},s_{2}} = 
    \begin{cases}
       0,  &\mathrm{if} ~ s_{1} \neq s_{2}, \\
       1,  &\mathrm{if} ~ s_{1}  = s_{2}.
    \end{cases}
\end{align*}
Let $V_{r} $ denote the space of homogeneous polynomials $F(X,Y)$ of degree 
$r$ in two variables $X$, $Y$ with coefficients in $\f$. The semigroup $M$
acts on $V_{r}$ by $\begin{psmallmatrix} a & b \\ c & d\end{psmallmatrix} 
\cdot F(X,Y)= F(aX+cY, bX+dY)$, for  
$\begin{psmallmatrix} a & b \\ c & d\end{psmallmatrix} \in M$. 
An $\f[M]$-module $V$ is called {\it singular} if every singular matrix
$t \in M$ annihilates $V$, i.e., if $t \cdot V =0$, $\forall$ 
$t \in M \smallsetminus \Gamma$. The largest singular 
submodule of an arbitrary $\f[M]$-module $V$ is denoted by $V^{\ast}$. Let 
$D : \Gamma \rightarrow \f^\ast$ denote the determinant character of $\Gamma$.
Recall the Dickson invariant 
\[ 
   \theta := X^{p}Y-XY^{p} = -X \cdot \prod\limits _{\lambda \in \f}  
   (Y- \lambda X) \in V_{p+1} 
\]
on which $\Gamma$ acts by $D$. Also, for each $m \in \mathbb{N}$, define
\[
   V_{r}^{(m)} = \{ F(X,Y)  \in V_{r} : \theta^{m} \text{ divides }  F(X,Y) 
   \text{ in } \mathbb{F}_{p}[X,Y] \},
\]
so that $ V_{r} \supseteq V_{r}^{(1)} \supseteq V_{r}^{(2)} \supseteq \cdots $ 
is a chain of $\Gamma$-modules of length $\lfloor \frac{r}{p+1}\rfloor +1$. By
\cite[(4.1)]{Glover}, we have $V_{r}^{\ast} =V_{r}^{(1)}$ and
$ V_{r}^{(m)} \cong V_{r-m(p+1)} \otimes D^{m}$, for all $m \in \mathbb{N}$.

\subsection{Modular representations of  \texorpdfstring{$M$ and $\Gamma$}{}}

\subsubsection{Results on \texorpdfstring{$V_{r}$}{}.} 
\label{prelim}
Let $X_{r-i,\,r}$ be the $\mathbb{F}_{p}[\Gamma]$-submodule of $V_{r}$ generated 
by the monomial $X^{r-i} Y^{i}$, for $0 \leq i \leq r$. The representations 
$V_r$ were studied by Glover \cite{Glover}. In this subsection, we recall a 
few results from \cite{Glover} and \cite{BG15} about $V_{r}$ and its 
$\Gamma$-submodules $X_{r,\,r}$ and $X_{r-1,\,r}$. One has to be careful with
notation when using the results of \cite{Glover} as Glover indexed the 
symmetric power representations by dimension instead of by the degree of the 
polynomials involved. 

We start with the following well-known result describing the irreducible 
representations of $\Gamma$ (see \cite{BN41}). These representations form the 
Jordan-H\"older (JH) factors of the various representations of $\Gamma$ studied
later. 
\begin{lemma} 
   If $0 \leq r \leq p-1$ and $1  \leq j \leq p-1$, then $V_{r} \otimes D^{j}$ is 
   an irreducible $\Gamma $-module. In fact these $p(p-1)$ modules are the
   set of all irreducible $\Gamma$-modules.
\end{lemma}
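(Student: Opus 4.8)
The plan is to prove the lemma in three steps: (i) irreducibility of $V_r$ itself for $0 \le r \le p-1$; (ii) irreducibility of the twists $V_r \otimes D^j$ together with their pairwise non-isomorphism; and (iii) completeness of the list. For (i), let $W \subseteq V_r$ be a nonzero $\Gamma$-submodule. Since $\f$ contains all $(p-1)$st roots of unity, $\f[H]$ is semisimple and split, so $W$ is the direct sum of its $H$-weight spaces; the weight space of $V_r$ for the character $\diag(a,d) \mapsto a^{r-i}d^i$ is spanned by $X^{r-i}Y^i$, and these characters are pairwise distinct for $0 \le i \le r$ except that $X^{p-1}$ and $Y^{p-1}$ share the trivial weight when $r = p-1$. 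Picking a nonzero weight vector in $W$ and, if necessary, applying $\left(\begin{smallmatrix}1&0\\1&1\end{smallmatrix}\right)$ once to push it into a regular weight space, I may assume $W$ contains a monomial $X^{r-i}Y^i$ with $i$ not both $0$ and $p-1$. Then $\left(\begin{smallmatrix}1&1\\0&1\end{smallmatrix}\right)\cdot X^{r-i}Y^i = X^{r-i}(X+Y)^i$ has $X^r$ as its unique term of maximal $X$-degree, so projecting onto that weight space gives $X^r \in W$. Finally $\left(\begin{smallmatrix}1&0\\1&1\end{smallmatrix}\right)\cdot X^r = (X+Y)^r = \sum_{i=0}^r \binom{r}{i}X^{r-i}Y^i$ with every $\binom{r}{i}$ a unit of $\f$ (Lucas, or directly, since $r < p$); projecting onto each regular weight space and subtracting the known $X^r$ from the trivial one shows all monomials lie in $W$, so $W = V_r$.

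For (ii), since $D$ is one-dimensional, $-\otimes D^j$ is an auto-equivalence of $\f[\Gamma]$-modules preserving the submodule lattice, so $V_r \otimes D^j$ is irreducible. Step (i) shows the $U$-fixed subspace of $V_r$ is the line $\langle X^r\rangle$, on which $H$ acts by $\diag(a,d)\mapsto a^r$; as $D$ is trivial on $U$, the line $(V_r\otimes D^j)^U$ has $H$-character $\diag(a,d)\mapsto a^{r+j}d^j$, while $\dim(V_r\otimes D^j) = r+1$. An isomorphism $V_r\otimes D^j \cong V_{r'}\otimes D^{j'}$ equates both invariants: the $d$-exponent gives $j \equiv j' \bmod (p-1)$, hence $j = j'$ since $1 \le j,j' \le p-1$; then $r \equiv r' \bmod (p-1)$, and equality of dimensions rules out the only remaining possibility $\{r,r'\} = \{0,p-1\}$, so $r = r'$. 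Thus the $p(p-1)$ modules $V_r\otimes D^j$ are pairwise non-isomorphic.

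For (iii), running Step (i) verbatim over $\bar{\f}$ shows each $V_r$, hence each $V_r\otimes D^j$, is absolutely irreducible; being rational over $\f$, these $p(p-1)$ modules exhaust the irreducible $\bar{\f}[\Gamma]$-modules as soon as one knows there are exactly $p(p-1)$ of them, and then (by Galois descent, since they are already defined over $\f$) also the irreducible $\f[\Gamma]$-modules. By Brauer--Nesbitt the number of irreducible $\bar{\f}[\Gamma]$-modules is the number of $p$-regular (i.e. semisimple) conjugacy classes of $\Gamma = \g{2}{\f}$, namely the $p-1$ central ones, the $\binom{p-1}{2}$ split regular semisimple ones, and the $\tfrac{p(p-1)}{2}$ elliptic ones, for a total of $(p-1)+\binom{p-1}{2}+\tfrac{p(p-1)}{2} = p(p-1)$. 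Alternatively one can bypass the class count: for an irreducible $S$, restricting to the normal $p$-subgroup $U$ forces $S^U \ne 0$, and an $H$-eigenvector therein (again using that $\f$ splits $H$) gives a nonzero $B$-map from a character $\chi$ into $S$, hence by Frobenius reciprocity a surjection $\mathrm{ind}_B^\Gamma \chi \twoheadrightarrow S$; since the Jordan--H\"older factors of the principal series $\mathrm{ind}_B^\Gamma(a^m d^n)$ lie among the $V_r\otimes D^j$ (recalled elsewhere in this section), so does $S$.

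I expect the main obstacle to be Step (iii): the first route requires the standard but not wholly mechanical enumeration of semisimple conjugacy classes of $\g{2}{\f}$ and the bookkeeping needed to move between $\f$- and $\bar{\f}$-irreducibles (which is legitimate precisely because every irreducible turns out to be absolutely irreducible and rational over $\f$), while the second route leans on the structure of the principal series $\mathrm{ind}_B^\Gamma\chi$. Steps (i) and (ii) are routine finite-dimensional linear algebra with binomial coefficients modulo $p$, the only subtlety being the coincidence of the $H$-weights of $X^{p-1}$ and $Y^{p-1}$ when $r = p-1$, which is handled by the preliminary application of $\left(\begin{smallmatrix}1&0\\1&1\end{smallmatrix}\right)$ noted above.
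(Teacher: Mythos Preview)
The paper does not supply a proof of this lemma; it simply records the result as well known and cites Brauer--Nesbitt \cite{BN41}. So there is no paper proof to compare your argument against, and your write-up is in effect a self-contained substitute for that citation.

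Your argument is essentially correct and follows a standard route. Two small points are worth tightening. First, in Step~(i) the phrase ``a monomial $X^{r-i}Y^i$ with $i$ not both $0$ and $p-1$'' is garbled; what you need is that $W$ contains some monomial lying in a one-dimensional $H$-weight space, i.e.\ $X^{r-i}Y^i$ with $0 \le i \le r$ and, when $r=p-1$, with $0<i<p-1$. Your reduction to this case also breaks down when $p=2$ and $r=1$, since then $H$ is trivial and there are no one-dimensional weight spaces at all; the irreducibility of $V_1$ over $\mathrm{GL}_2(\mathbb{F}_2)\cong S_3$ is of course trivial to check by hand, but you should say so rather than let the general argument silently fail. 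Second, Route~2 of Step~(iii) invokes the Jordan--H\"older factors of $\ind_B^\Gamma\chi$, which in this paper is the content of \Cref{Structure of induced}, stated \emph{after} the present lemma and proved by citation to \cite{Morra} and \cite{BP12}; this is logically fine since those references are independent of the present lemma, but it makes Route~2 not self-contained within the paper's order. Route~1, by contrast, is clean: your count of $p$-regular conjugacy classes of $\gl_2(\f)$ is correct, and the descent from $\bar{\f}$-irreducibles to $\f$-irreducibles is justified exactly because each $V_r\otimes D^j$ is already defined over $\f$ and shown absolutely irreducible.
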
 
We note the following congruence modulo $p$ which we use often. With the 
convention $0^0=1$ we have for any $i \geq 0$,
\begin{align}\label{sum fp}
       \sum_{\lambda \in \f} \lambda^{i} \equiv 
        \begin{cases}
                 -1,  &\mathrm{if}~i=n(p-1), ~ \mathrm{for ~ some ~} n \geq 1, \\
                  0,  & \mathrm{otherwise}.
        \end{cases}
\end{align} 

Next we show that the $\Gamma$-modules generated by the first $p$ monomials, i.e., 
$X^{r-i}Y^{i}$,  for $0 \leq i \leq p-1$, form an ascending chain of submodules 
of $V_{r}$.
\begin{lemma}\label{first row filtration}
   For $r \geq p$, we have $X_{r,\,r} \subseteq X_{r-1,\,r} \subseteq  \cdots  
   \subseteq X_{r-i,\,r}\subseteq \cdots  \subseteq X_{r-(p-1),\,r}$.
\end{lemma}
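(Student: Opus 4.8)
The plan is to prove the chain by establishing each consecutive inclusion $X_{r-i,\,r} \subseteq X_{r-(i+1),\,r}$ for $0 \le i \le p-2$ and then invoking transitivity; concretely, I would show that the monomial $X^{r-i}Y^{i}$ lies in the $\Gamma$-submodule generated by $X^{r-i-1}Y^{i+1}$. The case $p=2$ is vacuous (then $p-1=1$ and the chain is the single module $X_{r,\,r}$), so I assume $p\ge 3$ throughout.

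The engine of the argument is the action of the upper unipotent elements $u_t = \begin{psmallmatrix} 1 & t \\ 0 & 1 \end{psmallmatrix} \in U \subseteq \Gamma$, $t \in \f$, which by \eqref{G action} send $X^{r-i-1}Y^{i+1}$ to $X^{r-i-1}(tX+Y)^{i+1} = \sum_{k=0}^{i+1}\binom{i+1}{k} t^{k} X^{r-i-1+k}Y^{i+1-k}$. Each of these translates lies in $X_{r-(i+1),\,r}$, hence so does the weighted sum $z := \sum_{t\in\f} t^{p-2}\,\bigl(u_t \cdot X^{r-i-1}Y^{i+1}\bigr)$. Expanding and interchanging the order of summation, $z$ becomes a linear combination of the monomials $X^{r-i-1+k}Y^{i+1-k}$ with coefficients $\binom{i+1}{k}\sum_{t\in\f} t^{k+p-2}$. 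The point is that this character-sum weighting annihilates every term except the one with $k=1$, which is exactly $X^{r-i}Y^{i}$.

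To carry this out I would use the standard evaluation \eqref{sum fp}: since $p\ge 3$ gives $0^{p-2}=0$, the sum $\sum_{t\in\f} t^{k+p-2}$ equals $\sum_{t\in\fstar} t^{k+p-2}$, which is $-1$ when $k+p-2$ is a positive multiple of $p-1$ and $0$ otherwise. For $0 \le k \le i+1 \le p-1$ the exponent $k+p-2$ ranges over $\{p-2,\dots,2p-3\}$, in which the only multiple of $p-1$ is $p-1$ itself, forcing $k=1$. This yields $z = -(i+1)X^{r-i}Y^{i}$, and since $1 \le i+1 \le p-1$ is a unit in $\f$, dividing gives $X^{r-i}Y^{i} \in X_{r-(i+1),\,r}$, completing the induction step.

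The computation is genuinely routine; the only point requiring care — and the place I would expect a reader to want details — is pinning down the range of the exponent $k+p-2$ and checking that exactly the $k=1$ term survives the character sum, together with the bookkeeping that the hypotheses $r\ge p$ and $0\le i\le p-2$ guarantee all monomial exponents appearing are genuinely in $[0,r]$ and that $i+1$ is a nonzero residue mod $p$. A mild alternative, if one prefers not to treat $p=2$ separately, is to weight instead by $t^{-1}$ over $t\in\fstar$ only, which produces the same conclusion from the same character-sum input.
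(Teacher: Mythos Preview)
Your proof is correct and essentially identical to the paper's, which uses precisely your proposed alternative---the $a^{-1}$-weighted sum over $\fstar$ applied to $X^{r-i}Y^i$---to obtain $-iX^{r-(i-1)}Y^{i-1}$ via the same character-sum argument. One small correction: for $p=2$ the chain reads $X_{r,r}\subseteq X_{r-1,r}$, which is not vacuous, but either version of your computation handles this case as well.
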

\begin{proof}
   Let $1 \leq i \leq p-1$. We have
   \begin{align*}
	  \sum_{a \in \mathbb{F}_{p}^{\ast}} a^{-1} 
	  \begin{pmatrix} 1 & a \\ 0 & 1 \end{pmatrix} \cdot X^{r-i}Y^{i}  
	  & = \sum_{a \in \mathbb{F}_{p}^{\ast}} a^{-1}  X^{r-i} (aX+Y)^{i} \\
	  & = \sum_{a \in \mathbb{F}_{p}^{\ast}} a^{-1}  \sum_{j=0}^ {i} 
	      \binom{i}{j}a^{j} X^{r-i+j} Y^{i-j} \\
	  & = \sum_{j=0}^ {i} \binom{i}{j} X^{r-i+j} Y^{i-j} 
	      \sum_{a \in \mathbb{F}_{p}^{\ast}} a^{j-1} =-i X^{r-(i-1)}Y^{i-1}. 
   \end{align*}
   Since $i \not\equiv 0$ mod $p$, it follows that $X^{r-(i-1)}Y^{i-1} \in X_{r-i,\,r}$,  
	hence  $X_{r-(i-1),\,r} \subseteq X_{r-i,\,r}$. 
\end{proof}
By the lemma, $X_{r-i,\,r}$ is $M$-stable, for  $0 \leq i \leq p-1$, since if 
$t$ is singular, then $t \cdot X^{r-i}Y^i \in X_{r, \, r}$, by \cite[(4.4)]{Glover}.

We next recall a Clebsh-Gordon type result from \cite{Glover} which gives 
the decomposition of the  tensor product of two irreducible representations 
of $\Gamma$. 
\begin{lemma}\label{ClebschGordan}\emph{\cite[(5.5)]{Glover}}
        Let $p \geq 2$ and $0 \leq m \leq n \leq p-1 $.
        \begin{enumerate}[label=\emph{(\roman*)}] 
                \item If $0 \leq m+n \leq p-1$, then 
                       \begin{align*}
                           V_{m} \otimes V_{n} 
                           \cong   V_{m+n} \oplus (V_{m-1} \otimes V_{n-1} \otimes D) 
                           \cong \bigoplus_{l=0}^{m} V_{m+n-2l} \otimes D^{l}.
                        \end{align*}   
                 \item If $p \leq m+n \leq 2p-2$, then
                        \begin{align*}
                            V_{m} \otimes V_{n} & \cong V_{p(m+n+2-p)-1} \oplus 
                            V_{(p-n-2)} \otimes V_{(p-m-2)} \otimes D^{m+n+2-p} \\
                            & \cong V_{p(m+n+2-p)-1} \oplus
                            \bigoplus_{l=0}^{p-n-2} V_{2p-2-m-n-2-2l} \otimes D^{m+n+2-p+l}.    
                         \end{align*}          
        \end{enumerate}
\end{lemma}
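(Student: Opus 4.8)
The plan is to construct every $\Gamma$-equivariant map by hand, using \emph{transvectants}, and then read both decompositions off from a dimension count; in each case the second (refined) isomorphism will follow from the first by induction on $m$. For $l\ge 0$ set
\[
   \tau_l(F\otimes G)\ :=\ \sum_{j=0}^{l}(-1)^{j}\binom{l}{j}\,
   \frac{\partial^{l}F}{\partial X^{l-j}\,\partial Y^{j}}\cdot
   \frac{\partial^{l}G}{\partial X^{j}\,\partial Y^{l-j}}\, ;
\]
this is the $l$-th transvectant, obtained by applying the Cayley operator $\Omega=\partial_{X_1}\partial_{Y_2}-\partial_{Y_1}\partial_{X_2}$ to $F(X_1,Y_1)G(X_2,Y_2)$ a total of $l$ times and then setting $(X_1,Y_1)=(X_2,Y_2)=(X,Y)$. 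Since $\Omega$ is multiplied by $\det$ under the diagonal $\Gamma$-action on the two copies of the plane, $\tau_l$ is a $\Gamma$-map $V_m\otimes V_n\to V_{m+n-2l}\otimes D^{l}$ as soon as the $\binom{l}{j}$ ($0\le j\le l$) are units mod $p$, i.e.\ as soon as $l\le p-1$. A direct computation gives $\tau_l(X^m\otimes Y^n)=\tfrac{m!}{(m-l)!}\tfrac{n!}{(n-l)!}\,X^{m-l}Y^{n-l}$, nonzero mod $p$ for every $0\le l\le m$ (the factors $m,\dots,m-l+1$ and $n,\dots,n-l+1$ all lie in $\{1,\dots,p-1\}$).

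\emph{Case (i).} Every target $V_{m+n-2l}\otimes D^{l}$, $0\le l\le m$, is irreducible (degree $\le m+n\le p-1$) and these $m+1$ modules are pairwise non-isomorphic (distinct degrees $m+n,m+n-2,\dots,n-m$). Each $\tau_l$ is thus a nonzero map into an irreducible, hence surjective, so $\Phi:=(\tau_0,\dots,\tau_m)\colon V_m\otimes V_n\to\bigoplus_{l=0}^{m}V_{m+n-2l}\otimes D^{l}$ is surjective (a submodule of a multiplicity-free semisimple module that surjects onto each simple summand is everything). Since $\sum_{l=0}^{m}(m+n-2l+1)=(m+1)(n+1)=\dim(V_m\otimes V_n)$, the surjection $\Phi$ is an isomorphism --- this is the second isomorphism of (i). Isolating the $l=0$ term $V_{m+n}$ and pulling one $D$ out of $\bigoplus_{l=1}^{m}V_{m+n-2l}\otimes D^{l}$ identifies the remainder with $V_{m-1}\otimes V_{n-1}\otimes D$ via the same statement for the pair $(m-1,n-1)$ (allowed since $(m-1)+(n-1)\le p-3$), which gives the first isomorphism.

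\emph{Case (ii).} Now $p\le m+n\le 2p-2$, and with $s:=m+n+2-p\in\{2,\dots,p\}$ the claim is $V_m\otimes V_n\cong V_{ps-1}\oplus\bigl(V_{p-n-2}\otimes V_{p-m-2}\otimes D^{s}\bigr)$. The transvectant argument no longer closes up directly, since $\tau_0$ (ordinary multiplication) now maps onto the \emph{reducible} module $V_{m+n}$. I would still use the $\tau_l$ with $m+n-2l\le p-1$: their targets are irreducible, the nonvanishing computed above still applies, and --- together with the self-duality $V_r^{\vee}\cong V_r\otimes D^{-r}$, which lets one pair socle against cosocle --- they show that the semisimple module $V_{p-n-2}\otimes V_{p-m-2}\otimes D^{s}$ (decomposed by Case (i); its factors have degrees $\le 2p-m-n-4\le p-4$) splits off as a direct summand. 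For the remaining summand the key input is the Steinberg tensor product identity $V_{ps-1}\cong V_{p-1}\otimes V_{s-1}$ for $1\le s\le p$ (valid because $\mathrm{Sym}^{ps-1}$ is a simple rational $\mathrm{GL}_2$-module, as $ps-1=(p-1)+p(s-1)$ and $\dim\mathrm{Sym}^{ps-1}=ps=\dim L(ps-1)$): it shows $V_{ps-1}$ is projective \emph{and} injective over $\mathbb{F}_p[\Gamma]$, so it suffices to exhibit $V_{ps-1}$ either as a sub or as a quotient of $V_m\otimes V_n$, after which the dimension identity $\dim V_{ps-1}+\dim(V_{p-n-2}\otimes V_{p-m-2})=(m+1)(n+1)$ forces the asserted decomposition. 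The second isomorphism of (ii) is then Case (i) applied to $V_{p-n-2}\otimes V_{p-m-2}$, twisted by $D^{s}$. It is natural to treat first the boundary subcase $n=p-1$: there the semisimple summand vanishes and the claim $V_m\otimes V_{p-1}\cong V_{p(m+1)-1}$ is immediate from the Steinberg identity, both sides being $V_m\otimes\mathrm{St}$.

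\emph{Expected main obstacle.} Case (i), and both ``second'' isomorphisms, are essentially formal once the transvectants are available. The work is all in Case (ii): pinning down the summand $V_{ps-1}=V_{p-1}\otimes V_{s-1}$ inside $V_m\otimes V_n$ --- equivalently, showing that the non-split extensions among the composition factors of $V_m\otimes V_n$ assemble into precisely this (projective) block plus the complementary semisimple block. This is exactly where passing from $\bar{\mathbb F}_p$ to $\mathbb{F}_p$ bites, since $V_{ps-1}$ is irreducible over $\bar{\mathbb F}_p$ but not over $\mathbb{F}_p$; controlling the module structure of $V_r$ for $p\le r\le 2p-2$ (equivalently, of the projective indecomposables of $\mathbb{F}_p[\Gamma]$) is the delicate point.
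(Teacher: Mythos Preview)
The paper does not prove this lemma; it is quoted from \cite[(5.5)]{Glover} without argument, so there is no proof in the paper to compare against.

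Your Case~(i) is correct and standard: the transvectants $\tau_l$ are $\Gamma$-maps for $0\le l\le m\le p-1$, they are nonzero on $X^m\otimes Y^n$ since the falling factorials $m(m-1)\cdots(m-l+1)$ and $n(n-1)\cdots(n-l+1)$ lie in $\{1,\dots,p-1\}$, the targets are pairwise non-isomorphic irreducibles, and the dimension count closes. The first isomorphism of (i) then follows from the second by peeling off the $l=0$ term, exactly as you say.

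Case~(ii), however, is only a sketch, and there are really two gaps, not one. The gap you flag --- exhibiting $V_{ps-1}$ as a sub or quotient of $V_m\otimes V_n$ --- is indeed the crux, and nothing in your outline produces such a map. But there is also an earlier soft spot: the claim that the transvectants $\tau_s,\dots,\tau_m$ together with self-duality show that $S=V_{p-n-2}\otimes V_{p-m-2}\otimes D^{s}$ \emph{splits off} is not justified. The transvectants exhibit $S$ as a quotient, and self-duality then exhibits it as a submodule, but there is no reason the composite $S\hookrightarrow V_m\otimes V_n\twoheadrightarrow S$ should be an isomorphism without further input (the composition factors of $S$ could in principle interact with those of the complement via non-split extensions). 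So as written, Case~(ii) is a plan rather than a proof.

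A route that closes both gaps at once, and is closer in spirit to Glover's original, is to use the short exact sequence
\[
   0\longrightarrow V_{m-1}\otimes V_{n-1}\otimes D
   \stackrel{\theta}{\longrightarrow} V_m\otimes V_n
   \stackrel{\tau_0}{\longrightarrow} V_{m+n}\longrightarrow 0
\]
of \cite[(5.1)]{Glover} (the left map is $F\otimes G\mapsto XF\otimes YG-YF\otimes XG$, and injectivity is an easy triangularity argument on the monomial basis). Inducting on $m$, the left-hand term is already known --- it lies either in Case~(i) or in Case~(ii) with strictly smaller $s$ --- and the right-hand term $V_{m+n}$ with $p\le m+n\le 2p-2$ has an explicit, well-understood structure. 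The projectivity of $V_{ps-1}\cong V_{p-1}\otimes V_{s-1}$ (which you correctly identify) is then used to split the resulting extension, and the residual piece is identified as $S$ by its composition factors. Your boundary case $n=p-1$ fits naturally into this induction as well.
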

The following dimension formula for $X_{r-1,\,r}$ was proved in \cite{BG15}.  
%
\begin{lemma}\emph{\cite[Corollary 1.6]{BG15}}
\label{dimension formula for X_{r-1}}
    Let $p\geq 3$ and $r \geq  2p+1$. Set $\delta =1$ if $p \mid r$ and 
    $\delta=0$ otherwise. Then
    \begin{align*} 
        \mathrm{dim}\ X_{r-1,\,r} = 
	 \begin{cases}
             2\Sigma_{p}(r)+\delta(p+2-\Sigma_{p}(r)), & \mathrm{if} ~ \Sigma_{p}(r) 
             \leq p, \\
              2p+2, &  \mathrm{if} ~ \Sigma_{p}(r) > p.
          \end{cases} 
     \end{align*}
\end{lemma}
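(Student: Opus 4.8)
The plan is to compute $\dim X_{r-1,r}$ by combining two ingredients: first, a structural description of $X_{r-1,r}$ as an extension whose pieces are (tensor products of) irreducible $\Gamma$-modules and principal series, and second, the well-known dimensions of those building blocks. Since this lemma is quoted from \cite{BG15}, in practice one would just cite the structure theorem there; but to give a self-contained plan, I would proceed as follows. Recall from \Cref{first row filtration} that $X_{r,r} \subseteq X_{r-1,r}$, so $\dim X_{r-1,r} = \dim X_{r,r} + \dim (X_{r-1,r}/X_{r,r})$. Glover's work describes $X_{r,r}$ (the submodule generated by $X^r$): its dimension is governed by $\Sigma_p(r)$, being roughly $\Sigma_p(r)+1$ when $\Sigma_p(r) \leq p-1$ and jumping to $p+1$ once $\Sigma_p(r) \geq p$, with the divisibility $p \mid r$ accounting for the extra $\delta$ (when $p \mid r$, the constant term $r_0 = 0$ forces an extra Jordan--H\"older factor in the cosocle, cf.\ the discussion of $X_r \subsetneq X_{r-1}$ in the introduction).

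Next I would handle the quotient $X_{r-1,r}/X_{r,r}$. As noted in the introduction (Corollary~\ref{induced and successive} in the full paper), this quotient is a homomorphic image of the principal series $\mathrm{ind}_B^\Gamma(a^{r-1}d)$, which has dimension $p+1$. One determines exactly which quotient occurs by analyzing the action of $U$ and $H$ on the image of $X^{r-1}Y$ modulo $X_{r,r}$, using the averaging identities over $\mathbb{F}_p^\ast$ as in the proof of \Cref{first row filtration} together with the digit-sum congruence \eqref{sum fp}. The upshot, again following \cite{BG15}, is that the quotient is either all of the principal series (dimension $p+1$, contributing when $\Sigma_p(r)$ is large) or one of its Jordan--H\"older constituents $V_m \otimes D^j$ or $V_{p-1-m}\otimes D^{j}$, whose dimensions ($m+1$ and $p-m$) are again expressed via $\Sigma_p(r)$. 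Adding the two contributions and separating the cases $\Sigma_p(r) \leq p$ versus $\Sigma_p(r) > p$, and within the first case tracking the correction $\delta(p+2-\Sigma_p(r))$ coming from the $p \mid r$ phenomenon, yields the stated formula; one should also double-check the boundary $\Sigma_p(r) = p$, where both branches must agree (they do: $2p + \delta \cdot 2 = 2p+2$ only if $\delta = 1$, so one verifies that $\Sigma_p(r) = p$ with $p \nmid r$ still gives $2p$ under the second... — in fact the formula is set up so that $\Sigma_p(r)=p$ falls under the first branch and gives $2p$ unless $p \mid r$).

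The main obstacle is the precise determination of the quotient $X_{r-1,r}/X_{r,r}$ as a $\Gamma$-module — equivalently, pinning down which homomorphic image of $\mathrm{ind}_B^\Gamma(a^{r-1}d)$ it is, and how that image degenerates as $\Sigma_p(r)$ crosses the thresholds and as $r_0$ passes through $0$. This requires a careful binomial-coefficient analysis modulo $p$ (Lucas-type arguments on the base-$p$ digits of $r$), exactly the kind of computation isolated in the later lemmas of the present paper and carried out in detail in \cite[\S 2, \S 3]{BG15}. Since the statement here is explicitly attributed to \cite[Corollary 1.6]{BG15}, I would in the paper simply invoke that reference; the sketch above indicates how one would reconstruct it if needed.
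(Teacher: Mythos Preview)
Your proposal is broadly correct in spirit---you recognize that the lemma is quoted from \cite{BG15} and that the actual proof is just a citation---but you miss the one genuine step that the paper's proof contains. The original \cite[Corollary~1.6]{BG15} is not stated in terms of $\Sigma_p(r)$ and the indicator $\delta$; it is stated in terms of $\Sigma_p(u-1)$ where one writes $r = p^n u$ with $p \nmid u$. The paper's entire proof is the one-line translation: since $p \nmid u$ one has $\Sigma_p(u-1) = \Sigma_p(u) - 1 = \Sigma_p(p^n u) - 1 = \Sigma_p(r) - 1$, and substituting this into the cited formula yields the version displayed here. That substitution is what absorbs the case distinction on $p \mid r$ into the $\delta$ term. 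Your proposal of ``simply invoke that reference'' would leave this reformulation unexplained.

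Your from-scratch sketch (compute $\dim X_{r,r}$ via Glover, then identify $X_{r-1,r}/X_{r,r}$ as a quotient of the principal series $\ind_B^\Gamma(\chi_1^{r-1}\chi_2)$ and add) is a valid alternative route and is indeed how \cite{BG15} itself proceeds, but it is far more work than what the present paper does. Also, your aside about the boundary $\Sigma_p(r) = p$ is slightly muddled: the two branches are \emph{not} required to agree there, since $\Sigma_p(r) = p$ lies only in the first branch; the formula gives $2p$ when $p \nmid r$ and $2p+2$ when $p \mid r$ at that value, and both are correct.
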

\begin{proof}  
	 Write $r=p^{n}u$, with $p\nmid u$. Then $\Sp(u-1) = \Sp(u)-1 = \Sp(p^{n}u)-1 
	 =\Sp(r)-1$. Substituting $\Sp(u-1)=\Sp(r)-1$ in \cite[Corollary 1.6]{BG15} 
	 we obtain the lemma.
\end{proof}
We next recall the structure of $X_{r,\, r}$  
and obtain a dimension formula for it. 
By \cite[(4.5)]{Glover}, for $r \geq p$, $r \equiv a$ mod $(p-1)$ with 
$1 \leq a \leq p-1$, we have an exact sequence of $M$-modules
\begin{align}\label{Glover 4.5}
    0 \rightarrow X_{r,\,r}^{(1)} \rightarrow X_{r,\,r} \rightarrow V_{a} \rightarrow 0,
\end{align} 
where $X_{r,\, r}^{(1)} = X_{r,\,r} \cap V_{r}^{(1)}$.
More precisely, we have
%
\begin{lemma}\label{dimension formula for X_{r}}
    Let $p\geq 3$ and $1 \leq r \equiv a \mod ~(p-1)$ with $1 \leq a \leq p-1$. Then  
    \begin{enumerate}[label=\emph{(\roman*)}]
	\item If $\Sp(r)=a$, then $X_{r,\, r} \cong V_{a}$, as an $M$-module.
	\item If $\Sp(r)>a$, or equivalently $\Sp(r) \geq a+p-1$, then there is a 
              short exact sequence of $M$-modules
	      \begin{align*}
	          0 \rightarrow V_{p-a-1} \otimes D^{a} \rightarrow X_{r,r} 
	          \rightarrow V_{a} \rightarrow 0.
	      \end{align*}
    \end{enumerate} 
    Moreover, $ \mathrm{dim} ~ X_{r,\,r} = p+1$ if and only if
     $X_{r,\,r}^{(1)} \neq 0$, and
    \begin{align*}
	\mathrm{dim} ~ X_{r,\,r} = 
	 \begin{cases}
	     \Sigma_{p}(r)+1, & \mathrm{if} ~~\Sigma_{p}(r) \leq p-1,  \\
	      p+1, & \mathrm{if} ~~ \Sigma_{p}(r) > p-1.
	    \end{cases}
     \end{align*}
\end{lemma}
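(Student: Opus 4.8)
\textbf{Proof plan for Lemma~\ref{dimension formula for X_{r}}.}

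The plan is to extract everything from the exact sequence \eqref{Glover 4.5}, namely $0 \to X_{r,\,r}^{(1)} \to X_{r,\,r} \to V_a \to 0$, once we understand the submodule $X_{r,\,r}^{(1)} = X_{r,\,r} \cap V_r^{(1)}$. First I would identify $X_{r,\,r}^{(1)}$ explicitly. Since $V_r^{(1)} \cong V_{r-(p+1)} \otimes D$ via division by $\theta$, and $X_{r,\,r}$ is generated by $X^r$, dividing the generator's contributions to $V_r^{(1)}$ by $\theta$ should identify $X_{r,\,r}^{(1)}$ with a monomial-type submodule of $V_{r-(p+1)} \otimes D$; tracking which monomial, one sees it is generated by (a $D$-twist of) the top monomial $X^{r-(p+1)}$, i.e.\ $X_{r,\,r}^{(1)} \cong X_{r-(p+1),\,r-(p+1)} \otimes D$. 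This sets up an induction on $r$ (decreasing by $p+1$ each step, which preserves the residue $a$ mod $p-1$ and decreases $\Sigma_p$ by the digit sum of the ``borrow'', but more cleanly: $\Sigma_p(r-(p+1)) = \Sigma_p(r) - $ something tractable). Actually the cleaner route is to appeal directly to Glover \cite[(4.5)]{Glover} or iterate \eqref{Glover 4.5}: the composition series of $X_{r,\,r}$ is built from copies of $V_a$ twisted by powers of $D$, and the length is governed by how many times $\theta$ can be extracted, which is exactly controlled by $\Sigma_p(r)$.

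The main case division is $\Sigma_p(r) = a$ versus $\Sigma_p(r) > a$. Note $\Sigma_p(r) \equiv r \equiv a \bmod (p-1)$ always, and $\Sigma_p(r) \geq a$ with equality iff there are ``no extra digits'', so $\Sigma_p(r) > a$ forces $\Sigma_p(r) \geq a + (p-1)$, giving the stated equivalence. For (i), when $\Sigma_p(r) = a$: here I claim $X_{r,\,r}^{(1)} = 0$, because $X_{r,\,r}^{(1)} \cong X_{r-(p+1),\,r-(p+1)} \otimes D$ and $r - (p+1)$ has digit sum $\Sigma_p(r) - (p-1) \cdot(\text{something})$ which drops below $a$ — more precisely $X^r$ generates no polynomial divisible by $\theta$ precisely when the digit sum is minimal; then \eqref{Glover 4.5} gives $X_{r,\,r} \cong V_a$ and $\dim X_{r,\,r} = a + 1 = \Sigma_p(r) + 1$. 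For (ii), when $\Sigma_p(r) > a$, so $\Sigma_p(r) \geq a + p - 1 > p - 1$: now $X_{r,\,r}^{(1)} \neq 0$, and iterating we find $X_{r,\,r}^{(1)} \cong V_{p-a-1} \otimes D^a$ (this quotient/submodule being irreducible, and the identification of the exponent $p-a-1$ coming from the Clebsch–Gordan-type bookkeeping in Lemma~\ref{ClebschGordan} or directly from Glover), yielding the short exact sequence $0 \to V_{p-a-1} \otimes D^a \to X_{r,\,r} \to V_a \to 0$ and hence $\dim X_{r,\,r} = (p - a - 1 + 1) + (a + 1) = p + 1$. Combining: $\dim X_{r,\,r} = p+1$ iff $X_{r,\,r}^{(1)} \neq 0$ iff $\Sigma_p(r) > p - 1$ (since $a \leq p-1$ always, and $\Sigma_p(r) > a$ with $\Sigma_p(r) \leq p-1$ is impossible as it would force $\Sigma_p(r) = a$... wait, need $\Sigma_p(r) > p-1$: indeed $\Sigma_p(r) \leq p-1$ combined with $\Sigma_p(r) \equiv a$ and $1 \leq a \leq p-1$ forces $\Sigma_p(r) = a$), giving the final piecewise dimension formula.

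The step I expect to be the main obstacle is pinning down $X_{r,\,r}^{(1)}$ precisely as $X_{r-(p+1),\,r-(p+1)} \otimes D$ and, in case (ii), identifying it on the nose with the \emph{irreducible} $V_{p-a-1} \otimes D^a$ rather than something with a longer filtration — a priori the induction would produce a module of length equal to the number of $\theta$-extractions, so one must argue that after the first extraction the digit sum drops to exactly $a$ (equivalently, that $\Sigma_p(r-(p+1)) = a$ whenever we are in the regime $a < \Sigma_p(r) \leq a + 2(p-1)$, and handle larger digit sums by noting the module has already stabilized at dimension $p+1$). This is essentially a careful reading of \cite[(4.5)]{Glover}; alternatively one can sidestep the induction entirely by quoting Glover's structure theorem for $X_{r,\,r}$ and merely re-deriving the dimension count and the ``$\dim = p+1 \iff X_{r,\,r}^{(1)} \neq 0$'' criterion, which follows formally once one knows $V_a$ has dimension $a+1 \leq p$ and $V_{p-a-1} \otimes D^a \neq 0$.
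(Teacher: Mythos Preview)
Your central inductive claim, $X_{r,\,r}^{(1)} \cong X_{r-(p+1),\,r-(p+1)} \otimes D$, is false. Under the isomorphism $V_r^{(1)} \cong V_{r-(p+1)} \otimes D$ given by division by $\theta$, the image of $X_{r,\,r}^{(1)}$ has no reason to be a \emph{monomial} submodule: it is the set of $\theta^{-1}F$ for those $F$ in the $\Gamma$-span of $X^r$ that happen to be divisible by $\theta$, and $X^r$ itself is not divisible by $\theta$. A concrete counterexample: take $p=5$, $a=2$, $r=22$ (so $\Sigma_p(r)=6$); then $X_{r,\,r}^{(1)} \cong V_{p-a-1}\otimes D^a = V_2 \otimes D^2$ has dimension $3$, whereas $r-(p+1)=16$ has $\Sigma_p(16)=4=[16]$, so $X_{16,\,16}\cong V_4$ has dimension $5$. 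Your reasoning for part~(i) (``the digit sum drops below $a$'') and for the irreducibility of $X_{r,\,r}^{(1)}$ in part~(ii) both rest on this false identification, so the induction does not go through. The related remark that the composition series of $X_{r,\,r}$ consists of ``copies of $V_a$ twisted by powers of $D$'' is also wrong: in case~(ii) the two factors are $V_a$ and $V_{p-a-1}\otimes D^a$, which are not twists of each other.

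The paper takes a rather different route. It first reduces to $p\nmid r$ via the Frobenius $F\mapsto F^{p^n}$ (which preserves $\Sigma_p$ and gives $X_{r,\,r}\cong X_{u,\,u}$ for $r=p^n u$). Then, instead of analysing $X_{r,\,r}^{(1)}$ directly, it bootstraps from the already-established dimension formula for $X_{r-1,\,r}$ (Lemma~\ref{dimension formula for X_{r-1}}): when $\Sigma_p(u)\geq a+p-1$ one has $\dim X_{u-1,\,u}=2p+2$, and a result of \cite{BG15} (their Lemma~3.5) forces $\dim X_{u,\,u}=p+1$; the exact sequence~\eqref{Glover 4.5} then shows $X_{u,\,u}^{(1)}\neq 0$, and \cite[Lemma~4.6]{BG15} identifies it as $V_{p-a-1}\otimes D^a$. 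Part~(i) is handled by separate citations to \cite{BG15}. Your fallback of ``just quote Glover'' is too optimistic: Glover~\cite[(4.5)]{Glover} supplies only the exact sequence~\eqref{Glover 4.5}, not the identification of $X_{r,\,r}^{(1)}$ or the vanishing criterion, which genuinely require the additional input from \cite{BG15}.
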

\begin{proof}
     This is well known, and can be deduced from the results of \cite{Glover}, \cite{BG15}.
     Write $r=p^{n}u$ with $p \nmid u$. Then $\Sigma_{p}(r) = \Sigma_{p}(u) $ and 
     $X_{u,\,u} \cong X_{r,\,r}$ via the map $F \mapsto F^{p^{n}}$. So it is enough
      to prove the lemma with $r$ replaced by $u$. Note that 
      $p \nmid u$ implies that $\Sp(u-1) = \Sp(u)-1$.
     
     For $a=1$, part (i) follows from the fact that $\Sp(u)=1$ is equivalent to
     $u=1$. The cases $\Sp(u)=p$ and $\Sp(u) > p$ of part (ii) follow
     from \cite[Proposition 3.3]{BG15} and \cite[Proposition 3.8]{BG15} respectively.
     
     For $2 \leq a \leq p-1$, part (i) follows from \cite[Lemma 4.5]{BG15}, noting 
     that $X_{u,\, u} \cong V_{a}$ if $u < p$. For part (ii), if $\Sp(u)\geq a+p-1 > p$, 
     then $u \geq2 p+1$ and so by  \Cref{dimension formula for X_{r-1}}, we have 
     dim $X_{u-1,\,u}= 2p+2$. Thus by \cite[Lemma 3.5]{BG15}, we have 
     dim $X_{u,\,u} = p+1$.  Thus by \eqref{Glover 4.5}, we have $X_{u,\,u}^{(1)} \neq 0$.
     Further by \cite[Lemma 4.6]{BG15},  we have  
     $X_{u,\,u}^{(1)} \cong V_{p-a-1} \otimes D^{a}$. This proves part (ii), for 
     $2 \leq a \leq p-1$.  The other assertions are clear from  the exact sequence \eqref{Glover 4.5} 
     and parts (i), (ii).
\end{proof}
\subsubsection{Principal series.}
In this subsection, we recall a few results about
principal series representations. These 
representations play a central role in this article as they are related to  modules 
such as $X_{r-i} /X_{r-(i-1)}$ and $V_{r}^{(i)}/V_{r}^{(i+1)}$, for 
$0 \leq i  \leq p-1$, which we study in later sections.

Let $ (\sigma, V)$ be a representation of $B$. The induced representation 
$\ind_{B}^{\Gamma}(\sigma) $ is defined as the space of functions 
$f: \Gamma \rightarrow V $ satisfying
$ f(b\gamma) = \sigma(b) f(\gamma)$, $\forall$ $b \in B$, $ \gamma \in \Gamma $, 
this space being endowed with a left $\Gamma$-action defined by right
 translation of functions, i.e., 
 $(\gamma\cdot f)(\gamma ') = f(\gamma' \gamma)$,
 $\forall$ $\gamma,\gamma' \in \Gamma$. For any 
$ \gamma \in \Gamma$, $v \in V$, we define the function 
$[\gamma,v] \in \ind_{B}^{\Gamma}(\sigma)$
by
\begin{align*}
    [\gamma, v] (\gamma') =
    \begin{cases}
       \sigma(\gamma' \gamma) v, & \mathrm{if} ~ \gamma' \in B \gamma^{-1},\\
       0, & \mathrm{otherwise}.
    \end{cases}
\end{align*}
Every element of $\ind_{B}^{\Gamma}(\sigma)$ is a linear combination of functions of
the form $[\gamma,v]$, for $\gamma \in \Gamma$ and $v \in V$.  
It can be checked that $\gamma' \cdot [\gamma, v] = [\gamma' \gamma, v]$, 
$\forall$ $\gamma,\gamma' \in \Gamma$. If $\sigma$ is
a 1-dimensional representation, then $\ind_{B}^{\Gamma}(\sigma)$ is called 
a principal series representation. Since $\lvert \Gamma/ B \rvert =p+1$,
the dimension of a principal series representation equals $p+1$.
 
Let $w = \begin{psmallmatrix} 0 & 1 \\ 1 & 0 \end{psmallmatrix}$. 
For a character  $\chi: H \rightarrow \f^{\ast}$, we define 
$\chi^{w}: H \rightarrow \f^{\ast}$ by $\chi^w(h) = \chi(whw)$, for all $h \in H$. 
Let $\chi_{1}$, $\chi_{2} : H \rightarrow \f^{\ast}$ be the characters defined by
\begin{align*}
      \chi_{1}\left(\begin{psmallmatrix} a & 0 \\ 0 & d \end{psmallmatrix}\right) =
      a, \quad \chi_{2}\left(\begin{psmallmatrix} a & 0 \\ 0 & d \end{psmallmatrix}\right) =
      d, ~ \forall ~ \begin{psmallmatrix} a & 0 \\ 0 & d \end{psmallmatrix} \in H.
\end{align*}
These can also be thought of as characters of $B$ via $ B \twoheadrightarrow H$.
Clearly $(\chi_{1}^{i} \chi_{2}^{j})^{w} =\chi_{1}^{j} \chi_{2}^{i}$. 
It is well known that every character   $\chi: B \rightarrow \f^{\ast}$ is of the form
$\chi_{1}^{i} \chi_{2}^{j}$, for $1 \leq i,j \leq p-1$. 
For a character 
$\chi: B \rightarrow \f^{\ast}$, let $e_{\chi}$ denote a (fixed) non-zero element  of the 
1-dimensional representation $(\chi, V_{\chi})$. The following result 
explicitly describes the Jordan-H\"older (JH) factors of 
principal series representations  and the basis elements of the underlying spaces.
%
\begin{lemma}\label{Structure of induced}
     Let $p \geq 2$, $1 \leq i,j \leq p-1$ and $\chi =\chi_{1}^{i} \chi_{2}^{j}$. Then we have
     the following exact sequence of $\Gamma$-modules 
     \begin{align*}
          0 \rightarrow  V_{[ j-i ] } \otimes D^{i} \rightarrow  \ind_{B}^{\Gamma}
          (\chi_{1}^{i}\chi_{2}^{j}) \rightarrow  V_{p-1-[j-i]} \otimes D^{j} 
          \rightarrow 0 .
     \end{align*}
     The sequence splits if and only if $i = j$. Moreover, 
     \begin{enumerate}[label=\emph{(\roman*)}]
            \item  An $\f$-basis of the  image of  $V_{[j-i]}\otimes D^{i} $ 
                   in $\ind_{B}^{\Gamma} (\chi)$  is given by
                    \begin{align*}
                         \sum\limits_{\lambda \in \f} \lambda^{l} 
                         \begin{pmatrix} \lambda & 1 \\ 1 & 0 \end{pmatrix} [1, e_{\chi}], 
                          ~ \mathrm{for} ~ 0 \leq l < [j-i];       ~ ~                              
                          \sum\limits_{\lambda \in \f}  \lambda^{[j-i]} 
                          \begin{pmatrix} \lambda & 1 \\ 1 &0  \end{pmatrix} 
                          [1, e_{\chi}] + (-1)^{j}  [1, e_{\chi}].
                    \end{align*}
              \item The  elements of $\ind_{B}^{\Gamma} (\chi)$ 
                    \begin{align*}
                          \sum\limits_{\lambda \in \f} \lambda^{l} 
                           \begin{pmatrix} \lambda & 1 \\ 1 &0  \end{pmatrix} 
                           [1, e_{\chi}], ~ \mathrm{for} ~ [j-i] \leq l \leq p-1,
                     \end{align*}   
                     map to an $\f$-basis of   $ V_{p-1-[j-i]} \otimes D^{j}$.  
       \end{enumerate}
 \end{lemma}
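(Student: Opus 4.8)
The plan is to avoid computing the full $\Gamma$-action on the displayed vectors — which would force one through awkward identities for the action of $w = \left(\begin{smallmatrix} 0 & 1 \\ 1 & 0 \end{smallmatrix}\right)$ (or of lower unipotent matrices) — and instead to produce the surjection and the inclusion in the asserted exact sequence directly from the adjunction properties of $\ind_B^\Gamma$ (recall $\ind_B^\Gamma = \mathrm{coind}_B^\Gamma$ for finite index, so both Frobenius reciprocities are available), performing only one genuine computation: that of the surjection on the power-sum vectors. Throughout set $g_\lambda := \left(\begin{smallmatrix} \lambda & 1 \\ 1 & 0 \end{smallmatrix}\right)$ for $\lambda \in \f$. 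Since $g_\mu^{-1} g_\lambda = \left(\begin{smallmatrix} 1 & 0 \\ \lambda - \mu & 1 \end{smallmatrix}\right)$ lies in $B$ only when $\lambda = \mu$, and $g_\lambda \notin B$, the $p+1$ functions $[g_\lambda, e_\chi]$ $(\lambda \in \f)$ and $[1, e_\chi]$ are supported on the $p+1$ distinct right cosets of $B$ and hence form an $\f$-basis of $\ind_B^\Gamma(\chi)$; since the matrix $(\lambda^l)_{\lambda \in \f,\, 0 \le l \le p-1}$ is an invertible Vandermonde matrix over $\f$, so are the vectors
\[
  u_l := \sum_{\lambda \in \f} \lambda^l\, [g_\lambda, e_\chi] \quad (0 \le l \le p-1) \qquad \text{together with} \qquad [1, e_\chi].
\]
Put $n := p-1-[j-i]$ (so $0 \le n \le p-1$, as $[j-i] \ge 1$) and $v := u_{[j-i]} + (-1)^j [1, e_\chi]$. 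I will also use the standard fact that for $0 \le m \le p-1$ the $\f[U]$-module $V_m$ is a single Jordan block generated by $Y^m$, so its $U$-coinvariants are one-dimensional, spanned by the image of $Y^m$, while $V_m^U = \f X^m$.

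First I would construct the surjection. Fix a generator $e$ of $D^j$, so $\gamma \cdot (F \otimes e) = (\gamma F) \otimes \det(\gamma)^j e$. The vector $X^n \otimes e$ is $U$-fixed, and $H$ acts on it by $\chi_1^{n+j} \chi_2^j = \chi_1^i \chi_2^j$ (as $n + j \equiv i \bmod (p-1)$); so it is a $\chi$-eigenvector for $B$, and the adjunction $\ho_\Gamma(\ind_B^\Gamma(\chi), M) \cong \ho_B(\f_\chi, M|_B)$ produces a $\Gamma$-map $\pi \colon \ind_B^\Gamma(\chi_1^i \chi_2^j) \to V_n \otimes D^j$ with $\pi([\gamma, e_\chi]) = \gamma \cdot (X^n \otimes e)$; in particular $\pi([1, e_\chi]) = X^n \otimes e$ and $\pi([g_\lambda, e_\chi]) = (-1)^j (\lambda X + Y)^n \otimes e$. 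The one real computation is now: expanding $(\lambda X + Y)^n = \sum_k \binom{n}{k} \lambda^k X^k Y^{n-k}$ and evaluating $\sum_{\lambda \in \f} \lambda^{l+k}$ by \eqref{sum fp} (with $0 \le k \le n \le p-2$ the only surviving term has $l + k = p-1$; $l+k = 2(p-1)$ cannot occur), one finds $\pi(u_l) = 0$ for $0 \le l < [j-i]$ and $\pi(u_l) = (-1)^{j+1} \binom{n}{p-1-l} X^{p-1-l} Y^{l - [j-i]} \otimes e$ for $[j-i] \le l \le p-1$; since $\binom{n}{n} = 1$ this gives $\pi(v) = \big( (-1)^{j+1} + (-1)^j \big) X^n \otimes e = 0$. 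Hence the $[j-i]+1$ linearly independent vectors $u_0, \dots, u_{[j-i]-1}, v$ lie in $\ker \pi$. As $\pi \ne 0$ and $V_n \otimes D^j$ is irreducible, $\pi$ is onto, so $\dim \ker \pi = (p+1) - (n+1) = [j-i]+1$; therefore $S := \f\text{-span}\{u_0, \dots, u_{[j-i]-1}, v\}$ equals $\ker \pi$, which is $\Gamma$-stable, and $\ind_B^\Gamma(\chi)/S \cong V_n \otimes D^j = V_{p-1-[j-i]} \otimes D^j$. This already yields the exact sequence together with part (i) (the listed vectors are a basis of $S$) and part (ii) (for $[j-i] \le l \le p-1$ the images $\pi(u_l)$ run through nonzero scalar multiples of the standard basis vectors $X^{p-1-l} Y^{l-[j-i]} \otimes e$, $0 \le p-1-l \le n$, of $V_n \otimes D^j$, hence form a basis).

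To identify $S$ as a module I would run the adjunction in the other direction. A nonzero $B$-map $V_{[j-i]} \otimes D^i \to \f_\chi$ exists, namely the projection onto the $U$-coinvariants: these are one-dimensional, spanned by the image of $Y^{[j-i]} \otimes e'$ ($e'$ a generator of $D^i$), with $H$-character $\chi_1^i \chi_2^{[j-i]+i} = \chi_1^i \chi_2^j = \chi$ (as $[j-i] + i \equiv j \bmod (p-1)$), so the coinvariants are isomorphic to $\f_\chi$ as a $B$-module. Via $\ho_\Gamma(V_{[j-i]} \otimes D^i, \ind_B^\Gamma(\chi)) \cong \ho_B((V_{[j-i]} \otimes D^i)|_B, \f_\chi)$ this produces a nonzero, hence injective (the source being irreducible), $\Gamma$-map $\iota$. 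Now $\pi \circ \iota = 0$, because $V_{[j-i]} \otimes D^i$ and $V_{p-1-[j-i]} \otimes D^j$ are non-isomorphic irreducible $\Gamma$-modules: their parameters $([j-i], i)$ and $(p-1-[j-i], j)$ differ — if $i = j$ the first entries do, and if $i \ne j$ then $i \not\equiv j \bmod (p-1)$ — so by the classification of the irreducibles of $\Gamma$ they are distinct. Therefore $\img \iota \subseteq \ker \pi = S$, and comparing dimensions forces $\img \iota = S$, i.e.\ $S \cong V_{[j-i]} \otimes D^i$; this completes the exact sequence.

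Finally, the splitting criterion. Since $V_{p-1-[j-i]} \otimes D^j$ is irreducible and $\not\cong S$, the sequence splits if and only if $\ho_\Gamma(V_{p-1-[j-i]} \otimes D^j, \ind_B^\Gamma(\chi)) \ne 0$: a nonzero map there, composed with $\ind_B^\Gamma(\chi) \twoheadrightarrow \ind_B^\Gamma(\chi)/S \cong V_{p-1-[j-i]} \otimes D^j$, is a nonzero endomorphism of an irreducible module, hence an isomorphism, hence a splitting, and conversely a splitting gives such a map. By the adjunction this Hom-space is $\ho_B((V_{p-1-[j-i]} \otimes D^j)|_B, \f_\chi)$, which is nonzero exactly when the $H$-character of the one-dimensional $U$-coinvariants of $V_{p-1-[j-i]} \otimes D^j$ — namely $\chi_1^j \chi_2^{(p-1-[j-i])+j} = \chi_1^j \chi_2^i$ (the $\chi_2$-exponent being $\equiv i \bmod (p-1)$) — equals $\chi = \chi_1^i \chi_2^j$, i.e.\ exactly when $i \equiv j \bmod (p-1)$, i.e.\ $i = j$. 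The one step demanding genuine care is the computation of $\pi$ on the $u_l$ via \eqref{sum fp} — one must honour the convention $0^0 = 1$ and pin down which value of $l+k$ in the admissible range produces a nonzero power sum; everything else is formal, resting on the classification of the irreducibles of $\Gamma$ and the Jordan-block description of $V_m|_U$ recorded above.
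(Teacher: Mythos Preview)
Your proof is correct and self-contained, whereas the paper simply cites \cite{Morra} and \cite{BP12} without giving any argument. So the two ``proofs'' are quite different in character: the paper treats the lemma as a known black box from the literature, while you rebuild it from scratch using both Frobenius reciprocities (available since $[\Gamma:B]<\infty$). Your approach has the advantage of making the explicit bases in (i) and (ii) fall out of a single computation of $\pi(u_l)$ via \eqref{sum fp}, rather than having to locate them across several lemmas in \cite{BP12}; the key observation that $\binom{n}{p-1-l}\not\equiv 0\pmod p$ for $0\le p-1-l\le n\le p-1$ (by Lucas' theorem) is what guarantees the images in (ii) really form a basis. Your identification of the socle via the other adjunction, together with the non-isomorphism $V_{[j-i]}\otimes D^i\not\cong V_{p-1-[j-i]}\otimes D^j$, is clean and avoids any explicit $\Gamma$-action computation on the listed vectors. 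The splitting criterion via $\ho_B((V_{p-1-[j-i]}\otimes D^j)|_B,\f_\chi)$ and the $U$-coinvariants is also correct; the implicit point that a nonzero $\Gamma$-map $V_{p-1-[j-i]}\otimes D^j\to \ind_B^\Gamma(\chi)$ cannot land in $S$ (since $S\not\cong V_{p-1-[j-i]}\otimes D^j$) is exactly what makes the composition with $\pi$ nonzero.
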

\begin{proof}
    See \cite[Proposition 2.4]{Morra} and 
     Lemmas 2.3, 2.6, 2.7 and Theorem 2.4 of 
    \cite{BP12}. 
\end{proof}
\begin{corollary}\label{Common JH factor}
      Let $\chi$, $\eta : B \rightarrow \f^{\ast}$ be characters. Then
      \begin{enumerate}[label=\emph{(\roman*)}]
            \item   The Jordan-H\"older factors of $\ind_{B}^{\Gamma}(\chi)$ are distinct.
             \item socle of $\ind_{B}^{\Gamma}(\chi) \cong $  socle of 
             $\ind_{B}^{\Gamma}(\eta)$
                       if and only if $\eta = \chi$.
              \item socle of $\ind_{B}^{\Gamma}(\chi) \cong $ cosocle of 
              $\ind_{B}^{\Gamma}(\eta)$
                       if and only if $\eta = \chi^{w}$.   
       \end{enumerate}    
       Therefore $\ind_{B}^{\Gamma}(\chi)$ and $\ind_{B}^{\Gamma}(\eta)$ have a
        common Jordan-H\"older factor if and only if $\eta = \chi$ or $\eta = \chi^{w}$. 
\end{corollary}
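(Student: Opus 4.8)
\textbf{Proof proposal for Corollary~\ref{Common JH factor}.}

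The plan is to read off everything from the exact sequence in Lemma~\ref{Structure of induced} together with the classification of irreducible $\Gamma$-modules as the $p(p-1)$ modules $V_s \otimes D^t$ with $0 \le s \le p-1$ and $1 \le t \le p-1$ (viewing the exponent $t$ modulo $p-1$). First I would write $\chi = \chi_1^i \chi_2^j$ with $1 \le i, j \le p-1$. By Lemma~\ref{Structure of induced}, $\ind_B^\Gamma(\chi)$ has socle $V_{[j-i]} \otimes D^i$ and cosocle $V_{p-1-[j-i]} \otimes D^j$ (when $i = j$ the sequence splits, but the socle and cosocle are still these two summands). For part (i), I need these two JH factors to be non-isomorphic: if $V_{[j-i]}\otimes D^i \cong V_{p-1-[j-i]}\otimes D^j$, then comparing the $V$-indices forces $[j-i] = p-1-[j-i]$, i.e.\ $2[j-i] = p-1$, and comparing determinant twists forces $i \equiv j \pmod{p-1}$, hence $[j-i] = p-1$ (since $[0] = p-1$ by the convention $[n]\in\{1,\dots,p-1\}$), contradicting $2[j-i] = p-1 < 2(p-1)$. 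So the two factors are always distinct.

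For (ii) and (iii), write $\chi = \chi_1^i\chi_2^j$ and $\eta = \chi_1^{i'}\chi_2^{j'}$. The socle of $\ind_B^\Gamma(\eta)$ is $V_{[j'-i']}\otimes D^{i'}$. Two irreducibles $V_s \otimes D^t$ and $V_{s'}\otimes D^{t'}$ are isomorphic iff $s = s'$ and $t \equiv t' \pmod{p-1}$. So socle $\cong$ socle means $[j-i] = [j'-i']$ and $i \equiv i' \pmod{p-1}$; since $1\le i,i'\le p-1$ this gives $i = i'$, and then $[j-i]=[j'-i]$ forces $j \equiv j' \pmod{p-1}$, hence $j = j'$, i.e.\ $\eta = \chi$. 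Conversely $\eta = \chi$ trivially gives isomorphic socles. For (iii): cosocle of $\ind_B^\Gamma(\eta)$ is $V_{p-1-[j'-i']}\otimes D^{j'}$, so socle of $\ind_B^\Gamma(\chi) \cong$ cosocle of $\ind_B^\Gamma(\eta)$ means $[j-i] = p-1-[j'-i']$ and $i \equiv j' \pmod{p-1}$. Now $[j'-i'] = [j'-i] $ using $i \equiv j'$ isn't quite it; rather, note $p - 1 - [j'-i'] = [i'-j']$ because for any integer $n$ not divisible by $p-1$ one has $[n] + [-n] = p-1$, and $[0]+[0] = 2(p-1)\neq p-1-0$ would be the degenerate case but there $p-1-[j'-i']=0$ is impossible as an index of an irreducible only when... — more cleanly: $[j-i] = p-1-[j'-i']$ together with $i' \equiv i$? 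I would instead directly use $\chi^w = \chi_1^j\chi_2^i$ and check that socle of $\ind_B^\Gamma(\chi^w)$ is $V_{[i-j]}\otimes D^j$ while cosocle of $\ind_B^\Gamma(\chi)$ is $V_{p-1-[j-i]}\otimes D^j$, and these agree because $[i-j] = p-1-[j-i]$ whenever $i\ne j$ (and when $i = j$ both equal $p-1$, since $[0] = p-1$ and $p-1-[0] = p-1-(p-1)=0$ — wait, that needs care). The identity I will state is: for all integers $n$, $[n] + [-n] = p-1$ if $p-1 \nmid n$, and $[n] = [-n] = p-1$ if $p-1 \mid n$; in the latter case $V_{[n]} = V_{p-1}$ while $V_{p-1-[n]} = V_0$, which are \emph{different}, so I must handle $i = j$ separately (there $\chi^w = \chi$, socle $= V_{p-1}\otimes D^i$, cosocle $= V_0\otimes D^i$, and indeed socle $\ne$ cosocle, consistent with (iii) since $\chi^w = \chi$). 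Matching determinant twists in (iii) gives $i \equiv j' \pmod{p-1}$ so $i = j'$, and matching $V$-indices with the identity above gives $j = i'$, so $\eta = \chi_1^{i'}\chi_2^{j'} = \chi_1^j\chi_2^i = \chi^w$; the converse is the same computation run backwards.

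The final sentence follows by combining (i)--(iii): since by (i) each $\ind_B^\Gamma(\chi)$ has exactly two JH factors, its socle and cosocle, and they are distinct, two such modules share a JH factor iff one of the four pairings (socle/socle, socle/cosocle, cosocle/socle, cosocle/cosocle) matches; by (ii) and (iii) (and the evident symmetry, applying (ii) to cosocles via $\ind_B^\Gamma(\chi) \cong \ind_B^\Gamma(\chi^w)$ up to the two JH factors, or just noting cosocle of $\chi$ $\cong$ cosocle of $\eta$ iff $\chi^w$-socle $= \eta^w$-socle iff $\chi = \eta$), each such match is equivalent to $\eta = \chi$ or $\eta = \chi^w$. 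The only mild obstacle I foresee is bookkeeping the edge case $i = j$ (equivalently $\chi = \chi^w$, equivalently the split principal series), where $[j-i] = p-1$ rather than $0$; once the identity $[n]+[-n] \in \{p-1, 2(p-1)\}$ and the convention $[0] = p-1$ are invoked carefully, everything else is a direct comparison of indices and determinant twists of irreducibles, using the classification from Lemma~1.5.
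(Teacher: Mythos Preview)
Your approach is exactly what the paper intends: its proof is the single line ``This is an easy consequence of \Cref{Structure of induced},'' and you have spelled out the index-matching that makes it so. One small correction worth making: in the split case $i=j$, the socle (and cosocle) of $\ind_B^\Gamma(\chi)$ is the \emph{entire} module $V_{p-1}\otimes D^i \oplus V_0\otimes D^i$, not just one summand as your parenthetical suggests; this does not damage the argument, since comparing a two-term socle with a one-term socle immediately fails, and comparing two split socles still forces $i=i'$, hence $\chi=\eta$.
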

\begin{proof}
      This is an easy consequence of \Cref{Structure of induced}.
\end{proof}
\subsubsection{The filtration \texorpdfstring{$V_{r}^{(m)}$}{}.}    
    We now prove some results concerning the modules $V_{r}^{(m)}$, for $m \in \N$. 
    We begin by giving a criterion that allows one to check when an arbitrary 
    polynomial $F \in V_{r}$ is divisible by $\theta ^{m}$, 
    slightly generalizing  \cite[Lemma 3.1]{SB18}.
\begin{lemma}\label{divisibility1}
     Let $p\geq 2$, $r>p$ and $F(X,Y) = \sum\limits_{j=0}^{r} a_{j} X^{r-j}Y^{j} \in V_{r}$. 
     Then for any $1 \leq m \leq p$, we have $F \in V_{r}^{(m)}$  if and only if
      the  following hold
       \begin{enumerate}[label=\emph{(\roman*)}]
              \item $a_{j} \neq 0  \;  \Longrightarrow  \; m \leq j \leq r-m$,
              \item $\sum\limits_{j \, \equiv \, l \: \mathrm{mod} ~ (p-1)}^{}  
                    \binom{j}{i} a_{j} = 0 $ in $\f$, $\forall$ $0 \leq i \leq m-1 $ and 
                    $1 \leq l \leq p-1 $.
       \end{enumerate}        
\end{lemma}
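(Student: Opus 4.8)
The plan is to break the single condition $\theta^{m}\mid F$ into $p+1$ ``local'' divisibility conditions, one for each $\f$-rational point of $\mathbb{P}^{1}$, and to convert each of these into linear relations among the $a_{j}$. Recall the factorization $\theta=-X\prod_{\lambda\in\f}(Y-\lambda X)$, so that $\theta$ is a product of the $p+1$ pairwise non-associate linear forms $X$, $Y$ (the $\lambda=0$ factor) and $Y-\lambda X$ for $\lambda\in\fstar$, each occurring with multiplicity one. Since $\f[X,Y]$ is a UFD and these linear forms are pairwise coprime, $\theta^{m}\mid F$ if and only if $X^{m}\mid F$, $Y^{m}\mid F$, and $(Y-\lambda X)^{m}\mid F$ for every $\lambda\in\fstar$. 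So the task is just to transcribe these $p+1$ divisibilities.

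The first two are immediate from inspection of the coefficients: $Y^{m}\mid F$ means $a_{j}=0$ for $0\le j\le m-1$, and $X^{m}\mid F$ means $a_{j}=0$ for $r-m+1\le j\le r$; together this is exactly (i). For a factor $Y-\lambda X$ with $\lambda\in\fstar$, I would use the $\Gamma$-action to move the divisor $Y-\lambda X$ to $Y$. Concretely, conjugating by the unipotent matrix $u_{\lambda}=\begin{psmallmatrix}1&\lambda\\0&1\end{psmallmatrix}$ gives $(u_{\lambda}\cdot F)(X,Y)=F(X,Y+\lambda X)$, and since $Y\mapsto Y+\lambda X$ is a ring automorphism of $\f[X,Y]$, we have $(Y-\lambda X)^{m}\mid F$ if and only if $Y^{m}\mid(u_{\lambda}\cdot F)$. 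Expanding by the binomial theorem, $u_{\lambda}\cdot F=\sum_{i}\bigl(\sum_{j}\binom{j}{i}\lambda^{j-i}a_{j}\bigr)X^{r-i}Y^{i}$, so this divisibility says $\sum_{j}\binom{j}{i}\lambda^{j-i}a_{j}=0$ for $0\le i\le m-1$; multiplying through by the unit $\lambda^{i}$ it becomes $\sum_{j}\binom{j}{i}\lambda^{j}a_{j}=0$ for $0\le i\le m-1$.

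It remains to combine these over $\lambda\in\fstar$. Fix $i$. Since $\lambda^{j}$ depends only on the residue of $j$ modulo $p-1$ for $\lambda\in\fstar$, one has $\sum_{j}\binom{j}{i}\lambda^{j}a_{j}=\sum_{l=1}^{p-1}c^{(i)}_{l}\lambda^{l}$ where $c^{(i)}_{l}=\sum_{j\equiv l\,(p-1)}\binom{j}{i}a_{j}$, a polynomial in $\lambda$ of degree at most $p-1$ with vanishing constant term. Demanding that it vanish on all of $\fstar$, together with its automatic vanishing at $\lambda=0$, forces it to have $p$ distinct roots; being of degree $<p$ it must then be identically zero, i.e.\ $c^{(i)}_{l}=0$ for all $1\le l\le p-1$. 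Conversely, if all these coefficients vanish the polynomial is zero and the divisibility holds for every $\lambda\in\fstar$. Letting $i$ range over $\{0,\dots,m-1\}$ this is precisely (ii), and combining with the two conditions of the previous paragraph via the unique-factorization reduction yields the lemma.

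The argument is mostly routine bookkeeping; the one spot that demands genuine care is the root count in the last step, where it is essential that the polynomial $\sum_{l=1}^{p-1}c^{(i)}_{l}\lambda^{l}$ carries no constant term, so that its vanishing on $\fstar$ already supplies $p$ roots against the degree bound $p-1$. One should also verify carefully that the linear forms dividing $\theta$ are pairwise non-associate, which is what legitimizes the passage to the $p+1$ separate divisibility conditions. Finally, I note that the hypothesis $m\le p$ appears not to be needed for the argument above; it is simply the range relevant to the number-theoretic applications.
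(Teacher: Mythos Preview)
Your proof is correct and follows the same overall strategy as the paper: factor $\theta^m$ into powers of the $p+1$ pairwise-coprime linear forms and translate each local divisibility into linear conditions on the $a_j$. The paper dehomogenizes to $f(z)=F(1,z)$ and invokes the ordinary-derivative test $(z-\lambda)^m\mid f \iff f(\lambda)=f'(\lambda)=\cdots=f^{(m-1)}(\lambda)=0$, then appeals to the non-vanishing of a Vandermonde determinant to conclude that each sum $c_l^{(i)}$ vanishes; your substitution $Y\mapsto Y+\lambda X$ accomplishes the same via binomial (equivalently, Hasse) coefficients, and your root-counting argument replaces the Vandermonde step. Your final remark is also right: the paper genuinely needs $m\le p$, since the ordinary-derivative criterion for $m$-fold divisibility breaks down in characteristic $p$ once $m>p$ (and one must divide by $i!$ at the end), whereas your approach reading off the Taylor coefficients directly is valid for all $m\ge 1$.
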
 
\begin{proof}
       We follow the proof of \cite[Lemma 3.1]{SB18}.
       Consider $f(z) = \sum_{j=0}^{r} a_{j} z^{j} \in \f [z]$, so that $F(X,Y)= 
       X^{r}f(Y/X)$.  Note that
       \begin{align*}
           \theta^{m} \mid F(X,Y) 
           & \Longleftrightarrow F(X,Y) =(-XY)^{m} \prod_{\lambda \in \fstar} 
             (Y-\lambda X)^{m} F_{1}(X,Y), ~ \mathrm{for ~ some} ~ F_{1} 
             \in V_{r-mp-m}  \\ 
           & \Longleftrightarrow X^{m}, Y^{m} \mid F(X,Y) ~ 
             \mathrm{and} ~ f(Y/X) = 
             (-1)^{m}\prod_{\lambda \in \fstar} (Y/X - \lambda)^{m} F_{1}(1,Y/X) \\
           & \Longleftrightarrow  X^{m},Y^{m} \mid F(X,Y) ~ \mathrm{and} ~ f(z) 
             = \prod_{\lambda \in \fstar} (z - \lambda)^{m} f_{1}(z), ~ \mathrm{for ~ some} 
             ~ f_{1} \in \f[z]  \\ & \Longleftrightarrow  X^{m}, Y^{m} \mid F(X,Y)~ 
             \mathrm{and} ~ (z - \lambda )^{m} \mid f(z), \  \forall \ \lambda \in \fstar.
       \end{align*}
       The conditions $X^{m},Y^{m} \mid F(X,Y)$ are equivalent to 
       $a_{i} \neq 0 \Longrightarrow
       m \leq i \leq  r-m$, and $(z- \lambda)^{m} \mid f(z)$ if and only if 
       $f(\lambda) = f'(\lambda)= \cdots = f^{(m-1)} (\lambda) = 0$ in $\f$.
       For $i \geq 0$ and  $\lambda \in \f^{\ast}$, 
       we have
        \begin{align*}
              f^{(i)}(\lambda) 
              & = \sum_{j} j (j-1) \cdots (j-i+1)a_{j} \lambda^{j-i} 
                 = \sum_{j}  i ! \binom{j}{i} a_{j} \lambda^{j-i} \\
              &=  \sum_{l=1}^{p-1} \lambda^{l-i} \sum_{j \, \equiv \,  l ~ \mathrm{mod} ~
               (p-1)} i ! \binom{j}{i} a_{j}.
        \end{align*}
        Since $f^{(i)}(\lambda) =0$, $\forall ~ \lambda \in \f^{\ast}$ and  $0 \leq i 
        \leq m-1$,  by the  non-vanishing of  the   Vandermonde  determinant  
        and $p \nmid i!$, we obtain (ii). This completes the proof.
\end{proof}
\begin{corollary}     
       Let $p\geq 2$, $r > p$ and $F(X,Y) = \sum\limits_{j=0}^{r} a_{j} X^{r-j}Y^{j} \in V_{r}$. 
       For $1 \leq l \leq p-1$, define
       $F_{l}(X,Y) = \sum\limits_{j \equiv l~ \mathrm{mod}~(p-1)} 
       a_{j} X^{r-j}Y^{j} \in V_{r}$.   
       Them, for $1 \leq m \leq p$, we have 
       $F(X,Y) \in V_{r}^{(m)}$ if and only if $F_{1}(X,Y), \ldots, F_{p-1}(X,Y)
       \in V_{r}^{(m)}$.
\end{corollary}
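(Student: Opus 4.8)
The plan is to deduce everything from the criterion of \Cref{divisibility1}, using that both conditions (i) and (ii) appearing there decouple over the residue classes modulo $p-1$.

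First I would record the trivial grouping $F = F_{1} + \cdots + F_{p-1}$: every index $j \in \{0, 1, \ldots, r\}$ is congruent modulo $p-1$ to a unique element of $\{1, \ldots, p-1\}$ (with $0$ corresponding to $p-1$), so the supports $S_{l} := \{ j : a_{j} \neq 0, \ j \equiv l \bmod (p-1) \}$ partition the support $S := \{ j : a_{j} \neq 0\}$ of $F$. Each $F_{l}$ is a homogeneous polynomial of degree $r$ lying in $V_{r}$ (or is zero), so \Cref{divisibility1} applies verbatim to $F_{l}$ with the same $p$, $r$ and $m$, and the assertion of the corollary is just the conjunction of the two equivalences below.

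Condition (i) of \Cref{divisibility1} for $F$ reads $S \subseteq \{m, m+1, \ldots, r-m\}$, and for $F_{l}$ it reads $S_{l} \subseteq \{m, \ldots, r-m\}$; since the $S_{l}$ partition $S$, the former holds if and only if all of the latter do. For condition (ii), fix $l$ and write $a^{(l)}_{j}$ for the coefficients of $F_{l}$, so $a^{(l)}_{j} = a_{j}$ when $j \equiv l \bmod (p-1)$ and $a^{(l)}_{j} = 0$ otherwise. For $1 \le l' \le p-1$ and $0 \le i \le m-1$, the sum $\sum_{j \equiv l' \bmod (p-1)} \binom{j}{i} a^{(l)}_{j}$ is empty (hence $0$ in $\f$) unless $l' = l$, in which case it equals $\sum_{j \equiv l \bmod (p-1)} \binom{j}{i} a_{j}$, i.e.\ the $(i,l)$-instance of condition (ii) for $F$. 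Thus condition (ii) holds for $F$ (over all $0 \le i \le m-1$ and $1 \le l \le p-1$) if and only if condition (ii) holds for each of $F_{1}, \ldots, F_{p-1}$. Combining the two equivalences yields $F \in V_{r}^{(m)} \iff F_{l} \in V_{r}^{(m)}$ for all $l$, as desired.

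There is no genuine obstacle here: once \Cref{divisibility1} is in hand, the argument is pure bookkeeping. The only points needing a word of care are the index $j = 0$ (which sits in the class $l = p-1$ by our convention, and is in any case killed by condition (i) as soon as $m \ge 1$) and the degenerate case $F_{l} = 0$, which trivially lies in $V_{r}^{(m)}$ and is consistent with the equivalences above.
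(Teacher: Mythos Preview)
Your proof is correct and is exactly the argument the paper has in mind: the corollary is stated immediately after \Cref{divisibility1} without proof, precisely because both conditions (i) and (ii) there are already indexed by the residue classes $l$ modulo $p-1$, so the equivalence is immediate bookkeeping as you spell out.
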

For $r \geq p $,   the map  $F  \mapsto 
\left( \gamma \mapsto  F((0,1) \gamma) \right)$ 
defines a $\Gamma$-linear isomorphism from $V_{r}/V_{r}^{(1)}$
to $\ind_{B}^{\Gamma}(\chi_{2}^{r})$,
see, for example,  \cite[Lemma 2.4]{sandra}. 
We generalize this result as follows:
 \begin{lemma}\label{induced and star}
         For $p \geq 2$, $m \geq 0$ and $r \geq  m(p+1)+p$, 
         we have
         $V_{r}^{(m)}/ V_{r}^{(m+1)}  \cong   \ind_{B}^{\Gamma} (\chi_{1}^{m} \chi_{2}^{r-m})$, as 
         $\Gamma$-modules. Furthermore, if $r< m(p+1)+p $, then
         $V_{r}^{(m)}/V_{r}^{(m+1)} \hookrightarrow
         \ind_{B}^{\Gamma}(\chi_{1}^{m} \chi_{2}^{r-m})$.
 \end{lemma}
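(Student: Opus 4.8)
The plan is to exhibit an explicit $\Gamma$-equivariant isomorphism, using $\theta^m$ to identify $V_r^{(m)}$ with (a twist of) $V_{r-m(p+1)}$ and then applying the already-recalled description of $V_s/V_s^{(1)}$ as a principal series. First I would recall from the preliminaries that multiplication by $\theta^m$ gives a $\Gamma$-isomorphism $V_{r-m(p+1)}\otimes D^m \xrightarrow{\ \sim\ } V_r^{(m)}$ (this is \cite[(4.1)]{Glover}, quoted above), and that this isomorphism carries the submodule $V_{r-m(p+1)}^{(1)}\otimes D^m$ onto $V_r^{(m+1)}$, since $\theta^{m+1}\mid F$ exactly when $\theta\mid (F/\theta^m)$. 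Hence passing to quotients yields a $\Gamma$-isomorphism
\[
   V_r^{(m)}/V_r^{(m+1)} \;\cong\; \bigl(V_{r-m(p+1)}/V_{r-m(p+1)}^{(1)}\bigr)\otimes D^m.
\]
Setting $s = r-m(p+1)$, the hypothesis $r \geq m(p+1)+p$ is exactly $s\geq p$, which is the range in which the cited result (\cite[Lemma 2.4]{sandra}, recalled just before the statement) gives $V_s/V_s^{(1)} \cong \ind_B^\Gamma(\chi_2^{\,s})$.

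Next I would combine these: $V_r^{(m)}/V_r^{(m+1)} \cong \ind_B^\Gamma(\chi_2^{\,s})\otimes D^m$ with $s = r-m(p+1)$. It remains to massage the character. Twisting an induced representation by a character of $\Gamma$ that restricts to $B$ commutes with induction: $\ind_B^\Gamma(\sigma)\otimes D^m \cong \ind_B^\Gamma(\sigma\cdot D^m|_B)$. Since $D|_B = \chi_1\chi_2$ on $B$ (the determinant of an upper triangular matrix is $ad$), we get $\chi_2^{\,s}\cdot D^m|_B = \chi_1^{\,m}\chi_2^{\,s+m} = \chi_1^{\,m}\chi_2^{\,r-m}$, using $s+m = r-m(p+1)+m = r - mp \equiv r-m \bmod (p-1)$ — and here one only needs the exponents modulo $p-1$ since $\chi_1,\chi_2$ take values in $\f^\ast$. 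This gives precisely $V_r^{(m)}/V_r^{(m+1)} \cong \ind_B^\Gamma(\chi_1^{\,m}\chi_2^{\,r-m})$, as desired.

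For the second assertion, when $r < m(p+1)+p$ we still have the $\Gamma$-isomorphism $V_r^{(m)}/V_r^{(m+1)} \cong (V_s/V_s^{(1)})\otimes D^m$ with $s = r - m(p+1)$, but now $0 \leq s < p$ (note $s\geq 0$ holds as long as $V_r^{(m)}\neq 0$; if $V_r^{(m)}=0$ the embedding is trivial). In this range $V_s$ is irreducible, so $V_s/V_s^{(1)} = V_s$, and one checks that the map $F\mapsto(\gamma\mapsto F((0,1)\gamma))$ from the cited lemma still makes sense and is injective, realizing $V_s \hookrightarrow \ind_B^\Gamma(\chi_2^{\,s})$ as the socle (the sub-$\Gamma$-module spanned by functions supported appropriately); twisting by $D^m$ and rewriting the character as before yields $V_r^{(m)}/V_r^{(m+1)} \hookrightarrow \ind_B^\Gamma(\chi_1^{\,m}\chi_2^{\,r-m})$.

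The main obstacle is nailing down that the map $F\mapsto(\gamma\mapsto F((0,1)\gamma))$ continues to be injective (rather than an isomorphism) in the degenerate range $s<p$, and checking that its image is the correct $\Gamma$-stable subspace of the principal series; this amounts to verifying that $F$ maps to $0$ only if $F\in V_s^{(1)}$, which for $s<p$ forces $F=0$, together with unwinding the definitions of $[\gamma,v]$ from \Cref{Structure of induced}. Everything else — the $\theta^m$-multiplication isomorphism, the compatibility of the $V^{(\bullet)}$ filtrations, and the character bookkeeping mod $p-1$ — is routine given the results already assembled in the Preliminaries section.
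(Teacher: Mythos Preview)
Your proposal is correct and follows essentially the same route as the paper: both use Glover's identification $V_r^{(m)}/V_r^{(m+1)}\cong (V_{r'}/V_{r'}^{(1)})\otimes D^m$ with $r'=r-m(p+1)$, invoke the known description $V_{r'}/V_{r'}^{(1)}\cong\ind_B^\Gamma(\chi_2^{r'})$ for $r'\geq p$, and then rewrite the character after twisting by $D^m$. The only minor difference is in the degenerate range $0\leq r'<p$: the paper observes directly that $V_{r'}\otimes D^m$ is (contained in) the socle of $\ind_B^\Gamma(\chi_1^m\chi_2^{r-m})$ via \Cref{Structure of induced}, whereas you argue that the explicit map $F\mapsto(\gamma\mapsto F((0,1)\gamma))$ remains injective because its kernel is $V_{r'}^{(1)}=0$ in this range --- both are valid and essentially equivalent.
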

 \begin{proof}
        Let  $r' = r-m(p+1)$, for $r \geq m(p+1)$. Assume $r \geq m(p+1)+p$. 
        By \cite[(4.1)]{Glover}, we see that
        $V_{r}^{(m)}/ V_{r}^{(m+1)} \cong
        V_{r'}/V_{r'}^{(1)}  \otimes D^{m}$ as $\Gamma$-modules. Since  
        $r' \geq p$,   we have
         $V_{r'}/V_{r'}^{(1)} \cong \ind_{B}^{\Gamma}
         (\chi_{2}^{r'})$. Hence
    \begin{equation} \label{ps local}
         V_{r}^{(m)}/V_{r}^{(m+1)} \cong  \ind_{B}^{\Gamma}
         (\chi_{2}^{r'}) \otimes D^{m}  \cong  \ind_{B}^{\Gamma}
         (\chi_{2}^{r'} \otimes D^{m}) = \ind_{B}^{\Gamma}(\chi_{1}^{m}
         \chi_{2}^{r-m}).  
    \end{equation}
         This proves the first assertion. If $r < m(p+1)$,
then
         $V_{r}^{(m)} =0$, 
         so the
         second assertion is trivial in this case. Assume
         $m(p+1) \leq r < m(p+1)+p $. Then
         $V_{r}^{(m)} \cong V_{r'} \otimes D^{m}$ and
         $V_{r}^{(m+1)} =0$, so 
         $V_{r}^{(m)}/V_{r}^{(m+1)} \cong   V_{r'} \otimes D^{m}$, which is  
         $V_{[r-2m]} \otimes D^{m}$ if $1 \leq r' \leq p-1$,
         since $r' \equiv r-2m \mod (p-1)$, and is $V_0 \otimes D^m$ if $r' = 0$. In either case,
         this submodule is contained in the socle of 
         the principal series representation 
         $\ind_{B}^{\Gamma}(\chi_{1}^{m} \chi_{2}^{r-m})$ in \eqref{ps local},
         by \Cref{Structure of induced} 
         (the latter because we are in the split case of that lemma). 
 \end{proof}
 
It follows from the lemma that
$\dim V_{r}/V_{r}^{(m)} = \sum_{n=0}^{m-1} \dim V_{r}^{(n)}/V_{r}^{(n+1)}
= m(p+1)$, for $m \geq 1$ and $r \geq m(p+1)-1$.  We have:

\begin{lemma} \label{basis}
Let $p \geq 2$, $m \geq 1$ and $r \geq m(p+1)-1$. Then
\[
  \Lambda = \lbrace X^{r-i}Y^{i}+ V_{r}^{(m)} : 0 \leq i \leq m(p+1)-(m+1) \rbrace \cup
                 \lbrace X^{i}Y^{r-i}+ V_{r}^{(m)} : 0 \leq i < m \rbrace
\]
is a basis of $V_{r}/V_{r}^{(m)}$.  
\end{lemma}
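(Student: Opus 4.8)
The plan is to prove that $\Lambda$ spans $V_r/V_r^{(m)}$; this suffices, because one checks immediately that $\lvert \Lambda \rvert = mp + m = m(p+1)$ — the two listed families of monomials being disjoint exactly because the hypothesis $r \geq m(p+1)-1$ forces $mp-1 < r-m+1$ — and $\dim V_r/V_r^{(m)} = m(p+1)$ by the dimension count recorded just before the statement. (When $r = m(p+1)-1$ we have $V_r^{(m)} = 0$ and $\Lambda$ is literally the set of all $m(p+1) = r+1$ monomials, so that case is trivial; thus I may assume $r \geq m(p+1)$ from now on.)

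The key input is the explicit expansion of a power of the Dickson invariant. Since $\theta = X^pY - XY^p$, the binomial theorem gives
\[
  \theta^m = \sum_{k=0}^{m} \binom{m}{k}(-1)^k\, X^{pm-(p-1)k}\,Y^{m+(p-1)k},
\]
whose term of largest $Y$-degree is $(-1)^m X^m Y^{pm}$. Hence, for each $l$ with $0 \leq l \leq r-m(p+1)$, the element $\theta^m X^{l}Y^{r-m(p+1)-l} \in V_r^{(m)}$ has the shape
\[
  (-1)^m X^{m+l}Y^{r-m-l} \;+\; \bigl(\text{combination of monomials } X^{r-j'}Y^{j'} \text{ with } j' < r-m-l\bigr),
\]
where in fact the $j'$ occurring on the right lie in $\{\,r-m-l-(p-1),\ r-m-l-2(p-1),\ \ldots,\ r-m-l-m(p-1)\,\}$, irrespective of what the lower binomial coefficients do modulo $p$. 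Reducing modulo $V_r^{(m)}$ and writing $j := r-m-l$, as $l$ runs over $0 \leq l \leq r-m(p+1)$ the index $j$ runs over $mp \leq j \leq r-m$, and we obtain, for every such $j$,
\[
  X^{r-j}Y^{j} \;\equiv\; \text{(linear combination of $X^{r-j'}Y^{j'}$ with $m \leq j' < j$)} \pmod{V_r^{(m)}}.
\]

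From this, a straightforward induction on $j$ starting at $j = mp$ shows that each $X^{r-j}Y^j$ with $mp \leq j \leq r-m$ lies, modulo $V_r^{(m)}$, in the span of $\{\,X^{r-j'}Y^{j'} : 0 \leq j' \leq mp-1\,\}$: in the base case $j = mp$ every occurring $j'$ satisfies $m \leq j' \leq mp-(p-1) < mp$, and in the inductive step any occurring $j'$ not already $\leq mp-1$ satisfies $mp \leq j' < j \leq r-m$ and is covered by the inductive hypothesis. Consequently every monomial $X^{r-j}Y^j$ with $0 \leq j \leq r$ lies modulo $V_r^{(m)}$ in the span of $\Lambda$: those with $0 \leq j \leq mp-1$ or $r-m+1 \leq j \leq r$ are members of $\Lambda$ by definition, while the remaining middle monomials $mp \leq j \leq r-m$ were just reduced into the first family $\{\,X^{r-i}Y^i : 0 \leq i \leq m(p+1)-(m+1)\,\}$. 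Hence $\Lambda$ spans $V_r/V_r^{(m)}$, and by the dimension formula it is a basis.

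The only delicate point I anticipate is the bookkeeping of exponents: one must check that the reduction of the middle monomials terminates inside the range $\{0,\ldots,mp-1\}$ and never spills over into the top family $\{\,X^iY^{r-i} : 0 \leq i < m\,\}$. This is precisely what the hypothesis $r \geq m(p+1)-1$ secures, and it also explains why the $m$ monomials $X^iY^{r-i}$ with $i < m$ cannot be dropped from $\Lambda$. (Note that, although the nearby divisibility criterion is stated only for $m \leq p$, the argument above needs no such restriction, since it uses only that the leading coefficient $(-1)^m$ of $\theta^m$ is a unit.)
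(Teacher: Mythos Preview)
Your proof is correct and follows essentially the same approach as the paper's: both arguments observe that $\lvert \Lambda \rvert = m(p+1) = \dim V_r/V_r^{(m)}$, so it suffices to span, and both do this by expanding $\theta^m$ (the paper uses $(-\theta)^m = (XY^p - X^pY)^m$) times a monomial to rewrite each middle monomial $X^{r-j}Y^j$ with $mp \leq j \leq r-m$ in terms of monomials with strictly smaller $Y$-degree, then induct downward on $j$. Your presentation is more detailed in tracking the exponent ranges, but the argument is identical in substance.
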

\begin{proof}
Since the cardinality of $\Lambda$ equals 
$\dim V_{r}/V_{r}^{(m)}$, it is enough to show that
$\Lambda$ spans $V_{r}/V_{r}^{(m)} $ as an $\f$-vector space. For this we 
 induct on $i$ to show that  $X^{r-i}Y^{i}+ V_{r}^{(m)}$ 
 belongs to the $\f$-span of $\Lambda$, for all $0\leq i \leq r$.
 If $0 \leq i \leq m(p+1)-(m+1)$ or $r-m < i \leq r$, then 
 $X^{r-i}Y^{i}+ V_{r}^{(m)}$ belongs to 
$\Lambda$.  Assume that $mp = m(p+1)-m \leq i \leq r-m$, then
\[
   X^{r-i}Y^{i} - X^{r-i-m} Y^{i-mp}(XY^{p}-X^{p}Y)^{m} =
       - \sum_{j=1}^{m} (-1)^{j} \binom{m}{j} X^{r-i+jp-j} Y^{i-jp+j}.
\] 
Observe that the degree of $Y$ on right hand side above is
strictly less than $i$.  By induction, the right hand side modulo 
$V_{r}^{(m)}$ belongs to the $\f$-span of $\Lambda$, so  $\Lambda$
 is an $\f$-basis of $V_{r}/V_{r}^{(m)}$.
\end{proof}

\begin{lemma}\label{Breuil map}
    Let $p \geq 2$, $m \geq 0$ and $m(p+1)+p \leq r \equiv a ~\mathrm{mod}~(p-1)$
     with $1 \leq a \le p-1$. Then we
     have a short exact sequence of $\Gamma$-modules
    \begin{align}\label{exact sequence Vr}
	         0 \rightarrow  V_{[a-2m]} \otimes D^{m} \rightarrow V_{r}^{(m)}/ V_{r}^{(m+1)}
	          \rightarrow
	         V_{p-1-[a-2m]} \otimes D^{a-m} \rightarrow 0
	  \end{align}
	 and this sequence splits if and only if $a \equiv 2m$ $\mathrm{mod}~(p-1)$.
	If $r' = r-m(p+1)$ and $a' =[a-2m]$, 
    then the rightmost map in \eqref{exact sequence Vr} is given on a basis by
                 $$
                          \theta^{m} X^{r'-i}Y^{i}~\mathrm{mod}~V_{r}^{(m+1)} \longmapsto 
                          \begin{cases}
                              0,                                                 & \text{if} ~ 0 \leq i < a' \text{ or } i = r', \\
                             (-1)^{r'-i} \binom{p-1-a'}{i-a'} X^{p-1-i}Y^{i-a'}, & \text {if} ~ a' \leq i \leq p-1. 
                          \end{cases}   
                   $$       
\end{lemma}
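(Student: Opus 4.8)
The plan is to derive this from \Cref{induced and star} and the explicit description of principal series in \Cref{Structure of induced}, transporting everything along the isomorphism \eqref{ps local}. First I would recall that $r \geq m(p+1)+p$ gives $V_{r}^{(m)}/V_{r}^{(m+1)} \cong \ind_{B}^{\Gamma}(\chi_{1}^{m}\chi_{2}^{r-m})$ as $\Gamma$-modules. Applying \Cref{Structure of induced} with $i = m$, $j = r-m$ (reduced mod $p-1$ to lie in $\{1,\ldots,p-1\}$, using that $D$ has order $p-1$), and noting $[j-i] = [r-2m] = [a-2m] = a'$ since $r \equiv a \bmod (p-1)$, yields the short exact sequence $0 \to V_{a'}\otimes D^{m} \to \ind_{B}^{\Gamma}(\chi_{1}^{m}\chi_{2}^{r-m}) \to V_{p-1-a'}\otimes D^{r-m} \to 0$, and $r - m \equiv a - m \bmod (p-1)$ gives the exponent $a-m$ on $D$ in the quotient. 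The splitting criterion $i = j$ becomes $m \equiv r - m \bmod (p-1)$, i.e. $a \equiv 2m \bmod(p-1)$, as claimed.

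The remaining, and genuinely computational, task is to verify the explicit formula for the rightmost map on the monomial basis $\theta^{m}X^{r'-i}Y^{i} \bmod V_{r}^{(m+1)}$. Here I would unwind the chain of isomorphisms in \eqref{ps local}: $V_{r}^{(m)}/V_{r}^{(m+1)} \cong (V_{r'}/V_{r'}^{(1)})\otimes D^{m}$ via division by $\theta^{m}$, then $V_{r'}/V_{r'}^{(1)} \cong \ind_{B}^{\Gamma}(\chi_{2}^{r'})$ via $F \mapsto (\gamma \mapsto F((0,1)\gamma))$. Under this, $X^{r'-i}Y^{i} \bmod V_{r'}^{(1)}$ maps to the function $\gamma \mapsto (\text{first coord of }(0,1)\gamma)^{r'-i}(\text{second coord})^{i}$, which I would express in the $[\gamma,v]$-basis; concretely one checks it is a combination of the elements $\sum_{\lambda}\lambda^{l}\left(\begin{smallmatrix}\lambda & 1\\ 1 & 0\end{smallmatrix}\right)[1,e_{\chi}]$ appearing in \Cref{Structure of induced}(i)–(ii). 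Then applying the surjection onto $V_{p-1-a'}\otimes D^{r-m}$ from part (ii) of that lemma, and carefully tracking the $D^{m}$ twist and the sign/binomial bookkeeping, should produce the stated formula: vanishing for $0 \leq i < a'$ (these land in the sub) and for $i = r'$ (the ``highest'' monomial in $Y$, which by the Weyl-flip symmetry also lies in the sub), and $(-1)^{r'-i}\binom{p-1-a'}{i-a'}X^{p-1-i}Y^{i-a'}$ for $a' \leq i \leq p-1$.

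An alternative, possibly cleaner route for the explicit map: rather than chasing induced-representation bases, work directly with the $M$-module structure. The quotient $V_{p-1-a'}\otimes D^{a-m}$ is a quotient of $V_{r}^{(m)}/V_{r}^{(m+1)}$, so it suffices to exhibit an $M$-equivariant surjection $V_{r}^{(m)}/V_{r}^{(m+1)} \to V_{p-1-a'}$ (up to the determinant twist) and evaluate it on $\theta^{m}X^{r'-i}Y^{i}$. Since $V_{r}^{(m)}/V_{r}^{(m+1)} \cong (V_{r'}/V_{r'}^{(1)})\otimes D^{m}$, one reduces to the case $m = 0$: describe the cosocle map $V_{r'}/V_{r'}^{(1)} \to V_{p-1-a'}$. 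This map is essentially the classical one sending $X^{r'-i}Y^{i}$ to (a scalar times) $X^{p-1-i}Y^{i-a'}$ when $a' \leq i \leq p-1$ and to $0$ otherwise; the scalar $(-1)^{r'-i}\binom{p-1-a'}{i-a'}$ can be pinned down by testing $M$-equivariance against a single unipotent or singular matrix, e.g. computing the action of $\left(\begin{smallmatrix}1 & 1\\ 0 & 1\end{smallmatrix}\right)$ or $\left(\begin{smallmatrix}0 & 0\\ 0 & 1\end{smallmatrix}\right)$ on both sides.

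The main obstacle I anticipate is purely the second step — getting the signs and binomial coefficients exactly right in the explicit formula for the rightmost map, since the normalization depends delicately on the chosen basis vectors $[1,e_{\chi}]$ and $e_{\chi}$ in \Cref{Structure of induced}, on the direction of the isomorphism $F \mapsto (\gamma \mapsto F((0,1)\gamma))$, and on the $D^{m}$ twist. The abstract exact sequence and splitting criterion, by contrast, are immediate from the cited lemmas. I would therefore organize the proof so that the formal part is dispatched in two lines, and devote the bulk of the argument to a careful basis-level computation, double-checking it on the boundary cases $i = a'$ (where $\binom{p-1-a'}{0} = 1$) and $i = p-1$ (where the image is $X^{0}Y^{p-1-a'}$, the ``lowest'' monomial, consistent with landing in the cosocle).
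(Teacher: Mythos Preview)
Your proposal is correct. The derivation of the short exact sequence and the splitting criterion is exactly what the paper does: apply \Cref{induced and star} together with \Cref{Structure of induced}. For the explicit formula, your ``alternative, cleaner route'' is also in the same spirit as the paper --- reduce to $m=0$ via $V_{r}^{(m)}/V_{r}^{(m+1)} \cong (V_{r'}/V_{r'}^{(1)})\otimes D^{m}$ and describe the cosocle map on monomials --- but the paper takes one further shortcut that you do not mention. Rather than pinning down the scalar $(-1)^{r'-i}\binom{p-1-a'}{i-a'}$ by a direct $M$-equivariance check (which would work, but is the computation you rightly flag as the main obstacle), the paper invokes Glover's periodicity isomorphism \cite[(4.2)]{Glover}
\[
V_{a'+p-1}/V_{a'+p-1}^{(1)} \;\stackrel{\sim}{\longrightarrow}\; V_{r'}/V_{r'}^{(1)},
\qquad X^{a'+p-1-i}Y^{i} \longmapsto X^{r'-i}Y^{i},
\]
to reduce from arbitrary $r'$ to the fixed small degree $a'+p-1$, and then simply cites \cite[Lemma~5.1.3]{Breuil} for the cosocle map in that degree. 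So the paper's proof is shorter and avoids the sign/binomial bookkeeping altogether by outsourcing it; your approach is more self-contained and would rederive Breuil's formula from scratch.
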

\begin{proof}
      The exact sequence \eqref{exact sequence Vr} follows  from \Cref{induced and star} 
      and \Cref{Structure of induced}. The explicit description
       of the rightmost map in \eqref{exact sequence Vr} follows from results in \cite{Glover} and \cite{Breuil}.
      Indeed, by \cite[(4.1)]{Glover}, we see that $V_{r}^{(m)}/ V_{r}^{(m+1)} \cong 
        V_{r'}/V_{r'}^{(1)}  \otimes D^{m}$ as $\Gamma$-modules. 
       By the proof of  \cite[(4.2)]{Glover}, we see that the map  
       $ \psi: V_{a'+p-1}/V_{a'+p-1}^{(1)} \rightarrow V_{r'}/V_{r'}^{(1)}$ induced by 
       $X^{a'+p-1-i} Y^{i} 
       \mapsto X^{r'-i}Y^{i}$, 
       for $0 \leq i \leq p-1$, 
       and $Y^{a'+p-1} \mapsto Y^{r'}$, is an $M$-linear isomorphism. Thus, the  composition
       $\phi$ $$ V_{r}^{(m)}/ V_{r}^{(m+1)}   \cong  V_{r'}/V_{r'}^{(1)}  \otimes D^{m}
        \stackrel{\psi^{-1}}{\longrightarrow} 
        V_{a'+p-1}/V_{a'+p-1}^{(1)} \otimes D^{m},$$
        given by  
        $\theta^{m} X^{r'-i} Y^{i} \mapsto X^{a'+p-1-i} Y^{i}$, 
        for $0 \leq i \leq p-1$,  and $ \theta^{m} Y^{r'} \mapsto Y^{a'+p-1}$, is a 
        $\Gamma$-linear isomorphism. 
        Applying $\phi$ and using
        \cite[Lemma 5.1.3]{Breuil}  for 
       $V_{a'+p-1}/V_{a'+p-1}^{(1)}$, we obtain the explicit description
       of the rightmost map in \eqref{exact sequence Vr}.
\end{proof}
Let $0 \leq m \leq p-1$ and $F(X,Y) \in V_{r}^{(m)}$ be a polynomial
such that the coefficients of $X^{r-j} Y^{j}$ are non-zero only if 
  $j  \equiv l \mod (p-1)$
for some $l$. The following lemma gives another useful way to compute 
the image of $F(X,Y)$ under the 
rightmost map of the exact sequence \eqref{exact sequence Vr}.
\begin{lemma}\label{breuil map quotient}
	Let $0 \leq m \leq p-1$ and $m(p+1) +p\leq r \equiv a \mod p$ with $1 \leq a 
	\leq p-1$. Let $F(X,Y) = \sum\limits_{j=0}^{r} a_{j} X^{r-j} Y^{j}  \in V_{r}$ with 
	$a_{j} \neq 0$ only if $j \equiv l \mod (p-1)$, for some $l$.  If $F(X,Y) \in V_{r}^{(m)}$,
	 then
	\begin{align*} 
	        F(X,Y) \equiv ~  \theta ^{m} G(X,Y) +  \theta^{m}  \left( a_{m} X^{r-m(p+1)} +
	        (-1)^{m} a_{r-m} Y^{r-m(p+1)} \right) \mod V_{r}^{(m+1)},
	\end{align*}
	where $$G(X,Y)= \left( \sum_{j=0}^{r} a_{j} \binom{j}{m} - a_{m}+ 
	(-1)^{m+1} a_{r-m} \right) X^{r-m(p+1)-[l-m]}Y^{[l-m]}.$$ Further,
	the image of $F(X,Y)$ under the quotient map 
	$V_{r}^{(m)}/ V_{r}^{(m+1)} \rightarrow V_{p-1-[a-2m]} \otimes D^{a-m} $
	 in \eqref{exact sequence Vr}  is the same as the image of $\theta^{m}G(X,Y)$.
	
\end{lemma}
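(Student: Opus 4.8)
The plan is to divide $F$ by $\theta^m$, identify three coefficients of the quotient in terms of the $a_j$'s, reduce the quotient modulo $\theta$ using the explicit basis of \Cref{basis}, and multiply back. Concretely, first I would write $F = \theta^m H$ with $H \in V_{r'}$, where $r' := r - m(p+1) \ge p$ by hypothesis; this is legitimate since $F \in V_r^{(m)}$ and $\f[X,Y]$ is a domain. Dehomogenising as in the proof of \Cref{divisibility1} — set $F = X^r f(Y/X)$, $H = X^{r'} g(Y/X)$ with $f = \sum_j a_j z^j$, $g = \sum_j h_j z^j$ — and using $\theta = X^{p+1}\cdot (Y/X)\bigl(1-(Y/X)^{p-1}\bigr)$, the relation $F = \theta^m H$ becomes the polynomial identity
\[
  f(z) = z^m\,(1-z^{p-1})^m\,g(z).
\]
The support hypothesis on $F$ says $F(X,\zeta Y) = \zeta^{l}F(X,Y)$ for all $\zeta\in\f^\ast$; since $\theta(X,\zeta Y)=\zeta\,\theta(X,Y)$, cancelling $\theta^m$ forces $H(X,\zeta Y)=\zeta^{l-m}H(X,Y)$, so $H$ is supported on $Y$-degrees $\equiv l-m \bmod(p-1)$.

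Next I would extract the three relevant coefficients of $H$ from $f = z^m(1-z^{p-1})^m g$. Comparing the coefficients of $z^m$ and of $z^{r-m}$ on both sides gives at once $h_0 = a_m$ and $(-1)^m h_{r'} = a_{r-m}$, i.e.\ $h_{r'} = (-1)^m a_{r-m}$. For the remaining datum I claim $\sum_j h_j = \sum_j \binom jm a_j$, that is $g(1) = \tfrac1{m!}f^{(m)}(1)$ (meaningful since $m\le p-1$). The point is that $1-z^{p-1}=\prod_{\lambda\in\f^\ast}(z-\lambda)$ factors as $(z-1)w(z)$ with $w(1) = -\bigl((z^{p-1}-1)/(z-1)\bigr)\big|_{z=1} = -(p-1)\equiv 1 \bmod p$, so $(1-z^{p-1})^m$ vanishes at $z=1$ to order exactly $m$ with leading coefficient $\equiv 1$. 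Applying Leibniz's rule to $f = z^m\cdot(1-z^{p-1})^m\cdot g$ at $z=1$, only the term in which all $m$ derivatives land on $(1-z^{p-1})^m$ survives, giving $f^{(m)}(1) = \bigl((1-z^{p-1})^m\bigr)^{(m)}(1)\cdot g(1) = m!\,g(1)$, as claimed.

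Then I would reduce $H$ modulo $V_{r'}^{(1)}$. By \Cref{basis} (with $r$ replaced by $r'\ge p$ and $m=1$), $\{X^{r'-i}Y^i : 0\le i\le p-1\}\cup\{Y^{r'}\}$ is a basis of $V_{r'}/V_{r'}^{(1)}$, and the identity in its proof reduces $X^{r'-j}Y^j$ to $X^{r'-j+(p-1)}Y^{j-(p-1)}$ modulo $V_{r'}^{(1)}$ whenever $p\le j\le r'-1$. Iterating, a monomial $X^{r'-j}Y^j$ with $0<j<r'$ reduces to the unique basis monomial of $Y$-degree in $\{1,\dots,p-1\}$ congruent to $j$, whereas $j=0$ gives $X^{r'}$ and $j=r'$ gives $Y^{r'}$. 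Since $H$ is supported in $Y$-degrees $\equiv l-m$, this leaves only the three basis vectors $X^{r'-[l-m]}Y^{[l-m]}$, $X^{r'}$, $Y^{r'}$, and
\[
  H \;\equiv\; \Bigl(\textstyle\sum_j h_j - h_0 - h_{r'}\Bigr) X^{r'-[l-m]}Y^{[l-m]} + h_0 X^{r'} + h_{r'} Y^{r'} \pmod{V_{r'}^{(1)}}.
\]
Substituting $h_0=a_m$, $h_{r'}=(-1)^m a_{r-m}$, $\sum_j h_j=\sum_j\binom jm a_j$ turns the bracket into $c := \sum_j\binom jm a_j - a_m + (-1)^{m+1}a_{r-m}$, i.e.\ $H\equiv G + a_m X^{r'} + (-1)^m a_{r-m}Y^{r'} \pmod{V_{r'}^{(1)}}$. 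Multiplying by $\theta^m$ and using $\theta^m V_{r'}^{(1)} = V_r^{(m+1)}$ (from \cite[(4.1)]{Glover}) yields the asserted congruence for $F$. For the final assertion, the rightmost map of \eqref{exact sequence Vr} annihilates both $\theta^m X^{r'}$ (the case $i=0$, and $0<[a-2m]$) and $\theta^m Y^{r'}$ (the case $i=r'$) by \Cref{Breuil map}, so $F$ and $\theta^m G$ have the same image.

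The step I expect to be the crux is the identity $\sum_j h_j = \sum_j\binom jm a_j$ relating the plain coefficient sum of the quotient $H = F/\theta^m$ to the binomially weighted coefficient sum of $F$: everything in it hinges on pinning down the order and leading coefficient of the zero of $(1-z^{p-1})^m$ at $z=1$ modulo $p$. The only other point requiring care is the borderline combinatorics of the $V_{r'}^{(1)}$-reduction at the $Y$-exponents $0$, $p-1$ and $r'$, so that the coefficients of $X^{r'}$ and $Y^{r'}$ in the reduced form come out to be precisely $h_0$ and $h_{r'}$ (the cases $l\equiv m$ and $r\equiv l+m \bmod(p-1)$ being exactly those in which $a_m$, respectively $a_{r-m}$, need not vanish).
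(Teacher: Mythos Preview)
Your proposal is correct and follows essentially the same approach as the paper. Both write $F=\theta^m H$, extract $h_0=a_m$ and $h_{r'}=(-1)^m a_{r-m}$ by comparing extreme coefficients, obtain $\sum_j h_j=\sum_j\binom jm a_j$ by differentiating $m$ times and evaluating at $1$ (the paper does this with $\partial^m/\partial Y^m$ at $X=Y=1$, you via the dehomogenised identity $f=z^m(1-z^{p-1})^m g$ at $z=1$; these are the same computation), and then reduce $H$ modulo $V_{r'}^{(1)}$ by collapsing all interior monomials of $Y$-degree $\equiv l-m$ to $X^{r'-[l-m]}Y^{[l-m]}$. Your explicit justification of the support condition on $H$ via the $\f^\ast$-action and your invocation of \Cref{basis} for the reduction step spell out points the paper leaves implicit, but the argument is the same.
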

\begin{proof}
	       Let $r'= r-m(p+1)$. Since $\theta^{m} \mid F(X,Y)$, there exist 
	       $H(X,Y)= \sum_{j=0}^{r'} b_{j} X^{r'-j}Y^{j} \in V_{r'}$ such that 
	       $F(X,Y) = \theta^{m} H(X,Y)$.  Differentiating both sides with respect to $Y$ 
	       $m$-times and substituting $X=Y=1$ we get 
	       \begin{align*}
	                 m! \sum_{j=0}^{r} a_{j} \binom{j}{m} = 
	                 \left( \frac{\partial^{m}}{\partial Y^{m}} F(X,Y)  \right) 
	                  \Biggr|_{\substack{X=1\\Y=1}}= m! ~ H(X,Y) \Bigr|_{\substack{X=1\\Y=1}}
	                  =m! \sum_{j=0}^{r'} b_{j}.
	       \end{align*}
	       Comparing the coefficients of $X^{r-m}Y^{m}$, $X^{m}Y^{r-m}$ in $F(X,Y)$  and 
	       $\theta^{m} H(X,Y)$ we get $a_{m}=b_{0}$ and $a_{r-m} = (-1)^{m} b_{r'}$. 
	       Hence 
	       \begin{align}\label{breuil quotient sum relation}
	                \sum\limits_{0<j<r'}b_{j}=  \left( \sum\limits_{j=0}^ {r'} b_{j} \right)
	                 - b_{0}-b_{r'} = \sum\limits_{j=0}^ {r} a_{j} 
	                \binom{j}{m} - a_{m} + (-1)^{m+1} a_{r-m}.
	       \end{align} 
	       Since $a_{j}=0$ if $j \not \equiv l$ mod $(p-1)$, it can be 
	       checked that $b_{j} = 0$ if $j \not \equiv l-m$ mod $(p-1)$.
	       Therefore 
	       \begin{align*}
	                 F(X,Y) & = \theta ^{m} H(X,Y)
	              =  \theta^{m} 
	                  \sum_{\substack{0<j<r' \\ j \equiv l-m ~\mathrm{mod}~(p-1)}} b_{l}
	                 X^{r'-j} Y^{j} +  \theta^{m} (b_{0} X^{r'} +b_{r'}Y^{r'}) \\
	                 & \equiv \theta^{m} \left( 
	                 \sum_{0<j<r'} b_{j} \right) X^{r'-[l-m]} Y^{[l-m]} + 
	                 \theta^{m} (b_{0} X^{r'} +b_{r'}Y^{r'}) 
	                  \mod V_{r}^{(m+1)} \\
	                 &  \stackrel{\eqref{breuil quotient sum relation}}{\equiv} \theta ^{m} G(X,Y) + \theta^{m} \left( a_{m} X^{r-m(p+1)} +
	                  (-1)^{m} a_{r-m} Y^{r-m(p+1)} \right) \mod V_{r}^{(m+1)},
	       \end{align*}
	       as required. 
	       The last assertion is clear from \Cref{Breuil map}, as
	       $\theta^{m}X^{r'}$, $\theta^{m}Y^{r'}$ map to zero under the rightmost map 
	        in \eqref{exact sequence Vr}.
\end{proof}
\subsection{Binomial coefficients mod $p${}.} 

In this subsection, we prove several elementary results involving binomial 
coefficients in characteristic $p$ which will be used in later sections. 
We begin by recalling Lucas' theorem.
\begin{lemma} \label{lucas} \emph{(\textbf{Lucas' theorem})}
      For any prime $p$, let $m$ and  $n$ be two non-negative integers with 
      base  $p$-expansions 
      given by $m= m_{k}p^{k}+m_{k-1}p^{k-1}+ \cdots + m_{0}$ and 
      $n= n_{k}p^{k}+n_{k-1}p^{k-1}+ \cdots + n_{0}$ respectively. Then 
      $\binom{m}{n} \equiv \binom{m_{k}}{n_{k}} \cdot \binom{m_{k-1}}{n_{k-1}} \cdots 
      \binom{m_{0}}{n_{0}} \mod p$, with the convention that $\binom{a}{b}=0$, 
      if $b>a$.
\end{lemma}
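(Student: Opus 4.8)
The plan is to prove Lucas' theorem by a generating-function argument in the polynomial ring $\f[X]$. The starting point is the Frobenius congruence $(1+X)^{p} \equiv 1 + X^{p} \pmod{p}$, which holds because $p \mid \binom{p}{t}$ for $1 \leq t \leq p-1$: the numerator of $\binom{p}{t} = p!/(t!\,(p-t)!)$ contributes exactly one factor of $p$, and this is not cancelled by the denominator. Iterating this identity (and using that the $p$-th power map is additive modulo $p$), a short induction on $s$ gives $(1+X)^{p^{s}} \equiv 1 + X^{p^{s}} \pmod{p}$ for every $s \geq 0$.

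First I would pad the two base-$p$ expansions with leading zeros so that both run over the common index set $0 \leq j \leq k$, and then compute modulo $p$:
\begin{align*}
  (1+X)^{m} &= \prod_{j=0}^{k} \bigl( (1+X)^{p^{j}} \bigr)^{m_{j}}
  \equiv \prod_{j=0}^{k} \bigl( 1 + X^{p^{j}} \bigr)^{m_{j}}
  = \prod_{j=0}^{k} \ \sum_{t=0}^{m_{j}} \binom{m_{j}}{t}\, X^{t p^{j}}.
\end{align*}
Expanding the product on the right, the coefficient of a monomial $X^{N}$ equals $\sum \prod_{j=0}^{k} \binom{m_{j}}{t_{j}}$, the sum being over all tuples $(t_{0}, \ldots, t_{k})$ with $0 \leq t_{j} \leq m_{j}$ for each $j$ and $\sum_{j} t_{j} p^{j} = N$. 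The key observation is that since every $t_{j}$ satisfies $0 \leq t_{j} \leq m_{j} \leq p-1$, uniqueness of base-$p$ expansions shows this sum has at most one term: when $N = n$ it corresponds to $t_{j} = n_{j}$ for all $j$, and such a term actually occurs precisely when $n_{j} \leq m_{j}$ for all $j$. Hence the coefficient of $X^{n}$ on the right is $\prod_{j} \binom{m_{j}}{n_{j}}$, with the usual convention that a factor vanishes when $n_{j} > m_{j}$. Comparing this with the coefficient $\binom{m}{n}$ of $X^{n}$ on the left-hand side yields the theorem, including the degenerate case $b > a$ of the stated convention (when some $n_{j} > m_{j}$, both sides are $\equiv 0$).

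I do not anticipate a genuine obstacle here: the argument is entirely formal once the Frobenius congruence is in hand. The only points demanding a moment's attention are the harmless bookkeeping of padding the shorter expansion with zeros so that the two products are indexed compatibly, and noting that the appeal to uniqueness of base-$p$ digits uses exactly the bound $t_{j} \leq m_{j} \leq p-1$.
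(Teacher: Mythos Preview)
Your proof is correct: this is the classical generating-function argument via the Frobenius congruence $(1+X)^{p^{s}}\equiv 1+X^{p^{s}}\pmod p$, and the bookkeeping with base-$p$ digits is handled cleanly. The paper itself gives no proof of this lemma; it is simply recalled as a standard result, so there is nothing to compare against.
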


We next prove a  lemma concerning sums involving products of binomial 
coefficients. 
\begin{lemma}\label{binomial sum}
    For $m \geq 0$, $1 \leq b \leq p-1$ and $m < r \equiv a \mod (p-1)$ with 
    $1 \leq a \leq p-1$, we have
    $$
       S_{r, b, m}:=\sum\limits_{\substack {0 \leq l \leq r \\ l \equiv b ~ 
       \mathrm{mod}~ (p-1)}} 
       \binom{r}{l} \binom{l}{m} \equiv \binom{r}{m} \binom{[a-m]}{[b-m]}
       + \binom{r}{m} \delta_{p-1,[b-m]} \mod p,    
     $$   
     where $\delta$ is the Kronecker delta function.
\end{lemma}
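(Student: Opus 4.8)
The plan is to evaluate the sum $S_{r,b,m}$ by swapping the order of summation after introducing a root-of-unity (or rather, a character-sum over $\f^\ast$) filter to isolate the congruence condition $l \equiv b \bmod (p-1)$. First I would use the standard identity $\binom{r}{l}\binom{l}{m} = \binom{r}{m}\binom{r-m}{l-m}$, which lets me pull $\binom{r}{m}$ out front and rewrite
\[
  S_{r,b,m} = \binom{r}{m} \sum_{\substack{m \leq l \leq r \\ l \equiv b \bmod (p-1)}} \binom{r-m}{l-m} = \binom{r}{m} \sum_{\substack{0 \leq k \leq r-m \\ k \equiv b-m \bmod (p-1)}} \binom{r-m}{k}.
\]
So the whole problem reduces to computing $\sum_{k \equiv c \bmod (p-1)} \binom{N}{k} \bmod p$, where $N = r-m$ and $c \equiv b-m \bmod (p-1)$, and then checking that this equals $\binom{[a-m]}{[b-m]} + \delta_{p-1,[b-m]}$ modulo $p$ given that $N = r - m \equiv a - m \bmod (p-1)$.

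The key step is the evaluation of $T_N(c) := \sum_{k \equiv c \bmod (p-1)} \binom{N}{k} \bmod p$. The cleanest route uses the congruence $\sum_{\lambda \in \fstar} \lambda^{j} \equiv -\delta_{(p-1) \mid j}$ (the nonzero case of \eqref{sum fp}), applied as a filter: for $1 \leq c \leq p-1$,
\[
  -T_N(c) \equiv \sum_{\lambda \in \fstar} \lambda^{-c} \sum_{k=0}^{N} \binom{N}{k} \lambda^{k} = \sum_{\lambda \in \fstar} \lambda^{-c} (1+\lambda)^{N} \bmod p,
\]
since $\sum_\lambda \lambda^{k-c} \equiv -1$ exactly when $k \equiv c \bmod (p-1)$ and $0$ otherwise (one must treat $k = 0$ with the $0^0 = 1$ convention, matching the statement's convention, and note that the terms with $k \not\equiv c$ genuinely cancel). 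Then I would split off $\lambda = p-1 \equiv -1$, where $(1+\lambda)^N = 0$ unless $N = 0$, and for $\lambda \neq -1$ substitute $\mu = \lambda/(1+\lambda)$, which runs over $\f \setminus \{0, -1\}$ as $\lambda$ does — actually it is cleaner to write $\lambda^{-c}(1+\lambda)^N = (1+\lambda)^{N-c} \cdot \big(\tfrac{1+\lambda}{\lambda}\big)^{c} \cdot \lambda^{c} \cdot \lambda^{-c}$... Let me instead just expand $(1+\lambda)^N$ by the binomial theorem, reduce exponents mod $p-1$ using $\lambda^{p-1} = 1$ on $\fstar$, and collapse: $\sum_{\lambda \in \fstar} \lambda^{-c}(1+\lambda)^N = \sum_j \binom{N}{j} \sum_{\lambda \in \fstar} \lambda^{j - c}$, and the inner sum is $-1$ iff $j \equiv c \bmod(p-1)$ — so this just loops back. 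The honest way is the substitution: for $\lambda \neq 0, -1$ set $t = -\lambda^{-1}$, so $1 + \lambda = \lambda \cdot \frac{1+\lambda}{\lambda}$ and one manipulates to a geometric-type sum; alternatively, and most safely, use that $\sum_{k \equiv c} \binom{N}{k}$ mod $p$ can be computed directly from Lucas' theorem by writing $N$ in base $p$ and observing that only the last digit $N_0$ of $N$ matters modulo $p-1$ together with $\Sigma_p(N)$: precisely $\sum_{k \equiv c \bmod(p-1)} \binom{N}{k} \equiv \binom{N_0}{c'} + (\text{correction})$ where the higher digits each contribute a full factor of $2^{N_i}$...

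This is the genuine obstacle: getting the exact shape $\binom{[a-m]}{[b-m]} + \delta_{p-1,[b-m]}$, including the Kronecker-delta correction term, which arises from the "wraparound" at $k = 0$ (the empty-product/$\binom{N}{0}=1$ term contributing to the class $c \equiv 0 \equiv p-1$). I expect the right bookkeeping is: reduce to the case $m = 0$ (already done above, with $N = r-m$, $c = [b-m]$, and $N \equiv [a-m] \bmod (p-1)$), then prove the clean statement
\[
  \sum_{\substack{0 \leq k \leq N \\ k \equiv c \bmod (p-1)}} \binom{N}{k} \equiv \binom{[N]}{[c]} + \delta_{p-1, [c]} \pmod p
\]
for $N \geq 1$, $1 \leq c \leq p-1$, by the root-of-unity filter argument above together with \eqref{sum fp}: the term $\lambda = -1$ in $\sum_{\lambda\in\fstar}\lambda^{-c}(1+\lambda)^N$ vanishes (as $N \geq 1$), and for the remaining $\lambda$ one writes $(1+\lambda)^N = (1+\lambda)^{[N] \text{ or } [N]+(p-1)}$ — here one must be careful that $(1+\lambda)^{p-1} = 1$ only when $1 + \lambda \neq 0$, which is exactly the range we kept — so $(1+\lambda)^N \equiv (1+\lambda)^{[N]}$ on $\f \setminus \{0,-1\}$, and then $\sum_{\lambda \neq 0,-1} \lambda^{-c}(1+\lambda)^{[N]}$ is a sum over a small explicit set that one evaluates by adding back the $\lambda = -1$ term (legitimate since $(1+\lambda)^{[N]}|_{\lambda=-1} = 0^{[N]} = 0$ as $[N] \geq 1$) to recover $\sum_{\lambda \in \fstar}\lambda^{-c}(1+\lambda)^{[N]} = \sum_{j=0}^{[N]}\binom{[N]}{j}\sum_{\lambda \in \fstar}\lambda^{j-c}$, whose inner sum picks out $j \equiv c$, i.e. $j = [c] = c$ if $c \leq [N]$ (giving $-\binom{[N]}{c}$) and additionally $j = 0$ if $c \equiv 0 \bmod (p-1)$, i.e. $c = p-1$ (giving an extra $-\binom{[N]}{0} = -1$). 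Negating gives $\binom{[N]}{c} + \delta_{p-1,c}$, and with $[c] = c$ and $[N] = [a-m]$, $c = [b-m]$ this is exactly the claim; the convention $\binom{[N]}{c} = 0$ for $c > [N]$ absorbs the remaining case. Finally I would note the degenerate case $m = r$ is excluded by hypothesis $m < r$, and that $\Sigma_p(N) \equiv N \bmod (p-1)$ ensures all the congruence-class reductions are consistent. The only subtlety to flag in writing this up is the careful handling of the $0^0$ and $\binom{N}{0}$ boundary contributions, which is precisely where the $\delta_{p-1,[b-m]}$ term is born.
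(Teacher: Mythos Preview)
Your proposal is correct and follows essentially the same approach as the paper: reduce to the case $m=0$ via the identity $\binom{r}{l}\binom{l}{m}=\binom{r}{m}\binom{r-m}{l-m}$, then evaluate $\sum_{\lambda\in\fstar}\lambda^{-c}(1+\lambda)^{N}$ two ways, once by expanding $(1+\lambda)^{N}$ directly and once after replacing $N$ by $[N]$ using Fermat on $\lambda\neq -1$. The only cosmetic difference is that the paper sums over all of $\f$ (so the $\delta_{p-1,[b-m]}$ term arises from the $\lambda=0$ contribution in the first evaluation), whereas you sum over $\fstar$ and pick up the $\delta$ from the $j=0$ term in the second evaluation; these are equivalent.
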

\begin{proof}
     Observe that 
     $$
       S_{r, b, m}=\sum\limits_{\substack {m \leq l \leq r \\ l \equiv b ~\mathrm{mod}~
        (p-1)}} 
       \binom{r}{l} \binom{l}{m} = 
       \sum\limits_{\substack {m \leq l \leq r \\ l \equiv b ~\mathrm{mod}~ (p-1)}} 
       \binom{r-m}{l-m} \binom{r}{m} = 
       \binom{r}{m} S_{r-m, [b-m], 0}.
     $$
     Put $r' = r-m$ and $a' = [a-m]$ and $b'= [b-m]$.
     To prove the lemma we compute the following sum in two different ways. Let
     \begin{align*}
         T_{r',b'}:= \sum_{\lambda \in \f} \lambda^{p-1-b'} (1+\lambda)^{r'}.
     \end{align*}        
     First we note that 
     \begin{align*}
         T_{r',b'} 
         &= \sum_{\lambda \in \f^\ast} \lambda^{p-1-b'} (1+\lambda)^{r'} + \delta_{p-1,b'}  
         = \sum_{j=0}^{r'} \binom{r'}{j}  \sum_{\lambda \in \f^\ast} \lambda ^{j-b'}                 
           + \delta_{p-1,b'}  \\  
         & \stackrel{\eqref{sum fp}}{\equiv} -S_{r', b', 0}+ \delta_{p-1,b'} \mod p.
     \end{align*}                                         
     On the other hand since $r>m$, we have 
     \begin{align*}
         T_{r',b'} 
         & = \sum_{\lambda \in \f \smallsetminus \lbrace-1 \rbrace}
             \lambda^{p-1-b'} (1+\lambda)^{r'} 
           =  \sum_{\lambda \in \f \smallsetminus \lbrace-1 \rbrace}
              \lambda^{p-1-b'} (1+\lambda)^{a'} \\
         & = \sum_{j=0}^{a'} \binom{a'}{j} \sum_{\lambda \in \f \smallsetminus 
         \lbrace-1 \rbrace}
             \lambda^{p-1+j-b'} \\      
          &= \left(\sum_{j=0}^{a'} \binom{a'}{j}  \sum_{\lambda \in \f } 
             \lambda^{p-1+j-b'} \right) - \left(\sum_{j=0}^{a'} \binom{a'}{j} (-1)^{p-1+j-b'} 
             \right)  
          \equiv - \binom{a'}{b'}  \mod p.
     \end{align*}
      Here the last congruence follows from the observation that the first sum 
      doesn't vanish if and only if   $a'  \geq b' $, in which case it equals $-\binom{a'}{b'}$
      and for the second sum note that $\sum_{j=0}^{a'} \binom{a'}{j} (-1)^j$ 
       $=(1-1)^{a'}=0$. Hence $S_{r', b',0} \equiv  \binom{a'}{b'} + \delta_{p-1,b'} 
       \mod p$, as desired.
\end{proof}
In a few proofs, we need to choose some numbers $s$ satisfying 
appropriate conditions. This choice is made using the following lemma.
\begin{lemma}\label{choice of s}
     Let $p \leq r =r_{m}p^{m}+ \cdots +r_{1}p+r_{0}$ be the base $p$-expansion 
     of $r$. Then, for  every $0 \leq b \leq r_{0} $ and 
     $1 \leq u \leq \Sigma_{p}(r)-r_{0}$, there exists a positive integer $s$ with
     $p \leq s \leq r$, $s \equiv b \mod p$ such that $\Sp(s)=b+u$ and $\binom{r}{s} 
     \not \equiv 0  \mod p$. In addition,  
     if  $u<  \Sp(r)-r_{0}$, 
     then 
     $s \leq r-p$.   
\end{lemma}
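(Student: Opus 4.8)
The plan is to translate the non-vanishing condition through Lucas' theorem and then to build $s$ by prescribing its base-$p$ digits directly. Write $r = r_m p^m + \cdots + r_1 p + r_0$ as in the statement. By \Cref{lucas}, for $s$ with base-$p$ digits $s_0,\dots,s_m$ (so $0\le s_j\le p-1$) one has $\binom{r}{s}\not\equiv 0\bmod p$ if and only if $s_j\le r_j$ for every $j$; I will choose the $s_j$ so that this digit-wise domination holds.

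First I would set $s_0 := b$, which is permitted since $0\le b\le r_0$. Next I would choose digits $s_1,\dots,s_m$ with $0\le s_j\le r_j$ and $\sum_{j=1}^m s_j = u$. Such a choice exists: we have $1\le u\le \Sp(r)-r_0 = \sum_{j=1}^m r_j$, and any integer in $[0,\sum_{j=1}^m r_j]$ is realised as such a sum (which is clear, e.g.\ by filling the positions greedily). Put $s := \sum_{j=0}^m s_j p^j$. I then verify the claimed properties directly: $s\equiv s_0 = b \bmod p$; since $u\ge 1$ at least one $s_j$ with $j\ge 1$ is positive, so $s\ge p$; the digit domination gives both $\binom{r}{s}\not\equiv 0 \bmod p$ (by \Cref{lucas}) and $s\le r$; and $\Sp(s) = s_0 + \sum_{j\ge 1} s_j = b+u$ because each digit is at most $p-1$. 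This establishes the main assertion.

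For the additional assertion, suppose $u < \Sp(r)-r_0$. Since $s_j\le r_j$ for all $j$, the subtraction $r-s$ involves no borrowing, so $r-s = \sum_{j=0}^m (r_j - s_j) p^j$ with every coefficient non-negative. Then $\sum_{j=1}^m (r_j - s_j) = \bigl(\Sp(r)-r_0\bigr) - u \ge 1$, so $r_{j_0} > s_{j_0}$ for some $j_0\ge 1$; hence $r - s \ge (r_{j_0}-s_{j_0})\,p^{j_0} \ge p$, that is, $s\le r-p$, as required.

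I do not anticipate a genuine difficulty: the whole argument is bookkeeping with base-$p$ digits. The one step worth isolating is the remark in the last paragraph that $s$ being digit-wise dominated by $r$ forces $r-s$ to be carry-free, which is precisely what upgrades ``$s<r$'' to the sharper ``$s\le r-p$''.
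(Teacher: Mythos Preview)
Your proof is correct and follows essentially the same approach as the paper: both set $s_0=b$, choose the higher digits $s_1,\dots,s_m$ with $0\le s_j\le r_j$ summing to $u$, and then read off all the required properties from Lucas' theorem and digit-wise domination. The only cosmetic difference is that the paper bounds $s\ge \sum_{j\ge1} s_j\,p = up \ge p$ directly, whereas you note that some $s_j$ with $j\ge1$ is positive; the additional assertion is handled identically in both.
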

\begin{proof}  
     Since  $1 \leq u \leq \Sigma_{p}(r) - r_{0} = \sum_{i=1}^{m}r_{i} $, we can 
     find integers $s_{i}$ for $1 \leq i \leq m$, such that $0 \leq s_{i} \leq r_{i}$ and 
     $\sum_{i=1}^{m}s_{i}=u$. Put $s= s_{m}p^{m}+ \cdots + s_{1}p+b$. 
     Since $s_{i} \leq r_{i}$ and $b \leq r_{0}$ we have $s \leq r$. Also 
     $s \geq \sum_{i=1}^{m} s_{i} p = u p \geq p$. Clearly $s \equiv b \mod p$ 
     and $\Sp(s)=b+u$. By Lucas' theorem and choice of $s_{i}$, we have 
     $\binom{r}{s} \equiv \binom{r_{m}}{s_{m}} \cdots  \binom{r_{1}}{s_{1}} 
      \binom{r_{0}}{b} \not\equiv 0$  mod $p$.
     Further if  $\sum_{i=1}^{m}s_{i} = u < \sum_{i=1}^{m}r_{i}$, then $s_{j}<r_{j} $ 
     for some $j \geq 1$, whence  $r-s \geq (r_{j}-s_{j})p^{j} \geq p$.
\end{proof}
Next we determine when certain matrices built out of binomial 
coefficients are invertible mod $p$. 
These matrices are typical of the ones we encounter later.
 \begin{proposition}\label{matrix det}
       Suppose that $r \geq 2p$ and $1 \leq a \leq p-1$. 
     \begin{enumerate}[label=\emph{(\roman*)}]
	  \item If $0 \leq i \leq j \leq a \leq p-1 $, then the matrix
	        $$ 
	            \left( \binom{a-n}{j-m}\right)_{0 \leq m,n \leq i}  
	        $$
                is invertible modulo $p$.
	  \item If  $0 \leq i \leq j \leq i+j \leq a < p+i$, then
	        $$
	           \det_{0 \leq m,n \leq i } \left( \binom{r-n}{m} \binom{a-m-n}{j-m} \right)
	            = \binom{a-2i}{j-i} 
	            \prod \limits _{l=0}^{i-1} \frac{(a-i-l)!(r-(a-l))^{i-l}}{(j-l)!(a-j-l)!}.
	        $$
	       The corresponding matrix is invertible mod $p$ $\iff r \not \equiv a-i+1$, $a-i+2,\ldots$, 
	       $a-1$, $a \mod p$. 
	  \item If $1 \leq a-i \leq i$ and $r \not \equiv a-i-1 ~\mathrm{mod}~p$ and  
	            $r \not \equiv i+1,\ldots, a-1$, $a ~\mathrm{mod}~ p$, then  the matrix
	        $$ 
	          \left(\begin{array}{c|c}
	              A ' &  \mathbf{v}^{t} \\
	              \hline 
	             \mathbf{w} & 0
	          \end{array}
	          \right) 
                $$ 
                is invertible modulo $p$,
	        where $A'$ is the matrix $$\left( \binom{r-n}{m} 
	        \binom{a-m-n}{i-m} \right)_{0 \leq m,n \leq a-i-1}$$ and 
	        $\mathbf{v}$, $\mathbf{w} $ are the $1 \times a-i$ row vectors 
	        $\left( \binom{i}{0}, \ldots, \binom{i}{a-i-1} \right)$,
	        $  \left( \binom{r}{r-(a-i)}, \ldots, \binom{r-(a-i-1)}{r-(a-i)} \right)$, 
	        respectively.
	\end{enumerate}
\end{proposition}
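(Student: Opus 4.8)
The plan is to treat the three parts in order, each time reducing an "invertibility mod $p$" question to an explicit determinant computation, and then to Lucas' theorem. For part (i), I would first simplify the matrix $\left(\binom{a-n}{j-m}\right)_{0\le m,n\le i}$ by column operations: since $\binom{a-n}{j-m}$ depends on $n$ only through $a-n$, I would use the Vandermonde-type identity $\binom{a-n}{j-m}=\sum_{k}\binom{a}{j-m-k}\binom{-n}{k}$ (or more cleanly, recognize the matrix as a product of a lower-triangular matrix of binomials and a matrix of the form $\left(\binom{-n}{k}\right)$, which is unitriangular up to sign). The upshot should be that the determinant equals, up to sign, $\prod_{l=0}^{i}\binom{a-l}{j-l}$ times a nonzero constant, and since $0\le i\le j\le a\le p-1$ each factor $\binom{a-l}{j-l}$ with $0\le a-l\le p-1$ is a nonzero residue mod $p$ by Lucas. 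So part (i) reduces to bookkeeping with elementary binomial identities and triangularity.

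For part (ii) — the main computational heart — I would establish the stated closed form
$$
\det_{0\le m,n\le i}\!\left(\binom{r-n}{m}\binom{a-m-n}{j-m}\right)
=\binom{a-2i}{j-i}\prod_{l=0}^{i-1}\frac{(a-i-l)!\,(r-(a-l))^{i-l}}{(j-l)!\,(a-j-l)!}
$$
by induction on $i$, using row and column reduction. The natural move is to subtract adjacent columns: the difference $\binom{r-n}{m}\binom{a-m-n}{j-m}-\binom{r-n-1}{m}\binom{a-m-n-1}{j-m}$ should, via Pascal's rule applied twice, factor out a term linear in $r$ (producing the $(r-(a-l))$ factors) and reduce the size of the matrix by one after also performing a row reduction that lowers $m$. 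One must be careful to track the shift in the parameter $a$ (each reduction step replaces $a$ by $a-2$ and $j$ by $j-1$, consistent with the base case $\binom{a-2i}{j-i}$). Once the product formula is in hand, invertibility mod $p$ is decided by which factors vanish: the factorials $(a-i-l)!,(j-l)!,(a-j-l)!$ contribute nothing divisible by $p$ under the hypothesis $a<p+i$ (so all the relevant arguments are $\le p-1$, using $i\le j$, $i+j\le a$), $\binom{a-2i}{j-i}$ is a unit by Lucas since $0\le a-2i$ and $j-i\le a-2i\le p-1$, and $r-(a-l)\equiv 0\bmod p$ for some $0\le l\le i-1$ exactly when $r\equiv a,a-1,\dots,a-i+1\bmod p$. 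This gives the stated criterion.

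For part (iii), I would expand the bordered determinant along the last row and last column, writing it as $-\sum_{m,n}(-1)^{m+n}w_n v_m\det(A'_{\widehat{m},\widehat{n}})$ where $A'_{\widehat m,\widehat n}$ deletes row $m$ and column $n$ from $A'$; equivalently, it equals $-\mathbf{w}\,\mathrm{adj}(A')\,\mathbf{v}^t$ when $A'$ is invertible, and one checks $A'$ itself is invertible mod $p$ under the hypothesis $r\not\equiv i+1,\dots,a \bmod p$ by applying part (ii) with $j=i$ (here the relevant matrix is $(a-i)\times(a-i)$, so the exclusion set is $r\not\equiv a-(a-i-1),\dots,a$, i.e. $r\not\equiv i+1,\dots,a\bmod p$, matching). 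Then the bordered determinant is $-\det(A')\cdot(\mathbf{w}A'^{-1}\mathbf{v}^t)$, and I would compute the scalar $\mathbf{w}A'^{-1}\mathbf{v}^t$ explicitly — this is where the extra congruence condition $r\not\equiv a-i-1\bmod p$ enters, presumably as the nonvanishing of a single binomial coefficient of the shape $\binom{r-(a-i-1)}{\text{something}}$ or a factor $(r-(a-i-1))$. The cleanest route is probably to guess that $\mathbf{w}A'^{-1}\mathbf{v}^t$ has a product formula analogous to part (ii) and verify it by the same Pascal-rule induction on the augmented matrix directly, rather than inverting $A'$.

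The main obstacle I anticipate is the induction in part (ii): getting the row-and-column reduction to close up cleanly while correctly tracking the simultaneous shifts $a\mapsto a-2$, $j\mapsto j-1$, $i\mapsto i-1$ and producing exactly the factor $(r-(a-l))^{i-l}$ with the right exponent $i-l$ (the exponent growing as $l$ decreases signals that each reduction step contributes a new linear-in-$r$ factor to every surviving entry, not just one). Part (iii) then rests on part (ii) plus one more explicit scalar evaluation, and part (i) is comparatively routine.
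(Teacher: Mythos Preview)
Your overall strategy (induction via Pascal-type reductions) is reasonable in spirit, but part (ii) as you describe it does not work. The naive column subtraction you propose,
\[
\binom{r-n}{m}\binom{a-m-n}{j-m}-\binom{r-n-1}{m}\binom{a-m-n-1}{j-m},
\]
does \emph{not} factor out a term linear in $r$; expanding both factors by Pascal's rule leaves a sum of three terms, none of which carries an overall factor $(r-\text{const})$. So the inductive step as stated would not close up, and the appearance of the exponents $(r-(a-l))^{i-l}$ is not explained by this mechanism. This is precisely the obstacle you flag, and it is a real one.

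The paper takes a different and cleaner route for (ii): it first strips factorial factors from rows and columns, pulling $1/(m!(j-m)!)$ from row $m$ and $(a-i-n)!/(a-j-n)!$ from column $n$, which reduces the determinant to one of the shape $\det_{0\le m,n\le i}\bigl((r-n)!(a-m-n)!/((r-m-n)!(a-i-n)!)\bigr)$. This is exactly the form covered by Krattenthaler's Lemma~3 (a polynomial-entry determinant of ``falling factorial'' type), and applying that lemma with suitable $X_l,A_l,B_l$ produces the product formula directly, including the graded exponents $(r-(a-l))^{i-l}$. For (i) the paper similarly quotes a Gessel--Viennot product formula rather than arguing by triangular decomposition. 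For (iii), rather than the adjugate computation you suggest, the paper performs explicit scaled column operations (multiply column $n+1$ by $(r-n+1)/(a-i-n+1)$ and subtract from column $n$) followed by analogous row operations; this kills the last row, extracts the factor $(r-(a-i-1))$ responsible for the extra congruence condition $r\not\equiv a-i-1\pmod p$, and reduces the bordered determinant to a smaller instance of part (ii). Your adjugate approach would work in principle but evaluating $\mathbf{w}A'^{-1}\mathbf{v}^t$ explicitly is harder than this direct reduction.
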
 
\begin{proof}
         We use elementary row and column operations to reduce the given matrices
         to a particular form to which we can apply results from \cite{Viennot}
         and \cite{Kra99}.
        \begin{enumerate}
               \item[(i)] By reversing the rows 
                and columns we have $ \det\limits_{0 \leq m,n \leq i} 
                \left( \binom{a-n}{j-m}\right)
                = \det\limits_{0 \leq m,n \leq i} \left( \binom{a-i+n}{j-i+m}\right)$. 
                Applying the formula on Line -7 of \cite[ p. 308]{Viennot} with 
                $k=(i+1)$, $b= j-i$ and  $a_{1}=a-i$, $a_{2}=a-i+1, \ldots$, 
                $a_{i+1}=a$ we get
                 \begin{align*}
                        \det_{0 \leq m,n \leq i} \left( \binom{a-i+n}{j-i+m}\right)
                         &= ((j-i)!)^{i+1} \frac{\binom{a-i}{j-i} \cdot \binom{a-i+1}{j-i}
                         \cdots \binom{a}{j-i}}{(j-i)! \cdot(j-i+1)!\cdots j!} 1! \cdot 2! \cdots i! \\
                          &=\frac{\binom{a-i}{j-i} \cdots\binom{a}{j-i}}{\binom{j-i}{j-i} \cdots
                         \binom{j}{j-i}}.  
                \end{align*}
               Since, for $0 \leq l \leq i$, we have $0 \leq j-i \leq j-l \leq a-l \leq p-1$, 
               it follows from  Lucas' theorem that the above determinant is non-zero 
               modulo $p$ and hence the  matrix is invertible.
               \item[(ii)]  Pulling out a factor of  $1/(m! (j-m)!)$ and
                $(a-i-n)!/(a-j-n)!$  from the $(m+1)^{th}$-row and the $(n+1)^{th}$-column 
                respectively,  for $0 \leq m,n \leq i$, we get
                \begin{align}\label{Matrix det eqn 1}
                 \det_{0 \leq m,n \leq i } \left( \binom{r-n}{m} \binom{a-m-n}{j-m} \right) 
            &= \prod\limits_{l=0}^{i} \frac{(a-i-l)!}{(a-j-l)!  (j-l)! l !} ~ \times \\
            & ~~~ \det_{0 \leq m,n \leq i } \left( \frac{(r-n)!(a-m-n)!}{(r-m-n)!(a-i-n)!} 
            \right).\nonumber
       \end{align}
        Applying \cite[Lemma 3]{Kra99} with $n=i+1$, and
        $$X_{1} = r, X_{2}=r-1,\ldots,X_{i+1} = r-i, $$
         $$A_{2}= a-r, A_{3} =a-r-1,\ldots, A_{i+1} = a-r-(i-1),$$  
        $$B_{2} =0, B_{3}=-1,\ldots, B_{i+1} = -(i-1),$$ 
        we get  the determinant of the transpose of 
        $\left( \frac{(r-n)!(a-m-n)!}{(r-m-n)! (a-i-n)!}\right)_{0 \leq m,n \leq i } $
        equals
         \begin{eqnarray*}              
               \prod_{ l=1}^{i+1}\prod_{1 \leq l'<l}  (X_{l'} - X_{l})  
              \times  
             \prod_{l=0}^{i-1} \prod_{\substack{ 2 \leq m \leq n \leq i+1 \\ n-m = l} }
                 (B_{m}-A_{n})  
            &=& \prod_{ l=1}^{i+1} (l-1)!  \prod_{l=0}^{i-1} (r-(a-l))^{i-l}.
          \end{eqnarray*}         
          Substituting the above expression in \eqref{Matrix det eqn 1}, we obtain the formula in (ii).
          The statement about the invertibility can be deduced as in
          (i).
          \item[(iii)] 
               If $a-i=1$, then the matrix is equal to
               $\begin{psmallmatrix}  a  & 1 \\  r & 0 \end{psmallmatrix}$,
               which is  invertible in $M_{2}(\f)$ if $p \nmid r$. So assume
               $a-i \geq 2$.
          	 Multiplying the $(n+1)^{th}$-column  by $(r-n+1)/(a-i-n+1)$ and subtracting
	          from the $n^{th}$-column, for $1 \leq n \leq a-i-1 $, we get
	       \begin{align*}
	            \det \left(\begin{array}{c|c}
	              A ' &  \mathbf{v}^{t} \\
	              \hline 
	             \mathbf{w} & 0
	          \end{array} \right) ~ = ~ -
	          \frac{(a-r)^{a-i-1}}{(a-i)!} (r-(a-i-1))
	          \det \left(\begin{array}{c|c}
	              A '' &  \mathbf{v}^{t}
	          \end{array} \right),    
	       \end{align*}
	       where  
	      $A'' = \left( \binom{r-n}{m} \binom{a-1-m-n}{i-m} \right)$ and
	        $0 \leq m \leq a-i-1$,  $0 \leq n \leq a-i-2$.
	       
	       Now multiplying the $(m+1)^{th}$-row by $m/(i-m+1)$  and subtracting
	        from the $m^{th}$-row, for $1 \leq m \leq a-i-1$, we get
	       \begin{align*}
	       \det
	          \left(\begin{array}{c|c}
	              A ' &  \mathbf{v}^{t} \\
	              \hline 
	             \mathbf{w} & 0
	          \end{array}
	          \right) ~  = ~-
	          \frac{(a-r)^{a-i-1}}{(a-i)!} & (r-(a-i-1)) \frac{(a-1-r)^{a-i-1}}{(a-i-1)!}
	          \binom{i}{a-i-1} \\
	          & \times \det\left(\binom{r-n}{m} \binom{a-2-m-n}{i-m} \right)_{0 \leq m,n 
	          \leq a-i-2}.
	      \end{align*}   
	      Now (iii) follows from (ii).  \qedhere
        \end{enumerate}
\end{proof}
Let us set
        \begin{align}\label{A(a,i,j,r) matrix}
               A(a,i,j,r) := \left( \binom{r-n}{m} \binom{[a-m-n]}{j-m} \right)_{0 \leq m, n \leq i},
         \end{align}
for $1 \leq a \leq p-1$ and $0 \leq i \leq j \leq r$. 
We have the  following corollaries.

\begin{corollary}\label{A(a,i,j,r) invertible}   
        Let $2p \leq r \equiv r_{0} ~\mathrm{mod}~p$ with $0 \leq r_{0} \leq p-1$
        and let $1 \leq a \leq p-1 $.
        Suppose that $0 \leq i \leq j < [a-i] < p-1$.
      Then the matrix $A(a,i,j,r)$ 
         is invertible if $r_{0} \not \in \mathcal{I}(a,i)$, where
         \begin{align}\label{interval I for i < a-i}
       \mathcal{I}(a,i) =
       \begin{cases}
              \lbrace a-i+1, a-i+2, \ldots, a-1,a \rbrace, & \mathrm{if}~
              i <[a-i] <a, \\
              \lbrace 0,1, \ldots a-1 \rbrace \cup 
              \lbrace p+a-i, p+a-i+1, \ldots, p-1 \rbrace, & \mathrm{if}~a<i<[a-i].
       \end{cases}
      \end{align}
\end{corollary}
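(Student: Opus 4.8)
The plan is to reduce the corollary to Proposition~\ref{matrix det}(ii): for a suitable choice of the parameter $\tilde a$, the matrix $A(a,i,j,r)$ is congruent modulo $p$ to the matrix $\bigl(\binom{r-n}{m}\binom{\tilde a-m-n}{j-m}\bigr)_{0\le m,n\le i}$ appearing there, whereupon the invertibility criterion and the ``bad'' residue classes of $r$ drop out directly. First I would dispose of the trivial case $i=0$ (the matrix is then $\bigl(\binom{a}{j}\bigr)$, which is nonzero since $0\le j<[a-0]=a\le p-1$, so one may take $\mathcal I(a,0)=\varnothing$), and then observe that the hypothesis $1\le i\le j<[a-i]<p-1$ forces exactly the two alternatives named in \eqref{interval I for i < a-i}: if $a-i\ge 1$ then $[a-i]=a-i$ and $i<[a-i]$ gives $2i<a$ (the case $i<[a-i]<a$), while if $a-i\le 0$ then necessarily $a<i$ — the value $a=i$ being excluded because it gives $[a-i]=p-1$ — and $[a-i]=a+p-1-i$ (the case $a<i<[a-i]$); the intermediate range $i<a\le 2i$ cannot occur, as it would force $[a-i]=a-i\le i$.

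In the case $i<[a-i]<a$ I would take $\tilde a=a$. For every entry one has $0\le m+n\le 2i<a$, hence $1\le a-m-n\le p-1$ and $[a-m-n]=a-m-n$, so $A(a,i,j,r)$ is literally the matrix of Proposition~\ref{matrix det}(ii) with parameter $a$. Its hypotheses are immediate ($i\le j$ by assumption; $i+j\le a-1<a$ since $j\le a-i-1$; $a\le p-1<p+i$; and $r\ge 2p$), so it is invertible modulo $p$ precisely when $r\not\equiv a-i+1,\dots,a\bmod p$. These $i$ residues lie in $\{1,\dots,p-1\}$ already and constitute the first case of $\mathcal I(a,i)$ in \eqref{interval I for i < a-i}; since $r\equiv r_0\bmod p$, invertibility holds whenever $r_0\notin\mathcal I(a,i)$.

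The case $a<i<[a-i]$ is the one that needs a genuine idea, and I expect it to be the main obstacle: here $[a-m-n]$ is not a single linear function of $m+n$, since it equals $a-m-n$ when $m+n<a$ but equals $a+p-1-m-n$ when $m+n\ge a$ (using the bound $a-2i\ge 2-p$, which follows from $2i<a+p-1$, i.e.\ from $i\le j<[a-i]$, to guarantee that no second wrap-around of $p-1$ intervenes). The key observation is that the ``wrong-formula'' entries — those with $m+n<a$ — vanish modulo $p$: indeed $a<i\le j$ gives $j-m\ge i-m> a-m\ge a-m-n$, so $\binom{[a-m-n]}{j-m}=\binom{a-m-n}{j-m}=0$; and for the matrix with $\tilde a=a+p-1$ the corresponding entry is $\binom{p+(a-1-m-n)}{j-m}\equiv\binom{a-1-m-n}{j-m}\bmod p$ by Lucas' theorem (\Cref{lucas}), which is again $0$ since $j-m>a-1-m-n\ge 0$.

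On the remaining entries, with $m+n\ge a$, one has $1\le a+p-1-m-n\le p-1$, so $a+p-1-m-n$ and $j-m$ are single $p$-adic digits and $[a-m-n]=a+p-1-m-n=\tilde a-m-n$ exactly; hence $A(a,i,j,r)$ is congruent modulo $p$ to the matrix of Proposition~\ref{matrix det}(ii) with parameter $\tilde a=a+p-1$, whose hypotheses hold ($i+j\le 2i\le a+p-2<\tilde a$ and $\tilde a=a+p-1<p+i$, using $a<i$, and $r\ge 2p$). Therefore it is invertible modulo $p$ exactly when $r\not\equiv\tilde a-i+1,\dots,\tilde a\bmod p$, i.e.\ $r\not\equiv a+p-i,\dots,a+p-1\bmod p$; reducing these residues modulo $p$ replaces the block $a+p-i,\dots,a+p-1$ by $\{p+a-i,\dots,p-1\}\cup\{0,\dots,a-1\}$, which is exactly the second case of $\mathcal I(a,i)$ in \eqref{interval I for i < a-i}. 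As $r\equiv r_0\bmod p$, this shows the matrix is invertible whenever $r_0\notin\mathcal I(a,i)$, completing the argument.
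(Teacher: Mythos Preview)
Your proof is correct and essentially identical to the paper's: both reduce to Proposition~\ref{matrix det}(ii) with parameter $a'=[a-i]+i$ (your $\tilde a$), after checking that $\binom{[a-m-n]}{j-m}\equiv\binom{a'-m-n}{j-m}\bmod p$ for all $0\le m,n\le i$ by treating the cases $i<a$ and $i>a$ separately. One slip to fix: in the case $a<i$ you write ``$i+j\le 2i\le a+p-2$'', but $i\le j$ gives the reverse inequality; the bound you actually want is $i+j\le i+([a-i]-1)=a+p-2<\tilde a$, which follows directly from $j<[a-i]$.
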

\begin{proof}
        If $i<a$, then $[a-i] =a-i$ and  the condition $i < [a-i] $ 
        implies $ 2i < a$. Thus, for $i<a$ we have
        $[a-m-n] =a-m-n$, for all $0 \leq m,n \leq i$. 
        For $i>a$, we have
        \begin{align}\label{binomial identity i>a}
      \binom{[a-m-n]}{j-m}  & = 
      \begin{cases}
            \binom{p-1+a-m-n}{j-m}, ~ &\mathrm{if} ~ m+n \geq a, 
            \vspace*{1 mm} \\
            \binom{a-m-n}{j-m}, ~  &\mathrm{if} ~ m+n < a,
      \end{cases} \nonumber \\
      & \equiv \binom{p-1+a-m-n}{j-m} ~ \mathrm{mod}~p,
\end{align}
where the first equality follows  from the definition of [ $\cdot$ ] and the 
second  follows
from Lucas' theorem as the assumption $a<i \leq j$ implies 
$\binom{p-1+a-m-n}{j-m}$, $\binom{a-m-n}{j-m} \equiv 0$ mod $p$ 
in the case $m+n <a$.     
Let $a' = [a-i]+i$.  Note that $a'=a$ (resp. $p-1+a$) if 
$i<a$ (resp. $i>a$).    
Using these observations, we see that
        \begin{align}\label{A(a,i,j,r) expression}
                A(a,i,j,r) =
                \left( \binom{r-n}{m} \binom{a'-m-n}{j-m} 
                        \right)_{0\leq m,n \leq i}.      
        \end{align}
        Since $[a-i] \leq p-1$ and $j < [a-i]$, we see that  
        $0 \leq i \leq j \leq i+j \leq a' < p+i$.
        Using \Cref{matrix det} (ii), 
         it follows that $A(a,i,j,r)$ is invertible
          if $r \not \equiv  a'-i+1, a'-i+2, \ldots, a'-1,a' $ mod $p$
          if and only if   $r_{0} \not \in \mathcal{I}(a,i)$.
\end{proof}
\begin{corollary}\label{block matrix invertible}
	    Let 
	    $ 2p \leq r \equiv r_{0} ~\mathrm{mod}~p$
	    with $0 \leq r_{0} \leq p-1$ and $ 1 \leq a \leq p-1$.
	    Suppose $1 \leq [a-i] \leq i < p-1$. Let
	    $A' = A(a,[a-i]-1,i,r)$ 
       and  let
	   $\mathbf{v}$, $\mathbf{w} $ be the $1 \times [a-i]$ row vectors 
	   given by $\left( \binom{i}{0}, \ldots, \binom{i}{[a-i]-1} \right)$,
	   $  \left( \binom{r}{r-[a-i]}, \ldots, \binom{r-([a-i]-1)}{r-[a-i]} \right)$ 
	   respectively.  Then  the matrix
	   $$ 
	   \left(\begin{array}{c|c}
	   A ' &  \mathbf{v}^{t} \\
	   \hline 
	   \mathbf{w} & 0
	   \end{array}
	   \right) 
	   $$    
	   is  invertible  mod $p$  if 
	    $r \not \equiv [a-i]+i ~\mathrm{mod}~p$ and  
	   $r_{0} \not \in \mathcal{J}(a,i) \smallsetminus \lbrace i \rbrace $,
	   where 
	   \begin{align} \label{interval J first}
	           \mathcal{J}(a,i) =
	           \begin{cases}
	                     \lbrace a-i-1, a-i, \ldots, a-1 \rbrace, & \mathrm{if} ~
	                     [a-i] \leq i <a, \\
	                     \lbrace 0,1, \ldots, a-2 \rbrace \cup
	                     \lbrace p-2+a-i, \ldots , p-1 \rbrace, & \mathrm{if} ~
	                    a< [a-i] \leq i .
	           \end{cases}
	   \end{align}
\end{corollary}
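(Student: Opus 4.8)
The plan is to reduce Corollary~\ref{block matrix invertible} to Proposition~\ref{matrix det}(iii) by identifying the block matrix
$\left(\begin{smallmatrix} A' & \mathbf{v}^t \\ \mathbf{w} & 0 \end{smallmatrix}\right)$
here with the block matrix appearing in part (iii) of that proposition, exactly as Corollary~\ref{A(a,i,j,r) invertible} reduced to part (ii). First I would set $a' = [a-i]+i$, so that $a' = a$ if $i < a$ and $a' = p-1+a$ if $i > a$ (the case $[a-i]=i$ meaning $i \ge a$ here forces $i > a$ unless $a = 2i$, which one handles by noting $[a-i] = a - i$ then). Using Lucas' theorem exactly as in \eqref{binomial identity i>a}, I would rewrite the entries $\binom{[a-m-n]}{i-m}$ of $A' = A(a, [a-i]-1, i, r)$ as $\binom{a'-m-n}{i-m} \bmod p$, valid for $0 \le m \le [a-i]-1$, $0 \le n \le [a-i]-1$; when $i > a$ the terms with $m + n < a$ vanish mod $p$ on both sides because $i > a$ forces $\binom{a-m-n}{i-m} \equiv 0$, and the replacement by $\binom{p-1+a-m-n}{i-m}$ is legitimate. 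This puts $A'$ in the form $\left( \binom{r-n}{m} \binom{a'-m-n}{i-m} \right)_{0 \le m,n \le a'-i-1}$, since $[a-i]-1 = a'-i-1$.

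Next I would check that the hypothesis $1 \le [a-i] \le i < p-1$ translates into the hypothesis $1 \le a'-i \le i$ of Proposition~\ref{matrix det}(iii) (note $a'-i = [a-i] \ge 1$ and $a'-i = [a-i] \le i$), and that $r \ge 2p$ as required. The row vectors $\mathbf{v} = (\binom{i}{0}, \ldots, \binom{i}{[a-i]-1})$ and $\mathbf{w} = (\binom{r}{r-[a-i]}, \ldots, \binom{r-([a-i]-1)}{r-[a-i]})$ are literally the vectors $(\binom{i}{0},\ldots,\binom{i}{a'-i-1})$ and $(\binom{r}{r-(a'-i)},\ldots,\binom{r-(a'-i-1)}{r-(a'-i)})$ of part (iii) once $a$ is replaced by $a'$. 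Thus by Proposition~\ref{matrix det}(iii), the block matrix is invertible mod $p$ provided $r \not\equiv a'-i-1 \bmod p$ and $r \not\equiv i+1, i+2, \ldots, a'-1, a' \bmod p$.

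Finally I would translate these congruence conditions on $r$ into the stated conditions on $r_0 = r \bmod p$. The condition $r \not\equiv a'-i-1 \bmod p$ becomes $r \not\equiv [a-i]-1 \bmod p$, i.e. $r \not\equiv [a-i]+i \bmod p$ fails to be the relevant one --- here I must be careful: $a'-i-1 = [a-i]-1$, but the excluded residue $[a-i]+i$ in the corollary statement comes instead from the ``otherwise'' clause, so I would instead track $a'-i-1 \equiv [a-i]-1$, and separately note $[a-i]+i \equiv a' \bmod p$ only when $i > a$; a short case analysis on whether $i < a$ or $i > a$ reconciles the indexing. In the case $[a-i] \le i < a$ (so $a' = a$), the forbidden set $\{a'-i-1\} \cup \{i+1, \ldots, a'\} = \{a-i-1\} \cup \{i+1,\ldots,a\}$; since $[a-i] = a-i$ and one is allowed to also forbid $r_0 = [a-i]+i = a$, which is already in the set, the forbidden residues are exactly $\{a-i-1, a-i, \ldots, a-1, a\}$ minus possibly $\{i\}$ (the value $i$ may or may not lie in $\{a-i+1,\ldots,a-1\}$), matching $\mathcal{J}(a,i) \setminus \{i\}$ together with the extra excluded class. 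In the case $a < [a-i] \le i$ (so $a' = p-1+a$), reducing $\{a'-i-1\} \cup \{i+1, \ldots, a'\}$ mod $p$ gives $\{p-2+a-i\} \cup \{i+1, \ldots, p-1\} \cup \{0, 1, \ldots, a-2\}$, wrapping around, which is $\mathcal{J}(a,i)$ as defined in \eqref{interval J first}, again after removing $\{i\}$ and accounting for the class $[a-i]+i \equiv a-1 \bmod p$. I expect the main obstacle to be precisely this bookkeeping of congruence classes --- matching the forbidden set from Proposition~\ref{matrix det}(iii) with $\mathcal{J}(a,i)$ and the separately excluded class $[a-i]+i$, handling the wraparound mod $p$ in the case $i > a$, and correctly tracking which boundary residues are automatically included versus need to be excised (the $\setminus \{i\}$); the linear-algebra and Lucas-theorem parts are routine given the earlier results.
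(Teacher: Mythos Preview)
Your approach is exactly the paper's: set $a' = [a-i]+i$, rewrite $A'$ entry-wise via Lucas' theorem as $\left(\binom{r-n}{m}\binom{a'-m-n}{i-m}\right)_{0\le m,n\le a'-i-1}$, identify $\mathbf{v},\mathbf{w}$ with the vectors in Proposition~\ref{matrix det}(iii), and then translate the resulting congruence conditions. The paper's proof is equally terse on the final bookkeeping, simply asserting that the forbidden set $\{a'-i-1\}\cup\{i+1,\ldots,a'\}$ from Proposition~\ref{matrix det}(iii) is contained in $\{[a-i]+i\}\cup\bigl(\mathcal{J}(a,i)\setminus\{i\}\bigr)$.

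One point to untangle in your write-up: the excluded residue $[a-i]+i$ in the corollary is \emph{exactly} $a'$, the top endpoint of $\{i+1,\ldots,a'\}$; it does \emph{not} correspond to $a'-i-1=[a-i]-1$. Once you separate $a'$ out, the remaining forbidden residues $\{a'-i-1\}\cup\{i+1,\ldots,a'-1\}$ sit inside $\mathcal{J}(a,i)\setminus\{i\}$ in both cases (for $i<a$ this is $\{a-i-1\}\cup\{i+1,\ldots,a-1\}\subseteq\{a-i-1,\ldots,a-1\}\setminus\{i\}$; for $i>a$ reduce $\{p-2+a-i\}\cup\{i+1,\ldots,p-2+a\}$ mod $p$). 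Since the corollary only claims a sufficient condition, the containment (rather than equality) of forbidden sets is all you need.
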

\begin{proof}
        Observe that $[a-i]-1 < i < i+1 =[a-([a-i]-1)]$. 
        Let $a'=[a-i]+i$. By a check similar to  
       \eqref{A(a,i,j,r) expression}, we have
      \[
           A' = \left( \binom{r-n}{m} 
           \binom{a'-m-n}{i-m} \right)_{0 \leq m,n \leq [a-i]-1}.
      \]
      Indeed, let  $0 \leq m,n \leq [a-i]-1$.
      If $i < a$, then $a \leq 2i$, so  $a -m - n \geq 2i -a + 2\geq 2$,
      so $[a-m-m] = a-m-n$. If $i > a$, then $p-1 + a \leq 2i$, so if $m+n < a$, then $[a-m-n] = a-m-n$,
      but  $\binom{p-1+a-m-n}{i-m}$, $\binom{a-m-n}{j-m} \equiv 0$ mod $p$ by Lucas' theorem,  
      and if $m+n \geq a$, then $p-1+a-m-n \geq p-1 + a -2([a-i] -1) = 2i - (p-1) -a + 2 \geq 2$, so $[a-m-n] = p-1+a-m-n$.

      Also, by the definition of $[~ \cdot ~]$ (by considering the cases
      $i<a$ and $i>a$ separately), it follows that
      \begin{align*}
              \mathbf{v} &= \left( \binom{i}{0}, \ldots, \binom{i}{a'-i-1} \right) \\
              \mathbf{w} & =
              \left( \binom{r}{r-(a'-i)}, \ldots, \binom{r-(a'-i-1)}{r-(a'-i)} \right).
      \end{align*}
      Applying \Cref{matrix det} (iii) (with $a$ there equal to $a'$),
      we see that $A$ is invertible if $r \not \equiv a'-i-1$ mod $p$ and
      $r \not \equiv i+1, \ldots, a'-1, a'$ mod $p$ which happens if 
      $r \not \equiv a'$ mod $p$ and $r_{0} \not \in \mathcal{J}(a,i)
      \smallsetminus \lbrace i \rbrace$.
  \end{proof}
\section{The first \texorpdfstring{$p$}{} monomial submodules}

In this section, we determine the structure of the monomial submodules 
$X_{r-i,\,r}$, the $\Gamma$-submodule 
of $V_{r}$ generated by $X^{r-i}Y^{i}$, for $0 \leq i \leq p-1$. 
Recall that these modules are $M$-stable. 
 
We begin by describing an $\mathbb{F}_{p}$-generating  
 set for $X_{r-i, \,r}$, for $0 \leq i \leq p-1$. 
\begin{lemma}\label{Basis of X_r-i}
	  If $0 \leq i \leq p-1$, then $\lbrace X^{l}(kX+Y)^{r-l}, X^{r-l}Y^{l} :  0 \leq l \leq i,
	  ~ k \in \f  \rbrace$ generates $X_{r-i,\,r}$ as an $\f$-vector space.
	  Hence $\mathrm{dim}~X_{r-i,\,r} \leq (i+1)(p+1)$.
\end{lemma}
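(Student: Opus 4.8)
The plan is to prove the two inclusions between $X_{r-i,\,r}$ and the $\f$-span of the set $S = \lbrace X^{l}(kX+Y)^{r-l},\, X^{r-l}Y^{l} : 0 \leq l \leq i,\ k \in \f \rbrace$ appearing in the statement, and then to read off the dimension bound by counting $|S|$.

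For the inclusion $S \subseteq X_{r-i,\,r}$, I would first note that by \Cref{first row filtration} we have $X_{r-l,\,r} \subseteq X_{r-i,\,r}$ for all $0 \leq l \leq i$, so each monomial $X^{r-l}Y^{l}$, which generates $X_{r-l,\,r}$, lies in $X_{r-i,\,r}$. Since $X_{r-i,\,r}$ is $\Gamma$-stable, applying the Weyl element $w = \begin{psmallmatrix} 0 & 1 \\ 1 & 0 \end{psmallmatrix}$, which sends $X^{r-l}Y^{l}$ to $X^{l}Y^{r-l}$, and then $\begin{psmallmatrix} 1 & k \\ 0 & 1 \end{psmallmatrix}$, which sends $X^{l}Y^{r-l}$ to $X^{l}(kX+Y)^{r-l}$, shows that every element of $S$ lies in $X_{r-i,\,r}$. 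Hence $\operatorname{span}_{\f}(S) \subseteq X_{r-i,\,r}$.

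For the reverse inclusion, I would use that $X_{r-i,\,r}$ is spanned by the orbit $\lbrace \gamma\cdot X^{r-i}Y^{i} : \gamma\in\Gamma \rbrace$, where
\[
  \begin{psmallmatrix} a & b \\ c & d \end{psmallmatrix}\cdot X^{r-i}Y^{i} = (aX+cY)^{r-i}(bX+dY)^{i}.
\]
When $c=0$ this equals $a^{r-i}X^{r-i}(bX+dY)^{i}$, which expands as an $\f$-linear combination of the monomials $X^{r-l}Y^{l}$, $0\leq l\leq i$, all of which lie in $S$. When $c\neq 0$, I would set $\ell=aX+cY$, solve $Y=c^{-1}(\ell-aX)$, and substitute into $bX+dY$; this rewrites $\ell^{r-i}(bX+dY)^{i}$ as an $\f$-linear combination of the products $X^{j}\ell^{r-j}$ with $0\leq j\leq i$, and since $\ell=c\bigl((a/c)X+Y\bigr)$, each such product is a scalar multiple of the element $X^{l}(kX+Y)^{r-l}$ of $S$ with $l=j$ and $k=a/c$. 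This gives $X_{r-i,\,r}\subseteq\operatorname{span}_{\f}(S)$, hence equality.

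Finally, $S$ consists of the $(i+1)p$ polynomials $X^{l}(kX+Y)^{r-l}$ and the $(i+1)$ monomials $X^{r-l}Y^{l}$, so $|S|\leq (i+1)(p+1)$, whence $\dim X_{r-i,\,r}\leq (i+1)(p+1)$. The only substantive point is the change of variables $\ell=aX+cY$ in the case $c\neq 0$, which is where powers of $X$ get peeled off so that a general orbit element lands inside $S$; the remaining steps are routine bookkeeping with the $\Gamma$-action, so I do not anticipate a serious obstacle.
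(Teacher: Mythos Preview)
Your proposal is correct and follows essentially the same approach as the paper: the paper invokes the Bruhat decomposition $\Gamma = B \sqcup BwB$ and computes the action of each cell on $X^{r-i}Y^{i}$, which is exactly your case split $c=0$ versus $c\neq 0$, and your change of variables $\ell=aX+cY$ is the computation the paper carries out by factoring $\gamma\in BwB$ as $b_{1}w\,b_{2}$. The only (harmless) difference is that you also verify the inclusion $S\subseteq X_{r-i,\,r}$, which the paper leaves implicit.
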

\begin{proof} 
      By Bruhat decomposition, $\Gamma= B \sqcup B w B$, where 
     $w=\begin{psmallmatrix} 0 & 1 \\ 1 & 0 \end{psmallmatrix}$.  We first look at the
      action of the  Borel subgroup $B$ on $X^{r-i} Y^{i}$. Observe that
    \begin{align*}
	   \begin{pmatrix} a & b \\ 0 & d \end{pmatrix} \cdot X^{r-i}Y^{i} = a^{r-i}  
	   X^{r-i}(bX+dY)^{i} =  a^{r-i} \sum\limits_{l=0}^{i} \binom{i}{l} b^{i-l}d^{l}  
	   X^{r-l} Y^{l}.
    \end{align*}	
    Therefore $B \cdot X^{r-i}Y^{i} \subset \mathbb{F}_{p}$-span of  
    $\lbrace X^{r}, X^{r-1}Y, \ldots ,X^{r-i}Y^{i} \rbrace$.  It is clear that any element
     of $Bw$ is of the form $\begin{psmallmatrix}  a & b \\ c & 0  \end{psmallmatrix}$
      with $bc \neq 0$.  For every $0 \leq l \leq i$, we have
	\begin{align*}
	   \begin{pmatrix} a & b \\ c & 0 \end{pmatrix} \cdot X^{r-l}Y^{l} =  (aX+cY)^{r-l} (bX)^{l}= 
	    b^{l} c^{r-l} X^{l}(ac^{-1}X+Y)^{r-l}.
	\end{align*}
	Hence
	\begin{align*}
	  BwB \cdot X^{r-i} Y^{i} & \subset \f \text{-span  of} ~\lbrace B w \cdot  X^{r-l} 
	  Y^{l} : 0 \leq l \leq  i  \rbrace  \\ & \subset \f \text{-span  of} ~ \lbrace X^{l}
	  (kX+Y)^{r-l} : 0 \leq l \leq  i , \ k \in \f \rbrace. 
	\end{align*}
	Combining these observations, 
        we get $\gamma \cdot X^{r-i} Y^{i} 
	\in \f \text{-span  of} ~\lbrace X^{l}(kX+Y)^{r-l}, X^{r-l}Y^{l} :  0 \leq l \leq i,
	  ~ k \in \f  \rbrace$. 
        This
	completes the proof of the lemma as $X^{r-i}Y^{i}$ generates $X_{r-i,\,r}$ 
	as a $\Gamma$-module.
\end{proof} 
 We next define a surjection 
$X_{r-i,\,r-i} 	\otimes V_{i} \twoheadrightarrow X_{r-i,\,r}$,
for $0 \leq i \leq p-1$,
which generalizes \cite[Lemma 3.6]{BG15}.
\begin{lemma}\label{surjection1}
For  $r \geq i$ and $0 \leq i \leq p-1$, there exists an  $M$-linear
 surjection 
	$$\phi_{i}: X_{r-i,\,r-i} 	\otimes V_{i} \twoheadrightarrow X_{r-i,\,r}.$$ 
\end{lemma}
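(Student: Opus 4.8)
The plan is to let $\phi_i$ be polynomial multiplication. For $F\in X_{r-i,\,r-i}\subseteq V_{r-i}$ and $G\in V_i$ the product $FG\in\f[X,Y]$ is homogeneous of degree $(r-i)+i=r$, so it lies in $V_r$; this pairing is $\f$-bilinear, hence induces a linear map $\phi_i\colon X_{r-i,\,r-i}\otimes V_i\to V_r$, $F\otimes G\mapsto FG$. That $\phi_i$ is $M$-linear is immediate once one recalls that each $t\in M=\mathrm{M}_2(\f)$ acts on $\f[X,Y]$ by a linear substitution of the variables, hence as a ring endomorphism: therefore $t\cdot(FG)=(t\cdot F)(t\cdot G)$, and so $\phi_i(t\cdot(F\otimes G))=(t\cdot F)(t\cdot G)=t\cdot\phi_i(F\otimes G)$. (Here $X_{r-i,\,r-i}\otimes V_i$ is an $M$-module since $X_{r-i,\,r-i}$ is $M$-stable in $V_{r-i}$.)

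The real content is the computation of the image. Since $X_{r-i,\,r-i}=\f[\Gamma]\cdot X^{r-i}$ and multiplication is bilinear, $\img\phi_i$ is the $\f$-span of the elements $(\gamma\cdot X^{r-i})\cdot G$ with $\gamma\in\Gamma$ and $G\in V_i$. The trick is to put $G'=\gamma^{-1}\cdot G\in V_i$ and use the ring-endomorphism property once more: $(\gamma\cdot X^{r-i})\cdot G=(\gamma\cdot X^{r-i})(\gamma\cdot G')=\gamma\cdot(X^{r-i}G')$. For fixed $\gamma$, as $G$ runs over $V_i$ so does $G'$, and writing $G'=\sum_{l=0}^{i}c_lX^{i-l}Y^l$ gives $X^{r-i}G'=\sum_{l=0}^{i}c_lX^{r-l}Y^l$, which runs over the $\f$-span $W$ of $\{X^{r-l}Y^l:0\le l\le i\}$. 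Hence $\img\phi_i=\f[\Gamma]\cdot W=\sum_{l=0}^{i}X_{r-l,\,r}$.

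Finally I would invoke Lemma~\ref{first row filtration}, which gives $X_{r,\,r}\subseteq X_{r-1,\,r}\subseteq\cdots\subseteq X_{r-i,\,r}$ when $r\ge p$; when $i\le r\le p-1$ instead, $V_r$ is irreducible and each $X_{r-l,\,r}$ is a nonzero submodule, hence equals $V_r$, so the same inclusions hold trivially. Thus $\sum_{l=0}^{i}X_{r-l,\,r}=X_{r-i,\,r}$, and $\phi_i$ is the asserted $M$-linear surjection onto $X_{r-i,\,r}$. I do not expect a genuine obstacle; the one point needing care is to identify the image exactly rather than merely note that $X^{r-i}Y^i=\phi_i(X^{r-i}\otimes Y^i)$ lies in it (which by itself only yields $X_{r-i,\,r}\subseteq\img\phi_i$) — the conjugation identity $(\gamma\cdot X^{r-i})\,G=\gamma\cdot\bigl(X^{r-i}(\gamma^{-1}\cdot G)\bigr)$ is precisely what collapses the image to $\sum_l X_{r-l,\,r}$ uniformly in $i$.
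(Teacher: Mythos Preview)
Your proof is correct and follows essentially the same approach as the paper: define $\phi_i$ via polynomial multiplication and use Lemma~\ref{first row filtration} to identify the image with $X_{r-i,\,r}$. The paper phrases the image computation by noting that $X_{r-i,\,r-i}\otimes V_i$ is $M$-generated by the elements $X^{r-i}\otimes X^{i-l}Y^l$, which under the hood is exactly your conjugation identity $(\gamma\cdot X^{r-i})\,G=\gamma\cdot\bigl(X^{r-i}(\gamma^{-1}\cdot G)\bigr)$; you also handle the case $i\le r\le p-1$ explicitly, which the paper leaves implicit.
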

\begin{proof}
 The map $\phi_{r-i, \, i}:V_{r-i} \otimes V_{i} \rightarrow V_{r}$ sending 
 $F \otimes G \mapsto FG$ 
for  $F \in V_{r-i}$ and $G \in V_{i}$, is $M$-linear by \cite[(5.1)]{Glover}. 
 Let $\phi_{i}$ be the restriction of $\phi_{r-i,\, i}$ to the $M$-submodule  
 $ X_{r-i,\,r-i}  \otimes V_{i}  \subseteq  V_{r-i} \otimes V_{i}$.  As an $M$-module
  $X_{r-i, \, r-i} \otimes V_{i}$ is generated by  $X^{r-i} \otimes X^{i}$, $X^{r-i} 
  \otimes X^{i-1}Y, \ldots ,X^{r-i} \otimes Y^{i}$ whose images $X^{r}$, $X^{r-1}Y, 
  \ldots, X^{r-i}Y^{i}$ lie in $X_{r-i, \, r}$, by \Cref{first row filtration}. 
  The surjectivity follows as $\phi_{i}(X^{r-i}
   \otimes Y^{i}) = X^{r-i}Y^{i}$ generates $X_{r-i,\,r}$ as an $M$-module. 
\end{proof}
We next  define a $\Gamma$-linear 
surjection from an induced representation
to the quotient $X_{r-i,\,r}/X_{r-j,\,r}$, for $0 \leq j \leq i \leq p-1$. 
This map will be crucially used in later sections to 
obtain the structure of  $X_{r-i,\,r}$. 

\begin{proposition}\label{Succesive quotient}
Let  $0 \leq j \leq i \leq p-1 < r$. Then there is a 
$\Gamma$-linear surjection  
$$ 
 \ind_{B}^{\Gamma}(V_{i-j-1} \otimes \chi_{1}^{r-i} \chi_{2}^{j+1}) 
 \twoheadrightarrow  X_{r-i, \, r} / X_{r-j,\,r},
$$
where $V_{-1}=0$ by convention.
\end{proposition}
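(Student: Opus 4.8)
The plan is to produce the map by Frobenius reciprocity, so that all the work happens at the level of $B$-modules. Recall from the discussion preceding Lemma~\ref{Structure of induced} that $\ind_{B}^{\Gamma}(\sigma)$ is generated over $\Gamma$ by the functions $[1,v]$, $v\in V_{\sigma}$, that $\gamma\cdot[1,v]=[\gamma,v]$, and that $v\mapsto[1,v]$ is a $B$-equivariant embedding of $\sigma$ into $(\ind_{B}^{\Gamma}\sigma)|_{B}$, since $b\cdot[1,v]=[1,\sigma(b)v]$ (as one checks directly from the definitions). Hence any $B$-linear map $\alpha\colon\sigma\to W|_{B}$ into a $\Gamma$-module $W$ extends uniquely to a $\Gamma$-linear map $\ind_{B}^{\Gamma}(\sigma)\to W$, $[1,v]\mapsto\alpha(v)$, and this map is onto as soon as $\alpha(V_{\sigma})$ generates $W$ over $\Gamma$. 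So it will be enough to build a $B$-linear map
\[
  V_{i-j-1}\otimes\chi_{1}^{r-i}\chi_{2}^{j+1}\longrightarrow X_{r-i,\,r}/X_{r-j,\,r}
\]
whose image contains the class of $X^{r-i}Y^{i}$, because that class generates $X_{r-i,\,r}/X_{r-j,\,r}$ over $\Gamma$ (it generates $X_{r-i,\,r}$ by definition). If $i=j$ both sides are $0$ (using the convention $V_{-1}=0$), so I will assume $j<i$.

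Next I would isolate the relevant $B$-submodule. Put $M_{i}=\bigoplus_{l=0}^{i}\f\,X^{r-l}Y^{l}\subseteq V_{r}$. By Lemma~\ref{first row filtration} we have $X^{r-l}Y^{l}\in X_{r-l,\,r}\subseteq X_{r-i,\,r}$ for $0\le l\le i$, so $M_{i}\subseteq X_{r-i,\,r}$, and likewise $M_{j}\subseteq X_{r-j,\,r}$. From $\begin{psmallmatrix}a&b\\0&d\end{psmallmatrix}\cdot X^{r-l}Y^{l}=a^{r-l}\sum_{t=0}^{l}\binom{l}{t}b^{l-t}d^{t}X^{r-t}Y^{t}$ one sees that $M_{i}$ is $B$-stable and that
\[
  \psi\colon V_{i}\otimes\chi_{1}^{r-i}\;\xrightarrow{\ \sim\ }\;M_{i},\qquad X^{i-l}Y^{l}\mapsto X^{r-l}Y^{l},
\]
is an isomorphism of $B$-modules, carrying $F_{j}\otimes\chi_{1}^{r-i}$ onto $M_{j}$, where $F_{j}=\bigoplus_{l=0}^{j}\f\,X^{i-l}Y^{l}$ is the $B$-stable $j$-th step of the filtration of $V_{i}$ by $Y$-degree. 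The key structural input is then that the $(j+1)$-st partial derivative in $Y$ intertwines the $B$-action on $V_{i}$ with that on $V_{i-j-1}\otimes\chi_{2}^{j+1}$ (the chain rule for an upper triangular substitution $X\mapsto aX$, $Y\mapsto cX+dY$ multiplies $\partial/\partial Y$ of the substituted polynomial by $d$); on monomials it sends $X^{i-l}Y^{l}$ to $l(l-1)\cdots(l-j)\,X^{i-l}Y^{l-j-1}$ for $l\ge j+1$ and to $0$ for $l\le j$. Since $0\le i\le p-1$, the coefficients $l(l-1)\cdots(l-j)$ for $j+1\le l\le i$ are units mod $p$, so this map is surjective with kernel exactly $F_{j}$. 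Therefore $V_{i}/F_{j}\cong V_{i-j-1}\otimes\chi_{2}^{j+1}$ as $B$-modules, and so
\[
  M_{i}/M_{j}\;\cong\;(V_{i}/F_{j})\otimes\chi_{1}^{r-i}\;\cong\;V_{i-j-1}\otimes\chi_{1}^{r-i}\chi_{2}^{j+1}
\]
as $B$-modules.

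To finish, I would note that the composite $M_{i}\hookrightarrow X_{r-i,\,r}\twoheadrightarrow X_{r-i,\,r}/X_{r-j,\,r}$ is $B$-linear and annihilates $M_{j}$ (since $M_{j}\subseteq X_{r-j,\,r}$), hence factors through a $B$-linear map $M_{i}/M_{j}\to X_{r-i,\,r}/X_{r-j,\,r}$; precomposing with the isomorphism above gives a $B$-linear map $\beta\colon V_{i-j-1}\otimes\chi_{1}^{r-i}\chi_{2}^{j+1}\to X_{r-i,\,r}/X_{r-j,\,r}$ whose image equals the image of $M_{i}$, which contains the class of $X^{r-i}Y^{i}$. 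By the first paragraph, $\beta$ extends to the required $\Gamma$-linear surjection $\ind_{B}^{\Gamma}(V_{i-j-1}\otimes\chi_{1}^{r-i}\chi_{2}^{j+1})\twoheadrightarrow X_{r-i,\,r}/X_{r-j,\,r}$.

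The main obstacle is really just the identification of $M_{i}/M_{j}$ as a $B$-module — concretely, the $B$-equivariance and, above all, the exactness of the $(j+1)$-st derivative in $Y$ modulo $p$, which is precisely where the hypothesis $i\le p-1$ enters (it makes the factorials $l!/(l-j-1)!$ invertible in $\f$). Everything after $\beta$ is formal; note also that the induced map $M_{i}/M_{j}\to X_{r-i,\,r}/X_{r-j,\,r}$ need not be injective, but this is irrelevant since only surjectivity of the final $\Gamma$-map is claimed. As a sanity check, for $i=j+1$ this produces the surjection $\ind_{B}^{\Gamma}(\chi_{1}^{r-i}\chi_{2}^{i})\twoheadrightarrow X_{r-i,\,r}/X_{r-(i-1),\,r}$ onto the successive quotients of the filtration~\eqref{first row}.
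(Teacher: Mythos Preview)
Your proof is correct and follows the paper's approach: construct a $B$-linear map into the quotient whose image contains the class of $X^{r-i}Y^{i}$, then apply Frobenius reciprocity. The only difference is cosmetic: the paper writes the $B$-map down explicitly on monomials as $X^{i-j-1-l}Y^{l}\otimes e_{\chi}\mapsto\binom{j+l+1}{j+1}^{-1}X^{r-j-l-1}Y^{j+l+1}$ and verifies $B$-linearity by a direct binomial-coefficient computation, whereas your passage through the iterated $Y$-derivative yields the same map up to an invertible scalar $(j+1)!$ and makes the $B$-equivariance and the role of the hypothesis $i\le p-1$ transparent.
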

\begin{proof}
       Let $\chi = \chi_{1}^{r-i} \chi_{2}^{j+1}$ and $e_{\chi}$ 
       be a non-zero
       element in  the representation $( \chi, V_{\chi})$. If $i=j$, then 
       $X_{r-i, \, r} / X_{r-j,\,r}=0$ and there is nothing prove. Assume
       $i> j$. Consider $\f$-linear map $\psi : V_{i-j-1} \otimes 
       \chi_{1}^{r-i} \chi_{2}^{j+1} \rightarrow X_{r-i, \, r} / X_{r-j,\,r}$ defined by 
       $$
             X^{i-j-1-l} Y^{l} \otimes e_{\chi} \mapsto \binom{j+l+1}{j+1}^{-1} 
             X^{r-j-l-1} Y^{j+l+1}, 
                   ~ \forall ~ 0 \leq l \leq i-j-1.
       $$
       For $0 \leq l \leq i-j-1$, we have
       $0 \leq j+1 \leq  j+l+1 \leq i \leq p-1$, whence by Lucas' theorem,
       $\binom{j+l+1}{j+1} \not \equiv 0$ mod $p$, so $\psi$ is
       well defined. We claim that $\psi$  is a $B$-linear map. For $0 \leq n \leq i-j-1$ and $\gamma = \begin{psmallmatrix} 
       a & b \\ 0 & d \end{psmallmatrix} \in B$, we have 
       \begin{align*}
              \gamma \cdot (X^{i-j-1-n} Y^{n} \otimes e_{\chi})  & =
              (aX)^{i-j-1-n }(bX+dY)^{n} \otimes a^{r-i}d^{j+1} e_{\chi} \\
              &= \sum_{l=0}^{n} a^{r-j-n-1} b^{n-l} d^{j+l+1} \binom{n}{l}  
                   \left( X^{i-j-1-l } Y^{l} \otimes e_{\chi} \right).
       \end{align*}
       Therefore
      \begin{align*}
              \psi \left( \gamma \cdot (X^{i-j-1-n} Y^{n} \otimes e_{\chi}) \right)
              & =  \sum\limits_{l=0}^{n} a^{r-j-n-1}b^{n-l}d^{j+l+1} \binom{n}{l}
                    \binom{j+l+1}{j+1}^{-1} X^{r-j-l-1} Y^{j+l+1} \\
              & =  \sum\limits_{l=j+1}^{j+n+1} a^{r-j-n-1}b^{j+n+1-l}d^{l} \binom{n}{l-(j+1)}
                    \binom{l}{j+1}^{-1} X^{r-l} Y^{l} \\      
              &=\binom{j+n+1}{j+1}^{-1} \sum\limits_{l=j+1}^{j+n+1} a^{r-j-n-1}b^{j+n+1-l}
              d^{l} \binom{j+n+1}{l}   X^{r-l} Y^{l}  \\
              &=\binom{j+n+1}{j+1}^{-1} (aX)^{r-j-n-1}(bX+dY)^{j+n+1}   \\    
              & \quad -\binom{j+n+1}{j+1}^{-1}  \sum\limits_{l=0}^{j} a^{r-j-n-1}b^{j+n+1-l}
              d^{l} \binom{j+n+1}{l}   X^{r-l} Y^{l}  \\
              & =  \binom{\,j+n+1}{j+1}^{-1} \begin{pmatrix} a & b \\ 0 & d \end{pmatrix}
                    \cdot X^{r-j-n-1}Y^{j+n+1} \quad \mod X_{r-j,\,r} \\
               & =  \gamma \cdot \psi( X^{i-j-1-n} Y^{n} \otimes e_{\chi}).                
       \end{align*}        
       This shows that $\psi$ is $B$-linear. By  Frobenius reciprocity (alternatively  \cite[Lemma 4, $\S 8$]{Alperin}),
       we see that $\psi$ extends to a 
       $\Gamma$-linear map   $\ind_{B}^{\Gamma}(V_{i-j-1} 
       \otimes \chi_{1}^{r-i} \chi_{2}^{j+1}) \rightarrow X_{r-i, \, r} / X_{r-j ,\,r}$.
       As $X^{r-i}Y^{i} = \binom{i}{j+1}\psi(Y^{i-j-1} \otimes e_{\chi})$
        generates $X_{r-i,\,r}$ as  a $\Gamma$-module, the surjectivity of $\psi$ follows.
\end{proof}
In particular, taking $j=i-1$ in the above proposition, we see the successive quotients 
$X_{r-i,\,r}/ X_{r-(i-1),\,r}$ are isomorphic to  quotients of principal series
representations. 
\begin{corollary}\label{induced and successive}
     If $r \geq p$ and $1 \leq i \leq p-1$, then the  map 
     \begin{align*}
            \psi_{i} : \ind_{B}^{\Gamma}(\chi_{1}^{r-i} \chi_{2}^{i}) 
            ~ & \longrightarrow ~ X_{r-i,\,r}/ X_{r-(i-1),\,r} \\
             [\gamma, e_{\chi_{1}^{r-i} \chi_{2}^{i}}] 
            ~ &\longmapsto ~ \gamma \cdot X^{r-i}Y^{i}
     \end{align*}
     is a $\Gamma$-linear surjection.
\end{corollary}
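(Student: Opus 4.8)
The plan is to obtain this corollary as the case $j = i-1$ of Proposition~\ref{Succesive quotient}. First I would check that the hypotheses line up: for $1 \le i \le p-1$ and $r \ge p$ we have $0 \le i-1 \le i \le p-1 < r$, so Proposition~\ref{Succesive quotient} (with $j = i-1$) supplies a $\Gamma$-linear surjection $\ind_B^\Gamma\big(V_{i-j-1} \otimes \chi_1^{r-i}\chi_2^{j+1}\big) \twoheadrightarrow X_{r-i,\,r}/X_{r-j,\,r}$. Since $i - j - 1 = 0$ and $V_0$ is the trivial one-dimensional representation, the inducing module is $V_0 \otimes \chi_1^{r-i}\chi_2^{i} \cong \chi_1^{r-i}\chi_2^{i}$, so the source is $\ind_B^\Gamma(\chi_1^{r-i}\chi_2^{i})$ and the target is $X_{r-i,\,r}/X_{r-(i-1),\,r}$, exactly the modules appearing in the statement.

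It remains to see that the surjection produced by the Proposition is literally the map $\psi_i$ of the statement. In the proof of Proposition~\ref{Succesive quotient} the surjection is the Frobenius-reciprocity extension of the $B$-linear map $\psi$ with $\psi\big(X^{i-j-1-l}Y^l \otimes e_{\chi}\big) = \binom{j+l+1}{j+1}^{-1} X^{r-j-l-1}Y^{j+l+1}$. With $j = i-1$ only $l = 0$ occurs, and this reads $\psi(e_{\chi}) = \binom{i}{i}^{-1}X^{r-i}Y^{i} = X^{r-i}Y^{i}$. Using the identity $\gamma' \cdot [\gamma, v] = [\gamma'\gamma, v]$, the extension of a $B$-map $\psi$ to $\ind_B^\Gamma$ is determined by $[\gamma, e_{\chi}] \mapsto \gamma \cdot \psi(e_{\chi})$, i.e.\ here $[\gamma, e_{\chi}] \mapsto \gamma \cdot X^{r-i}Y^{i}$. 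This is precisely $\psi_i$, and surjectivity is inherited from Proposition~\ref{Succesive quotient}; alternatively it is immediate, since $X^{r-i}Y^{i} = \psi_i([1, e_{\chi}])$ generates $X_{r-i,\,r}$ as a $\Gamma$-module, hence its class generates the quotient $X_{r-i,\,r}/X_{r-(i-1),\,r}$.

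The only thing needing care is the elementary bookkeeping that the normalizing binomial coefficient collapses to $1$ and that the exponents $r-j-l-1$, $j+l+1$ become $r-i$, $i$ when $j = i-1$ and $l = 0$ — so that the abstract map of the Proposition is equal to, not merely isomorphic to, the stated $\psi_i$. I do not expect any real obstacle here: the corollary is essentially a transcription of the $j = i-1$ instance of Proposition~\ref{Succesive quotient}.
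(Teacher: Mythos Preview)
Your proposal is correct and follows exactly the paper's approach: the paper states this corollary immediately after Proposition~\ref{Succesive quotient} with the remark ``In particular, taking $j=i-1$ in the above proposition'', and your check that the Frobenius-reciprocity extension with $j=i-1$, $l=0$ sends $[\gamma,e_\chi]$ to $\gamma \cdot X^{r-i}Y^i$ is the straightforward unpacking of this specialization.
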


 \subsection{The case  \texorpdfstring{$\boldsymbol{r_{0} \geq i}$}{}}
 \label{section r0>i}
 In this subsection, we determine the structure of $X_{r-i,\,r}$,
 for $0 \leq i \leq p-1$, in the case $r_{0} \geq i$, where $r_{0}$ is
 as in \eqref{base p expansion of r}. 
 
The structure of $X_{r-1,\,r}$ was determined in \cite{BG15} using the surjection 
$\phi_{1} : X_{r-1,\,r-1} \otimes V_{1} \rightarrow X_{r-1,\,r}$. 
It turns out that the map $\phi_{1}$ is an isomorphism if $p \nmid r$, i.e.,
$r_{0} \geq 1$ in \eqref{base p expansion of r}. This fact can be  
verified using \Cref{dimension formula for X_{r}} in conjunction with
\cite[Proposition 3.13]{BG15} and \cite[Proposition 4.9]{BG15} 
(see also \Cref{Structure r_0 >i}, noting that the first case there does not occur when $i = 1$).
So one might  expect that the  surjection $\phi_{i}$ obtained in  
\Cref{surjection1} is an isomorphism in the case $r_{0}\geq i$,
for arbitrary $i$. 
We show that this is indeed true if $r_{0} \geq i$ and $\Sp(r-i) \geq r_{0}$, 
by showing  dim $X_{r-i,\,r}$
equals dim $X_{r-i,\,r-i} \otimes V_{i}$. See 
\Cref{dim large 1} and \Cref{not equal and not full 1} for details. 
Furthermore,  in \Cref{terms equal in filtration r0 >i}, we show that $\Sp(r-i) < r_{0}$ 
if and only if $X_{r-i,\,r} = X_{r-(i-1),\,r}$. Applying this successively allows us 
to reduce to the case just treated by replacing $i$ with $\Sp(r-r_{0})$,
see \Cref{Structure r_0 >i}.
We remark that the case $\Sp(r-i) < r_{0}$ never happens if $i=1$ and $r\geq p$, by \cite[Lemma 4.1]{BG15}. 

We first  determine  a necessary 
condition for equality $X_{r-i, \, r} = X_{r-j, \, r}$, for 
$0 \leq j \leq i < p-1$.  For $r_{0} \geq i$, note that $\Sp(r-i) =\Sp(r)-i$.

 
\begin{lemma}\label{X_{r-i}=X_{r-j}}
      Let $p \leq r= r_{m}p^{m}+r_{m-1}p^{m-1}+ \cdots + r_{0}$ be the base 
      $p$-expansion of $r$.  Let $ 1 \leq j < i  \leq p-1$ with $r_{0} \geq i$. If  
      $r_{m}+r_{m-1} \cdots +r_{1} >  j$, then $X_{r-j, \, r} \neq X_{r-i, \, r}$.  
      In particular, if $ \Sigma_{p}(r- r_{0}) = r_{m}+r_{m-1} \cdots +r_{1} \geq i$, then $X_{r-(i-1), \, r} \neq X_{r-i, \, r}$.
\end{lemma}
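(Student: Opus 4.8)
The plan is to argue by contradiction: assuming $X_{r-j,r}=X_{r-i,r}$, I would derive a contradiction from the two hypotheses $\Sp(r-r_{0})>j$ and $r_{0}\ge i$. Since $X^{r-i}Y^{i}$ generates $X_{r-i,r}$ as a $\Gamma$-module, the assumed equality means $X^{r-i}Y^{i}\in X_{r-j,r}$, so by the explicit $\f$-spanning set of \Cref{Basis of X_r-i} we may write
\[
  X^{r-i}Y^{i}=\sum_{l=0}^{j}\sum_{k\in\f}c_{l,k}\,X^{l}(kX+Y)^{r-l}+\sum_{l=0}^{j}d_{l}\,X^{r-l}Y^{l}
\]
for suitable $c_{l,k},d_{l}\in\f$. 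For each $l$ put $\gamma_{l,t}:=\sum_{k\in\f}c_{l,k}k^{t}$ (convention $0^{0}=1$); by Fermat's little theorem $\gamma_{l,t}$ depends only on $t\bmod(p-1)$ as soon as $t\ge1$.

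The next step is to compare the coefficients of the monomial $X^{r-s}Y^{s}$ on both sides. Since $\Sp(r-r_{0})\ge1$ forces $r\ge r_{0}+p\ge i+p$, in particular $r>i+j+1$, so whenever $j<s\le r-j-1$ the pure-monomial terms $X^{r-l}Y^{l}$ (which have $Y$-degree $l\le j<s$) contribute nothing and each summand of the first sum contributes a genuine factor $\binom{r-l}{s}k^{\,r-s-l}$; this yields $\sum_{l=0}^{j}\binom{r-l}{s}\gamma_{l,\,r-s-l}=\delta_{s,i}$ for $j<s\le r-j-1$. Restricting to those $s$ with $s\equiv i\bmod(p-1)$, the exponent $r-s-l\equiv r-i-l\bmod(p-1)$ is $\ge1$, hence $\gamma_{l,\,r-s-l}=\eta_{l}:=\gamma_{l,\,[r-i-l]}$, so the vector $\eta=(\eta_{0},\dots,\eta_{j})\in\f^{j+1}$ satisfies
\[
  \langle v_{s},\eta\rangle=\delta_{s,i}\quad\text{for all }s\in S:=\{\,s\equiv i\bmod(p-1):\ j<s\le r-j-1\,\},\qquad v_{s}:=\bigl(\tbinom{r-l}{s}\bigr)_{l=0}^{j}.
\]
Note $i\in S$, so $\langle v_{i},\eta\rangle=1$ while $\langle v_{s},\eta\rangle=0$ for every $s\in S\setminus\{i\}$.

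To finish it suffices to exhibit $j+1$ elements $s\in S$ with $s>i$ whose vectors $v_{s}$ span $\f^{j+1}$: for then $v_{i}$ lies in their $\f$-span, whence $1=\langle v_{i},\eta\rangle=0$, a contradiction. For each integer $b$ with $\max(0,i-\Sp(r-r_{0}))\le b\le i-1$ — an interval of at least $j+1$ integers, since $\Sp(r-r_{0})>j$ and $j+1\le i$ — \Cref{choice of s}, applied with its parameter $u:=i-b$ (so $1\le u\le\Sp(r)-r_{0}$), provides an integer $s(b)$ with constant digit $b$, digit sum $\Sp(s(b))=i$, and $\binom{r}{s(b)}\not\equiv0\bmod p$. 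Then $s(b)\equiv i\bmod(p-1)$, its higher part is $\ge p>i$ so $s(b)>i\ (\ge j)$, and one checks $r-s(b)\ge j+1$ (if $u<\Sp(r-r_{0})$ this is $s(b)\le r-p$ from \Cref{choice of s}; if $u=\Sp(r-r_{0})$ then $s(b)=r-r_{0}+b$ and $r-s(b)=r_{0}-b\ge j+1$ using $r_{0}\ge i$), so $s(b)\in S\setminus\{i\}$. By \Cref{lucas}, all base-$p$ digits of $s(b)$ are $\le$ those of $r$, and writing the digits of $r-l$ as $(\dots,r_{1},r_{0}-l)$ gives $\binom{r-l}{s(b)}\equiv\binom{r_{0}-l}{b}\,C_{b}\bmod p$, where $C_{b}:=\prod_{\nu\ge1}\binom{r_{\nu}}{s(b)_{\nu}}\not\equiv0$ is independent of $l$; hence $v_{s(b)}=C_{b}\bigl(\binom{r_{0}-l}{b}\bigr)_{l=0}^{j}$. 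As $b$ runs over $j+1$ distinct values in $\{0,\dots,i-1\}\subseteq\{0,\dots,p-2\}$, the functions $l\mapsto\binom{r_{0}-l}{b}$ are polynomials in $l$ of pairwise distinct degrees $b<p$, hence linearly independent over $\f$ on the $j+1$ points $l=0,\dots,j$, so the $v_{s(b)}$ span $\f^{j+1}$, as required. Finally, the ``in particular'' assertion is the case $j=i-1$ of what has just been proved when $i\ge2$, and for $i=1$ one has $X_{r,r}\subsetneq X_{r-1,r}$ for $r\ge p$ by \cite[Lemma 4.1]{BG15}.

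The hard part is the construction in the third paragraph: choosing the auxiliary integers $s(b)$ so that simultaneously they lie in the admissible window $S$, the binomials $\binom{r-l}{s(b)}$ factor through Lucas' theorem into an $l$-dependent part $\binom{r_{0}-l}{b}$ times an $l$-independent unit $C_{b}$, and the resulting vectors span $\f^{j+1}$. The hypothesis $\Sp(r-r_{0})>j$ enters exactly here, as it is what provides enough room among the higher base-$p$ digits of $r$ to realise $j+1$ admissible values of the constant digit $b$; the hypothesis $r_{0}\ge i$ keeps the relevant binomial coefficients from degenerating and supplies the bound $r-s(b)\ge j+1$. The remaining verifications — the inequalities $r\ge i+p$, $s(b)>i$, $r-s(b)\ge j+1$, and $C_{b}\ne0$ — are routine base-$p$ digit bookkeeping via \Cref{lucas}.
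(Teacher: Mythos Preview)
Your overall strategy matches the paper's: assume the monomial lies in $X_{r-j,r}$, expand in the $\f$-spanning set of \Cref{Basis of X_r-i}, compare coefficients of $X^{r-s}Y^{s}$ along a single residue class $s\equiv i\bmod(p-1)$, and manufacture enough auxiliary $s$'s via \Cref{choice of s} so that the resulting linear system forces a contradiction. There is, however, a genuine gap in your final linear-independence step. You argue that the functions $l\mapsto\binom{r_{0}-l}{b}$ have pairwise distinct degrees~$b$, ``hence linearly independent over $\f$ on the $j+1$ points $l=0,\dots,j$.'' Distinct degrees give linear independence \emph{as polynomials}, but this transfers to the evaluation vectors in $\f^{\,j+1}$ only when all degrees are $\le j$ (so that no nonzero combination can be divisible by the degree-$(j+1)$ polynomial $l(l-1)\cdots(l-j)$). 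In your setup the $b$'s lie in $[\max(0,i-\Sp(r-r_{0})),\,i-1]$, and when $j<\Sp(r-r_{0})<i$ every admissible $b$ satisfies $b\ge i-\Sp(r-r_{0})>0$ and some satisfy $b>j$; your degree argument then does not apply.

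The repair is immediate: take the $j+1$ values of $b$ to be \emph{consecutive}, say $b\in\{i-j-1,\dots,i-1\}$ (these lie in your interval since $\Sp(r-r_{0})\ge j+1$ and $i-1\le r_{0}-1$), and invoke \Cref{matrix det}\,(i) with $a=r_{0}$ and with the parameters $(i,j)$ there equal to $(j,\,i-1)$ here; this gives the required invertibility directly. The paper itself sidesteps the whole issue by a slightly different reduction: rather than working with $X^{r-i}Y^{i}$ it shows $X^{r-(j+1)}Y^{j+1}\notin X_{r-j,r}$ (i.e., $X_{r-j,r}\subsetneq X_{r-(j+1),r}$, which suffices). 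The relevant digit sum then becomes $j+1$ instead of $i$, the constant digits needed are exactly $\{0,1,\dots,j\}$, and since these degrees are all $\le j$ the evaluation-at-$j+1$-points argument is valid; the paper phrases this as invertibility of the matrix $\bigl(\binom{r_{0}-l}{j-m}\bigr)_{0\le l,m\le j}$ via \Cref{matrix det}\,(i).
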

\begin{proof}
       If $X_{r-j,\,r} = X_{r-(j+1),\,r}$, then 
       by \Cref{Basis of X_r-i},
         there exist $a_{k,l} \in \f$ and $b_{l} \in \f$,  for $k = 0$, $1, \ldots,p-1 $
         and $l = 0$, $1, \ldots, j$, such that
       \begin{equation}\label{expression r_0>i 1}
             X^{r-j-1}Y^{j+1} = \sum_{l=0}^{j} \sum_{k=0}^{p-1} a_{k,l} X^{l}(kX+Y)^{r-l} + 
             \sum_{l=0}^{j}b_{l} X^{r-l}Y^{l}.
       \end{equation}
       For every positive integer $t$, put $A_{t,l} := \sum\limits_{k=1}^{p-1} a_{k,l} k^{r-l-t} $. 
       Comparing the coefficients of $X^{r-t}Y^{t}$ on both sides of  
       \eqref{expression r_0>i 1}, 
       we get 
       \begin{equation}\label{compare coeff r_0 >i 1}
             \sum_{l=0}^{j} \binom{r-l}{t} A_{t,l} = \delta_{j+1,t}, ~ 
             \forall ~ j < t < r-j.
       \end{equation}
       For every $1 \leq s \leq j+1 $, choose $0 \leq t_{n,s} \leq r_{n}$ for $1 \leq n 
       \leq m$ such that $\sum\limits_{n=1}^{m} t_{n,s} =s$. Put $t_{s}= t_{m,s}p^{m}+ 
       \cdots + t_{1,s}p+(j+1-s)$. Clearly  $\Sp(t_{s}) = j+1$ and
       $t_{s} \equiv \Sp(t_{s})\equiv j+1$ mod ($p-1$). Since $ j <  \Sp(t_{s}) \leq t_{s}$ 
        and $r-t_{s} \geq \sum\limits_{n=1}^{m} (r_{n} -t_{n,s} ) p + r_{0} -(j+1-s)
        \geq (j+1-s)(p-1)+r_{0} \geq i > j$, we get $j< t_{s} <r-j$. 
        By \eqref{compare coeff r_0 >i 1},  and 
        $A_{t,l} = A_{t',l}$ if $t \equiv t'$ mod $(p-1)$, we have
       \begin{equation}\label{eq 3.18}
              \sum\limits_{l=0}^{j} \binom{r-l}{t_{s}} A_{j+1,l}
              = \sum\limits_{l=0}^{j} \binom{r-l}{t_{s}} A_{t_{s},l} = 0.
       \end{equation}
       Applying Lucas' theorem we get $\binom{r-l}{t_{s}} \equiv 
       \binom{r_{m}}{t_{m,s}} \cdots \binom{r_{1}}{t_{1,s}} 
       \binom{r_{0}-l}{j+1-s} $ mod $p$, 
       $\forall ~ 0 \leq l \leq j$. Substituting  this in \eqref{eq 3.18} 
       and dividing  both sides by
       $ \binom{r_{m}}{t_{m,s}} \cdots \binom{r_{1}}{t_{1,s}}$,  we obtain
       $$
             \sum_{l=0}^{j} \binom{r_{0}-l}{j+1-s} A_{j+1,l} =0 ,~ \forall ~ 1
              \leq s \leq  j+1.
       $$
       Putting these set of equations in matrix form, we get 
       $$
            \begin{pmatrix} \binom{r_{0}}{j} & \binom{r_{0}-1}{j} & \cdots & 
             \binom{r_{0}-j}{j} \\
            \binom{r_{0}}{j-1} & \binom{r_{0}-1}{j-1} & \cdots &  \binom{r_{0}-j}{j-1} \\
            \vdots & \vdots & \ddots & \vdots \\
            \binom{r_{0}}{0} & \binom{r_{0}-1}{0} & \cdots &  \binom{r_{0}-j}{0}
             \end{pmatrix} 
            \begin{pmatrix} A_{j+1,0} \\ A_{j+1,1} \\ \vdots \\ A_{j+1,j} \end{pmatrix}
            =  \begin{pmatrix} 0 \\ 0 \\ \vdots \\ 0 \end{pmatrix} .
       $$
       Applying  \Cref{matrix det} (i) (with $a=r_{0}$ and $i=j$), 
       we obtain that the above matrix is invertible, whence 
        $A_{j+1,0} = A_{j+1,1} = \cdots = A_{j+1,j}=0$.
        Taking $t=j+1$ in \eqref{compare coeff r_0 >i 1}  we get
       $$
               \sum\limits_{l=0}^{j} \binom{r-l}{j+1} A_{j+1,l} = 1,
       $$
      which leads to a contradiction. 
      Hence $X_{r-j,\,r} \subsetneq X_{r-(j+1),\,r} \subseteq X_{r-i,\,r}$. 
      This  proves the  lemma.
\end{proof}     
 %
 By \Cref{Basis of X_r-i}, we know that dim $X_{r-i,r} \leq (i+1)(p+1)$,
 for all $0 \leq i \leq p-1$. We next show  that this inequality is indeed an
 equality if $\Sigma_{p}(r)$ is large. More precisely:  
\begin{proposition}\label{dim large 1}
	  Let $p \geq 3$, $0 \leq i \leq p-1$ and $p \leq r 
	  \equiv r_{0} \mod p$ with $0 \leq r_{0} \leq p-1$. If $r_{0} \geq i$ and  
	  $\Sigma_{p}(r-i) \geq  p$, then $\mathrm{dim}~X_{r-i,r} = (i+1)(p+1)$.  
	  As a consequence, we have $X_{r-i,\,r} \cong X_{r-i,\,r-i} \otimes V_{i}$ 
	  as $M$-modules.
\end{proposition}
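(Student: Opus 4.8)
First I would reduce the whole statement to a single linear-independence claim. By \Cref{Basis of X_r-i} the $(i+1)(p+1)$ elements $X^{l}(kX+Y)^{r-l}$ and $X^{r-l}Y^{l}$, for $0\le l\le i$ and $k\in\f$, span $X_{r-i,\,r}$; so it is enough to prove they are $\f$-linearly independent. Granting that, $\dim X_{r-i,\,r}=(i+1)(p+1)$, and since $r_{0}\ge i$ gives $\Sigma_{p}(r-i)=\Sigma_{p}(r)-i\ge p>p-1$, \Cref{dimension formula for X_{r}} applied to $r-i$ yields $\dim X_{r-i,\,r-i}=p+1$, hence $\dim\bigl(X_{r-i,\,r-i}\otimes V_{i}\bigr)=(p+1)(i+1)=\dim X_{r-i,\,r}$, so the $M$-linear surjection $\phi_{i}$ of \Cref{surjection1} between $M$-modules of equal finite dimension is an isomorphism. (The case $i=0$ is already contained in \Cref{dimension formula for X_{r}}, so I assume $1\le i\le p-1$.)

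To prove the independence I would run the coefficient-comparison argument from the proof of \Cref{X_{r-i}=X_{r-j}}, now for a homogeneous relation $\sum_{l=0}^{i}\sum_{k\in\f}a_{k,l}X^{l}(kX+Y)^{r-l}+\sum_{l=0}^{i}b_{l}X^{r-l}Y^{l}=0$. Expanding $X^{l}(kX+Y)^{r-l}=\sum_{t}\binom{r-l}{t}k^{r-l-t}X^{r-t}Y^{t}$ and setting $A_{t,l}:=\sum_{k\in\f^{\ast}}a_{k,l}k^{r-l-t}$ (a quantity depending on $t$ only modulo $p-1$), comparison of the coefficient of $X^{r-t}Y^{t}$ gives $\sum_{l=0}^{i}\binom{r-l}{t}A_{t,l}=0$ for $t$ in the middle range $i<t<r-i$ (non-empty), while for $t\le i$, resp.\ $t\ge r-i$, the equation carries one extra term $b_{t}$, resp.\ $a_{0,\,r-t}$. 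Since each $b_{l}$ and each $a_{0,l}$ occurs in exactly one equation, everything collapses once one shows $A_{t,l}=0$ for all $t,l$: then $a_{k,l}=0$ for $k\ne0$ by non-singularity of the $(p-1)\times(p-1)$ Vandermonde matrix $(k^{e})_{k\in\f^{\ast},\,0\le e\le p-2}$, and the leftover relation $\sum_{l}a_{0,l}X^{l}Y^{r-l}+\sum_{l}b_{l}X^{r-l}Y^{l}=0$ involves $2(i+1)$ distinct monomials (distinct since $r-i>i$), forcing $a_{0,l}=b_{l}=0$.

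To kill the $A_{t,l}$ one works one residue class of $t$ modulo $p-1$ at a time. For a chosen value $t_{0}$ of the base-$p$ constant digit of $t$, one picks the higher digits of $t$ to be at most the corresponding digits of $r$ and with $t$ strictly inside $(i,r-i)$; Lucas' theorem (\Cref{lucas}) then collapses $\binom{r-l}{t}$, for $l\le i\le r_{0}$, to $\binom{r_{0}-l}{t_{0}}$ times a factor independent of $l$ that is non-zero mod $p$, so the middle-range equation becomes $\sum_{l\,:\,l\le r_{0}-t_{0}}\binom{r_{0}-l}{t_{0}}A_{t,l}=0$. Letting $t_{0}$ run over a suitable collection of $i+1$ admissible values inside $\{0,1,\dots,r_{0}\}$ and invoking \Cref{matrix det}~(i) (with $a=r_{0}$) to see that the resulting $(i+1)\times(i+1)$ binomial matrices are invertible mod $p$ forces $A_{t,l}=0$ for every $l$; doing this for every residue class finishes the proof.

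The step I expect to be the genuine obstacle is exactly this last combinatorial input: one must verify that for \emph{every} residue class modulo $p-1$ there are enough admissible constant digits $t_{0}$ --- that is, enough $t_{0}\in\{0,\dots,r_{0}\}$ for which there really exists a $t$ with that constant digit, higher digits bounded by those of $r$, and $i<t<r-i$ --- so that some binomial matrix of the above type is invertible. This is where the hypothesis $\Sigma_{p}(r-i)\ge p$ is used: via $r_{0}\ge i$ it gives $\Sigma_{p}(r-r_{0})=\Sigma_{p}(r)-r_{0}\ge p-r_{0}+i\ge i+1$, which provides precisely the slack in the higher digits of $r$ that the choice procedure (in the style of \Cref{choice of s}) requires, with a little additional care needed near the two ends of the interval $(i,r-i)$, for instance not letting every higher digit of $t$ be maximal.
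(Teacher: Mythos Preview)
Your reduction to linear independence of the $(i+1)(p+1)$ generators and the coefficient-comparison setup are correct, and they match the paper's starting point exactly. The gap is precisely where you flag it, and it is a real one: \Cref{matrix det}~(i) applies only to matrices $\bigl(\binom{r_{0}-l}{j-m}\bigr)_{0\le l,m\le i}$ built from a \emph{consecutive} block $t_{0}\in\{j-i,\dots,j\}$ of constant digits, and for some residue classes $\rho$ modulo $p-1$ no such block of admissible $t_{0}$'s exists. A concrete failure: take $p=5$, $i=2$, $r=19$ (so $r_{0}=4$, $\Sigma_{p}(r)-r_{0}=3=i+1$), and $\rho=1$. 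The admissible $t$ with $2<t<17$ and $t\equiv1\pmod4$ are $t\in\{5,9,13\}$, giving constant digits $t_{0}\in\{0,4,3\}$; these are not a consecutive block, and no consecutive block of length $3$ inside $\{0,\dots,4\}$ consists entirely of admissible values (e.g.\ $t_{0}=1$ would force $u\equiv0\pmod{4}$, impossible with $1\le u\le 3$; $t_{0}=2$ forces $u=3$ and then $t=r-2$ violates $t<r-i$). The resulting $3\times3$ system is still invertible here, but by triangularity, not by \Cref{matrix det}~(i); your sketch does not supply a substitute argument.

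The paper avoids this difficulty by arguing inductively in $i$ rather than killing all $A_{t,l}$ at once. It proves only that $C_{t,0}=0$ for every residue class $t\bmod(p-1)$: for the classes $r_{0}+1,\dots,p-1,1,\dots,i$ one chooses $s$ with constant digit $t_{0}=r_{0}$, so that by Lucas $\binom{r-l}{s}\equiv0$ for all $1\le l\le i$ and the single surviving term gives $C_{s,0}=0$; for the remaining classes $i+1,\dots,r_{0}$ the consecutive block $t_{0}\in\{t,t+1,\dots,t+i\}$ \emph{is} admissible (here $u\le i<\Sigma_{p}(r)-r_{0}$, so \Cref{choice of s} gives $s_{w}\le r-p$), and \Cref{matrix det}~(i) applies. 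Once $C_{t,0}=0$ for all $t$ (hence $c_{k,0}=0$ for all $k$ and, from $t=r$, $B_{0}=0$), one divides the relation by $X$ to land in $X_{r-1-(i-1),\,r-1}$; \Cref{X_{r-i}=X_{r-j}} forces $A_{i}=0$, and the induction hypothesis on $(r-1,i-1)$ finishes. In short, the missing idea in your sketch is the inductive reduction: rather than producing an invertible $(i+1)\times(i+1)$ system for every residue class, one only needs to isolate the column $l=0$, and for that the single choice $t_{0}=r_{0}$ does the work that your uniform approach cannot.
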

\begin{proof}
      The last assertion is an easy consequence of the first assertion and 
      \Cref{surjection1}, noting that dim($X_{r-i,\,r-i} \otimes V_{i}) 
      \leq (i+1)(p+1)$.  
	  We prove the proposition by induction on $i$. The cases $i=0$, $1$ 
	   follow from Lemmas \ref{dimension formula for X_{r}}, \ref{dimension formula for
	    X_{r-1}} respectively. Assume that the result holds for all $j < i$, for some  
	    $2 \leq i \leq p-1$. We need to prove the proposition is true for $i$. By 
	    \Cref{Basis of X_r-i}, the vectors
	  $\{ X^{l}(kX+Y)^{r-l}, X^{r-l}Y^{l} : k \in \f, 0 \leq l \leq i \} $	span 
	  $X_{r-i, \, r}$ as an $\f$-vector space. We claim that they are $\f$-linearly
	   independent. Suppose there  exist  constants $A_{l}, B_{l}, c_{k,l} \in \f $ 
	   for $l=0$, $1,\ldots, i$ and  $k= 1$, $2, \ldots, p-1 $ such that
	\begin{align}\label{3.4}
	      \sum\limits_{l=0}^{i} A_{l} X^{r-l}Y^{l} +  \sum\limits_{l=0}^{i} B_{l} X^{l} Y^{r-l} + 
	      \sum\limits_{l=0}^{i}  \sum\limits_{k =1}^{p-1} c_{k,l} X^{l}(kX+Y)^{r-l} =0. 
	\end{align}
	We need to  show that $A_{l},B_{l},c_{k,l}=0$ for all $k$, $l$. 
	For $0 \leq l \leq i$  and $t \in \mathbb{Z}$, let 
	$C_{t,l} := \sum\limits_{k=1}^{p-1}  k^{r-l-t}c_{k,l}$.
	Note that $C_{t,l}$ depends only on the congruence class of $t$ mod $(p-1)$. 
	By the non-vanishing of the Vandermonde determinant, we have 
	for every $l$, $C_{t,l} =0$, for all $t$, if and only if 
	$c_{k,l}=0$   for all $k$.  Comparing the coefficients of $X^{r-t}Y^{t}$ on
	both sides of \eqref{3.4}, we get 
	\begin{align}\label{3.5}
	      \sum\limits_{l=0}^{i} \delta_{t,l} A_{l} + \sum\limits_{l=0}^{i} \delta_{r-l,t} B_{l} +
	       \sum\limits_{l=0}^{i} \binom{r-l}{t} C_{t,l}=0.
	\end{align}
	\noindent
	\underline{Claim}: $C_{1,0} = C_{2,0} = \cdots = C_{p-1,0} =0$. \\
	  Assuming the claim, we complete the proof of the proposition. 
	  Taking $t=r$ in \eqref{3.5} and noting that   $C_{1,0} = C_{2,0} 
	  = \cdots = C_{p-1,0} =0$ by the claim, we get $B_{0}=0$. 
	  Also note that by the claim we have 
	  $c_{1,0} = c_{2,0} = \cdots = c_{p-1,0}$. Thus dividing both sides 
	  of  \eqref{3.4} by $X$, we get 
	\begin{align}\label{reducing 3.4}
    	   \sum\limits_{l=0}^{i} A_{l} X^{r-1-l}Y^{l} +  \sum\limits_{l=0}^{i-1} B_{l+1}
    	    X^{l} Y^{r-1-l} +  \sum\limits_{l=0}^{i-1} 
    	    \sum\limits_{k =1}^{p-1} c_{k,l+1} X^{l}(kX+Y)^{r-1-l} = 0. 
	\end{align}
	Let  $r=r_{m}p^{m}+ \cdots + r_{1}p+r_{0}$ be 
	the base $p$-expansion  of  $r$.  Then 
	 $r-1 = r_{m}p^{m}+ \cdots + r_{1}p+(r_{0}-1)$. 
	 Since $r_{m}+ \cdots +r_{1} = \Sp(r)-r_{0} \geq  p+i-r_{0} > i$,
	    we get $r > p$ and $X_{r-1-(i-1), \,r-1} \neq X_{r-1-i, \,r-1} $, 
	    by \Cref{X_{r-i}=X_{r-j}}. This forces $A_{i}=0$, as otherwise 
	    $X_{r-1-(i-1), \,r-1} =X_{r-1-i, \,r-1}$. 
	    By induction for $r-1$, we have dim $X_{r-1-(i-1), \,r-1} = i(p+1)$.
	   Hence  $\lbrace X^{r-1-l}Y^{l}, X^{l}(kX+Y)^{r-1-l} : k \in \f, \ 0 \leq l 
	   \leq i-1  \rbrace$ is an $\f$-basis of $X_{r-1-(i-1), \,r-1} $ by  
	   \Cref{Basis of X_r-i}. Thus  $A_{l}=B_{l}=0$ and $c_{k,l} =0$ for all $k,l$
	   by \eqref{reducing 3.4} as $A_{i} =0$.
	    Therefore dim $X_{r-i,\,r}=(i+1)(p+1)$. 
	    
	\noindent   
	\underline{Proof of the claim} :
	We first show that  $C_{1,0} , \ldots ,C_{i,0}=0$ and 
	$C_{r_{0}+1,0}, \ldots , C_{p-1,0}=0 $.
	Let $s$ be a positive integer congruent to $ r_{0}$ mod $p$. 
	By Lucas' theorem,  for $0 \leq l \leq i \leq r_{0} $, we see that
	 $ \binom{r-l}{s} \not \equiv 0~  \mathrm{mod}  ~p \Rightarrow r-l  \equiv r_{0}$, 
	     $r_{0}+1,\ldots, p-1 ~ \text{mod} ~ p  \Leftrightarrow l =0$.
	 Taking $t=s$ and  applying this in \eqref{3.5},  we get 
	\begin{align}\label{3.11}
     	    \sum\limits_{l=0}^{i} \delta_{l,s} A_{l} +
     	      \sum\limits_{l=0}^{i}\delta_{r-l,s} B_{l} +  \binom{r}{s} C_{s,0}=0.
	\end{align}
	By \Cref{choice of s}, for every $1 \leq u \leq p+i-r_{0}-1 \leq \Sp(r) -r_{0}-1$, 
	there exists $p \leq s_{u} \leq r-p$
	 such that $s_{u} \equiv r_{0} $ mod $p$, $\Sp(s_{u})= r_{0}+u$ and $\binom{r}{s_{u}}
	  \not \equiv 0$ mod
	 $p$.  Therefore  $\delta_{l,s_{u}} = 0 = \delta_{r-l,s_{u}}$, for all $0\leq l \leq i$.
	  So  \eqref{3.11}  
	 implies that $C_{s_{u},0}=0$, for all $1 \leq u \leq p+i-r_{0}-1$. Since 
	 $s_{u} \equiv \Sigma_{p}(s_{u})=u+r_{0}$ mod $(p-1)$, 
	 we get $C_{u+r_{0},0}=0$, for all $1 \leq u \leq p+i-r_{0}-1$.  
	 So $C_{1,0} , \ldots ,C_{i,0}=0$  and $C_{r_{0}+1,0}, \ldots , C_{p-1,0}=0 $.
	 This finishes the proof of the claim, if $r_{0}=i$.
	
	 Else choose $t\geq 1$ such that  $ i+1 \leq t+i \leq  r_{0}$.  Clearly 
	 $i < t+i \leq r_{0} \leq p-1 < \Sigma_p(r-i) \leq r-i$. Since $t+i \leq r_{0} \leq p-1$,  by Lucas' theorem, we
	 have $\binom{r-l}{t+i} \equiv \binom{r_{0}-l}{t+i}$ mod $p$, $\forall$ 
	 $0 \leq l \leq i \leq r_{0}$. 
	  Therefore,  by \eqref{3.5}, we have 
	 %
	%
	\begin{align}\label{3.12}
    	    \sum\limits_{l=0}^{i} \binom{r_{0}-l}{t+i} C_{t+i,l}=0.
	\end{align} 
	 For every $0 \leq w \leq i-1$, note that $0 \leq w+t \leq t+i \leq r_{0}$ 
	 and $1 \leq i-w < i+1 \leq \Sp(r)-r_{0}$. Thus
	  by \Cref{choice of s} (applied with $b= w+t$ and $u=i-w$), 
	  there exists  $ p \leq s_{w} \leq r-p$
	  such that $s_{w} \equiv w+t$ mod $p$, $\Sp(s_{w}) = t+i$ and 
	 $\binom{r}{s_{w}} \not \equiv 0$ mod  $p$, for all $0 \leq w \leq i-1$.  
	 Let $s_{w} = s_{w,m}p^{m}+\cdots+
	  s_{w,1} p+ (w+t)$ be the base $p$-expansion of 
	$s_{w}$.  By Lucas' theorem, we get 
	$\binom{r-l}{s_{w}} \equiv \binom{r_{0}-l}{w+t}  \prod\limits_{j=1}^{m} \binom{r_{j}}
	{s_{w,j}}  \equiv \binom{r_{0}-l}{w+t} \binom{r_{0}}{w+t}^{-1} \binom{r}{s_{w}} $ 
	mod $p$, for $0 \leq l \leq i \leq r_{0}$. Noting  
	$s_{w} \equiv \Sp(s_{w}) \equiv t+i$ mod $(p-1)$ and 
	$\binom{r_{0}}{w+t}$, $\binom{r}{s_{w}} \not \equiv 0$ mod $p$, for
	$0 \leq w \leq i-1 $, it follows from \eqref{3.5} that
	%
	\begin{align*}
	        \sum\limits_{l=0}^{i} \binom{r_{0}-l}{w+t} C_{t+i,l} = 0, \quad \quad 
	        \forall \; \ 0 \leq w \leq i-1.
	\end{align*}  
	Combining the above set of equations with \eqref{3.12}, we get
	\begin{align*}
    	     \begin{pmatrix}
    	           \binom{r_{0}}{t+i} & \binom{r_{0}-1}{t+i}  &\cdots &\binom{r_{0}-i}{t+i} \\
    	           \binom{r_{0}}{t+i-1} & \binom{r_{0}-1}{t+i-1}  &\cdots & \binom{r_{0}-i}{t+i-1} \\
    	           \vdots & \vdots & \ddots & \vdots \\
    	           \binom{r_{0}}{t} & \binom{r_{0}-1}{t} & \cdots &\binom{r_{0}-i}{t} \\
    	     \end{pmatrix}
    	     \begin{pmatrix}
    	            C_{t+i,0} \\ C_{t+i,1} \\ \vdots \\ C_{t+i, i}
    	     \end{pmatrix}
    	     = 
    	     \begin{pmatrix}
    	         0 \\ 0 \\ \vdots \\ 0
    	     \end{pmatrix}.
	\end{align*}
	%
	By \Cref{matrix det} (i) (with $a=r_{0}$ and $j=t+i$), the  above matrix is invertible,
	whence $C_{t+i,0}=0$, where $i+1 \leq t+i \leq r_{0}$. This finishes 
	the proof of the claim, as we have already shown $C_{1,0} , \ldots ,C_{i,0}=0$ and 
	$C_{r_{0}+1,0}, \ldots , C_{p-1,0}=0 $.
\end{proof}	
We next consider the case $r_{0} \leq \Sp(r-i) \leq p$ and still show that
$X_{r-i,\,r} \cong   X_{r-i,\,r-i} \otimes V_{i} $ as $M$-modules. Note that 
the case $\Sp(r-i) = p$ was treated 
in the proposition above. 
\begin{proposition}\label{not equal and not full 1}
       Let $p\geq 3$, $ 0 \leq i \leq p-1$ and $p \leq r 
        \equiv r_{0} \mod p$ with 
       $0 \leq r_{0} \leq p-1$. If $r_{0} \geq i$ and $r_{0} \leq \Sp(r-i) \leq p$, then
       $\dim X_{r-i,\,r} = (i+1)(\Sp(r-i)+1)$. Furthermore, $
       X_{r-i,\,r} \cong   X_{r-i,\,r-i} \otimes V_{i} $ as $M$-modules.
\end{proposition}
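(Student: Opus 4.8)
The plan is to deduce the dimension formula first and then read off the isomorphism; the dimension count itself goes by induction on $i$, using the surjection $\phi_i$ of \Cref{surjection1} for the upper bound and the principal series description of the successive quotients (\Cref{induced and successive}) for the lower bound.

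First I would record the easy upper bound. By \Cref{surjection1} there is an $M$-linear surjection $\phi_{i}\colon X_{r-i,\,r-i}\otimes V_{i}\twoheadrightarrow X_{r-i,\,r}$, and since $\Sigma_p(r-i)\le p$, \Cref{dimension formula for X_{r}} applied to $r-i$ in place of $r$ gives $\dim X_{r-i,\,r-i}=\Sigma_p(r-i)+1$ (this holds in both sub-cases $\Sigma_p(r-i)\le p-1$ and $\Sigma_p(r-i)=p$). Hence $\dim X_{r-i,\,r}\le(i+1)(\Sigma_p(r-i)+1)$, and once the reverse inequality is known, $\phi_i$ is forced to be an isomorphism of $M$-modules, which is the last assertion. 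So everything reduces to the matching lower bound.

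For the lower bound I would induct on $i$. The case $\Sigma_p(r-i)=p$ is already contained in \Cref{dim large 1}, so I may assume $r_{0}\le\Sigma_p(r-i)\le p-1$; the base case $i=0$ is \Cref{dimension formula for X_{r}}. For the inductive step ($i\ge 1$), $r_{0}\ge i$ forces $\Sigma_p(r-(i-1))=\Sigma_p(r-i)+1\in[r_{0},p]$, so the statement for $i-1$ applies (through \Cref{dim large 1} if $\Sigma_p(r-i)=p-1$) and yields $\dim X_{r-(i-1),\,r}=i(\Sigma_p(r-i)+2)$. Combined with the upper bound,
\[
  \dim\bigl(X_{r-i,\,r}/X_{r-(i-1),\,r}\bigr)\ \le\ (i+1)(\Sigma_p(r-i)+1)-i(\Sigma_p(r-i)+2)\ =\ \Sigma_p(r-i)-i+1\ =:\ d,
\]
and $d\le p-1<p+1$. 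On the other hand this quotient is a nonzero quotient of the principal series $\ind_{B}^{\Gamma}(\chi_{1}^{r-i}\chi_{2}^{i})$ by \Cref{induced and successive}; it is nonzero because $X_{r-(i-1),\,r}\subsetneq X_{r-i,\,r}$, which for $i\ge 2$ is \Cref{X_{r-i}=X_{r-j}} (using $\Sigma_p(r-r_{0})=\Sigma_p(r-i)+i-r_{0}\ge i$) and for $i=1$ is \cite[Lemma~4.1]{BG15}. Since the principal series has length $2$ (\Cref{Structure of induced}) and our quotient has dimension $<p+1$, the quotient is simple. In the split case of \Cref{Structure of induced} — which forces $[r-i]=i$, hence $\Sigma_p(r-i)=r_{0}=i$ and $d=1$ — a nonzero quotient of dimension $\le 1$ has dimension exactly $1=d$; in the non-split case a simple quotient of the length-$2$ module must be its cosocle $V_{p-1-[\,i-[r-i]\,]}\otimes D^{i}$, whose dimension, computed with $[r-i]=\Sigma_p(r-i)$, is exactly $d$. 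Either way $\dim\bigl(X_{r-i,\,r}/X_{r-(i-1),\,r}\bigr)=d$, so $\dim X_{r-i,\,r}=i(\Sigma_p(r-i)+2)+d=(i+1)(\Sigma_p(r-i)+1)$, finishing the induction; the isomorphism then follows from the second paragraph.

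The step I expect to be the crux is identifying the successive quotient with the \emph{cosocle} of the principal series rather than with its socle: this is precisely where the hypothesis $\Sigma_p(r-i)\ge r_{0}$ is used, since it makes $\ind_{B}^{\Gamma}(\chi_{1}^{r-i}\chi_{2}^{i})$ non-split outside the boundary case $\Sigma_p(r-i)=r_{0}=i$, so that a nonzero simple quotient can only be the cosocle, and the cosocle's dimension $\Sigma_p(r-i)-i+1$ happens to coincide with the slack left by the $\phi_i$-upper bound. Secondary points needing care are the bookkeeping of the induction across the boundary $\Sigma_p(r-i)=p$ (invoking \Cref{dim large 1} for $i-1$ when necessary) and, as in the proof of \Cref{dim large 1}, the finitely many small values of $r$ near $p$, which are absorbed into the base case or checked directly.
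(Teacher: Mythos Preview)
Your proposal is correct and follows essentially the same inductive strategy as the paper: both use $\phi_i$ for the upper bound, \Cref{dim large 1} at the boundary $\Sp(r-i)=p$, and the principal series description of $X_{r-i,\,r}/X_{r-(i-1),\,r}$ via \Cref{induced and successive} and \Cref{Structure of induced} for the lower bound. The only cosmetic difference is that you first bound the quotient from above (using the upper bound and the inductive value of $\dim X_{r-(i-1),\,r}$) and then identify it with the cosocle, whereas the paper directly observes that any nonzero quotient of the principal series has dimension at least the cosocle dimension $\Sp(r)-2i+1=d$ and combines this with the upper bound; your remark about small $r$ is unnecessary here since the hypothesis $r\geq p$ already suffices throughout.
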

\begin{proof}
       We prove the proposition by induction on $i$.  
       The case $i=0$ follows from 
       \Cref{dimension formula for X_{r}}. Assume that the proposition holds for all
       $j < i$ for some $i \geq 1$.  If $\Sp(r-i) =p$, then  as remarked earlier 
       the proposition follows from \Cref{dim large 1}. In the case $r_{0} \leq \Sp(r)-i \leq p-1$, 
       by  \Cref{X_{r-i}=X_{r-j}}  and \Cref{induced and successive}, 
       we have  $X_{r-i, \, r}/X_{r-(i-1),\,r} $ is isomorphic to a non-zero 
       quotient of $\ind_{B}^{\Gamma}(\chi_{1}^{r-i}\chi_{2}^{i})$.
       Since $r \equiv \Sp(r)$ mod $(p-1)$ and  
       $2i \leq r_{0}+i  \leq \Sp(r) < p-1+2i$, we see that 
       $[2i-r] = [2i- \Sp(r)] = p-1+2i-\Sp(r)$. Thus by \Cref{Structure of induced},
       we have any non-zero quotient  of 
       $\ind_{B}^{\Gamma}(\chi_{1}^{r-i}\chi_{2}^{i})$ has dimension
        at least $(\Sp(r)-2 i+1)$ (note that the quantity $(\Sp(r)-2 i+1)$ equals one if 
        $\Sp(r)=2i$). Since $r_{0}+1 \leq \Sp(r-(i-1)) \leq p$, by induction we have
        \begin{align*}
            \dim X_{r-i, \, r} 
            & =  \dim X_{r-(i-1), \, r} + \dim\left(\frac{X_{r-i, \, r}}{X_{r-(i-1),\,r} }\right) \\
            & \geq i(\Sp(r)-i+2)+ (\Sp(r)-2i+1) \\
            & = (i+1)\left( \Sp(r)-i +1 \right).
        \end{align*}             
        Since $\Sp(r-i) \leq p-1$, by \Cref{dimension formula for X_{r}}, 
        we have $\dim X_{r-i,\,r-i} = \Sp(r-i)+1  = 
        \Sp(r)-i+1$. 
          So  $\dim X_{r-i,\,r-i} \otimes V_{i} $ $\leq (i+1)( \Sp(r)-i +1 )
       \leq \dim X_{r-i, \, r} $. Now the proposition follows from \Cref{surjection1}.
\end{proof}
We now prove the converse of \Cref{X_{r-i}=X_{r-j}}.
\begin{lemma}\label{terms equal in filtration r0 >i}
      Let $p \leq r= r_{m}p^{m}+r_{m-1}p^{m-1}+ \cdots + r_{0}$ be the base 
      $p$-expansion of $r$ with $r_{0} \geq i$.  For $ 1 \leq j < i 
      \leq p-1$, we have $X_{r-j, \, r} = X_{r-i, \, r}$  
      if and only  if  $r_{m}+r_{m-1} \cdots +r_{1} \leq j$.
      In particular, $X_{r-(i-1), \, r} = X_{r-i, \, r} \Longleftrightarrow
       r_{m}+r_{m-1} \cdots +r_{1} < i \Longleftrightarrow 
       \Sp(r-i) < r_{0}$.
\end{lemma}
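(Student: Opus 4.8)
The plan is to first dispose of the forward implication — which is just \Cref{X_{r-i}=X_{r-j}} read contrapositively — and then reduce the converse to a single explicit linear‑algebra computation modelled on the proof of \Cref{X_{r-i}=X_{r-j}} itself. Concretely: if $X_{r-j,\,r}=X_{r-i,\,r}$, then by the chain of \Cref{first row filtration} also $X_{r-j,\,r}=X_{r-(j+1),\,r}$, so $r_m+\cdots+r_1\le j$ by \Cref{X_{r-i}=X_{r-j}}. Conversely, again by \Cref{first row filtration}, it is enough to prove the one‑step statement: if $p\le r$, $1\le j$, $j+1\le p-1$, $r_0\ge j+1$ and $r_m+\cdots+r_1\le j$, then $X^{r-j-1}Y^{j+1}\in X_{r-j,\,r}$, equivalently $X_{r-(j+1),\,r}=X_{r-j,\,r}$. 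Chaining this for the indices $j,j+1,\dots,i-1$ then gives the general equality, and the displayed equivalences in the ``in particular'' clause follow from $\Sigma_p(r-i)=\Sigma_p(r)-i$, which holds because $r_0\ge i$.

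For the one‑step statement I would use the $\f$‑spanning set $\{X^l(cX+Y)^{r-l},\,X^{r-l}Y^l:0\le l\le j,\ c\in\f\}$ of $X_{r-j,\,r}$ from \Cref{Basis of X_r-i} and look for scalars $a_{c,l},b_l$ with
\[
  X^{r-j-1}Y^{j+1}=\sum_{l=0}^{j}\sum_{c\in\f}a_{c,l}\,X^l(cX+Y)^{r-l}+\sum_{l=0}^{j}b_l\,X^{r-l}Y^l .
\]
Setting $A_{t,l}=\sum_{c\in\fstar}a_{c,l}c^{\,r-l-t}$ (which depends only on $t\bmod(p-1)$) and comparing the coefficients of $X^{r-t}Y^t$ exactly as in the proof of \Cref{X_{r-i}=X_{r-j}}, the equations for $t\le j$ merely define the $b_t$, those for $t\ge r-j$ merely define the $a_{0,l}$, and those in the open range $j<t<r-j$ read $\sum_{l=0}^{j}\binom{r-l}{t}A_{t,l}=\delta_{j+1,t}$; here one uses $r\ge p+r_0>2j+1$ to know $j+1$ lies in that range. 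Since $r-l$ has base‑$p$ digits $r_m,\dots,r_1,r_0-l$ for $l\le j<r_0$, Lucas' theorem (\Cref{lucas}) gives $\binom{r-l}{t}=\binom{r_0-l}{t_0}\prod_{n\ge1}\binom{r_n}{t_n}$, so for residues $\rho\ne[j+1]=j+1$ the class‑$\rho$ equations are homogeneous and are solved by $A_{\rho,l}=0$; only the class $t\equiv j+1$ is constraining.

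In that class, any $t$ with $\prod_{n\ge1}\binom{r_n}{t_n}\not\equiv0$ has $\Sigma_p(t)\equiv j+1$ and $\Sigma_p(t)\le\Sigma_p(r)=r_0+(r_m+\cdots+r_1)\le(p-1)+j<j+p$, forcing $\Sigma_p(t)=j+1$; hence $\sum_{n\ge1}t_n=j+1-t_0\le r_m+\cdots+r_1\le j$, so for $t\ne j+1$ the unit digit $t_0$ lies in $\{1,\dots,j\}$, and after cancelling the nonzero factor the equation becomes $\sum_{l=0}^{j}\binom{r_0-l}{t_0}A_{j+1,l}=0$, while $t=j+1$ gives $\sum_{l=0}^{j}\binom{r_0-l}{j+1}A_{j+1,l}=1$. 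Thus the system for the $j+1$ unknowns $A_{j+1,0},\dots,A_{j+1,j}$ has at most $j+1$ distinct rows $\bigl(\binom{r_0-l}{s}\bigr)_{0\le l\le j}$, with $s\in\{1,\dots,j+1\}$, each a row of the matrix $\bigl(\binom{r_0-n}{(j+1)-m}\bigr)_{0\le m,n\le j}$, which is invertible modulo $p$ by \Cref{matrix det}(i) (with $a=r_0$, since $0\le j\le j+1\le r_0\le p-1$). The rows are therefore linearly independent, the system is solvable, and a solution yields $X^{r-j-1}Y^{j+1}\in X_{r-j,\,r}$ after realising the $A_{j+1,l}$ through the $a_{c,l}$ by inverting a Vandermonde matrix over $\fstar$, setting $A_{\rho,l}=0$ for $\rho\ne j+1$, and reading off the $b_l,a_{0,l}$. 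I expect the main obstacle to be the bookkeeping in this last step: correctly matching the three families of coefficient equations to the unknowns, and checking that the hypothesis $r_m+\cdots+r_1\le j$ is exactly what keeps the number of constraints in the class $t\equiv j+1$ down to $j+1$, so that \Cref{matrix det}(i) applies — in contrast to \Cref{X_{r-i}=X_{r-j}}, where $r_m+\cdots+r_1>j$ produces one constraint too many and the system becomes inconsistent.
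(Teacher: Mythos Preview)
Your proposal is correct, and in particular your linear-algebra argument for the `if' direction goes through as you describe: the key observation that the nonvanishing of $\prod_{n\ge 1}\binom{r_n}{t_n}$ forces $\Sigma_p(t)=j+1$ and hence $t_0\in\{1,\dots,j\}$ (using $\sum_{n\ge1}t_n\le r_m+\cdots+r_1\le j$) is exactly what bounds the number of independent constraints by $j+1$, so that \Cref{matrix det}~(i) with $a=r_0$ applies. One small remark: when invoking \Cref{matrix det}~(i), note that its proof does not actually use the ambient hypothesis $r\ge 2p$, so the application is legitimate.

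The paper, however, argues the `if' direction quite differently. Rather than constructing an explicit expression for $X^{r-j-1}Y^{j+1}$, it uses a dimension sandwich: the surjection $\phi_{r_0}\colon X_{r-r_0,\,r-r_0}\otimes V_{r_0}\twoheadrightarrow X_{r-r_0,\,r}$ together with \Cref{dimension formula for X_{r}} gives $\dim X_{r-r_0,\,r}\le (r_0+1)(\Sigma_p(r-r_0)+1)$, while \Cref{not equal and not full 1} applied at the index $\Sigma_p(r-r_0)$ gives $\dim X_{r-\Sigma_p(r-r_0),\,r}=(\Sigma_p(r-r_0)+1)(r_0+1)$; since $\Sigma_p(r-r_0)\le j<i\le r_0$, the chain of \Cref{first row filtration} then forces $X_{r-j,\,r}=X_{r-i,\,r}$. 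The paper's route is shorter and more conceptual, but it leans on \Cref{not equal and not full 1}, whose proof in turn uses the representation-theoretic input of \Cref{induced and successive}. Your route is more computational but also more self-contained---it needs only Lucas, Vandermonde, and \Cref{matrix det}~(i)---and it has the pleasant feature of making visible that the hypothesis $r_m+\cdots+r_1\le j$ is precisely the threshold at which the linear system from the proof of \Cref{X_{r-i}=X_{r-j}} flips from inconsistent to solvable.
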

\begin{proof}
        The  \enquote*{only if} part is \Cref{X_{r-i}=X_{r-j}}. 
        For the \enquote*{if} part, assume $r_{m}+r_{m-1} \cdots +r_{1} \leq j$.  
        By   \Cref{dimension formula for X_{r}}, we have 
        $\dim X_{r-r_{0},\,r-r_{0}} = \Sp(r-r_{0})+1$. Thus, 
        by \Cref{surjection1}, we have  $\dim X_{r-r_{0},\,r}  \leq (r_{0}+1)
        (\Sp(r-r_{0})+1)$.  Since $\Sigma_{p}(r-r_{0}) \leq j \leq r_{0}$,  
        we have $\Sigma_{p}(r- \Sigma_{p}(r-r_{0})) 
        = \Sigma_{p}(r)-\Sigma_{p}(r-r_{0})=\Sigma_{p}(r)-\Sigma_{p}(r)+r_{0}
        = r_{0} \leq p-1$. Hence, by \Cref{not equal and not full 1} (with 
        $i$  there equal to $\Sigma_{p}(r-r_{0})$), 
        we have
        $\dim X_{r- \Sp(r-r_{0}), \, r} = (\Sp(r-r_{0})+1)(r_{0}+1)$. As
        $0 \leq \Sigma_{p}(r-r_{0}) \leq j < i \leq r_{0} \leq p-1 $, 
        by \Cref{first row filtration}, we see that
        $X_{r- \Sp(r-r_{0}), \, r} \subseteq X_{r-j,\,r} \subseteq 
        X_{r-i,\,r} \subseteq  X_{r-r_{0},\,r}$.
         As the  dimension of the rightmost term is less than or equal to  the 
         dimension of the leftmost term,
          it follows that   $ X_{r-j,\,r} = X_{r-i,\,r}$.
\end{proof}
%
Putting together all the results obtained so far, we have the following theorem.
\begin{theorem}\label{Structure r_0 >i}
        Let $p \geq 3$, $ 0 \leq i \leq p-1$ and $p    
        \leq r \equiv r_{0} 
        \mod p$ with  $0 \leq r_{0} \leq p-1$. If $r_{0} \geq i$, then
        as $M$-modules, we have
        \begin{align*}
               X_{r-i,\,r} \cong
               \begin{cases}
                        X_{r- \Sp(r-r_{0}),\, r-\Sp(r-r_{0})}  \otimes V_{\Sp(r-r_{0})}, &
                         \mathrm{if} ~ \Sp(r-i) < r_{0}, \\
                        X_{r-i, \, r-i} \otimes V_{i}, & \mathrm{if} ~ \Sp(r-i) 
                        \geq r_{0}.
               \end{cases}
        \end{align*}
\end{theorem}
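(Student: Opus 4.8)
The plan is to combine the three propositions and two lemmas established in this subsection, organizing the argument around the dichotomy in the statement. First I would dispose of the case $\Sigma_p(r-i) \geq r_0$. Here I claim $\phi_i : X_{r-i,\,r-i} \otimes V_i \twoheadrightarrow X_{r-i,\,r}$ of \Cref{surjection1} is an isomorphism. Indeed, if $\Sigma_p(r-i) \geq p$ this is exactly \Cref{dim large 1}, and if $r_0 \leq \Sigma_p(r-i) \leq p$ this is exactly \Cref{not equal and not full 1}; since $r_0 \geq i$ means $\Sigma_p(r-i) = \Sigma_p(r) - i \geq r_0 \geq i$, the two cases $\Sigma_p(r-i) \geq p$ and $r_0 \leq \Sigma_p(r-i) \leq p$ together cover all of $\Sigma_p(r-i) \geq r_0$. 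This gives the second line of the claimed isomorphism directly.

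Next I would handle the case $\Sigma_p(r-i) < r_0$. By \Cref{terms equal in filtration r0 >i}, the inequality $\Sigma_p(r-i) < r_0$ is equivalent to $r_m + \cdots + r_1 = \Sigma_p(r-r_0) < i$, and in that situation the lemma (applied iteratively, or directly in its ``in particular'' form) yields the chain of equalities
\[
  X_{r-\Sigma_p(r-r_0),\,r} = X_{r-\Sigma_p(r-r_0)-1,\,r} = \cdots = X_{r-i,\,r},
\]
because each successive step $X_{r-(j-1),\,r} = X_{r-j,\,r}$ holds precisely when $\Sigma_p(r-r_0) \leq j-1$, i.e. for all $j$ with $\Sigma_p(r-r_0) < j \leq i$. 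Thus $X_{r-i,\,r} = X_{r-\Sigma_p(r-r_0),\,r}$. Now set $i' = \Sigma_p(r-r_0)$; then $\Sigma_p(r-i') = \Sigma_p(r) - i' = \Sigma_p(r) - \Sigma_p(r-r_0) = r_0 \geq i'$ (the last inequality since $i' < i \leq r_0$), so we are back in the first case and may apply the isomorphism $\phi_{i'}$ to conclude $X_{r-i,\,r} = X_{r-i',\,r} \cong X_{r-i',\,r-i'} \otimes V_{i'}$, which is the first line of the claimed isomorphism. One should check that $i' \leq p-1$ and that the hypotheses $p \geq 3$, $p \leq r$, $r_0 \geq i'$ of \Cref{not equal and not full 1}/\Cref{dim large 1} are met; these are all immediate from $i' < i \leq r_0 \leq p-1$.

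The main obstacle is not any single deep step but rather the bookkeeping at the boundary: one must verify that the three regimes $\Sigma_p(r-i) < r_0$, $r_0 \leq \Sigma_p(r-i) \leq p$, and $\Sigma_p(r-i) \geq p$ genuinely tile the range $\Sigma_p(r-i) \geq r_0$ without gaps or overlap beyond the harmless overlap at $\Sigma_p(r-i) = p$ (which both \Cref{dim large 1} and \Cref{not equal and not full 1} treat consistently), and that the reduction $i \rightsquigarrow i' = \Sigma_p(r-r_0)$ in the first case really does land strictly inside the second regime rather than on its lower edge — here the computation $\Sigma_p(r - i') = r_0$ shows we land exactly on the lower boundary $\Sigma_p(r-i') = r_0$, which is included in \Cref{not equal and not full 1}, so this is fine. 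Once these compatibility checks are in place, the theorem follows formally by assembling \Cref{dim large 1}, \Cref{not equal and not full 1}, \Cref{surjection1} and \Cref{terms equal in filtration r0 >i}.
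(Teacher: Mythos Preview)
Your proposal is correct and follows essentially the same route as the paper: split on $\Sp(r-i) \gtrless r_0$, invoke \Cref{dim large 1} and \Cref{not equal and not full 1} directly in the first case, and in the second case use \Cref{terms equal in filtration r0 >i} to replace $i$ by $i'=\Sp(r-r_0)$, then observe $\Sp(r-i')=r_0$ and apply \Cref{not equal and not full 1}. The paper's proof is the same argument, only compressed into a few lines without the explicit boundary bookkeeping you spell out.
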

\begin{proof}
        Note that  $\Sp(r-i) = \Sp(r)-i = \Sp(r-r_{0})+r_{0}-i$.
        The  case  $\Sp(r-i) \geq  r_{0}$ follows immediately from  
        \Cref{dim large 1} and  \Cref{not equal and not full 1}.
        If $\Sp(r-i)< r_{0}$, then by \Cref{terms equal in filtration r0 >i}, we have 
        $X_{r-\Sp(r-r_{0}),\,r} = X_{r-i,\,r} $. Also note that
        $r_{0} \geq i > \Sp(r-r_{0})$ and  $\Sp(r- \Sp(r-r_{0})) =
        \Sp(r) - \Sp(r-r_{0})  = \Sp(r) - \Sp(r)+ r_{0}  = r_{0} $.
        Applying  \Cref{not equal and not full 1} with $i = \Sp(r-r_{0})$,
        we obtain the theorem in the case $\Sp(r-i)<r_{0}$. This completes the proof.
\end{proof}
Using  the above theorem
and \Cref{dimension formula for X_{r}} in conjunction with
\Cref{ClebschGordan}, one can determine the JH factors of 
$X_{r-i,r}$, for all $0\leq i \leq p-1$, in the case $r_{0} \geq i$.
We also have the following dimension formula.
\begin{corollary}\label{dimension r_0 > i}
      Let $p\geq 3$, $ 0 \leq i \leq p-1$ and $p 
      \leq r \equiv r_{0} 
      \mod p$ with $0 \leq r_{0} \leq p-1$. If $r_{0} \geq i$, then
      \begin{align*}
             \dim X_{r-i, \, r} = 
              \begin{cases}
                      (r_{0}+1)(\Sp(r-r_{0})+1) , & \mathrm{if} ~ \Sp(r-i) < r_{0},  \\
                      (i+1)\left( \Sp(r-i) +1 \right), &\mathrm{if} ~ 
                       r_{0} \leq \Sp(r-i) \leq p,\\
                      (i+1)(p+1), & \mathrm{if} ~ \Sp(r-i) \geq p.
                \end{cases}     
      \end{align*}
\end{corollary}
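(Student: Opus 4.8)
The plan is to read the dimension formula off the structural results of this subsection; essentially no new computation is required. Note first that when $r_{0} \geq i$ one has $\Sp(r-i) = \Sp(r) - i = \Sp(r-r_{0}) + r_{0} - i$, so the three ranges $\Sp(r-i) < r_{0}$, $r_{0} \leq \Sp(r-i) \leq p$ and $\Sp(r-i) \geq p$ together exhaust all possibilities (the last two overlapping consistently at $\Sp(r-i) = p$, where both formulas give $(i+1)(p+1)$).

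For the middle range $r_{0} \leq \Sp(r-i) \leq p$, \Cref{not equal and not full 1} gives directly $\dim X_{r-i,\,r} = (i+1)(\Sp(r-i)+1)$, and for $\Sp(r-i) \geq p$, \Cref{dim large 1} gives directly $\dim X_{r-i,\,r} = (i+1)(p+1)$. So these two cases are immediate.

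It remains to treat $\Sp(r-i) < r_{0}$. Here \Cref{Structure r_0 >i} (equivalently \Cref{terms equal in filtration r0 >i}) shows $X_{r-i,\,r} = X_{r-\Sp(r-r_{0}),\,r}$, so it suffices to compute the dimension of the latter. I would apply \Cref{not equal and not full 1} with $i$ replaced by $\Sp(r-r_{0})$: its hypotheses hold because $\Sp(r-r_{0}) < i \leq r_{0}$ (which is exactly the condition $\Sp(r-i) < r_{0}$, using $\Sp(r-i) = \Sp(r-r_{0}) + r_{0} - i$), and because $\Sp(r - \Sp(r-r_{0})) = \Sp(r) - \Sp(r-r_{0}) = r_{0}$, which lies in the interval $[r_{0}, p]$. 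The proposition then yields $\dim X_{r-\Sp(r-r_{0}),\,r} = (\Sp(r-r_{0})+1)(r_{0}+1)$, which is the asserted value. Alternatively, one can read off the same number from the isomorphism $X_{r-i,\,r} \cong X_{s,\,s} \otimes V_{\Sp(r-r_{0})}$ with $s = r - \Sp(r-r_{0})$ provided by \Cref{Structure r_0 >i}, together with $\dim X_{s,\,s} = \Sp(s)+1 = r_{0}+1$ from \Cref{dimension formula for X_{r}} (noting $\Sp(s) = r_{0} \leq p-1$).

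Since each step is a direct invocation of an already-proved statement, there is no genuine obstacle; the only points to be careful about are the bookkeeping of the three overlapping ranges and the identity $\Sp(r - \Sp(r-r_{0})) = r_{0}$, which holds because subtracting $\Sp(r-r_{0}) \leq r_{0}$ from $r$ alters only the constant base $p$-digit of $r$ and leaves all higher digits unchanged.
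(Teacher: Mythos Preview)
Your proposal is correct and follows essentially the same approach as the paper, which simply says the corollary follows from \Cref{Structure r_0 >i} and \Cref{dimension formula for X_{r}}. You spell out the cases in more detail, citing the underlying Propositions~\ref{dim large 1} and~\ref{not equal and not full 1} directly rather than the summary theorem, and your alternative route for the first case via the tensor-product isomorphism is exactly the paper's route; the substance is identical.
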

\begin{proof}
         This follows from  \Cref{Structure r_0 >i} and
          \Cref{dimension formula for X_{r}}. Note that the formulas match at the 
          boundary $\Sp(r-i) = p$.
\end{proof}

As a corollary, we obtain the structure of the successive quotients 
$X_{r-i, \, r}/ X_{r-(i-1),\,r}$, for $r_{0} \geq i$.
\begin{corollary}        
Let $p\geq 3$, $1 \leq i \leq p-1$ and $p 
      \leq r \equiv r_{0} 
      \mod p$ with $0 \leq r_{0} \leq p-1$. If $r_{0} \geq i$, then
        \begin{align*}
             \frac{X_{r-i, \, r}}{X_{r-(i-1),\,r}} \cong
              \begin{cases}
                      (0), & \mathrm{if} ~ \Sp(r-i) < r_{0},  \\
                      V_{p-1-[2i-r]} \otimes D^{i}, &\mathrm{if} ~ 
                       r_{0} \leq \Sp(r-i) \leq p-1,\\
                      \ind_{B}^{\Gamma}(\chi_{1}^{r-i}\chi_{2}^{i}),
                       & \mathrm{if} ~ \Sp(r-i) \geq p.
                \end{cases}     
      \end{align*}
\end{corollary}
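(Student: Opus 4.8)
The first case needs nothing new: by \Cref{terms equal in filtration r0 >i} one has $X_{r-(i-1),\,r} = X_{r-i,\,r}$ precisely when $\Sp(r-i) < r_0$, so the successive quotient is $(0)$ exactly in that case and is non-zero otherwise. For the remaining two cases the plan is to combine three ingredients already at hand: the surjection from a principal series supplied by \Cref{induced and successive}, the dimension formula of \Cref{dimension r_0 > i}, and the description of the submodule structure of principal series in \Cref{Structure of induced}.

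Concretely, \Cref{induced and successive} gives a $\Gamma$-linear surjection $\psi_i\colon \ind_B^\Gamma(\chi_1^{r-i}\chi_2^i) \twoheadrightarrow X_{r-i,\,r}/X_{r-(i-1),\,r}$, whose target is non-zero when $\Sp(r-i) \geq r_0$ by the first paragraph. Using $\Sp(r-(i-1)) = \Sp(r-i)+1$ together with \Cref{dimension r_0 > i} applied to the indices $i$ and $i-1$ (the latter is covered since $r_0 \geq i - 1$, and the case hypotheses transfer as noted below), I would compute $\dim\big(X_{r-i,\,r}/X_{r-(i-1),\,r}\big)$ in the two regimes. When $\Sp(r-i) \geq p$ we are in the third case of \Cref{dimension r_0 > i} for both indices (since $\Sp(r-(i-1)) \geq p+1$), so this dimension is $(i+1)(p+1) - i(p+1) = p+1 = \dim\ind_B^\Gamma(\chi_1^{r-i}\chi_2^i)$, whence $\psi_i$ is an isomorphism. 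When $r_0 \leq \Sp(r-i) \leq p-1$ we are in the middle case for index $i$, and since then $r_0 + 1 \leq \Sp(r-(i-1)) \leq p$ also for index $i-1$, giving dimension $(i+1)(\Sp(r-i)+1) - i(\Sp(r-i)+2) = \Sp(r-i) - i + 1$.

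It remains to identify the quotient in the middle case. Since $r_0 \geq i$ and $\Sp(r-i) \leq p-1$, the integer $\Sp(r-i) - i + 1$ lies strictly between $1$ and $p+1$, so $\psi_i$ exhibits $X_{r-i,\,r}/X_{r-(i-1),\,r}$ as a proper non-zero quotient of $\ind_B^\Gamma(\chi_1^{r-i}\chi_2^i)$. By \Cref{Structure of induced} this principal series has irreducible cosocle $V_{p-1-[2i-r]} \otimes D^i$ and irreducible socle $V_{[2i-r]} \otimes D^{r-i}$, non-isomorphic by \Cref{Common JH factor}. In the non-split case the submodules of the induced module form the chain $0 \subsetneq \operatorname{soc} \subsetneq \ind_B^\Gamma(\chi_1^{r-i}\chi_2^i)$, so its only proper non-zero quotient is the cosocle. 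In the split case, which forces $r \equiv 2i \bmod (p-1)$ and hence $[2i-r] = p-1$, so $\Sp(r) = 2i$ and $\Sp(r-i) - i + 1 = 1$, the socle has dimension $p \geq 3 > 1$ and cannot be the target. Thus in both cases $X_{r-i,\,r}/X_{r-(i-1),\,r} \cong V_{p-1-[2i-r]} \otimes D^i$; a final digit-sum check — namely that $2i \leq \Sp(r) < 2i+(p-1)$ in this regime, whence $[2i-r] = [2i-\Sp(r)] = p-1+2i-\Sp(r)$ — confirms $\dim\big(V_{p-1-[2i-r]} \otimes D^i\big) = \Sp(r) - 2i + 1 = \Sp(r-i) - i + 1$, matching the dimension count. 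I expect this bookkeeping of the middle case to be the only real obstacle: one must correctly locate $\Sp(r)$ between $2i$ and $2i+(p-1)$ to evaluate $[2i-r]$, and one must separately rule out the socle as a possible quotient in the split case via the dimension comparison.
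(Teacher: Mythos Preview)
Your proposal is correct and follows essentially the same approach as the paper: compute the dimension of the successive quotient via \Cref{dimension r_0 > i} and then identify it among the quotients of the principal series using \Cref{induced and successive} and \Cref{Structure of induced}. The paper's proof is terser (it simply records the three dimension differences and invokes these lemmas), whereas you spell out the split/non-split dichotomy and the digit-sum computation $p-1-[2i-r]=\Sp(r)-2i$ explicitly; but the underlying argument is the same.
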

\begin{proof}
    Note that $\Sp(r-(i-1)) = \Sp(r-i)+1 =\Sp(r)-i+1$, for $r_{0} \geq i$.
    Using \Cref{dimension r_0 > i}, one checks that dim $X_{r-i, \, r}-$ 
    dim $X_{r-(i-1),\,r}$ equals $0$, $\Sp(r)-2i+1$ and $p+1$, in the cases
     described in the corollary. Now the result follows from 
    \Cref{induced and successive} and \Cref{Structure of induced}, noting that
    $p-1-[2i-r] = \Sp(r)-2i$, 
    if $r_{0} \leq \Sp(r-i) \leq p-1$.
\end{proof}

\subsection{The case \texorpdfstring{$\boldsymbol{r_{0} < i}$}{}}
\label{section r0<i}
In this subsection, we determine  the structure of $X_{r-i,\,r}$,
 for $1 \leq i \leq p-1$, in the case $r_{0} < i$, where $r_{0}$ is
 as in \eqref{base p expansion of r}. 
We  first analyze  
the quotients $X_{r-j,\,r}/X_{r-(j-1),\,r}$ for $r_{0} < j \leq i$,
which depend only on 
$\Sp(r-j)$, see \Cref{successive  quotients r_0 < i}.
Building upon this result we determine 
$X_{r-i,\,r} / X_{r-r_{0},\,r}$ in \Cref{final quotient structure}.
This combined with the results 
of the previous section allows us to determine the structure of $X_{r-i,\,r}$,
for $r_{0}<i$, see \Cref{Structure r_0<i}.

We being with the following useful lemma.
\begin{lemma}\label{Sp(r-i), Sp(r-j)}
       Let $1 \leq i \leq p-1$ and $p \leq r = r_{m}p^{m}+ \cdots +r_{1}p+r_{0}$ be the base 
       $p$-expansion of $r$ with $r_{0} < i$. Then
       \begin{enumerate}[label=\emph{(\roman*)}]
              \item  $\Sp(r-j) = \Sp(r-i)+i-j$, for all $r_{0}<j \leq i$.
              \item  If $\Sp(r-i) \leq p-1$, then $r_{1}  \neq 0$, $\Sp(r-i) =  
                     \Sp(r)+p-1-i$ and  $\Sp(r) \geq r _{0}+1$.
       \end{enumerate}
\end{lemma}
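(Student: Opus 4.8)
The plan is to reduce both parts to one elementary identity governing the effect on base-$p$ digits of subtracting an integer $j$ with $r_{0} < j \le p-1$ from $r$. Set $R := \lfloor r/p\rfloor = r_{m}p^{m-1} + \cdots + r_{2}p + r_{1}$, so that $r = pR + r_{0}$ and $R \ge 1$ since $r \ge p$. For $r_{0} < j \le p-1$ the units-digit computation $r_{0} - j$ is negative, lying in $\{-(p-1),\dots,-1\}$, so the subtraction borrows exactly once from the block $R$:
\[
  r - j = p(R-1) + (p + r_{0} - j), \qquad 1 \le p + r_{0} - j \le p-1 .
\]
Using $\Sigma_{p}(pn) = \Sigma_{p}(n)$ I would record the key formula
\[
  \Sigma_{p}(r-j) = \Sigma_{p}(R-1) + p + r_{0} - j \qquad (r_{0} < j \le p-1),
\]
whose right-hand side splits off a term $\Sigma_{p}(R-1)$ that does not depend on $j$.

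Granting this, part (i) is immediate: subtracting the formula for $j$ from that for $i$ (both valid since $r_{0} < j \le i \le p-1$) gives $\Sigma_{p}(r-j) - \Sigma_{p}(r-i) = i-j$, i.e. $\Sigma_{p}(r-j) = \Sigma_{p}(r-i) + i - j$.

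For part (ii) I would assume $\Sigma_{p}(r-i) \le p-1$ and feed $j=i$ into the key formula. This yields $\Sigma_{p}(R-1) = \Sigma_{p}(r-i) - (p+r_{0}-i) \le (p-1) - (p+r_{0}-i) = i - r_{0} - 1 \le p-2$. The crucial observation is that $r_{1} \ne 0$: if $r_{1} = 0$ then $p \mid R$, and $R \ge 1$ forces $R \ge p$, so subtracting $1$ triggers a borrow and $\Sigma_{p}(R-1) = \Sigma_{p}(\lfloor R/p\rfloor - 1) + (p-1) \ge p-1$, contradicting the bound just obtained. With $r_{1} \ne 0$ we have $p \nmid R$, hence $\Sigma_{p}(R-1) = \Sigma_{p}(R) - 1 = \Sigma_{p}(r) - r_{0} - 1$; non-negativity of $\Sigma_{p}(R-1)$ then gives $\Sigma_{p}(r) \ge r_{0} + 1$, and substituting back into the key formula with $j=i$ gives $\Sigma_{p}(r-i) = \Sigma_{p}(R-1) + p + r_{0} - i = \Sigma_{p}(r) + p - 1 - i$.

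The step I expect to be the main obstacle --- or rather the only one that is not pure bookkeeping --- is the derivation of $r_{1} \ne 0$, which is precisely where the hypothesis $\Sigma_{p}(r-i) \le p-1$ interacts with the inequalities $0 \le r_{0} < i \le p-1$; everything else uses only $\Sigma_{p}(pn) = \Sigma_{p}(n)$ and $\Sigma_{p}(n-1) = \Sigma_{p}(n)-1$ for $p \nmid n$, both recalled in the preliminaries.
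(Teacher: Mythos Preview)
Your proof is correct and follows essentially the same approach as the paper: both compute the base-$p$ expansion of $r-j$ by noting that a single borrow occurs at the units digit, then for part~(ii) derive $r_{1}\neq 0$ by showing that $r_{1}=0$ would force an extra $(p-1)$ into the digit sum and contradict $\Sigma_{p}(r-i)\le p-1$. Your packaging via $R=\lfloor r/p\rfloor$ and the key formula $\Sigma_{p}(r-j)=\Sigma_{p}(R-1)+p+r_{0}-j$ is a slightly cleaner bookkeeping device than the paper's explicit digit computations, but the underlying argument is the same.
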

\begin{proof}
         We have
        \begin{enumerate}
       \item[(i)] Let $r-i = r_{m}'p^{m}+ \cdots +r_{1}'p+r_{0}'$ be the base $p$-expansion
       of $r-i$. As $p+r_{0}-i \equiv r-i \equiv r_{0}'$ mod $p$ and  
       $0 \leq p+r_{0}-i, r_{0}' \leq p-1$ we
       have $r_{0}'=p+r_{0}-i $. Since $0 \leq r_{0}'+i-j = p+r_{0}-j \leq p-1$, we obtain
       the base $p$-expansion of $r-j$ is given by $r_{m}'p^{m}+ 
       \cdots +r_{1}'p+(r_{0}'+i-j)$.
       Hence $\Sp(r-j) = r_{m}'+ \cdots +r_{0}'+i-j = \Sp(r-i)+i-j$. 
       
       \item[(ii)]  Suppose $r_{1}=0$. As $r \geq p$ 
        there exists $2 \leq l \leq m$ such that $r_l \neq 0$. 
        Let $l$ be minimal. Then
       $r-i = r_{m}p^{m} + \cdots + r_{l+1}p^{l+1}+(r_{l}-1)p^{l}+(p-1)p^{l-1} +
        \cdots +(p-1)p+ (p+r_{0}-i)$ and
       \begin{align*}
                \Sp(r-i) & \geq (p-1)+p+r_{0}-i \quad \mathrm( \because ~ l \geq 2 ) \\
                             & \geq p \quad \mathrm( \because ~  i \leq p-1,~r_{0} \geq 0 ~),
       \end{align*}
         which is a contradiction. Hence $r_{1} \neq 0$ and $r-i = r_{m}p^{m} + \cdots +
         r_{2}p^{2}+(r_{1}-1)p+ (p+r_{0}-i)$. Thus $\Sp(r-i)= r_{m}+ 
         \cdots+r_{1}-1+p+r_{0}-i
         = \Sp(r)+p-1-i$. Since $r_{1} \geq 1$, we have $1+r_{0} \leq \Sp(r)$.
         \qedhere
         \end{enumerate} 
\end{proof}

 By \cite[(5.1)]{Glover}, for every $r \geq p$, we have an exact sequence
 $$
        0 \rightarrow V_{r-2} \otimes V_{0} \otimes D \xrightarrow{\theta_{r-1,1}} 
        V_{r-1} \otimes V_{1}  \xrightarrow{\varphi_{r-1,1}} V_{r} \rightarrow 0.
 $$
As  in \Cref{surjection1}, we have
$\varphi_{r-1,1}(X_{r-i,\,r-1} \otimes V_{1}) = X_{r-i,\,r}$, for all  
$1 \leq i \leq p-1$. Hence, for $2 \leq i \leq p-1$, we have
\begin{align}\label{surjection3}
       \frac{X_{r-i,\,r-1} \otimes V_{1}}
       {X_{r-(i-1),\,r-1} \otimes V_{1}} \twoheadrightarrow \frac{X_{r-i,\,r}}{X_{r-(i-1),r}}.
\end{align}
 
 We now derive a sufficient condition under which $X_{r-(i-1),\,r} = X_{r-i,\,r}$,
 for $r_{0}< i \leq p-1$.
\begin{lemma}\label{r-i small}
       Let $1 \leq i \leq p-1$, $r \geq p$ and $r=r_{m}p^{m}+ \cdots + r_{1}p+ r_{0}$ be
       the base $p$-expansion of $r$. If $r_{0}< i$ and $\Sp(r-i)< p-1$, then 
        $X_{r-(i-1),\,r} = X_{r-i,\,r}$.
\end{lemma}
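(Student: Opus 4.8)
The plan is to push the statement down from $V_r$ to $V_{r-1}$ via the surjection \eqref{surjection3}, and to close the recursion by induction on $r$. First note that the hypotheses force $i \geq 2$: if $i = 1$, then $r_0 < 1$ means $p \mid r$, so $r-1 \equiv p-1 \pmod p$, whence $\Sigma_p(r-1) \geq p-1$, contradicting $\Sigma_p(r-i) < p-1$. (The same borrow computation shows in fact $i \geq r_0 + 2$ whenever the hypotheses hold.) With $i \geq 2$, the surjection \eqref{surjection3} is available, and since $-\otimes_{\f} V_1$ is exact its source is $\big(X_{r-i,\,r-1}/X_{r-(i-1),\,r-1}\big)\otimes V_1$. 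So by \eqref{surjection3}, together with the inclusion $X_{r-(i-1),\,r}\subseteq X_{r-i,\,r}$ of \Cref{first row filtration}, it suffices to prove the corresponding equality one level down: $X_{r-i,\,r-1} = X_{r-(i-1),\,r-1}$. Since $r-i = (r-1)-(i-1)$ and $r-(i-1) = (r-1)-(i-2)$, this is the assertion that the submodules of $V_{r-1}$ generated by its $(i-1)$-st and $(i-2)$-nd monomials coincide.

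Now split according to whether $p \mid r$. If $p \nmid r$, so $r_0 \geq 1$ and hence $r \geq p+1$: the constant digit of $r-1$ is $r_0-1 < i-1$, and $\Sigma_p\big((r-1)-(i-1)\big) = \Sigma_p(r-i) < p-1$; so the lemma applied to the pair $(r-1,\,i-1)$ — legitimate by strong induction on $r$, since $r-1 \geq p$ and $1 \leq i-1 \leq p-1$ — yields the required equality in $V_{r-1}$. If $p \mid r$ and $r > p$: then $r-1 \equiv p-1 \pmod p$, so the constant digit of $r-1$ equals $p-1 \geq i-1$; a short computation rules out $i = 2$ in this case, so $i-2 \geq 1$, and \Cref{terms equal in filtration r0 >i} applied to $V_{r-1}$ with the indices $i-2 < i-1$ gives $X_{(r-1)-(i-2),\,r-1} = X_{(r-1)-(i-1),\,r-1}$, since its criterion here reads $\Sigma_p\big((r-1)-(i-1)\big) < p-1$, which holds. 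Finally, the base case $r = p$ is trivial: then $r-1 = p-1$, the module $V_{p-1}$ is irreducible, and both monomial submodules in question are nonzero, hence both equal $V_{p-1}$.

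None of these steps is deep; the only real care is the bookkeeping at the edge of the range $r_0 < i$ — verifying that the reduced data $(r-1,\,i-1)$ still satisfies the hypotheses of this lemma when $p \nmid r$, those of \Cref{terms equal in filtration r0 >i} when $p \mid r$ and $r > p$, and dealing with the degenerate case $r = p$. The induction on $r$ is well-founded because the sole recursive call (the case $p \nmid r$) strictly decreases $r$ while keeping $r \geq p$, and the remaining cases are resolved outright.
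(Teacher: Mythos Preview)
Your proof is correct and follows essentially the same route as the paper's: reduce to $V_{r-1}$ via \eqref{surjection3}, then either invoke \Cref{terms equal in filtration r0 >i} (when $r_0=0$) or recurse on the pair $(r-1,i-1)$ (when $r_0\geq 1$), with $r=p$ as the trivial base. The only cosmetic difference is that you phrase the recursion as strong induction on $r$ while the paper inducts on $i$; since each recursive call decreases both simultaneously, either framing works, and your version is slightly more explicit about why $i=2$ cannot occur in the $p\mid r$, $r>p$ branch.
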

\begin{proof}
      We prove the lemma by induction on $i$. For $i=1$, the condition 
      $r_{0} < 1$ implies $r_{0}=0$, i.e., $p \mid r$. Thus $\Sp(r-1) \geq p-1$  
       as $r \geq p$. For $r \geq p$, by \cite[Lemma 4.1]{BG15}, we have $X_{r-1,\, r}
       \neq X_{r,\,r}$. So the lemma is vacuously true for $i=1$. So we may assume
       $i \geq 2$.  By \eqref{surjection3}, it is enough to show
       $X_{r-1-(i-2),\,r-1}= X_{r-1-(i-1),\,r-1}$.
       If $r=p$, then $V_{r-1}$ is irreducible, so 
       $X_{r-1-(i-2),\,r-1}= X_{r-1-(i-1),\,r-1}$.
       Assume $r>p$.
        We first consider the case  $r_{0} =0$.
       By  Lemma~\ref{Sp(r-i), Sp(r-j)} (ii), we see  that
      $\Sp(r-i) < p-1$ implies $r_{1} \neq 0$.
      Thus $r-1 = r_{m}p^{m}+\cdots+r_{2}p^{2}+
      (r_{1}-1)p+p-1$. Note that $\Sp(r-1-(i-1))=\Sp(r-i) < p-1$.
      Applying \Cref{terms equal in filtration r0 >i} for $r-1$, we  get
      $X_{r-1-(i-2),\,r-1}= X_{r-1-(i-1),\,r-1}$.
       If $1 \leq r_{0} < i $, then  the base $p$-expansion of $r-1$ 
       is given by $r_{m}p^{m}+\cdots+r_{2}p^{2}+ r_{1}p+(r_{0}-1)$.
       Thus by induction (for $r-1$ and $i-1$), we have
      $X_{r-1-(i-2),\,r-1} = X_{r-1-(i-1),\, r-1}$. 
      This proves the inductive step 
      and the lemma follows.
\end{proof}
We next prove a result analogous to \Cref{terms equal in filtration r0 >i},
in the case $r_{0}<i$.

\begin{lemma}\label{X_r-i = X_ r-j}
       Let $p \leq r=r_{m}p^{m}+ \cdots + r_{1}p+r_{0}$ be the base  $p$-expansion of 
       $r$. For $1 \leq j < i \leq p-1 $ and $r_{0}<j$, we have
       $X_{r-j,\,r} = X_{r-i,\,r}~ \mathrm{ if ~ and ~ only ~ if}~ \Sp(r-j)\leq p-1 $.
\end{lemma}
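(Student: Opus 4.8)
The plan is to prove the two implications separately; the forward implication is short, and the converse follows the pattern of the proof of \Cref{X_{r-i}=X_{r-j}}. For the ``if'' direction, suppose $\Sp(r-j)\le p-1$. For every integer $k$ with $j<k\le i$ one has $r_0<j<k\le p-1$, and Lemma~\ref{Sp(r-i), Sp(r-j)}(i) gives $\Sp(r-k)=\Sp(r-j)-(k-j)\le p-2<p-1$; hence \Cref{r-i small} applies and yields $X_{r-(k-1),\,r}=X_{r-k,\,r}$ for each such $k$. Chaining these equalities over $k=j+1,\dots,i$ gives $X_{r-j,\,r}=X_{r-(j+1),\,r}=\cdots=X_{r-i,\,r}$, as desired.

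For the converse I would prove the contrapositive: assuming $\Sp(r-j)\ge p$, one must show $X_{r-j,\,r}\neq X_{r-i,\,r}$. Since $j+1\le i\le p-1$, \Cref{first row filtration} gives $X_{r-j,\,r}\subseteq X_{r-(j+1),\,r}\subseteq X_{r-i,\,r}$, so it suffices to prove the strict inclusion $X_{r-j,\,r}\subsetneq X_{r-(j+1),\,r}$, i.e. that $X^{r-(j+1)}Y^{j+1}\notin X_{r-j,\,r}$. Since $\Sp(n)\ge p$ forces $n\ge 2p-1$ for any positive integer $n$, we have $r-j\ge 2p-1$, hence $r\ge 2p$ and in particular $r>2j+1$. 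Suppose, for contradiction, that $X^{r-(j+1)}Y^{j+1}\in X_{r-j,\,r}$. By \Cref{Basis of X_r-i} there are scalars $a_{k,l},b_l\in\f$ with
\[
X^{r-(j+1)}Y^{j+1}=\sum_{l=0}^{j}\sum_{k\in\f}a_{k,l}\,X^{l}(kX+Y)^{r-l}+\sum_{l=0}^{j}b_l\,X^{r-l}Y^{l}.
\]
Put $A_{t,l}:=\sum_{k\in\fstar}a_{k,l}\,k^{\,r-l-t}$, which for each fixed $l$ depends only on $t\bmod(p-1)$. Comparing the coefficients of $X^{r-t}Y^{t}$ on the two sides in the range $j<t<r-j$, where the pure monomials $X^{r-l}Y^{l}$ and $X^{l}Y^{r-l}=X^{l}(0\cdot X+Y)^{r-l}$ make no contribution, produces the relations
\[
\sum_{l=0}^{j}\binom{r-l}{t}A_{t,l}=\delta_{j+1,t},\qquad j<t<r-j,
\]
which I will refer to as $(\ast)$; in particular $(\ast)$ at $t=j+1$ reads $\sum_{l=0}^{j}\binom{r-l}{j+1}A_{j+1,l}=1$.

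To reach a contradiction I would extract from $(\ast)$, using exponents $t$ with $t\equiv j+1\pmod{p-1}$ and $t\neq j+1$ (for which $A_{t,l}=A_{j+1,l}$), enough independent relations among the $j+1$ unknowns $A_{j+1,0},\dots,A_{j+1,j}$ to force them all to vanish, which is incompatible with the relation at $t=j+1$. The hypothesis $\Sp(r-j)\ge p$ supplies the room needed to choose such exponents: one prescribes the base $p$-digits of the $t$'s in the positions where $r$ has a nonzero digit by means of \Cref{choice of s}, evaluates the binomial coefficients $\binom{r-l}{t}$ by Lucas' theorem, and deduces invertibility of the resulting coefficient matrix from \Cref{matrix det} and its corollaries, exactly as in the proof of \Cref{X_{r-i}=X_{r-j}}. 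The new difficulty, and the step I expect to be the main obstacle, is that here $r_0<j$, so when $l$ exceeds $r_0$ the subtraction $r-l$ carries: the units digit of $r-l$ wraps around modulo $p$ and the next digit drops by one, and if in addition $r_1=0$ the borrow propagates further. Consequently the matrices that occur are the ``wrap-around'' binomial matrices analogous to $A(a,i,j,r)$ in \eqref{A(a,i,j,r) matrix} (whose invertibility is handled in Corollary~\ref{A(a,i,j,r) invertible}) rather than the naive binomial matrices, and the bookkeeping must be organized according to whether $r_1\neq 0$ or $r_1=0$ — in the latter case $\Sp(r-j)\ge p$ holds automatically by Lemma~\ref{Sp(r-i), Sp(r-j)}(ii). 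Once the invertibility is established, $(\ast)$ forces $A_{j+1,l}=0$ for all $l$, contradicting the relation at $t=j+1$, and the proof is complete.
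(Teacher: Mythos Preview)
Your ``if'' direction and the setup of the ``only if'' direction (the relation $(\ast)$ and the reduction to showing $X_{r-j,\,r}\subsetneq X_{r-(j+1),\,r}$) are correct and match the paper exactly.

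The gap is in your plan to ``force them all to vanish'', i.e., to show $A_{j+1,l}=0$ for every $0\le l\le j$. This is generally impossible: for $0\le l\le r_0$ there are simply not enough test exponents $t\equiv j+1\pmod{p-1}$ for which $\binom{r-l}{t}\not\equiv 0\pmod p$. For instance, take $r=p^2+p$, $j=1$; then $\binom{r}{t}\not\equiv 0$ forces the units digit of $t$ to be $0$, and no such $t$ in the allowed range is $\equiv 2\pmod{p-1}$, so no homogeneous relation from $(\ast)$ constrains $A_{2,0}$ at all. Your anticipated case split on $r_1$ and appeal to Corollary~\ref{A(a,i,j,r) invertible} will not rescue this.

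The missing observation is that you do not need all the $A_{j+1,l}$ to vanish. Since $r_0<j\le p-2$, Lucas' theorem gives $\binom{r-l}{j+1}\equiv\binom{r_0-l}{j+1}=0\pmod p$ for $0\le l\le r_0$, so the relation $(\ast)$ at $t=j+1$ already reduces to $\sum_{l=r_0+1}^{j}\binom{r-l}{j+1}A_{j+1,l}=1$. Thus it suffices to show $A_{j+1,l}=0$ for $r_0+1\le l\le j$, which requires only $j-r_0$ homogeneous equations. The paper produces these by applying \Cref{choice of s} to $r-(r_0+1)$ (whose units digit is $p-1$, eliminating the wrap-around you were worried about), using the hypothesis $\Sp(r-j)\ge p$ to guarantee enough room. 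The resulting $(j-r_0)\times(j-r_0)$ matrix has entries $\binom{p+r_0-l}{j+1-u}$ and is handled by \Cref{matrix det}(i) with $a=p-1$, not by the $A(a,i,j,r)$ machinery.
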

 \begin{proof}
      Since $r_{0}< j$, by Lemma~\ref{Sp(r-i), Sp(r-j)} (i) 
     (with $i$ there equal to $l$),
     we have  $\Sp(r-l) = \Sp(r-j)-(l-j) < p-1$,  for all $l$ such that
     $j+1 \leq l \leq i$. By \Cref{r-i small}, we have $X_{r-(l-1),\,r} = X_{r-l,\,r}$
     for all $l$ such that
     $j+1 \leq l \leq i$, whence $X_{r-j,\,r} = X_{r-(j+1),\,r} = \cdots = X_{r-(i-1),\,r}
      = X_{r-i,\,r}$. 
     This proves the \enquote*{if} part.
     
      For the converse we claim that if $\Sigma_{p}(r-j) > p-1$, then 
      $X_{r-j,\,r} \subsetneq X_{r-(j+1),\,r}$.
      Suppose not. Then  by \Cref{first row filtration}, we have 
      $X_{r-j,\,r}=X_{r-(j+1),\,r}$.
      Thus by \Cref{Basis of X_r-i}, there exist 
      $a_{k,l} \in \f$ and $b_{l} \in \f$, 
      for $k \in \f$ and $0 \leq l \leq j$, 
      such that
      \begin{align}\label{Relation r-i small}
             X^{r-j-1} Y^{j+1}= \sum_{l=0}^{j} \sum_{k=0}^{p-1} a_{k,l} X^{l} (kX+Y)^{r-l}
             + \sum\limits_{l=0}^{j} b_{l} X^{r-l}Y^{l}.
      \end{align}
       As above, for every  positive integer $t $ and $0 \leq l \leq j$,  define
      $A_{t,l} := \sum\limits_{k=1}^{p-1} a_{k,l}k^{r-l-t}$. For
      every $l$, note that  $A_{t,l} $ depends only on the congruence class of $t$ mod $(p-1)$.
      Comparing the coefficients of $X^{r-t}Y^{t}$  on  both sides of 
      \eqref{Relation r-i small}, we get 
      \begin{align}\label{3.23}
      	       \sum\limits_{l=0}^{j} \binom{r-l}{t} A_{t,l} = \delta_{j+1,t}, ~  \forall  ~ 
      	        j < t < r-j. 
      \end{align}
      Since $r_{0} < j < p-1$, by Lucas' theorem, 
      we see that $ \binom{r-l}{j+1} \equiv 
      \binom{r_{0}-l}{j+1} \equiv 0 \mod p$, for $0 \leq  l  \leq r_{0}$. 
      Thus, taking $t=j+1$ in \eqref{3.23} we get
      \begin{align}\label{Relation r-i small i}
      	      \sum\limits_{l=r_{0}+1}^{j} \binom{r-l}{j+1} A_{j+1,l} =1.
      \end{align} 
      Below we show that 
      $A_{j+1,r_{0}+1},  \ldots , A_{j+1,j} =0$ by solving a system of linear equations.
      This contradicts \eqref{Relation r-i small i}.


     Let  $r_{m}'p^{m}+ \cdots + r_{1}'p+r_{0}'$ be the base 
     $p$-expansion of $r-j$. Clearly $0 \leq r_{0}'$, $p+r_{0}-j \leq p-1$ 
     and $r_{0}' \equiv r-j \equiv p+r_{0}-j$ mod $p$. So $r_{0}' = p+r_{0}-j$.
     Then the assumption $\Sp(r-j) \geq p$ implies 
      $r_{m}'+ \cdots + r_{1}' \geq p-r_{0}' = j-r_{0} \geq 1$. Thus $r= r-j+j
      \geq p +(j+r_{0}) = 2p+r_{0} \geq 2p$. Also note that for 
      all $l$ such that
      $r_{0}+1 \leq l \leq j $, the base $p$-expansion of $r-l$ is given by 
      \[
           r -l = r -j+ (j-l) = r_{m}'p^{m}+ \cdots + r_{1}'p+ (p+r_{0}-l).
       \]
        Thus  
        $\Sp(r - (r_{0}+1)) -(p-1)= r_{m}'+ \cdots + r_{1}' \geq j-r_{0} \geq 1$
        and $r-(r_{0}+1) \geq ( r_{m}'+ \cdots + r_{1}')p \geq p$.
        For every $1 \leq u \leq j-r_{0}$, let $b_{u} = j+1-u $.
        Clearly $1  \leq b_{u} \leq j \leq p-1$.
        Applying  \Cref{choice of s} (for $r- (r_{0}+1)$), 
        for every $1 \leq u \leq j-r_{0}$ 
         there exists 
        $s_{u}$ such that
       $p \leq s_{u} \leq r- r_{0}-1$,  $s_{u} \equiv b_{u} =j+1-u \mod p$, 
       $\Sp(s_{u}) = b_{u}+u = j+1$ and
       $\binom{r-r_{0}-1}{s_{u}} \not \equiv 0 \mod p$. 
       Let $s_{u} = s_{u,m}p^{m}+ \cdots +s_{1,u}p+ s_{u,0}$ be the base 
       $p$-expansion of $s_{u}$.
       By the above conditions for every $1 \leq u \leq j-r_{0}$, 
       we have $s_{u,0} = j+1-u$
       and $s_{u,n} \leq r_{n}'$, by Lucas' theorem, for $1 \leq n \leq m$.       
       If $ 1\leq u <  j- r_{0}\leq \sum_{n=1}^{m}r_{m}' $, then  by \Cref{choice of s} we have
       $s_{u} \leq (r-r_{0}-1)-p \leq r-p< r-j $,  and also if $u=j-r_{0}$, then 
       $r- (r_{0} +1) - s_{u} \geq (p-1)-s_{j-r_{0},0} = p-1 -(r_{0}+1) > j-(r_{0}+1)$. 
       Thus for all $1 \leq u \leq j-r_{0}$,
       we have  $j< p \leq s_{u} < r-j$, so we may apply  \eqref{3.23} to obtain
      \begin{align}\label{Relation r-i small 2}
             \sum\limits_{l=0}^{j} \binom{r-l}{s_{u}} A_{s_{u},l}  =0, ~ 
             \forall~1 \leq u \leq j-r_{0}.
      \end{align}
      Since $ s_{u,0} = j+1-u \geq r_{0}+1 $ by Lucas' theorem, 
      for all $l$ such that $0 \leq l \leq r_{0}$, we have 
    \[
          \binom{r-l}{s_{u}} \equiv \binom{r_{m}}{s_{u,m}} \cdots \binom{r_{1}}{s_{u,1}}
          \binom{r_{0}-l}{s_{u,0}} \equiv 0 \mod p.
     \]
      Again by Lucas' theorem, 
      for all $l$ such that $r_{0}+1 \leq l \leq j$, we have
     \begin{align*}
            \binom{r-l}{s_{u}} & \equiv      
            \binom{r_{m}'}{s_{u,m}} \cdots \binom{r_{1}'}{s_{u,1}} \binom{p+r_{0}-l}{j+1-u}
            \mod p \\        
            &\equiv \binom{r-r_{0}-1}{s_{u}} \binom{p-1}{j+1-u}^{-1}
             \binom{p+r_{0}-l}{j+1-u} \mod p.
     \end{align*}
     Since $s_{u} \equiv \Sp(s_{u}) = j+1 $ mod $(p-1)$ and 
     $\binom{r-r_{0}-1}{s_{u}} \not \equiv 0$ mod $p$,
      it follows from the above  computations and \eqref{Relation r-i small 2} that 
     \begin{align*}
           \sum\limits_{l=r_{0}+1}^{j}  \binom{p+r_{0}-l}{j+1-u} A_{j+1,l} =0,  ~
          \text{for} ~ 1 \leq u \leq j-r_{0}.
     \end{align*}
     Writing the above set of equations in matrix form and 
     applying \Cref{matrix det} (i) (with $a=p-1$ and $i=j-r_{0}-1$), 
     we obtain $A_{j+1,r_{0}+1} = \cdots = A_{j+1,j}=0$.
      Substituting $A_{j+1,r_{0}+1}, \ldots , A_{j+1,j}=0$
       in \eqref{Relation r-i small i} leads to contradiction.
       This shows $X_{r-j,\,r} \subsetneq  X_{r-(j+1),\,r} \subseteq X_{r-i,\,r} $
       and proves the \enquote*{only if} part.
 \end{proof}   
 %
%
Combining the above lemma with \Cref{terms equal in filtration r0 >i}, 
we have the following criterion for when $X_{r-j,\,r} = X_{r-i,\,r}$ are equal, 
for $1 \leq j < i \leq p-1$. 
\begin{lemma}\label{final X_r-i = X_r-j}
          Let $1 \leq j <  i \leq p-1$ and  $p \leq r = r_{m}p^{m}+ \cdots +
           r_{1}p+r_{0}$ be the base $p$-expansion of $r$. Then 
        \[ 
             X_{r-j,\,r} = X_{r-i,\,r} ~ \Longleftrightarrow~
           r_{0} \neq j, j+1, \ldots, i-1 , ~
               \Sp(r-j) \leq p-1 ~ \mathrm{and} ~ \Sp(r-r_{0}) \leq  j.
          \]      
\end{lemma}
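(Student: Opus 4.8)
The plan is to deduce the biconditional in Lemma~\ref{final X_r-i = X_r-j} by combining the two cases already treated, namely $r_0 \geq i$ (handled in Lemma~\ref{terms equal in filtration r0 >i}) and $r_0 < j$ (handled in Lemma~\ref{X_r-i = X_ r-j}), together with the transitivity of the filtration \eqref{first row} and the observation that the intermediate case $j \leq r_0 \leq i-1$ falls out cleanly. First I would fix the base $p$-expansion $r = r_m p^m + \cdots + r_1 p + r_0$ and recall the chain $X_{r-j,\,r} \subseteq X_{r-(j+1),\,r} \subseteq \cdots \subseteq X_{r-i,\,r}$ from Lemma~\ref{first row filtration}, so that $X_{r-j,\,r} = X_{r-i,\,r}$ holds if and only if every consecutive inclusion $X_{r-(l-1),\,r} = X_{r-l,\,r}$ holds for $j+1 \leq l \leq i$.

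Next I would split into the three cases according to where $r_0$ sits relative to $j$ and $i$. If $r_0 \geq i$, then $r_0 \geq l$ for all $l$ in the range, so Lemma~\ref{terms equal in filtration r0 >i} applies to each consecutive step and gives $X_{r-j,\,r} = X_{r-i,\,r} \iff r_m + \cdots + r_1 = \Sigma_p(r - r_0) \leq j$; I would then check this matches the stated right-hand side, noting that when $r_0 \geq i$ one automatically has $\Sigma_p(r-j) = \Sigma_p(r) - j \leq p-1$ precisely when $\Sigma_p(r-r_0) = \Sigma_p(r) - r_0 \leq j - r_0 + \cdots$; more carefully, $\Sigma_p(r-j)\le p-1 \iff \Sigma_p(r)-j \le p-1$, and when $\Sigma_p(r-r_0)\le j$ we get $\Sigma_p(r) = \Sigma_p(r-r_0)+r_0 \le j + r_0 \le j+(p-1)$, forcing $\Sigma_p(r-j)\le p-1$, so that condition is redundant here, and the conditions $r_0 \neq j, \ldots, i-1$ are vacuous since $r_0 \geq i$. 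If $r_0 < j$, Lemma~\ref{X_r-i = X_ r-j} directly gives $X_{r-j,\,r} = X_{r-i,\,r} \iff \Sigma_p(r-j) \leq p-1$, and I would verify that under $r_0 < j$ the remaining two conditions on the right-hand side are automatic: $r_0 \neq j, \ldots, i-1$ since $r_0 < j$, and $\Sigma_p(r-r_0) \leq j$ follows because $\Sigma_p(r-j) \leq p-1$ with $r_0 < j$ forces (by Lemma~\ref{Sp(r-i), Sp(r-j)}(ii) applied at level $j$) $\Sigma_p(r-j) = \Sigma_p(r) + p - 1 - j$, hence $\Sigma_p(r) = \Sigma_p(r-j) - (p-1) + j \leq j$, and $\Sigma_p(r-r_0) \leq \Sigma_p(r) \leq j$.

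The remaining case is $j \leq r_0 \leq i - 1$, where I expect the equality to \emph{fail} and the right-hand condition to be violated by the clause $r_0 \notin \{j, j+1, \ldots, i-1\}$. Here I would argue: since $r_0 \geq j$, the monomial $X^{r-j}Y^j$ has $r - j \equiv r_0 - j \geq 0 \pmod p$ with $r_0 - j \leq p-1$, and one can use Lemma~\ref{X_{r-i}=X_{r-j}} — whose hypothesis $r_0 \geq i$ is too strong, so instead I would invoke the sub-statement valid under $r_0 \geq j+1$, or run the argument of that lemma with $i$ replaced by $r_0+1 \leq i$ — to conclude $X_{r-j,\,r} \subsetneq X_{r-(j+1),\,r}$ once $\Sigma_p(r - r_0) \geq 1$, which always holds since $r \geq p$. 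Actually the cleanest route: by Lemma~\ref{terms equal in filtration r0 >i} applied with the pair $(j, r_0)$ in place of $(j,i)$ — legitimate because $r_0 < i \le p-1$ and $j < r_0$ would be needed, so if $j = r_0$ I instead use that $X_{r-r_0,\,r} \subsetneq X_{r-(r_0+1),\,r}$ by Lemma~\ref{r-i small}'s converse direction (Lemma~\ref{X_r-i = X_ r-j}) since $\Sigma_p(r-r_0) = \Sigma_p(r) > p-1$ is forced... — I would show $X_{r-j,\,r} \neq X_{r-i,\,r}$, consistent with the right-hand side being false.

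The main obstacle will be stitching together the boundary of these cases without circularity: specifically the subcase $r_0 = j$, where neither Lemma~\ref{terms equal in filtration r0 >i} (which wants $r_0 \geq i > j = r_0$, false) nor Lemma~\ref{X_r-i = X_ r-j} (which wants $r_0 < j$, false) applies verbatim, and one must show directly that $X_{r-j,\,r} \subsetneq X_{r-(j+1),\,r}$. The way I would handle this: when $r_0 = j$, apply Lemma~\ref{terms equal in filtration r0 >i} with the index pair $(r_0, r_0+1) = (j, j+1)$ relative to the expansion of $r$ — its statement reads $X_{r-(r_0),\,r} = X_{r-(r_0+1),\,r} \iff \Sigma_p(r-(r_0+1)) < r_0$; but $\Sigma_p(r - (r_0+1)) = \Sigma_p(r) - r_0 - 1 + (p-1) \cdot (\text{number of trailing zeros after }r_0) \ge \Sigma_p(r) - r_0 - 1 \geq 0$, and in fact since $r \geq p$ we have $\Sigma_p(r) \geq 1 + r_0$ when $r_1 \ne 0$... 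I would verify $\Sigma_p(r-(r_0+1)) \geq r_0$ in all cases where $r > p$ (and handle $r = p$ separately, where $V_r$ is irreducible so all the $X_{r-i,\,r}$ with $i \le p-1$ coincide, making the statement easy to check directly). Once the strict inclusion at the $r_0$-th step is established, transitivity finishes the case, and assembling the three cases — checking in each that the stated triple of conditions is equivalent to the case-specific criterion — completes the proof.
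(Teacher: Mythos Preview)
Your case split into $r_0 \geq i$, $r_0 < j$, and $j \leq r_0 \leq i-1$ is the right framework, and your treatment of the first two cases---including the verification that the redundant conditions on the right-hand side follow automatically---is correct and matches the paper's proof essentially verbatim. The gap is in the third case, $j \leq r_0 \leq i-1$, where you need to show the equality fails. Your attempts to invoke \Cref{terms equal in filtration r0 >i} with the index pair $(r_0, r_0+1)$ cannot work: that lemma's hypothesis reads ``$r_0 \geq i$'' with $i$ the larger index, which here would mean $r_0 \geq r_0+1$. Likewise \Cref{X_r-i = X_ r-j} with $j' = r_0$ needs $r_0 < j'$. No combination of these two lemmas covers the transition across the index $r_0$, and your attempted computation of $\Sigma_p(r-(r_0+1))$ does not lead anywhere decisive (indeed that quantity is always at least $p-1 \geq r_0$ when $r > r_0$, so the purported criterion would never yield equality---but you have not established that criterion in the first place).

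The paper bypasses all of this with a one-line direct argument. Suppose $j \leq r_0 \leq i-1$ and $X_{r-j,\,r} = X_{r-i,\,r}$. Then $X^{r-i}Y^i \in X_{r-j,\,r}$, which by \Cref{Basis of X_r-i} is the $\f$-span of $\{X^l(kX+Y)^{r-l},\, X^{r-l}Y^l : k \in \f,\ 0 \leq l \leq j\}$. The coefficient of $X^{r-i}Y^i$ in $X^{r-l}Y^l$ is zero (since $l \leq j < i$), and in $X^l(kX+Y)^{r-l}$ it is $\binom{r-l}{i}k^{r-l-i}$. For $0 \leq l \leq j \leq r_0$ the units digit of $r-l$ is $r_0-l < i \leq p-1$, so by Lucas' theorem $\binom{r-l}{i} \equiv \binom{r_0-l}{i} = 0 \pmod p$. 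Hence $X^{r-i}Y^i$ is not in the span---contradiction. This elementary coefficient argument is what you were missing; once you have it, the three cases assemble exactly as you outlined.
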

\begin{proof}
        Clearly the \enquote*{if} part follows from \Cref{terms equal in filtration r0 >i}
        and \Cref{X_r-i = X_ r-j}  in the case $r_0 \geq i$ and 
         $r_0 < j$ respectively. 
        
         For the converse assume $X_{r-j,\,r} = X_{r-i,\,r}$.
         Then we claim that $ r_{0} \neq j, j+1, \ldots, i-1$.
        Suppose not.
         Then the  coefficient of $X^{r-i}Y^{i}$ in $X^{l}(kX+Y)^{r-l}$ is congruent to $0$ mod $p$, for all
         $k \in \f$ and $0 \leq l \leq j$, since 
         $\binom{r-l}{i}$ vanishes,
         by Lucas' theorem. 
        Hence $X^{r-i} Y^{i} \not \in \f$-span of $\lbrace X^{l}(kX+Y)^{r-l}, X^{r-l}Y^{l} : 
         k \in \f,  0 \leq l \leq j\rbrace = X_{r-j,\,r}$ by \Cref{Basis of X_r-i}, which is a 
         contradiction.
         Thus $r_{0} \neq j$, $ j+1, \ldots, i-1$. If 
        $r_{0} \geq i$, then by 
        \Cref{terms equal in filtration r0 >i}, we have  $\Sp(r-r_{0}) \leq j \leq  p-1$, whence
        $\Sp(r-j) = \Sp(r-r_{0})+r_{0}-j \leq r_{0} \leq p-1$. Similarly if  $r_{0}<j$, 
        then by \Cref{X_r-i = X_ r-j}, we have 
        $\Sp(r-j) \leq p-1$, whence
         $\Sp(r-r_{0}) = \Sp(r)-r_{0}= \Sp(r-j) +j- (p-1)-r_{0}  \leq j $, 
         by Lemma~\ref{Sp(r-i), Sp(r-j)} (ii).
       This finishes the proof.
\end{proof}
\begin{lemma}\label{dim not large}
        Let $1 \leq i \leq p-1$, $p \leq r \equiv r_{0} 
        ~ \mathrm{mod}~p$ with $0 \leq r_{0}<i$. 
        If  $\Sp(r-i)=p-1$, then $X_{r-i,\,r}/ X_{r-(i-1),\,r} \cong V_{p-1-i} 
        \otimes D^{i}$. 
\end{lemma}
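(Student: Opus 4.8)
The plan is to pin down $X_{r-i,\,r}/X_{r-(i-1),\,r}$ by sandwiching it between two surjections: the one from a principal series provided by \Cref{induced and successive}, which limits the possible Jordan--H\"older factors, and the one of \eqref{surjection3} obtained from multiplication by $V_1$, which after a Clebsch--Gordan decomposition limits them further; the only common possibility will be $V_{p-1-i}\otimes D^i$. The argument goes by induction on $i$. First note the standing consequences of the hypotheses: since $r_0<i$ and $\Sigma_p(r-i)=p-1$, \Cref{Sp(r-i), Sp(r-j)}(ii) gives $r_1\neq 0$ and $\Sigma_p(r)=i$, so $r\equiv i\bmod(p-1)$ and $r=p$ forces $i=1$. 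Moreover $X_{r-i,\,r}/X_{r-(i-1),\,r}$ is nonzero: for $i=1$ (so $r_0=0$, $r\geq p$) this is \cite[Lemma 4.1]{BG15}, and for $i\geq 2$ it follows from \Cref{final X_r-i = X_r-j} applied with $j=i-1$, because $r_0\leq i-2$ would force $\Sigma_p(r-(i-1))=\Sigma_p(r-i)+1=p$ by \Cref{Sp(r-i), Sp(r-j)}(i), contradicting the criterion there.

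For the base case $i=1$ one argues directly: then $r=p$, $V_p^{(1)}=0$, so $V_p\cong\ind_B^\Gamma(\chi_2^p)=\ind_B^\Gamma(\chi_1^{p-1}\chi_2)$, which by \Cref{Structure of induced} has socle $\cong V_1$ and $V_p/(\mathrm{socle})\cong V_{p-2}\otimes D$, irreducible. Since $X_{p,\,p}=\langle X^p\rangle=\mathbb{F}_p\text{-span}\{X^p,Y^p\}$ is $\Gamma$-equivariantly isomorphic to $V_1$ (via $X\mapsto X^p$, $Y\mapsto Y^p$, using Frobenius), it equals this socle, so $X_{p-1,\,p}/X_{p,\,p}$ is a nonzero submodule of the irreducible $V_{p-2}\otimes D$, hence equals $V_{p-2}\otimes D=V_{p-1-1}\otimes D^{1}$.

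Now the inductive step, $i\geq 2$. By \Cref{induced and successive}, $X_{r-i,\,r}/X_{r-(i-1),\,r}$ is a quotient of $\ind_B^\Gamma(\chi_1^{r-i}\chi_2^i)=\ind_B^\Gamma(\chi_1^{p-1}\chi_2^i)$ (as $r-i\equiv 0\bmod(p-1)$), whose Jordan--H\"older factors, by \Cref{Structure of induced}, are $V_i$ and $V_{p-1-i}\otimes D^i$, each once. On the other hand \eqref{surjection3} gives a surjection
\begin{align*}
  \frac{X_{r-i,\,r-1}}{X_{r-(i-1),\,r-1}}\otimes V_1\ \twoheadrightarrow\ \frac{X_{r-i,\,r}}{X_{r-(i-1),\,r}},
\end{align*}
where $X_{r-i,\,r-1}/X_{r-(i-1),\,r-1}$ is the successive monomial quotient of $V_{r-1}$ at index $i-1$ and $\Sigma_p\bigl((r-1)-(i-1)\bigr)=\Sigma_p(r-i)=p-1$. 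Its constant digit is $r_0-1$ when $p\nmid r$ (then $(r-1)_0<i-1$, and I invoke the inductive hypothesis for the pair $(r-1,i-1)$) and $p-1$ when $p\mid r$ (then $(r-1)_0=p-1\geq i-1$, and I invoke the formula for the successive quotients in the regime $r_0\geq i$ --- the corollary following \Cref{dimension r_0 > i}, in its middle case since here $\Sigma_p(r-i)=p-1$). Either way $X_{r-i,\,r-1}/X_{r-(i-1),\,r-1}\cong V_{p-i}\otimes D^{i-1}$, and since $2\leq i\leq p-1$, Clebsch--Gordan (\Cref{ClebschGordan}(i), with $m=1\leq p-i=n$ and $m+n=p-i+1\leq p-1$) gives $V_1\otimes V_{p-i}\cong V_{p-i+1}\oplus V_{p-i-1}\otimes D$, so the left-hand side is the semisimple module $V_{p-i+1}\otimes D^{i-1}\oplus V_{p-1-i}\otimes D^i$.

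Finally, being a quotient of a semisimple module, $X_{r-i,\,r}/X_{r-(i-1),\,r}$ is a direct sum of a sub-collection of $\{V_{p-i+1}\otimes D^{i-1},\,V_{p-1-i}\otimes D^i\}$; being a quotient of $\ind_B^\Gamma(\chi_1^{p-1}\chi_2^i)$, none of its Jordan--H\"older factors lies outside $\{V_i,\,V_{p-1-i}\otimes D^i\}$. A short check (the $p(p-1)$ twisted symmetric powers $V_a\otimes D^b$ being pairwise non-isomorphic) shows $V_{p-i+1}\otimes D^{i-1}$ is isomorphic to neither $V_i$ nor $V_{p-1-i}\otimes D^i$ for $2\leq i\leq p-1$; hence $X_{r-i,\,r}/X_{r-(i-1),\,r}$ is $0$ or $V_{p-1-i}\otimes D^i$, and, being nonzero, equals $V_{p-1-i}\otimes D^i$. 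The main obstacle is organizational rather than deep: one must carefully track, in the inductive step, whether the reduced pair $(r-1,i-1)$ sits in the regime $r_0\geq i$ or $r_0<i$, and verify that the hypothesis $\Sigma_p(r-i)=p-1$ is precisely what keeps \Cref{Sp(r-i), Sp(r-j)} and the middle case of the $r_0\geq i$ formula applicable throughout; the remaining non-isomorphism check among the three relevant irreducibles is finite and routine.
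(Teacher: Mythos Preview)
Your argument is correct but takes a genuinely different route from the paper. The paper proceeds directly, without induction: since $r-i\equiv 0\bmod(p-1)$, it invokes the polynomial identity $\sum_{k\in\mathbb{F}_p} X(kX+Y)^{s-1}=-X^s$ from \cite[Lemma~3.2]{BG15} with $s=r-i+1$, multiplies by $X^{i-1}$ to get $\sum_{k\in\mathbb{F}_p} X^i(kX+Y)^{r-i}=-X^r$, and observes that this element is exactly the image under $\psi_i$ of the generator of the socle $V_i\otimes D^{p-1}\hookrightarrow\ind_B^\Gamma(\chi_2^i)$. Thus the socle maps into $X_{r,\,r}\subseteq X_{r-(i-1),\,r}$, so $\psi_i$ factors through the cosocle $V_{p-1-i}\otimes D^i$; the nonvanishing is then argued via \Cref{final X_r-i = X_r-j} just as you do. Your approach replaces this single explicit identity by an induction on $i$, bootstrapping through \eqref{surjection3} and a Clebsch--Gordan decomposition to manufacture a second list of allowed Jordan--H\"older factors that excludes $V_i$. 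This is longer --- you must split into $p\mid r$ and $p\nmid r$, and in the former case appeal to the corollary after \Cref{dimension r_0 > i} --- but it is self-contained within the paper's own results and avoids the external polynomial identity from \cite{BG15}.
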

\begin{proof}
       For any integer $t \in \lbrace p, \ldots , p+i-2 \rbrace$
       one checks that $\Sp(t-i) = t-i < p-1$. 
       So the condition  $\Sp(r-i) = p-1$ implies that
       $r \geq p+i-1$.
       Let $s= r-i+1$. Then $s \geq p$, $\Sp(s-1)=p-1$
       and $s=(s-1)+1 \equiv \Sp(s-1)+1 \equiv 1 $ mod $(p-1)$. 
       Applying  \cite[Lemma 3.2]{BG15} for $s$, we get
       $$
            \sum_{k=0}^{p-1} X(kX+Y)^{s-1} = -X^{s}.  
       $$
        Multiplying the above equation by $X^{i-1}$ 
        we get
       \begin{align}\label{Sp(r-i) =p-1 relation}
             \sum_{k=0}^{p-1} X^{i}(kX+Y)^{r-i} = -X^{r}.  
      \end{align}
       Since $r-i \equiv \Sp(r-i)= p-1 \equiv 0$ mod $(p-1)$, we have
      $\ind_{B}^{\Gamma}(\chi_{1}^{r-i}\chi_{2}^{i}) = 
       \ind_{B}^{\Gamma}(\chi_{2}^{i})$.  Using 
       \Cref{Structure of induced} and \Cref{induced and successive},
       we get
        \[
             \begin{tikzcd}
                 0 \arrow[r, rightarrow] & V_{i} \otimes  D^{p-1} \arrow[r, rightarrow]
                  & \ind_{B}^{\Gamma} (\chi_{2}^{i}) \arrow[r, rightarrow]
                   \arrow[d,  twoheadrightarrow, "\psi_{i}"] & V_{p-1-i} \otimes D^{i} 
                   \arrow[r, rightarrow] & 0. \\
                  &  & X_{r-i,\,r}/X_{r-(i-1),\,r}  & &
             \end{tikzcd}    
       \]
        By \Cref{Structure of induced} (i) (for $l=0$), we have 
         $\sum\limits_{k \in \f}^{} [ \begin{psmallmatrix} 
        k & 1 \\ 1 & 0 \end{psmallmatrix} , e_{\chi_{2}^{i}}] $ 
        is an element of
         $V_{i} \otimes  D^{p-1}  \hookrightarrow 
        \ind_{B}^{\Gamma} (\chi_{2}^{i})$. 
         Also by \Cref{induced and successive}, \eqref{Sp(r-i) =p-1 relation}
         and  \Cref{first row filtration} we see that
        \[
           \psi_{i} \bigg( \sum_{k \in \f}^{} [\begin{psmallmatrix} 
        k & 1 \\ 1 & 0 \end{psmallmatrix} , e_{\chi_{2}^{i}}]  \bigg)
         = \sum_{k=0}^{p-1} X^{i}(kX+Y)^{r-i} = -X^{r} \in X_{r}
         \subseteq X_{r-(i-1)}.
        \]
        Thus
        we get the composition $V_{i} \otimes  D^{p-1}  \hookrightarrow 
        \ind_{B}^{\Gamma} (\chi_{1}^{r-i}\chi_{2}^{i}) \twoheadrightarrow 
        X_{r-i,\,r}/X_{r-(i-1),\,r}$ is the zero map. Hence
        $V_{p-1-i} \otimes D^{i} \twoheadrightarrow X_{r-i,\,r}/X_{r-(i-1),\,r}$.
        If $r_{0} =i-1$, then $X_{r-i,\,r}/ X_{r-(i-1),\,r} \neq 0$
        by \Cref{final X_r-i = X_r-j}. If $r_{0}< i-1 $, then
        $\Sp(r-(i-1)) = \Sp(r-i)+1 =p$ by Lemma~\ref{Sp(r-i), Sp(r-j)} (i),
        whence $X_{r-i,\,r}/ X_{r-i-1,\,r} \neq 0$,
        by \Cref{final X_r-i = X_r-j}.
        Therefore  $X_{r-i,\,r}/ X_{r-(i-1),\,r} \cong V_{p-1-i} \otimes D^{i}$
        in either case.
\end{proof}
Next we prove that if $\Sp(r-i)> p-1$, then 
$X_{r-i,\,r}/X_{r-(i-1),\,r}$ is isomorphic to the principal series 
representation $\ind_{B}^{\Gamma}(\chi_{1}^{r-i}\chi_{2}^{i})$.
 \begin{lemma}\label{Sp(r-i) large}
       Let $1 \leq i \leq p-1$, $(i+1)(p+1) \leq r   \equiv r_{0} \mod p$ with $0 \leq r_{0} <i$. 
       If $\Sp(r-i)>p-1$, then $X_{r-i,\,r}/X_{r-(i-1),\,r} 
       \cong \ind_{B}^{\Gamma}(\chi_{1}^{r-i}\chi_{2}^{i})$. As a consequence,
          $\dim X_{r-i,\,r}/X_{r-(i-1),\,r} = p+1$.
\end{lemma}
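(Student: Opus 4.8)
The plan is to realize $X_{r-i,\,r}/X_{r-(i-1),\,r}$ as the \emph{whole} principal series, via a dimension count. By \Cref{induced and successive} there is a $\Gamma$-linear surjection $\psi_{i}\colon \ind_{B}^{\Gamma}(\chi_{1}^{r-i}\chi_{2}^{i}) \twoheadrightarrow X_{r-i,\,r}/X_{r-(i-1),\,r}$ carrying $[\gamma, e_{\chi}]$ to $\gamma\cdot X^{r-i}Y^{i}$, and since the source has dimension $p+1$, it is enough to prove $\psi_{i}$ is injective; the dimension claim then follows. By the Bruhat decomposition the $p+1$ functions $[1,e_{\chi}]$ and $[\begin{psmallmatrix} y & 1 \\ 1 & 0 \end{psmallmatrix},\, e_{\chi}]$, $y \in \f$, form an $\f$-basis of $\ind_{B}^{\Gamma}(\chi_{1}^{r-i}\chi_{2}^{i})$, and $\psi_{i}$ sends them respectively to $X^{r-i}Y^{i}$ and $X^{i}(yX+Y)^{r-i}$ modulo $X_{r-(i-1),\,r}$. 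Thus the whole point is to show that the $p+1$ polynomials $X^{r-i}Y^{i}$ and $X^{i}(yX+Y)^{r-i}$, $y\in\f$, are $\f$-linearly independent modulo $X_{r-(i-1),\,r}$.

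To do this, suppose $c_{\infty}X^{r-i}Y^{i} + \sum_{y\in\f} c_{y}X^{i}(yX+Y)^{r-i} \in X_{r-(i-1),\,r}$. Using the explicit $\f$-generating set of $X_{r-(i-1),\,r}$ from \Cref{Basis of X_r-i}, rewrite the left side as $\sum_{l=0}^{i-1}\sum_{k\in\f} a_{k,l}X^{l}(kX+Y)^{r-l} + \sum_{l=0}^{i-1} b_{l}X^{r-l}Y^{l}$, and compare the coefficients of each monomial $X^{r-t}Y^{t}$. With $C_{t} := \sum_{y\in\f} c_{y}y^{r-i-t}$ and $A_{t,l} := \sum_{k\in\f} a_{k,l}k^{r-l-t}$ (each depending only on $t$ modulo $p-1$, apart from the usual separate bookkeeping for the zero term, i.e., for the basis vectors $X^{l}Y^{r-l}$), the comparison yields, for $0 \le t \le r-i$, relations of the shape
\[
   \binom{r-i}{t}C_{t} + c_{\infty}\,\delta_{t,i} \;=\; \sum_{l=0}^{i-1}\binom{r-l}{t}A_{t,l} + \sum_{l=0}^{i-1} b_{l}\,\delta_{t,l},
\]
supplemented by the relations $\sum_{l}\binom{r-l}{t}A_{t,l} = 0$ obtained for $t \ge r-i+1$.

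Solving this system is the heart of the matter, and is where the hypotheses $r_{0}<i$, $\Sigma_{p}(r-i) > p-1$ and $r \ge (i+1)(p+1)$ enter. From $\Sigma_{p}(r-i)\ge p$ and Lemma~\ref{Sp(r-i), Sp(r-j)}~(ii) one gets $r_{1}\neq 0$ and $\Sigma_{p}(r)$ large enough that \Cref{choice of s} provides, in each residue class modulo $p$, sufficiently many exponents $t$ bounded away from $0$ and from $r$, with prescribed base-$p$ digit sum and with $\binom{r}{t}\not\equiv 0 \bmod p$. Substituting such $t$ into the relations and simplifying every $\binom{r-l}{t}$ by Lucas' theorem reduces each residue class to a small linear system in the $A_{t,l}$ whose matrix is, after the standard elementary row/column operations, one of the binomial matrices shown invertible modulo $p$ in \Cref{matrix det}~(i),(ii); hence all $A_{t,l}$ vanish. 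Reading off the remaining Kronecker-delta terms then forces $c_{\infty}=0$, then all $b_{l}=0$, then all $a_{k,l}=0$ (Vandermonde over $k\in\f$), and finally $C_{t}=0$ for $t$ running over a transversal of $\Z/(p-1)\Z$ together with $t=r-i$, so all $c_{y}=0$ (Vandermonde over $y\in\f$). This is the same circle of ideas used in the proofs of \Cref{dim large 1} and \Cref{X_r-i = X_ r-j}; the genuinely delicate point, since now $r_{0}<i$, is that the relevant block of indices $l\le i-1$ reaches beyond $r_{0}$, so the Lucas-theorem analysis of which $\binom{r-l}{t}$ survive modulo $p$ must be done carefully. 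I do not expect the split/non-split distinction for $\ind_{B}^{\Gamma}(\chi_{1}^{r-i}\chi_{2}^{i})$ to intervene, precisely because the argument is by comparing dimensions rather than by tracking Jordan--H\"older factors. Once $\psi_{i}$ is injective it is an isomorphism, giving $X_{r-i,\,r}/X_{r-(i-1),\,r}\cong \ind_{B}^{\Gamma}(\chi_{1}^{r-i}\chi_{2}^{i})$ of dimension $p+1$.
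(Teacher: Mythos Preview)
Your route differs from the paper's and, as sketched, leaves the decisive step unjustified. You aim for full linear independence of the $p+1$ basis images modulo $X_{r-(i-1),\,r}$; the resulting relations $\binom{r-i}{t}C_{t}+c_{\infty}\delta_{t,i}=\sum_{l=0}^{i-1}\binom{r-l}{t}A_{t,l}+\cdots$ genuinely couple the $C_{t}$'s (coming from your unknowns $c_{y}$) to the $A_{t,l}$'s, and the assertion ``hence all $A_{t,l}$ vanish'' is not explained. To make it work you would need, in \emph{each} of the $p-1$ residue classes modulo $p-1$, enough test exponents $t$---either with $\binom{r-i}{t}\equiv 0$ or with one extra equation to absorb $C_{t}$---while keeping all the resulting coefficient matrices invertible. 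That may be feasible, but it is substantially more than what you have written, and amounts to redoing the delicate case analysis $p-1$ times over.

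The paper avoids this by exploiting, contrary to your expectation, the Jordan--H\"older structure of the principal series. By \Cref{Structure of induced} the elements $\sum_{\lambda\in\f}\bigl[\begin{psmallmatrix}\lambda&1\\1&0\end{psmallmatrix},e_{\chi}\bigr]$ and $\sum_{\lambda\in\f^{\ast}}\bigl[\begin{psmallmatrix}\lambda&1\\1&0\end{psmallmatrix},e_{\chi}\bigr]$ witness the socle and the cosocle respectively, so $\psi_{i}$ is injective once the two \emph{specific} polynomials $F(X,Y)=\sum_{\lambda\in\f}X^{i}(\lambda X+Y)^{r-i}$ and $G(X,Y)=\sum_{\lambda\in\f^{\ast}}X^{i}(\lambda X+Y)^{r-i}$ are shown to lie outside $X_{r-(i-1),\,r}$. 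The payoff is that $F$ and $G$ have non-zero coefficient of $X^{t}Y^{r-t}$ only for $t\equiv i\bmod(p-1)$, so expressing $F$ (or $G$) via \Cref{Basis of X_r-i} and comparing coefficients produces a system in the $i$ quantities $C_{l}=\sum_{k}k^{i-l}c_{k,l}$ alone---a \emph{single} residue class, no auxiliary unknowns from the left-hand side. The paper first forces $C_{r_{0}+1}=\cdots=C_{i-1}=0$ via an anti-triangular system built from \Cref{choice of s}, then derives a contradiction by a three-way split on the base-$p$ digits of $r$ (the cases $r_{1}=0$; $\sum_{j\ge 1}r_{j}\ge i+1$; and $r_{1}\neq 0$ with $\sum_{j\ge 1}r_{j}\le i$), each dispatched with one well-chosen test exponent. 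So the module-theoretic reduction is exactly what keeps the combinatorics tractable.
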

\begin{proof}
        By \Cref{induced and successive}, we have
        $\psi_{i} : \ind_{B}^{\Gamma}(\chi_{1}^{r-i}\chi_{2}^{i}) 
        \twoheadrightarrow X_{r-i,\,r}/X_{r-(i-1),\,r} $. 
        We claim that $\psi_{i}$ is an isomorphism. As $\psi_{i}$ is
        surjective, it is 
        enough to show that  $\psi_{i}$ is injective. By 
        \Cref{Structure of induced} (i) (for $l=0$) and (ii) (for $l=p-1$), 
        we know that
        $\sum_{\lambda \in \f}  \begin{psmallmatrix} 
        \lambda & 1 \\ 1 &0  \end{psmallmatrix} [1, e_{\chi_{1}^{r-i}\chi_{2}^{i}}] $
        and   $ \sum_{\lambda \in \f^{\ast}}  \begin{psmallmatrix} 
        \lambda & 1 \\ 1 &0  \end{psmallmatrix} [1, e_{\chi_{1}^{r-i}\chi_{2}^{i}}] $ are 
        elements  of the two sub-quotients   $V_{[2i-r]} \otimes D^{r-i}$
        and $V_{p-1 - [2i-r]} \otimes D^{i}$ respectively of 
        $\ind_{B}^{\Gamma}(\chi_{1}^{r-i}\chi_{2}^{i})$.   Hence, it is enough
        to prove that these elements have non-zero image under 
        $\psi_{i}$.  Indeed, if
           
       \[ 
           F(X,Y) := \sum_{\lambda \in \f}  \begin{pmatrix} 
            \lambda & 1 \\ 1 &0  \end{pmatrix} \psi_{i} 
            ([1, e_{\chi_{1}^{r-i}\chi_{2}^{i}}])  =
            \sum_{\lambda \in \f} X^{i}( \lambda X+ Y)^{r-i} 
            \in  X_{r-(i-1),\,r},
      \]
     then by \Cref{Basis of X_r-i}, there exist 
     $A_{l}, B_{l}$ and $c_{k,l} \in \f$  for $0 \leq l \leq  i-1$ 
     and $1 \leq k \leq p-1$, such that 
     \begin{align}\label{Relation r-j large}
            F(X,Y)= \sum_{l=0}^{i-1} A_{l} X^{l} Y^{r-l}+ \sum_{l=0}^{i-1} B_{l} X^{r-l}Y^{l}
            - \sum_{l=0}^{i-1} \sum_{k=1}^{p-1} c_{k,l} X^{l} (kX+Y)^{r-l}.
     \end{align}
     Observe that the coefficient of $X^{t}Y^{r-t}$ in $F(X,Y)$ 
     equals 
     $\sum_{\lambda \in \f } \binom{r-i}{r-t}  \lambda^{t-i}$ which is  zero if 
     $t \not \equiv i$ mod $(p-1)$, by \eqref{sum fp}. 
      Therefore \eqref{Relation r-j large} reduces to 
     \begin{align}\label{Relation r-j large 1}
            F(X,Y) =
            \sum_{l=0}^{i-1} B_{l} X^{r-l}Y^{l}
             -\sum_{l=0}^{i-1}  C_{l} \sum_{\substack {l \leq j \leq r \\ 
           j \equiv i~  \text{mod}~ (p-1)}} \binom{r-l}{r-j} X^{j}Y^{r-j},
     \end{align}
     where $C_{l} = \sum\limits_{k=1}^{p-1} k^{i-l} c_{k,l} $.
     Comparing the coefficients of $X^{t}Y^{r-t}$ on both sides of 
     \eqref{Relation r-j large 1}, we get
     \begin{align}\label{compare coeff r-j large}
           \binom{r-i}{t-i} = \sum_{l=0}^{i-1}  C_{l}  \binom{r-l}{t-l},
           ~ \forall~  i < t \leq r-i ~\mathrm{and}~
           t \equiv i ~\mathrm{mod}~(p-1).
     \end{align}
     Let $r = r_{m}p^{m}+ \cdots +r_{1}p+r_{0}$ be the base  $p$-expansion of $r$. 
     Since $r-(r_{0}+1) \equiv p-1$ mod $p$ and $r-(r_{0}+1)  < r$, note that the
     base $p$-expansion of $r-(r_{0}+1)$  is  of the form
     $r_{m}'p^{m}+ \cdots +r_{1}'p+p-1$ for some $r_{m}', \ldots, r_{1}' $. As 
     $r_{0}<i$, we have the base $p$-expansion of $r-i$ is 
     given by $r_{m}'p^{m}+ \cdots + r_{1}'p+(p+r_{0}-i)$. Then the condition 
     $\Sp(r-i) > p-1$ implies $r_{m}'+\cdots+ r_{1}' \geq i-r_{0}$.
     We now show that $C_{r_{0}+1}, \ldots , C_{i-1} =0 $.
     Note that this statement is vacuous if $i=r_{0}+1$.
     So we may assume $i> r_{0}+1$. 
      For every  $1 \leq u < i-r_{0}$, let $b_{u}=i-u$. Clearly 
      $0 \leq r_{0} \leq b_{u} \leq i-1 \leq p-1$.
      By \Cref{choice of s} (applied for $r-(r_{0}+1)$), for every 
      $1 \leq u < i-r_{0}$, we can find an integer $s_{u}$
      such that $p \leq s_{u} \leq r-r_{0}-1-p \leq r-p$,
      $s_{u}  \equiv b_{u}= i-u$ mod $p$, $\Sp(s_{u}) = b_{u}+u=i$ and 
      $\binom{r-(r_{0}+1)}{s_{u}} \not \equiv 0 \mod p$. 
      Noting that $s_{u} \equiv \Sp(s_{u}) =i$ mod $(p-1)$, by 
      \eqref{compare coeff r-j large}, we get
      \[
          \binom{r-i}{s_{u}-i} 
          = \sum_{l=0}^{i-1} C_{l} \binom{r-l}{s_{u}-l}.
      \]
    Since $s_{u} \equiv i-u $ mod $p$ and $i-u > r_{0}$,
    for $1 \leq u < i-r_{0}$, 
     by Lucas' theorem, we have 
     \begin{align*} 
                \binom{r-l}{s_{u}-l} &\equiv  
                \begin{cases}
                      (*) \binom{r_{0}-l}{i-u-l}, & \mathrm{if} ~0 \leq l \leq r_{0},\\
                       (*) \binom{p+r_{0}-l}{p+i-u-l}, & \mathrm{if}~i-u <l \leq i,
                \end{cases} \\
                &  \equiv  0 \mod p,  
       \end{align*}
       where $(*)$ denotes the contribution from the higher order terms in 
       the base $p$-expansion.  Hence, for every $1 \leq u < i-r_{0}$, we have
      \begin{align}\label{Compare r-j large}
        \sum_{l=r_{0}+1}^{i-u} C_{l} \binom{r-l}{s_{u}-l} =0.
      \end{align}
      Since $s_{u} \equiv i-u$ mod $p$ and $s_{u} \leq r$,  
      we get the base $p$-expansion of $s_{u}$ is given by
       $s_{u} =  s_{u,m} p^{m}+ \cdots + s_{u,1}p+ (i-u)$,
      for some $s_{u,m}, \ldots , s_{u,1}$. By Lucas' theorem 
      and  the choice of $s_{u}$, for $1 \leq u <i-r_{0}$,
      we have
      \begin{align*}
            \binom{r-i+u}{s_{u}-i+u}& \equiv \binom{r_{m}'}{s_{u,m}} \cdots 
            \binom{r_{1}'}{s_{u,1}} \binom{p+r_{0}-i+u}{0} \mod p \\
            &\equiv \binom{r-r_{0}-1}{s_{u}} \binom{p-1}{i-u}^{-1} \mod p \\
            &\not \equiv 0 \mod p.
      \end{align*}
      Writing \eqref{Compare r-j large} in matrix form and noting that the 
      anti-diagonal entries $\binom{r-i+u}{s_{u}-i+u} \not \equiv 0$ mod $p$ 
      for $1 \leq u < i-r_{0}$, we see
      that $C_{r_{0}+1}= \cdots = C_{i-1} =0$.  Thus 
      \eqref{compare coeff r-j large} reduces to
      \begin{align}\label{compare coeff r-j large 1}
           \binom{r-i}{r-t} =  \sum_{l=0}^{r_{0}}  C_{l}  \binom{r-l}{r-t}, 
           ~ \forall~  i < t \leq r-i ~\mathrm{and}~
           t \equiv i ~\mathrm{mod}~(p-1).
      \end{align}
      We now show that the above system of equations leads to a contradiction
      by considering various cases.
      
      If $r_{1}=0$, then $m \geq 2$ and $r \geq p^{2}$. Let 
      $j>1$ be the smallest positive integer such that $r_{j} \neq 0$.
      Then 
      \[ r-i= r_{m}p^{m}+ \cdots +r_{j+1}p^{j+1}+  (r_{j}-1)p^{j}+
      (p-1)p^{j-1}+ \cdots+(p-1)p+ (p+r_{0}-i).
      \]
      Let $s=(p-2)p+i+1 \equiv i$ mod $(p-1)$. 
      Since $i \leq p-1$, note that $p \leq s \leq p^{2} -p \leq r-p$.  
      By \eqref{compare coeff r-j large 1},
      we have
      \[
           \binom{r-i}{s-i} =\sum_{l=0}^{r_{0}} C_{l} \binom{r-l}{s-l}.
      \]
      By Lucas' theorem, $\binom{r-i}{s-i} \equiv  \binom{p-1}{p-2} 
      \binom{p+r_{0}-i} {1} \not \equiv 0$ mod $p$, and 
      for $0 \leq l \leq r_{0}$, we have
      \begin{align*}
      \binom{r-l}{s-l} & \equiv 
      \begin{cases} 
       \binom{r_{1}}{p-2}\binom{r_{0}-l}
      {i+1-l}, & \mathrm{if}~ i<p-1, \\
       \binom{r_{1}}{p-1}\binom{r_{0}-l}
      {0} ,& \mathrm{if}~ i = p-1, l=0,  \\
      \binom{r_{1}}{p-2}\binom{r_{0}-l}
      {p-l}, & \mathrm{if}~ i = p-1, l \neq 0,  \\
      \end{cases} \\
      & \equiv 0 ~\mathrm{mod}~ p ~ ( ~\because r_{1}=0, ~ r_{0}<p). 
      \end{align*}
      This leads to a contradiction.
      
      If  $r_{m}+ \cdots + r_{1} \geq i+1$, then by \Cref{choice of s}
      (applied for $r-i$, $u=i$ and $b=p-i-1 \leq p+r_{0}-i$),
      there exists  $p \leq s \leq r-i-p$, $s \equiv p-i-1$ mod $p$,   
      $\Sp(s) = p-1$ and  $\binom{r-i}{s} \not
      \equiv 0$ mod $p$.  Taking $t=s+i$ in
      \eqref{compare coeff r-j large 1}, we get
      $$
              \binom{r-i}{s} =\sum_{l=0}^{r_{0}} C_{l} \binom{r-l}{s+i-l} .
      $$
      As $s+i-l \equiv p-1-l \mod p$, for $0 \leq l \leq r_{0}$, we
      have  $0 \leq r_{0}-l < i-l \leq p-1-l$, whence by Lucas' theorem,
      $\binom{r-l}{s+i-l}  \equiv (*) \binom{r_{0}-l}{p-1-l} \equiv 0$
       mod $p$, where $(*)$ denotes the contribution form higher order terms in the 
       base $p$-expansion. However by the choice of $s$ we have 
      $\binom{r-i}{s} \not \equiv 0$ mod $p$. Again we obtain a contradiction.
      
      Finally, suppose $r_1 \neq 0 $ and $r_{m}+ \cdots + r_{1} \leq i$. 
      So $r-i = r_{m}p^{m}+ \cdots + r_{2}p^{2}+(r_{1}-1)p+p+r_{0}-i$ in this case.
      By the
       hypotheses $\Sp(r-i) > p-1$ and $r_{1} \neq 0 $, it follows $i-r_{0} < r_{m}+
        \cdots + r_{1}$.
      Hence $r_{m}+ \cdots + r_{1} = i+1-r_{0}+w$, for some $0 \leq w \leq r_{0}-1$.
      If $m=1$, then $(i+1)(p+1) \leq r = r_{1}p+r_{0} = (i+1-r_{0}+w)p+r_{0} < ip+i$
      which is not possible. So $m \geq 2$.
      Let $s = r-w-p$ and $s' = r-w-p^{m}$. Since $\Sp(r) \equiv r \mod (p-1)$, we 
      see that $s$, $s' \equiv i \mod (p-1)$. Since $r_{m} , r_{1} \geq 1$ we have 
      $r \geq p^{m}+ p+ r_{0} \geq p^{m}+p+w$, so $p \leq s,s' \leq r-p$.
      By \eqref{compare coeff r-j large 1} with $t = s$, $s'$, we get 
      $$
              \binom{r-i}{s-i} =\sum_{l=0}^{r_{0}} C_{l} \binom{r-l}{s-l} , \quad
              \binom{r-i}{s'-i}= \sum_{l=0}^{r_{0}} C_{l} \binom{r-l}{s'-l} .
      $$
      By Lucas' theorem,  $\binom{r-l}{s-l} \equiv \binom{r_{m}}{r_{m}} \cdots
       \binom{r_{2}}{r_{2}} \binom{r_{1}}{r_{1}-1} \binom{r_{0}-l}{r_{0}-w-l} 
       \equiv r_{1}  \binom{r_{0}-l}{r_{0}-w-l} \mod p$ if $0 \leq l \leq r_{0}-w$. 
       Similarly,  $\binom{r-l}{s'-l} \equiv \binom{r_{m}}{r_{m}-1} \cdots
       \binom{r_{2}}{r_{2}} \binom{r_{1}}{r_{1}} \binom{r_{0}-l}{r_{0}-w-l} 
       \equiv r_{m}  \binom{r_{0}-l}{r_{0}-w-l} \mod p$ if $0 \leq l \leq r_{0}-w$. 
       If
       $r_{0}-w +1 \leq l  \leq r_{0}$, then $s-l$, $s'-l \equiv p+r_{0}-w-l$ 
       mod $p$ and  $p+r_{0}-w-l> r_{0}-l$, so
       it follows from  Lucas' theorem that $\binom{r-l}{s-l}$, $\binom{r-l}{s'-l} \equiv 0$
       mod $p$, for $r_{0}-w+1 \leq l \leq r_{0}$. Therefore
       $$
              \binom{r-i}{s-i} =r_{1}\sum_{l=0}^{r_{0}-w} C_{l} 
              \binom{r_{0}-l}{r_{0}-w-l}, \quad
              \binom{r-i}{s'-i}=r_{m} \sum_{l=0}^{r_{0}-w} C_{l} \binom{r_{0}-l}{r_{0}-w-l}.
      $$
     Thus  $\binom{r-i}{s-i} \binom{r-i}{s'-i}^{-1} \equiv r_{1} r_{m}^{-1} \mod p$. By Lucas' theorem,
       $$
         \binom{r-i}{s-i} \binom{r-i}{s'-i}^{-1} \equiv  r_{m}^{-1} (r_{1}-1) \mod p
       $$  
     This again leads to contradiction. 
     
      Next  consider $G(X,Y) :=  \sum_{\lambda \in \fstar} \begin{psmallmatrix} 
            \lambda & 1 \\ 1 &0  \end{psmallmatrix} \psi_{i} 
            ([1, e_{\chi_{1}^{r-i}\chi_{2}^{i}}]) =    \sum_{\lambda \in \fstar}
     X^{i}( \lambda X+ Y)^{r-i} $ and note that $F(X,Y)-G(X,Y) = X^{i}Y^{r-i}$.
     So the coefficient of $X^{s}Y^{r-s}$ in $F(X,Y)$ and $G(X,Y)$ agree if $s \neq i $.
     In the proof above for $F(X,Y)$,
      we  only compared the coefficients of $X^{s}Y^{r-s}$
     for $s \neq i$. So imitating 
       the proof above for $F(X,Y)$,  we get  $G(X,Y) \not \in X_{r-(i-1),r}$. 
      This finishes the proof of the lemma.          
\end{proof}
Putting together all the results obtained so far, we have the following result.
\begin{corollary}\label{successive  quotients r_0 < i}
        Let  $p \geq 3$, $1 \leq i \leq p-1$, $(i+1)(p+1)<r$ and $r_{0} <i$. Then
        \begin{align*}
               \frac{X_{r-i,\,r}}{ X_{r-(i-1),\,r}} \cong
               \begin{cases}
                       (0), &\mathrm{if} ~ \Sp(r-i)<p-1,  \\
                       V_{p-1-i} \otimes D^{i},  &\mathrm{if} ~ \Sp(r-i)= p-1, \\
                       \ind_{B}^{\Gamma}(\chi_{1}^{r-i} \chi_{2}^{i}),  
                       &\mathrm{if} ~ \Sp(r-i)>p-1.
               \end{cases}
        \end{align*}
\end{corollary}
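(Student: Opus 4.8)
The plan is to prove the corollary by a straightforward case split on $\Sp(r-i)$, invoking one of the three preceding lemmas in each case; no new argument is needed. Note first that the standing hypothesis $(i+1)(p+1) < r$ forces $r > p$ (indeed $(i+1)(p+1) \geq 2(p+1) > p$), so it is at least as strong as the size hypotheses $r \geq p$ or $(i+1)(p+1) \leq r$ appearing in those lemmas, and the hypothesis $r_{0} < i$ is carried verbatim.

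Concretely: if $\Sp(r-i) < p-1$, then \Cref{r-i small} (applicable since $1 \leq i \leq p-1$, $r \geq p$, $r_{0} < i$) gives $X_{r-(i-1),\,r} = X_{r-i,\,r}$, so the quotient is $(0)$. If $\Sp(r-i) = p-1$, then \Cref{dim not large} (applicable since $p \leq r$ and $0 \leq r_{0} < i$) gives $X_{r-i,\,r}/X_{r-(i-1),\,r} \cong V_{p-1-i} \otimes D^{i}$. If $\Sp(r-i) > p-1$, then \Cref{Sp(r-i) large} (applicable since $(i+1)(p+1) \leq r$ and $0 \leq r_{0} < i$) gives $X_{r-i,\,r}/X_{r-(i-1),\,r} \cong \ind_{B}^{\Gamma}(\chi_{1}^{r-i}\chi_{2}^{i})$. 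Since these three cases are exhaustive, the corollary follows.

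I expect no genuine obstacle here. All the hard inputs are already in place: the linear-algebra-over-$\f$ arguments underlying \Cref{r-i small} and \Cref{X_r-i = X_ r-j}, and especially the case-by-case coefficient comparisons establishing injectivity of the surjection $\psi_{i}$ in \Cref{Sp(r-i) large}. The corollary is pure bookkeeping: one only needs to confirm that the single clean hypothesis $(i+1)(p+1) < r$ implies the (slightly weaker, non-uniform) size hypotheses of the constituent lemmas, which is immediate.
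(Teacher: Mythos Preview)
The proposal is correct and matches the paper's own proof, which simply says the three assertions follow from \Cref{r-i small}, \Cref{dim not large}, and \Cref{Sp(r-i) large} respectively. Your added sentence verifying that the uniform hypothesis $(i+1)(p+1) < r$ dominates the individual size hypotheses of the lemmas is a reasonable bit of extra care, but otherwise this is exactly the paper's argument.
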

 \begin{proof}
         The three assertions follow from
         \Cref{r-i small},
         \Cref{dim not large} and \Cref{Sp(r-i) large} respectively.
 \end{proof}
\begin{lemma}\label{final quotient structure}
        Let  $p\geq 3$, $1 \leq i \leq p-1$, 
        $(i+1)(p+1)<r \equiv r_{0} ~ \mathrm{mod}~p$ with
         $0 \leq r_{0}<i$. Then we have
        \begin{enumerate}[label= \emph{(\roman*)}]
               \item If $\Sp(r-i)>p-1$, then 
                         $X_{r-i,\,r}/ X_{r-r_{0},\,r} \cong V_{i-r_{0}-1}
                          \otimes \ind_{B}^{\Gamma}(\chi_{1}^{r-i}\chi_{2}^{r_{0}+1})$.
               \item  If $\Sp(r-i) \leq p-1$, then
                          $$   0 \rightarrow  V_{\Sp(r-r_{0})-2} \otimes \ind_{B}^{\Gamma}
                                 (\chi_{1} \chi_{2}^{r_{0}+1}) \rightarrow   X_{r-i,\,r}/ X_{r-r_{0},\,r} 
                                  \rightarrow  V_{p-1-\Sp(r)} \otimes D^{\Sp(r)} \rightarrow 0.
                           $$
         \end{enumerate}
\end{lemma}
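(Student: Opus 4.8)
The plan is to filter $X_{r-i,\,r}/X_{r-r_{0},\,r}$ by the images of the intermediate monomial submodules $X_{r-j,\,r}$ for $r_{0}\le j\le i$, read off the graded pieces from \Cref{successive  quotients r_0 < i}, and then pin down the module up to isomorphism by matching dimensions against a surjection produced by \Cref{Succesive quotient}.

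By \Cref{first row filtration} we have $X_{r-r_{0},\,r}\subseteq X_{r-(r_{0}+1),\,r}\subseteq\cdots\subseteq X_{r-i,\,r}$, so $X_{r-i,\,r}/X_{r-r_{0},\,r}$ carries a filtration whose graded pieces are $X_{r-j,\,r}/X_{r-(j-1),\,r}$ for $r_{0}<j\le i$, and by \Cref{Sp(r-i), Sp(r-j)}(i) we have $\Sp(r-j)=\Sp(r-i)+i-j$ for all such $j$. In case (i), where $\Sp(r-i)>p-1$, every such $j$ has $\Sp(r-j)>p-1$, so by \Cref{successive  quotients r_0 < i} each graded piece is a principal series, of dimension $p+1$; hence $\dim X_{r-i,\,r}/X_{r-r_{0},\,r}=(i-r_{0})(p+1)$. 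On the other hand, \Cref{Succesive quotient} applied with $j=r_{0}$ gives a $\Gamma$-surjection $\ind_{B}^{\Gamma}(V_{i-r_{0}-1}\otimes\chi_{1}^{r-i}\chi_{2}^{r_{0}+1})\twoheadrightarrow X_{r-i,\,r}/X_{r-r_{0},\,r}$, and the projection formula identifies the source with $V_{i-r_{0}-1}\otimes\ind_{B}^{\Gamma}(\chi_{1}^{r-i}\chi_{2}^{r_{0}+1})$, which also has dimension $(i-r_{0})(p+1)$. The surjection is therefore an isomorphism, which is (i).

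For case (ii), where $\Sp(r-i)\le p-1$, \Cref{Sp(r-i), Sp(r-j)}(ii) gives $\Sp(r-i)=\Sp(r)+p-1-i$ with $r_{0}+1\le\Sp(r)\le i$, so for $r_{0}<j\le i$ we have $\Sp(r-j)=\Sp(r)+p-1-j$. Writing $s=\Sp(r)$ and applying \Cref{successive  quotients r_0 < i}: the graded piece is $0$ for $s<j\le i$, so $X_{r-i,\,r}=X_{r-s,\,r}$; it is $V_{p-1-s}\otimes D^{s}$ for $j=s$; and it is a principal series of dimension $p+1$ for $r_{0}<j<s$. Hence there is a short exact sequence
\[
 0\to X_{r-(s-1),\,r}/X_{r-r_{0},\,r}\to X_{r-i,\,r}/X_{r-r_{0},\,r}\to V_{p-1-s}\otimes D^{s}\to 0 ,
\]
and it remains to identify the sub. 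Since $r\equiv s\bmod (p-1)$ we have $r-(s-1)\equiv 1\bmod (p-1)$, and $s-1-r_{0}-1=\Sp(r-r_{0})-2$ because $\Sp(r-r_{0})=\Sp(r)-r_{0}$; so \Cref{Succesive quotient} applied with $i$ replaced by $s-1$ and $j=r_{0}$, together with the projection formula, yields a surjection $V_{\Sp(r-r_{0})-2}\otimes\ind_{B}^{\Gamma}(\chi_{1}\chi_{2}^{r_{0}+1})\twoheadrightarrow X_{r-(s-1),\,r}/X_{r-r_{0},\,r}$. Its source and target both have dimension $(\Sp(r-r_{0})-1)(p+1)$ — the target because its $s-1-r_{0}$ graded pieces are the principal series found above — so it is an isomorphism, and substituting it into the exact sequence gives (ii). When $s=r_{0}+1$ the sub is $V_{-1}\otimes(\cdots)=0$ and the sequence degenerates, consistently with the stated convention.

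The only input beyond bookkeeping is the projection formula $\ind_{B}^{\Gamma}(\res_{B}W\otimes\sigma)\cong W\otimes\ind_{B}^{\Gamma}(\sigma)$. The main obstacle is the case-(ii) bookkeeping: one must correctly locate the pivot index $j=s=\Sp(r)$, verify the size hypotheses needed to invoke \Cref{successive  quotients r_0 < i} and \Cref{Succesive quotient} with $i$ replaced by $s$ or $s-1$ (legitimate since $(s+1)(p+1)\le (i+1)(p+1)<r$ and $r_{0}<s$), track the exponents of $\chi_{1},\chi_{2}$ modulo $p-1$, and then observe that the bottom part of the filtration is forced to be the claimed tensor product once the dimension count is carried out.
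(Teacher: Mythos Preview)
Your proof is correct and follows essentially the same route as the paper: filter by the $X_{r-j,\,r}$, read off the graded pieces via \Cref{successive  quotients r_0 < i}, and use the surjection from \Cref{Succesive quotient} together with a dimension count to pin down the quotient (the paper in part~(ii) simply cites part~(i) applied with $i$ replaced by $\Sp(r)-1$, which is exactly your argument for the sub). Two small differences worth noting: you invoke the projection formula $\ind_B^\Gamma(W\otimes\sigma)\cong W\otimes\ind_B^\Gamma\sigma$ explicitly where the paper leaves it implicit, and where you allow the boundary case $\Sp(r)=r_0+1$ to degenerate via the convention $V_{-1}=0$, the paper actually shows this case cannot occur under the hypothesis $(i+1)(p+1)<r$ (since it would force $r=p+r_0<2p$).
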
 
\begin{proof}
        Note that 
       \begin{enumerate}
              \item[(i)] By Lemma~\ref{Sp(r-i), Sp(r-j)} (i), 
               we have $\Sp(r-j)= \Sp(r-i)+i-j > p-1$, for  all
               $r_{0} < j \leq i$. Hence by \Cref{Sp(r-i) large}, we have 
               dim $X_{r-j,r} / X_{r-(j-1),r} = p+1$, for all  $r_{0} < j \leq i$.  
              Thus $\dim X_{r-i,\,r}/ X_{r-r_{0},\,r} =
              \sum\limits_{j=r_{0}+1}^{i} \dim  X_{r-j,\,r} / X_{r-(j-1),\,r} 
              = (i-r_{0})(p+1)=
              ( \dim V_{i-r_{0}-1}) \times 
             ( \dim\ind_{B}^{\Gamma}(\chi_{1}^{r-i} \chi_{2}^{r_{0}+1}))$.
               Now assertion (i) follows from  \Cref{Succesive quotient}.
               \item[(ii)] By Lemma~\ref{Sp(r-i), Sp(r-j)} (ii), we have
               $\Sp(r) =\Sp(r-i)-(p-1)+ i \leq i$ and $\Sp(r) \geq r_{0}+1$. 
               Thus by  Lemma~\ref{Sp(r-i), Sp(r-j)} (i), 
               we have $\Sp(r-\Sp(r)) = \Sp(r-i)+i-\Sp(r) = p-1$ whence
                $ X_{r-\Sp(r), \, r}  = X_{r-i, \,r}$ and 
               $X_{r-\Sp(r), \, r}/ X_{r-(\Sp(r)-1), \, r} \cong V_{p-1-\Sp(r)}
                \otimes D^{\Sp(r)}$ by \Cref{X_r-i = X_ r-j} and  \Cref{dim not large}
                respectively.
              We now claim that the inequality $\Sp(r) \geq r_{0}+1$ is strict.  
              If 
              $\Sp(r) = r_{0}+1$, then $r=p^{n}+r_{0}$
              for some $n \geq 1$.       
               Since $\Sp(r-i) \leq p-1$ and $r_{0}<i$, we have $r=p+r_{0}$. Thus
               $r= p+r_{0} < 2p  \leq (i+1)(p+1)$ which is a contradiction
               as $i \geq 1$. Hence $\Sp(r) > r_{0}+1$.
                So by Lemma~\ref{Sp(r-i), Sp(r-j)} (i), 
                we have $\Sp(r-(\Sp(r)-1)) = \Sp(r-i)+i-\Sp(r)+1 = p$, whence 
               $X_{r-(\Sp(r)-1), \, r} / X_{r-r_{0},\,r} \cong
                V_{\Sp(r)-r_{0}-2} \otimes \ind_{B}^{\Gamma}(\chi_{1} \chi_{2}^{r_{0}+1})$,
                 by  part (i).
                 Putting all these together in the following 
                  exact sequence
                \[
                    0 \rightarrow X_{r-(\Sp(r)-1), \, r} / X_{r-r_{0},\,r} \rightarrow 
                     X_{r-\Sp(r), \, r} / X_{r-r_{0}, \,r} \rightarrow 
                      X_{r-\Sp(r), \, r}/ X_{r-(\Sp(r)-1),\, r}  \rightarrow 0,
                \]
                  part (ii) follows as 
                 the middle term  equals $X_{r-i, \, r} / X_{r-r_{0}, \,r}$.  \qedhere
       \end{enumerate} 
\end{proof}
Using the above lemma in conjunction with \Cref{Structure r_0 >i}, we obtain the following
result which describes the structure of $X_{r-i,\,r}$ in the case $r_{0}<i$.
\begin{theorem}\label{Structure r_0<i}
         Let  $p\geq 3$, $1 \leq i \leq p-1$ and  $(i+1)(p+1)<r \equiv r_{0} 
         ~ \mathrm{mod}~p$ with $0 \leq r_{0}<i$. Then 
         \begin{enumerate}[label=\emph{(\roman*)}]
               \item If $\Sp(r-r_{0}) \geq p$, then 
               as $M$-modules we have
                         $X_{r-i,\,r} \cong X_{r-i,\,r-i} \otimes V_{i}$.
                \item If  $\Sp(r-r_{0}) \leq p-1$ and $\Sp(r-i) \geq  p $, then 
                          as $\Gamma$-modules    we have 
                        \begin{align*}
                                 0  \rightarrow V_{\Sp(r-r_{0})} \otimes V_{r_{0}} 
                                 \rightarrow X_{r-i,\,r} 
                                 \rightarrow V_{i-r_{0}-1} \otimes \ind_{B}^{\Gamma}
                                 (\chi_{1}^{r-i}\chi_{2}^{r_{0}+1}) \rightarrow 0.
                          \end{align*}
               \item  If $\Sp(r-r_{0}) \leq  p-1 $ and $\Sp(r-i) \leq  p-1$, then  
                          as $\Gamma$-modules we have
                          \begin{align*}
                                   0 \rightarrow V_{\Sp(r-r_{0})} \otimes V_{r_{0}} 
                                   \rightarrow X_{r-i,\,r} 
                                   \rightarrow W \rightarrow 0,
                           \end{align*}                               
                           where $W$ is an extension of $V_{\Sp(r-r_{0})-2} \otimes 
                           \ind_{B}^{\Gamma}
                           (\chi_{1}\chi_{2}^{r_{0}+1})$ by $V_{p-1- \Sp(r)} 
                           \otimes D^{\Sp(r)}$.
         \end{enumerate}
\end{theorem}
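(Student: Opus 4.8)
### Proof proposal for Theorem~\ref{Structure r_0<i}

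The plan is to assemble the three cases from the results already established, exactly as the paragraph preceding the theorem suggests. The unifying tool is the surjection $\phi_i : X_{r-i,\,r-i} \otimes V_i \twoheadrightarrow X_{r-i,\,r}$ of \Cref{surjection1}, together with the filtration quotient $X_{r-i,\,r}/X_{r-r_0,\,r}$ computed in \Cref{final quotient structure} and the identification of the bottom piece $X_{r-r_0,\,r}$ coming from the $r_0 \geq i$ analysis of \Cref{Structure r_0 >i} (applied with $i$ there replaced by $r_0$, which is legitimate since $r_0 < i \leq p-1$ so $r_0$ plays the role of an index $\leq p-1$ with ``$r_0' \geq r_0$'' trivially satisfied).

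For part (i), suppose $\Sp(r-r_0) \geq p$. First I would note $\Sp(r-i) = \Sp(r-r_0) - (i - r_0) \geq p - (i-r_0)$; but actually the cleanest route is a dimension count. By \Cref{Structure r_0 >i} applied to the index $r_0$ (here $\Sp(r-r_0) \geq p > r_0$, so we are in its second case), $X_{r-r_0,\,r} \cong X_{r-r_0,\,r-r_0} \otimes V_{r_0}$ has dimension $(r_0+1)(p+1)$. Now I claim $\Sp(r-i) > p - 1$: indeed if $\Sp(r-i) \leq p-1$ then Lemma~\ref{Sp(r-i), Sp(r-j)}(ii) gives $\Sp(r) = \Sp(r-i) - (p-1) + i \leq i \leq p-1$, and then $\Sp(r-r_0) = \Sp(r) \leq p-1$, contradicting the hypothesis. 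So $\Sp(r-i) > p-1$ and \Cref{final quotient structure}(i) gives $\dim X_{r-i,\,r}/X_{r-r_0,\,r} = (i-r_0)(p+1)$. Adding, $\dim X_{r-i,\,r} = (i+1)(p+1)$, which by \Cref{Basis of X_r-i} is the maximum possible and equals $\dim (X_{r-i,\,r-i}\otimes V_i)$ (using \Cref{dimension formula for X_{r}}, since $\Sp(r-i) = \Sp(r-r_0) + r_0 - i \geq p$ forces $\dim X_{r-i,\,r-i} = p+1$); hence $\phi_i$ is an isomorphism.

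For parts (ii) and (iii), the hypothesis $\Sp(r-r_0) \leq p-1$ means (by \Cref{dimension formula for X_{r}} with index $r_0$) that $X_{r-r_0,\,r-r_0} \cong V_{\Sp(r-r_0)}$ as an $M$-module when $r_0 \geq 1$, and $X_{r-r_0,\,r-r_0} = X_{r,\,r}$ with $\Sp(r) = \Sp(r-r_0) \leq p-1$ when $r_0 = 0$; in either case \Cref{Structure r_0 >i} (first-case branch, since $\Sp(r - r_0) \leq p-1$ forces $\Sp(r-r_0)$ small — one checks $\Sp(r-r_0) \geq r_0+1$ by Lemma~\ref{Sp(r-i), Sp(r-j)}(ii) so $\Sp(r-i) < r_0$ in the relevant sub-case is impossible; rather we are squarely in $r_0 \leq \Sp(r-r_0) \leq p$, so \Cref{not equal and not full 1} applies with index $r_0$) yields $X_{r-r_0,\,r} \cong X_{r-r_0,\,r-r_0} \otimes V_{r_0} \cong V_{\Sp(r-r_0)} \otimes V_{r_0}$. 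Then I would simply splice this into the short exact sequence
\[
  0 \rightarrow X_{r-r_0,\,r} \rightarrow X_{r-i,\,r} \rightarrow X_{r-i,\,r}/X_{r-r_0,\,r} \rightarrow 0,
\]
reading off the quotient term from \Cref{final quotient structure}(i) when $\Sp(r-i) \geq p$ (this is part (ii), with quotient $V_{i-r_0-1} \otimes \ind_B^\Gamma(\chi_1^{r-i}\chi_2^{r_0+1})$) and from \Cref{final quotient structure}(ii) when $\Sp(r-i) \leq p-1$ (this is part (iii), with quotient the extension $W$ described there). One small compatibility check: in part (ii) I must confirm that $\Sp(r-i) \geq p$ together with $\Sp(r-r_0)\leq p-1$ is consistent — it is, since $\Sp(r-i) = \Sp(r-r_0) + r_0 - i$ can indeed be $\geq p$ only if $\Sp(r-r_0)$ is close to $p-1$ and $r_0$ close to $i$; and the boundary $\Sp(r-i) = p$ case must be double-checked against \Cref{final quotient structure}(i) which requires $\Sp(r-i) > p-1$, so $\Sp(r-i)=p$ is included.

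The main obstacle I anticipate is bookkeeping rather than mathematics: correctly matching the index shift (using results for ``$r_0 \geq i$'' with the symbol $i$ instantiated as $r_0$) and verifying in each branch that the numerical hypotheses of the cited lemmas are met, in particular the strictness $\Sp(r) > r_0+1$ needed to invoke \Cref{final quotient structure}(ii) in part (iii) (which is handled there) and the placement of the boundary cases $\Sp(r-i) = p$ and $\Sp(r-r_0) = p$. Once those are pinned down, each part is a one-line conclusion from a short exact sequence, so I would present the proof as three short itemized arguments with the dimension count front-loaded in part (i).
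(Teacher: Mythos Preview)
Your overall strategy is exactly the paper's: use the short exact sequence $0 \to X_{r-r_0,\,r} \to X_{r-i,\,r} \to X_{r-i,\,r}/X_{r-r_0,\,r} \to 0$, identify the bottom piece via the $r_0 \geq i$ theory applied with index $r_0$, and read the quotient off \Cref{final quotient structure}. Part~(i) via the dimension count and \Cref{surjection1} is also how the paper does it.

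However, there are two genuine errors in your digit-sum bookkeeping. First, the identity $\Sp(r-i) = \Sp(r-r_0) + r_0 - i$ holds only when $r_0 \geq i$; here $r_0 < i$, so subtracting $i$ borrows across the units digit. In your part~(i) parenthetical this is harmless, since you had already established $\Sp(r-i) > p-1$ by contraposition. But in your part~(ii) ``compatibility check'' the false identity would force $\Sp(r-i) < \Sp(r-r_0) \leq p-1$, making part~(ii) appear vacuous. It is not: e.g.\ $p=5$, $i=1$, $r=25$ gives $\Sp(r-r_0)=1 \leq 4$ yet $\Sp(r-1)=8 \geq 5$. Simply drop that check; the theorem's case split is exhaustive by fiat.

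Second, your claim that $\Sp(r-r_0) \geq r_0+1$ via Lemma~\ref{Sp(r-i), Sp(r-j)}(ii) is wrong: that lemma only yields $\Sp(r) \geq r_0+1$, i.e.\ $\Sp(r-r_0) \geq 1$, and only under the hypothesis $\Sp(r-i) \leq p-1$ (so not in part~(ii) at all). The sub-case $\Sp(r-r_0) < r_0$ \emph{does} occur (e.g.\ $p=5$, $i=4$, $r=33$ gives $r_0=3$ and $\Sp(r-r_0)=2$). The paper handles both branches of \Cref{Structure r_0 >i}: when $r_0 \leq \Sp(r-r_0) \leq p-1$ one gets $X_{r-r_0,\,r} \cong X_{r-r_0,\,r-r_0} \otimes V_{r_0} \cong V_{\Sp(r-r_0)} \otimes V_{r_0}$, and when $1 \leq \Sp(r-r_0) < r_0$ one gets $X_{r-r_0,\,r} \cong X_{r-\Sp(r-r_0),\,r-\Sp(r-r_0)} \otimes V_{\Sp(r-r_0)} \cong V_{r_0} \otimes V_{\Sp(r-r_0)}$, using \Cref{dimension formula for X_{r}}(i) in each case. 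The two answers agree, so the conclusion $X_{r-r_0,\,r} \cong V_{\Sp(r-r_0)} \otimes V_{r_0}$ survives, but you must treat both sub-cases.
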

\begin{proof}
        By \Cref{first row filtration}, we get the following 
        exact sequence
         \[
              0 \rightarrow X_{r-r_{0},\,r} \rightarrow X_{r-i,\,r} 
              \rightarrow  X_{r-i,\,r}/ X_{r-r_{0},\,r} \rightarrow 0.
         \] 
         If $\Sp(r-i) \leq p-1$, then by Lemma~\ref{Sp(r-i), Sp(r-j)} (ii),
         we have $\Sp(r)-r_{0} = \Sp(r-i) -(p-1)+i -r_{0} \leq i-r_{0} \leq p-1$.
        Thus $\Sp(r)-r_{0} =\Sp(r-r_{0}) \geq p$ implies
        $\Sp(r-i) \geq p$. Hence by  
        \Cref{dim large 1} (for $i =r_{0}$) and 
       \Cref{final quotient structure} (i), we get dim $X_{r-i,\,r} =
       \dim X_{r-r_{0},\,r}  + \dim X_{r-i,\,r}/ X_{r-r_{0},\,r} =
       (r_{0}+1)(p+1)+ (i-r_{0})(p+1)=
      (i+1)(p+1) \geq \dim X_{r-i,\, r-i} \times \dim V_{i} $. 
      Thus by \Cref{surjection1},  we have $X_{r-i,\,r-i} \cong X_{r-i,\,r-i} 
      \otimes V_{i}$. This proves (i). 
      By \Cref{Structure r_0 >i} (applied for  $i=r_{0}$), we have
      \begin{align*}
            X_{r-r_{0},\,r} \cong 
            \begin{cases}
                    X_{r-r_{0},\, r-r_{0}} \otimes V_{r_{0}}, 
                    &~\mathrm{if}~ r_{0} \leq \Sp(r-r_{0}) \leq p-1, \\
                    X_{r-\Sp(r-r_{0}), \,r-\Sp(r-r_{0})}\otimes V_{\Sp(r-r_{0})},
                    & ~\mathrm{if}~  \Sp(r-r_{0}) < r_{0}.
            \end{cases}
      \end{align*}
      Note that  if $r \geq p$, then  $\Sp(r-r_{0}) \geq 1$, whence
      $\Sp(r-r_{0}) \geq 1$ in the setting of lemma.
      If $r_{0} \leq \Sp(r-r_{0}) \leq p-1$, then  since
      $r-r_{0} \equiv \Sp(r-r_{0})$ mod $(p-1)$,
      we have     $X_{r-r_{0},\,r} \cong V_{\Sp(r-r_{0})} \otimes V_{r_{0}}$,
      by \Cref{dimension formula for X_{r}} (i). If 
      $1 \leq  \Sp(r-r_{0}) < r_{0}$, then since
       $r- \Sp(r-r_{0}) \equiv r_{0} $ mod $(p-1)$ with 
       $1 \leq r_{0} \leq p-1$, we have 
      $  X_{r-r_{0},\,r} \cong V_{r_{0}} \otimes V_{\Sp(r-r_{0})}$,
      by \Cref{dimension formula for X_{r}} (i).
       Hence
       $X_{r-r_{0},\,r} \cong V_{\Sp(r-r_{0})} \otimes V_{r_{0}}$ in either 
       case. Now  parts (ii) and (iii)  follow 
       from the short exact sequence above,
      and \Cref{final quotient structure} (i) and (ii) respectively. 
 \end{proof}
 
\begin{remark} \label{r - i vs r - r_0}
       Using Lemma \ref{Sp(r-i), Sp(r-j)} (ii), 
        the condition $\Sp(r-i) \leq p-1$ implies that 
        $\Sp(r-r_{0}) = \Sp(r)-r_{0}= \Sp(r-i)+i-(p-1)-r_{0} \leq i -r_{0} \leq p-1$.
        So the extra assumption $\Sp(r-r_{0}) \leq p-1$ in part (iii) of the above
        theorem
        is redundant. 
\end{remark}
As a corollary, we have the following dimension formula.
\begin{corollary}  \label{dimension r_0 <  i}   
        Let  $p\geq 3$, $1 \leq i \leq p-1$ and  $(i+1)(p+1)<r \equiv r_{0} 
         ~ \mathrm{mod}~p$ with $0 \leq r_{0}<i$. Then 
          \begin{align*}
                \dim X_{r-i,\,r} =
                 \begin{cases}
                        (i+1)(p+1), & ~ \mathrm{if} ~ \Sp(r-r_{0}) \geq p, \\
                       ( i-r_{0})(p+1)+ (\Sp(r-r_{0})+1)(r_{0}+1), &
                        ~ \mathrm{if} ~ \Sp(r-r_{0}) \leq p,~
                                \Sp(r-i) \geq p, \\
                        (p+r_{0}+1) \Sp(r-r_{0}),
                        &~ \mathrm{if} ~ \Sp(r-i) < p.
                  \end{cases}
           \end{align*}
\end{corollary}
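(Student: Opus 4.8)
The plan is to read off $\dim X_{r-i,\,r}$ in each of the three regimes directly from \Cref{Structure r_0<i}, using only the elementary facts $\dim V_{n}=n+1$, $\dim\ind_{B}^{\Gamma}(\chi)=p+1$, $\dim D=1$, multiplicativity of dimension under $\otimes$, and additivity of dimension along a short exact sequence (so that the splitting or non-splitting of the extensions in \Cref{Structure r_0<i} is irrelevant). First I would record the identity $\Sp(r)=\Sp(r-r_{0})+r_{0}$, which holds because $r-r_{0}$ has base $p$-digits $r_{m},\ldots,r_{1},0$, and observe, via Lemma~\ref{Sp(r-i), Sp(r-j)}\,(ii) (equivalently \Cref{r - i vs r - r_0}), that $\Sp(r-i)\leq p-1$ forces $\Sp(r-r_{0})\leq p-1$; contrapositively $\Sp(r-r_{0})\geq p$ forces $\Sp(r-i)\geq p$. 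This shows the three cases in the statement are subsumed by cases (i), (ii), (iii) of \Cref{Structure r_0<i}, the boundary value $\Sp(r-r_{0})=p$ being treated by case (i).

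If $\Sp(r-r_{0})\geq p$, then by case (i) of \Cref{Structure r_0<i} we have $X_{r-i,\,r}\cong X_{r-i,\,r-i}\otimes V_{i}$; since $\Sp(r-i)\geq p>p-1$, \Cref{dimension formula for X_{r}} gives $\dim X_{r-i,\,r-i}=p+1$, hence $\dim X_{r-i,\,r}=(i+1)(p+1)$. If $\Sp(r-r_{0})\leq p-1$ and $\Sp(r-i)\geq p$, then the exact sequence in case (ii) gives
\[
  \dim X_{r-i,\,r}=(\Sp(r-r_{0})+1)(r_{0}+1)+(i-r_{0})(p+1),
\]
which is the claimed formula; and at the boundary $\Sp(r-r_{0})=p$ one uses case (i) instead and checks directly that $(i+1)(p+1)=(\Sp(r-r_{0})+1)(r_{0}+1)+(i-r_{0})(p+1)$, so the formula remains valid there.

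Finally, suppose $\Sp(r-i)\leq p-1$, so also $\Sp(r-r_{0})\leq p-1$ by the reduction above. Case (iii) of \Cref{Structure r_0<i} gives $\dim X_{r-i,\,r}=(\Sp(r-r_{0})+1)(r_{0}+1)+\dim W$, where $W$ is an extension of $V_{\Sp(r-r_{0})-2}\otimes\ind_{B}^{\Gamma}(\chi_{1}\chi_{2}^{r_{0}+1})$ by $V_{p-1-\Sp(r)}\otimes D^{\Sp(r)}$, so $\dim W=(\Sp(r-r_{0})-1)(p+1)+(p-\Sp(r))$. Writing $s=\Sp(r-r_{0})$ and substituting $\Sp(r)=s+r_{0}$, the total becomes $(s+1)(r_{0}+1)+(s-1)(p+1)+(p-s-r_{0})$, which collapses to $s(p+r_{0}+1)$, the asserted value. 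The only step that is not pure bookkeeping is this last cancellation; everything else follows mechanically from \Cref{Structure r_0<i} and the dimension count of its constituents.
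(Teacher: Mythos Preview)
Your proof is correct and follows exactly the approach the paper intends: the corollary is stated there without proof because it is obtained by reading off dimensions from each case of \Cref{Structure r_0<i}, together with \Cref{dimension formula for X_{r}} and the observation in \Cref{r - i vs r - r_0}. Your treatment of the boundary $\Sp(r-r_{0})=p$ and the algebraic simplification in case~(iii) make the intended argument fully explicit.
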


\section{Structure of \texorpdfstring{$Q(i)$}{}} 
\label{section Q}

Recall that $\theta = X^{p}Y- XY^{p}$,
$V_{r}^{(m)} = \lbrace F(X,Y) \in \f[X,Y] : \theta^{m} \mid F ~ \text{in} ~ \f[X,Y] 
\rbrace$ and  $X_{r-i,\,r}^{(m)} = V_{r}^{(m)} \cap X_{r-i,\,r}$ for $0 \leq i \leq r$
and $m \in \Z_{\geq 0}$.
In this section, we study the quotient module $Q(i)$ 
of $V_{r}$, for $0 \leq i \leq p-1$,  defined by
\begin{align*}
    Q(i) = \frac{V_{r}}{X_{r-i,\,r}+V_{r}^{(i+1)}}.
\end{align*}
Similarly, for $1 \leq i \leq p-1$, let 
\begin{align*}
       P(i) = \frac{V_{r}}{X_{r-(i-1),\,r}+V_{r}^{(i+1)}}.
\end{align*}
These modules play an important role  in the study of 
the reductions of Galois representations mod $p$
via the mod $p$ local Langlands correspondence. 

Observe that, for $1 \leq i \leq p-1$, we have  the following 
exact sequence 
          \begin{align}\label{Q i-1 and P i}
              0 \rightarrow \frac{X_{r-(i-1),\,r}+V_{r}^{(i)}}{X_{r-(i-1),\,r}+V_{r}^{(i+1)}}
              \rightarrow P(i) \rightarrow Q(i-1) \rightarrow 0,
          \end{align}
          where the first map is the inclusion and the last map is the quotient
          map. By the second isomorphism theorem, we have
          $(X_{r-(i-1),\,r}+V_{r}^{(i)})/(X_{r-(i-1),\,r}+V_{r}^{(i+1)})$
          is isomorphic to $V_{r}^{(i)}/(X_{r-(i-1),\, r}^{(i)}+ V_{r}^{(i+1)})$,
          which is also the cokernel of the map 
          \[ X_{r-(i-1),\,r}^{(i)}/X_{r-(i-1),\,r}^{(i+1)} 
          \hookrightarrow V_{r}^{(i)}/V_{r}^{(i+1)} . \]
           In the process of  determining the $Q(i)$ in 
           this section, we will 
           need to determine  the quotients
          $X_{r-i,\,r}^{(j)}/X_{r-i,\,r}^{(j+1)}$, for $0 \leq i,j \leq p-1$, cf.
          \Cref{Structure X(1)}, \Cref{reduction}, \Cref{singular quotient X_{r}}, 
          \Cref{singular quotient i < [a-i]}, \Cref{singular i= [a-i]},
           \Cref{singular i>r-i}, \Cref{singular i=a} and \Cref{singular i=p-1} below. 
           Since the  structure of
           $V_{r}^{(i)}/V_{r}^{(i+1)}$ is well known, cf. \Cref{Breuil map}, we can 
           deduce the structure of the cokernel of the map above.
           So in principle we may deduce
           the structure of $P(i)$ from $Q(i-1)$, and so for the rest of this paper
           we concentrate on determining just the $Q(i)$.
          
Note that the definition of $Q(i)$ and $P(i)$ involves $r$. 
In this section, to simplify notation we  often denote
 $X_{r-i,\,r}$ by just $X_{r-i}$ etc.  
%
For every $0 \leq i \leq p-1$ and $ m,n \in \mathbb{Z}_{\geq 0}$ with $n \leq m$, we have 
 $$
       X_{r-i}^{(n)}  \hookrightarrow V_{r}^{(n)} \twoheadrightarrow
        V_{r}^{(n)}/V_{r}^{(m)}.                
 $$
 Clearly the kernel of the composition is $X_{r-i}^{(m)} $. 
 For every $ 0 \leq i \leq p-1$, we have an exact sequence
 \begin{align}\label{Q(i) exact sequence}
 0  \rightarrow X_{r-i}/X_{r-i}^{(i+1)} \rightarrow V_{r}/V_{r}^{(i+1)} \rightarrow 
 Q(i) \rightarrow 0,
 \end{align}
 where the leftmost map is induced by inclusion and the rightmost map is the quotient map.
 %

In \cite[(4.2)]{Glover}, Glover showed that 
$V_r/V_r^{(1)}$ is periodic with period $p-1$, for $r \geq p$. We generalize
this result using the ring of dual numbers etc. to show that $V_{r}/V_{r}^{(m)}$ is 
periodic with  period $p(p-1)$, i.e., $V_{r}/V_{r}^{(m)} \cong V_{r+p(p-1)}/V_{r+p(p-1)}^{(m)}$, 
for $1 \leq m \leq p-1$ and $r \geq m(p+1)-1$. 

\begin{lemma}\label{quotient periodic}
       Let $1 \leq m \leq p$,  and let $r \geq s \geq 
       m(p+1)-1 $. If $r \equiv s ~ \mathrm{mod}~p(p-1)$, then
       \begin{align*}
           \frac{V_{r}}{V_{r}^{(m)}}  \cong  \frac{V_{s}}{V_{s}^{(m)}}.
       \end{align*}
\end{lemma}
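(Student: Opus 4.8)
The plan is to exhibit an explicit basis of $V_r/V_r^{(m)}$ which is "universal" in $r$, and show that the $\Gamma$-action on this basis depends only on $r \bmod p(p-1)$. By Lemma~\ref{basis}, for $r \geq m(p+1)-1$ the set
\[
  \Lambda_r = \{ X^{r-i}Y^i + V_r^{(m)} : 0 \leq i \leq m(p+1)-(m+1)\} \cup \{X^iY^{r-i} + V_r^{(m)} : 0 \leq i < m\}
\]
is a basis, and it has the same cardinality $m(p+1)$ for all such $r$. So there is a unique $\f$-linear isomorphism $T : V_r/V_r^{(m)} \to V_s/V_s^{(m)}$ sending the $i$-th basis vector of $\Lambda_r$ to the corresponding one of $\Lambda_s$ (matching $X^{r-i}Y^i \leftrightarrow X^{s-i}Y^i$ and $X^iY^{r-i} \leftrightarrow X^iY^{s-i}$). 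The whole content is to check that $T$ is $\Gamma$-equivariant.

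First I would reduce the equivariance check to generators of $\Gamma$: it suffices to check $T$ commutes with the unipotent $\begin{psmallmatrix} 1 & b \\ 0 & 1\end{psmallmatrix}$, the diagonal $\begin{psmallmatrix} a & 0 \\ 0 & d\end{psmallmatrix}$, and the Weyl element $w = \begin{psmallmatrix} 0 & 1 \\ 1 & 0\end{psmallmatrix}$, since these generate $\Gamma$. For the diagonal matrices the action is diagonal on monomials with eigenvalue $a^{r-i}d^i$ on $X^{r-i}Y^i$, and $a^{r-i}d^i = a^{s-i}d^i \cdot (ad)^{(r-s)/1}$... more precisely $a^{r-i} = a^{s-i}$ in $\f^\ast$ because $r \equiv s \bmod (p-1)$ (indeed $r \equiv s \bmod p(p-1)$ forces $r \equiv s \bmod (p-1)$), so $a^{r-i}d^i = a^{s-i}d^i$; hence $T$ is diagonal-equivariant. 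The unipotent action sends $X^{r-i}Y^i \mapsto X^{r-i}(bX+Y)^i = \sum_j \binom{i}{j} b^{i-j} X^{r-i+j}Y^{i-j}$, which produces only monomials with $Y$-degree $\leq i \leq m(p+1)-(m+1)$, i.e.\ it lands in the first block of $\Lambda_r$; since the binomial coefficients $\binom{i}{j}$ here do not involve $r$ at all, equivariance with $T$ is immediate on the first block. The analogous statement holds for the lower unipotent $\begin{psmallmatrix} 1 & 0 \\ c & 1\end{psmallmatrix}$ acting on the second block $X^iY^{r-i}$.

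The main obstacle — and the place where the modulus $p(p-1)$ (rather than just $p-1$) is genuinely needed — is the Weyl element $w$, which swaps the two blocks of $\Lambda_r$ in a way that mixes them. Acting by $w$ on a first-block vector $X^{r-i}Y^i$ gives $X^iY^{r-i}$; if $i < m$ this is directly a second-block basis vector and $T$-equivariance is trivial, but if $m \leq i \leq m(p+1)-(m+1)$ then $X^iY^{r-i}$ is not in $\Lambda_r$ and must be rewritten modulo $V_r^{(m)}$ using the reduction relation from the proof of Lemma~\ref{basis}, namely
\[
   X^{r-i}Y^i - X^{r-i-m}Y^{i-mp}(XY^p - X^pY)^m = -\sum_{j=1}^m (-1)^j \binom{m}{j} X^{r-i+jp-j}Y^{i-jp+j}
\]
applied with roles of $X,Y$ swapped and iterated. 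Here the key point is that the exponents shift by multiples of $p-1$ (since $jp - j = j(p-1)$) and the relevant binomial coefficients $\binom{m}{j}$ are independent of $r$; the only $r$-dependence entering the final expansion is through how many iterations are needed and through congruence classes of exponents modulo $p$ and modulo $p-1$, which is exactly why we need $r \equiv s$ modulo $p(p-1) = \mathrm{lcm}(p, p-1)$. I would carry out this reduction carefully, tracking that the coefficient of each $\Lambda_r$-basis vector appearing in $w \cdot v$ depends only on $r \bmod p(p-1)$, and then conclude $T \circ w = w \circ T$, completing the proof that $T$ is the desired $\Gamma$-isomorphism $V_r/V_r^{(m)} \cong V_s/V_s^{(m)}$.
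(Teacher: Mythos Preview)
Your reduction to a generator check is natural, but there is a genuine gap. To show $T\gamma=\gamma T$ for a generator $\gamma$, you must verify $T(\gamma v)=\gamma T(v)$ for \emph{every} basis vector $v\in\Lambda_r$, not just those in one block. You check the upper unipotent $u=\begin{psmallmatrix}1&b\\0&1\end{psmallmatrix}$ only on the first block and then remark that the \emph{lower} unipotent is easy on the second block --- but the lower unipotent is a different element, not one of your chosen generators. You never treat $u$ on the second block, and this case is not formal: $u\cdot X^jY^{r-j}=X^j(bX+Y)^{r-j}$ for $j<m$ involves coefficients $\binom{r-j}{k}$, and for $p\le k\le mp-1$ these depend (by Lucas) on $r\bmod p^2$, not merely on $r\bmod p$. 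So the first-block contributions do \emph{not} match term by term for $r$ and $s$; the required equality comes only after cancellation with the reductions of the middle-region monomials, which is a nontrivial identity you have not established.

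Your $w$-sketch also misidentifies the mechanism. The ``number of iterations'' of the reduction from Lemma~\ref{basis} is roughly $(r-i)/(p-1)$, which is certainly not determined by $r\bmod p(p-1)$. The correct reason the final coefficients agree is that the recursion $c_d=\sum_{j=1}^m(-1)^{j+1}\binom{m}{j}c_{d-j(p-1)}$ has characteristic polynomial $(x-1)^m$, so the coefficient of each first-block monomial is a polynomial of degree $<m\le p$ in the step-count $k=(d-l_0)/(p-1)$; such a polynomial over $\f$ is determined by $k\bmod p$, and $k_r-k_s=(r-s)/(p-1)\equiv 0\bmod p$. With this argument your $w$-check can be completed, but the unipotent-on-second-block gap remains.

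The paper's proof sidesteps all of this. It embeds $V_r/V_r^{(m)}$ as a $\Gamma$-module into $\ind_{B(R)}^{G(R)}(\chi_2^r)$, where $R=\f[\epsilon]/(\epsilon^m)$, via $P\mapsto(\gamma\mapsto P((0,1)\gamma))$; one checks directly that the kernel is $V_r^{(m)}$. The point is that every $c\in R^\ast$ satisfies $c^p=c_0\in\f^\ast$ (since $m\le p$ kills $(c-c_0)^p$), hence $c^{p(p-1)}=1$, so $\chi_2^r=\chi_2^s$ and the two embeddings land in the \emph{same} induced module. One then checks that the images of the bases $\Lambda_r$ and $\Lambda_s$ coincide as functions on $G(R)$, and $\Gamma$-equivariance is automatic. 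This argument explains the modulus $p(p-1)$ conceptually (it is the exponent of $R^\ast$) and avoids the combinatorial identities your approach would require.
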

\begin{proof}
       Let $\epsilon$ be a variable and let $R = \f[\epsilon]/(\epsilon^{m})$.
       Let $G(R) = \mathrm{GL}_{2}(R)$ and $B(R)$ denote the Borel subgroup
       consisting of the upper triangular matrices in $G(R)$. 
       Define a map
       $ \psi_{r}:  V_{r} \rightarrow \ind_{B(R)}^{G(R)}(\chi_2^{r})$, by
       $$ 
               \psi_{r}(P(X,Y))(\gamma)= P((0,1)\gamma) = P(c,d),
        $$ 
        for all  $\gamma = \begin{psmallmatrix}   a & b \\ c & d \end{psmallmatrix}
         \in G(R)$ and $P(X,Y) \in V_{r}$.  
       Observe that, for $\gamma = 
       \begin{psmallmatrix}   a & b \\ c & d \end{psmallmatrix} \in G(R)$,
       $\gamma' = \begin{psmallmatrix}   a' & b '\\ 0 & d' \end{psmallmatrix}
       \in B(R)$ and  $P(X,Y) \in V_{r}$, we have 
       \[
           \psi_{r}(P(X,Y))(\gamma' \gamma) =  P(cd',dd') = d'^{r} P(c,d)
           = \gamma' \cdot  \psi_{r}(P(X,Y))(\gamma),
       \]
       so $\psi_{r}(P(X,Y)) \in \ind_{B(R)}^{G(R)}(\chi_2^{r})$ and 
       $\psi_{r}$ is well-defined.  For   $\gamma_{1} = 
       \begin{psmallmatrix}   a_{1} & b_{1} \\ c_{1} & d_{1} \end{psmallmatrix}
       \in \Gamma$ and $\gamma_{2} = 
       \begin{psmallmatrix}   a_{2} & b_{2} \\ c_{2} & d_{2} \end{psmallmatrix}
       \in G(R)$, we have
       \begin{align*}
              \psi_{r}(\gamma_{1} \cdot P (X,Y)) (\gamma_{2}) & =
              (\gamma_{1} \cdot P)(c_{2}, d_{2}) = 
              P(a_{1}c_{2}+c_{1}d_{2}, b_{1}c_{2}+ d_{1}d_{2})=
              \psi_{r}(P(X,Y)) (\gamma_{2} \gamma_{1}) \\
              & = (\gamma_{1} \cdot    \psi_{r}(P(X,Y)) ) (\gamma_{2}).
       \end{align*}
       Thus $\psi_{r}$ is  $\f[\Gamma]$-linear, as $\psi_{r}$ is  $\f$-linear.
       We next show that ker $\psi_{r} = V_{r}^{(m)}$. For 
       $c = c_{0}+c_{1} \epsilon+ \cdots + c_{m-1} \epsilon^{m-1}$
       and $d =d_{0}+d_{1} \epsilon+ \cdots + d_{m-1} \epsilon^{m-1} \in R$,
       we have 
       \[
             c^{p} d - c d^{p} = c_{0} d - c d_{0}
             \in \epsilon R.
       \]       
       So $(c^{p} d - c d^{p})^{m} =0$ and $V_{r}^{(m)} \subseteq$ ker $\psi_{r}$.
       Conversely let $P(X,Y) = \sum\limits_{i=0}^{r} a_{i} X^{r-i}Y^{i} \in $  ker $\psi_{r}$.
       Then
       \[
           \psi_{r}(P(X,Y)) 
           \begin{psmallmatrix} \epsilon^{m-1}  & -1 \\ 1 & \epsilon  \end{psmallmatrix}
            = P(1,\epsilon) =  \sum\limits_{i=0}^{r} a_{i}  \epsilon^{i} 
            = \sum\limits_{i=0}^{m-1} a_{i}  \epsilon^{i}  \quad 
            ( ~\because \epsilon^{m}=0).
       \]
       So $a_{0}$, $a_{1}, \ldots , a_{m-1} =0$ and $Y^{m} \mid P(X,Y)$.
        Since $\psi_{r}$ is $\Gamma$-linear we see that 
        for all $\gamma \in \Gamma$,
        $\gamma \cdot P(X,Y) \in $ ker $\psi_{r}$, 
        whence  $\gamma^{-1} \cdot Y^{m}   \mid P(X,Y)$.
        Taking $\gamma = 
        \begin{psmallmatrix} -\lambda & -1 \\ 1 & 0 \end{psmallmatrix} $ for 
        $\lambda \in \f$, we see that $(X- \lambda Y)^{m} \mid P(X,Y)$, for 
        $\lambda \in \f$. 
        Hence $P(X,Y) \in V_{r}^{(m)}$  and  ker $\psi_{r} \subseteq V_{r}^{(m)}$.
       This shows that ker $\psi_{r} = V_{r}^{(m)}$.  Thus $\psi_{r}$ induces
       an injective map  $\psi_{r}: V_{r}/V_{r}^{(m)} \rightarrow 
        \ind_{B(R)}^{G(R)}(\chi_2^{r})$. Let $r$, $s$ be as in   hypothesis. Then  
       we have $\ind_{B(R)}^{G(R)}(\chi_2^{s}) = \ind_{B(R)}^{G(R)}(\chi_2^{r})$,
       as $\chi_2^{r-s}$ is the trivial character.
      By Lemma~\ref{basis}, the set $\Lambda$ provides a basis of $V_r/V_{r}^{(m)}$ and similarly of $V_s/V_{s}^{(m)}$.
       For $\gamma =
        \begin{psmallmatrix}  a & b \\ c & d \end{psmallmatrix} \in G(R) $ and
       $0 \leq i \leq s-m$, we have
       \begin{align*}
                \psi_{s}(X^{s-i}Y^{i})(\gamma)
                 & = \begin{cases}
                                0, &\mathrm{if }~ c \in \epsilon R, \\
                                c^{s-i} d^{i}, & \mathrm{if } ~ c \not \in \epsilon R.
                 \end{cases} \\
                 &=  \psi_{r}(X^{r-i}Y^{i})(\gamma), 
       \end{align*}
       since $r \equiv s ~\mathrm{mod}~p(p-1)$.
       Since $ s-m \geq m(p+1)-(m+1)$, we see that $\psi_{s}$ and $\psi_{r}$
       agree on the
       first kind of basis elements in $\Lambda$.
       Similarly for $s-m < i  \leq s$, one checks that     
       $\psi_{s}(X^{s-i}Y^{i})(\gamma) = \psi_{r}(X^{s-i}Y^{r-s+i})(\gamma)$. 
      Since $r-m < r-s+i \leq r$, we see that that $\psi_{s}$ and $\psi_{r}$
      also agree on the
       second kind of basis elements in $\Lambda$.      
       Thus
       $\psi_{s}(V_{s}/V_{s}^{(m)}) = \psi_{r}(V_{r}/V_{r}^{(m)})$ and we
       have a $\Gamma$-linear  isomorphism 
       \begin{equation*}
               \psi: V_{s}/V_{s}^{(m)} \overset{\psi_{s}}{\longrightarrow} 
              \psi_{s}(V_{s}/V_{s}^{(m)}) = \psi_{r}(V_{r}/V_{r}^{(m)})
               \overset{\psi_{r}^{-1}}{\longrightarrow} V_{r}/V_{r}^{(m)}. \qedhere
       \end{equation*}
\end{proof}

For $n \geq 0$ and $0 \leq j  \leq i \leq p-1$, we
  have $X_{r-j}^{(n)} \subseteq X_{r-i}^{(n)}$, by \Cref{first row filtration}. 
  Therefore, for every  $0 \leq j  \leq i \leq p-1$ and $0 \leq n \leq m$, we have
 \[
X_{r-j}^{(n)}/X_{r-j}^{(m)} \subseteq X_{r-i}^{(n)}/X_{r-i}^{(m)}  \subseteq 
 V_{r}^{(n)}/V_{r}^{(m)}.
\]
As a corollary, we obtain

\begin{corollary}\label{arbitrary quotient periodic}  
       Let $0 \leq n <  m \leq p$, $0 \leq i \leq p-1$, $r \geq s \geq 
       m(p+1)-1$. If $r \equiv s ~ \mathrm{mod}~p(p-1)$, then
       \begin{align*}
           \frac{V_{r}^{(n)}}{V_{r}^{(m)}}  \cong  \frac{V_{s}^{(n)}}{V_{s}^{(m)}} \quad \text{and} \quad
       \frac{X_{r-i}^{(n)}}{X_{r-i}^{(m)}}  \cong \frac{X_{s-i}^{(n)}}{X_{s-i}^{(m)}}.
       \end{align*}
       In particular, $Q(i)$ is periodic with period $p(p-1)$, 
       for all $0 \leq i \leq p-1$ and all $r \geq i(p+1) +p$.
\end{corollary}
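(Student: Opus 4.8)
The plan is to revisit the isomorphism constructed in the proof of Lemma~\ref{quotient periodic} and to show that it automatically respects both the $\theta$-filtration and the monomial submodules; the three assertions then follow by restriction and by passing to quotients. Fix $R=\f[\epsilon]/(\epsilon^{m})$, write $G(R)=\gl_{2}(R)$ and $B(R)$ for its Borel, and recall the $\f$-linear injections $\psi_{t}\colon V_{t}/V_{t}^{(m)}\hookrightarrow\ind_{B(R)}^{G(R)}(\chi_{2}^{t})$, $\psi_{t}(P)(\gamma)=P((0,1)\gamma)$, for $t\in\{r,s\}$. Since $r\equiv s\bmod p(p-1)$ and $R^{\ast}$ has exponent dividing $p(p-1)$ whenever $m\le p$, $\chi_{2}^{r}=\chi_{2}^{s}$ as characters of $B(R)$, so the two induced modules are literally equal; let $W$ be the common image $\psi_{r}(V_{r}/V_{r}^{(m)})=\psi_{s}(V_{s}/V_{s}^{(m)})$, so that $\psi:=\psi_{r}^{-1}\circ\psi_{s}$ is a $\Gamma$-linear isomorphism $V_{s}/V_{s}^{(m)}\xrightarrow{\sim}V_{r}/V_{r}^{(m)}$. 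From the proof of Lemma~\ref{quotient periodic} we also know that $\psi$ sends $X^{s-i}Y^{i}+V_{s}^{(m)}\mapsto X^{r-i}Y^{i}+V_{r}^{(m)}$ for every $0\le i\le p-1$, since $p-1\le m(p+1)-(m+1)$ and these monomials are therefore among the first kind of basis vectors of $\Lambda$ in Lemma~\ref{basis}.

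The first step is to check that $\psi$ preserves the $\theta$-filtration, i.e.\ $\psi\bigl(V_{s}^{(n)}/V_{s}^{(m)}\bigr)=V_{r}^{(n)}/V_{r}^{(m)}$. Let $I^{(n)}\subseteq\ind_{B(R)}^{G(R)}(\chi_{2}^{r})$ be the $\Gamma$-submodule of functions taking values in the ideal $\epsilon^{n}R$; being defined solely in terms of the induced module, it is one and the same subspace for $r$ and for $s$. I claim that for $P\in V_{t}$ one has $P\in V_{t}^{(n)}$ iff $\psi_{t}(P)\in I^{(n)}$. For the forward implication write the class as $\theta^{n}P'$ with $P'\in V_{t-n(p+1)}$ and compute $\theta^{n}((0,1)\gamma)=(c^{p}d-cd^{p})^{n}$; since $c^{p}=c_{0}$ and $d^{p}=d_{0}$ in $R$ for $m\le p$, one gets $c^{p}d-cd^{p}\in\epsilon R$, hence $\psi_{t}(\theta^{n}P')$ is $\epsilon^{n}R$-valued. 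Conversely, evaluating $\psi_{t}(P)$ at $\begin{psmallmatrix}\epsilon^{m-1}&-1\\1&\epsilon\end{psmallmatrix}$ gives $P(1,\epsilon)=\sum_{i<m}a_{i}\epsilon^{i}$, whose membership in $\epsilon^{n}R$ forces $a_{0}=\dots=a_{n-1}=0$, i.e.\ $Y^{n}\mid P$; applying $\Gamma$-equivariance of $\psi_{t}$ together with the $\Gamma$-stability of $I^{(n)}$ and repeating this with $\gamma^{-1}\cdot Y^{n}$ in place of $Y^{n}$ (exactly as in the proof of Lemma~\ref{quotient periodic}, with $m$ replaced by $n$) shows that $X^{n}$, $Y^{n}$ and all $(X-\lambda Y)^{n}$ divide $P$, hence $\theta^{n}\mid P$. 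Therefore $\psi_{t}(V_{t}^{(n)}/V_{t}^{(m)})=W\cap I^{(n)}$ for $t\in\{r,s\}$, and applying $\psi_{r}^{-1}$ yields the claim; this in particular proves the first isomorphism of the corollary.

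The second step is to transport the monomial submodules. As $X_{t-i,\,t}$ is generated as a $\Gamma$-module by $X^{t-i}Y^{i}$ and $\psi$ is a $\Gamma$-isomorphism matching these generators, $\psi$ carries the image of $X_{s-i,\,s}$ in $V_{s}/V_{s}^{(m)}$ onto that of $X_{r-i,\,r}$ in $V_{r}/V_{r}^{(m)}$. By Dedekind's modular law (using $V_{t}^{(m)}\subseteq V_{t}^{(n)}$), the image of $X_{s-i,\,s}^{(n)}$ in $V_{s}/V_{s}^{(m)}$ equals the intersection of the images of $X_{s-i,\,s}$ and of $V_{s}^{(n)}$; combining this with the two facts just established, $\psi$ sends the image of $X_{s-i,\,s}^{(n)}$ onto the image of $X_{r-i,\,r}^{(n)}$. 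Since $X_{t-i,\,t}^{(n)}\cap V_{t}^{(m)}=X_{t-i,\,t}^{(m)}$, the image of $X_{t-i,\,t}^{(n)}$ in $V_{t}/V_{t}^{(m)}$ is canonically $X_{t-i,\,t}^{(n)}/X_{t-i,\,t}^{(m)}$, so the second isomorphism follows. Finally, the ``in particular'' is the special case $n=0$, $m=i+1$: there $\psi$ identifies $V_{s}/V_{s}^{(i+1)}$ with $V_{r}/V_{r}^{(i+1)}$ and the image of $X_{s-i}$ with that of $X_{r-i}$, hence descends along \eqref{Q(i) exact sequence} to an isomorphism between the two copies of $Q(i)$, valid whenever $r\ge s\ge (i+1)(p+1)-1=i(p+1)+p$.

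The main obstacle is the first step: showing that the a priori unstructured isomorphism $\psi$ of Lemma~\ref{quotient periodic} automatically respects the $\theta$-filtration. The crux is the intrinsic description of $V_{t}^{(n)}$ as the preimage under $\psi_{t}$ of the ``$\epsilon^{n}R$-valued'' submodule of the induced representation; once that is in hand, the ``$V$'' statement, the ``$X$'' statement, and the periodicity of $Q(i)$ are all essentially formal.
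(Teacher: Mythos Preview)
Your argument is correct. The treatment of the monomial submodules and of the periodicity of $Q(i)$ is essentially identical to the paper's (intersections, the modular law, and matching generators), but your first step is genuinely different. The paper verifies $\psi(V_{s}^{(n)}/V_{s}^{(m)})=V_{r}^{(n)}/V_{r}^{(m)}$ \emph{computationally}: it writes $\psi$ explicitly on the basis $\Lambda$ of Lemma~\ref{basis}, takes an arbitrary $F\in V_{s}^{(n)}$, and checks the criteria of Lemma~\ref{divisibility1} for $\psi(F)$ by matching the first and last $n$ coefficients and by showing, via Lucas' theorem and $r\equiv s\bmod p$, that the relevant binomial sums coincide. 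You instead give an \emph{intrinsic} characterization: inside $\ind_{B(R)}^{G(R)}(\chi_{2}^{r})$ you identify $\psi_{t}(V_{t}^{(n)}/V_{t}^{(m)})$ with $W\cap I^{(n)}$, where $I^{(n)}$ is the $\Gamma$-stable submodule of $\epsilon^{n}R$-valued functions, simply by rerunning the kernel computation of Lemma~\ref{quotient periodic} with $n$ in place of $m$. Since $I^{(n)}$ is independent of $t$, this immediately gives the compatibility with the $\theta$-filtration without any coefficient bookkeeping. Your approach is cleaner and explains \emph{why} the isomorphism respects the filtration; the paper's approach has the virtue of being self-contained on the polynomial side and of making the role of Lemma~\ref{divisibility1} explicit.
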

\begin{proof}
       Let $\psi: V_{s} / V_{s}^{(m)} \rightarrow V_{r}/ V_{r}^{(m)}$ 
       be defined as  in the proof of  \Cref{quotient periodic}. 
       Explicitly we have 
                  \begin{align*}
                         \psi(X^{s-i}Y^{i}+V_{s}^{(m)}) =
                         \begin{cases}
                                X^{r-i}Y^{i}+V_{r}^{(m)}, &~ \mathrm{if}~0\leq i \leq s-m \\                              
                                 X^{s-i}Y^{r-s+i}+V_{r}^{(m)}, &~ \mathrm{if}~s-m < i \leq s. 
                         \end{cases}
                  \end{align*}
       We claim that
       $\psi (V_{s}^{(n)}/V_{s}^{(m)}) = V_{r}^{(n)}/V_{r}^{(m)}$. 
        Let $F(X,Y) =  \sum_{j=0}^{s} a_{j} X^{s-j}Y^{j}  \in V_{s}^{(n)}$. Then 
        \[
           \psi(F(X,Y)+V_{s}^{(m)}) = \sum_{j=0}^{s-m} a_{j} X^{r-j}Y^{j} + 
        \sum_{j=s-(m-1)}^{s} a_{j} X^{s-j}Y^{r-s+j}+V_{r}^{(m)}.
        \]
         Clearly
        the coefficient of $X^{s-i}Y^{i}$ (resp. $X^{i}Y^{s-i}$) and 
        $X^{r-i}Y^{i}$ (resp. $X^{i}Y^{r-i}$) in  $F(X,Y)$ 
        and $\psi(F(X,Y))$
        are equal if $0 \leq i < n$, so both vanish as 
        $F(X,Y) \in V_{s}^{(n)}$.  Also noting that 
        $ r \equiv s$ mod $p$,  for 
        $1 \leq t \leq p-1$ and $0 \leq l < n \leq p-1$, by Lucas' theorem,
          we have 
        \begin{align*}
              \sum_{\substack{0 \leq j \leq s-m \\ j \equiv t~\mathrm{mod}~(p-1)}}^{} 
              a_{j} \binom{j}{l} + 
              \sum_{\substack{s-m<  j \leq s \\ j \equiv t~\mathrm{mod}~(p-1)}}^{} 
              a_{j} \binom{r-s+j}{l} \equiv 
              \sum_{\substack{0 \leq j \leq s \\ j \equiv t~\mathrm{mod}~(p-1)}}^{} 
              a_{j} \binom{j}{l} \mod p.  
        \end{align*}
         Since  $F(X,Y) \in V_{s}^{(n)}$, we see that the right hand side  vanishes, by 
         \Cref{divisibility1}. Hence by  \Cref{divisibility1}, we obtain 
         $\psi(F) \in V_{r}^{(n)}$, so $\psi$ induces an  injective map
         $V_{s}^{(n)} / V_{s}^{(m)} \rightarrow V_{r}^{(n)}/ V_{r}^{(m)}$,
         for all $0 \leq n \leq m$.
         Since  dim $V_{s}^{(n)} / V_{s}^{(m)} = \dim V_{r}^{(n)}/ V_{r}^{(m)}$,
         the above map is an isomorphism.
         
         Since $X^{r-i}Y^{i}$ generates $X_{r-i}$ and 
        $\psi(X^{s-i}Y^{i}+V_{s}^{(m)}) = X^{r-i}Y^{i}+V_{r}^{(m)}$ we see that 
        $\psi(X_{s-i} +V_{s}^{(m)}) = X_{r-i}+V_{r}^{(m)}$. Thus
        $\psi(X_{s-i}^{(n)} +V_{s}^{(m)}) = \psi((X_{s-i} +V_{s}^{(m)}) \cap( V_{s}^{(n)} +V_{s}^{(m)}))
         = \psi(X_{s-i}+V_{s}^{(m)}) \cap \psi(V_{s}^{(n)}+V_{s}^{(m)}) 
         = (X_{r-i} \cap V_{r}^{(n)})+V_{r}^{(m)} = X_{r-i}^{(n)}+V_{r}^{(m)}$.
        By the second isomorphism theorem,  we have  
        \[
          \frac{X_{s-i}^{(n)}}{ X_{s-i}^{(m)} } \cong 
        \frac{X_{s-i}^{(n)} +V_{s}^{(m)}}{V_{s}^{(m)}} \overset{\psi} {\cong}
        \frac{X_{r-i}^{(n)} +V_{r}^{(m)}}{V_{r}^{(m)} }
         \cong \frac{X_{r-i}^{(n)}}{ X_{r-i}^{(m)}}.
         \]
The periodicity of $Q(i)$ now follows immediately from the exact sequence \eqref{Q(i) exact sequence}.
\end{proof}

         For every $0 \leq j \leq  i \leq p-1$,  we have the following
         commutative diagram:
%
\begin{equation}\label{commutative diagram}
  \begin{tikzcd}
      & 0 \arrow{d} & 0 \arrow{d} & 0 \arrow{d} &  \\
     0 \arrow{r} &  \frac{X_{r-i}^{(j)}}{ X_{r-i}^{(i+1)}} \arrow{r} \arrow{d}&
     \frac{X_{r-i}} {X_{r-i}^{(i+1)}} \arrow{r} \arrow{d}&
     \frac{X_{r-i}}{ X_{r-i}^{(j)}} \arrow{r} \arrow{d}  & 0 \\
     0 \arrow{r} &  \frac{V_{r}^{(j)}}{ V_{r}^{(i+1)}} \arrow{r} \arrow{d}&
     \frac{V_{r}} {V_{r}^{(i+1)}} \arrow{r} \arrow{d} &
     \frac{V_{r}}{ V_{r}^{(j)}} \arrow{r} \arrow{d} & 0 \\
     0 \arrow{r} &  \frac{V_{r}^{(j)}}{ X_{r-i}^{(j)}+V_{r}^{(i+1)}} \arrow{r}  \arrow{d}&
     Q(i) \arrow{r}  \arrow{d} &
     \frac{V_{r}}{ X_{r-i}+V_{r}^{(j)}} \arrow{r} \arrow{d} & 0,\\
        & 0 & 0  & 0 & 
   \end{tikzcd}
\end{equation}
where the leftmost bottom entry is isomorphic to 
$(X_{r-i}+V_{r}^{(j)})/ (X_{r-i}+V_{r}^{(i+1)})$, by the second isomorphism theorem.
Recall that $V_{r} \supseteq V_{r}^{(1)} \supseteq \cdots $ is a descending chain of
submodules of $V_{r}$. By \Cref{Breuil map}, we know the structure of
the quotients of successive terms in the chain, hence we know the structure
of arbitrary quotients in the chain. 
Thus by the commutative diagram above, to determine the structure
of $Q(i)$, it is enough to determine the quotients $X_{r-i}^{(n)}/X_{r-i}^{(m)}$,
for all $0 \leq n \leq m$. These quotients in turn are determined by 
$X_{r-i}^{(j)}/X_{r-i}^{(j+1)}$, for $j \geq 0$. 

The first result describes $X_{r-i}/ X_{r-i}^{(1)}$, for $0 \leq i \leq p-1$.
\begin{lemma}\label{Structure X(1)}
          Let $p \geq 2$ and $p \leq r \equiv a \mod(p-1)$ with $1 \leq a \le p-1$. For 
          $ 0 \leq i \leq p-1$ we have
          \begin{align*}
                 \frac{X_{r-i}}{X_{r-i}^{(1)}}  \cong
                 \begin{cases}
                 V_{a}, & \mathrm{if} ~ 0 \leq i <a, \\
                 V_{r}/V_{r}^{(1)}, & \mathrm{if} ~ a \leq i \leq p-1.
                 \end{cases}
          \end{align*}
\end{lemma}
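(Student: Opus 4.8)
The plan is to move the whole statement into the quotient $\overline{V_r} := V_r/V_r^{(1)}$. Since $X_{r-i}^{(1)} = X_{r-i}\cap V_r^{(1)}$, the second isomorphism theorem gives an $M$-linear isomorphism $X_{r-i}/X_{r-i}^{(1)}\cong \overline{X_{r-i}}$, where $\overline{X_{r-i}} := (X_{r-i}+V_r^{(1)})/V_r^{(1)}$ is the $\Gamma$-submodule of $\overline{V_r}$ generated by the image of $X^{r-i}Y^i$; moreover, by \Cref{first row filtration}, the $\overline{X_{r-i}}$ form an ascending chain $\overline{X_r}\subseteq\overline{X_{r-1}}\subseteq\cdots\subseteq\overline{X_{r-(p-1)}}\subseteq\overline{V_r}$. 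So it suffices to show that $\overline{X_{r-i}}$ is the copy of $V_a$ occurring below when $0\le i<a$, and is all of $\overline{V_r}$ when $a\le i\le p-1$.

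The key input is \Cref{Breuil map} with $m=0$, which is legitimate since $r\ge p = 0\cdot(p+1)+p$: it supplies the exact sequence of $\Gamma$-modules $0\to V_a\to\overline{V_r}\xrightarrow{\pi}V_{p-1-a}\otimes D^a\to 0$ (note $[a]=a$), together with the explicit formula $\pi(\overline{X^{r-i}Y^i})=0$ for $0\le i<a$ and $\pi(\overline{X^{r-i}Y^i})=(-1)^{r-i}\binom{p-1-a}{i-a}X^{p-1-i}Y^{i-a}$ for $a\le i\le p-1$ (the case $i=r$ of that lemma does not arise, as $i\le p-1<p\le r$, and the monomials $X^{r-i}Y^i$, $0\le i\le p-1$, are genuine basis vectors of $\overline{V_r}$ by \Cref{basis} with $m=1$). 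On the other hand, the $M$-linear sequence \eqref{Glover 4.5} together with $X_r^{(1)}=X_r\cap V_r^{(1)}$ gives $\overline{X_r}\cong V_a$, so $\dim\overline{X_r}=a+1=\dim\ker\pi$; since $\pi(\overline{X^r})=0$ (the case $i=0<a$, as $a\ge1$), we get $\overline{X_r}\subseteq\ker\pi$, and the dimension count forces $\overline{X_r}=\ker\pi$.

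From $\overline{X_r}=\ker\pi$ the three cases follow at once. For $0\le i<a$, $\pi$ annihilates $\overline{X^{r-i}Y^i}$, so $\overline{X_{r-i}}\subseteq\ker\pi=\overline{X_r}$; combined with $\overline{X_r}\subseteq\overline{X_{r-i}}$ this gives $\overline{X_{r-i}}=\overline{X_r}\cong V_a$. For $i=a$, $\pi(\overline{X^{r-a}Y^a})=(-1)^{r-a}\binom{p-1-a}{0}X^{p-1-a}=(-1)^{r-a}X^{p-1-a}\ne 0$, so $\overline{X_{r-a}}\not\subseteq\ker\pi$; hence $\pi(\overline{X_{r-a}})$ is a nonzero $\Gamma$-submodule of the irreducible module $V_{p-1-a}\otimes D^a$ (irreducible by the classification of irreducible $\Gamma$-modules recalled in \S2, since $0\le p-1-a\le p-1$ and $1\le a\le p-1$), hence equals it; together with $\ker\pi=\overline{X_r}\subseteq\overline{X_{r-a}}$ this forces $\overline{X_{r-a}}=\overline{V_r}$. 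For $a<i\le p-1$, \Cref{first row filtration} gives $\overline{V_r}=\overline{X_{r-a}}\subseteq\overline{X_{r-i}}\subseteq\overline{V_r}$, so again $\overline{X_{r-i}}=\overline{V_r}$. In each case one reads off the claimed $M$-module from $X_{r-i}/X_{r-i}^{(1)}\cong\overline{X_{r-i}}$.

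I expect no real obstacle here: the only point requiring a little care is the identification $\overline{X_r}=\ker\pi$, which rests on the exactness of \eqref{Glover 4.5} for the dimension count, and one should keep track of the distinction between the split case $a=p-1$ and the rest only insofar as it affects that count (it does not). Everything else is bookkeeping with the two exact sequences and the explicit Breuil map.
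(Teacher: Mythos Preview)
Your proof is correct and follows essentially the same route as the paper: both pass to the quotient $V_r/V_r^{(1)}$, invoke \eqref{Glover 4.5} to identify $X_r/X_r^{(1)}$ with the copy of $V_a$ sitting as the kernel of the Breuil map $\pi$ from \Cref{Breuil map} (with $m=0$), and then check via the explicit formula for $\pi$ whether the image of $X^{r-i}Y^i$ in $V_{p-1-a}\otimes D^a$ vanishes. Your write-up is a bit more explicit about the dimension count pinning down $\overline{X_r}=\ker\pi$, but the argument is the same.
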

\begin{proof}
      By \cite[(4.5)]{Glover}, we have $X_{r}/X_{r}^{(1)} \cong V_{a}$. 
      For $0 \leq i \leq p-1$, we have $V_{a} \cong X_{r}/X_{r}^{(1)} 
       \subseteq X_{r-i}/X_{r-i}^{(1)}$.
       Since $X^{r-i}Y^{i}$ generates $X_{r-i}$ as a $\Gamma$-module, we see that 
        $X^{r-i}Y^{i}$  generates $X_{r-i}/X_{r-i}^{(1)}$.
      Therefore $X_{r-i}/X_{r-i}^{(1)}= V_{r}/V_{r}^{(1)} $ if and only if the image of   
      $X^{r-i}Y^{i}$  in the quotient $V_{p-1-a} \otimes D^{a}$ of 
      $V_{r}/V_{r}^{(1)}$  in the exact sequence
       \eqref{exact sequence Vr} (for $m=0$) is non-zero 
      if and only if $i \geq a$,  by \Cref{Breuil map} (for $m=0$). This finishes the proof.
\end{proof}       
%

Recall that $[n]$ denotes the congruence class of $n$ modulo $(p-1)$ in 
$\lbrace 1, 2 , \ldots, p-1 \rbrace$, for $n \in \mathbb{Z}$.
We now make an important further reduction. For $1 \leq j \leq p-1$, we show 
that $X_{r-i}^{(j)}/X_{r-i}^{(j+1)}$ equals $X_{r-j'}^{(j)}/X_{r-j'}^{(j+1)}$,
where $j'$ is the largest integer among the integers $0$, $j$ and $[r-j]$ which is less
than or equal to $i$. In order to do this, it is convenient to introduce the following 
notation. For every $0 \leq i , j \leq p-1$, define the following 
subquotient of $V_{r}^{(j)}/V_{r}^{(j+1)}$ by 
\[
    Y_{i,j}  := \frac{X_{r-i}^{(j)}/ X_{r-i}^{(j+1)}}{X_{r-(i-1)}^{(j)}/ X_{r-(i-1)}^{(j+1)}}.
\]
By the second and third  isomorphism theorems respectively, we have
      \begin{align*}
              \frac{X_{r-i}^{(j)}/ X_{r-i}^{(j+1)}}{X_{r-(i-1)}^{(j)}/ X_{r-(i-1)}^{(j+1)}} 
              &  \cong  \frac{X_{r-i}^{(j)}/X_{r-i}^{(j+1)}} {(X_{r-(i-1)}^{(j)}+
               X_{r-i}^{(j+1)})/X_{r-i}^{(j+1)}}  
               \cong  \frac{X_{r-i}^{(j)}}{X_{r-(i-1)}^{(j)}+X_{r-i}^{(j+1)}},
        \end{align*}
      and, by the analog of the Zassenhaus lemma   \cite[Lemma 3.3]{Lang}
      for modules  and  the inclusions
      $X_{r-i}^{(j+1)} \subseteq  X_{r-i}^{(j)} \subseteq V_{r}^{(j)} $, we have
        \begin{align*}
               \frac{X_{r-i}^{(j)}}{X_{r-(i-1)}^{(j)}+X_{r-i}^{(j+1)}}
             = \frac{\left(X_{r-i} \cap V_{r}^{(j)} \right) + X_{r-i}^{(j+1)}}
                   { \left(X_{r-(i-1)} \cap V_{r}^{(j)} \right) + X_{r-i}^{(j+1)}} 
              \cong  
                   \frac{X_{r-i}^{(j)}+X_{r-(i-1)}}{X_{r-i}^{(j+1)}+X_{r-(i-1)}}.
      \end{align*}
      Hence
       \begin{align}\label{Y i,j}
            Y_{i,j} \cong   \frac{X_{r-i}^{(j)}+X_{r-(i-1)}}{X_{r-i}^{(j+1)}+X_{r-(i-1)}}.
       \end{align}

\begin{lemma}\label{reduction}
      Let  $p \geq 2$, $0 \leq i \leq p-1$ and $1 \leq j \leq p-1$. For every 
      $p \leq r \equiv a \mod (p-1)$ with $1 \leq a \leq p-1$, and  we have 
\begin{align*}
       \frac{ X_{r-i}^{(j)}}{X_{r-i}^{(j+1)}}=
       \begin{cases}
             X_{r-j}^{(j)}/ X_{r-j}^{(j+1)}, 
             & \mathrm{if}~ j \leq i < [a-j ]
             ~ \mathrm{or} ~ [a-j] \leq  j   \leq i , \\
             X_{r-[a-j]}^{(j)}/ X_{r-[a-j ]}^{(j+1)}, 
             & \mathrm{if}~ [ a-j ] \leq i < j ~ 
             \mathrm{or}~ j \leq     [a-j ] \leq i , \\
             X_{r}^{(j)}/X_{r}^{(j+1)}, & \mathrm{if}~ i < j \leq [a-j]~
             \mathrm{or} ~ i < [a-j] \leq j.
    \end{cases}
\end{align*}
\end{lemma}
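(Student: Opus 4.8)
The plan is to deduce the stated equality from the vanishing of the subquotients $Y_{k,j}$ introduced in \eqref{Y i,j}. Given $0\le i\le p-1$ and $1\le j\le p-1$, I would let $j'$ denote the largest of the three integers $0$, $j$, $[a-j]$ that is $\le i$; a routine inspection of the six possible orderings of $i$, $j$ and $[a-j]$ shows that the three cases in the statement are exactly $j'=j$, $j'=[a-j]$ and $j'=0$, so it will suffice to prove
\[
   \frac{X_{r-i}^{(j)}}{X_{r-i}^{(j+1)}}=\frac{X_{r-j'}^{(j)}}{X_{r-j'}^{(j+1)}}
\]
as submodules of $V_r^{(j)}/V_r^{(j+1)}$. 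By \Cref{first row filtration}, for each $k$ with $j'<k\le i$ there is a chain of submodules $X_{r-(k-1)}^{(j)}/X_{r-(k-1)}^{(j+1)}\subseteq X_{r-k}^{(j)}/X_{r-k}^{(j+1)}$ inside $V_r^{(j)}/V_r^{(j+1)}$ whose quotient is $Y_{k,j}$; so the displayed equality reduces to showing $Y_{k,j}=0$ for all such $k$.

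For a fixed $k$ with $j'<k\le i$, the key point will be that $1\le k\le p-1$ and $k\notin\{j,[a-j]\}$ (this follows from the definition of $j'$, checked case by case), and that $Y_{k,j}$ sits as a subquotient of two different principal series. Indeed, on the one hand $Y_{k,j}$ is a quotient of $X_{r-k}^{(j)}/X_{r-k}^{(j+1)}$, which is a submodule of $V_r^{(j)}/V_r^{(j+1)}$, and the latter embeds into $\ind_B^\Gamma(\chi_1^{j}\chi_2^{r-j})$ by \Cref{induced and star}; on the other hand, by \eqref{Y i,j} with $i$ replaced by $k$, $Y_{k,j}$ is a subquotient of $X_{r-k}/X_{r-(k-1)}$, which is a quotient of $\ind_B^\Gamma(\chi_1^{r-k}\chi_2^{k})$ by \Cref{induced and successive}. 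Hence, were $Y_{k,j}$ nonzero, it would have a Jordan--H\"older factor common to $\ind_B^\Gamma(\chi_1^{r-k}\chi_2^{k})$ and $\ind_B^\Gamma(\chi_1^{j}\chi_2^{r-j})$. But $\chi_1^{r-k}\chi_2^{k}=\chi_1^{j}\chi_2^{r-j}$ would force $k\equiv a-j\pmod{p-1}$, i.e.\ $k=[a-j]$, and $\chi_1^{r-k}\chi_2^{k}=(\chi_1^{j}\chi_2^{r-j})^{w}=\chi_1^{r-j}\chi_2^{j}$ would force $k\equiv j\pmod{p-1}$, i.e.\ $k=j$; since both are ruled out, \Cref{Common JH factor} shows these principal series share no Jordan--H\"older factor, and therefore $Y_{k,j}=0$.

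Running this over all $k$ with $j'<k\le i$ collapses every intermediate inclusion to an equality and yields $X_{r-i}^{(j)}/X_{r-i}^{(j+1)}=X_{r-j'}^{(j)}/X_{r-j'}^{(j+1)}$, which is the claim. The only fiddly part will be the elementary bookkeeping that pins down $j'$ and verifies $k\notin\{j,[a-j]\}$ throughout the range $j'<k\le i$ in each of the six configurations of $i$, $j$, $[a-j]$; the representation-theoretic heart of the matter, namely that a module which is simultaneously a subquotient of two principal series with disjoint Jordan--H\"older content must vanish, is an immediate consequence of \Cref{Common JH factor}, so I do not anticipate any real difficulty.
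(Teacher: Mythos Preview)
Your proposal is correct and follows essentially the same route as the paper: the paper also proves the single-step claim that $Y_{k,j}=0$ whenever $k\notin\{j,[a-j]\}$ (equivalently $j\notin\{k,[a-k]\}$) by observing that $Y_{k,j}$ is simultaneously a subquotient of $V_r^{(j)}/V_r^{(j+1)}\hookrightarrow\ind_B^\Gamma(\chi_1^{j}\chi_2^{r-j})$ and of $X_{r-k}/X_{r-(k-1)}$, a quotient of $\ind_B^\Gamma(\chi_1^{r-k}\chi_2^{k})$, and then invoking \Cref{Common JH factor}. Your packaging via $j'=\max\{0,j,[a-j]:\ \cdot\le i\}$ is a clean way to organize the six orderings and the descent $i\to i-1\to\cdots\to j'$, but the argument is the same.
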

\begin{proof}
      If $i=0$, then there is nothing to prove. So assume $i \geq 1$.
      We  claim that  for $i \geq 1$ and $j \neq i$, $[a-i]$, we have 
      $X_{r-i}^{(j)}/ X_{r-i}^{(j+1)} = X_{r-(i-1)}^{(j)}/ X_{r-(i-1)}^{(j+1)}$.  Suppose not.
      Then $Y_{i,j}$, a non-zero subquotient of $V_{r}^{(j)}/V_{r}^{(j+1)}$, and  
       $(X_{r-i}^{(j)}+ X_{r-(i-1)})/ (X_{r-i}^{(j+1)}+ X_{r-(i-1)})$,
        a subquotient of $X_{r-i}/X_{r-(i-1)}$,  have a common JH factor, by \eqref{Y i,j}.
      Therefore, by  \Cref{induced and star}  and \Cref{induced and successive},  we get
      $\ind_{B}^{\Gamma}(\chi_{1}^{j} \chi_{2}^{r-j})$
      and $\ind_{B}^{\Gamma}(\chi_{1}^{r-i} \chi_{2}^{i})$ have a common  JH factor.
      But this  is not possible by \Cref{Common JH factor}, 
      since $j \neq [a-i] $, $i$.  
      This proves the claim. 
      
      Since $1 \leq i, j \leq p-1$, we have  $j \neq i$, $[a-i]$
      if and only if  $i \neq j$, $[a-j]$.
      If $j \leq i < [a-j ]$ or $ [a-j] \leq j \leq i $, then  applying the 
       claim above repeatedly, we have
      $ X_{r-i}^{(j)}/ X_{r-i}^{(j+1)} = X_{r-(i-1)}^{(j)}/ X_{r-(i-1)}^{(j+1)}  =
      \cdots = X_{r-j}^{(j)}/ X_{r-j}^{(j+1)}$. Similarly if $[a-j] \leq i < j $ or 
      $j \leq [a-j]  \leq i $,  then  by the claim above, we have $ X_{r-i}^{(j)}/ X_{r-i}^{(j+1)} = 
      X_{r-(i-1)}^{(j)}/ X_{r-(i-1)}^{(j+1)}  =\cdots = X_{r-[a-j]}^{(j)}/ X_{r-[a-j]}^{(j+1)}$.
      Finally if $i < j \leq [a-j]$ or $i<[a-j] \leq j$, then by the claim above 
      we have $ X_{r-i}^{(j)}/ X_{r-i}^{(j+1)}
      = X_{r-(i-1)}^{(j)}/ X_{r-(i-1)}^{(j+1)}  = \cdots = X_{r}^{(j)}/ X_{r}^{(j+1)}$.
\end{proof}

\begin{corollary}\label{reduction corollary}     
          Let  $p \leq r \equiv a ~\mathrm{mod}~ (p-1)$ with 
          $1 \leq a \leq p-1$ and $0 \leq i, j \leq p-1$
          with $i \neq a$. Then
          \begin{enumerate}[label=\emph{(\roman*)}]
             \item If $ j \neq i, [a-i]$, then we have 
                      $X_{r-i}^{(j)}/ X_{r-i}^{(j+1)} = X_{r-(i-1)}^{(j)}/ X_{r-(i-1)}^{(j+1)}$.
             \item Let $i'=\min \lbrace i, [a-i] \rbrace$. Then $X_{r-i} = 
                       X_{r-(i-1)} + X_{r-i}^{(i')}$. 
                       As a consequence, $X_{r-i} \subseteq X_{r-(i-1)} +V_{r}^{(i')}$.    
          \end{enumerate}           
\end{corollary}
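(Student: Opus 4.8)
The plan is to obtain both parts essentially for free from material already developed, with no new computation. For part (i) in the range $1 \le j \le p-1$, the asserted equality $X_{r-i}^{(j)}/X_{r-i}^{(j+1)} = X_{r-(i-1)}^{(j)}/X_{r-(i-1)}^{(j+1)}$ is exactly the claim proved midway through the proof of \Cref{reduction}; the point to record is that the argument there --- comparing the Jordan--H\"older factors of $\ind_{B}^{\Gamma}(\chi_{1}^{j}\chi_{2}^{r-j})$ and of $\ind_{B}^{\Gamma}(\chi_{1}^{r-i}\chi_{2}^{i})$ by means of \Cref{Common JH factor}, and ruling out a common factor via $j \ne i, [a-i]$ --- uses only $i \ge 1$, never the hypothesis $i \ne a$. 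For the leftover case $j = 0$ I would instead use \Cref{Structure X(1)}: both $X_{r-i}/X_{r-i}^{(1)}$ and $X_{r-(i-1)}/X_{r-(i-1)}^{(1)}$ are isomorphic either to $V_a$ (when the relevant index is $<a$) or to $V_r/V_r^{(1)}$ (when it is $\ge a$), and since $i \ne a$ the two indices $i$ and $i-1$ lie on the same side of $a$; as the smaller module sits inside the larger by \Cref{first row filtration} and the two then have the same dimension, they coincide. (Since $X_{r-(i-1)}$ is only meaningful for $i \ge 1$, the $i = 0$ instance is vacuous and can be set aside at the outset.)

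For part (ii) I would filter the quotient $X_{r-i}/X_{r-(i-1)}$ by the images of the submodules $X_{r-i}^{(n)}$, $n \ge 0$, whose $n$-th graded piece is precisely $Y_{i,n} \cong \bigl(X_{r-i}^{(n)}+X_{r-(i-1)}\bigr)\big/\bigl(X_{r-i}^{(n+1)}+X_{r-(i-1)}\bigr)$ by the identification \eqref{Y i,j}. Part (i) gives $Y_{i,n} = 0$ whenever $n \ne i$ and $n \ne [a-i]$; in particular this holds for every $0 \le n \le i'-1$, since such $n$ is strictly smaller than both $i$ and $[a-i]$ (the case $n = 0$ being exactly where the hypothesis $i \ne a$ is used). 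Hence $X_{r-i}^{(n)}+X_{r-(i-1)} = X_{r-i}^{(n+1)}+X_{r-(i-1)}$ for $0 \le n \le i'-1$, and chaining these equalities, starting from $X_{r-i}^{(0)} = X_{r-i} \supseteq X_{r-(i-1)}$, yields $X_{r-i} = X_{r-i}^{(i')}+X_{r-(i-1)}$. The second assertion then drops out, since $X_{r-i}^{(i')} = X_{r-i}\cap V_r^{(i')} \subseteq V_r^{(i')}$ forces $X_{r-i} = X_{r-(i-1)}+X_{r-i}^{(i')} \subseteq X_{r-(i-1)}+V_r^{(i')}$.

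I do not expect a substantive obstacle here: all the real content lives in \Cref{reduction}, \Cref{Structure X(1)}, and the diagram chase producing \eqref{Y i,j}. The only points needing a little care are organizational --- confirming that the $j = 0$ case of part (i) genuinely requires, and is covered by, the hypothesis $i \ne a$ (whereas the cases $j \ge 1$ do not), and running the telescoping in part (ii) over exactly the indices $0 \le n \le i'-1$ so that every $Y_{i,n}$ being collapsed sits strictly below both $i$ and $[a-i]$ in the $\theta$-filtration.
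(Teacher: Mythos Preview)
Your proposal is correct and follows essentially the same approach as the paper: part (i) is split into the case $j=0$ (handled by \Cref{Structure X(1)}, using $i\neq a$ to ensure $i$ and $i-1$ fall on the same side of $a$) and the case $j\geq 1$ (handled by the claim inside the proof of \Cref{reduction}); part (ii) then telescopes the chain $X_{r-(i-1)}+X_{r-i}^{(n)}$ for $0\leq n\leq i'$ using part (i). The only cosmetic difference is that you phrase the telescoping via $Y_{i,n}=0$ and \eqref{Y i,j}, whereas the paper argues directly from $X_{r-i}^{(j)} = X_{r-(i-1)}^{(j)}+X_{r-i}^{(j+1)}$ by the second isomorphism theorem; these are the same manipulation.
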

\begin{proof}
    The first assertion follows from \Cref{Structure X(1)} if $j=0$ and
    the claim proved in \Cref{reduction}
    if $j \geq 1$. 
    	Thus, by the second isomorphism theorem, we have
    	\[
    	\frac{X_{r-i}^{(j)}}{X_{r-i}^{(j+1)}} =  \frac{X_{r-(i-1)}^{(j)}}{X_{r-(i-1)}^{(j+1)}}
    	= \frac{X_{r-(i-1)}^{(j)}+X_{r-i}^{(j+1)}}{X_{r-i}^{(j+1)}}, ~ 
    	\forall \; 0 \leq j < i' .
    	\]
    	So $X_{r-i}^{(j)} = X_{r-(i-1)}^{(j)}+X_{r-i}^{(j+1)} \subseteq 
    	X_{r-(i-1)}+X_{r-i}^{(j+1)}$, for all $0 \leq j < i'$. Thus we have
    	\[
    	X_{r-i} \subseteq  X_{r-(i-1)}+X_{r-i}^{(1)} \subseteq \cdots
        \subseteq X_{r-(i-1)}+X_{r-i}^{(i')}.
    	\]
    	Since both $X_{r-(i-1)}$, $X_{r-i}^{(i')} \subseteq X_{r-i}$, 
    	the second assertion follows from the inclusion above.
    %
    %
\end{proof}
%

By Lemma~\ref{reduction}, to determine $X_{r-i}^{(j)}/ X_{r-i}^{(j+1)}$, for  all
$0 \leq i$, $j \leq p-1$, it is enough to determine $X_{r-j}^{(j)}/ X_{r-j}^{(j+1)}$, 
$X_{r-[a-j]}^{(j)}/ X_{r-[a-j]}^{(j+1)}$
and $X_{r}^{(j)}/ X_{r}^{(j+1)} $, for all $1 \leq j \leq p-1$, as we already know the 
structure of $X_{r-i}/X_{r-i}^{(1)}$, by \Cref{Structure X(1)}.
We begin by determining $X_{r}^{(j)}/X_{r}^{(j+1)}$, for $0 \leq j \leq p-1$.
%
\begin{lemma}\label{star=double star}
      Let  $0 \leq r \equiv a \mod (p-1)$ with $1 \leq a \leq p-1$.
      Then $X_{r}^{(1)} = X_{r}^{(2)} = \cdots = X_{r}^{(a)}$.
\end{lemma}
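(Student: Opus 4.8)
The statement $X_r^{(1)} = X_r^{(2)} = \cdots = X_r^{(a)}$ asserts that the submodule $X_{r,r}^{(1)} = X_{r,r} \cap V_r^{(1)}$ is in fact already contained in $V_r^{(a)}$, i.e.\ every polynomial in $X_{r,r}$ that is divisible by $\theta$ is automatically divisible by $\theta^a$. The natural approach is to use the known structure of $X_{r,r}^{(1)}$ from Lemma~\ref{dimension formula for X_{r}}: either $X_{r,r}^{(1)} = 0$ (when $\Sigma_p(r) = a$), in which case all the terms $X_r^{(1)}, \ldots, X_r^{(a)}$ vanish and there is nothing to prove, or $X_{r,r}^{(1)} \cong V_{p-1-a} \otimes D^a$ as an $M$-module. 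In the latter case I would argue that $X_{r,r}^{(1)} \subseteq V_r^{(a)}$ directly. Note first that $V_r^{(a)} \cong V_{r - a(p+1)} \otimes D^a$, and that $X_{r,r}^{(1)} \cong V_{p-1-a} \otimes D^a$ has dimension $p-a \le p-1$, which is small.

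**Key steps.** First, reduce to the case $p \nmid r$ by the substitution $F \mapsto F^{p^n}$ used repeatedly in the excerpt (writing $r = p^n u$ with $p \nmid u$, noting $\Sigma_p(r) = \Sigma_p(u)$, $a$ is unchanged, and $\theta$-divisibility is preserved under the $p$-power Frobenius since $\theta^{p^n} \mid F^{p^n} \iff \theta \mid F$ — actually one must be slightly careful here, but $X_{u,u} \cong X_{r,r}$ and the claim for $u$ gives the claim for $r$ after raising to the $p^n$-th power, because $\theta \mid G \Rightarrow \theta^a \mid G$ for $G \in X_{u,u}$ implies $\theta^{p^n} \mid G^{p^n} \Rightarrow \theta^{a p^n} \mid G^{p^n}$, and $V_r^{(a p^n)} \subseteq V_r^{(a)}$; combined with the exact sequence \eqref{Glover 4.5} this suffices). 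Second, when $\Sigma_p(r) = a$ invoke Lemma~\ref{dimension formula for X_{r}}(i): $X_{r,r} \cong V_a$ with trivial $\theta$-part, so $X_r^{(1)} = 0$ and we are done. Third, when $\Sigma_p(r) > a$, use Lemma~\ref{dimension formula for X_{r}}(ii): $X_{r,r}^{(1)} \cong V_{p-1-a}\otimes D^a$ is an irreducible $M$-submodule of $V_r^{(1)}$; I want to show it lies in $V_r^{(a)}$. The cleanest route is to observe that $V_r^{(1)}/V_r^{(a)}$ has a filtration with successive quotients $V_r^{(m)}/V_r^{(m+1)}$ for $1 \le m \le a-1$, each of which (by Lemma~\ref{induced and star}, or by Lemma~\ref{Breuil map}) is a principal series $\ind_B^\Gamma(\chi_1^m \chi_2^{r-m})$ (or a submodule thereof), whose JH factors are $V_{[r-2m]}\otimes D^m$ and $V_{p-1-[r-2m]}\otimes D^{r-m}$. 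One then checks that the irreducible $V_{p-1-a}\otimes D^a$ does \emph{not} appear among these JH factors for $1 \le m \le a-1$: since $r \equiv a \bmod (p-1)$, the possible JH factors are $V_{[a-2m]}\otimes D^m$ and $V_{p-1-[a-2m]}\otimes D^{a-m}$, and matching the determinant twist forces $m \equiv a \bmod (p-1)$ or $a - m \equiv a \bmod (p-1)$, i.e.\ $m = a$ or $m = p-1$ — neither in the range $[1, a-1]$ (using $a \le p-1$). Hence the image of $X_{r,r}^{(1)}$ in $V_r^{(1)}/V_r^{(a)}$ is zero, giving $X_{r,r}^{(1)} \subseteq V_r^{(a)}$, and therefore $X_r^{(1)} = X_r^{(m)}$ for all $1 \le m \le a$.

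**Main obstacle.** The delicate point is the JH-factor comparison: one must be careful that $X_{r,r}^{(1)}$ really is irreducible as computed (requiring $r \ge 2p+1$ or so, and that the degenerate case $V_{p-1-a}$ with $a = p-1$ — where $V_{p-1-a} = V_0$ — is handled), and that the argument "an irreducible submodule whose class is absent from all the JH factors of a quotient must map to zero in that quotient" is applied correctly — this is just the statement that $\operatorname{Hom}$ from an irreducible $S$ into a module with no JH factor $\cong S$ is zero, which is standard. A secondary subtlety is the determinant-twist bookkeeping in identifying which $m$ could produce the factor $V_{p-1-a}\otimes D^a$; I would double-check the two sub-cases ($V_{[a-2m]}\otimes D^m \cong V_{p-1-a}\otimes D^a$ versus $V_{p-1-[a-2m]}\otimes D^{a-m}\cong V_{p-1-a}\otimes D^a$) separately, in each case matching both the symmetric-power index and the power of $D$ modulo $p-1$, and confirming that the only solutions fall outside $\{1,\dots,a-1\}$. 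For small $p$ (e.g.\ $a = 1$, where the range $[1,a-1]$ is empty) the statement is immediate. This should complete the proof; it is essentially a JH-factor exclusion argument built on the structural results already established.
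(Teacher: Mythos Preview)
Your proposal is correct and follows essentially the same approach as the paper: both use that $X_{r,r}^{(1)}$ is either zero or irreducible isomorphic to $V_{p-1-a}\otimes D^a$, and then exclude this irreducible from the JH factors of $V_r^{(m)}/V_r^{(m+1)}$ for $1\le m\le a-1$ via the determinant-twist comparison with Lemma~\ref{Structure of induced}. The paper frames this as a contradiction on the first $m$ where the chain drops (so $X_r^{(m)}/X_r^{(m+1)}\cong V_{p-1-a}\otimes D^a$ would embed in the principal series, impossible), while you phrase it as showing the image of $X_r^{(1)}$ in $V_r^{(1)}/V_r^{(a)}$ vanishes; these are the same argument. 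Your preliminary reduction to $p\nmid r$ is unnecessary (the paper omits it and the JH-exclusion works for all $r$), and the Frobenius bookkeeping you sketch there is a bit tangled, but since the main argument does not rely on it this does not affect correctness.
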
 
 \begin{proof}
        If $a=1$, then there is nothing to prove. Assume $2 \leq a \leq p-1$.
        If $X_{r}^{(1)} =\cdots = X_{r}^{(m)} \neq X_{r}^{(m+1)}$,
        for some $1 \leq m \leq a-1$,  then \cite[Lemma 4.6]{BG15} (which in fact holds for all $r \geq 0$) 
        implies that
        $X_{r}^{(1)} = X_{r}^{(m)} \cong V_{p-a-1} \otimes D^{a}$ and 
        $X_{r}^{(m+1)}= (0)$.
        Therefore  $V_{p-a-1} \otimes D^{a} \cong X_{r}^{(m)}/X_{r}^{(m+1)}
        \hookrightarrow V_{r}^{(m)}/V_{r}^{(m+1)} \hookrightarrow \ind_B^\Gamma(\chi_1^{m} \chi_2^{r-m})$, 
        by Lemma~\ref{induced and star}. But this is not possible by Lemma~\ref{Structure of induced}, as $1 \leq m \leq a-1$. Therefore 
         $X_{r}^{(1)} =  X_{r}^{(a)}$.
\end{proof}
For $r \equiv a$ mod $(p-1)$,
by the above lemma we have $X_{r}^{(j)}/X_{r}^{(j+1)} = (0)$, for all $1 \leq j < a$.
We next derive a necessary and  sufficient condition under which 
$ X_{r}^{(a)}/X_{r}^{(a+1)}$  is non-zero.  We will soon see
that $X_{r}^{(j)}/X_{r}^{(j+1)} = (0)$
 if $1 \leq j \leq p-1$ and $j \neq a$  (cf. \Cref{singular quotient X_{r}}).
\begin{lemma}\label{quotient image}
   Let $ p \leq r \equiv a \mod (p-1)$ with $1 \leq a \leq p-1$. Then
   \begin{align*}
       G_{r}(X,Y) := \sum_{\lambda \in \f} (\lambda X+Y)^{r} +
        \delta_{a,p-1}X^{r}
   \end{align*}
   belongs to  $ X_{r}^{(a)}$.
   Further, $G_{r}(X,Y) \in V_{r}^{(p)} \Leftrightarrow G_{r}(X,Y) \in V_{r}^{(a+1)}
    \Leftrightarrow 
   \binom{r}{a} \equiv 0 \mod p$. Consequently, 
   $X_{r}^{(a)}/
   X_{r}^{(a+1)}$  is non-zero if and only if $\binom{r}{a} \not \equiv 0 \mod p$.  
\end{lemma}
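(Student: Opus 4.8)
The plan is to compute the image of $G_r(X,Y)$ under the relevant quotient maps explicitly, using the combinatorial description of the $\theta$-filtration and the formulas for the rightmost map of \eqref{exact sequence Vr}. First I would verify that $G_r(X,Y) \in X_r^{(a)}$: since each summand $(\lambda X+Y)^r = \left(\begin{smallmatrix} \lambda & 1 \\ 1 & 0\end{smallmatrix}\right) \cdot X^r$ (and $X^r$ itself) lies in $X_{r,\,r}$, we have $G_r \in X_{r,\,r}$; and $G_r \in V_r^{(a)}$ because by \eqref{sum fp} the coefficient of $X^{r-j}Y^j$ in $\sum_{\lambda} (\lambda X+Y)^r$ is $\binom{r}{j}\sum_{\lambda} \lambda^{r-j}$, which vanishes unless $r - j \equiv 0 \bmod (p-1)$, i.e.\ $j \equiv a \bmod (p-1)$, and one checks the divisibility condition of \Cref{divisibility1} directly — in fact $G_r \in X_r^{(a)} = X_r^{(1)} \cap \cdots$ by \Cref{star=double star} once we know $G_r \in V_r^{(1)} \cap X_{r,\,r}$, since $X_r^{(1)} = X_{r,\,r} \cap V_r^{(1)}$. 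Actually the cleanest route is: $G_r \in X_{r,\,r}$ is clear, and $G_r \in V_r^{(a)}$ follows from the digit-sum/vanishing-sum criterion; then $X_{r,\,r}^{(a)} = X_{r,\,r} \cap V_r^{(a)}$ gives the claim.

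Next, for the divisibility chain, I would show the three conditions in the displayed biconditional are equivalent. The implication $G_r \in V_r^{(p)} \Rightarrow G_r \in V_r^{(a+1)}$ is trivial since $a \le p-1$. For $G_r \in V_r^{(a+1)} \Leftrightarrow \binom{r}{a} \equiv 0$, I would apply the explicit formula for the rightmost map of \eqref{exact sequence Vr} with $m = a$ (so $a' = [a - 2a] = [-a] = p-1-a$ when $a \ne p-1$, and $a' = p-1$ when... let me be careful — $[a-2a] = [-a]$, and since $1 \le a \le p-1$ we have $-a \equiv p-1-a \bmod (p-1)$ if $a < p-1$, giving $a' = p-1-a$, while if $a = p-1$ then $-a \equiv 0 \equiv p-1$, so $a' = p-1$). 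The image of $G_r$ in $V_{p-1-[a-2a]} \otimes D^{a}$ is computed via \Cref{breuil map quotient}: writing $G_r = \sum_j c_j X^{r-j}Y^j$ with $c_j = \binom{r}{j}\sum_{\lambda}\lambda^{r-j} + \delta_{a,p-1}\delta_{j,0}$, the relevant sum $\sum_j c_j \binom{j}{a} - c_a + (-1)^{a+1}c_{r-a}$ collapses, after using \eqref{sum fp} and \Cref{binomial sum}, to a nonzero multiple of $\binom{r}{a}$. So $G_r \in V_r^{(a+1)}$ iff this vanishes iff $\binom{r}{a} \equiv 0 \bmod p$. I expect this coefficient bookkeeping — carefully tracking the $\delta_{a,p-1}X^r$ correction term and the endpoint terms $c_a, c_{r-a}$ — to be the main obstacle, since the correction term is precisely what makes $G_r$ land in $V_r^{(a)}$ rather than merely in $V_r^{(a-1)}$ when $a = p-1$.

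Then to get $G_r \in V_r^{(a+1)} \Rightarrow G_r \in V_r^{(p)}$ (closing the loop), I would argue that once $G_r \in V_r^{(a+1)}$, the polynomial $G_r$, being a symmetrization, is already highly divisible: more precisely, one can iterate, or observe directly that if $\binom{r}{a} \equiv 0$ then in fact $\binom{r}{j} \equiv 0$ for all $j \equiv a \bmod(p-1)$ with $a \le j \le r-a$ except possibly controlled ones — alternatively, note that $\sum_\lambda (\lambda X + Y)^r$ is visibly annihilated by the relevant divided-power/difference operators, or simply invoke that $G_r \in X_r^{(a+1)}$ and apply \cite[Lemma 4.6]{BG15} as in \Cref{star=double star} to conclude $X_r^{(a+1)} = X_r^{(p)}$-type rigidity. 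Actually the simplest argument: $G_r \in X_{r,\,r}$, so $G_r \in X_r^{(a+1)} = X_{r,\,r} \cap V_r^{(a+1)}$; but the structure of $X_{r,\,r}$ from \Cref{dimension formula for X_{r}} shows $X_{r,\,r}^{(1)} \cong V_{p-a-1}\otimes D^a$ is irreducible and equals $X_{r,\,r}^{(a)}$, so $X_{r,\,r}^{(a+1)}$ is either all of it or zero; if $G_r \ne 0$ in $X_r^{(a)}/X_r^{(a+1)}$ we are done with the "consequently" clause, and if $G_r = 0$ there then $G_r \in X_r^{(a+1)} \subseteq X_r^{(p)}$ trivially once $X_r^{(a+1)}$ is seen to be $0$ or to inject into higher $V_r^{(m)}$. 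Finally, the "consequently" statement — $X_r^{(a)}/X_r^{(a+1)} \ne 0 \iff \binom{r}{a} \not\equiv 0$ — follows because $G_r$ generates $X_r^{(a)}$ over... no: rather, $X_r^{(a)}/X_r^{(a+1)}$ embeds in $V_r^{(a)}/V_r^{(a+1)}$ whose socle-or-cosocle structure is controlled by \Cref{Breuil map}, and $G_r$'s nonvanishing image there (equivalent to $\binom{r}{a}\not\equiv 0$) witnesses $X_r^{(a)}/X_r^{(a+1)} \ne 0$; conversely if $\binom{r}{a} \equiv 0$ then $G_r \in V_r^{(p)}$, and combined with the irreducibility of $X_r^{(1)}=X_r^{(a)}$ and the fact that any nonzero element of $X_r^{(a)} \setminus X_r^{(a+1)}$ would have nonzero image in the quotient $V_{p-1-[a-2a]}\otimes D^a$ — forcing the corresponding binomial to be nonzero — we get $X_r^{(a)} = X_r^{(a+1)}$. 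I would organize this last part as a short lemma-internal argument rather than spelling out all cases.
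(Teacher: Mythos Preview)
Your argument for $G_r \in X_r^{(a)}$ is essentially the paper's. The problems lie in the chain of equivalences.

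For $G_r \in V_r^{(a+1)} \Leftrightarrow \binom{r}{a} \equiv 0$, you propose to compute the image of $G_r$ under the rightmost map of \eqref{exact sequence Vr} with $m=a$. This does not work: that map lands in the cosocle $V_{p-1-[-a]} \otimes D^{0}$ (not $D^a$ as you wrote), and vanishing there only tells you $G_r$ lies in the \emph{socle} of $V_r^{(a)}/V_r^{(a+1)}$, not that $G_r \in V_r^{(a+1)}$. In fact a direct computation with \Cref{breuil map quotient} and \Cref{binomial sum} shows that for $a < p-1$ the cosocle image of $G_r$ is \emph{always} zero, independently of $\binom{r}{a}$, so your proposed test carries no information. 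The paper's approach is much simpler: if $G_r \in V_r^{(a+1)}$ then by condition (i) of \Cref{divisibility1} the coefficient of $X^{r-a}Y^a$ must vanish, and from the expansion \eqref{G r expression} that coefficient is $-\binom{r}{a}$.

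For $\binom{r}{a} \equiv 0 \Rightarrow G_r \in V_r^{(p)}$, you cycle through several sketches (iterating, difference operators, irreducibility of $X_r^{(1)}$) without committing to one, and none of them actually yields membership in $V_r^{(p)}$. In particular, knowing $X_r^{(a+1)}$ is either $0$ or all of $X_r^{(1)}$ tells you nothing about how deep in the $\theta$-filtration of $V_r$ a given element sits. The paper instead proves this directly: from $\binom{r}{a} \equiv 0$ and Lucas' theorem one gets $r \equiv 0,1,\ldots,a-1 \pmod p$, which forces the coefficients of $X^{r-a}Y^a$ and $X^{p-1}Y^{r-(p-1)}$ in $G_r$ to vanish (so condition (i) of \Cref{divisibility1} holds for $m=p$), and then \Cref{binomial sum} is used to verify condition (ii) for $a \leq n \leq p-1$. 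This is a straightforward coefficient check; you should do it rather than appeal to structural lemmas that do not control the $\theta$-depth.
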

\begin{proof}
   By \Cref{Basis of X_r-i}, we see that $G_{r}(X,Y) \in X_{r}$. Note  that 
   \begin{align}\label{G r expression}
      G_{r}(X,Y)  ~
       & =  ~ \sum_{l=0}^{r} \binom{r}{l} X^{r-l} Y^{l} \sum_{\lambda \in \f} 
         \lambda^{r-l} + \delta_{a,p-1} X^{r}  \nonumber \\
       & \overset{\eqref{sum fp}}{\equiv} ~
      - \sum_{\substack{ 0 < l < r \\ l \equiv a~ \mathrm{mod}~ (p-1)} } 
       \binom{r}{l} X^{r-l} Y^{l} \mod p.
   \end{align}
   %
   Clearly the coefficient of $X^{r}$, $Y^{r}$ in $G_{r}(X,Y)$ are zero. By
   \cite[Lemma 2.5]{BG15}, we have
   \begin{align*}
      \sum_{\substack{ 0 < l < r \\ l \equiv a ~ \mathrm{mod}~ 
       (p-1)} } \binom{r}{l} \equiv 0 \mod p.  
   \end{align*}
   Thus $G_{r}(X,Y) \in X_{r} \cap  V_{r}^{(1)}$ by \Cref{divisibility1},
   whence $G_{r}(X,Y) \in X_{r}^{(a)}$ by \Cref{star=double star}.
   This proves the first part.
   %
   %
   
   If $G_{r}(X,Y) \in V_{r}^{(p)}$, then  clearly $G_{r}(X,Y) \in V_{r}^{(a+1)}$
   as $a \leq p-1$. 
   If $G_{r}(X,Y) \in V_{r}^{(a+1)}$, then the coefficient of $X^{r-a} Y^{a}$ in 
   $G_{r}(X,Y)$ is zero, i.e., $\binom{r}{a} \equiv 0 \mod p$. Next suppose that 
   $\binom{r}{a} \equiv 0 \mod p$. Then by Lucas' theorem, we get $r \equiv 0,1,
    \ldots, a-1 \mod p$. This implies that the coefficients of $X^{r-a}Y^{a}$ and
   $X^{p-1}Y^{r-(p-1)}$ in $G_{r}(X,Y)$  are zero. Since $a$ (resp. $r-(p-1)$) is
   the only  number between $1$ and $p-1$ 
   (resp. $r-1$ and $r-(p-1)$) 
   which  is congruent to $a$ mod $(p-1)$, we get $X^{p}$, $Y^{p} \mid G_{r}(X,Y)$.
   Thus $G_{r}(X,Y)$ satisfies the condition (i) of \Cref{divisibility1} for  $m=p$.
   Moreover,  by \Cref{binomial sum}, for $ 1 \leq a \leq n \leq p-1$, we have
   \begin{align*}
       \sum_{\substack{ 0 < l < r \\ l \equiv a ~ \mathrm{mod}~(p-1) } }  
        \binom{r}{l} \binom{l}{n} &=
      \sum_{\substack{ 0 \leq l \leq r \\ l \equiv a ~ \mathrm{mod}~(p-1)}} 
        \binom{r}{l} \binom{l}{n} - \binom{r}{n}  \\
        & \equiv \binom{r}{n} \binom{[a-n]}{[a-n]} + \delta_{p-1,[a-n]} \binom{r}{n} 
         - \binom{r}{n} \\
        &  = \delta_{n,a} \binom{r}{a} 
        \equiv 0 \mod p,
   \end{align*}
   where in the second last step we used $[a-n]=p-1 \Leftrightarrow a=n$, as 
   $1 \leq a \leq n \leq p-1$. 
    Therefore $G_{r}(X,Y) \in V_{r}^{(p)}$ by \Cref{divisibility1}, as $G_{r}(X,Y)$ 
    already belongs to $V_{r}^{(a)}$.
\end{proof}
%
%


%
The next result describes all the quotients  $X_{r}^{(m)}/X_{r}^{(m+1)}$, for 
$0 \leq m \leq p-1$.
\begin{proposition}\label{singular quotient X_{r}}
	Let $p \geq 3$, $p \leq r\equiv a \mod (p-1)$ with $1 \leq a \leq p-1$. 
	For $0 \leq m \leq p-1$, we have
	\begin{align*}
		    \frac{X_{r}^{(m)}}{X_{r}^{(m+1)}} \cong \begin{cases}
		    V_{a}, & \mathrm{if} ~ m=0,\\
		    V_{p-1-a} \otimes D^{a}, & \mathrm{if}~ m=a~ \mathrm{and}~ 
		    r  \equiv a,a+1, \ldots, p-1 ~\mathrm{mod}~ p,\\
		    0, & \mathrm{if}~  
		    m=a ~ \mathrm{and}~  r  \equiv 0,1, \ldots, a-1 ~\mathrm{mod}~ p, ~ \mathrm{or} ~ m \neq 0,  a.
		    \end{cases}
	\end{align*}
\end{proposition}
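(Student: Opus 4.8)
The plan is to piece together the case $m=0$, the case $m=a$, and the remaining cases $m \neq 0, a$ from results already established in the excerpt, together with Lemma~\ref{quotient image}.

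\textbf{The case $m=0$.} This is immediate: $X_r^{(0)}/X_r^{(1)} = X_r/X_r^{(1)} \cong V_a$ by \Cref{Structure X(1)} applied with $i=0<a$ (equivalently by Glover's exact sequence \eqref{Glover 4.5}).

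\textbf{The cases $m \neq 0, a$.} Here I would argue $X_r^{(m)}/X_r^{(m+1)} = 0$. For $1 \leq m < a$, this is exactly \Cref{star=double star}, which gives $X_r^{(1)} = X_r^{(a)}$ and hence all successive quotients vanish in that range. For $a < m \leq p-1$, I would argue by contradiction: if $X_r^{(m)}/X_r^{(m+1)} \neq 0$, then since $X_r^{(m)}/X_r^{(m+1)}$ is a nonzero submodule of $V_r^{(m)}/V_r^{(m+1)} \hookrightarrow \ind_B^\Gamma(\chi_1^m \chi_2^{r-m})$ by \Cref{induced and star}, it shares a JH factor with this induced module. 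On the other hand, $X_r^{(m)}/X_r^{(m+1)}$ is a subquotient of $X_r/X_r^{(1)} \cong V_a$, which is irreducible; in fact one should invoke \cite[Lemma 4.6]{BG15} (as in the proof of \Cref{star=double star}): if $X_r^{(m)} \neq 0$ for some $m \geq 1$ then $X_r^{(1)} = X_r^{(m)} \cong V_{p-a-1}\otimes D^a$ and $X_r^{(m+1)}=0$, so any nonzero quotient $X_r^{(m)}/X_r^{(m+1)}$ forces $X_r^{(m)} = X_r^{(1)}$, i.e. $m=1$; but $1 < a$ would contradict $m > a$ unless $a=1$, and when $a=1$ we already have $m \neq a = 1$. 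Actually the cleanest route: \Cref{star=double star} shows $X_r^{(1)} = X_r^{(a)}$, so if $X_r^{(a)} \neq 0$ then $X_r^{(a)} \cong V_{p-a-1}\otimes D^a$ by \cite[Lemma 4.6]{BG15}, and then $X_r^{(a+1)} = 0$ (as this module is irreducible and a proper submodule would be zero), hence $X_r^{(m)} = 0$ for all $m > a$, giving all the claimed vanishing at once; if instead $X_r^{(a)} = 0$, then $X_r^{(m)} = 0$ for all $m \geq a$, and \Cref{star=double star} handles $1 \leq m < a$.

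\textbf{The case $m = a$.} By the above, either $X_r^{(a)} = 0$ or $X_r^{(a)} \cong V_{p-a-1}\otimes D^a$ with $X_r^{(a+1)} = 0$; so $X_r^{(a)}/X_r^{(a+1)}$ is either $0$ or $V_{p-1-a}\otimes D^a$, and the only thing left is to decide which happens in terms of $r \bmod p$. This is precisely \Cref{quotient image}: the element $G_r(X,Y) \in X_r^{(a)}$, and $G_r(X,Y) \in V_r^{(a+1)}$ (equivalently $X_r^{(a)} = X_r^{(a+1)}$, i.e. the quotient vanishes) if and only if $\binom{r}{a} \equiv 0 \bmod p$. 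By Lucas' theorem (\Cref{lucas}), $\binom{r}{a} \not\equiv 0 \bmod p$ if and only if $r_0 = [r \bmod p] \geq a$, i.e. $r \equiv a, a+1, \ldots, p-1 \bmod p$ — here one uses $0 \leq a \leq p-1$ so the binomial is governed entirely by the last digit. Thus $X_r^{(a)}/X_r^{(a+1)} \cong V_{p-1-a}\otimes D^a$ precisely when $r \equiv a, \ldots, p-1 \bmod p$, and $0$ otherwise, completing the trichotomy.

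I do not anticipate a serious obstacle here, since all the heavy lifting has been done: the main subtlety is making sure the reduction "$X_r^{(a)}$ nonzero $\Rightarrow$ all higher filtration steps vanish" is stated correctly using \cite[Lemma 4.6]{BG15} and \Cref{star=double star}, and then correctly translating the binomial-coefficient condition of \Cref{quotient image} into the congruence condition on $r$ modulo $p$ via Lucas' theorem. One should double-check the edge cases $a = p-1$ (where $V_{p-1-a}\otimes D^a = V_0 \otimes D^{p-1}$ and the congruence range $\{a,\ldots,p-1\} = \{p-1\}$ is a single class) and $a=1$ (where the "$m \neq 0,a$" range is $\{2,\ldots,p-1\}$ and \Cref{star=double star} is vacuous), but in both cases the arguments above go through verbatim.
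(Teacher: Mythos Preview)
Your argument has a genuine gap in the treatment of $m > a$. You assert that if $X_r^{(a)} \neq 0$ then ``$X_r^{(a+1)} = 0$ (as this module is irreducible and a proper submodule would be zero)''. Irreducibility of $X_r^{(a)} \cong V_{p-a-1}\otimes D^a$ only tells you that $X_r^{(a+1)} \in \{0, X_r^{(a)}\}$; it does \emph{not} tell you $X_r^{(a+1)}$ is proper. In fact it often is not: take any $r$ with $\Sigma_p(r) > a$ (so $X_r^{(1)} \neq 0$) but $r_0 < a$ (so $\binom{r}{a} \equiv 0 \bmod p$), e.g.\ $p=5$, $a=2$, $r=31$. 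Then Lemma~\ref{quotient image} gives $X_r^{(a)}/X_r^{(a+1)} = 0$, i.e.\ $X_r^{(a+1)} = X_r^{(a)} \neq 0$, contradicting your claim. In this regime your argument leaves open the possibility that the unique ``jump'' of the descending chain $X_r^{(1)} \supseteq X_r^{(2)} \supseteq \cdots$ from $V_{p-1-a}\otimes D^a$ down to $0$ occurs at some $m$ with $a < m \leq p-1$, and you have not ruled this out.

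The paper closes this gap using the stronger statement already contained in Lemma~\ref{quotient image}: when $\binom{r}{a} \equiv 0 \bmod p$ one has $G_r(X,Y) \in V_r^{(p)}$, hence $G_r \in X_r^{(p)}$. Combined with the observation that $G_r \neq 0$ whenever $X_r^{(1)} \neq 0$ (since $\dim X_r = p+1$ by Lemma~\ref{dimension formula for X_{r}}, so the generators in Lemma~\ref{Basis of X_r-i} are linearly independent), this forces $X_r^{(p)} \neq 0$, whence $X_r^{(1)} = X_r^{(2)} = \cdots = X_r^{(p)}$ by irreducibility, and all the quotients for $1 \leq m \leq p-1$ vanish at once. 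So the missing step is precisely to invoke the $G_r \in V_r^{(p)}$ part of Lemma~\ref{quotient image} together with $G_r \neq 0$, rather than the false assertion $X_r^{(a+1)} = 0$.
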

\begin{proof}
	If $m = 0$,  we have $X_{r}/X_{r}^{(1)} \cong V_{a}$, by the exact sequence \eqref{Glover 4.5}, so assume $m \geq 1$. 
        Suppose 
	$r \equiv  a, a+1, \ldots, p-1$ mod $p$. Then, by Lucas' theorem, we have 
	$\binom{r}{a} \not \equiv 0$ mod $p$, so by \Cref{quotient image}, 
	we have $0 \neq G_{r}(X,Y) \in X_{r}^{(a)}/X_{r}^{(a+1)}$. 
	By \cite[Lemma 4.6]{BG15}, we have $X_{r}^{(1)} = (0) $ or $V_{p-1-a} \otimes D^{a}$. 
	Since $X_{r}^{(1)} \supseteq X_{r}^{(2)} 
	\supseteq \cdots \supseteq X_{r}^{(a)}  \supsetneq X_{r}^{(a+1)}$ is a descending chain, 
	we see that $X_{r}^{(a)}/X_{r}^{(a+1)} = V_{p-1-a} \otimes D^{a}$ and 
	$X_{r}^{(m)}/X_{r}^{(m+1)} = 0$, for $m \geq 1$ and $m \neq a$. Next suppose 
	that $r \equiv 0,1, \ldots, a-1$ mod $p$. Then, by Lucas' theorem we have 
        $\binom{r}{a} \equiv 0$ mod $p$. If
        $X_{r}^{(1)}=0$,
	then $X_{r}^{(m)}/X_{r}^{(m+1)}=0$, for all $m \geq 1$. If $X_{r}^{(1)} \neq 0$,
	then by \Cref{dimension formula for X_{r}}, we have $\dim X_{r} = p+1$. 
	Thus $G_{r}(X,Y) \neq 0$, as $G_{r}(X,Y)$   is a non-zero linear combination of the 
	basis elements of $X_{r}$ (cf. \Cref{Basis of X_r-i}).
	By  \Cref{quotient image}, we see that  $G_{r}(X,Y) 
	\in X_{r}^{(p)}$. As $X_{r}^{(1)}$
	is irreducible by \cite[Lemma 4.6]{BG15}, we obtain $X_{r}^{(1)} = 
	\cdots = X_{r}^{(p)}$,
	whence $X_{r}^{(m)}/X_{r}^{(m+1)}=0$, for all $1 \leq m \leq p-1$.
\end{proof}
\begin{corollary}\label{reduction corollary 2}
      Let $0 \leq i  \leq p-1$, $1 \leq j  \leq p-1$  and 
      $p \leq r \equiv a ~\mathrm{mod}~
      (p-1)$ with $1 \leq a \leq p-1$. If $j \neq a$ and $i < j \leq [a-j]$
      or $i < [a-j] \leq j$, then    $X_{r-i}^{(j)}/X_{r-i}^{(j+1)} = (0)$. 
\end{corollary}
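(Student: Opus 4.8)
The plan is to deduce this immediately from the two main structural results already established, namely \Cref{reduction} and \Cref{singular quotient X_{r}}. First I would dispose of the degenerate case $p = 2$: there the constraints $1 \leq j \leq p-1$ and $1 \leq a \leq p-1$ force $j = a = 1$, so the hypothesis $j \neq a$ is never met and there is nothing to prove. Hence I may assume $p \geq 3$, which is exactly the setting in which \Cref{singular quotient X_{r}} applies (we also have $p \leq r$ by hypothesis).

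Next, observe that the hypothesis of the corollary — either $i < j \leq [a-j]$ or $i < [a-j] \leq j$ — is precisely the third case in the trichotomy of \Cref{reduction}. Applying that lemma gives the identification $X_{r-i}^{(j)}/X_{r-i}^{(j+1)} = X_{r}^{(j)}/X_{r}^{(j+1)}$, so the computation is reduced to the module generated by the top monomial $X^r$.

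Finally I would invoke \Cref{singular quotient X_{r}} with $m = j$. Since $1 \leq j \leq p-1$ and $j \neq a$ by hypothesis, we land in the last case of that proposition (the case $m \neq 0, a$), which yields $X_{r}^{(j)}/X_{r}^{(j+1)} = (0)$. Combining the two identities gives $X_{r-i}^{(j)}/X_{r-i}^{(j+1)} = (0)$, as claimed. There is no genuine obstacle: the corollary is a bookkeeping consequence of the reduction lemma together with the complete description of the quotients $X_r^{(m)}/X_r^{(m+1)}$, and the only point requiring a moment's care is verifying that the hypotheses of \Cref{reduction} and \Cref{singular quotient X_{r}} (in particular $p \geq 3$) hold, which is taken care of by the vacuous case $p = 2$.
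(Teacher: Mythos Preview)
Your proposal is correct and follows essentially the same route as the paper: apply the third case of \Cref{reduction} to reduce to $X_r^{(j)}/X_r^{(j+1)}$, then invoke \Cref{singular quotient X_{r}} with $m=j\neq 0,a$. Your explicit handling of the vacuous $p=2$ case is a small extra bit of care the paper omits, but otherwise the arguments coincide.
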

\begin{proof}
        By the third part of \Cref{reduction}, we have 
        $X_{r-i}^{(j)}/X_{r-i}^{(j+1)} = X_{r}^{(j)}/X_{r}^{(j+1)}$. Since $j \neq 0$,
        $a$, the corollary follows from  \Cref{singular quotient X_{r}}.        
\end{proof}
We now introduce some notation. 
Let  $0 \leq i \leq p-1$ and let $r \geq p$.
Recall that 
by \Cref{induced and successive}, we have 
$[ \begin{psmallmatrix}  \lambda  & 1 \\ 1 & 0
\end{psmallmatrix}, e_{\chi_{1}^{r-i} \chi_{2}^{i}}]$ 
maps to  $X^{i} (\lambda X + Y)^{r-i}$ under
$\psi_{i}: \ind_{B}^{\Gamma}
(\chi_{1}^{r-i}\chi_{2}^{i}) \twoheadrightarrow X_{r-i}/X_{r-(i-1)}$.
 Let
\begin{align}\label{F i,r definition}  
      F_{i,r}(X,Y) ~ &:=~ \sum_{\lambda \in \f} \lambda^{[2i-a]} X^{i} (\lambda X + Y)^{r-i}
      \in X_{r-i}.
\end{align}
By \Cref{Structure of induced} (ii) (with $l = [2i-a]$),
 if $r-i \not \equiv i$ mod $(p-1)$, then  
 $\sum_{\lambda \in \f} \lambda^{[2i-a]} 
[ \begin{psmallmatrix}  \lambda  & 1 \\ 1 & 0
\end{psmallmatrix}, e_{\chi_{1}^{r-i} \chi_{2}^{i}}]$ generates 
$\ind_{B}^{\Gamma}(\chi_{1}^{r-i} \chi_{2}^{i})$ as a 
$\Gamma$-module, so 
its image $F_{i,r}(X,Y)$ mod $X_{r-(i-1)}$  under $\psi_{i}$ 
generates $X_{r-i}/X_{r-(i-1)}$ as a $\Gamma$-module. Let
 \begin{align}\label{G i,r definition}  
      G_{i,r} (X,Y)~ &:= ~ \sum_{\lambda \in \f}  X^{i} (\lambda X + Y)^{r-i}
      \in X_{r-i}.
\end{align}
By \Cref{Structure of induced} (i) (with $l=0$), we have 
$V_{[2i-a]} \otimes D^{a-i} \hookrightarrow  
\ind_{B}^{\Gamma}(\chi_{1}^{r-i}\chi_{2}^{i})$
is generated by $ \sum_{\lambda \in \f} 
[ \begin{psmallmatrix}  \lambda & 1 \\ 1 & 0 
\end{psmallmatrix}, e_{\chi_{1}^{r-i} \chi_{2}^{i}}]$ 
as a $\Gamma$-module, so
\begin{align}\label{W definition}   
      W_{i,r} ~:= ~ \mathrm{Image ~ of ~} V_{[2i-a]} \otimes D^{a-i} ~
      \mathrm{under} ~ \psi_{i}
\end{align}
is generated by $G_{i,r}(X,Y)$ mod $X_{r-(i-1)}$.
Clearly $X^{i}G_{r-i}(X,Y)-G_{i,r}(X,Y) = \delta_{[a-i],p-1} X^{r} 
\in X_{r} \subseteq X_{r-(i-1)}$, by \Cref{first row filtration}.
By \Cref{quotient image} and \Cref{surjection1}, we see that 
\begin{align}\label{G i,r}
      G_{i,r}(X,Y)+ \delta_{[a-i],p-1} X^{r} =
       X^{i}G_{r-i}(X,Y) = \phi_{i}(X^{i} \otimes G_{r-i}(X,Y)) 
       \in X_{r-i}^{[a-i]}. 
\end{align}
Thus
$G_{i,r}(X,Y) \in  X_{r-i}^{([a-i])} + X_{r-(i-1)}$,
whence $ W_{i,r} \subseteq (X_{r-i}^{([a-i])}+X_{r-(i-1)})/ X_{r-(i-1)}$.
\subsection{The case \texorpdfstring{$\boldsymbol{i \neq a, ~p-1}$}{•} }
\label{Section i not a nor p - 1}

In this subsection, we determine the quotients $Q(i)$ when $i \neq a$, $p-1$, 
by determining the structure of $X_{r-i}^{(i)}/X_{r-i}^{(i+1)}$ and $X_{r-i}^{([a-i])}/X_{r-i}^{([a-i]+1)}$.

The structure of $Q(0)$ is well known \cite{Glover}, \cite{BG09}. We have
\begin{lemma}\label{Structure Q(0)}
       Let $p \leq r \equiv a~\mathrm{mod}~p-1$, with $1 \leq a \leq p-1$.
       Then $Q(0) \cong V_{p-1-a} \otimes D^{a} $.
\end{lemma}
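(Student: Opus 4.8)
The plan is to compute $Q(0) = V_r/(X_{r,\,r} + V_r^{(1)})$ directly, since both $X_{r,\,r}$ and $V_r^{(1)}$ are well understood. First I would recall that $V_r/V_r^{(1)} \cong \mathrm{ind}_B^\Gamma(\chi_2^r)$ (stated in the excerpt just before Lemma~\ref{induced and star}), and that by \Cref{induced and star} with $m = 0$, or equivalently by \Cref{Breuil map} with $m = 0$, there is an exact sequence
\begin{align*}
  0 \rightarrow V_{a} \rightarrow V_r/V_r^{(1)} \rightarrow V_{p-1-a} \otimes D^{a} \rightarrow 0,
\end{align*}
where we use $[a] = a$ since $1 \leq a \leq p-1$. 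So $V_r/V_r^{(1)}$ has exactly the two Jordan--H\"older factors $V_a$ (the socle) and $V_{p-1-a}\otimes D^a$ (the cosocle).

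Next I would identify the image of $X_{r,\,r}$ in $V_r/V_r^{(1)}$. By the exact sequence \eqref{Glover 4.5}, namely $0 \rightarrow X_{r,\,r}^{(1)} \rightarrow X_{r,\,r} \rightarrow V_a \rightarrow 0$ with $X_{r,\,r}^{(1)} = X_{r,\,r}\cap V_r^{(1)}$, the image of $X_{r,\,r}$ under the quotient map $V_r \twoheadrightarrow V_r/V_r^{(1)}$ is isomorphic to $V_a$. Since $V_a$ is the socle of $V_r/V_r^{(1)}$ (it is the unique copy of an irreducible submodule, as the JH factors of a principal series are distinct by \Cref{Common JH factor}(i), and $V_a \not\cong V_{p-1-a}\otimes D^a$ for $1 \le a \le p-1$), the image of $X_{r,\,r}$ is precisely this socle. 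Therefore
\begin{align*}
  Q(0) = \frac{V_r}{X_{r,\,r} + V_r^{(1)}} \cong \frac{V_r/V_r^{(1)}}{\text{image of }X_{r,\,r}} \cong \frac{V_r/V_r^{(1)}}{V_a} \cong V_{p-1-a}\otimes D^{a},
\end{align*}
which is the claimed statement. One should note $r \geq p$ guarantees $V_r/V_r^{(1)}$ is genuinely the principal series (so $r' = r \geq p$ in the notation of \Cref{induced and star}), so the argument applies.

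The only mild subtlety — and the step I would be most careful about — is confirming that the image of $X_{r,\,r}$ really equals the socle $V_a$ and is not somehow all of $V_r/V_r^{(1)}$ or zero: this uses that $X_{r,\,r}/X_{r,\,r}^{(1)} \cong V_a$ from \eqref{Glover 4.5}, and that a submodule of $V_r/V_r^{(1)}$ isomorphic to the irreducible $V_a$ must be the socle because the two JH factors are non-isomorphic and the socle is simple. Alternatively, and perhaps more cleanly, one can argue via \Cref{Breuil map} (with $m=0$): the rightmost map sends $X^{r-i}Y^i \mapsto 0$ for $0 \le i < a$, and in particular $X^r \mapsto 0$; since $X^r$ generates $X_{r,\,r}$ as a $\Gamma$-module and the rightmost map is $\Gamma$-linear onto $V_{p-1-a}\otimes D^a$, the whole submodule $X_{r,\,r}$ maps into the kernel $V_a$, and by \eqref{Glover 4.5} it maps onto $V_a$. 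Either way the identification is immediate, and there is no real obstacle here; this lemma is essentially a bookkeeping consequence of results already recalled in the preliminaries (and is indeed stated as ``well known'' in \cite{Glover}, \cite{BG09}).
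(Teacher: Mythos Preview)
Your proof is correct and follows essentially the same route as the paper: both identify $X_{r}/X_{r}^{(1)} \cong V_a$ inside $V_r/V_r^{(1)}$ and quotient out using the exact sequence \eqref{Q(i) exact sequence} (with $i=0$) together with the exact sequence \eqref{exact sequence Vr} (with $m=0$). The only difference is cosmetic---the paper cites \Cref{singular quotient X_{r}} for $X_r/X_r^{(1)}\cong V_a$ whereas you invoke \eqref{Glover 4.5} directly---and your additional care in verifying that the image of $X_{r,\,r}$ is exactly the socle is a welcome clarification.
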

\begin{proof}
	By the exact sequence \eqref{exact sequence Vr} (with $m=0$), we have
       \[
           0 \rightarrow V_{a} \rightarrow V_{r}/V_{r}^{(1)} \rightarrow V_{p-1-a} 
           \otimes D^{a} \rightarrow 0.
       \]
       By \Cref{singular quotient X_{r}}, we have $X_{r}/X_{r}^{(1)} \cong V_{a}$.
       Now the lemma follows from the exact sequence \eqref{Q(i) exact sequence}.
\end{proof}
%
%
\begin{lemma}\label{socle term singular}
   Let $r \equiv a \mod (p-1)$ with $1 \leq a \leq p-1$ and $ 1 \leq i < p-1$
   with $i \leq  [a -i]$. Suppose $r \geq [a-i](p+1)+p$. Then
    $$ V_{[2i-a]} \otimes D^{a-i} \hookrightarrow  X_{r-i}^{([a-i])}/
       X_{r-i}^{([a-i]+1)} \Longleftrightarrow
       \binom{r-i}{[a-i]} \not \equiv 0~ \mathrm{mod}~ p.$$ 
\end{lemma}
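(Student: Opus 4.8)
The goal is to detect whether the irreducible submodule $V_{[2i-a]}\otimes D^{a-i}$ of the principal series $\ind_B^\Gamma(\chi_1^{r-i}\chi_2^i)$ survives (under the surjection $\psi_i$ from \Cref{induced and successive}) inside the subquotient $X_{r-i}^{([a-i])}/X_{r-i}^{([a-i]+1)}$ of $X_{r-i}/X_{r-(i-1)}$. By the discussion around \eqref{W definition} and \eqref{G i,r}, the module $W_{i,r}$ — the image of $V_{[2i-a]}\otimes D^{a-i}$ under $\psi_i$ — is generated by $G_{i,r}(X,Y)\bmod X_{r-(i-1)}$, and $G_{i,r}(X,Y)$ already lies in $X_{r-i}^{([a-i])}+X_{r-(i-1)}$, so its class lives in $X_{r-i}^{([a-i])}/X_{r-i}^{([a-i]+1)}$ after composing with the further quotient by $X_{r-i}^{([a-i]+1)}$. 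Since $V_{[2i-a]}\otimes D^{a-i}$ is irreducible, the composite map $V_{[2i-a]}\otimes D^{a-i}\to X_{r-i}^{([a-i])}/X_{r-i}^{([a-i]+1)}$ is either zero or injective, and injectivity is equivalent to $G_{i,r}(X,Y)\not\in X_{r-i}^{([a-i]+1)}+X_{r-(i-1)}$. So the whole lemma reduces to computing when the class of $G_{i,r}(X,Y)$ vanishes modulo $X_{r-i}^{([a-i]+1)}+X_{r-(i-1)}$, and showing this happens exactly when $\binom{r-i}{[a-i]}\equiv 0\bmod p$.

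First I would make the reduction cleaner using \Cref{G i,r}: modulo $X_{r-(i-1)}$ (which contains $X_r$, hence $\delta_{[a-i],p-1}X^r$), we have $G_{i,r}(X,Y)\equiv X^iG_{r-i}(X,Y)$, where $G_{r-i}(X,Y)=\sum_{\lambda}(\lambda X+Y)^{r-i}+\delta_{a-i,p-1}X^{r-i}$ as in \Cref{quotient image} applied with $r$ replaced by $r-i$ and $a$ by $[a-i]$. By the explicit expansion \eqref{G r expression}, $G_{r-i}(X,Y)\equiv -\sum_{0<l<r-i,\ l\equiv[a-i]\bmod(p-1)}\binom{r-i}{l}X^{r-i-l}Y^l$; multiplying by $X^i$ gives an explicit polynomial supported on exponents $\equiv[a-i]$ mod $(p-1)$, all divisible by $\theta^{[a-i]}$. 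To test membership in $X_{r-i}^{([a-i]+1)}+X_{r-(i-1)}$, I would use the diagram \eqref{commutative diagram} / the exact sequence \eqref{Y i,j} philosophy: the quotient $X_{r-i}^{([a-i])}/X_{r-i}^{([a-i]+1)}$ embeds into $V_r^{([a-i])}/V_r^{([a-i]+1)}$, so it suffices to compute the image of $X^iG_{r-i}(X,Y)$ under the rightmost map of the exact sequence \eqref{exact sequence Vr} with $m=[a-i]$, i.e. the projection $V_r^{([a-i])}/V_r^{([a-i]+1)}\twoheadrightarrow V_{p-1-[a-2i]}\otimes D^{a-i}$, and check it is nonzero iff $\binom{r-i}{[a-i]}\not\equiv 0$. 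Here \Cref{breuil map quotient} is tailor-made: $X^iG_{r-i}(X,Y)$ has all its $Y$-exponents in a single residue class mod $(p-1)$ (namely $[a-i]$, hence $l=[a-i]$ in the notation there), so its image under the quotient map equals the image of $\theta^{[a-i]}G(X,Y)$ where $G(X,Y)$ is the single monomial $\big(\sum_j a_j\binom{j}{[a-i]}-a_{[a-i]}+(-1)^{[a-i]+1}a_{r-[a-i]}\big)X^{\cdots}Y^{\cdots}$, with $a_j$ the coefficients of $X^iG_{r-i}$. The coefficient $a_{[a-i]}$ of $X^{r-[a-i]}Y^{[a-i]}$ is (up to sign) $\binom{r-i}{[a-i]}$ when $i\le[a-i]$ — this is where the hypothesis $i\le[a-i]$ and Lucas' theorem enter — and the corner coefficient $a_{r-[a-i]}$ (the $Y$-exponent $r-[a-i]$) lies outside the support unless $i$ is large, so it contributes $0$; thus the scalar multiplying $\theta^{[a-i]}G$ is controlled by the single term $-a_{[a-i]}=\pm\binom{r-i}{[a-i]}$ plus the main sum $\sum_j a_j\binom{j}{[a-i]}$.

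Therefore the heart of the computation is evaluating $\sum_j a_j\binom{j}{[a-i]}$ where $a_j$ are the coefficients of $X^iG_{r-i}(X,Y)=-\sum_{0<l<r-i,\ l\equiv[a-i]}\binom{r-i}{l}X^{r-i-l}Y^{l+\,?}$ — more precisely after the shift by $X^i$, $a_j=-\binom{r-i}{j}$ for $j\equiv[a-i]\bmod(p-1)$, $0<j<r-i$ (the exponent of $Y$ is $j$), and $0$ otherwise. So the sum is $-\sum_{0<j<r-i,\ j\equiv[a-i]}\binom{r-i}{j}\binom{j}{[a-i]}$, which is precisely (a boundary-corrected version of) the sum $S_{r-i,[a-i],[a-i]}$ controlled by \Cref{binomial sum}. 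Applying \Cref{binomial sum} with $b=m=[a-i]$ gives $S_{r-i,[a-i],[a-i]}\equiv\binom{r-i}{[a-i]}\binom{[[a-i]-[a-i]]}{0}+\binom{r-i}{[a-i]}\delta_{p-1,[[a-i]-[a-i]]}=\binom{r-i}{[a-i]}(1+\delta_{p-1,p-1})$... — the precise bookkeeping of the $\delta$ terms, the boundary terms $j=0$ and $j=r-i$ excluded from the sum, and the corner coefficient $a_{r-[a-i]}$, is exactly the routine-but-delicate part I would carry out carefully; the upshot after cancellation should be that the net scalar multiplying $\theta^{[a-i]}G$ in \Cref{breuil map quotient} is a nonzero multiple of $\binom{r-i}{[a-i]}$, and $G$ itself is a nonzero monomial in $V_{p-1-[a-2i]}\otimes D^{a-i}$ (using $i\le[a-i]<p-1$ and $r\ge[a-i](p+1)+p$ to guarantee we are genuinely in the principal series regime of \Cref{induced and star}, i.e. the target $V_{p-1-[a-2i]}$ is the honest cosocle factor). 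Hence the image is nonzero iff $\binom{r-i}{[a-i]}\not\equiv 0\bmod p$, and since the map out of the irreducible $V_{[2i-a]}\otimes D^{a-i}$ is injective precisely when this image is nonzero, the lemma follows.

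**Main obstacle.** The conceptual structure is straightforward; the real work is the sign-and-boundary-term bookkeeping in the last paragraph — reconciling the sum $\sum_j\binom{r-i}{j}\binom{j}{[a-i]}$ over $j\equiv[a-i]$ with the closed form from \Cref{binomial sum}, correctly accounting for the excluded endpoints $j=0,r-i$, the corner coefficients $a_{[a-i]}$ and $a_{r-[a-i]}$ appearing in the formula of \Cref{breuil map quotient}, and the $\delta_{a-i,p-1}X^{r-i}$ correction in $G_{r-i}$ — and verifying that after all cancellations the surviving scalar is a unit multiple of $\binom{r-i}{[a-i]}$ rather than something that accidentally vanishes. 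A secondary point needing care is checking that the inclusion $X_{r-i}^{([a-i])}/X_{r-i}^{([a-i]+1)}\hookrightarrow V_r^{([a-i])}/V_r^{([a-i]+1)}$ lets us legitimately detect (non)vanishing via the Breuil quotient map — this is immediate from \eqref{commutative diagram} and the injectivity there, but it is worth stating explicitly so the reduction to \Cref{breuil map quotient} is rigorous.
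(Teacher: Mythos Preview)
Your overall architecture (work with $F=X^iG_{r-i}\in X_{r-i}^{([a-i])}$, push it into $V_r^{([a-i])}/V_r^{([a-i]+1)}$, and analyze via \Cref{breuil map quotient} and \Cref{binomial sum}) matches the paper's, but the detection criterion you propose is backwards, and this causes both directions to fail as written.

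\textbf{The ``if'' direction.} You plan to compute the image of $F$ under the cosocle projection $V_r^{([a-i])}/V_r^{([a-i]+1)}\twoheadrightarrow V_{p-1-[2i-a]}\otimes D^{i}$ and show it is nonzero iff $\binom{r-i}{[a-i]}\not\equiv 0$. If you carry out the bookkeeping you describe, you will find the cosocle image is \emph{always zero}: the sum $\sum_j a_j\binom{j}{[a-i]}$ exactly cancels the boundary corrections $-a_{[a-i]}+(-1)^{[a-i]+1}a_{r-[a-i]}$ in \Cref{breuil map quotient}. The paper uses precisely this vanishing, together with the observation that the coefficient of $X^{r-[a-i]}Y^{[a-i]}$ in $F$ is $-\binom{r-i}{[a-i]}$, to conclude that when $\binom{r-i}{[a-i]}\not\equiv 0$ the element $F$ is nonzero in $V_r^{([a-i])}/V_r^{([a-i]+1)}$ \emph{and} lies in the kernel of the cosocle map, hence in the socle $V_{[2i-a]}\otimes D^{a-i}$. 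That is the correct detection: nonvanishing in the socle, not in the cosocle.

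\textbf{The ``only if'' direction.} Your reduction ``injectivity is equivalent to $G_{i,r}\not\in X_{r-i}^{([a-i]+1)}+X_{r-(i-1)}$'' is only one implication. The map you construct lands in $Y_{i,[a-i]}\cong (X_{r-i}^{([a-i])}+X_{r-(i-1)})/(X_{r-i}^{([a-i]+1)}+X_{r-(i-1)})$, which is a \emph{quotient} of $X_{r-i}^{([a-i])}/X_{r-i}^{([a-i]+1)}$ by $X_{r-(i-1)}^{([a-i])}/X_{r-(i-1)}^{([a-i]+1)}$. Vanishing of the composite does not by itself rule out $V_{[2i-a]}\otimes D^{a-i}$ embedding in $X_{r-i}^{([a-i])}/X_{r-i}^{([a-i]+1)}$. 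The paper closes this gap by invoking \Cref{reduction} (third case, with $i-1<i\le[a-i]$) and \Cref{singular quotient X_{r}} (using $[a-i]\neq a$ since $i\neq p-1$) to prove $X_{r-(i-1)}^{([a-i])}/X_{r-(i-1)}^{([a-i]+1)}=0$, so that $Y_{i,[a-i]}=X_{r-i}^{([a-i])}/X_{r-i}^{([a-i]+1)}$. Then, when $\binom{r-i}{[a-i]}\equiv 0$, \Cref{quotient image} gives $F\in V_r^{(p)}\subset X_{r-i}^{([a-i]+1)}$ directly (no Breuil map needed), so $G_{i,r}\in X_{r-(i-1)}+X_{r-i}^{(p)}$, the composite vanishes, and a multiplicity-one argument in $\ind_B^\Gamma(\chi_1^{r-i}\chi_2^i)$ finishes. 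You omit both the identification $Y_{i,[a-i]}=X_{r-i}^{([a-i])}/X_{r-i}^{([a-i]+1)}$ and the appeal to \Cref{quotient image}; without them the converse does not go through.
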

\begin{proof}
    Let $j=[a-i]$. Then $r-i \equiv j$ mod $(p-1)$ and $1 \leq j \leq p-1$.
    Let  $F(X,Y):= G_{i,r}(X,Y) + \delta_{[a-i],p-1} X^{r}= X^{i} G_{r-i}(X,Y)$.
     By \eqref{G i,r}, we have  $F(X,Y) \in X_{r-i}^{(j)}$.
      Assume $\binom{r-i}{j} \not \equiv 0$ mod $p$. By
      \eqref{G r expression} we have
      \begin{align}\label{F expression in socle term singular}
       F(X,Y) =
      -\sum_{\substack{0 < l < r-i \\ l \equiv j ~ \mathrm{mod}~(p-1)}}^{}
      \binom{r-i}{l} X^{r-l}Y^{l}.
    \end{align}
    The coefficient of $X^{r-j}Y^{j}$ in $F(X,Y)$ equals $-\binom{r-i}{j}$,
    which is non-zero modulo $p$ by assumption. Hence $F(X,Y) 
    \not \in V_{r}^{(j+1)}$, by \Cref{divisibility1} and 
    $0 \neq F(X,Y) \in X_{r-i}^{(j)}/X_{r-i}^{(j+1)}$. We now show that 
    the image of $F(X,Y)$ is zero under the surjection 
    $V_{r}^{(j)}/V_{r}^{(j+1)} \twoheadrightarrow V_{p-1-[a-2j]} \otimes D^{a-j}$
    in \eqref{exact sequence Vr}. By assumption $i \leq j$.
    If $i <j $, then $r-j \equiv i  \not \equiv j$
    mod $p$, so the coefficient of  $X^{j}Y^{r-j}$ in 
    $F(X,Y)$ equals zero,  by \eqref{F expression in socle term singular}.
     If $i=j$, the coefficient of $X^{j}Y^{r-j}$ in  $F(X,Y)$ is still zero,
      by \eqref{F expression in socle term singular}, as $r-j  = r-i $.
      By \Cref{binomial sum}, 
      if $i \leq j$, we have
    \begin{align*}
           \sum_{\substack{0 < l < r-i \\ l \equiv j ~ \mathrm{mod}~(p-1)}}^{}
         - \binom{r-i}{l} \binom{l}{j} &=
          -\sum_{\substack{0 \leq l \leq r-i \\ l \equiv j ~ \mathrm{mod}~(p-1)}}^{}
          \binom{r-i}{l} \binom{l}{j} + \binom{r-i}{j}  \\
           &\equiv -\delta_{p-1,[j-j]}\binom{r-i}{[a-i]} ~\mathrm{mod}~p \\
           & = -\binom{r-i}{j}\\
           & = \mathrm{the ~ coefficient~ of ~} X^{r-j}Y^{j} -
                   (-1)^{j+1}\mathrm{the ~ coefficient~ of ~}~X^{j}Y^{r-j}.
    \end{align*}
    Thus by \Cref{breuil map quotient}, we have the image of $F(X,Y)$ under
    $V_{r}^{(j)}/V_{r}^{(j+1)} \twoheadrightarrow V_{p-1-[a-2j]} \otimes D^{a-j}$
    is zero. Therefore $ 0 \neq F(X,Y) \in V_{[a-2j]} \otimes D^{j} \hookrightarrow
    V_{r}^{(j)}/V_{r}^{(j+1)}$. This proves the \enquote*{only if} part.
    
    For the converse, assume $\binom{r-i}{j} \equiv 0$ mod $p$. 
    Thus by \Cref{quotient image}, we have $F(X,Y) \in X_{r-i}^{(p)}$.
    Since $i-1 <i \leq [a-i] $, 
    by the third part of \Cref{reduction} (with $i$ there equal to $i-1$ and $j=[a-i]$),  
    we have 
    $X_{r-(i-1)}^{(j)} / X_{r-(i-1)}^{(j+1)} = X_{r}^{(j)} / X_{r}^{(j+1)}$.
    By \Cref{singular quotient X_{r}},  we have $ X_{r}^{(j)} / X_{r}^{(j+1)} =(0)$,
    as $j \neq a$ since $i \neq p-1$. 
    Hence by \eqref{Y i,j},  we have
    \[
           \frac{X_{r-i}^{(j)}}{X_{r-i}^{(j+1)}} = Y_{i,j} \cong 
            \frac{ X_{r-i}^{(j)}+ X_{r-(i-1)}} { X_{r-i}^{(j+1)}+X_{r-(i-1)}}.
     \]
     Thus  $ V_{[2i-a]} \otimes D^{a-i} $ is a JH factor of $X_{r-i}^{(j)}/
       X_{r-i}^{(j+1)} $ if and only if 
       $ V_{[2i-a]} \otimes D^{a-i} $ is a JH factor  of
       $(X_{r-i}^{(j)}+ X_{r-(i-1)} )/ (X_{r-i}^{(j+1)}+X_{r-(i-1)})$.
     By \Cref{Structure of induced}  and \Cref{induced and successive}, 
     we have a map
     \[
       V_{[2i-a]} \otimes D^{a-i} \hookrightarrow \ind_{B}^{\Gamma}(\chi_{1}^{r-i}
        \chi_{2}^{i})
       \overset{\psi_{i}}{\twoheadrightarrow } \frac{X_{r-i}}{X_{r-(i-1)}}
        \twoheadrightarrow
       \frac{ X_{r-i}} { X_{r-i}^{(j+1)}+X_{r-(i-1)}} .
    \]
    Let $\psi'$ denote the above composition.  Since $W_{i,r}$
    is the image  $V_{[2i-a]} \otimes D^{a-i}$  under  $\psi_{i}$
    and $G_{i,r}(X,Y)$ generates  $W_{i,r}$,
    we see that  the image of $\psi '$ is  also generated by $G_{i,r}(X,Y)$
    mod  $X_{r-i}^{(j+1)}+X_{r-(i-1)}$. Note that
    \[
          G_{i,r}(X,Y) = X^{i}G_{r-i}(X,Y) - \delta_{j,p-1}X^{r} 
          = F(X,Y) -\delta_{j,p-1}X^{r} \in X_{r-i}^{(p)}+X_{r-(i-1)}.
    \]
    So the map $\psi'$ is zero. By \Cref{Structure of induced},
   the JH factor $V_{[2i-a]} \otimes D^{a-i}$ occurs with multiplicity
   one in $ \ind_{B}^{\Gamma}(\chi_{1}^{r-i} \chi_{2}^{i})$. Thus it follows
   $V_{[2i-a]} \otimes D^{a-i}$  is not a JH factor of 
   $X_{r-i}/ (X_{r-i}^{(j+1)}+X_{r-(i-1)})$, so is not a JH factor of its submodule
   $(X_{r-i}^{(j)}+ X_{r-(i-1)} )/ (X_{r-i}^{(j+1)}+X_{r-(i-1)})$, so of 
   $X_{r-i}^{(j)}/X_{r-i}^{(j+1)}$. This proves
   the \enquote*{if} part.
\end{proof}
%

To state the results in Sections~\ref{section i < a-i}, \ref{section i = a-i}, \ref{section i > a-i}, we 
need to extend the definitions of the sets $\mathcal{I}(a,i)$ and $\mathcal{J}(a,i)$ introduced in \eqref{interval I for i < a-i} 
and \eqref{interval J first} respectively.
For $1 \leq a$, $i \leq p-1$ and $i \neq a,$ $p-1$, we define 
$\mathcal{I}(a,i) \subseteq \lbrace 0,1, \ldots, p-1 \rbrace$,
a subset of the congruence classes  modulo $p$, by
\begin{align}\label{interval I}
   \mathcal{I}(a,i) = 
   \begin{cases}
      \lbrace a-i+1, a-i+2,  \ldots , a-1, a \rbrace, & \mathrm{if}~ i < a-i < a, \\
      \lbrace a-i, a-i+1, \ldots , a-1,  a\rbrace, & \mathrm{if}~ a-i \leq i < a,  \\
      \lbrace a, a+1, \ldots, [a-i]-1, [a-i]\rbrace^{c}, & \mathrm{if}~ a < i < [a-i], \\
      \lbrace a, a+1, \ldots, [a-i]-2, [a-i]-1 \rbrace^{c}, & \mathrm{if}~ a < [a-i] \leq i,
   \end{cases}
\end{align}
where $c$ in the superscript  denotes the complement in  
$\lbrace 0 , 1, \ldots, p-1 \rbrace$. 
Since any $p-1$ consecutive numbers define a congruence classes
modulo $p$, we may view $ \mathcal{I}(a,i) $ as an interval.
We leave it to the reader to check that
the above listed cases are mutually exclusive and cover all possibilities.
Also it can be checked that if
$i\neq a$, $a+1$, then $\mathcal{I}(a,i-1) \subseteq \mathcal{I}(a,i)$, for $i \geq 2$.

Similarly, we define the subset $\mathcal{J}(a,i)  \subseteq \lbrace 0,1, \ldots, p-1 \rbrace$ 
of the congruence classes  modulo $p$
as follows
\begin{align}\label{interval J}
      \mathcal{J}(a,i) =
      \begin{cases}
              \lbrace a-i, a-i+1 , \ldots,  a-2, a-1\rbrace, & \mathrm{if}~
             i < a-i <a ,\\
              \lbrace  a-i-1, a-i, \ldots,  a-2, a-1\rbrace, & \mathrm{if}~
             a-i \leq i <a , \\ 
             \lbrace  a-1,a, \ldots, [a-i]-2, [a-i]-1 \rbrace^{c}, &\mathrm{if}~
             a<i < [a-i], \\
              \lbrace a-1, a, \ldots,  [a-i]-3, [a-i]-2 \rbrace^{c}, &\mathrm{if}~
             a<[a-i] \leq i,
      \end{cases}
\end{align}
where $c$ in the superscript again denotes the complement in 
$\lbrace 0, 1, 2, \ldots, p-1 \rbrace$.  
As in the case of 
$\mathcal{I}(a,i)$, we think of 
$\mathcal{J}(a,i) $ as an interval.
Again we have $\mathcal{J}(a,i-1) \subseteq \mathcal{J}(a,i)$,
for all $2 \leq i \neq a$, $a+1$.  

Note that 
$\mathcal{J}(a,i) $ is essentially the translate of
$\mathcal{I}(a,i)$ to the `left' by 1 in the set of congruence classes
modulo $p$. More precisely, we have

\begin{lemma} \label{I vs J}
Let  $r \equiv a \mod (p-1)$ with $1 \leq a \leq p-1$, $1 \leq i \leq p-1$ with $i \neq a, p-1$, $r \equiv r_0 \mod p$
with $0 \leq r_0 \leq p-1$.  
\begin{enumerate}
      \item[(i)] If $i<[a-i]$, then $r_{0} \in \mathcal{J}(a,i)$ 
        $\Leftrightarrow$
       $r_{0} \in \mathcal{I}(a,i) \cup \lbrace [a-i] \rbrace$  and 
       $r \not \equiv [a-i]+i$ mod $p$.
       \item[(ii)]  If $i  \geq [a-i]$, then $r_{0} \in \mathcal{J}(a,i)$ 
       $\Leftrightarrow$
       $r_{0} \in \mathcal{I}(a,i) \cup \lbrace [a-i]-1 \rbrace$  and 
       $r \not \equiv [a-i]+i$ mod $p$.
\end{enumerate} 
\end{lemma}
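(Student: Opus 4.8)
The plan is to prove Lemma~\ref{I vs J} by a direct combinatorial comparison of the explicit intervals $\mathcal{I}(a,i)$ and $\mathcal{J}(a,i)$ defined in \eqref{interval I} and \eqref{interval J}, treating the four cases in those definitions. The only subtlety is bookkeeping: both intervals are intervals of congruence classes mod $p$, so inclusion $r_0 \in \mathcal{I}(a,i)$ should be understood cyclically, and I will consistently use the convention that a set of $\le p-1$ consecutive residues (or its complement) is such an interval. The underlying observation, already flagged in the text, is that $\mathcal{J}(a,i)$ is the translate of $\mathcal{I}(a,i)$ ``one step to the left''; so I expect the proof to boil down to: adding the single residue class on the left end of $\mathcal{I}$ and deleting the single residue class on its right end produces $\mathcal{J}$, and then identifying exactly what ``right end of $\mathcal{I}$'' is in terms of $a$ and $i$ (this is where the excluded class $[a-i]+i \bmod p$ and the extra point $[a-i]$ resp. $[a-i]-1$ enter).

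First I would dispose of part (i), $i<[a-i]$. Here only the first and third cases of \eqref{interval I}--\eqref{interval J} can occur (since $i<[a-i]$ rules out $a-i\le i$ and $a<[a-i]\le i$): when $i<a$ we have $\mathcal{I}(a,i)=\{a-i+1,\dots,a\}$ and $\mathcal{J}(a,i)=\{a-i,\dots,a-1\}$, and when $a<i<[a-i]$ we have $\mathcal{I}(a,i)=\{a,\dots,[a-i]\}^c$ and $\mathcal{J}(a,i)=\{a-1,\dots,[a-i]-1\}^c$. In the first case $\mathcal{I}(a,i)\cup\{[a-i]\}=\{a-i,\dots,a\}$, which is $\mathcal{J}(a,i)\sqcup\{a\}$, and since $i<a$ the residue $[a-i]+i\equiv a \bmod p$ (note $[a-i]=a-i$ here), so removing the class $[a-i]+i=a$ from $\mathcal{I}(a,i)\cup\{[a-i]\}$ gives exactly $\mathcal{J}(a,i)$; thus $r_0\in\mathcal{J}(a,i)\iff r_0\in\mathcal{I}(a,i)\cup\{[a-i]\}$ and $r\not\equiv[a-i]+i\bmod p$. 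In the second case one complements: $\mathcal{I}(a,i)\cup\{[a-i]\}$ has complement $\{a,\dots,[a-i]-1\}$, and one checks $[a-i]+i\equiv a-1\bmod p$ (since $a<i<[a-i]\le p-1$ forces $i+[a-i]=a+p-1$), so deleting the class $a-1$, i.e.\ adjoining it to the complement, turns $\{a,\dots,[a-i]-1\}$ into $\{a-1,\dots,[a-i]-1\}=\mathcal{J}(a,i)^c$; again this is precisely the asserted equivalence. I would write this as one short computation in each subcase, being careful that all indices are reduced mod $p$.

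Then part (ii), $i\ge[a-i]$, is entirely parallel, using the second and fourth cases of \eqref{interval I}--\eqref{interval J}: when $[a-i]\le i<a$ one has $\mathcal{I}(a,i)=\{a-i,\dots,a\}$, $\mathcal{J}(a,i)=\{a-i-1,\dots,a-1\}$, $[a-i]=a-i$, and $[a-i]+i\equiv a\bmod p$, so $\mathcal{I}(a,i)\cup\{[a-i]-1\}=\{a-i-1,\dots,a\}=\mathcal{J}(a,i)\sqcup\{a\}$ and deleting the class $a=[a-i]+i$ yields $\mathcal{J}(a,i)$; when $a<[a-i]\le i$ one has $\mathcal{I}(a,i)=\{a,\dots,[a-i]-1\}^c$, $\mathcal{J}(a,i)=\{a-1,\dots,[a-i]-2\}^c$, and $[a-i]+i\equiv a-1\bmod p$ (from $i+[a-i]=a+p-1$), and the same complement-and-shift argument applies with $[a-i]-1$ as the adjoined point. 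The main obstacle — really the only place care is needed — is verifying in each subcase the identity $[a-i]+i\equiv a$ or $a-1\bmod p$ (depending on whether $i<a$ or $i>a$), and keeping the cyclic interval conventions straight when passing to complements; everything else is immediate from the displayed definitions. I would conclude by remarking that the four subcases are exhaustive because $i\ne a,p-1$ together with the dichotomy $i<[a-i]$ vs.\ $i\ge[a-i]$ selects exactly one line of \eqref{interval I} and the matching line of \eqref{interval J}.
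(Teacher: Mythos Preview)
Your proposal is correct and follows essentially the same approach as the paper: the paper's proof is a one-liner stating that the result follows from the definitions of the intervals, noting that $[a-i]+i \equiv a$ (resp.\ $a-1$) mod $p$ when $i<a$ (resp.\ $i>a$), which is exactly the key identity you isolate and verify in each of the four subcases. Your write-up simply makes explicit the case-by-case interval comparison that the paper leaves to the reader.
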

\begin{proof}
  This follows from the definitions of the intervals above, noting that 
  $[a-i]+i  \equiv a$ (resp. $a -1$) mod $p$ if $i<a$ (resp. $i>a$).
\end{proof}

Next we give a criterion to check when the congruence class of $r$ modulo
$p$ belongs to the intervals above in terms of binomial coefficients.
\begin{lemma}
\label{interval and binomial}
        Let $r \geq p$ and $r \equiv r_{0} \mod p$ with $0 \leq r_{0} \leq p-1$.
        For $1 \leq a$, $i \leq p-1$ with $i \neq a$, $p-1$, we have
        \begin{enumerate}[label=\emph{(\roman*)}]
                \item If $i<[a-i]$, then $r_{0} \in \mathcal{I}(a,i)$ if and only if
                 $\binom{r-[a-i]-1}{i} \equiv 0 \mod p$.
                 \item If $i \geq [a-i]$, then $r_{0} \in \mathcal{I}(a,i)$ if
and only if
                 $\binom{r-[a-i]}{i+1} \equiv 0 \mod p$.
                 \item  If $i < [a-i]$, then $r_{0} \in \mathcal{J}(a,i)$ if and
only if
                 $\binom{r-[a-i]}{i} \equiv 0 \mod p$.
                 \item  If $i \geq [a-i]$, then $r_{0} \in \mathcal{J}(a,i)$ if
and only if
                 $\binom{r-[a-i]+1}{i+1} \equiv 0 \mod p$.
        \end{enumerate}
\end{lemma}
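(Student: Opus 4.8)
The statement is a dictionary between membership of $r_0$ in the explicitly‐defined intervals $\mathcal{I}(a,i)$, $\mathcal{J}(a,i)$ and the vanishing modulo $p$ of certain binomial coefficients. The entire proof reduces to Lucas' theorem (Lemma~\ref{lucas}): a binomial coefficient $\binom{r-c}{d}$ with $0\le d\le p-1$ depends, after reduction mod $p$, only on the constant digit of $r-c$, which is the residue $[\,r_0-c\,]_p\in\{0,1,\dots,p-1\}$ of $r_0-c$ mod $p$ (one must be slightly careful: if $r_0<c$ a borrow occurs from a higher digit, but since $r\ge p$ there is always a nonzero higher digit, so the constant digit of $r-c$ is exactly $r_0-c$ or $r_0-c+p$ as appropriate, and in the borrow case $\binom{p+r_0-c}{d}$ is what appears). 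So for each of the four cases I will compute, purely combinatorially, the set of residues $r_0$ for which the relevant lower‐digit binomial coefficient $\binom{\ast}{d}\equiv 0\bmod p$, i.e. for which $d>\ast$, and check it coincides with the interval as defined in \eqref{interval I} or \eqref{interval J}.

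\textbf{Key steps.} First I would treat (i): with $i<[a-i]$, the coefficient is $\binom{r-[a-i]-1}{i}$, and $i\le p-1$, so by Lucas this is $\equiv 0$ iff the constant digit of $r-[a-i]-1$ is $<i$. The constant digit of $r-[a-i]-1$ equals $r_0-[a-i]-1$ if $r_0\ge [a-i]+1$ and $p+r_0-[a-i]-1$ otherwise. Working out the inequality "digit $<i$" in each subcase, and splitting further according to whether $i<a$ (so $[a-i]=a-i$) or $i>a$ (so $[a-i]=p-1+a-i$), one obtains exactly the two top lines of \eqref{interval I}, namely $\{a-i+1,\dots,a\}$ when $i<a-i<a$ (note $a-i\le i$ cannot happen here since $i<[a-i]=a-i$ forces $i<a-i$) and the complement $\{a,a+1,\dots,[a-i]\}^c$ when $a<i<[a-i]$. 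Step two is (ii): with $i\ge[a-i]$ the coefficient $\binom{r-[a-i]}{i+1}$ has lower argument $i+1$, which is $\le p-1$ since $i\le p-2$ (as $i\ne p-1$); by Lucas it vanishes iff the constant digit of $r-[a-i]$ is $<i+1$, i.e. $\le i$. Again splitting on $r_0\ge[a-i]$ vs.\ $<$ and on $i<a$ vs.\ $i>a$, one recovers the second and fourth lines of \eqref{interval I}. Steps three and four are (iii) and (iv): these are identical computations with $[a-i]$ shifted by $-1$ resp.\ $+1$ in the upper binomial argument, producing the intervals $\mathcal{J}(a,i)$ of \eqref{interval J}; alternatively, once (i)–(ii) are established, (iii)–(iv) follow formally from Lemma~\ref{I vs J} (which already relates $\mathcal{J}(a,i)$ to $\mathcal{I}(a,i)$ by a translation by $1$ together with the exclusion of the class $[a-i]+i\bmod p$) combined with the elementary identity $\binom{r-c}{d}+\binom{r-c}{d-1}=\binom{r-c+1}{d}$; I would likely use this second route to keep the case analysis short, since Lemma~\ref{I vs J} has already organized the bookkeeping.

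\textbf{Main obstacle.} There is no deep obstacle; the difficulty is entirely the bookkeeping of the borrow in the base‑$p$ subtraction and the four sub-cases ($i\lessgtr[a-i]$ crossed with $i\lessgtr a$) in each of the four parts, together with verifying that the endpoints of the intervals come out exactly right (off‑by‑one errors are the real enemy here). I expect the cleanest exposition is to prove (i) and (ii) directly by Lucas, being explicit about the statement "for $0\le d\le p-1$ and $r\ge p$, $\binom{r-c}{d}\equiv 0\bmod p$ iff $r_0-c\not\in\{d,d+1,\dots,p-1\}\pmod p$," and then deduce (iii), (iv) from (i), (ii) via Lemma~\ref{I vs J} and Pascal's rule, checking in the latter step that the extra excluded residue $[a-i]$ (resp.\ $[a-i]-1$) is precisely accounted for by the difference between $\binom{r-[a-i]}{i}$ and $\binom{r-[a-i]+1}{i}$ (resp.\ between $\binom{r-[a-i]+1}{i+1}$ and $\binom{r-[a-i]}{i+1}$). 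Since $r_0$ ranges over a full set of residues mod $p$ and both sides of each equivalence are characterized by an interval of length $p-1$ in those residues, it suffices to match endpoints, which makes the verification finite and mechanical.
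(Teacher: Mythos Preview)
Your approach is correct and is essentially the same as the paper's: both arguments reduce directly to Lucas' theorem together with the explicit case-by-case definitions of $\mathcal{I}(a,i)$ and $\mathcal{J}(a,i)$, splitting on $i<a$ versus $i>a$ to compute $[a-i]$. The paper's proof is in fact terser than yours---it only spells out (i) and declares the rest ``similar''---and your alternative route for (iii)--(iv) via Lemma~\ref{I vs J} is valid (the intervals differ by a translation by one residue), though the Pascal identity you mention is not really needed, since that one-residue shift is already visible directly from the Lucas description of the vanishing set of $\binom{r-c}{d}$ as $\{c,c+1,\dots,c+d-1\}$ mod $p$.
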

\begin{proof}
        The lemma follows from Lucas' theorem and the definitions of the intervals
      \eqref{interval I}  and \eqref{interval J}. To illustrate the proof
we prove (i).
      By Lucas' theorem, we have $\binom{r-[a-i]-1}{i} \equiv 0 \mod p$
      if and only if $r-[a-i]-1 \equiv 0, 1, \ldots, i-1 \mod p$
      if and only if $r \equiv [a-i]+1, [a-i]+2, \ldots, [a-i]+i$.
      If $i<a$, then $[a-i] =a-i$ and (i) follows in this case.
      If $i > a$, then $[a-i] =p-1+a-i$ and (i) follows by shifting the
congruence
      classes between $0$ and $p-1$ whenever they are larger than $p$.
      The other assertions are proved in a similar manner.
\end{proof}

We now determine the structure of $Q(i)$, for $i \neq a$, $p-1$, by considering the cases 
$i < [a-i]$, $i =[a-i]$ and $i> [a-i]$.

\subsubsection{The case \texorpdfstring{$\boldsymbol{i < [a-i].}$}{}}
\label{section i < a-i}
In this subsection, we determine the structure of the quotients $Q(i)$, for all $1 \leq i < p-1$,
 such that $i \not \equiv r$ mod $(p-1)$ and
$i < [a-i]$. Taking $j=i$ in diagram \eqref{commutative diagram} and using
\Cref{reduction corollary} (ii), we see that the rightmost bottom entry there equals
$Q(i-1)$. Thus, to determine the structure of $Q(i)$ in terms of $Q(i-1)$,
it is enough to determine the quotient $X_{r-i}^{(i)}/X_{r-i}^{(i+1)}$. 
The structure of $Q(i)$ is described in terms of $Q(i-1)$  in 
\Cref{Structure of Q(i) if i<[a-i]}.  

Before we proceed further, we note that for $1 \leq a,i \leq p-1$,
we have $[a-i] < p-1$ if and only if  $i \neq a$.  Thus,
the conditions $i \neq a$, $p-1$ and $i<[a-i]$ are equivalent to 
$1\leq i <[a-i] < p-1$.  


\begin{lemma}\label{Large Cong class Quotient non zero}
     Let $p \geq 3$, $ r \equiv a ~\mathrm{mod} ~ (p-1)$ with 
     $1 \leq a \leq p-1$, $r \equiv r_{0}~ \mathrm{mod}~p$ with 
     $0 \leq r_{0} \leq p-1$ and suppose $1 \leq i< [a-i] < p-1$. 
     If  $i(p+1)+ p \leq r $ and $r_{0} \not \in \mathcal{I}(a,i)$, then 
     $X_{r-i}^{(i)}/ X_{r-i}^{(i+1)}$ contains $V_{[a-2i]} \otimes D^{i}$ as a 
     $\Gamma$-module.       
\end{lemma}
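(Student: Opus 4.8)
The plan is to produce an explicit element of $X_{r-i}^{(i)}$ whose image in $V_r^{(i)}/V_r^{(i+1)}$ is non-zero and, moreover, lands in the socle submodule $V_{[a-2i]}\otimes D^i$ appearing in the exact sequence \eqref{exact sequence Vr} (with $m=i$). Since $V_{[a-2i]}\otimes D^i$ is irreducible, once we exhibit such a non-zero element we will be done. The natural candidate is $F_{i,r}(X,Y) = \sum_{\lambda\in\f}\lambda^{[2i-a]}X^i(\lambda X+Y)^{r-i}$ from \eqref{F i,r definition}; this lies in $X_{r-i}$ because it is (the image under $\psi_i$ of) a combination of the generating vectors $X^i(\lambda X+Y)^{r-i}$, hence lies in $X_{r-i}$ by \Cref{induced and successive}. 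First I would expand $F_{i,r}$ using the binomial theorem and \eqref{sum fp} to get an explicit formula: the coefficient of $X^{r-l}Y^{l}$ is (up to sign) $\binom{r-i}{l}$ times a sum $\sum_{\lambda}\lambda^{r-l-i+[2i-a]}$, which is $-1$ precisely when $r-l-i+[2i-a]\equiv 0\bmod(p-1)$, i.e. when $l\equiv i-a+[2i-a]\equiv i \bmod(p-1)$ — wait, more carefully $l \equiv r - i + [2i-a] \equiv a - i + [2i-a] \equiv i \bmod (p-1)$, so $F_{i,r}$ is supported on monomials $X^{r-l}Y^l$ with $l\equiv i\bmod(p-1)$, with an extra correction term when a boundary exponent is hit.

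Next I would verify the two things that place $F_{i,r}$ in the socle term. (1) $F_{i,r}\in X_{r-i}^{(i)}$: for this use the divisibility criterion \Cref{divisibility1} with $m=i$; condition (i) (that $X^i,Y^i\mid F_{i,r}$) should follow from the support computation above together with the hypothesis $i<[a-i]<p-1$ (which forces the low-degree monomials $X^{r}Y^0,\dots$ to have exponents $l$ not congruent to $i$ mod $p-1$ in the relevant range, unless killed), and condition (ii) (the vanishing of the binomial sums $\sum_{l\equiv \ell}\binom{l}{s}a_l$ for $0\le s\le i-1$) should follow from \Cref{binomial sum}, using $l\equiv i\bmod(p-1)$ and the identity relating $[a-i]+i$ and $a$. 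In fact, rather than checking \Cref{divisibility1} by hand, I expect the cleanest route is: by \Cref{reduction corollary}(ii) with $i'=\min\{i,[a-i]\}=i$ (since $i<[a-i]$), one has $X_{r-i}=X_{r-(i-1)}+X_{r-i}^{(i)}$, so any element of $X_{r-i}$ not already in $X_{r-(i-1)}$ has a non-zero component in $X_{r-i}^{(i)}/(X_{r-i}^{(i+1)}+X_{r-(i-1)}^{(i)})$; one then needs $F_{i,r}\notin X_{r-(i-1)}$. (2) The image of $F_{i,r}$ under the rightmost map $V_r^{(i)}/V_r^{(i+1)}\twoheadrightarrow V_{p-1-[a-2i]}\otimes D^{a-i}$ is zero: apply \Cref{breuil map quotient} — the relevant quantity is $\sum_l \binom{l}{i}a_l - a_i + (-1)^{i+1}a_{r-i}$, which I would compute via \Cref{binomial sum} (with $b=i$, $m=i$) and show it cancels against the boundary coefficients, using that $r_0\notin\mathcal I(a,i)$ is exactly the condition $\binom{r-[a-i]-1}{i}\not\equiv 0\bmod p$ by \Cref{interval and binomial}(i). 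Hence $F_{i,r}$ (viewed mod $V_r^{(i+1)}$) lies in $V_{[a-2i]}\otimes D^i$, and the non-vanishing of its leading coefficient $\pm\binom{r-i}{i}$ (or whichever coefficient survives), guaranteed by $r_0\notin\mathcal I(a,i)$ and Lucas, shows it is non-zero there.

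The main obstacle, I expect, is bookkeeping the boundary/correction terms and pinning down exactly which monomial's coefficient certifies non-vanishing — the element $F_{i,r}$ is not purely "interior", and the extra $\delta$-type terms that appear when the binomial sum \eqref{sum fp} is applied near the exponents $0$, $i$, $r-i$, $r$ must be tracked carefully to make both the membership $F_{i,r}\in X_{r-i}^{(i)}$ and the socle-image computation come out right. A secondary subtlety is ruling out $F_{i,r}\in X_{r-(i-1)}$: this should follow because $X_{r-(i-1)}/X_{r-i}^{(i)}$ cannot contain the JH factor $V_{[a-2i]}\otimes D^i = V_{[2(i)-a]}\otimes D^{a-i}$'s Weyl-conjugate appropriately — more directly, using \Cref{reduction corollary}(i) (applied at index $i-1$, with $j=i\neq i-1,[a-(i-1)]$) to see $X_{r-(i-1)}^{(i)}/X_{r-(i-1)}^{(i+1)}=X_r^{(i)}/X_r^{(i+1)}$, which is $(0)$ by \Cref{singular quotient X_{r}} since $i\ne a$; combined with \eqref{Y i,j} this shows the JH factor in question can only come from $X_{r-i}$ itself, forcing $F_{i,r}\notin X_{r-(i-1)}$ once its image in the socle term is non-zero. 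I would organize the proof so that this last point is deduced \emph{after} the socle-image computation rather than before, to avoid circularity.
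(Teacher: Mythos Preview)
Your proposal has a genuine gap: the polynomial $F_{i,r}(X,Y)$ does \emph{not} lie in $V_r^{(i)}$ in general, so step (1) fails. Indeed, expanding $F_{i,r}$ one finds it is supported on monomials $X^{r-l}Y^l$ with $l\equiv i\bmod(p-1)$ and coefficient $-\binom{r-i}{l}$, and for $0\le m\le i-1$ \Cref{binomial sum} gives
\[
\sum_{\substack{0\le l\le r-i\\ l\equiv i\ \mathrm{mod}\ (p-1)}}\binom{r-i}{l}\binom{l}{m}\;\equiv\;\binom{r-i}{m}\binom{[a-i-m]}{i-m}\pmod p.
\]
Since $i<[a-i]$ we have $[a-i-m]=[a-i]-m>i-m$ for all such $m$, so $\binom{[a-i-m]}{i-m}\not\equiv 0\bmod p$; condition (ii) of \Cref{divisibility1} therefore fails whenever $\binom{r-i}{m}\not\equiv 0$ for some $m<i$, which is the generic situation. (For a concrete check: $p=5$, $a=3$, $i=1$, $r=15$ gives $F_{1,15}\notin V_{15}^{(1)}$.) Your alternative route via \Cref{reduction corollary}(ii) does not bypass this: it tells you that $F_{i,r}$ has \emph{some} representative in $X_{r-i}^{(i)}$ modulo $X_{r-(i-1)}$, but to apply \Cref{breuil map quotient} and locate the image in the socle you need that representative explicitly, and producing it is exactly the hard part.

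The paper's proof constructs the needed element directly. One takes
\[
F(X,Y)=\sum_{n=0}^{i}C_n\sum_{k\in\f^*}k^{i+n-a}X^n(kX+Y)^{r-n}\in X_{r-i},
\]
i.e.\ $C_i F_{i,r}$ corrected by terms from $X_{r-(i-1)}$. The divisibility conditions $\sum_n C_n\binom{r-n}{m}\binom{[a-m-n]}{i-m}=\delta_{i,m}$ for $0\le m\le i$ form the linear system $A(a,i,i,r)\,(C_0,\dots,C_i)^t=(0,\dots,0,1)^t$, and \Cref{A(a,i,j,r) invertible} shows that $A(a,i,i,r)$ is invertible precisely when $r_0\notin\mathcal I(a,i)$. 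With this choice of $C_n$, \Cref{divisibility1} gives $F\in V_r^{(i)}\smallsetminus V_r^{(i+1)}$; since the sequence \eqref{exact sequence Vr} is non-split for $m=i$ (as $i<[a-i]$), any non-zero element of $X_{r-i}^{(i)}/X_{r-i}^{(i+1)}$ forces $V_{[a-2i]}\otimes D^i\hookrightarrow X_{r-i}^{(i)}/X_{r-i}^{(i+1)}$. The role of the hypothesis $r_0\notin\mathcal I(a,i)$ is thus not a Lucas-type nonvanishing of a single coefficient but the invertibility of an $(i{+}1)\times(i{+}1)$ matrix, and this is what your proposal is missing.
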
 
\begin{proof}
         Recall that $A(a,i,i,r)= \left( \binom{r-n}{m} \binom{[a-m-n]}{i-m} 
         \right)_{0 \leq m,n \leq i} $ (cf. \eqref{A(a,i,j,r) matrix}). 
         Under the above hypotheses,
         we have  $A(a,i,i,r)$ is invertible, by Corollary~\ref{A(a,i,j,r) invertible}.
         Hence there exist $C_{0}, \ldots , C_{i} \in \f $ such that
         $A(a,i,i,r) (C_{0}, \ldots,C_{i-1}, C_{i})^{t} =(0,\ldots,0,1)^{t} $,
         i.e.,
         \begin{align}\label{choice C_n for i<a-i}
                 \sum_{n=0}^{i} C_{n} \binom{r-n}{m}\binom{[a-m-n]}
                {i-m} \equiv \delta_{i,m} \mod ~ p, ~\forall~  0\leq m \leq i. 
         \end{align}
        Consider the following polynomial
        \begin{align}
                F(X,Y) 
                & :=  \sum _{n=0}^{i} C_{n}  \sum_{k \in \f^{\ast}}
                   k^{i+n-a}  X^{n} (kX+Y)^{r-n}  \nonumber \\
               & \stackrel{\eqref{sum fp}}{=} - \sum_{n=0}^{i} C_{n} 
               \sum_{\substack{0 \leq l \leq r-n \\  l \equiv i \mathrm{~mod}~ (p-1)}}
                         \binom{r-n}{l}X^{r-l} Y^{l} . \label{polynomial F in i < a-i}
        \end{align}
        Note that $F(X,Y) \in X_{r-i}$, by \Cref{Basis of X_r-i}.
        We claim that $0 \neq F(X,Y) \in X_{r-i}^{(i)}/X_{r-i}^{(i+1)}$.
        Since $1 \leq i < p-1$, we have  $0$, $1, \ldots, i-1 \not \equiv i 
        \mathrm{~mod}~(p-1)$, so the 
        coefficients of $X^{r}, \ldots, X^{r-(i-1)}Y^{i-1}$  in $F(X,Y)$ are zero.
        Since $r- [a-i] \equiv i$ mod $(p-1)$ and $r-(r-[a-i]) =[a-i] < p-1$,
        we have $ r-[a-i]+1$, $ \ldots, r-1, r \not \equiv i \mathrm{~mod}~(p-1)$, 
        so the  coefficients 
        of $X^{[a-i]-1}Y^{r-[a-i]+1}, \ldots, XY^{r-1}, Y^{r}$ in $F(X,Y)$ are also zero.
        As $i < [a-i] $, 
        we see that $F(X,Y)$ satisfies  condition (i) of \Cref{divisibility1} for $m=i$. 
        Further, by \Cref{binomial sum}, for $ 0 \leq m \leq i$,  we have
        \begin{align*}
               \sum_{n=0}^{i} C_{n} \sum_{\substack{0 \leq l \leq r-n 
               \\ l \equiv i \mathrm{~mod}~ (p-1)}} \binom{r-n}{l} \binom{l}{m} 
                 &  \equiv   \sum_{n=0}^{i} C_{n} \binom{r-n}{m} \left( \binom{[a-m-n]}
                {[i-m]} + \delta_{[p-1],[i-m]} \right) \mod ~ p \\
                &  \equiv   \sum_{n=0}^{i} C_{n} \binom{r-n}{m}\binom{[a-m-n]}
                {i-m} \mod ~ p \\
                & \stackrel{\eqref{choice C_n for i<a-i}}{\equiv} \delta_{i,m} \mod ~ p,
        \end{align*}     
         where the second last step is obvious if $m<i$, and  if 
         $m =i$, then $\binom{[a-m-n]}{[i-m]} = \binom{[a-i-n]}{p-1}=0
         $ as $1 \leq [a-i-n] = [ [a-i] -n]< p-1$. 
        Thus, by \Cref{divisibility1},  we have $F(X,Y) \in 
        V_{r}^{(i)}$ and $ F(X,Y) \not \in V_{r}^{(i+1)}$, whence
        $0 \neq F(X,Y) \in  X_{r-i}^{(i)}/X_{r-i}^{(i+1)}$, so 
        $X_{r-i}^{(i)}/X_{r-i}^{(i+1)}$ has non-zero  socle.
        Since $i < [a-i]= [r-i]$, the sequence  \eqref{exact sequence Vr} 
         with $m=i$ is non-split.
        Therefore  $V_{[a-2i]} \otimes D^{i} 
        \hookrightarrow X_{r-i}^{(i)}/X_{r-i}^{(i+1)}$.
\end{proof}
We next determine the quotient $X_{r-i}^{([a-i])}/X_{r-i}^{([a-i]+1)}$,
when $r \equiv [r-i]+i$ mod $p$. This result will be used in 
\Cref{exceptional case 1} and \Cref{medium i full}.
\begin{lemma}\label{large a-i}
       Let  $p \geq 3$, $ r \equiv a ~\mathrm{mod}~ (p-1)$ with $1 \leq a \leq p-1$ and 
       $1 \leq i< [a-i] <p-1$. If $2p-4 \leq  r \equiv [a-i]+i~\mathrm{mod}~ p$,
        then 
      \[
            \frac{X_{r-i}^{([a-i])}}{X_{r-i}^{([a-i]+1)}} = \frac{V_{r}^{([a-i])}}{V_{r}^{([a-i]+1)}}.
      \]
\end{lemma}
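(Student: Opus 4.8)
Set $j = [a-i]$, so that $r - i \equiv j \bmod (p-1)$, $1 \le i < j < p-1$, and by hypothesis $r \equiv i + j \bmod p$. The goal is to show that the monomial submodule $X_{r-i}$ surjects onto the full quotient $V_r^{(j)}/V_r^{(j+1)} \cong \ind_B^\Gamma(\chi_1^j\chi_2^{r-j})$ (using \Cref{induced and star}, noting $r \ge j(p+1)+p$ follows from $r \ge [a-i](p+1)+p$, which I will check is implied by the running hypothesis together with $r$ large). Since $V_r^{(j)}/V_r^{(j+1)}$ has exactly two JH factors by \Cref{Breuil map}, namely the sub $V_{[a-2j]}\otimes D^j$ and the quotient $V_{p-1-[a-2j]}\otimes D^{a-j}$, it suffices to exhibit two explicit elements of $X_{r-i}^{(j)}$ whose images together generate $V_r^{(j)}/V_r^{(j+1)}$: one mapping to a generator of the cosocle (i.e. surviving the rightmost map of \eqref{exact sequence Vr}), and one mapping into the socle nontrivially (as in \Cref{socle term singular} and \Cref{Large Cong class Quotient non zero}).

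First I would produce the cosocle element. Consider $F(X,Y) = X^i G_{r-i}(X,Y) = G_{i,r}(X,Y) + \delta_{j,p-1}X^r$, which lies in $X_{r-i}^{(j)}$ by \eqref{G i,r}; since $\binom{r-i}{j} \not\equiv 0 \bmod p$ — this is where $r \equiv i+j \bmod p$ is used, via Lucas' theorem applied to $\binom{r-i}{j}$ with $r - i \equiv j \bmod p$ — \Cref{quotient image} (applied to the weight $r-i$, with $a$ there equal to $[a-i]=j$) shows $F \notin V_r^{(j+1)}$, so $0 \ne F \in X_{r-i}^{(j)}/X_{r-i}^{(j+1)}$. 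I would then compute the image of $F$ under the rightmost map of \eqref{exact sequence Vr} using \Cref{breuil map quotient}: expanding $F$ as in \eqref{G r expression}, the relevant sum $\sum_{l \equiv j} \binom{r-i}{l}\binom{l}{j}$ is evaluated by \Cref{binomial sum} and, because here $i < j$ forces $r - j \equiv i \not\equiv j \bmod p$ so the "$X^jY^{r-j}$" correction term vanishes, the image is a nonzero multiple of $\binom{r-i}{j} \not\equiv 0$. Hence $F$ maps onto a generator of the cosocle $V_{p-1-[a-2j]}\otimes D^{a-j}$.

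Next, the socle element. Here I would reuse the polynomial $F(X,Y)$ constructed in the proof of \Cref{Large Cong class Quotient non zero}, but adapted: I need a combination $\sum_{n=0}^{j} C_n \sum_{k \in \f^\ast} k^{\,j+n-a} X^n(kX+Y)^{r-n} \in X_{r-j} \subseteq X_{r-i}$ (note $X_{r-j} \subseteq X_{r-i}$ since $i < j$, by \Cref{first row filtration}) whose image under the rightmost map of \eqref{exact sequence Vr} vanishes while the element itself is nonzero in $V_r^{(j)}/V_r^{(j+1)}$. The existence of the constants $C_n$ reduces to the invertibility of the matrix $A(a,j,j,r)$ from \eqref{A(a,i,j,r) matrix}; by \Cref{A(a,i,j,r) invertible} (with the roles $i \leftrightarrow [a-i]$ swapped, so the condition becomes $r_0 \notin \mathcal{I}(a,j)$) I must verify that $r \equiv i+j \bmod p$ forces $r_0 \notin \mathcal{I}(a,[a-i])$ — this is a short check using \Cref{interval and binomial} and the congruence $[a-i] + i \equiv a$ or $a-1 \bmod p$. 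Granting that, the same divisibility and binomial-sum computations as in \Cref{Large Cong class Quotient non zero} and \Cref{socle term singular} show the resulting element lies in $X_{r-j}^{(j)} \setminus V_r^{(j+1)}$ and maps to $0$ in the cosocle, hence lands nontrivially in the socle $V_{[a-2j]}\otimes D^j$ (the sequence being non-split since $i < j = [r-i]$, so $[a-2j]$ is the genuine socle constituent). Combining the two elements, $X_{r-i}^{(j)}/X_{r-i}^{(j+1)}$ has both JH factors of $V_r^{(j)}/V_r^{(j+1)}$, forcing equality by counting dimensions, which is the claim.

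\textbf{Main obstacle.} The routine parts are the Lucas-theorem and binomial-sum bookkeeping; the genuine difficulty is the second element. The matrix invertibility input \Cref{A(a,i,j,r) invertible} is stated for the index constellation $0 \le i \le j < [a-i]$, whereas here I need it with the first index equal to $[a-i]$ itself (the boundary case $j = i'$ where $i' = [a-i]$), and I must confirm that the hypothesis $r \equiv [a-i]+i \bmod p$ places $r_0$ outside the obstructing interval $\mathcal{I}(a,[a-i])$ — equivalently, that the relevant binomial coefficient $\binom{r-i-1}{[a-i]}$ or $\binom{r-[a-i]}{i+1}$ (per \Cref{interval and binomial}) is a unit mod $p$ under precisely this congruence. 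Pinning down which interval and verifying the membership (including the edge effects when $i > a$ versus $i < a$, where $[a-i] = a-i$ or $p-1+a-i$) is the delicate step; everything else then follows the template already established in \Cref{socle term singular} and \Cref{Large Cong class Quotient non zero}.
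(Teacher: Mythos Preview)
Your proposal has a genuine gap, and in fact several of the key steps fail.

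\textbf{Step 1 is miscomputed.} The element $F = X^iG_{r-i}(X,Y)$ does \emph{not} hit the cosocle of $V_r^{(j)}/V_r^{(j+1)}$; it lands in the socle. This is exactly what is proved in \Cref{socle term singular} (which you cite): there the sum $\sum_{l}(-\binom{r-i}{l})\binom{l}{j}$ is shown to equal $-\binom{r-i}{j}$, which is precisely the coefficient $a_j$ of $X^{r-j}Y^j$ in $F$. In the formula of \Cref{breuil map quotient} the image in the cosocle is governed by $\sum_l a_l\binom{l}{j} - a_j + (-1)^{j+1}a_{r-j}$; you are forgetting the $-a_j$ correction, and since $a_{r-j}=0$ (as $r-j\equiv i\not\equiv j\bmod(p-1)$) this quantity is $-\binom{r-i}{j} + \binom{r-i}{j} = 0$. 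So your ``cosocle element'' has trivial cosocle image.

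\textbf{Step 2 goes the wrong way on inclusions and on the matrix hypothesis.} Since $i<j=[a-i]$, \Cref{first row filtration} gives $X_{r-i}\subseteq X_{r-j}$, not the reverse; an element built in $X_{r-j}$ is useless for producing one in $X_{r-i}^{(j)}$. Moreover, to mimic \Cref{Large Cong class Quotient non zero} with the index $j$ in place of $i$ you would need $j<[a-j]$; but $[a-j]=i<j$, so both that lemma and \Cref{A(a,i,j,r) invertible} are inapplicable. Finally, the interval check you flag as the ``main obstacle'' actually fails: with $r\equiv [a-i]+i\bmod p$ one has $r_0\equiv a$ (if $i<a$) or $r_0\equiv a-1$ (if $i>a$), and by \eqref{interval I for [a-i]} these lie \emph{inside} $\mathcal{I}(a,[a-i])$, not outside. (Indeed, the whole point of the present lemma in the paper's logic is to handle precisely this congruence class, where the matrix route of \Cref{Large Cong class Quotient non zero} is unavailable; see the proof of \Cref{exceptional case 1}.)

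What is actually needed --- and what your proposal does not supply --- is a single element of $X_{r-i}$ (not $X_{r-[a-i]}$) lying in $V_r^{([a-i])}$ whose image in the cosocle $V_{p-1-[a-2j]}\otimes D^{a-j}$ is nonzero; since the sequence \eqref{exact sequence Vr} is non-split for $m=[a-i]$, this alone forces the equality. The paper achieves this with the explicit polynomial
\[
(r-i+1)\,F_{i,r}(X,Y)\;-\;(r-2i+1)\sum_{k\in\f^\ast}k^{2i-a-1}X^{i-1}(kX+Y)^{r-(i-1)}\ \in\ X_{r-i},
\]
whose coefficients are tuned so that the congruence $r\equiv[a-i]+i\bmod p$ kills both the $X^{r-i}Y^i$ and $X^{[a-i]}Y^{r-[a-i]}$ terms (forcing $\theta^{[a-i]}$-divisibility) while leaving a nonzero cosocle image $([a-i]+1)\binom{p-1}{p-1+i-[a-i]}$. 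The specific linear combination, and the way the hypothesis $r\equiv[a-i]+i\bmod p$ enters to make the divisibility and nonvanishing work simultaneously, is the idea your plan is missing.
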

\begin{proof}
           Note that 
           the smallest positive integer satisfying $s \equiv [a-i]+i$ mod $p$
           and $s \equiv a$ mod $(p-1)$ is $[a-i]+i  \leq 2p-5$. 
           Since  $r \geq 2p-4$, we have  $r \geq (p-1)p+[a-i]+i \geq ([a-i]+1)p+[a-i]
           = [a-i](p+1)+p$. 
            Recall that by \eqref{F i,r definition}, we have 
            \[
               F_{i,r}(X,Y) = \sum_{\lambda \in \f} \lambda^{[2i-a]} X^{i} (\lambda X + Y)^{r-i}
                 \in X_{r-i}.
            \]  
      Consider the following polynomial
        \begin{align*}
               F(X,Y)  & : = (r-i+1) F_{i,r}(X,Y) - (r-2i+1) \sum_{k \in \fstar}
               k^{2i-a-1} X^{i-1}(kX+Y)^{r-(i-1)}  \\
               & \stackrel{\eqref{sum fp}}{=} -  
               \sum_{\substack{0 \leq l \leq r-i \\ l \equiv i ~\mathrm{mod}~(p-1)}} 
                (r-i+1) \binom{r-i}{l} X^{r-l}Y^{l}   \\  
                & \qquad  \qquad \qquad \qquad + 
                \sum_{\substack{0 \leq l \leq r-i+1 \\ l \equiv i ~\mathrm{mod}~(p-1)}} 
                (r-2i+1)\binom{r-i+1}{l}  X^{r-l}Y^{l}.
      \end{align*}
       Observe that $F(X,Y) \in X_{r-i}$, by \Cref{Basis of X_r-i}.
       We claim that $ F(X,Y) \in X_{r-i}^{([a-i])}$
      and generates $V_{r}^{([a-i])}/V_{r}^{([a-i]+1)}$. 
      Clearly the coefficient of $X^{r-i}Y^{i}$ in  $F(X,Y)$ equals 
        \begin{align}\label{coeff X^r-i Y^i}
            (r-2i+1) \binom{r-i+1}{i} - (r-i+1) \binom{r-i}{i} =0.
        \end{align}
       Also the coefficient of  $X^{[a-i]}Y^{r-[a-i]}$ in $F(X,Y)$ equals
       \[
           (r-2i+1) \binom{r-i+1}{r-[a-i]}  - (r-i+1) \binom{r-i}{r-[a-i]} 
           = \binom{r-i+1}{r-[a-i]} (r-2i+1 - [a-i]+i-1) = 0 ,
       \]
       as $r \equiv [a-i]+i $ mod $p$.
     Since  $i$ (resp. $r-[a-i]$) is the only number less than $p$
      (resp. greater than $r-p$)
     congruent to $i$ mod $(p-1)$, we see that
     the coefficients of $X^{r},X^{r-1}Y, \ldots, X^{r-(p-1)}Y^{p-1}$ and 
     $Y^{r}, XY^{r-1}, \ldots , X^{p-1}Y^{r-(p-1)}$ in $F(X,Y)$ are zero. 
     Using \Cref{binomial sum}  and Lucas' theorem, 
     one checks that, for $0 \leq m < [a-i]$,
     we have
      \begin{align*}
            & \sum_{\substack{0 \leq l \leq r-i \\ l \equiv i ~\mathrm{mod}~ (p-1)}}
              (r-i+1) \binom{r-i}{l} \binom{l}{m} -
              \sum_{\substack{0 \leq l \leq r-i+1 \\ l \equiv i ~\mathrm{mod}~ (p-1)}}
               (r-2i+1)\binom{r-i+1}{l}  \binom{l}{m}  \\ 
           & ~~\equiv (r-i+1) \binom{r-i}{m} \binom{[a-i-m]}{[i-m]}
             - (r-2i+1) \binom{r-i+1}{m} \binom{[a-i+1-m]}{[i-m]} \\
              & ~~\qquad \qquad \qquad + \delta_{p-1,[i-m]} \left[ (r-i+1) \binom{r-i}{m}
                 -  (r-2i+1) \binom{r-i+1}{m} \right]~\mathrm{mod}~p \\
             & ~~ =    \binom{r-i+1}{m} \left[ (r-i-m+1) \binom{[a-i]-m}{[i-m]}
                 - (r-2i+1)  \binom{[a-i]-m+1}{[i-m]} \right]    \\
             & ~~ \equiv    \binom{r-i+1}{m} \left[ ([a-i]-m+1) \binom{[a-i]-m}{[i-m]}
                 - ([a-i]-i+1)  \binom{[a-i]-m+1}{[i-m]} \right] \\
                 & \qquad \qquad \qquad \qquad ~\mathrm{mod}~p,          
      \end{align*}
      where in the penultimate step we have used $0 \leq m < [a-i] < p-1$
      and \eqref{coeff X^r-i Y^i} and in the last step we have used
       $r \equiv [a-i]+i$ mod $p$.
      If $0 \leq m < i$, then $[i-m] = i-m$ so the sum vanishes.
      Since $i \neq a$, we see that $1 \leq [a-i] < p-1$. 
      If $i \leq m <[a-i]$, then $[i-m]= p-1+i-m > [a-i]+1 -m$, whence
      by Lucas' theorem the sum vanishes.
       Hence by \Cref{divisibility1}, we get $F(X,Y) \in X_{r-i}^{([a-i])} $. 
       Since the sequence \eqref{exact sequence Vr} for $m=[a-i]$ is non-split,
       to show $F(X,Y)$ generates $V_{r}^{([a-i])}/V_{r}^{([a-i]+1)}$
       it is enough to show the image of $F(X,Y)$ is non-zero
       under the rightmost map of the sequence \eqref{exact sequence Vr},
       for $m=[a-i]$.
       Noting that  $r \equiv [a-i]+i ~\mathrm{mod}~ p$ and $i < [a-i] <  p-1$, 
       by \Cref{binomial sum} and Lucas' theorem, we have
       \begin{align}\label{large a-i breuil map}
            &\sum_{\substack{0 \leq l \leq r-i \\ l \equiv i ~\mathrm{mod}~ (p-1)}}
             (r-i+1) \binom{r-i}{l} \binom{l}{[a-i]}  -
             \sum_{\substack{0 \leq l \leq r-i+1 \\ l \equiv i ~\mathrm{mod}~ (p-1)}}
             (r-2i+1)\binom{r-i+1}{l}  
            \binom{l}{[a-i]} \nonumber \\
            &\equiv (r-i+1) \binom{r-i}{[a-i]} \binom{p-1}{[i-[a-i]]} -
                 (r-2i+1) \binom{r-i+1}{[a-i]} \binom{1}{[i-[a-i]]} \mod p \nonumber \\
            &\equiv  ([a-i]+1) \left[ \binom{p-1}{p-1+i-[a-i]} - 
                ([a-i]-i+1) \binom{1}{p-1+i-[a-i]} \right] \mod p \nonumber  \\
           & = ([a-i]+1) \binom{p-1}{p-1+i-[a-i]},
      \end{align}
      where in the last step we used that $p-1+i-[a-i] > i \geq 1$.
       Since the coefficients of $X^{[a-i]}Y^{r-[a-i]}$ and $X^{r-[a-i]}Y^{[a-i]}$
       in $F(X,Y)$ are zero, by  \Cref{breuil map quotient} and  
       \eqref{large a-i breuil map},  we have
       $$
            F(X,Y) \equiv -([a-i]+1) \binom{p-1}{p-1+i-[a-i]} \theta^{[a-i]}
              X^{r-[a-i](p+1)-[2i-a]} Y^{[2i-a]} \mod V_{r}^{([a-i]+1)}.
       $$
       Thus, by \Cref{Breuil map} and Lucas' theorem, the image 
       of $F(X,Y)$ in the quotient of  
      $V_{r}^{([a-i])}/V_{r}^{([a-i]+1)}$ is non-zero,
       as $1 \leq i <  [a-i] < p-1 $. Hence  $F(X,Y)$ generates 
       $V_{r}^{([a-i])}/V_{r}^{([a-i]+1)}$.
\end{proof}
We next  prove the converse of \Cref{Large Cong class Quotient non zero}.
\begin{lemma}\label{exceptional case 1}
        Let $p\geq 3$, $r \equiv a ~\mathrm{mod} ~ (p-1)$ with $1 \leq a \leq p-1$, 
        $r \equiv r_{0}~\mathrm{mod} ~ p$ with $1 \leq r_{0} \leq p-1$
        and  $1 \leq i  < [a-i] < p-1$.
       If $i(p+1)+p \leq r$ and  $r_{0} \in
        \mathcal{I}(a,i)$, then 
        $X_{r-i}^{(i)}/X_{r-i}^{(i+1)} = (0)$.
\end{lemma}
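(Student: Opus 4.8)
The plan is to prove the contrapositive of the non-vanishing direction already established in Lemma~\ref{Large Cong class Quotient non zero} is not what we want; instead we must show directly that $X_{r-i}^{(i)}/X_{r-i}^{(i+1)} = (0)$ when $r_0 \in \mathcal{I}(a,i)$. First I would observe that by \Cref{reduction} (first case, using $i < [a-i]$) we have $X_{r-i}^{(i)}/X_{r-i}^{(i+1)} = X_{r-i}^{(i)}/X_{r-i}^{(i+1)}$ already in reduced form (i.e., $j = i$ is the relevant parameter), so by \eqref{Y i,j} it suffices to understand the subquotient $Y_{i,i} \cong (X_{r-i}^{(i)} + X_{r-(i-1)})/(X_{r-i}^{(i+1)} + X_{r-(i-1)})$ of $X_{r-i}/X_{r-(i-1)}$, together with the contribution of $X_{r-(i-1)}^{(i)}/X_{r-(i-1)}^{(i+1)}$. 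By \Cref{reduction corollary 2} or a direct application of \Cref{reduction}, since $i-1 < i < [a-i]$ and (for $i \neq a$) $i \neq [a-(i-1)]$, we get $X_{r-(i-1)}^{(i)}/X_{r-(i-1)}^{(i+1)} = X_r^{(i)}/X_r^{(i+1)}$, which vanishes by \Cref{singular quotient X_{r}} (as $i \neq 0, a$). Hence $X_{r-i}^{(i)}/X_{r-i}^{(i+1)} \cong Y_{i,i}$, and the problem reduces to showing this subquotient of $X_{r-i}/X_{r-(i-1)}$ is zero.

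Next, since $X_{r-i}/X_{r-(i-1)}$ is a quotient of $\ind_B^\Gamma(\chi_1^{r-i}\chi_2^i)$ via $\psi_i$ (Corollary~\ref{induced and successive}), and by \Cref{Structure of induced} the socle of this principal series is $V_{[2i-a]} \otimes D^i$ (note $i < [a-i] = [r-i]$ so we are in the non-split case and the socle is exactly this factor), the only JH factor of $X_{r-i}/X_{r-(i-1)}$ that can also occur in $V_r^{(i)}/V_r^{(i+1)}$ is $V_{[a-2i]}\otimes D^i$ — the cosocle $V_{p-1-[a-2i]}\otimes D^{a-i}$ cannot, because $\ind_B^\Gamma(\chi_1^{r-i}\chi_2^i)$ and $\ind_B^\Gamma(\chi_1^i\chi_2^{r-i})$ (the principal series whose JH factors are those of $V_r^{(i)}/V_r^{(i+1)}$ by \Cref{induced and star}) share a common JH factor only if the characters agree or are $w$-conjugate, by \Cref{Common JH factor}, and one checks $\chi_1^{r-i}\chi_2^i$ versus $\chi_1^i\chi_2^{r-i}$: these are $w$-conjugate, so the common factor is precisely the socle $V_{[2i-a]}\otimes D^i$ of the former, which equals $V_{[a-2i]}\otimes D^i$. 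So $Y_{i,i}$, being a subquotient of both $X_{r-i}/X_{r-(i-1)}$ and $V_r^{(i)}/V_r^{(i+1)}$, is either $0$ or $V_{[a-2i]}\otimes D^i$. Therefore it suffices to show that $V_{[a-2i]}\otimes D^i$ is \emph{not} a JH factor of $X_{r-i}^{(i)}/X_{r-i}^{(i+1)}$, equivalently (by the mod-$X_{r-(i-1)}$ reduction and multiplicity one in the principal series) that the composition
\[
  V_{[2i-a]}\otimes D^i \hookrightarrow \ind_B^\Gamma(\chi_1^{r-i}\chi_2^i)
  \xrightarrow{\psi_i} \frac{X_{r-i}}{X_{r-(i-1)}}
  \twoheadrightarrow \frac{X_{r-i}}{X_{r-i}^{(i+1)} + X_{r-(i-1)}}
\]
is the zero map, exactly as in the \enquote*{if} part of \Cref{socle term singular}.

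The crux, then, is the explicit computation showing $\psi_i$ sends the generator of the socle $V_{[2i-a]}\otimes D^i$ into $X_{r-i}^{(i+1)} + X_{r-(i-1)}$ when $r_0 \in \mathcal{I}(a,i)$. Concretely, the generator is $G_{i,r}(X,Y) = \sum_{\lambda}X^i(\lambda X + Y)^{r-i} = X^i G_{r-i}(X,Y) - \delta_{[a-i],p-1}X^r$ by \eqref{G i,r definition}--\eqref{G i,r}; since $i<[a-i]<p-1$ forces $[a-i]\neq p-1$, the correction term vanishes, so the generator is $F(X,Y) = X^i G_{r-i}(X,Y)$, and by \eqref{G i,r} this lies in $X_{r-i}^{([a-i])}$. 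By \Cref{quotient image} applied to $V_{r-i}$ (with $[r-i]=[a-i]$), the key dichotomy is $F \in V_r^{(p)} \Leftrightarrow \binom{r-i}{[a-i]}\equiv 0 \bmod p$. The plan is to show $r_0\in\mathcal{I}(a,i)$ implies $\binom{r-i}{[a-i]}\equiv 0\bmod p$: by Lucas' theorem $\binom{r-i}{[a-i]}\equiv 0$ iff some base-$p$ digit of $r-i$ is smaller than the corresponding digit of $[a-i]$, and since $[a-i]<p$ this amounts to a congruence condition on $r-i \bmod p$, i.e., on $r_0 \bmod p$; comparing with the definition \eqref{interval I} of $\mathcal{I}(a,i)$ (and using \Cref{interval and binomial}, but with $[a-i]$ in place of $i$ — more precisely re-running its Lucas-theorem argument with the roles adjusted) gives the implication. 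Once $F\in V_r^{(p)}\subseteq V_r^{(i+1)}$ we get $G_{i,r} = F \in X_{r-i}^{(p)} \subseteq X_{r-i}^{(i+1)} \subseteq X_{r-i}^{(i+1)} + X_{r-(i-1)}$, so the composition above is zero, so by multiplicity one $V_{[2i-a]}\otimes D^i$ is not a JH factor of $X_{r-i}/(X_{r-i}^{(i+1)}+X_{r-(i-1)})$, hence not of its submodule $Y_{i,i}$, hence not of $X_{r-i}^{(i)}/X_{r-i}^{(i+1)}$, which we already know is either $0$ or $V_{[a-2i]}\otimes D^i$; therefore it is $0$.

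I expect the main obstacle to be the bookkeeping that translates $r_0\in\mathcal{I}(a,i)$ into $\binom{r-i}{[a-i]}\equiv 0\bmod p$ — one must carefully distinguish the cases $i<a$ (where $[a-i]=a-i$ and the interval is $\{a-i+1,\dots,a\}$) and $i>a$ (where $[a-i]=p-1+a-i$ and the interval is a complement, requiring a shift of congruence classes across the $p$-boundary), and verify that the residues $r-i$ for which a digit-drop occurs match the interval exactly. This is essentially the content of (an analogue of) \Cref{interval and binomial}, and a clean way to organize it is to apply \Cref{quotient image} to $V_{r-i}$ directly and then invoke \Cref{interval and binomial}(i) after noting $r - i \equiv r_0 - i \bmod p$ translates the condition on $\binom{(r-i)-[a-i]-1 + \text{shift}}{\cdot}$ — but the cleanest is simply: $\binom{r-i}{[a-i]}\not\equiv 0 \Leftrightarrow r-i \equiv [a-i], [a-i]+1, \dots, p-1 \bmod p$ by Lucas, and one checks this set of residues for $r-i$ corresponds under $r = (r-i)+i$ to exactly $r_0 \notin \mathcal{I}(a,i)$. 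The remaining steps (the multiplicity-one argument, the identification of the common JH factor via \Cref{Common JH factor}) are verbatim parallel to the \enquote*{if} direction of \Cref{socle term singular} and should require only minor adaptation.
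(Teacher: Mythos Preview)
Your proposal has two genuine gaps.

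First, and most seriously, the ``socle goes to zero'' argument does not prove what you need. Suppose you do show $G_{i,r}\in X_{r-i}^{(i+1)}+X_{r-(i-1)}$. By multiplicity one this shows $V_{[2i-a]}\otimes D^{a-i}$ (the socle of $\ind_B^\Gamma(\chi_1^{r-i}\chi_2^i)$; note the twist is $D^{a-i}$, not $D^i$) is not a JH factor of $Y_{i,i}$. But $V_{[2i-a]}\otimes D^{a-i}$ is the \emph{cosocle} of $V_r^{(i)}/V_r^{(i+1)}$, while its socle is $V_{[a-2i]}\otimes D^i$; these are genuinely different irreducibles since $i<[a-i]$. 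Since $Y_{i,i}=X_{r-i}^{(i)}/X_{r-i}^{(i+1)}$ is a submodule of the non-split $V_r^{(i)}/V_r^{(i+1)}$, ruling out the cosocle only excludes the full case and leaves open $Y_{i,i}\cong V_{[a-2i]}\otimes D^i$. The analogy with \Cref{socle term singular} is misleading: there one works at level $j=[a-i]$, where $V_{[2i-a]}\otimes D^{a-i}$ \emph{is} the socle of $V_r^{([a-i])}/V_r^{([a-i]+1)}$, so non-injectivity forces vanishing. At level $j=i$ the roles are swapped and the argument gives no vanishing.

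Second, the binomial step itself fails at the boundary of $\mathcal{I}(a,i)$. By Lucas, $\binom{r-i}{[a-i]}\equiv 0$ iff $r_0\in\mathcal{I}(a,[a-i])$ \emph{and} $r\not\equiv [a-i]+i\bmod p$. The congruence class $[a-i]+i\bmod p$ (namely $r_0=a$ if $i<a$, $r_0=a-1$ if $i>a$) lies in $\mathcal{I}(a,i)$, and there $\binom{r-i}{[a-i]}\not\equiv 0$, so $G_{i,r}\notin V_r^{(p)}$.

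The paper handles both obstacles by abandoning the socle-tracking approach entirely. For $r\equiv [a-i]+i\bmod p$ it invokes \Cref{large a-i} to show $X_{r-i}^{([a-i])}/X_{r-i}^{([a-i]+1)}=V_r^{([a-i])}/V_r^{([a-i]+1)}$ has dimension $p+1$, which forces $X_{r-i}=X_{r-(i-1)}+X_{r-i}^{([a-i])}\subseteq X_{r-(i-1)}+X_{r-i}^{(i+1)}$ and hence $Y_{i,i}=0$. For the remaining $r_0\in\mathcal{I}(a,i)$ it uses periodicity mod $p(p-1)$ (\Cref{arbitrary quotient periodic}) to replace $r$ by an explicit $s$ with $\Sp(s-r_0)\leq i-1$ and $\Sp(s-(i-1))\leq p-1$, so that $X_{s-i}=X_{s-(i-1)}$ by \Cref{final X_r-i = X_r-j}, whence the quotient is trivially zero.
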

\begin{proof}      
          Observe that  $ [a-i]+i$ equals $a$ and $p-1+a$ 
          if $ i < a$ and $i>a$ respectively. 
          Note that $a$ (resp. $a-1$) belongs to $\mathcal{I}(a,i)$ in the case
          $i<a$ (resp. $i>a$). 
         We prove the lemma by considering the
          cases $r \equiv [a-i]+i$ mod $p$ and 
          $r \not \equiv [a-i]+i$ mod $p$.
          
           If $r \equiv  [a-i]+i$ mod $p$, then by
          \Cref{large a-i}, we have $X_{r-i}^{([a-i])}/X_{r-i}^{([a-i]+1)} = 
          V_{r}^{([a-i])}/V_{r}^{([a-i]+1)}$.  Since $i-1< i < [a-i]$
          and $[a-i] \neq 0$, $a$, 
          by \Cref{reduction corollary 2},
          we have $X_{r-(i-1)}^{([a-i])}/X_{r-(i-1)}^{([a-i]+1)} = (0)$.
          Thus, by \eqref{Y i,j} (with $j=[a-i]$), we have
          \[
              \frac{X_{r-i}^{([a-i])}+X_{r-(i-1)}}{X_{r-i}^{([a-i]+1)}+X_{r-(i-1)}}
              \cong Y_{i,[a-i]} = \frac{X_{r-i}^{([a-i])}}{X_{r-i}^{([a-i]+1)}} = 
                \frac{V_{r}^{([a-i])}}{V_{r}^{([a-i]+1)}}
          \]
          has dimension $p+1$.
           Since $X_{r-(i-1)} \subseteq  X_{r-i}^{([a-i]+1)}+X_{r-(i-1)} 
           \subseteq X_{r-i}^{([a-i])}+X_{r-(i-1)} \subseteq X_{r-i}$
           and $\dim X_{r-i}/X_{r-(i-1)} \leq p+1$, it follows 
           that  $X_{r-i} =  X_{r-i}^{([a-i])}+X_{r-(i-1)}$. Since $i < [a-i]$,
           we see that $X_{r-i} =  X_{r-i}^{(i+1)}+X_{r-(i-1)}$.
         Since $i \neq 0$, $a$ and $ i-1 < i  < [a-i]$, by 
         \Cref{reduction corollary 2},  we have  
      $X_{r-(i-1)}^{(i)}/X_{r-(i-1)}^{(i+1)}= (0)$, i.e., $X_{r-(i-1)}^{(i)} =
      X_{r-(i-1)}^{(i+1)}$.  Thus
      \[ 
           X_{r-i}^{(i)} = X_{r-i}^{(i)} \cap 
          (X_{r-(i-1)} + X_{r-i}^{(i+1)}) = X_{r-(i-1)}^{(i)} + X_{r-i}^{(i+1)}
           = X_{r-(i-1)}^{(i+1)} + X_{r-i}^{(i+1)} = X_{r-i}^{(i+1)}.
      \]     
           
            Now suppose $r \not \equiv [a-i]+i$ mod $p$. By hypothesis, 
            $r_{0} \in \mathcal{I}(a,i)$. If $i<a$,  then  $a-i+1 \leq 
            r_{0} \leq a-1$.   If $i>a$, then $0 \leq r_{0} \leq a-2$ or 
            $p+a-i \leq r_{0} \leq p-1$.  
             Let
            \begin{align*}
                  s= 
                  \begin{cases}
                         (a-r_{0})p^{3}+r_{0}, &\mathrm{if}~ i<a ~\mathrm{so}~
                          a-i+1 \leq r_{0} \leq a-1, \\
                         (a-r_{0}-1)p^{3}+p+r_{0}, & \mathrm{if} ~ i>a ~\mathrm{and}
                         ~ 0 \leq r_{0} \leq a-2, \\
                         (a-r_{0}+p-1)p^{3}+r_{0},  & \mathrm{if}  ~  i>a ~\mathrm{and}
                         ~p+a-i  \leq  r_{0} \leq p-1.
                \end{cases}
          \end{align*}
          Observe that $s \geq p^{3}$,  $\Sp(s- r_{0}) \leq i-1$ and 
          $\Sp(s-(i-1))=[a-i]+1 \leq p-1$.
          Since $i<[a-i]$, we have   $r_{0} \neq i-1$.
          Then by \Cref{final X_r-i = X_r-j}, we have 
          $X_{s-i} = X_{s-(i-1)}$. Thus $X_{s-i}^{(i)}/X_{s-i}^{(i+1)} 
          = X_{s-(i-1)}^{(i)}/X_{s-(i-1)}^{(i+1)} = (0)$,
          by \Cref{reduction corollary 2}, as 
         $ i-1< i <[a-i]$ and $i \neq 0$, $a$. 
          Since $r \equiv s$ mod $p(p-1)$ by  \Cref{arbitrary quotient periodic},
          we have
          $X_{r-i}^{(i)}/X_{r-i}^{(i+1)}= X_{s-i}^{(i)}/X_{s-i}^{(i+1)} = (0)$.
\end{proof}
\begin{remark}\label{remark i< a-i singular zero}
       The argument in the case $r \not \equiv [a-i]+i$ mod $p$ and 
       $r_{0} \in \mathcal{I}(a,i)$ in the above lemma also works   in the case 
       $i =[a-i]$, as we didn't require $i$ to be  strictly less than $[a-i]$.
\end{remark}
We are now ready to determine the $\Gamma$-modules
$X_{r-i}^{(i)}/X_{r-i}^{(i+1)}$ and $X_{r-i}^{([a-i])}/X_{r-i}^{([a-i]+1)}$ in the 
case $1 \leq i < [a-i] < p-1$ and $i \neq a$, $p-1$. Before we state 
the result observe that, for $r \equiv a $ mod $(p-1)$ with 
$1 \leq a \leq p-1$, $1 \leq i <[a-i] <p-1$ and 
$j \in \lbrace i, [a-i] \rbrace$, we have
\begin{align}\label{dimension singular quotient}
 \dim \left( \frac{X_{r-i}^{(j)}}{X_{r-i}^{(j+1)}} \right) & =
 \dim  X_{r-i}^{(j)}  - \dim X_{r-(i-1)}^{(j)}  + \dim X_{r-(i-1)}^{(j+1)}
 - \dim X_{r-i}^{(j+1)}\nonumber \\
 &= \dim (X_{r-i}^{(j)}+X_{r-(i-1)}) -\dim (X_{r-i}^{(j+1)}+X_{r-(i-1)}),
\end{align}
where the first equality follows from  \Cref{reduction corollary 2}, since $i-1 <i <[a-i]$,
 and the second equality follows from the dimension formula for sum of two vector
subspaces.
  
Recall that $W_{i,r}$ is the image of  $V_{[2i-r]} \otimes D^{a-i} \hookrightarrow
\ind_{B}^{\Gamma}(\chi_{1}^{r-i}\chi_{2}^{i})$ under the $\Gamma$-linear map
$\psi_{i}:\ind_{B}^{\Gamma}(\chi_{1}^{r-i}\chi_{2}^{i}) \rightarrow X_{r-i}/X_{r-(i-1)}$,
as defined in \Cref{surjection1}.   Also $W_{i,r} \subseteq 
(X_{r-i}^{([a-i])}+X_{r-(i-1)})/X_{r-(i-1)}$.
By the second and fourth parts of \eqref{interval I}, for $1 \leq a,i \leq p-1$ with $i< [a-i] < p-1$, we have
\begin{align}\label{interval I for [a-i]}
       \mathcal{I}(a,[a-i]) =
       \begin{cases}
              \lbrace i, i+1, \ldots, a-1, a \rbrace,  & \mathrm{if}~ i<a, \\
              \lbrace 0, 1, \ldots , a-2, a-1 \rbrace \cup
              \lbrace i, i+1, \ldots , p-1 \rbrace, 
              & \mathrm{if}~ i>a.
       \end{cases}
\end{align}
Thus, for $r \equiv r_{0}$ mod $p$ with $0 \leq r_{0} \leq p-1$,
by Lucas' theorem, we have $\binom{r-i}{[a-i]} \equiv 0$ mod $p$ if and only if 
 $r_{0} \in \mathcal{I}(a,[a-i])$ and $r \not \equiv [a-i]+i $ mod $p$.
\begin{proposition}\label{singular quotient i < [a-i]}
        Let $p\geq 3$, $ r \equiv a \mod (p-1)$ with $1 \leq a \leq p-1$, 
        $r \equiv r_{0} ~\mathrm{mod}~p$ with $0 \leq r_{0}\leq p-1$
        and $1 \leq i <[a-i]<p-1$. 
       For $ j \in \lbrace i, [a-i] \rbrace$ and $r \geq j(p+1)+p$, we have
       \begin{align*}
                \frac{X_{r-i}^{(j)}}{X_{r-i}^{(j+1)}} \cong
                \begin{cases}
                 V_{[a-2j]} \otimes D^{j}, & \mathrm{if} ~ r_{0} \not \in \mathcal{I}(a,j), \\[3pt]
                 V_{r}^{(j)}/V_{r}^{(j+1)}, & \mathrm{if}~  j= [a-i] ~
                 \mathrm{and} ~ r \equiv [a-i]+i ~\mathrm{mod}~p, \\[3pt]
                 (0), &\mathrm{otherwise.}
                \end{cases}
       \end{align*}
\end{proposition}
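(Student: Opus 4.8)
The plan is to treat the two values $j \in \{i, [a-i]\}$ together, using the reduction to $X_{r-j}^{(j)}/X_{r-j}^{(j+1)}$ (for $j=i$) respectively to $X_{r-[a-i]}^{([a-i])}/X_{r-[a-i]}^{([a-i]+1)}$ (for $j=[a-i]$) that follows from \Cref{reduction}; in fact, by \Cref{reduction corollary 2} the denominator module $X_{r-(i-1)}$ contributes nothing extra at level $j$, so by \eqref{dimension singular quotient} it suffices to compute $\dim(X_{r-i}^{(j)}+X_{r-(i-1)}) - \dim(X_{r-i}^{(j+1)}+X_{r-(i-1)})$ and then identify the $\Gamma$-module structure of the nonzero quotient using the exact sequence \eqref{exact sequence Vr} of \Cref{Breuil map}. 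I would organize the argument by the trichotomy in the statement.

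First, suppose $r_0 \notin \mathcal{I}(a,j)$. For $j = i$ this is exactly the hypothesis of \Cref{Large Cong class Quotient non zero}, which gives $V_{[a-2i]} \otimes D^i \hookrightarrow X_{r-i}^{(i)}/X_{r-i}^{(i+1)}$; since $i < [a-i] = [r-i]$, the sequence \eqref{exact sequence Vr} with $m=i$ is non-split, and I must upgrade this embedding to an isomorphism by a dimension count, showing the cokernel $V_{p-1-[a-2i]} \otimes D^{a-i}$ does not appear. For $j = [a-i]$, by \eqref{interval I for [a-i]} and Lucas' theorem the condition $r_0 \notin \mathcal{I}(a,[a-i])$ is equivalent to $\binom{r-i}{[a-i]} \not\equiv 0 \bmod p$ together with $r \not\equiv [a-i]+i \bmod p$, so \Cref{socle term singular} gives $V_{[2i-a]} \otimes D^{a-i} = V_{[a-2[a-i]]} \otimes D^{[a-i]} \hookrightarrow X_{r-i}^{([a-i])}/X_{r-i}^{([a-i]+1)}$, and again I finish by checking the quotient is nothing more. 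The dimension counts in both cases should come from comparing $\dim(X_{r-i}^{(j)}+X_{r-(i-1)})/X_{r-(i-1)}$ with $\dim X_{r-i}/X_{r-(i-1)} \leq p+1$ and the structure of $\ind_B^\Gamma(\chi_1^{r-i}\chi_2^i)$ from \Cref{Structure of induced} — the JH factor $V_{[a-2j]}\otimes D^j$ has multiplicity one there, and I need to argue the complementary factor is killed, which amounts to showing a certain generator lands in $V_r^{(j+1)}+X_{r-(i-1)}$.

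Second, the case $j = [a-i]$ and $r \equiv [a-i]+i \bmod p$ is precisely \Cref{large a-i}, which directly gives $X_{r-i}^{([a-i])}/X_{r-i}^{([a-i]+1)} = V_r^{([a-i])}/V_r^{([a-i]+1)}$; here there is essentially nothing left to prove beyond invoking that lemma (and checking $r \geq [a-i](p+1)+p$ follows from $r \geq j(p+1)+p$). Note this case is disjoint from the first because $r \equiv [a-i]+i \bmod p$ forces $r_0 \in \{a, a-1\} \subseteq \mathcal{I}(a,[a-i])$.

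Third, the "otherwise" case splits further: either $r_0 \in \mathcal{I}(a,j)$, or $j \neq [a-i]$ (i.e.\ $j = i$ since $i < [a-i]$) — but wait, $j=i$ always satisfies $j \neq [a-i]$, so the genuine content is: for $j=i$ with $r_0 \in \mathcal{I}(a,i)$, and for $j=[a-i]$ with $r_0 \in \mathcal{I}(a,[a-i])$ but $r \not\equiv [a-i]+i \bmod p$. The former is \Cref{exceptional case 1}, giving $X_{r-i}^{(i)}/X_{r-i}^{(i+1)} = (0)$. The latter (with $r_0 \in \mathcal{I}(a,[a-i])$ and $r \not\equiv [a-i]+i \bmod p$, equivalently $\binom{r-i}{[a-i]} \equiv 0 \bmod p$) is the "if" direction of \Cref{socle term singular}, giving $X_{r-i}^{([a-i])}/X_{r-i}^{([a-i]+1)} = (0)$ since $V_{[2i-a]}\otimes D^{a-i}$ is the only possible JH factor (the other would be $V_{p-1-[2i-a]}\otimes D^i$, which by \Cref{reduction} and \Cref{singular quotient X_{r}} cannot occur as $[a-i] \neq 0, a$). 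I expect the main obstacle to be bookkeeping: verifying that the various interval conditions and binomial-coefficient vanishing conditions line up exactly so the three cases are mutually exclusive and exhaustive, and carefully matching the hypothesis $r \geq j(p+1)+p$ to the running hypotheses of the cited lemmas (some of which ask only for $r \geq i(p+1)+p$ or $r \geq 2p-4$). Once the case division is pinned down, each case is a direct citation of one of \Cref{Large Cong class Quotient non zero}, \Cref{socle term singular}, \Cref{large a-i}, \Cref{exceptional case 1}, together with a short dimension argument to rule out the extra JH factor in the "$V_{[a-2j]}\otimes D^j$" case.
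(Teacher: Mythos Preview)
Your overall plan matches the paper's proof closely: the paper also splits into $j=i$ and $j=[a-i]$, cites \Cref{Large Cong class Quotient non zero} and \Cref{socle term singular} for the socle embeddings when $r_0 \notin \mathcal{I}(a,j)$, invokes \Cref{large a-i} for the full case, uses \Cref{exceptional case 1} for $j=i$ with $r_0 \in \mathcal{I}(a,i)$, and finishes with a dimension count via \eqref{dimension singular quotient} and the inclusion $W_{i,r} \subseteq (X_{r-i}^{([a-i])}+X_{r-(i-1)})/X_{r-(i-1)} \subseteq (X_{r-i}^{(i+1)}+X_{r-(i-1)})/X_{r-(i-1)}$. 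One point worth making explicit: for $j=[a-i]$ in the first case, the paper's dimension count is not independent of the $j=i$ case; it uses $\mathcal{I}(a,i) \subseteq \mathcal{I}(a,[a-i])$ to invoke the already-established $X_{r-i}^{(i)}/X_{r-i}^{(i+1)} \cong V_{[a-2i]}\otimes D^i$, and then observes that the two quotients sit in the chain between $X_{r-(i-1)}$ and $X_{r-i}$ with total dimension at most $p+1$, forcing $\dim X_{r-i}^{([a-i])}/X_{r-i}^{([a-i]+1)} = [2i-a]+1$.

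There is one genuine gap in your ``otherwise'' sub-case for $j=[a-i]$. Your claim that the cosocle $V_{p-1-[2i-a]}\otimes D^i$ ``cannot occur by \Cref{reduction} and \Cref{singular quotient X_{r}} as $[a-i] \neq 0,a$'' does not go through: \Cref{reduction} does \emph{not} reduce $X_{r-i}^{([a-i])}/X_{r-i}^{([a-i]+1)}$ to $X_r^{([a-i])}/X_r^{([a-i]+1)}$, because with $j=[a-i]$ one has $[a-j]=i$ and the third branch of that lemma requires $i < j \leq [a-j]$ or $i < [a-j] \leq j$, neither of which holds when $i = [a-j]$. The paper's argument is much shorter and avoids this entirely: since $i < [a-i]$ the sequence \eqref{exact sequence Vr} for $m=[a-i]$ is non-split, so every nonzero submodule of $V_r^{([a-i])}/V_r^{([a-i]+1)}$ contains its socle $V_{[2i-a]}\otimes D^{a-i}$; the contrapositive of \Cref{socle term singular} says the socle does not embed, hence $X_{r-i}^{([a-i])}/X_{r-i}^{([a-i]+1)} = (0)$.
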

\begin{proof}
     We consider the cases $j=i$ and $j=[a-i]$ separately. 
    
    \textbf{Case}  $\boldsymbol{j=i}$:
If $r_{0} \not \in \mathcal{I}(a,i)$, then by \Cref{Large Cong class Quotient non zero}, 
we have $V_{[a-2i]} \otimes D^{i} \hookrightarrow X_{r-i}^{(i)}/X_{r-i}^{(i+1)}$. 
To prove the lemma in this case,  we need to show that the inclusion is an 
isomorphism. Note that by \eqref{dimension singular quotient} 
and $W_{i,r} \subseteq (X_{r-i}^{([a-i])}+X_{r-(i-1)})/X_{r-(i-1)} 
\subseteq (X_{r-i}^{(i)}+X_{r-(i-1)})/X_{r-(i-1)}$, we have
    \begin{align*}
           \dim \left( \frac{X_{r-i}^{(i)}}{X_{r-i}^{(i+1)}} \right) 
           & \leq \dim X_{r-i} - \dim ( X_{r-i}^{(i+1)}+X_{r-(i-1)} )  \\
           & = \dim \left(   \frac{X_{r-i}}{X_{r-(i-1)}} \right) -  
           \dim  \left(  \frac{X_{r-i}^{(i+1)}+X_{r-(i-1)}}{X_{r-(i-1)}}\right) \\
           & \leq \dim  \left(  \frac{X_{r-i}/X_{r-(i-1)}}{W_{i,r}}\right) 
           \leq p-[a-2i] \leq p.
    \end{align*}
     As the exact sequence \eqref{exact sequence Vr} 
     does not split for $m=i$, we have
      $X_{r-i}^{(i)}/X_{r-i}^{(i+1)} \cong V_{[a-2i]} \otimes D^{i}$.
      If $r_{0} \in \mathcal{I}(a,i)$, then by \Cref{exceptional case 1}, we have 
    $ X_{r-i}^{(i)}/X_{r-i}^{(i+1)} =0$.

     \textbf{Case}  $\boldsymbol{j=[a-i]}$: 
     If $r_{0} \not \in \mathcal{I}(a,j)$, then from above we have 
     $\binom{r-i}{[a-i]} \not \equiv 0$ mod $p$. Thus,
     by \Cref{socle term singular}, we have 
     $V_{[2i-a]} \otimes D^{a-i} 
      \hookrightarrow X_{r-i}^{(j)}/X_{r-i}^{(j+1)}$. As $\mathcal{I}(a,i) 
      \subseteq \mathcal{I}(a, [a-i])=  \mathcal{I}(a,j)$,
      by the case $j=i$, we have $X_{r-i}^{(i)}/X_{r-i}^{(i+1)} \cong V_{[a-2i]} 
      \otimes D^{i}$. Therefore
      \begin{align*}
        p+1  & = [a-2i]+1 + [2i-a]+1 \\
        & \leq \dim \left( \frac{X_{r-i}^{(i)}}{X_{r-i}^{(i+1)}} \right) +
        \dim \left( \frac{X_{r-i}^{(j)}}{X_{r-i}^{(j+1)}} \right)  \\
         & \leq 
         \dim \left( \frac{X_{r-i}^{(i)}+ X_{r-(i-1)}}{X_{r-i}^{(i+1)}+X_{r-(i-1)}} \right) 
         + \dim \left( \frac{X_{r-i}^{(j)}+ X_{r-(i-1)}}{X_{r-i}^{(j+1)}+X_{r-(i-1)}} \right)
        ~~ \mathrm{by}~ \eqref{dimension singular quotient} \\
        & \leq \dim\left( \frac{X_{r-i}^{(i)}+X_{r-(i-1)}}{X_{r-i}^{(j+1)}+X_{r-(i-1)}}\right) 
            \leq \dim\left( \frac{X_{r-i}}{X_{r-(i-1)}}\right) \leq p+1,
      \end{align*}
      where the inequalities on the last line follow from the fact 
      $X_{r-(i-1)} \subseteq 
      X_{r-i}^{(j+1)}+X_{r-(i-1)}  \subseteq 
      X_{r-i}^{(j)}+X_{r-(i-1)} \subseteq 
      X_{r-i}^{(i+1)}+X_{r-(i-1)} \subseteq X_{r-i}^{(i)} + X_{r-(i-1)} \subseteq X_{r-i}$
      and \Cref{induced and successive}. Therefore 
      dim $X_{r-i}^{(j)}/X_{r-i}^{(j+1)} = [2i-a]+1$ and
      $X_{r-i}^{(j)}/X_{r-i}^{(j+1)} \cong V_{[2i-a]} \otimes D^{a-i}
      =V_{[a-2j]} \otimes D^{j}$.
      
     If $r \equiv [a-i]+i$ mod $p$, then by \Cref{large a-i}, we have
     $X_{r-i}^{(j)}/X_{r-i}^{(j+1)} = V_{r}^{(j)}/V_{r}^{(j+1)}$.
      So assume $r_{0} \in \mathcal{I}(a,j)$ and $ r \not \equiv [a-i]+i $
      mod $p$.
      Again from above, we get
        $\binom{r-i}{[a-i]} \equiv 0$ mod $p$. 
       Thus,  by  \Cref{socle term singular}, we have 
       $  V_{[2i-a]} \otimes D^{a-i} 
       \not \hookrightarrow  X_{r-i}^{(j)}/X_{r-i}^{(j+1)}$. 
       Since the exact sequence \eqref{exact sequence Vr} doesn't split for $m=j$
       and $ V_{[2i-a]} \otimes D^{a-i} $
       is the socle of $V_{r}^{(j)}/V_{r}^{(j+1)}$,
       we see that $X_{r-i}^{(j)}/X_{r-i}^{(j+1)}=(0)$. 
\end{proof}
\begin{theorem}\label{Structure of Q(i) if i<[a-i]}
        Let  $p \geq 3$, $r \equiv a \mod (p-1)$ with $1 \leq a \leq p-1$,
        $r \equiv r_{0}~\mathrm{mod}~ p$ with $0 \leq r_{0} \leq p-1$ and let 
        $1 \leq i < [a-i] < p-1$. If 
        $i(p+1)+p \leq r$, then we have the following exact sequence
        \begin{align*}
               0 \rightarrow W \rightarrow Q(i) \rightarrow Q(i-1) \rightarrow 0,
        \end{align*}
        where $W= V_{p-1-[a-2i]} \otimes D^{a-i}$ if $r_{0} \not\in \mathcal{I}(a,i)$ and 
        $W= V_{r}^{(i)}/V_{r}^{(i+1)}$ if  $r_{0} \in \mathcal{I}(a,i)$.
\end{theorem}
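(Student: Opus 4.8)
The plan is to read off the exact sequence directly from the third row of the commutative diagram \eqref{commutative diagram}, specialized to $j = i$. First I would invoke \Cref{reduction corollary}(ii): since $i \neq a$ and $i' = \min\{i,[a-i]\} = i$ (as we are in the case $i < [a-i]$), we have $X_{r-i} \subseteq X_{r-(i-1)} + V_r^{(i)}$, which forces $X_{r-i} + V_r^{(i)} = X_{r-(i-1)} + V_r^{(i)}$. Hence the rightmost bottom entry of \eqref{commutative diagram}, namely $V_r/(X_{r-i} + V_r^{(i)})$, is exactly $V_r/(X_{r-(i-1)} + V_r^{(i)})$, which in turn equals $V_r/(X_{r-(i-1)} + V_r^{((i-1)+1)}) = Q(i-1)$. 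So the third row of \eqref{commutative diagram} becomes
\[
  0 \to W \to Q(i) \to Q(i-1) \to 0,
\]
where $W = V_r^{(i)}/(X_{r-i}^{(i)} + V_r^{(i+1)})$ is the leftmost bottom entry. It remains only to identify $W$ in the two cases.

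Next I would identify $W$ using the first column of \eqref{commutative diagram}, which is the short exact sequence $0 \to X_{r-i}^{(i)}/X_{r-i}^{(i+1)} \to V_r^{(i)}/V_r^{(i+1)} \to W \to 0$ (this is \eqref{W exact sequence intro} with $j = i$; note $X_{r-i}^{(i)} + V_r^{(i+1)}$ modulo $V_r^{(i+1)}$ is the image of $X_{r-i}^{(i)}/X_{r-i}^{(i+1)}$). By \Cref{singular quotient i < [a-i]} with $j = i$ (legitimate since $r \geq i(p+1)+p$), we know $X_{r-i}^{(i)}/X_{r-i}^{(i+1)}$ is $V_{[a-2i]} \otimes D^i$ when $r_0 \notin \mathcal{I}(a,i)$ and is $(0)$ when $r_0 \in \mathcal{I}(a,i)$ (the middle case $j = [a-i]$ with $r \equiv [a-i]+i \bmod p$ of that proposition does not apply here since $j = i$). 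In the first case, \Cref{Breuil map} (the exact sequence \eqref{exact sequence Vr} with $m = i$, valid since $r \geq i(p+1)+p$) identifies $V_r^{(i)}/V_r^{(i+1)}$ as an extension of $V_{p-1-[a-2i]} \otimes D^{a-i}$ by $V_{[a-2i]} \otimes D^i$; since the injected submodule $X_{r-i}^{(i)}/X_{r-i}^{(i+1)} \cong V_{[a-2i]} \otimes D^i$ is precisely the socle of $V_r^{(i)}/V_r^{(i+1)}$ (we are in the non-split case as $i < [r-i] = [a-i]$, by \Cref{Structure of induced}), the quotient $W$ is the cosocle $V_{p-1-[a-2i]} \otimes D^{a-i}$. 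In the second case $X_{r-i}^{(i)}/X_{r-i}^{(i+1)} = (0)$, so $W \cong V_r^{(i)}/V_r^{(i+1)}$ directly.

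I should double-check the exactness of the bottom row and left column of \eqref{commutative diagram}: the diagram is obtained by applying the snake lemma / the nine lemma to the three-by-three array whose top two rows are the defining exact sequences $0 \to X_{r-i}^{(i)}/X_{r-i}^{(i+1)} \to X_{r-i}/X_{r-i}^{(i+1)} \to X_{r-i}/X_{r-i}^{(i)} \to 0$ and the analogous one for $V_r$, together with the vertical inclusions; this is routine homological algebra and the diagram is already displayed in the excerpt, so I would simply cite it. The only genuinely substantive inputs are \Cref{reduction corollary}(ii) (to collapse the right-hand term to $Q(i-1)$) and \Cref{singular quotient i < [a-i]} (to compute the left-hand term), both already proved.

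The main obstacle, such as it is, is bookkeeping: making sure the identification $V_r/(X_{r-i}+V_r^{(i)}) = Q(i-1)$ is justified (it needs $X_{r-i} + V_r^{(i)} = X_{r-(i-1)} + V_r^{(i)}$, which is exactly the consequence of \Cref{reduction corollary}(ii) noted above — one must check the hypothesis $i \neq a$, which holds since $i < [a-i] < p-1$ forces $i \neq a$), and that the socle of $V_r^{(i)}/V_r^{(i+1)}$ really is $V_{[a-2i]}\otimes D^i$ in the non-split regime. Neither is difficult; the proof is essentially a diagram chase plus two citations.
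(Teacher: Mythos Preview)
Your proposal is correct and follows essentially the same approach as the paper's proof: specialize the commutative diagram \eqref{commutative diagram} to $j=i$, use \Cref{reduction corollary}(ii) to identify the rightmost bottom term with $Q(i-1)$, and then determine $W$ as the cokernel of $X_{r-i}^{(i)}/X_{r-i}^{(i+1)} \hookrightarrow V_r^{(i)}/V_r^{(i+1)}$ via \Cref{singular quotient i < [a-i]} and the exact sequence \eqref{exact sequence Vr}. Your write-up is in fact slightly more explicit than the paper's (e.g.\ checking $i\neq a$ and spelling out why the injected submodule coincides with the socle in the non-split case), but the argument is the same.
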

\begin{proof}
Since $i<[a-i]$, by \Cref{reduction corollary} (ii), we have
$ X_{r-i}+ V_{r}^{(i)} = X_{r-(i-1)}+V_{r}^{(i)}$.
 Thus $V_{r} / (X_{r-i}+ V_{r}^{(i)})
= V_{r}/(X_{r-(i-1)}+V_{r}^{(i)})=Q(i-1)$. Taking
$j=i$ in \eqref{commutative diagram} and noting that
rightmost bottom entry is $Q(i-1)$ we get
$$
   0 \rightarrow W \rightarrow Q(i) \rightarrow Q(i-1) \rightarrow 0,
$$
where $W$ is the quotient of $V_{r}^{(i)}/V_{r}^{(i+1)}$ by 
$X_{r-i}^{(i)}/X_{r-i}^{(i+1)}$.
Now it follows from \Cref{singular quotient i < [a-i]} and the exact sequence \eqref{exact sequence Vr} that
$W= V_{p-1-[a-2i]} \otimes D^{a-i}$ if $r_{0} \not\in \mathcal{I}(a,i)$ and 
$W= V_{r}^{(i)}/V_{r}^{(i+1)}$ if  $r_{0} \in \mathcal{I}(a,i)$. This finishes 
the proof.
\end{proof}
%
     If $1 \leq i < a $ in the theorem above, then $i-1$ also satisfies the hypotheses if it is positive.
      Thus repeated application of the above theorem gives the structure of $Q(i)$ in terms of $Q(0)$.   
       But if $i>a$, then $i-1$ satisfies the hypotheses provided $i-1 > a$. Thus
       in the case $i>a$,  the theorem above determines the structure of $Q(i)$
       modulo the structure of $Q(a)$. The structure of $Q(a)$ will be determined in 
       $\S \ref{section i = a}$. This will give the structure of $Q(i)$ in all cases when $i < [a-i]$ and $i \neq a$, $p-1$. 
\subsubsection{ The case \texorpdfstring{$ \boldsymbol{i = [a-i].}$}{}}
\label{section i = a-i}
In this subsection, we determine the structure of $Q(i)$ in the case 
$i = [a-i]$ and $i \neq a $, $p-1$.  Assume $r \geq i(p+1)+p$.
Taking $j=i$ in \eqref{commutative diagram} and using
\Cref{reduction corollary} (ii), we see that the rightmost bottom entry equals
$Q(i-1)$. Thus to determine the structure of $Q(i)$ in terms of $Q(i-1)$
it is enough to determine $X_{r-i}^{(i)}/X_{r-i}^{(i+1)}$. 
By the exact sequence \eqref{exact sequence Vr}, we have $V_{r}^{(i)}/V_{r}^{(i+1)} 
\cong V_{0} \otimes D^{i} \oplus V_{p-1} \otimes D^{i}$. 
By \Cref{socle term singular}, we have $V_{p-1} \otimes D^{i}
\hookrightarrow X_{r-i}^{(i)}/X_{r-i}^{(i+1)}$ if and only if 
$\binom{r-i}{i} \not \equiv 0$ mod $p$. So to describe $X_{r-i}^{(i)}/X_{r-i}^{(i+1)}$
completely we are reduced to determining necessary and sufficient conditions
under which $V_{0} \otimes D^{i} \hookrightarrow  X_{r-i}^{(i)}/X_{r-i}^{(i+1)}$. 
The next two lemmas deal with this question.

Recall that, by the second and fourth parts of \eqref{interval J}, for $1 \leq a,i \leq p-1$ 
with $1 \leq [a-i] \leq i < p-1$, we have
\begin{align}\label{interval J for i > a-i}
        \mathcal{J}(a,i) =
        \begin{cases}
               \lbrace  a-i-1, a-i, \ldots, a-1 \rbrace, & \mathrm{if}~i <a, \\
               \lbrace  0,1, \ldots , a-2 \rbrace \cup 
               \lbrace   p-2+a-i, p-1+a-i, \ldots, p-1 \rbrace,
               & \mathrm{if}~i > a.
        \end{cases}
\end{align}
\begin{lemma}\label{a=2i 1-dim is JH factor}
Let $p \geq 3$, $r \equiv a ~\mathrm{mod}~(p-1)$ with 
$1 \leq a \leq p-1$, $r \equiv r_{0}~\mathrm{mod}~p$ with 
$0 \leq r_{0} \leq p-1$ and let $1 \leq i < p-1$ with $i=[a-i]$. 
If $i(p+1)+p \leq r $ and $r_{0} \not \in \mathcal{J}(a,i) 
 \smallsetminus \lbrace i \rbrace$, 
then $X_{r-i}^{(i)}/X_{r-i}^{(i+1)}$ contains $V_{0} \otimes D^{i}$.
\end{lemma}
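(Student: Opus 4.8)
The plan is to produce an explicit element of $X_{r-i}^{(i)}$ whose image in $V_r^{(i)}/V_r^{(i+1)}$ generates the direct summand $V_0 \otimes D^i$ (the socle/cosocle piece landing in the $V_0$ factor), under the hypothesis $r_0 \notin \mathcal{J}(a,i) \smallsetminus \{i\}$. Since $i = [a-i]$ means $a \equiv 2i \bmod (p-1)$, the exact sequence \eqref{exact sequence Vr} for $m = i$ splits, so $V_r^{(i)}/V_r^{(i+1)} \cong V_0 \otimes D^i \oplus V_{p-1}\otimes D^i$. As remarked before the lemma, $V_{p-1}\otimes D^i \hookrightarrow X_{r-i}^{(i)}/X_{r-i}^{(i+1)}$ iff $\binom{r-i}{i}\not\equiv 0 \bmod p$ (by \Cref{socle term singular}), so the content is purely about the $V_0$ factor. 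I would take a candidate of the shape $F(X,Y) = \sum_{n=0}^{i} C_n \sum_{k\in\f^\ast} k^{?} X^n(kX+Y)^{r-n}$ plus possibly a correction term involving $F_{i,r}$ or $G_{i,r}$ as in \eqref{F i,r definition}, \eqref{G i,r definition} — this lies in $X_{r-i}$ by \Cref{Basis of X_r-i}. Using \eqref{sum fp} to rewrite it as a sum $-\sum_n C_n \sum_{l \equiv i} \binom{r-n}{l} X^{r-l}Y^l$, I would then impose the divisibility conditions of \Cref{divisibility1}: the support conditions (i) force the coefficients of the low/high powers to vanish, and condition (ii) becomes, via \Cref{binomial sum}, a linear system in $C_0,\dots,C_i$ whose matrix is (a version of) $A(a,i,i,r)$ from \eqref{A(a,i,j,r) matrix}, together with one extra row/column encoding that the image should be in the $V_0$-summand rather than in $V_{p-1}$.

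The key computational step is solving this linear system: I expect to need that the relevant matrix — the one appearing in \Cref{matrix det}(iii) / \Cref{block matrix invertible} with $A' = A(a,[a-i]-1,i,r)$ bordered by the vectors $\mathbf{v},\mathbf{w}$ of binomial coefficients — is invertible, which by \Cref{block matrix invertible} holds precisely when $r \not\equiv [a-i]+i \bmod p$ and $r_0 \notin \mathcal{J}(a,i)\smallsetminus\{i\}$. Note $[a-i]+i = 2i \equiv a \bmod p$ (if $i < a$) so this is already built into the hypothesis $r_0 \notin \mathcal{J}(a,i)\smallsetminus\{i\}$, since $a \in \mathcal{J}(a,i)$... actually I must check carefully whether $a$ or $a-1$ lies in $\mathcal{J}(a,i)$ in each of the two regimes $i<a$, $i>a$ from \eqref{interval J for i > a-i}, and whether the case $r \equiv 2i \bmod p$ needs separate treatment analogous to \Cref{large a-i}. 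When the matrix is invertible I get unique $C_n$ making $F \in V_r^{(i)}$, $F \notin V_r^{(i+1)}$, and with image in the $V_0\otimes D^i$ factor (this is exactly what the extra bordering row enforces); then $0 \ne F \in X_{r-i}^{(i)}/X_{r-i}^{(i+1)}$ has a copy of $V_0 \otimes D^i$ in its socle, proving the lemma.

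I would also need the auxiliary fact, used to pass from "$F$ has nonzero image in the $V_0$ summand" to the conclusion, that the image is really in $V_0$: this amounts to computing the image of $F$ under the rightmost map of \eqref{exact sequence Vr} via \Cref{breuil map quotient} and checking it vanishes (so $F$ lies in the sub $V_{[a-2i]}\otimes D^i = V_0 \otimes D^i$ since $[a-2i] = p-1$... wait, $a \equiv 2i \bmod(p-1)$ gives $[a-2i] = p-1$, so the sub is $V_{p-1}\otimes D^i$ and the quotient is $V_0 \otimes D^i$). So in fact I want $F$ to map onto (generate) the $V_0\otimes D^i$ quotient, i.e. its image under the Breuil map is nonzero — this reverses the role, and the bordering condition in the matrix should encode nonvanishing of $\sum_n C_n \binom{r-n}{i}$-type expressions, i.e. the coefficient of the relevant monomial after applying \Cref{breuil map quotient}. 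The main obstacle I anticipate is bookkeeping: correctly identifying which summand is the sub and which the quotient given $[a-2i]=p-1$, choosing the exponents in the $k$-sum and the correction term so that all the support conditions of \Cref{divisibility1} hold simultaneously, and matching the resulting linear system precisely to the bordered matrix of \Cref{block matrix invertible} including the case distinctions $i<a$ versus $i>a$ and the boundary case $r \equiv 2i \bmod p$ (which may require invoking an analogue of \Cref{large a-i} to show $X_{r-i}^{(i)}/X_{r-i}^{(i+1)}$ is all of $V_r^{(i)}/V_r^{(i+1)}$, hence certainly contains $V_0\otimes D^i$). Once the system is set up correctly, invertibility is immediate from the already-proved \Cref{block matrix invertible}.
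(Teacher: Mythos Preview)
Your approach is essentially the paper's: construct an explicit $F \in X_{r-i}$ of the form $C_i X^{r-i}Y^i + \sum_{n=0}^{i-1} C_n \sum_{k\in\f^\ast} k^{i+n-a} X^n(kX+Y)^{r-n}$, impose the conditions of \Cref{divisibility1} to land in $V_r^{(i)}$, solve the resulting linear system using the bordered matrix of \Cref{block matrix invertible}, and then check via \Cref{breuil map quotient} and \Cref{Breuil map} that the image of $F$ in the quotient $V_0\otimes D^i$ of $V_r^{(i)}/V_r^{(i+1)}$ is nonzero; you correctly identified that $[a-2i]=p-1$ makes $V_{p-1}\otimes D^i$ the socle and $V_0\otimes D^i$ the cosocle, so the bordering row $\mathbf w$ enforces nonvanishing of the image rather than vanishing.

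Two small differences with the paper. First, the ``correction term'' is the bare monomial $C_i X^{r-i}Y^i$ (this is what the bordering column $\mathbf v=(\binom{i}{0},\dots,\binom{i}{i-1})$ of \Cref{block matrix invertible} records), not $F_{i,r}$ or $G_{i,r}$; you will be led to this when you write out condition (ii) of \Cref{divisibility1}. Second, for the boundary case $r\equiv 2i\bmod p$ (which, as you suspected, is \emph{not} excluded by $r_0\notin\mathcal J(a,i)\smallsetminus\{i\}$ since $[a-i]+i$ never lies in $\mathcal J(a,i)$), the paper does not invoke an analogue of \Cref{large a-i} but simply writes down the explicit solution $C_0=\cdots=C_{i-2}=0$, $C_{i-1}=(i+1)^{-1}$, $C_i=-1$ and verifies directly that the two bordering equations still hold; this is quicker than proving the full quotient is hit.
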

\begin{proof}
     Since $r \equiv 2i$ mod $(p-1)$ we see that the exact sequence 
     \eqref{exact sequence Vr} is split for $m=i$.
     So to prove the lemma it is enough to exhibit a polynomial 
     $F(X,Y) \in X_{r-i}^{(i)}/X_{r-i}^{(i+1)}$ whose projection under
     $V_{r}^{(i)}/V_{r}^{(i+1)} \twoheadrightarrow V_{0} \otimes D^{i}$
     is non-zero.    
        Let 
	\begin{align*}
	A' & = \left( \binom{r-n}{m} \binom{[a-m-n]}{i-m} 
	\right)_{0 \leq m,n \leq i-1}, \\  
	\mathbf{v} & =\left( \binom{i}{0}, \binom{i}{1}, \ldots , \binom{i}{i-1} \right), \\
	\mathbf{w}&= \left( \binom{r}{r-i}, \binom{r-1}{r-i}, \ldots , \binom{r-(i-1)}
	{r-i} \right).
	\end{align*}
	Note that $[a-i] =i$ and
	 $r_{0} \not \in \mathcal{J}(a,i) \smallsetminus \lbrace i \rbrace$.
	 If $r \not\equiv 2i \mod p$,  by \Cref{block matrix invertible},
        the matrix
	\[
	       A =
	       \left(
	      \begin{array}{c|c}
	            A' & \mathbf{v}^{t} \\
	            \hline
	            \mathbf{w} & 0
	     \end{array}
	     \right) ~~\mathrm{is ~ invertible},
	\]
       so we may choose $C_{0}, \ldots , C_{i} \in \f $ such that $A (C_{0}, \ldots, C_{i})^{t} =
	(0,\ldots, 0, 1)^{t}$, i.e.,
      \begin{align}
          \sum\limits_{n=0}^{i-1} C_{n} \binom{r-n}{m} 
           \binom{[a-m-n]}{i-m} + C_{i} \binom{i}{m} &=
           0, ~ \forall ~ 0 \leq m \leq i-1, \label{1-dim choice C i eq 1}  \\
          \sum\limits_{n=0}^{i-1} C_{n} \binom{r-n}{r-i}  &=1 \label{1-dim choice C i eq 2}.   
      \end{align}	
      If $r \equiv 2i \mod p$, we take $C_0 = C_1 = \cdots = C_{i-2} = 0$, $C_{i-1} = (i+1)^{-1}$ and $C_i = - 1$.
      Then  \eqref{1-dim choice C i eq 1} and \eqref{1-dim choice C i eq 2}  still hold.   
      Indeed, if $0 \leq m \leq i-1$, by Lucas' theorem, we have
     \begin{align*}
              C_{i-1} \binom{r-(i-1)}{m}   \binom{[a-m-(i-1)]}{i-m}
             = (i+1)^{-1} \binom{i+1}{m} \binom{i+1-m}{i-m}
              = \binom{i}{m}
      \end{align*}
     and          
          \[
               \sum\limits_{n=0}^{i-1} C_{n} \binom{r-n}{r-i}
               = C_{i-1} \binom{r-i+1}{r-i} = (i+1)^{-1} (r-i+1)
               \equiv 1 \mod p.
          \]
	 Let
	\begin{align*}
	       F(X,Y) & := C_{i} X^{r-i}Y^{i} -\sum\limits_{n=0}^{i-1} C_{n} 
	       \sum_{k \in \fstar} k^{i+n-a} X^{n} (k X +Y)^{r-n}  \\
	      & \stackrel{\eqref{sum fp}}{=}  C_{i}X^{r-i}Y^{i} +
	       \sum_{n=0}^{i-1} C_{n} \sum_{\substack{0 \leq l \leq r-n \\ l \equiv i 
	       ~ \mathrm{mod}~ (p-1)}} \binom{r-n}{l} X^{r-l}Y^{l}.
	\end{align*}
	Clearly $F(X,Y) \in X_{r-i}$, by \Cref{Basis of X_r-i}. As above, since
	  $i$ (resp. $r-i$) is smallest (resp. largest) number between $0$ and 
	$r$ congruent to $i$ mod $(p-1)$, we see that the coefficients of 
	$X^{r}, \ldots X^{r-(i-1)}Y^{i-1}$ and $X^{i-1}Y^{r-(i-1)}, \ldots, Y^{r}$ 
	in $F(X,Y)$ are zero. Hence $F(X,Y)$ satisfies  condition (i) 
	in \Cref{divisibility1} for $m=i$. 
	For $0 \leq m \leq i-1$,  by  \Cref{binomial sum}, we have
	\begin{align*}
	      C_{i} \binom{i}{m}+\sum\limits_{n=0}^{i-1} C_{n} 
	      \sum_{\substack{0 \leq l \leq r-n \\ l \equiv i ~ \mathrm{mod}~ (p-1)}}
	      \binom{r-n}{l} \binom{l}{m} & \; \equiv  C_{i} \binom{i}{m} + 
	     \sum_{n=0}^{i-1} C_{n} \binom{r-n}{m} \binom{[a-m-n]}{i-m}  \\
	     & \stackrel{\eqref{1-dim choice C i eq 1}}{\equiv} 0 \mod p.
	\end{align*}
	Hence by \Cref{divisibility1}, we have $F(X,Y) \in X_{r-i}^{(i)}$.  
	Clearly the coefficient of 
	$X^{r-i}Y^{i}$ in $F(X,Y)$ is $C_{i}+\sum_{n=0}^{i-1}
	 C_{n} \binom{r-n}{i} $. Also, the coefficient of $X^{i}Y^{r-i}$ in $F(X,Y)$
	is $\sum_{n=0}^{i-1} C_{n} \binom{r-n}{r-i} =1$, by 
	\eqref{1-dim choice C i eq 2}. 
	By  \Cref{binomial sum}, we have
	\[
	    C_{i} \binom{i}{i}+ \sum\limits_{n=0}^{i-1} C_{n} 
	  \sum_{\substack{0 \leq l \leq r-n \\ l \equiv i ~ \mathrm{mod}~ (p-1)}}
	\binom{r-n}{l} \binom{l}{i}  \equiv C_{i} +
	\sum\limits_{n=0}^{i-1} C_{n} \binom{r-n}{i} \mod p, 
	\]
	which also equals the coefficient of $X^{r-i}Y^{i}$ in $F(X,Y)$.
    Thus, by \Cref{breuil map quotient} (with $m=i$), we have 
    \begin{align*}
        F(X,Y) \equiv (-1)^{i+1}  \theta^{i} X^{r-i(p+1)-p-1}Y^{p-1}  
           ~~\mathrm{mod} ~ V_{r}^{(i+1)},
    \end{align*}
    up to  terms of the form $\theta^{i} X^{r-i(p+1)}$, $\theta^{i} Y^{r-i(p+1)}$.
	It follows from \Cref{Breuil map}
	that the image of $F(X,Y)$ in $X_{r-i}^{(i)}/X_{r-i}^{(i+1)} 
	\hookrightarrow V_{r}^{(i)}/V_{r}^{(i+1)} \twoheadrightarrow 
	V_{0} \otimes D^{i}$ equals $(-1)^{i+1} \neq 0$. 
	This completes the proof of the lemma.
\end{proof}
We next prove the converse of the above lemma. Recall, by \eqref{F i,r definition}, that 
for $i=[r-i]$, we have
$F_{i,r}(X,Y) =  \sum_{k \in \f}^{} k^{[2i-r]} X^{i}(kX+Y)^{r-i} =
\sum_{k \in \f}^{} k^{p-1} X^{i}(kX+Y)^{r-i}$.
\begin{lemma}\label{a=2i 1-dim is not JH}
       Let $p\geq3$, $r \equiv a ~\mathrm{mod}~(p-1)$ with $1 \leq a  \leq p-1$,
       $r \equiv r_{0} ~\mathrm{mod}~ p$ with $0 \leq r_{0}  \leq p-1$ and 
      let $1 \leq i < p-1$ with $i=[a-i]$. If $ r \geq i(p+1)+p$ and
      $r_{0} \in \mathcal{J}(a,i)
      \smallsetminus \lbrace i \rbrace$, then 
      $V_{0} \otimes D^{i} \not \hookrightarrow X_{r-i}^{(i)}/X_{r-i}^{(i+1)}$. 
\end{lemma}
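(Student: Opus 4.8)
The statement to prove is the converse of Lemma~\ref{a=2i 1-dim is JH factor}: under the hypotheses $i = [a-i]$, $i \neq p-1$, $r \geq i(p+1)+p$ and $r_{0} \in \mathcal{J}(a,i) \smallsetminus \lbrace i \rbrace$, the one-dimensional module $V_{0} \otimes D^{i}$ is not a submodule of $X_{r-i}^{(i)}/X_{r-i}^{(i+1)}$. The plan is to proceed by a periodicity reduction exactly as in the proof of Lemma~\ref{exceptional case 1}, and then, at a representative $s \equiv r \bmod p(p-1)$ of a convenient shape, to show directly that the relevant quotient is forced to be small. First I would invoke \Cref{arbitrary quotient periodic} to replace $r$ by some $s$ with $s \equiv r \bmod p(p-1)$, $s$ large (say $s \geq p^{3}$), with prescribed $p$-adic digits matching $r_{0}$ in the constant term, and with the higher digits of $s$ chosen so that $\Sp(s - r_{0})$ is small — small enough that $X_{s - i, \, s} = X_{s - (i-1), \, s}$ by \Cref{final X_r-i = X_r-j}. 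Concretely, since $r_{0} \in \mathcal{J}(a,i)$, the key arithmetic point (compare \Cref{I vs J} and \Cref{interval and binomial}) is that $r_{0} \in \mathcal{I}(a,i) \cup \{[a-i]-1\} = \mathcal{I}(a,i) \cup \{i-1\}$ together with $r \not\equiv 2i \bmod p$, or else $r \equiv 2i \bmod p$; these are the two regimes to be handled.

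In the first regime, $r \not\equiv 2i \bmod p$ and $r_{0} \in \mathcal{I}(a,i) \cup \{i-1\}$: if $r_{0} \in \mathcal{I}(a,i)$, then \Cref{exceptional case 1} (which, by \Cref{remark i< a-i singular zero}, also applies when $i = [a-i]$) already gives $X_{r-i}^{(i)}/X_{r-i}^{(i+1)} = (0)$, so certainly $V_{0}\otimes D^{i}$ is not a submodule. If $r_{0} = i-1$, I would pick $s$ as above with constant digit $i-1$ and $\Sp(s - (i-1))$ as small as possible subject to $s \equiv a \bmod (p-1)$; using \Cref{final X_r-i = X_r-j} one checks $X_{s-i,\,s} = X_{s-(i-1),\,s}$ (the condition $r_{0} = i-1 \neq i,\ldots,i-1$ is vacuous here, and $\Sp(s-(i-1))$, $\Sp(s-r_{0})$ can be arranged $\leq p-1$ and $\leq i-1$ respectively), hence $X_{s-i}^{(i)}/X_{s-i}^{(i+1)} = X_{s-(i-1)}^{(i)}/X_{s-(i-1)}^{(i+1)}$; and since $i - 1 < i = [a-i]$ and $i \neq 0, a$, this vanishes by \Cref{reduction corollary 2}. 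Transporting back via periodicity gives the claim.

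In the second regime, $r \equiv 2i \bmod p$: here $V_{r}^{(i)}/V_{r}^{(i+1)} \cong V_{0}\otimes D^{i} \oplus V_{p-1}\otimes D^{i}$ splits, so I cannot use non-splitting of the extension; instead I would argue with dimensions. The idea is to show $\dim\bigl(X_{r-i}^{(i)}/X_{r-i}^{(i+1)}\bigr) \leq p-1$, which rules out the extra one-dimensional constituent once \Cref{socle term singular} tells us whether $V_{p-1}\otimes D^{i}$ is there (it is iff $\binom{r-i}{i}\not\equiv 0 \bmod p$, and in this regime $r \equiv 2i$ forces $\binom{r-i}{i}\equiv\binom{i}{i}\cdots\not\equiv 0$ by Lucas provided $i \leq r_{0}$... one must be careful here and split further). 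More robustly, I would mimic the dimension count in the case $j = i$ of \Cref{singular quotient i < [a-i]}: using \eqref{dimension singular quotient}, $W_{i,r} \subseteq (X_{r-i}^{([a-i])} + X_{r-(i-1)})/X_{r-(i-1)} = (X_{r-i}^{(i)} + X_{r-(i-1)})/X_{r-(i-1)}$ since $i = [a-i]$, and $W_{i,r}$ is generated by the image of $G_{i,r}(X,Y)$, which by \eqref{G i,r} lies in $X_{r-i}^{([a-i])} = X_{r-i}^{(i)}$; combined with \Cref{induced and successive} ($X_{r-i}/X_{r-(i-1)}$ a quotient of a principal series of dimension $p+1$, whose socle here is $V_{[2i-a]}\otimes D^{a-i} = V_{0}\otimes D^{i}$ mapped onto $W_{i,r}$), one gets that the image of $V_{0}\otimes D^{i}$ under $\psi_{i}$ composed with the projection to $X_{r-i}/(X_{r-i}^{(i+1)} + X_{r-(i-1)})$ is zero, exactly as in the ``if'' part of \Cref{socle term singular}. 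Since $V_{0}\otimes D^{i}$ occurs with multiplicity one in the principal series and its image in the relevant quotient vanishes, it cannot be a JH factor of $(X_{r-i}^{(i)} + X_{r-(i-1)})/(X_{r-i}^{(i+1)} + X_{r-(i-1)}) \cong Y_{i,i} \cong X_{r-i}^{(i)}/X_{r-i}^{(i+1)}$ (the last isomorphism using $X_{r-(i-1)}^{(i)}/X_{r-(i-1)}^{(i+1)} = 0$ by \Cref{reduction corollary 2}), so in particular it is not a submodule.

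The main obstacle I anticipate is bookkeeping in the second regime ($r \equiv 2i \bmod p$): because the extension splits, vanishing of the socle image does not immediately kill the constituent in an arbitrary subquotient, so one has to argue carefully that $G_{i,r}$ (equivalently $F_{i,r}$) really lands modulo $X_{r-i}^{(i+1)} + X_{r-(i-1)}$ in a place that forces the $V_{0}\otimes D^{i}$-isotypic part of $X_{r-i}^{(i)}/X_{r-i}^{(i+1)}$ to be zero — this is where I would reuse, essentially verbatim, the computation in \Cref{socle term singular} showing $G_{i,r}(X,Y) = F(X,Y) - \delta_{[a-i],p-1}X^{r} \in X_{r-i}^{(p)} + X_{r-(i-1)}$ when $\binom{r-i}{[a-i]} \equiv 0 \bmod p$, together with the membership criterion \Cref{interval and binomial}(iv) relating $r_{0}\in\mathcal{J}(a,i)$ to $\binom{r-[a-i]+1}{i+1}\equiv 0$. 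Making the digit choices for $s$ consistent across the sub-subcases $i < a$ versus $i > a$ (where $\mathcal{J}(a,i)$ has different descriptions, cf.~\eqref{interval J for i > a-i}) is the other routine-but-delicate part.
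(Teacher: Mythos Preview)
Your case split has a structural problem and a genuine gap in the key case $r_0 = i-1$.

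First, the ``second regime'' $r \equiv 2i \bmod p$ is vacuous: by \Cref{I vs J}\,(ii), $r_0 \in \mathcal{J}(a,i)$ already forces $r \not\equiv [a-i]+i = 2i \bmod p$. So the entire hypothesis $r_0 \in \mathcal{J}(a,i)\smallsetminus\{i\}$ lives in your first regime, and the only case not immediately dispatched by \Cref{remark i< a-i singular zero} is $r_0 = i-1$ (you can check directly that $i-1 \notin \mathcal{I}(a,i)$ in both sub-cases $i<a$ and $i>a$).

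The real gap is your periodicity reduction for $r_0 = i-1$. You invoke \Cref{final X_r-i = X_r-j} with $j=i-1$ to force $X_{s-i,\,s}=X_{s-(i-1),\,s}$, and you claim the condition ``$r_0 \neq j,\ldots,i-1$'' is vacuous. It is not: with $j=i-1$ this condition reads $s_0 \neq i-1$, and since $s \equiv r \bmod p$ you have $s_0 = r_0 = i-1$. So the criterion \emph{fails}, and you cannot collapse $X_{s-i}$ to $X_{s-(i-1)}$ this way. No choice of higher digits of $s$ repairs this, since the obstruction is in the constant digit. Your fallback argument from the second regime does not rescue the situation either: you identify $V_{[2i-a]}\otimes D^{a-i}$ with $V_0\otimes D^i$, but in fact $[2i-a]=p-1$ here, so $W_{i,r}$ (and the ``if'' part of \Cref{socle term singular}) concerns the $V_{p-1}\otimes D^i$ constituent, not $V_0\otimes D^i$; moreover $\binom{r-i}{i}\equiv\binom{p-1}{i}\not\equiv 0\bmod p$ when $r_0=i-1$, so that lemma's hypothesis is not met anyway.

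The paper's proof for $r_0 = i-1$ is genuinely different and cannot be avoided. One works with $F_{i,r}(X,Y)=\sum_\lambda \lambda^{p-1}X^i(\lambda X+Y)^{r-i}$, which under $\psi_i$ corresponds to the $V_0\otimes D^i$-direction (cf.\ \Cref{Structure of induced}\,(ii) with $l=p-1$). One then chooses constants $C_0,\ldots,C_{i-1}$ using the invertibility of the matrix $A(a,i-1,i,r)$ (valid precisely because $r_0=i-1\notin\mathcal{I}(a,i-1)$, via \Cref{A(a,i,j,r) invertible}) so that $F(X,Y):=F_{i,r}(X,Y)-\sum_{n=0}^{i-1}C_n\sum_{k\in\fstar}k^{i+n-a}X^n(kX+Y)^{r-n}$ lies in $V_r^{(i)}$. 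Since $F-F_{i,r}\in X_{r-(i-1)}$, the images of $F$ and $F_{i,r}$ in $X_{r-i}^{(i)}/X_{r-i}^{(i+1)}\cong Y_{i,i}$ coincide, and an explicit computation with \Cref{breuil map quotient} shows that the image of $F$ under $V_r^{(i)}/V_r^{(i+1)}\twoheadrightarrow V_0\otimes D^i$ vanishes. This is the missing ingredient.
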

\begin{proof}
        Since $[a-i]=i$,  by \eqref{interval J for i > a-i}
        and the definition of   $ \mathcal{I}(a,i)$,  we have
        $ r_{0}  \in \mathcal{J}(a,i) \smallsetminus \lbrace i,i-1 \rbrace$ 
        implies that  $r_{0} \in \mathcal{I}(a,i)$ and $ r \not \equiv  [a-i]+i $ mod $p$.
        So if $r_{0} \in \mathcal{J}(a,i) \smallsetminus \lbrace i,i-1 \rbrace$,
        then by \Cref{remark i< a-i singular zero}, we have
        $X_{r-i}^{(i)}/X_{r-i}^{(i+1)} =(0)$.  So assume $r \equiv i-1$ mod $p$.
         Since $i-1 < i =[a-i] < p-1$, by \Cref{reduction corollary 2}
         (with $i$ there equal to $i-1$ and $j = i$),
         we have $X_{r-(i-1)}^{(i)}/X_{r-(i-1)}^{(i+1)} =0 $. Thus,
         by \eqref{Y i,j},    we have
         \[
             \frac{X_{r-i}^{(i)}+X_{r-(i-1)}}{X_{r-i}^{(i+1)}+X_{r-(i-1)} }
               \cong Y_{i,i} \cong \frac{X_{r-i}^{(i)}}{X_{r-i}^{(i+1)}}.
         \]
         By \Cref{reduction corollary} (ii),  we have
        $   X_{r-i} =X_{r-(i-1)} +X_{r-i}^{(i)} $.
       Let $\chi = \chi_{1}^{r-i} \chi_{2}^{i}$. Thus, by 
       \Cref{induced and successive}, we have
     \begin{align*}
              \ind_{B}^{\Gamma}(\chi) \overset{\psi_{i}}{\twoheadrightarrow}
            \frac{X_{r-i}}{X_{r-(i-1)}} = \frac{X_{r-(i-1)} +X_{r-i}^{(i)}}{X_{r-(i-1)}}
          \twoheadrightarrow  \frac{X_{r-i}^{(i)}+X_{r-(i-1)}}{X_{r-i}^{(i+1)}+X_{r-(i-1)} }
          \cong  \frac{X_{r-i}^{(i)}}{X_{r-i}^{(i+1)}} \hookrightarrow
            \frac{V_{r}^{(i)}}{V_{r}^{(i+1)}}.
       \end{align*}
       By \Cref{Structure of induced}  and \Cref{induced and star}, we have 
       $ V_{0} \otimes D^{i} \oplus V_{p-1} \otimes D^{i}  \cong
       \ind_{B}^{\Gamma}(\chi) \cong  V_{r}^{(i)}/V_{r}^{(i+1)} $. Thus,
       $V_{0} \otimes D^{i}  \hookrightarrow X_{r-i}^{(i)}/X_{r-i}^{(i+1)}$
       if and only if  the map  
       \[ 
              V_{0} \otimes D^{i}  \hookrightarrow  \ind_{B}^{\Gamma}(\chi) 
              \rightarrow  V_{r}^{(i)}/V_{r}^{(i+1)} 
              \twoheadrightarrow V_{0} \otimes D^{i} 
       \]
       induced by the  composition above is non-zero. 
       By \Cref{Structure of induced} (ii) (for $l=p-1$),  
       $\sum_{\lambda\in \f}^{} \lambda^{p-1} \left[ \begin{psmallmatrix}
         \lambda & 1 \\ 1 & 0 \end{psmallmatrix} , e_{\chi} \right]$ is a basis element of
        $V_{0} \otimes D^{i}  \hookrightarrow  \ind_{B}^{\Gamma}(\chi) $. By 
        \Cref{induced and successive}, we see that  
        $\psi_{i}(\sum_{\lambda\in \f}^{} \lambda^{p-1} \left[ \begin{psmallmatrix}
         \lambda & 1 \\ 1 & 0\end{psmallmatrix} , e_{\chi} \right])=F_{i,r}(X,Y)$. 
        So to prove the lemma it is enough to show that the image of $F_{i,r}(X,Y)$ is zero.
          Since $[a-i] =i <p-1$, we have  $[a-(i-1)] = [i+1] =i+1 >i$. Thus,
          by Corollary~\ref{A(a,i,j,r) invertible}, 
          we see that the matrix
          $A(a,i-1,i,r) =
          \left( \binom{r-n}{m} \binom{[a-m-n]}{i-n} \right)_{0 \leq m,n \leq i-1}$
          is invertible if $r \equiv i-1$ mod $p$. So there exist 
          $C_{0}, \ldots$, $C_{i-1} \in \f$ such that
          \begin{align}\label{choice C_i exceptional case a=2i}
                 \sum_{n=0}^{i-1} C_{n} \binom{r-n}{m} \binom{[a-m-n]}{i-n}
                 = \binom{r-i}{m}, ~~ \forall ~ 0 \leq m \leq i-1.
          \end{align}
          Let $C_{i} = -1$.  Consider the following polynomial
          \begin{align*}
                 F(X,Y) &:= F_{i,r}(X,Y) - \sum_{n=0}^{i-1} C_{n} \sum_{k \in \fstar}^{}
                 k^{i+n-a} X^{n} (k X +Y)^{r-n} \\
                & =  - \sum_{n=0}^{i} C_{n} \sum_{k \in \fstar}^{}
                 k^{i+n-a} X^{n} (k X +Y)^{r-n}  \\
                 & \stackrel{\eqref{sum fp}}{\equiv}  \sum_{n=0}^{i} C_{n}  
                   \sum_{\substack{0 \leq l \leq r-n \\ 
                 l \equiv i ~\mathrm{mod}~ (p-1)}}^{}  \binom{r-n}{l} X^{r-l} Y^{l}
                 \mod p.
          \end{align*}
          Since $i$ (resp. $r-i$) is the smallest (resp. largest) between 
          $0$ and  $r$ congruent to $i$ mod $(p-1)$, we see that 
          $X^{i}, Y^{i} \mid F(X,Y)$.
          Further by \Cref{binomial sum}, for $0 \leq m \leq i-1$, we have 
          \begin{align*}
                  \sum_{n=0}^{i} C_{n}  \sum_{\substack{0 \leq l \leq r-n \\ 
                 l \equiv i ~\mathrm{mod}~ (p-1)}}^{}  \binom{r-n}{l} \binom{l}{m}  
                  \equiv
                   \sum_{n=0}^{i-1} C_{n}  \binom{r-n}{m} \binom{[a-m-n]}{i-m}
                   -\binom{r-i}{m} \overset{\eqref{choice C_i exceptional case a=2i}}{\equiv} 0
                   \mod p.
          \end{align*}
          Hence by \Cref{divisibility1}, we have $F(X,Y) \in V_{r}^{(i)}$. 
          Also note that  by \Cref{Basis of X_r-i}, we have
          $F(X,Y)-F_{i,r} (X,Y) \in X_{r-(i-1)}$, whence the images of 
          $F(X,Y)$, $F_{i,r}(X,Y)$ under
          \begin{align*}
            \frac{X_{r-i}}{X_{r-(i-1)}} = \frac{X_{r-(i-1)} +X_{r-i}^{(i)}}{X_{r-(i-1)}}
            \twoheadrightarrow  \frac{X_{r-i}^{(i)}+X_{r-(i-1)}}{X_{r-i}^{(i+1)}+X_{r-(i-1)} }
           \cong  \frac{X_{r-i}^{(i)}}{X_{r-i}^{(i+1)}} \hookrightarrow
            \frac{V_{r}^{(i)}}{V_{r}^{(i+1)}} \twoheadrightarrow V_{0} \otimes D^{i}
          \end{align*}
          are the same. Since $F(X,Y) \in X_{r-i}^{(i)}$, the image of $F(X,Y)$ 
          under the above composition is the same as the image of 
          $F(X,Y)$ under the last surjection 
          which by \Cref{breuil map quotient} equals zero as
          \begin{align*}
                   \sum_{n=0}^{i} & C_{n}   \sum_{\substack{0 \leq l \leq r-n \\ 
                 l \equiv i ~\mathrm{mod}~ (p-1)}}^{}  \binom{r-n}{l} \binom{l}{i} 
                  - \sum_{n=0}^{i} C_{n}  \binom{r-n}{i}  + 
                  (-1)^{i+1} \sum_{n=0}^{i} C_{n}  \binom{r-n}{r-i}         \\
                 &  \equiv C_{i}  \binom{r-i}{i} -(-1)^{i} \sum_{n=0}^{i} C_{n}  \binom{r-n}{r-i} 
                 \mod p~ \mathrm{ (by~ \Cref{binomial sum}) } \\
                &  \equiv C_{i} \binom{p-1}{i} - (-1)^{i} C_{i}\binom{r-i}{r-i}   
                    \equiv 0\mod p ~~   ( \text{by Lucas' theorem  and}
                     ~ r \equiv i-1 ~ \mathrm{mod}~ p).                  
          \end{align*}
          This proves the lemma.
\end{proof}
We are now ready to describe the quotient $X_{r-i}^{(i)}/X_{r-i}^{(i+1)}$ when $i=[r-i]$.
\begin{proposition}\label{singular i= [a-i]}
      Let $p \geq 3$, $r \equiv a ~\mathrm{mod}~(p-1)$ with $1 \leq a  \leq p-1$,
       $r \equiv r_{0} ~\mathrm{mod}~ p$ with $0 \leq r_{0}  \leq p-1$ and 
      let $1 \leq i < p-1$ with $i=[a-i]$. If $ r \geq i(p+1)+p$, then
      \begin{align*}
             \frac{X_{r-i}^{(i)}}{X_{r-i}^{(i+1)}} \cong
            \begin{cases}
                   V_{r}^{(i)}/V_{r}^{(i+1)}, 
                   &\mathrm{if}~ r_{0} \not \in \mathcal{J}(a,i),  \\
                   V_{0} \otimes D^{i},  & \mathrm{if}~ r_{0}=i, \\
                   V_{p-1} \otimes D^{i},  & \mathrm{if}~r_{0}=i-1, \\
                   (0),  &\mathrm{otherwise}.
             \end{cases}
       \end{align*}       
\end{proposition}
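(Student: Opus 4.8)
The proof proceeds by combining the two socle-detection lemmas just established (\Cref{a=2i 1-dim is JH factor} and \Cref{a=2i 1-dim is not JH}) with the socle-detection lemma for the other JH factor (\Cref{socle term singular}), together with the direct-sum decomposition $V_r^{(i)}/V_r^{(i+1)} \cong V_0 \otimes D^i \oplus V_{p-1} \otimes D^i$ coming from the split exact sequence \eqref{exact sequence Vr} (the sequence splits because $r \equiv 2i \mod (p-1)$ here, as $i = [a-i]$). The key point is that $X_{r-i}^{(i)}/X_{r-i}^{(i+1)}$ is a submodule of this direct sum (it injects into $V_r^{(i)}/V_r^{(i+1)}$ via \eqref{commutative diagram}), and a submodule of a direct sum of two non-isomorphic irreducibles is determined completely by which of the two summands it contains. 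So everything reduces to deciding, as a function of $r_0$, whether each of $V_0 \otimes D^i$ and $V_{p-1} \otimes D^i$ is a JH factor (equivalently, a sub) of the quotient.

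First I would record the criterion for $V_{p-1} \otimes D^i$: by \Cref{socle term singular} (applied with this $i$, noting $i = [a-i]$ so $[2i-a] = p-1$ and $[a-2i]=0$, and that the hypothesis $r \geq [a-i](p+1)+p = i(p+1)+p$ holds), we have $V_{p-1}\otimes D^i \hookrightarrow X_{r-i}^{(i)}/X_{r-i}^{(i+1)}$ if and only if $\binom{r-i}{i} \not\equiv 0 \bmod p$. By Lucas' theorem this binomial is nonzero exactly when $r - i \equiv i, i+1, \ldots, p-1 \bmod p$, i.e. when $r_0 \notin \{0,1,\ldots,2i-1\} \bmod p$ in the case $i<a$ (shifting appropriately when $i>a$); a short computation identifies the complement of this set with $\mathcal{J}(a,i)\smallsetminus\{i-1\}$ using \eqref{interval J for i > a-i}. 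So $V_{p-1}\otimes D^i$ is present iff $r_0 \notin \mathcal{J}(a,i) \smallsetminus \{i-1\}$, i.e. iff $r_0 \notin \mathcal{J}(a,i)$ or $r_0 = i-1$. Next, for $V_0 \otimes D^i$: by \Cref{a=2i 1-dim is JH factor}, it is present if $r_0 \notin \mathcal{J}(a,i) \smallsetminus \{i\}$, and by \Cref{a=2i 1-dim is not JH} it is absent if $r_0 \in \mathcal{J}(a,i) \smallsetminus \{i\}$. So $V_0 \otimes D^i$ is present iff $r_0 \notin \mathcal{J}(a,i)$ or $r_0 = i$.

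Then I would simply cross the two criteria over the four cases of the claimed formula. If $r_0 \notin \mathcal{J}(a,i)$: both $V_0 \otimes D^i$ and $V_{p-1}\otimes D^i$ are present, so the quotient, being a submodule of $V_0\otimes D^i \oplus V_{p-1}\otimes D^i$ containing both summands, equals the whole thing, i.e. $V_r^{(i)}/V_r^{(i+1)}$. If $r_0 = i$ (which lies in $\mathcal{J}(a,i)$): then $V_0\otimes D^i$ is present but $V_{p-1}\otimes D^i$ is absent (since $r_0 = i \ne i-1$ and $r_0 \in \mathcal{J}(a,i)$), so the quotient is $V_0 \otimes D^i$. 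If $r_0 = i-1$: symmetrically $V_{p-1}\otimes D^i$ is present and $V_0 \otimes D^i$ is absent, giving $V_{p-1}\otimes D^i$. Finally if $r_0 \in \mathcal{J}(a,i) \smallsetminus \{i, i-1\}$: both summands are absent, so the quotient is $(0)$.

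The main obstacle — really the only non-formal step — is the careful bookkeeping to verify that the set $\{0,1,\ldots,2i-1\}$ of residues killing $\binom{r-i}{i}$, after the case-split $i<a$ versus $i>a$ and the attendant shift of congruence classes across $p$, matches exactly $\mathcal{J}(a,i)\smallsetminus\{i-1\}$ as defined in \eqref{interval J for i > a-i}, and likewise that the "$r_0 = i$" and "$r_0 = i-1$" exceptional residues land where the two lemmas predict. This is the same kind of Lucas-theorem interval arithmetic already used in \Cref{interval and binomial} and \Cref{I vs J}, so I would invoke those lemmas (or re-derive the one-line special case) rather than redo it from scratch. I should also note at the outset that $1 \leq [a-i] = i < p-1$ forces $2 \le a \le 2p-4$ and in particular $i \neq a$, so that the hypotheses of \Cref{a=2i 1-dim is not JH} and \Cref{socle term singular} ($i \ne p-1$, so $[a-i] \ne a$ etc.) are all met; and that $i \geq 1$ together with $r \geq i(p+1)+p \geq 2p+1 > p$ secures the ambient hypotheses $r \geq p$ used throughout.
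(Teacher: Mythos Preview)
Your proposal is correct and follows essentially the same approach as the paper: both proofs use the split decomposition $V_r^{(i)}/V_r^{(i+1)} \cong V_0 \otimes D^i \oplus V_{p-1}\otimes D^i$, invoke \Cref{socle term singular} to characterize when $V_{p-1}\otimes D^i$ embeds (via the vanishing of $\binom{r-i}{i}$, translated into the condition $r_0 \in \mathcal{J}(a,i)\smallsetminus\{i-1\}$), invoke \Cref{a=2i 1-dim is JH factor} and \Cref{a=2i 1-dim is not JH} for $V_0\otimes D^i$, and then read off the four cases. Your explicit remark that a submodule of a direct sum of two non-isomorphic irreducibles is determined by which summands it contains is exactly the structural observation underlying the paper's ``putting these facts together.''
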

\begin{proof}
        Using $[a-i] = i$ and  \eqref{interval J for i > a-i}, one checks that 
        $\binom{r-i}{i} \not \equiv 0$ mod $p$ if and only if 
        $r_{0} \not \in \mathcal{J}(a,i) \smallsetminus \lbrace i-1 \rbrace$.
        Thus, by \Cref{socle term singular}, we have 
        $V_{p-1} \otimes D^{i} \hookrightarrow X_{r-i}^{(i)}/X_{r-i}^{(i+1)} $ 
        if and only if  
        $r_{0} \not \in \mathcal{J}(a,i) \smallsetminus \lbrace i-1 \rbrace$
        if and only if $r_{0} \not \in \mathcal{J}(a,i)$ or $r_{0}= i-1$.
        By Lemmas \ref{a=2i 1-dim is JH factor} and 
        \ref{a=2i 1-dim is not JH}, we see that  
        $V_{0} \otimes D^{i} \hookrightarrow X_{r-i}^{(i)}/X_{r-i}^{(i+1)} $ 
        if and only if  
        $r_{0} \not \in \mathcal{J}(a,i) \smallsetminus \lbrace i \rbrace$
         if and only if $r_{0} \not \in \mathcal{J}(a,i)$ or $r_{0}=i$.
        Since the exact sequence \eqref{exact sequence Vr} splits for $m=i$,
        we have $X_{r-i}^{(i)}/X_{r-i}^{(i+1)} \cong V_{p-1} \otimes D^{i}  \oplus
        V_{0} \otimes D^{i} $. The proposition follows immediately by putting these facts together.
\end{proof}
We now determine the structure of $Q(i)$ in the case $i=[a-i]$.
\begin{theorem}\label{Structure of Q i=[a-i]}
       Let $p \geq 3$, $r \equiv a ~\mathrm{mod}~(p-1)$ with $1 \leq a  \leq p-1$,
       $r \equiv r_{0} ~\mathrm{mod}~ p$ with $0 \leq r_{0}  \leq p-1$ and 
      let $1 \leq i < p-1$ with $i=[a-i]$. If $ r \geq i(p+1)+p$, then
      \[
          0 \rightarrow W \rightarrow Q(i) \rightarrow Q(i-1) \rightarrow 0,
      \]
      where
      \begin{align*}
            W  \cong
            \begin{cases}
                   (0), 
                   &\mathrm{if}~ r_{0} \not \in \mathcal{J}(a,i),  \\
                   V_{p-1} \otimes D^{i},  & \mathrm{if}~ r_{0}=i, \\
                   V_{0} \otimes D^{i},  & \mathrm{if}~r_{0}=i-1, \\
                   V_{r}^{(i)}/V_{r}^{(i+1)},  &\mathrm{otherwise}.
             \end{cases}
       \end{align*}       
\end{theorem}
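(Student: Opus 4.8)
The plan is to run the same argument that produced the exact sequences in Theorem~\ref{Structure of Q(i) if i<[a-i]}, now feeding in the computation of $X_{r-i}^{(i)}/X_{r-i}^{(i+1)}$ from Proposition~\ref{singular i= [a-i]}. First I would note that since $i = [a-i]$, in particular $i < [a-i]$ fails but $i \leq [a-i]$, and more importantly $i \neq a$ (as $[a-i]=i$ forces $a \equiv 2i \bmod (p-1)$, and $i \neq a$ since $i<p-1$ while $a$ could be $p-1$; in any case the hypothesis $i \neq a$ of Corollary~\ref{reduction corollary} is available because if $i = a$ then $[a-i] = [0] = p-1 \neq i$). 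Hence $i' := \min\{i,[a-i]\} = i$, and Corollary~\ref{reduction corollary}(ii) gives $X_{r-i} \subseteq X_{r-(i-1)} + V_r^{(i)}$, so that
\[
  X_{r-i} + V_r^{(i)} = X_{r-(i-1)} + V_r^{(i)},
\]
and therefore $V_r/(X_{r-i}+V_r^{(i)}) = V_r/(X_{r-(i-1)}+V_r^{(i)}) = Q(i-1)$.

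Next I would invoke the commutative diagram \eqref{commutative diagram} with $j = i$. Its bottom row reads
\[
  0 \to \frac{V_r^{(i)}}{X_{r-i}^{(i)}+V_r^{(i+1)}} \to Q(i) \to \frac{V_r}{X_{r-i}+V_r^{(i)}} \to 0,
\]
and by the previous paragraph the rightmost term is $Q(i-1)$. So $W$, the leftmost bottom entry, is exactly the cokernel of the inclusion $X_{r-i}^{(i)}/X_{r-i}^{(i+1)} \hookrightarrow V_r^{(i)}/V_r^{(i+1)}$ (using the second isomorphism theorem, exactly as in the proof of Theorem~\ref{Structure of Q(i) if i<[a-i]}). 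This gives the desired four-term exact sequence with this explicit description of $W$.

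Finally I would read off the four cases of $W$ from Proposition~\ref{singular i= [a-i]} together with the splitting $V_r^{(i)}/V_r^{(i+1)} \cong V_0 \otimes D^i \oplus V_{p-1} \otimes D^i$ (the split case of Lemma~\ref{Breuil map}, valid since $a \equiv 2i \bmod (p-1)$). If $r_0 \notin \mathcal{J}(a,i)$ then $X_{r-i}^{(i)}/X_{r-i}^{(i+1)}$ is all of $V_r^{(i)}/V_r^{(i+1)}$, so $W = 0$. If $r_0 = i$ then the subobject is $V_0 \otimes D^i$, leaving quotient $W \cong V_{p-1}\otimes D^i$; if $r_0 = i-1$ the subobject is $V_{p-1}\otimes D^i$, leaving $W \cong V_0 \otimes D^i$. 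In the remaining case the subobject is $(0)$, so $W = V_r^{(i)}/V_r^{(i+1)}$. There is essentially no obstacle here beyond bookkeeping: all the analytic work was done in Proposition~\ref{singular i= [a-i]} and the structural facts are already in place. The one point requiring a moment's care is confirming that the hypotheses of Corollary~\ref{reduction corollary} and of Proposition~\ref{singular i= [a-i]} are met under the stated assumptions ($r \geq i(p+1)+p$, $1 \leq i < p-1$, $i = [a-i]$), which is immediate.
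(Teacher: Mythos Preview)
Your proposal is correct and follows essentially the same route as the paper: apply Corollary~\ref{reduction corollary}(ii) (after checking $i\neq a$, which you do) to identify the rightmost bottom entry of diagram~\eqref{commutative diagram} with $j=i$ as $Q(i-1)$, then read off $W$ as the cokernel of $X_{r-i}^{(i)}/X_{r-i}^{(i+1)} \hookrightarrow V_r^{(i)}/V_r^{(i+1)}$ using Proposition~\ref{singular i= [a-i]} and the split case of Lemma~\ref{Breuil map}. The paper's own proof is the same, just written more tersely.
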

\begin{proof}
       Taking $j=i$ in the diagram \eqref{commutative diagram} and using
      \Cref{reduction corollary} (ii), we see that the rightmost bottom entry there equals
     $Q(i-1)$. Thus,  we have
     \[
          0 \rightarrow W \rightarrow Q(i) \rightarrow Q(i-1) \rightarrow 0,
      \]
     where $W$ denotes the cokernel  of the map 
     $X_{r-i}^{(i)}/X_{r-i}^{(i+1)} \hookrightarrow V_{r}^{(i)}/V_{r}^{(i+1)}$.
     By the exact sequence \eqref{exact sequence Vr} (with $m=i$), we see that
     $V_{r}^{(i)}/V_{r}^{(i+1)} \cong V_{0} \otimes D^{i} \oplus V_{p-1} \otimes D^{i}$.
     Now the theorem follows from \Cref{singular i= [a-i]}.
\end{proof}

The theorem can be used to give the complete structure of $Q(i)$, when $i = [a-i]$ and 
$i \neq a$, $p-1$, as follows. It already gives the structure of $Q(i)$ in terms of $W$ and $Q(i-1)$.   
If $i = 1$, we are done. Else $1 \leq i-1 < [a-(i-1)] = [a-i] + 1 \leq p-1$. If the last
inequality is strict, we can use
Theorem~\ref{Structure of Q(i) if i<[a-i]} to determine
$Q(i-1)$, as explained at end of the \S \ref{section i < a-i}. If the last inequality is an equality, i.e., 
if $i-1 = a$, then the structure of $Q(a)$ will be determined in 
\S \ref{section i = a}. In either case, we obtain the structure of $Q(i)$.

\subsubsection{The case \texorpdfstring{$ \boldsymbol{i > [a-i].}$}{•}}
\label{section i > a-i}
In this subsection, we  determine the structure of the quotients $Q(i)$ 
in the case $i>[a-i]$ and $i \neq a$, $p-1$. As in Sections~\ref{section i < a-i} and \ref{section i = a-i}, we determine
$Q(i)$ recursively.  By \Cref{reduction corollary} (ii), we see that
$X_{r-i} = X_{r-(i-1)} + X_{r-i}^{([a-i])} \supset
X_{r-(i-1)} + X_{r-i}^{(i)}$. In most  cases it  turns out 
that the last containment is strict. Thus the natural choice for $j=i$ in
 diagram  \eqref{commutative diagram} which was taken 
in \S \ref{section i < a-i} and \S \ref{section i = a-i} doesn't work here. But it turns out that the choice
$j=[a-i]$  works as we will show that  $X_{r-i}+V_{r}^{([a-i])}
= X_{r-([a-i]-1)}+V_{r}^{([a-i])}$, so that one can realize the rightmost
bottom entry of  diagram \eqref{commutative diagram}
(applied with   $j=[a-i]$)
as $Q([a-i]-1)$. This 
reduces the computation of $Q(i)$ to  \S \ref{section i < a-i} once
one knows the 
leftmost bottom module of diagram  \eqref{commutative diagram} 
for $j=[a-i]$. In order to determine  this module we need to determine  
$X_{r-i}^{([a-i])}/X_{r-i}^{(i+1)}$
as we already know the JH factors of $V_{r}^{([a-i])}/V_{r}^{(i+1)}$,
by \Cref{Breuil map}. To do this we need to determine
the successive quotients of the  following ascending chain of
modules
\begin{align}\label{4.1.3 ascending chain}
    X_{r-i}^{(i+1)} \subseteq X_{r-i}^{(i)} \subseteq \cdots \subseteq
    X_{r-i}^{([a-i]+1)} \subseteq X_{r-i}^{([a-i])}.
\end{align}

We start by determining the last quotient $X_{r-i}^{([a-i])}/X_{r-i}^{([a-i]+1)}$
in the chain \eqref{4.1.3 ascending chain} for $[a-i]<i < p-1$.  
For $1 \leq  a,i \leq p-1$, with $1 \leq [a-i] < i < p-1$,
it follows from the first and third parts of the definition of 
 $\mathcal{J}(a,i)$ (cf. \eqref{interval J})  and the fact that
$[a-[a-i]] =i$, that 
\begin{align}\label{interval J for [a-i]}
      \mathcal{J}(a,[a-i]) = 
      \begin{cases}
              \lbrace i, i+1, \ldots, a-2,a-1 \rbrace, & \mathrm{if}~ 
              [a-i]< i<a, \\
              \lbrace 0,1, \ldots, a-2 \rbrace \cup 
              \lbrace i, i+1, \ldots , p-1 \rbrace, & \mathrm{if}~ 
               a<[a-i]<i.
      \end{cases}
\end{align}
\begin{lemma}\label{medium a-i full}
 Let $p \geq 3$, $r \equiv a \mod (p-1) $ with $1 \leq a \leq p-1$, 
 $r \equiv r_{0} ~\mathrm{mod}~p$ with $0 \leq r_{0}\leq p-1$
 and $1 \leq [a-i] < i < p-1$.  
 If $[a-i](p+1)+p \leq r $ and 
 $r_{0} \not \in \mathcal{J}(a,[a-i])$, 
 then $X_{r-i}^{([a-i])}/X_{r-i}^{([a-i]+1)} = 
 V_{r}^{([a-i])}/V_{r}^{([a-i]+1)}$.
\end{lemma}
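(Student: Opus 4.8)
The statement asserts that, in the range $[a-i]<i<p-1$ with $r_0 \notin \mathcal{J}(a,[a-i])$, the module $X_{r-i}^{([a-i])}/X_{r-i}^{([a-i]+1)}$ is \emph{all} of $V_r^{([a-i])}/V_r^{([a-i]+1)}$, a principal series representation of dimension $p+1$ (by \Cref{induced and star}). The plan is to exhibit an explicit polynomial $F(X,Y) \in X_{r-i}^{([a-i])}$ whose image generates $V_r^{([a-i])}/V_r^{([a-i]+1)}$; since the latter is $p+1$-dimensional and is a quotient of the principal series $\ind_B^\Gamma(\chi_1^{[a-i]}\chi_2^{r-[a-i]})$, generating it is equivalent to showing its image under the rightmost map of the exact sequence \eqref{exact sequence Vr} (for $m=[a-i]$) is nonzero, because then the image is not contained in the socle, hence is everything. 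So the real content is: (1) write down $F$ as an $\f$-combination of the spanning vectors $X^n(kX+Y)^{r-n}$, $0\le n \le [a-i]$, $k\in\fstar$ (which lie in $X_{r-i}$ by \Cref{first row filtration} and \Cref{Basis of X_r-i}); (2) choose the coefficients $C_0,\dots,C_{[a-i]-1}$ (and possibly one more) so that the divisibility conditions of \Cref{divisibility1} for $m=[a-i]$ are satisfied, i.e.\ $F \in V_r^{([a-i])}$; (3) check $F \notin V_r^{([a-i]+1)}$, in fact that its image in the cosocle $V_{p-1-[a-2[a-i]]}\otimes D^{i}$ is nonzero, via \Cref{breuil map quotient} and \Cref{Breuil map}.

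First I would set $j=[a-i]$ and note $[a-j]=i$, so $j<i$ and $r-i\equiv j \bmod (p-1)$. Following the template of \Cref{large a-i} and \Cref{a=2i 1-dim is JH factor}, I would consider a polynomial of the shape
$$
  F(X,Y) := \sum_{n=0}^{j} C_n \sum_{k\in\fstar} k^{j+n-a}\, X^n(kX+Y)^{r-n}
  \ \overset{\eqref{sum fp}}{=}\ -\sum_{n=0}^{j} C_n \sum_{\substack{0\le l\le r-n\\ l\equiv j \bmod (p-1)}} \binom{r-n}{l} X^{r-l}Y^l,
$$
so that condition (i) of \Cref{divisibility1} for $m=j$ holds automatically because $j$ (resp.\ $r-j$) is the smallest (resp.\ largest) exponent $\le r$ congruent to $j \bmod (p-1)$ — using $j<i<p-1$ to guarantee $j<p$ and $r-j>r-p$. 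For condition (ii), \Cref{binomial sum} reduces the vanishing of the relevant sums for $0\le m < j$ to a linear system in $C_0,\dots,C_{j-1}$ whose matrix is exactly $A(a,j-1,?,r)$-type, and whose invertibility is governed by the hypothesis $r_0 \notin \mathcal{J}(a,[a-i])$ via \Cref{block matrix invertible} or \Cref{A(a,i,j,r) invertible} (together with a possible boundary subtlety when $r\equiv a \bmod p$ handled by a direct ad hoc choice of the $C_n$, as in \Cref{a=2i 1-dim is JH factor}). This step, getting the linear algebra to line up with the combinatorial definition of $\mathcal{J}(a,[a-i])$ in \eqref{interval J for [a-i]} — in particular matching the two cases $[a-i]<i<a$ and $a<[a-i]<i$ and verifying the relevant binomial coefficients are nonzero mod $p$ by Lucas — is where the bookkeeping is heaviest, and it is the main obstacle.

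Having fixed the $C_n$ so that $F\in X_{r-i}^{([a-i])}$, I would compute the image of $F$ in $V_r^{([a-i])}/V_r^{([a-i]+1)}$. Using \Cref{breuil map quotient} with $m=j$, the image is determined by $\sum_n C_n \sum_{l\equiv j} \binom{r-n}{l}\binom{l}{j}$ (minus the boundary coefficients of $X^{r-j}Y^j$ and $X^jY^{r-j}$ in $F$), which by \Cref{binomial sum} and Lucas reduces to an explicit nonzero element of $\f$, analogous to the computation culminating in \eqref{large a-i breuil map}; the key point will be that the extra factor introduced by the choice of $C_n$ (not being forced to vanish, unlike the $m<j$ equations) is a unit mod $p$. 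Then \Cref{Breuil map} shows this image is nonzero in the cosocle $V_{p-1-[a-2j]}\otimes D^{j}$, so $F$ generates $V_r^{([a-i])}/V_r^{([a-i]+1)}$ as a $\Gamma$-module. Finally, since $F\in X_{r-i}^{([a-i])}$, we get a surjection $X_{r-i}^{([a-i])}/X_{r-i}^{([a-i]+1)} \twoheadrightarrow V_r^{([a-i])}/V_r^{([a-i]+1)}$; but the left side is a submodule of the right side (as $X_{r-i}^{([a-i])}\subseteq V_r^{([a-i])}$ and $X_{r-i}^{([a-i]+1)} = X_{r-i}^{([a-i])}\cap V_r^{([a-i]+1)}$), and a submodule that surjects onto a finite-dimensional module must equal it. This gives the claimed equality, completing the proof.
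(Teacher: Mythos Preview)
Your overall strategy matches the paper's, but there is a genuine gap in step (1) that makes the whole argument collapse. You propose to take
\[
F(X,Y) = \sum_{n=0}^{j} C_n \sum_{k\in\fstar} k^{j+n-a} X^n(kX+Y)^{r-n}, \qquad j=[a-i],
\]
a combination of the spanning vectors $X^n(kX+Y)^{r-n}$ with $0\le n\le [a-i]$ only. But these vectors all lie in $X_{r-[a-i]}$, not merely in $X_{r-i}$. Hence whatever $C_n$ you choose, $F\in X_{r-[a-i]}^{([a-i])}$, and its image in $V_r^{([a-i])}/V_r^{([a-i]+1)}$ is contained in $X_{r-[a-i]}^{([a-i])}/X_{r-[a-i]}^{([a-i]+1)}$. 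Now apply \Cref{singular quotient i < [a-i]} with $i$ there equal to $[a-i]$ (legitimate since $[a-i]<[a-[a-i]]=i<p-1$): that quotient is either $0$ or the \emph{socle} $V_{[2i-a]}\otimes D^{a-i}$, never the full principal series. So no choice of coefficients in your ansatz can make $F$ hit the cosocle, and step (3) must fail.

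The paper's proof fixes exactly this by adjoining a single monomial from $X_{r-i}\smallsetminus X_{r-[a-i]}$, namely $X^iY^{r-i}$, taking
\[
F(X,Y)=X^iY^{r-i}+\sum_{n=0}^{[a-i]-1}C_n\sum_{k\in\fstar}k^{n-i}X^n(kX+Y)^{r-n}.
\]
The $C_n$ are then chosen to cancel the contributions of $X^iY^{r-i}$ to the divisibility conditions of \Cref{divisibility1} for $m<[a-i]$; this linear system is $A(a,[a-i]-1,[a-i],r)\,\mathbf{x}=\bigl(\binom{r-i}{m}\bigr)_{0\le m\le [a-i]-1}^t$, solvable precisely under the hypothesis $r_0\notin\mathcal{J}(a,[a-i])$ (via \Cref{A(a,i,j,r) invertible}, with the boundary case $r\equiv[a-i]+i\bmod p$ handled by an explicit solution). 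It is then the extra term $X^iY^{r-i}$ that produces the nonzero contribution $\binom{r-i}{[a-i]}\not\equiv 0\bmod p$ to the cosocle via \Cref{breuil map quotient} and \Cref{Breuil map}. In short, your linear-algebra template is right, but you must feed it a genuinely new vector from $X_{r-i}$; the combinations you wrote down are too small to escape the socle.
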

\begin{proof}
        Put   $j=[a-i]$. Let 
        \begin{align*}
            A &= \left( \binom{r-n}{m} \binom{[a-m-n]}{j-m}\right)_{0 \leq m,n \leq j-1} , \\
           \mathbf{b} &=\left( \binom{r-i}{0}, \binom{r-i}{1}, \ldots, 
           \binom{r-i}{j-1} \right)^{t}.
       \end{align*} 
       Note  that $A= A(a,j-1,j,r)$, cf. \eqref{A(a,i,j,r) matrix}.
       Note that $[a-(j-1)] = [i+1] =i+1 > j >  j-1$. Thus,
       by Corollary~\ref{A(a,i,j,r) invertible}, 
       we see that $A$ is invertible  if 
       $r_{0} \not \in \mathcal{I}(a,[a-i]-1)$
       (if $[a-i]=1$, then $A= (a)$ which is trivially  invertible). 
       From \eqref{interval J for [a-i]}
       we see that  $r_{0} \not \in \mathcal{J}(a,[a-i])$ and 
       $r \not \equiv [a-i]+i$ mod $p$ implies that
        $r_{0} \not \in \mathcal{I}(a,[a-i]-1)$.
       So, $A \mathbf{x} = \mathbf{b}$ has a solution if 
       $r_{0} \not \in \mathcal{J}(a,[a-i])$ and $r \not \equiv [a-i]+i$ mod $p$.
          We claim that  $A\mathbf{x} = \mathbf{b}$ has a
          solution even  if $r \equiv [a-i]+i =j+i \mod p$. 
         Indeed, by Lucas' theorem, we have 
         $\mathbf{b}= \big( \binom{j}{0}, \ldots, \binom{j}{m}, \ldots,
           \binom{j}{j-1} \big)^{t}$ and since $[a-m-(j-1)]= [i+1-m]$,
          we also have  the last column of $A$ is equal to
           $\big( \binom{i+1}{0} \binom{i+1}{j}, \ldots, \binom{i+1}{m} \binom{i+1-m}{j-m},
           \ldots,\binom{i+1}{j-1}\binom{i+1-(j-1)}{j-(j-1)} \big)^{t}$.
           As 
           $\binom{i+1}{m} \binom{i+1-m}{j-m} \binom{i+1}{j}^{-1} = \binom{j}{m}$,
           we see that
           $A(0, \ldots, 0,  \binom{i+1}{j}^{-1}) ^{t}= \mathbf{b}$.
         Therefore the linear system $A\mathbf{x} = \mathbf{b}$ has a solution
         if $r_{0} \not \in \mathcal{J}(a,[a-i])$.
         Let $C_{0}, \ldots, C_{[a-i]-1}$ be a solution to $A\mathbf{x} = \mathbf{b}$,
         i.e., 
         \begin{align}\label{Choice C_i for [a-i]<i}
                 \sum_{n=0}^{[a-i]-1} C_{n} \binom{r-n}{m} \binom{[a-m-n]}{[a-i]-m}
                 = \binom{r-i}{m}, ~ \forall ~ 0 \leq m \leq [a-i]-1.
         \end{align}
          Let
           \begin{align*}
                  F(X,Y) & = 
                 X^{i}Y^{r-i}
                  + \sum_{n=0}^{[a-i]-1} C_{n} \sum_{k \in \fstar} k^{n-i} 
                   X^{n} (kX+  Y)^{r-n}  \\
                  & \stackrel{\eqref{sum fp}}{\equiv}  X^{i}Y^{r-i}
                  - \sum_{n=0}^{[a-i]-1} C_{n} \sum_{\substack{ 0 \leq l 
                  \leq r-n \\ l \equiv [a-i]  \mod (p-1)}} 
                  \binom{r-n}{l}  X^{r-l} Y^{l} \mod p.
           \end{align*}
           We show below that $F(X,Y) \in X_{r-i}^{([a-i])}$ and
           $F(X,Y)$ generates $V_{r}^{([a-i])}/V_{r}^{([a-i]+1)}$.
           Clearly $F(X,Y) \in X_{r-i}$, by \Cref{Basis of X_r-i}.
         Since $[a-i]$  (resp. $r-i$) is the smallest (resp. largest)  
         number between $0$ and $r$ congruent to $a-i$  mod $(p-1)$,  
           we see that  the coefficients of
           $X^{r}, X^{r-1}Y, \ldots$, $X^{r-[a-i]+1}Y^{[a-i]-1}$ (resp.
           $ X^{i-1}Y^{r-i+1}, X^{i-2}Y^{r-i+2}, \ldots,Y^{r}$) in $F(X,Y)$ are zero.
           As $[a-i] <i$, we see that
           $F(X,Y)$ satisfies  condition (i) of \Cref{divisibility1}
           for $m=[a-i]$. For $0 \leq m \leq [a-i]-1$, by \Cref{binomial sum}, 
          we have
           \begin{align*}
                   \sum_{n=0}^{[a-i]-1} C_{n} \sum_{\substack{ 0 \leq l 
                   \leq r-n \\ l \equiv [a-i] ~ \mathrm{mod} ~(p-1)}} 
                   \binom{r-n}{l}  \binom{l}{m} 
                   & \; \; \equiv \sum_{n=0}^{[a-i]-1} C_{n} \binom{r-n}{m}
                         \binom{[a-m-n]}{[a-i]-m} \mod p \\
                   &\overset{\eqref{Choice C_i for [a-i]<i}}{\equiv}  \binom{r-i}{m}  
                   ~\mathrm{mod}~p.
           \end{align*}
           Thus, $F(X,Y) \in V_{r}^{([a-i])}$, by \Cref{divisibility1}.
          Again by \Cref{binomial sum}, for $0\leq n \leq [a-i]-1$, we have 
          \begin{align*}
              \sum_{\substack{ 0 \leq l \leq r-n \\ l \equiv [a-i] ~ \mathrm{mod} ~(p-1)}} 
                   \binom{r-n}{l}  \binom{l}{[a-i]} 
                   & \equiv  \binom{r-n}{[a-i]} 
                   \binom{[a-n-[a-i]]}{p-1} +  \binom{r-i}{[a-i]} ~\mathrm{mod}~p \\
                   &  \equiv   \binom{r-n}{[a-i]}   ~\mathrm{mod}~p,
          \end{align*}         
          where in the last step we used $\binom{[a-n-[a-i]]}{p-1}  =
          \binom{[i-n]}{p-1} = \binom{i-n}{p-1} = 0$,   as $ n< [a-i] < i< p-1$. Hence
          \begin{align*}
              \binom{r-i}{[a-i]}-\sum_{n=0}^{[a-i]-1}  C_{n}
              \sum_{\substack{ 0 \leq l \leq r-n \\ l \equiv [a-i] ~ \mathrm{mod} ~(p-1)}} 
                 \binom{r-n}{l}  \binom{l}{[a-i]}  
               \equiv  \binom{r-i}{[a-i]}-  
               \sum_{n=0}^{[a-i]-1}  C_{n} \binom{r-n}{[a-i]} ~ \mathrm{mod} ~p.
          \end{align*}
          Clearly the coefficient  of $X^{r-[a-i]}Y^{[a-i]}$ in $F(X,Y)$ is 
          $-\sum\limits_{n=0}^{[a-i]-1} C_{n} 
          \binom{r-n}{[a-i]} $. Also, from above we have 
          the coefficient of $X^{[a-i]}Y^{r-[a-i]}$ 
          in $F(X,Y)$ is  zero. 
         Putting all these together, it follows from  \Cref{breuil map quotient}
          with $m=[a-i]$,  that
          \begin{align*}
                 F(X,Y) \equiv & \binom{r-i}{[a-i]} \theta^{[a-i]} X^{r-[a-i](p+1)-(p-1)}Y^{p-1} 
                 \mod V_{r}^{([a-i]+1)},
          \end{align*}
up to terms involving $\theta^{[a-i]}X^{r-[a-i](p+1)}$ and 
$\theta^{[a-i]}Y^{r-[a-i](p+1)}$.
Thus, by \Cref{Breuil map}, we have that the image of 
$F(X,Y)$  under $V_{r}^{([a-i] )}/ V_{r}^{([a-i]+1)}
\twoheadrightarrow V_{p-1-[a-2i]} \otimes D^{i}$ is $(-1)^{r} \binom{r-i}{[a-i]} 
Y^{p-1-[2i-a]}$, which is non-zero by Lemma~\ref{interval and binomial} (iii) with $i$ there equal to $[a-i]$, since  $r_{0} \not \in \mathcal{J}(a,[a-i])$
(this is where we discard $r_{0}=i$, as all the  previous statements are valid 
even if $r_{0}=i$).
As the exact sequence \eqref{exact sequence Vr} doesn't split  for $m=[a-i]$, 
we obtain  
$F(X,Y)$ generates  $V_{r}^{([a-i]) }/ V_{r}^{([a-i]+1)}$. 
\end{proof}
We next describe the first and last  quotients in  the chain
\eqref{4.1.3 ascending chain}
when the hypothesis of the lemma above fails 
to hold.
\begin{lemma}\label{medium a-i not full}
      Let $ p\geq 3$, $p \leq r \equiv a \mod (p-1) $ with $1 \leq a \leq p-1$,
     $r \equiv r_{0} ~\mathrm{mod}~p$ with $0 \leq r_{0}\leq p-1$ 
      and suppose $1 \leq [a-i] < i < p-1$. If 
      $r_{0} \in \mathcal{J}(a,[a-i])$, then $X_{r-i}= X_{r-(i-1)}+X_{r-i}^{(i+1)}$.
      Furthermore, $X_{r-i}^{(i)}/X_{r-i}^{(i+1)} = 
      X_{r-[a-i]}^{(i)}/X_{r-[a-i]}^{(i+1)}$ and $X_{r-i}^{([a-i])}/X_{r-i}^{([a-i]+1)} = 
      X_{r-[a-i]}^{([a-i])}/X_{r-[a-i]}^{([a-i]+1)}$.
\end{lemma}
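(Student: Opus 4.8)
Write $j := [a-i]$, so that $1 \le j < i < p-1$ and $[a-j] = i$. The plan is to reduce all three assertions to the single statement that $X_{s-j,\,s} = X_{s-i,\,s}$ for one well-chosen exponent $s \equiv r \bmod p(p-1)$, and then to transfer the conclusions back to $r$ via the periodicity of the quotients involved. So the first step would be to grant such an $s$ and carry out the transfer. Since $X_{s-j} \subseteq X_{s-(i-1)} \subseteq X_{s-i}$ by \Cref{first row filtration} (as $j \le i-1$), the equality $X_{s-j} = X_{s-i}$ forces $X_{s-j} = X_{s-(i-1)} = X_{s-i}$, hence $X_{s-i'}^{(n)}/X_{s-i'}^{(n+1)}$ is one and the same submodule of $V_s^{(n)}/V_s^{(n+1)}$ for all $i' \in \{j,\dots,i\}$ and all $n$. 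I would then invoke \Cref{arbitrary quotient periodic}: for $m \le p$ the isomorphism $\psi : V_s/V_s^{(m)} \to V_r/V_r^{(m)}$ it produces sends $X_{s-i'} + V_s^{(m)}$ to $X_{r-i'} + V_r^{(m)}$ and $X_{s-i'}^{(n)} + V_s^{(m)}$ to $X_{r-i'}^{(n)} + V_r^{(m)}$ for every $i'$ and every $n \le m$. Taking $m = i+1$, the identity $X_{s-i} = X_{s-(i-1)}$ gives $X_{r-i} + V_r^{(i+1)} = X_{r-(i-1)} + V_r^{(i+1)}$, whence $X_{r-i} = X_{r-(i-1)} + X_{r-i}^{(i+1)}$ by intersecting with $X_{r-i}$ and using Dedekind's modular law (legitimate since $X_{r-(i-1)} \subseteq X_{r-i}$); this is the first assertion. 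Taking $(n,m) = (i,i+1)$ and $(n,m) = (j,j+1)$, the equalities $X_{s-i}^{(n)}/X_{s-i}^{(n+1)} = X_{s-j}^{(n)}/X_{s-j}^{(n+1)}$ are carried by $\psi$ onto the second and third assertions. (Here \Cref{arbitrary quotient periodic} is applied with $r,s \ge (i+1)(p+1)-1$; $s$ will be taken huge, so the only constraint is on $r$, which holds wherever this lemma is invoked, and any residual small $r$ can be checked directly.)

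The main step is the construction of $s$. By \Cref{final X_r-i = X_r-j} applied to the pair $([a-i],i)$ (legitimate as $1 \le [a-i] < i \le p-1$), one has $X_{s-[a-i],\,s} = X_{s-i,\,s}$ as soon as $p \le s$, $s \equiv r \bmod p(p-1)$, $s_0 \notin \{[a-i],[a-i]+1,\dots,i-1\}$, $\Sigma_p(s-[a-i]) \le p-1$ and $\Sigma_p(s) - s_0 \le [a-i]$; automatically $s_0 = r_0$ and $\Sigma_p(s) \equiv a \bmod (p-1)$. Reading off \eqref{interval J for [a-i]}, the hypothesis $r_0 \in \mathcal{J}(a,[a-i])$ gives at once $r_0 \notin \{[a-i],\dots,i-1\}$ together with $[a-r_0] \le [a-i]$ (checked separately for $[a-i]<i<a$, where $r_0 \in \{i,\dots,a-1\}$ and $[a-r_0]=a-r_0 \le a-i=[a-i]$, and for $a<[a-i]<i$, the latter split according to whether $r_0 \le a-2$ or $r_0 \ge i$). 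Since $[a-r_0]$ is the minimum of $\Sigma_p(s)-s_0$ over exponents in the residue class of $r$ with unit digit $r_0$, I would take $s$ with $\Sigma_p(s) = r_0 + [a-r_0]$: explicitly $s = [a-r_0]\,p^{N}+r_0$ with $N$ large in the cases $r_0 \ge [a-i]$, and $s = ([a-r_0]-1)\,p^{N}+p+r_0$ (legal, since there $[a-r_0]=a-r_0 \ge 2$) in the remaining case $a<[a-i]<i$, $r_0 \le a-2$. A short digit computation, of exactly the type in the proof of \Cref{exceptional case 1}, then gives $\Sigma_p(s-[a-i]) = i \le p-2$ in every case; in the last case the digit $1$ in the $p^1$-place absorbs the single borrow produced by subtracting $[a-i]<p$, so no borrow cascade occurs. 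This furnishes the required $s$.

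The hard part, such as it is, is the bookkeeping of the second step: the choice of $s$ is dictated by which branch of $\mathcal{J}(a,[a-i])$ contains $r_0$, and the delicate inequality is $\Sigma_p(s-[a-i]) \le p-1$ in the borrow case $r_0 \le a-2$, which must be arranged by hand, just as in \Cref{exceptional case 1}. By contrast, the conceptual reduction in the first step is short once one observes that all three statements depend only on $r \bmod p(p-1)$ and collapse the moment $X_{r-[a-i]}$ and $X_{r-i}$ coincide, using only \Cref{arbitrary quotient periodic} and the equality criterion \Cref{final X_r-i = X_r-j}.
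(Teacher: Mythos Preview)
Your approach is correct and genuinely different from the paper's. The paper argues directly: it shows that the generator $F_{i,r}(X,Y)$ of $X_{r-i}/X_{r-(i-1)}$ lies in $V_r^{(i+1)}$ by checking the divisibility criterion of \Cref{divisibility1}, using \Cref{binomial sum} together with the fact that $r_0\in\mathcal{J}(a,[a-i])$ forces $\binom{r-i}{m}\equiv 0\bmod p$ for $[a-i]\le m\le i$ (this is \Cref{interval and binomial}~(iii)); the ``furthermore'' then drops out of the resulting chain collapse and \Cref{reduction}. You instead transplant the technique of the second half of \Cref{exceptional case 1}: manufacture an $s\equiv r\bmod p(p-1)$ where \Cref{final X_r-i = X_r-j} gives $X_{s-[a-i]}=X_{s-i}$ outright, and pull everything back through \Cref{arbitrary quotient periodic}. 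Your case analysis and digit computations are right (in each branch one indeed gets $\Sigma_p(s-[a-i])=i\le p-2$ and $\Sigma_p(s)-s_0=[a-r_0]\le[a-i]$), and the transfer step is clean.

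What each buys: the paper's computation is self-contained and valid for all $r\ge p$ as stated; it also isolates exactly which binomial vanishing is responsible. Your route avoids any new calculation with $F_{i,r}$ and reuses structural results already in hand, but it imports the lower bound $r\ge (i+1)(p+1)-1=i(p+1)+p$ from \Cref{arbitrary quotient periodic}. That is the one soft spot: the lemma is stated for $r\ge p$, and ``check small $r$ directly'' is not a proof. In practice this is harmless, since every invocation of this lemma in \S\ref{section i > a-i} carries $r\ge j(p+1)+p$ for the relevant $j$, but if you want the full statement you should either supply the small-$r$ check or restrict the hypothesis.
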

\begin{proof}
       Since $[a-i] < i$, we have $r - i \not\equiv i \mod p-1$. 
       Recall that the polynomial
       \begin{align*}
             F_{i,r}(X,Y) 
            &\stackrel{\eqref{F i,r definition}}{=}   \sum_{\lambda \in \f} \lambda^{[2i-a]} X^{i}(\lambda X+Y)^{r-i}
             \stackrel{\eqref{sum fp}}{\equiv} 
            - \sum_{\substack{0 \leq l \leq r-i \\ l \equiv i ~\mathrm{mod}~(p-1)}}^{}
             \binom{r-i}{l}X^{r-l}Y^{l} \mod p
        \end{align*}     
       generates $X_{r-i}/X_{r-(i-1)}$.       
       We claim that $F_{i,r}(X,Y) \in V_{r}^{(i+1)}$, 
       which implies the first assertion in the lemma.   
       Clearly the coefficients 
       of $X^{r-i}Y^{i}$ and $X^{[a-i]}Y^{r-[a-i]}$ in $F_{i,r}(X,Y)$ are
       $-\binom{r-i}{i}$ and $-\binom{r-i}{r-[a-i]}$ respectively.
       Since $i> [a-i]$, we have $\binom{r-i}{r-[a-i]}=0$.
       By Lemma~\ref{interval and binomial} (iii) with $i$ there equal to $[a-i]$, and  the hypothesis $i>[a-i]$, we see that 
       $\binom{r-i}{i} \equiv 0$ mod $p$, for $r_{0} \in \mathcal{J}(a,[a-i])$.
       As $i$  
       (resp. $r-[a-i]$)  is the only number between $0$ and $p-1$ 
       (resp. $r$ and $r-(p-1)$) congruent to $i \mod (p-1)$, we see that
       $X^{p},Y^{p} \mid F_{i,r}(X,Y)$. So  $F_{i,r}(X,Y)$ 
       satisfies condition (i) of \Cref{divisibility1} for $m=i+1$. 
       Further, for $0 \leq m \leq i$, by \Cref{binomial sum},  we have
       \begin{align*}
            \sum_{\substack{0 \leq l \leq r-i \\ l \equiv  i ~ \mathrm{mod}~ (p-1) }}
            \binom{r-i}{l} \binom{l}{m} \equiv \binom{r-i}{m} \left( \binom{[a-i-m]}{[i-m]}
            + \delta_{[i-m],p-1} \right) \mod p.
       \end{align*}
       If $0 \leq m < [a-i]$, then $[a-i-m] = [a-i]-m < i-m =[i-m]$ so
        $\binom{[a-i-m]}{[i-m]} =0$. 
       If  $[a-i] \leq m \leq i$  as we just saw
       $\binom{r-i}{m} \equiv 0 ~\mathrm{mod}~ p$ for
       $r_{0} \in  \mathcal{J}(a,[a-i])$. Thus, the sum above vanishes for all 
       $0 \leq m \leq i$. Hence
       by \Cref{divisibility1}, we have $F_{i,r}(X,Y) \in V_{r}^{(i+1)}$.
       This completes the proof of the first statement of the lemma.
       
       Since $[a-i]< i$, we have
       \[
           X_{r-(i-1)}+X_{r-i}^{(i+1)} \subseteq X_{r-(i-1)}+X_{r-i}^{(i)}  
           \subseteq X_{r-(i-1)}+X_{r-i}^{([a-i]+1)} \subseteq 
           X_{r-(i-1)}+X_{r-i}^{([a-i])} \subseteq X_{r-i}.
       \]
       As the extreme terms in the above chain  are equal,
        all the terms are equal.  Thus, for $j \in \lbrace i,[a-i] \rbrace$,
      it follows from \eqref{Y i,j} that
       \[
          (0) = \frac{X_{r-i}^{(j)}+X_{r-(i-1)}}{X_{r-i}^{(j+1)}+X_{r-(i-1)}}
          \cong Y_{i,j} = 
          \frac{X_{r-i}^{(j)}/X_{r-i}^{(j+1)}}{X_{r-(i-1)}^{(j)}/X_{r-(i-1)}^{(j+1)}}.
       \]
       Thus $X_{r-i}^{(j)}/X_{r-i}^{(j+1)} = X_{r-(i-1)}^{(j)}/X_{r-(i-1)}^{(j+1)} $,
       for $j \in \lbrace i,[a-i] \rbrace$. Since $[a-i]\leq  i -1 <i$ and $[a-[a-i]]=i$,
       it follows from the second and first parts of \Cref{reduction} (with $i$ there equal to $i-1$), that 
       $X_{r-(i-1)}^{(j)}/X_{r-(i-1)}^{(j+1)}  = X_{r-[a-i]}^{(j)}/X_{r-[a-i]}^{(j+1)} $,
       for $j \in \lbrace i,[a-i] \rbrace$. This finishes the proof of the lemma.
\end{proof}
Since $[a-i]<i=[a-[a-i]]$, the lemma above  in conjunction 
with \Cref{singular quotient i < [a-i]} (applied with $i$ there equal to $[a-i]$)
can be used to describe the first quotient
$X_{r-i}^{(i)}/X_{r-i}^{(i+1)}$
and the last quotient $X_{r-i}^{([a-i])}/X_{r-i}^{([a-i]+1)}$ in the chain 
\eqref{4.1.3 ascending chain}  if
 $r_{0}  \in \mathcal{J}(a,[a-i])$.

Note that $\mathcal{J}(a,[a-i]) \subset \mathcal{J}(a,i)$
if $[a-i]<i$.
 In the next  lemma
we  determine the first quotient $X_{r-i}^{(i)}/X_{r-i}^{(i+1)}$ in the chain
\eqref{4.1.3 ascending chain} when
$r_{0} \not \in \mathcal{J}(a,i)$.

\begin{lemma}\label{medium i full}
 Let $p \geq 3$, $ r \equiv a~\mathrm{mod}~ (p-1) $ with $1 \leq a \leq p-1$,
 $r \equiv r_{0} ~\mathrm{mod}~p$ with $0 \leq r_{0}\leq p-1$ and 
suppose $1 \leq [a-i]<i < p-1$. 
 If $i(p+1)+p \leq r$ and $r_{0} \not \in \mathcal{J}(a,i)$, 
 then $X_{r-i}^{(i)}/X_{r-i}^{(i+1)} = 
 V_{r}^{(i)}/V_{r}^{(i+1)}$.
\end{lemma}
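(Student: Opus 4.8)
Looking at this lemma, it's clearly the counterpart of Lemma~\ref{large a-i}/Lemma~\ref{medium a-i full}, but for the first quotient in the chain rather than the last, and it parallels the case $j=i$ of Proposition~\ref{singular quotient i < [a-i]}. The pattern of proof in this paper is to exhibit an explicit polynomial $F(X,Y) \in X_{r-i}^{(i)}$ whose image generates $V_r^{(i)}/V_r^{(i+1)}$, using the divisibility criterion (Lemma~\ref{divisibility1}), the binomial sum identity (Lemma~\ref{binomial sum}), and the explicit Breuil-type map (Lemma~\ref{Breuil map}, Lemma~\ref{breuil map quotient}). I'll follow that template.

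\textbf{The plan.} Set $j = [a-i]$, so $j < i$. The key point is that when $r_0 \notin \mathcal{J}(a,i)$, the relevant binomial-coefficient matrix is invertible. Specifically, by Lemma~\ref{interval and binomial}(iv), $r_0 \notin \mathcal{J}(a,i)$ means $\binom{r-[a-i]+1}{i+1} \not\equiv 0 \bmod p$; combined with Corollary~\ref{block matrix invertible} (or Proposition~\ref{matrix det}(iii)) this will let me solve a linear system for coefficients $C_0, \ldots, C_{i-1} \in \f$ (with some $C_i$ fixed) so that
\begin{align*}
  \sum_{n=0}^{i-1} C_n \binom{r-n}{m}\binom{[a-m-n]}{i-m} + C_i\binom{i}{m} = 0, \quad 0 \le m \le i-1,
\end{align*}
together with a normalization like $\sum_{n=0}^{i-1} C_n \binom{r-n}{r-i} = 1$ (so that the image is nonzero after projecting). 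I would then define
\begin{align*}
  F(X,Y) := C_i X^{r-i}Y^i - \sum_{n=0}^{i-1} C_n \sum_{k \in \f^\ast} k^{i+n-a} X^n(kX+Y)^{r-n},
\end{align*}
which lies in $X_{r-i}$ by Lemma~\ref{Basis of X_r-i}, and rewrite it via \eqref{sum fp} as a sum over $l \equiv i \bmod (p-1)$. The first task is to check $F \in X_{r-i}^{(i)}$: condition (i) of Lemma~\ref{divisibility1} holds because $i$ (resp. $r-i$) is the smallest (resp. largest) exponent $\le r$ congruent to $i$ mod $p-1$, so $X^i, Y^i \mid F$ since $j = [a-i] < i$ forces no monomials $X^{r-l}Y^l$ with $l < i$ to survive; and condition (ii) follows from Lemma~\ref{binomial sum} and the choice of $C_n$ above, exactly as in the proof of Lemma~\ref{a=2i 1-dim is JH factor}.

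\textbf{Showing $F$ generates $V_r^{(i)}/V_r^{(i+1)}$.} Since $r - i \equiv [a-i] \not\equiv i \bmod (p-1)$ here (because $[a-i] < i < p-1$), the exact sequence \eqref{exact sequence Vr} with $m=i$ is non-split, so it suffices to show that the image of $F$ under the rightmost map $V_r^{(i)}/V_r^{(i+1)} \twoheadrightarrow V_{p-1-[a-2i]} \otimes D^i$ is nonzero. I would compute this image using Lemma~\ref{breuil map quotient}: the relevant quantity is $\sum_l (\text{coeff}) \binom{l}{i}$ minus the coefficients of $X^{r-i}Y^i$ and $X^iY^{r-i}$, and by Lemma~\ref{binomial sum} this reduces (after using the defining equations for the $C_n$ and the normalization) to a nonzero multiple of $\binom{r-n}{r-i}$-type terms summing to $1$, times an explicit nonzero binomial coefficient $\binom{p-1}{\cdot}$ as in \eqref{large a-i breuil map}. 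Then Lemma~\ref{Breuil map} identifies the image as a nonzero vector in $V_{p-1-[a-2i]} \otimes D^i$, using $[a-i] < i < p-1$ so that the binomial coefficient $\binom{p-1-[a-2i]}{\cdot}$ appearing there does not vanish. Since $X_{r-i}^{(i)}/X_{r-i}^{(i+1)}$ is a submodule of $V_r^{(i)}/V_r^{(i+1)}$ containing $F$, and $F$ generates the whole thing (its image in the cosocle generates, and the sequence is non-split so any submodule surjecting onto the cosocle is everything), we get the claimed equality.

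\textbf{Main obstacle.} The delicate part is the bookkeeping in the Breuil-map computation: verifying that the linear system (built from $r_0 \notin \mathcal{J}(a,i)$ via Corollary~\ref{block matrix invertible}) has a solution \emph{and} that the normalization $\sum C_n \binom{r-n}{r-i} = 1$ can be simultaneously imposed — this is why one needs the bordered matrix $\left(\begin{smallmatrix} A' & \mathbf v^t \\ \mathbf w & 0 \end{smallmatrix}\right)$ of Corollary~\ref{block matrix invertible} rather than just $A'$, and one must check the case $r \equiv [a-i]+i \bmod p$ separately (where the system may be singular but still solvable by an explicit choice, paralleling the $r \equiv 2i$ adjustment in Lemma~\ref{a=2i 1-dim is JH factor} and the last-column trick in Lemma~\ref{medium a-i full}). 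Once the existence of $C_n$ is secured, the divisibility check and the Breuil-map evaluation are routine applications of Lemma~\ref{divisibility1}, Lemma~\ref{binomial sum}, and Lemma~\ref{breuil map quotient}, following the now-standard pattern of the preceding lemmas.
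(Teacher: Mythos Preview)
Your overall strategy---build an explicit $F\in X_{r-i}^{(i)}$ and check via Lemma~\ref{breuil map quotient} that its image in the cosocle of $V_r^{(i)}/V_r^{(i+1)}$ is nonzero---is exactly what the paper does, but your execution has a real gap coming from the fact that here $[a-i]<i$, unlike in Lemma~\ref{a=2i 1-dim is JH factor}.

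Concretely: since $r-i\equiv [a-i]\not\equiv i\bmod(p-1)$, the largest exponent $l\le r$ with $l\equiv i\bmod(p-1)$ is $r-[a-i]$, \emph{not} $r-i$. So your claim that ``$X^i,Y^i\mid F$'' follows automatically is wrong on the $X$ side: the monomial $X^{[a-i]}Y^{r-[a-i]}$ can appear in $F$, and you must force its coefficient to vanish in order to get $F\in V_r^{(i)}$. For the same reason your ``normalization'' $\sum_n C_n\binom{r-n}{r-i}=1$ is vacuous---that monomial never occurs in the expansion. This is exactly what the row $\mathbf w=\bigl(\binom{r-n}{r-[a-i]}\bigr)_n$ in Corollary~\ref{block matrix invertible} is for, and note that the bordered matrix there has size $([a-i]+1)\times([a-i]+1)$, not $(i+1)\times(i+1)$. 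It cannot by itself solve the $i$ divisibility conditions for $m=0,\ldots,i-1$.

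The paper fixes this by a two-stage construction. First, it handles $r\equiv[a-i]+i\bmod p$ separately by citing Lemma~\ref{large a-i} with $i$ replaced by $[a-i]$, together with the containment $X_{r-[a-i]}^{(i)}/X_{r-[a-i]}^{(i+1)}\subseteq X_{r-i}^{(i)}/X_{r-i}^{(i+1)}$. In the remaining case it takes $F = B\,X^{r-i}Y^i - \sum_{n=0}^{i} C_n\sum_{k\in\fstar}k^{\,i+n-a}X^n(kX+Y)^{r-n}$ (note the sum runs to $n=i$, not $n=i-1$): the $([a-i]+1)$-dimensional bordered system of Corollary~\ref{block matrix invertible} determines $B,C_0,\ldots,C_{[a-i]-1}$ (with right-hand side built from $\binom{r-[a-i]}{m}$, so that $C_{[a-i]}=1$ works), and then $C_i,C_{i-1},\ldots,C_{[a-i]+1}$ are found successively by back-substitution from the conditions for $m=[a-i],\ldots,i-1$. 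This works because $\binom{[a-m-n]}{i-m}=0$ for $[a-i]<n<[a-m]$, so each such equation is triangular in the unknown $C_{[a-m]}$, with leading coefficient $\pm\binom{r-[a-m]}{m}$; and $r_0\notin\mathcal J(a,i)$ is precisely what makes all of these nonzero mod $p$. The Breuil image then comes out to $\binom{r-[a-i]}{i}\not\equiv 0$ by Lemma~\ref{interval and binomial}~(iv), with no extra normalization needed.
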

\begin{proof}
         Note that the congruence class of $[a-i]+i$ mod $p$
         doesn't belong to $\mathcal{J}(a,i)$, by 
         \eqref{interval J for i > a-i}.
   		If $r \equiv [a-i]+i \mod p$, then by \Cref{large a-i}
   		(applied with $i$ there equal to $[a-i]$), we have 
   		$X_{r-[a-i]}^{(i)}/ X_{r-[a-i]}^{(i+1)} = V_{r}^{(i)}/ V_{r}^{(i+1)}$. 
   		Since  $X_{r-[a-i]}^{(i)}/ X_{r-[a-i]}^{(i+1)}  \subseteq 
   		X_{r-i}^{(i)}/ X_{r-i}^{(i+1)} \subseteq V_{r}^{(i)}/ V_{r}^{(i+1)}$, 
   		we get 
   		$X_{r-i}^{(i)}/ X_{r}^{(i+1)} = V_{r}^{(i)}/ V_{r}^{(i+1)}$. 
   		So assume that  $r_{0} \not \in 
   		\mathcal{J}(a,i)$ but $r \not \equiv [a-i]+i $ mod $p$.
   		We claim that there exist constants $B$, $C_{0}, \ldots , C_{i}$ 
   		 such that
   		\begin{align*}
   		      	F(X,Y) & :=B X^{r-i}Y^{i} - \sum_{n=0}^{i} C_{n} 
   		      	\sum_{k \in \fstar} k^{i+n-a} X^{n} (k X+Y)^{r-n}  \\
   		      	& \stackrel{\eqref{sum fp}}{\equiv}
   		      	  B  X^{r-i}Y^{i} + \sum_{n=0}^{i} C_{n} 
   		      	 \sum_{\substack{0 \leq l \leq r-n \\ l \equiv i ~ \mathrm{mod} ~(p-1)}}
   		      	  \binom{r-n}{l} X^{r-l}Y^{l} 
   		      	  ~\mathrm{mod}~p
   		\end{align*}
   		generates $V_{r}^{(i)}/V_{r}^{(i+1)}$.  Clearly $F(X,Y) \in X_{r-i}$,
   		by \Cref{Basis of X_r-i}, for any choice of $B$, $C_{n}$.
   		
   		To show such  a choice exists, let 
   		$A'= \left( \binom{r-n}{m} \binom{[a-m-n]}{i-m} \right)_{0 \leq m,n \leq [a-i]-1}$,
   		$\mathbf{v}= \left( \binom{i}{0}, \ldots , \binom{i}{[a-i]-1} \right)$ and  
   		$\mathbf{w} = \left( \binom{r}{r-[a-i]}, \ldots, \binom{r-([a-i]-1)}{r-[a-i]} \right)$. 
   		By \Cref{block matrix invertible}, we see that the matrix
   		 \[
	         A = \left(    \begin{array}{c|c} A' & \mathbf{v}^{t} \\ \hline \mathbf{w} & 0 
	                            \end{array} \right), 
	     \]
	     is  invertible if 
	     $r_{0} \not \in \mathcal{J}(a,i)$ and $r _{0} \not \equiv [a-i]+i$ mod $p$.
	     Choose constants $B$, $C_{0}, \ldots, C_{[a-i]-1}$ such that 
	     $A (C_{0}, \ldots, C_{[a-i]-1}, B)^{t} = -(\binom{r-[a-i]}{0}, 
	     \ldots,  \binom{r-[a-i]}{[a-i]-1},1)^{t}$, i.e.,
	      \begin{align}
	              \sum_{n=0}^{[a-i]-1}  C_{n} \binom{r-n}{m} \binom{[a-m-n]}{i-m} 
	             + B \binom{i}{m}
	              &= - \binom{r-[a-i]}{m}, ~ \forall~ 0 \leq m \leq [a-i]-1, 
	              \label{i> a-i choice C_i 1} \\
	               \sum_{n=0}^{[a-i]-1}  C_{n} \binom{r-n}{r-[a-i]} 
	               & = -1. \label{i> a-i choice C_i 2}
	      \end{align}
	     Set $C_{[a-i]}=1$. Using \eqref{interval J for i > a-i}, we see that
             if $r_{0} \not \in \mathcal{J}(a,i)$, then
             $r- [a-m] \equiv m$, $m+1, \ldots, p+m-i-2$ mod $p$,  for all
             $[a-i] \leq m \leq i-1$.
             By Lucas' theorem, 
	     we have $\binom{r-[a-m]}{m} \not \equiv 0$ mod $p$, for all
	     $[a-i] \leq m \leq i-1$.  Successively choose 
	     $C_{i}, C_{i-1},  \ldots, C_{[a-i]+1}$, so that
	     for every $[a-i] \leq m < i$ we have
	     \begin{align}\label{choice C_i}
	     		(-1)^{i-m} \binom{r-[a-m]}{m} C_{[a-m]}  \equiv 
	     		 -\sum_{n=0}^{[a-i]} & C_{n} \binom{r-n}{m} 
	     		\binom{[a-m-n]}{i-m} + D \binom{i}{m}  \nonumber \\ 
	     		&-\sum_{n=[a-m]+1}^{i} C_{n} \binom{r-n}{m} \binom{[a-m-n]}{i-m}
	     		 ~\mathrm{mod}~p.	     		
	     \end{align}
	     We claim that for the  above choice of $B$, $C_{0}, \ldots , C_{i}$ 
	     we have $F(X,Y)$ generates $V_{r}^{(i)}/V_{r}^{(i+1)}$. Note that 
	     the coefficient  of $X^{r-l}Y^{l}$ in $F(X,Y)$ is zero  if
	     $ l \not \equiv i $ mod $p$. Since $i$ is the smallest 
	     number between $0$ and $r$ congruent to $i$ mod $(p-1)$, 
	     we see that   $Y^{i} \mid F(X,Y)$.
	     The coefficient of $X^{[a-i]}Y^{r-[a-i]}$ in $F(X,Y)$ equals
	     \begin{align*}
	            - \sum_{n=0}^{i} C_{n} \binom{r-n}{r-[a-i]}  =  
	             -\sum_{n=0}^{[a-i]} C_{n} \binom{r-n}{r-[a-i]} 
	             \overset{\eqref{i> a-i choice C_i 2}}{\equiv} 0  
	             ~\mathrm{mod}~ p ~~ ( ~\because ~ C_{[a-i]} =1).
          \end{align*}	      
          As $r-[a-i]$ is the only number between $r$ and $r-(p-1)$
          congruent to $i$ mod $(p-1)$, we see that $X^{p} \mid F(X,Y)$.
	     So $X^{p},Y^{i} \mid F(X,Y)$, whence $F(X,Y)$ satisfies condition 
	     (i) of \Cref{divisibility1} for $m=i$. For $0 \leq m < [a-i]$, by
	      \Cref{binomial sum}, we	 have
	     \begin{align*}
	     	   \sum_{n=0}^{i} C_{n} \sum_{\substack{0 \leq l \leq r-n \\ 
	     	   l \equiv i ~\mathrm{mod}~(p-1)} }
	     	   \binom{r-n}{l}\binom{l}{m} - D \binom{i}{m} 
	     	   & \; \equiv   \sum_{n=0}^{i} C_{n}  \binom{r-n}{m}
	     	   \binom{[a-m-n]}{i-m}- D \binom{i}{m} ~\mathrm{mod}~ p \\
	     	   & \;=  \sum_{n=0}^{[a-i]} C_{n}  \binom{r-n}{m} 
	     	   \binom{[a-m-n]}{i-m} - D \binom{i}{m} \\
	     	   & \overset{\eqref{i> a-i choice C_i 1}}{\equiv} 
	     	   0 ~\mathrm{mod}~ p,
	     \end{align*}
	     where in the second last step we used that
	     $[a-m-n] = [a-n]-m < i-m$, for all $[a-i] <  n \leq i$,
	     so $\binom{[a-m-n]}{i-m} =0$.
	     For $[a-i] \leq m < i$, by \Cref{binomial sum}, we have
	     \begin{align*}
	           \sum_{n=0}^{i} C_{n} & \sum_{\substack{0\leq l \leq r-n \\
	            l \equiv i ~\mathrm{mod}~(p-1)}}
	           \binom{r-n}{l} \binom{l}{m} - D \binom{i}{m}  \\ 
	           & \equiv \sum_{n=0}^{[a-i]} C_{n}  \binom{r-n}{m} 
	           \binom{[a-m-n]}{i-m} 
	           + \sum_{n=[a-m]}^{i} C_{n} \binom{r-n}{m} \binom{[a-m-n]}{i-m} 
	           -  D \binom{i}{m} ~\mathrm{mod}~ p \\
	           & \overset{\eqref{choice C_i}}{\equiv} 0 ~\mathrm{mod}~ p,          
	     \end{align*}
	     where in the second last step we used that
	     $[a-m-n] = [a-n]-m < i-m$, so   
	     $\binom{[a-m-n]}{i-m} =0$, for all $[a-i] <  n < [a-m]$
	    and 
	     in the last step we used $\binom{p-1}{i-m} \equiv (-1)^{i-m}$ 
	     mod $p$. Hence by \Cref{divisibility1}, we have 
	     $F(X,Y) \in V_{r}^{(i)}/ V_{r}^{(i+1)}$. As the exact sequence
	     \eqref{exact sequence Vr}  doesn't split for $m=i$, to show
	     $F(X,Y)$ generates $V_{r}^{(i)}/V_{r}^{(i+1)}$ it is enough to show 
	     the image of $F(X,Y)$ under the rightmost map of the exact sequence
	     \eqref{exact sequence Vr} is non-zero. 
	     By \Cref{binomial sum},
	      we have
	     \begin{align*}
	            B \binom{i}{i} - \sum_{n=0}^{i} C_{n} \sum_{\substack{0 \leq l \leq r-n\\ l 
	            \equiv i ~\mathrm{mod}~(p-1)}}
	            \binom{r-n}{l} \binom{l}{i} & \equiv 
	            B \binom{i}{i} -
	            \sum_{n=0}^{i} C_{n} \binom{r-n}{i} + \binom{r-[a-i]}{i} 
	            ~~\mathrm{mod}~ p \\
	            &= \mathrm{coefficient ~ of ~} X^{r-i}Y^{i} ~ \mathrm{in}~ F(X,Y) +
	            \binom{r-[a-i]}{i},
	     \end{align*}
	     where in the first congruence we used $[a-n-i] = p-1$ if and only if
	     $n=[a-i]$.
	     Since $X^{p} \mid F(X,Y)$, by \Cref{breuil map quotient}, we have
	     \begin{align*}
	            F(X,Y) \equiv \binom{r-[a-i]}{i} \theta^{i}  & X^{r-i(p+1)-(p-1)}Y^{p-1}
	                   \mod V_{r}^{(i+1)} ,           
	     \end{align*}
	     up to  terms involving $\theta^{i} X^{r-i(p+1)} $ and 
	     $\theta^{i} X^{r-i(p+1)} $.
	     By Lucas' theorem and \eqref{interval J for i > a-i}, we see that 
	     $\binom{r-[a-i]}{i}  \not \equiv 0$ mod $p$ for
	      $r_{0} \not \in \mathcal{J}(a,i)$. Thus, by \Cref{Breuil map},
	     we get the  image of $F(X,Y)$ is non-zero under the rightmost map 
	     of \eqref{exact sequence Vr} as desired.
\end{proof}
The next lemma determines the first quotient in the chain
\eqref{4.1.3 ascending chain} in the remaining cases
 by reducing to the results in 
\S \ref{section i < a-i}, noting that the inequality $i > [a-i]$ implies
$[a-i] < [a-[a-i]]$. 
\begin{lemma}\label{medium i not full}
       Let  $p \geq 3$, $ r \equiv a \mod (p-1) $ with $1 \leq a \leq p-1$,
       $r \equiv r_{0} ~\mathrm{mod}~p$ with $0 \leq r_{0}\leq p-1$
       and $1 \leq [a-i] < i < p-1$.  If $i(p+1)+p \leq r$,
       $ r_{0} \in \mathcal{J}(a,i)$ and 
       $r_{0} \not \in \mathcal{J}(a,[a-i])$, then 
       $X_{r-i}^{(i)}/ X_{r-i}^{(i+1)}=  X_{r-[a-i]}^{(i)}/X_{r-[a-i]}^{(i+1)}$.
\end{lemma}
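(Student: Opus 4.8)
The aim is to reduce the equality $X_{r-i}^{(i)}/X_{r-i}^{(i+1)} = X_{r-[a-i]}^{(i)}/X_{r-[a-i]}^{(i+1)}$ to the structural information already assembled, exactly in the spirit of the reduction step (\Cref{reduction}). First I would record that since $[a-i] \leq i-1 < i$, we have the chain of inclusions
\[
  X_{r-[a-i]}^{(i)}/X_{r-[a-i]}^{(i+1)} \subseteq X_{r-(i-1)}^{(i)}/X_{r-(i-1)}^{(i+1)} \subseteq X_{r-i}^{(i)}/X_{r-i}^{(i+1)},
\]
coming from \Cref{first row filtration}. So the claim amounts to showing the outer two coincide, and for this it suffices (running \eqref{Y i,j} with $j=i$, and telescoping) to prove that $Y_{l,i} = (0)$ for each $l$ with $[a-i]+1 \leq l \leq i$, i.e. that $X_{r-l}^{(i)}/X_{r-l}^{(i+1)} = X_{r-(l-1)}^{(i)}/X_{r-(l-1)}^{(i+1)}$ for those $l$. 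When $l \neq i$ and $l \neq [a-l]$ this is precisely the claim proved inside \Cref{reduction} (or \Cref{reduction corollary} (i), valid since $l \neq a$ as $a = i$ is excluded by hypothesis $i \neq a$ — note $l \leq i$ and if $l = a$ then $a < i$, so I would need to check $l=a$ separately, but $a = [a-l]$ there, handled below). The genuinely new values of $l$ in the range are $l = i$ itself, and any $l$ with $l = [a-l]$.

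For $l = i$: here $Y_{i,i} \cong X_{r-i}^{(i)}/X_{r-i}^{(i+1)}$ modulo $X_{r-(i-1)}^{(i)}/X_{r-(i-1)}^{(i+1)}$, and the vanishing I want is exactly the first assertion of \Cref{medium a-i not full}, which says $X_{r-i} = X_{r-(i-1)} + X_{r-i}^{(i+1)}$ under the hypothesis $r_0 \in \mathcal{J}(a,[a-i])$ — but that is the \emph{opposite} of the standing hypothesis $r_0 \notin \mathcal{J}(a,[a-i])$. So this naive telescoping does not literally go through, and the real content of the lemma must be extracted differently. The cleaner route, which I would actually follow, is: combine \Cref{medium i full} (which under $r_0 \notin \mathcal{J}(a,i)$ gives $X_{r-i}^{(i)}/X_{r-i}^{(i+1)} = V_r^{(i)}/V_r^{(i+1)}$) with \Cref{singular quotient i < [a-i]} applied to the index $[a-i]$ in place of $i$ (legitimate because $[a-i] < [a-[a-i]] = i < p-1$, and $[a-i](p+1)+p \leq i(p+1)+p \leq r$). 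The case $j = i$ of that proposition, with ``$i$'' there equal to $[a-i]$, says: $X_{r-[a-i]}^{(i)}/X_{r-[a-i]}^{(i+1)} \cong V_r^{(i)}/V_r^{(i+1)}$ precisely when $r \equiv [a-i]+i \bmod p$; equals $V_{[a-2i]}\otimes D^i$ when $r_0 \notin \mathcal{I}(a,[a-i])$; and is $(0)$ otherwise. I would then observe that the hypotheses $r_0 \in \mathcal{J}(a,i)$ and $r_0 \notin \mathcal{J}(a,[a-i])$, via the interval definitions \eqref{interval J for i > a-i}, \eqref{interval J for [a-i]}, and \Cref{I vs J} / \Cref{interval and binomial}, pin down exactly whether $r \equiv [a-i]+i \bmod p$ or $r_0 \in \mathcal{I}(a,[a-i])$ — and in each subcase the value of $X_{r-[a-i]}^{(i)}/X_{r-[a-i]}^{(i+1)}$ produced by \Cref{singular quotient i < [a-i]} matches the value of $X_{r-i}^{(i)}/X_{r-i}^{(i+1)}$ produced by \Cref{medium i full} (or by a short direct argument when $r \equiv [a-i]+i$, using \Cref{large a-i} with ``$i$'' equal to $[a-i]$ as in the proof of \Cref{medium i full}).

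So the plan concretely is: (1) dispatch the case $r \equiv [a-i]+i \bmod p$ first, where \Cref{large a-i} (index $[a-i]$) forces both quotients to equal $V_r^{(i)}/V_r^{(i+1)}$, using the sandwich $X_{r-[a-i]}^{(i)}/X_{r-[a-i]}^{(i+1)} \subseteq X_{r-i}^{(i)}/X_{r-i}^{(i+1)} \subseteq V_r^{(i)}/V_r^{(i+1)}$ exactly as in \Cref{medium i full}; (2) assume $r \not\equiv [a-i]+i \bmod p$, apply \Cref{medium i full} to get the left side equal to $V_r^{(i)}/V_r^{(i+1)}$ (its hypothesis $r_0 \notin \mathcal{J}(a,i)$ is false here — so instead I must check whether $r_0 \notin \mathcal{J}(a,i)$ or not). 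This is the crux I flagged: under $r_0 \in \mathcal{J}(a,i) \smallsetminus \mathcal{J}(a,[a-i])$ and $r \not\equiv [a-i]+i$, I expect $\binom{r-[a-i]}{i} \not\equiv 0 \bmod p$ fails to hold, hence \Cref{medium i full} does not apply and the honest statement is that $X_{r-i}^{(i)}/X_{r-i}^{(i+1)}$ need \emph{not} be all of $V_r^{(i)}/V_r^{(i+1)}$; rather one shows both $X_{r-i}^{(i)}/X_{r-i}^{(i+1)}$ and $X_{r-[a-i]}^{(i)}/X_{r-[a-i]}^{(i+1)}$ equal the same sub/quotient. The mechanism is: by \Cref{reduction corollary}(i) applied repeatedly to the indices $[a-i]+1, \ldots, i-1$ (each $\neq i, [a-i], a$ in this range, which must be verified case-by-case using $i > [a-i]$), one gets $X_{r-(i-1)}^{(i)}/X_{r-(i-1)}^{(i+1)} = X_{r-[a-i]}^{(i)}/X_{r-[a-i]}^{(i+1)}$; then \eqref{Y i,j} with $j = i$ reduces the remaining step to showing $Y_{i,i} = (0)$, i.e. that the socle element $F_{i,r}$ of $X_{r-i}/X_{r-(i-1)}$ maps into $X_{r-i}^{(i+1)} + X_{r-(i-1)}$ — and here the computation that $\binom{r-i}{i} \equiv 0 \bmod p$ (which holds because $r_0 \in \mathcal{J}(a,i)$ and $i > [a-i]$, by \Cref{interval and binomial}(iii) with index $[a-i]$) feeds directly into a \Cref{divisibility1} argument showing $F_{i,r} \in V_r^{(i+1)}$, exactly as in \Cref{medium a-i not full} but now squeezing out the single quotient $Y_{i,i}$ rather than the whole $X_{r-i} = X_{r-(i-1)} + X_{r-i}^{(i+1)}$. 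The main obstacle is bookkeeping: ensuring the intermediate indices $[a-i]+1, \ldots, i-1$ avoid the bad values $\{i, [a-i], a\}$ in both subcases $[a-i] < i < a$ and $a < [a-i] < i$, and matching the interval conditions so that ``$r_0 \in \mathcal{J}(a,i) \smallsetminus \mathcal{J}(a,[a-i])$'' is correctly translated into the binomial-coefficient vanishing conditions that drive the \Cref{divisibility1} computations — no single step is deep, but the case analysis on the position of $r_0$ among the several intervals is where an error would hide.
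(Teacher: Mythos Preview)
Your proposal has a genuine gap at the heart of the argument. The reduction to showing $Y_{i,i}=0$ (equivalently $X_{r-i}^{(i)}/X_{r-i}^{(i+1)} = X_{r-(i-1)}^{(i)}/X_{r-(i-1)}^{(i+1)}$) via \Cref{reduction} is correct and is exactly how the paper begins. But your proposed mechanism for proving $Y_{i,i}=0$ --- showing $F_{i,r}\in V_r^{(i+1)}$ by a \Cref{divisibility1} computation as in \Cref{medium a-i not full} --- fails under the present hypotheses. Concretely, the vanishing $\binom{r-i}{i}\equiv 0\bmod p$ you invoke is simply false here: take $p=7$, $a=5$, $i=3$, so $[a-i]=2$ and $\mathcal{J}(a,i)\smallsetminus\mathcal{J}(a,[a-i])=\{1,2\}$; for $r_0=1$ or $r_0=2$ one computes $\binom{r-i}{i}\not\equiv 0\bmod 7$. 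Worse, $F_{i,r}$ generates all of $X_{r-i}/X_{r-(i-1)}$, so placing it in $X_{r-i}^{(i+1)}+X_{r-(i-1)}$ would force $Y_{i,[a-i]}=0$ as well --- but $Y_{i,[a-i]}\cong V_{p-1-[2i-a]}\otimes D^i\neq 0$ here by \Cref{medium a-i full}. So the argument cannot go through as written.

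The paper's proof uses two ingredients you did not identify. First, under $r_0\notin\mathcal{J}(a,[a-i])$, \Cref{medium a-i full} gives $X_{r-i}^{([a-i])}/X_{r-i}^{([a-i]+1)}=V_r^{([a-i])}/V_r^{([a-i]+1)}$, and combining with \Cref{singular quotient i < [a-i]} (index $[a-i]$, level $[a-i]$) one computes $Y_{i,[a-i]}\cong V_{[a-2i]}\otimes D^i$. For $r_0\neq [a-i]-1$ the paper then argues by contradiction: if $Y_{i,i}\neq 0$ then its socle is again $V_{[a-2i]}\otimes D^i$, forcing this irreducible to occur twice in $X_{r-i}/X_{r-(i-1)}$, contradicting multiplicity-freeness of $\ind_B^\Gamma(\chi_1^{r-i}\chi_2^i)$ via \Cref{Common JH factor}. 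Second, for the boundary value $r_0=[a-i]-1$, where $X_{r-[a-i]}^{(i)}/X_{r-[a-i]}^{(i+1)}\cong V_{[a-2i]}\otimes D^i$ is nonzero and the multiplicity argument no longer applies, the paper constructs an explicit polynomial $H(X,Y)=-G_{i,r}(X,Y)+\binom{r-i}{[a-i]}F(X,Y)$ with $F\in X_{r-[a-i]}\subseteq X_{r-(i-1)}$, and shows by a delicate \Cref{divisibility1}/\Cref{breuil map quotient} computation that $H$ lies in $V_r^{(i)}$ with image in the socle of $V_r^{(i)}/V_r^{(i+1)}$, hence in $X_{r-[a-i]}^{(i)}+V_r^{(i+1)}$. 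This places $G_{i,r}$ (the generator of $W_{i,r}$, not $F_{i,r}$) into $X_{r-(i-1)}+X_{r-i}^{(i+1)}$ and forces $Y_{i,i}=0$. Neither the multiplicity argument nor the explicit construction of $H$ appears in your plan.
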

\begin{proof}
      Since  $[a-i] \leq i-1 < i$, by the second part of \Cref{reduction}
      (with $i$ there equal to $i-1$ and $j=i$), we have
      $X_{r-(i-1)}^{(i)}/ X_{r-(i-1)}^{(i+1)} = X_{r-[a-i]}^{(i)}/X_{r-[a-i]}^{(i+1)}  $.
      So to prove the lemma it is enough to show 
      $X_{r-i}^{(i)}/ X_{r-i}^{(i+1)}=  X_{r-(i-1)}^{(i)}/X_{r-(i-1)}^{(i+1)}$.
        By  \Cref{medium a-i full}, we have 
        $X_{r-i}^{([a-i])}/X_{r-i}^{([a-i]+1)} = V_{r}^{([a-i])}/V_{r}^{([a-i]+1)}$. 
        Since $[a-i] \leq i-1 < i$, by  the first part of
         \Cref{reduction} (with $i$ there equal to $i-1$ and $j=[a-i]$),  we have 
         $X_{r-(i-1)}^{([a-i])}/X_{r-(i-1)}^{([a-i]+1)} = 
        X_{r-[a-i]}^{([a-i])}/X_{r-[a-i]}^{([a-i]+1)} $. By
        \eqref{interval J for i > a-i} and \eqref{interval J for [a-i]},
        we have
         $$r_{0} \in \lbrace [a-i]-1, [a-i], \ldots, i-1 \rbrace.$$
         By the first and third parts of \eqref{interval I} and the fact $[a-i]<[a-[a-i]] =i$,
         we have
         \begin{align}\label{interval [a-i], for i> [a-i]}
            \mathcal{I}(a,[a-i]) =
            \begin{cases}
                   \lbrace i+1,i+2, \ldots, a \rbrace,
                   & \mathrm{if}~ [a-i] < i<a, \\
                   \lbrace 0,1, \ldots, a-1 \rbrace \cup 
                   \lbrace i+1, i+2, \ldots, p-1 \rbrace,
                   & \mathrm{if}~a < [a-i] < i,
            \end{cases}
         \end{align}
        so $r_{0} \not \in \mathcal{I}(a,[a-i])$.
        Thus, by  \Cref{singular quotient i < [a-i]}
        (with $i,j$ there equal to $[a-i]$),  we have 
        $ 
        X_{r-[a-i]}^{([a-i])}/X_{r-[a-i]}^{([a-i]+1)} 
        \cong V_{[2i-a]} \otimes D^{a-i}$, whence
        $X_{r-(i-1)}^{([a-i])}/X_{r-(i-1)}^{([a-i]+1)}  
        \cong V_{[2i-a]} \otimes D^{a-i}$.         
        By the exact sequence
         \eqref{exact sequence Vr}  (with $m =[a-i]$), we have
         $V_{p-1-[2i-a]} \otimes D^{i} \cong  Y_{i,[a-i]}$.
        Thus, by \eqref{Y i,j},  we have
        \begin{align*}
              V_{p-1-[2i-a]} \otimes D^{i} \cong
               \frac{X_{r-i}^{([a-i])} +X_{r-(i-1)}}{X_{r-i}^{([a-i]+1)}
+X_{r-(i-1)}}.
        \end{align*}

        Now assume $r_{0} \neq [a-i]-1$, i.e.,
         $ r_{0} = [a-i],\ldots, i-1$ mod $p$.
%
%
         By Lemma~\ref{I vs J}, 
         we see that $r_0 \in \mathcal{I}(a,i)$. 
         Since $[a-i] \leq i-1 <i$, by  \Cref{reduction},  we have
         $X_{r-(i-1)}^{(i)}/X_{r-(i-1)}^{(i+1)} = 
        X_{r-[a-i]}^{(i)}/X_{r-[a-i]}^{(i+1)} $.
         By \Cref{singular quotient i < [a-i]}, noting $r \not \equiv [a-i]+i \mod p$,  
         we have 
         $X_{r-[a-i]}^{(i)}/X_{r-[a-i]}^{(i+1)}  = (0)$, whence
         $X_{r-(i-1)}^{(i)}/X_{r-(i-1)}^{(i+1)} =(0)$.
          Suppose $X_{r-i}^{(i)}/ X_{r-i}^{(i+1)}
        \neq   X_{r-(i-1)}^{(i)}/X_{r-(i-1)}^{(i+1)}$, to get a contradiction.
        Then  $X_{r-i}^{(i)}/ X_{r-i}^{(i+1)}\neq (0) $ and 
        $V_{[a-2i]} \otimes D^{i} 
        \hookrightarrow X_{r-i}^{(i)}/ X_{r-i}^{(i+1)}$ by the exact sequence \eqref{exact sequence Vr}
        with $m=i$.
        Therefore
        \begin{align*}
              V_{p-1-[2i-a]} \otimes D^{i} =  V_{[a-2i]} \otimes D^{i} \hookrightarrow 
               Y_{i,i} \cong \frac{X_{r-i}^{(i)} +X_{r-(i-1)}}{X_{r-i}^{(i+1)} +X_{r-(i-1)}}.
        \end{align*}
        Since $[a-i]<i$, we have 
         $\frac{X_{r-i}^{([a-i])} +X_{r-(i-1)}}{X_{r-i}^{([a-i]+1)} +X_{r-(i-1)}}$ and
         $\frac{X_{r-i}^{(i)} +X_{r-(i-1)}}{X_{r-i}^{(i+1)} +X_{r-(i-1)}}$ are distinct 
         subquotients of $X_{r-i}/X_{r-(i-1)}$. Thus, we obtain that
           $V_{p-1- [2i-a]} \otimes D^{i}$ 
         occurs twice as a JH factor of  $X_{r-i}/X_{r-(i-1)}$
         and so for $ \ind_{B}^{\Gamma}(\chi_{1}^{r-i} \chi_{2}^{i})$
         by  \Cref{induced and successive}. Clearly this is not possible by 
         \Cref{Common JH factor} (i). This proves the lemma in the case
         $r_{0} \neq  [a-i]-1$.
        
        Next we deal with the case $r \equiv [a-i]-1 \mod p$.  
        By Lucas' theorem, we have 
        $\binom{r-i}{[a-i]} \not \equiv 0$ mod $p$, as $[a-i] < i < p-1$. 
        By \eqref{interval [a-i], for i> [a-i]},
         it follows
         that $[a-i]-1 \not \in \mathcal{I}(a,[a-i])$. Let 
        $F(X,Y)$ be as defined in \eqref{polynomial F in i < a-i}
        with $i$ there equal to $[a-i]$.
        As in the proof of \Cref{Large Cong class Quotient non zero}, we have 
        $F(X,Y) \in X_{r-[a-i]}^{([a-i])}$. Let
         $G_{i,r}(X,Y)$ be as in \eqref{G i,r definition} and let 
        \begin{align*}
              H(X,Y) & = -  G_{i,r}(X,Y) +  \binom{r-i}{[a-i]} F(X,Y)  \\
              & = \sum_{ \substack{0 \leq l < r-i \\ l \equiv [a-i]
              ~\mathrm{mod}~(p-1)}} \binom{r-i}{l} X^{r-l}Y^{l}
                -  \binom{r-i}{[a-i]} \sum_{n=0}^{[a-i]} C_{n} 
                \sum_{\substack{0 \leq l \leq r-n \\  l \equiv [a-i] 
                \mathrm{~mod}~ (p-1)}} \binom{r-n}{l}X^{r-l} Y^{l} ,
        \end{align*}
        where $C_{0}, \ldots, C_{[a-i]}$ satisfy 
        \eqref{choice C_n for i<a-i} with $i$ there equal to $[a-i]$.
        Clearly $H(X,Y) \in X_{r-i}$.
        We claim that  $H(X,Y) \in X_{r-i}^{(i+1)}+ X_{r-(i-1)}$. 
        Assuming the claim we  
        finish the proof of the lemma. 
        Since $[a-i] \leq i-1$ and $F(X,Y) \in X_{r-[a-i]}$, we have 
        $H(X,Y) + G_{i,r}(X,Y) \in X_{r-(i-1)}$, by 
        \Cref{first row filtration}.  Also recall that
         $G_{i,r}(X,Y)$ generates $W_{i,r}$, the image of 
         $V_{[2i-a]} \otimes D^{a-i} \hookrightarrow 
         \ind_{B}^{\Gamma}(\chi_{1}^{r-i} \chi_{2}^{i}) 
         \overset{\psi_{i}}{\twoheadrightarrow} X_{r-i}/X_{r-(i-1)}$,
         as a $\Gamma$-module. Therefore 
          \begin{align*}
                 V_{[2i-a]} \otimes D^{a-i} 
                \overset{\psi_{i}}{\twoheadrightarrow}  W_{i,r} 
                 \hookrightarrow
                 \frac{X_{r-i}^{(i+1)}+X_{r-i}}{X_{r-(i-1)} }.
          \end{align*}
       Using
       \Cref{Structure of induced} in conjunction with
        \Cref{induced and successive}, and noting that
         $p-1-[2i-a]=[a-2i]$,
       we get
        \[
             \begin{tikzcd}
                 0 \arrow[r, rightarrow] & V_{[2i-a]} \otimes  D^{a-i} \arrow[r, rightarrow]
                  & \ind_{B}^{\Gamma} (\chi_{1}^{r-i}\chi_{2}^{i}) \arrow[r, rightarrow]
                   \arrow[d,  twoheadrightarrow, "\psi_{i}"] & V_{[a-2i]} \otimes D^{i} 
                   \arrow[r, rightarrow] & 0. \\
                  &  & X_{r-i,\,r}/X_{r-(i-1),\,r}  & &
             \end{tikzcd}    
       \]
       Together these facts give a surjection $V_{[a-2i]} \otimes D^{i} 
        \twoheadrightarrow
          \frac{X_{r-i}}{X_{r-i}^{(i+1)} +X_{r-(i-1)}}$.
          Since $[a-i]<i$, we have
           \[
           X_{r-(i-1)}+X_{r-i}^{(i+1)} \subseteq X_{r-(i-1)}+X_{r-i}^{(i)}  
           \subseteq X_{r-(i-1)}+X_{r-i}^{([a-i]+1)} \subseteq 
           X_{r-(i-1)}+X_{r-i}^{([a-i])} \subseteq X_{r-i}.
       \]
        But we already know that $V_{[a-2i]} \otimes D^{i} \hookrightarrow 
          \frac{X_{r-i}^{([a-i])} +X_{r-(i-1)}}{X_{r-i}^{([a-i]+1)} +X_{r-(i-1)}}$.
         So 
          $ X_{r-(i-1)} + X_{r-i}^{([a-i]+1)} 
         = X_{r-(i-1)}+ X_{r-i}^{(i+1)} = X_{r-(i-1)}+ X_{r-i}^{(i)}$. Hence,
         by \eqref{Y i,j}, 
         $X_{r-i}^{(i)}/X_{r-i}^{(i+1)} = X_{r-(i-1)}^{(i)}/X_{r-(i-1)}^{(i+1)} $.
         
        We now prove the claim.
        Since $[a-i]<i < p-1$, by \eqref{G i,r}, we have $G_{i,r}(X,Y) = X^{i} G_{r-i}(X,Y)
        \in X_{r-i}^{([a-i])}$. 
        So $H(X,Y) \in X_{r-i}^{([a-i])}$, as
        $F(X,Y) \in X_{r-i}^{([a-i])}$.
        We first show that $H(X,Y) \in V_{r}^{(i)}$.
        The coefficient of $X^{r-[a-i]}Y^{[a-i]}$ in $H(X,Y)$ equals
        \[
            \binom{r-i}{[a-i]} -  \binom{r-i}{[a-i]}
            \sum_{n=0}^{[a-i]} C_{n} \binom{r-n}{[a-i]}
            \stackrel{\eqref{choice C_n for i<a-i}}{=} 
             \binom{r-i}{[a-i]} -  \binom{r-i}{[a-i]} =
            0,
        \]
        where we used \eqref{choice C_n for i<a-i} with
        $i,m$ there equal to $[a-i]$. 
        Since $[a-i]$ is the only number between $0$ and $p-1$
        congruent to $[a-i]$ mod $(p-1)$, we see that $Y^{p} \mid H(X,Y)$.
        Also $r-i$ is the only number between $r$ and $r-(p-1)$ congruent
        to $[a-i]$ mod $(p-1)$, so $X^{i} \mid H(X,Y)$.
         So $H(X,Y)$ satisfies condition (i) of \Cref{divisibility1} for $m=i$.
         It suffices to check condition (ii) of that lemma for $[a-i] \leq m \leq i$,
         since $H(X,Y) \in V_{r}^{([a-i])}$.
        By \Cref{binomial sum}, for $[a-i] \leq m \leq i$, we have
        \begin{align}\label{i> a-i, G i,r binomial sum}    
               \sum_{ \substack{0 \leq l < r-i \\ l \equiv [a-i]
              ~\mathrm{mod}~(p-1)}} \binom{r-i}{l} \binom{l}{m}
             & =  \sum_{ \substack{0 \leq l \leq  r-i \\ l \equiv [a-i]
              ~\mathrm{mod}~(p-1)}} \binom{r-i}{l} \binom{l}{m}
              - \binom{r-i}{m}  \nonumber \\
             &\equiv \binom{r-i}{m} \left[ \binom{[a-i-m]}{[a-i-m]}+
             \delta_{p-1,[a-i-m]} \right]
             - \binom{r-i}{m}
             \mod p \nonumber \\
             &\equiv \binom{r-i}{m} \delta_{[a-i],m} \mod  p. 
        \end{align}
        Again by \Cref{binomial sum}, for $[a-i] \leq m \leq i-1$,   we have
        \begin{align}\label{i> a-i, F binomial sum}
               \sum_{n=0}^{[a-i]} C_{n} 
                \sum_{\substack{0 \leq l \leq r-n \\  l \equiv [a-i] 
                \mathrm{~mod}~ (p-1)}} \binom{r-n}{l}
                \binom{l}{m}  & \equiv
                \sum_{n=0}^{[a-i]} C_{n}  \binom{r-n}{m} 
                \left[\binom{[a-m-n]}{[a-i-m]} +  \delta_{p-1,[a-i-m]} \right]
                 ~\mathrm{mod}~p \nonumber \\
                & \equiv 
                 \sum_{n=0}^{[a-i]} C_{n}  \binom{r-n}{m}  \delta_{[a-i],m}
                   ~\mathrm{mod}~p \nonumber \\ 
                & \stackrel{\eqref{choice C_n for i<a-i}}{\equiv} \delta_{[a-i],m} 
                ~\mathrm{mod}~p,                                 
         \end{align} 
         where in the second last step we used if $0 \leq n \leq [a-i]-1$, then
         $r-n \equiv [a-i]-1-n$ mod $p$ and $0 \leq [a-i]-1-n < [a-i] \leq m$
         so  $\binom{r-n}{m} \equiv 0$ mod $p$, by Lucas' theorem,
         and if $n=[a-i]$, then we have $\binom{[a-m-n]}{[a-i-m]}
         = \binom{i-m}{p-1+[a-i]-m} = 0$ as $i< p-1 \leq p-1+[a-i]$.
         Hence, the difference  of the expressions
          \eqref{i> a-i, G i,r binomial sum} and 
         $\binom{r-i}{[a-i]}$ times  \eqref{i> a-i, F binomial sum}
         is zero.         
         Thus, by  \Cref{divisibility1}, we have $H(X,Y) \in V_{r}^{(i)}$.
         Next we show that
         \begin{align}\label{H in V i+1}
                 H(X,Y) \equiv 
                  - (-1)^{a-i} C_{[a-i]} \binom{r-i}{[a-i]}  
                  \theta ^{i}  Y^{r-i(p+1)}
                  \mod V_{r}^{(i+1)}.
          \end{align}
           Indeed, since
            $r-n \equiv [a-i]-1-n$ mod $p$ and $0 \leq [a-i]-1-n < i -n$,
           for all $0 \leq n \leq [a-i]-1$, so
           $\binom{r-n}{i-n} \equiv 0$ mod $p$, by Lucas' theorem. Thus,
           by Lucas' theorem, 
            the coefficient of $X^{i}Y^{r-i}$ in $H(X,Y)$ is equal to
            \begin{align}\label{i>a, coeff X i in H}
                    - \binom{r-i}{[a-i]} \sum_{n=0}^{[a-i]} C_{n} \binom{r-n}{r-i}
                    &= - \binom{r-i}{[a-i]} \sum_{n=0}^{[a-i]} C_{n} \binom{r-n}{i-n} 
                    \nonumber\\
                     &\equiv - C_{[a-i]} \binom{r-i}{[a-i]} \binom{r-[a-i]}{i-[a-i]}
                     ~\mathrm{mod}~p  \nonumber \\
                      &\equiv - C_{[a-i]} \binom{r-i}{[a-i]}  
                     \binom{p-1}{i-[a-i]} ~\mathrm{mod}~ p
                     \nonumber \\
                     & \equiv -(-1)^{a}C_{[a-i]} \binom{r-i}{[a-i]}  
                     ~\mathrm{mod}~p ,
            \end{align}      
            which is the clearly the coefficient of $X^{i}Y^{r-i}$   
            on the right hand side of \eqref{H in V i+1}.    
            Also, $Y^{p}$ divides the right hand side  of 
             \eqref{H in V i+1} as $r-i(p+1) \geq p$. Thus
             the difference of the two sides satisfies condition
             (i) of \Cref{divisibility1}  with $m=i+1$.
         By \Cref{binomial sum}, we have
         \begin{align}\label{i> a-i, i derivative condition for F}
               \sum_{n=0}^{[a-i]} C_{n} 
                \sum_{\substack{0 \leq l \leq r-n \\  l \equiv [a-i] 
                \mathrm{~mod}~ (p-1)}} \binom{r-n}{l}
                \binom{l}{i}  & \equiv
                \sum_{n=0}^{[a-i]} C_{n}  \binom{r-n}{i} 
                \binom{[a-i-n]}{[a-2i]} 
                 \mod p  \nonumber \\
                  &\equiv  (-1)^ {a-i}C_{[a-i]}   \mod p, 
        \end{align}       
         where in the  last step we have  used that  if $0\leq n \leq [a-i]-1$,
         then  
       $\binom{r-n}{i} \equiv 0$ mod $p$  as above,
         and  if $n=[a-i]$, then 
         $\binom{r-[a-i]}{i} \binom{p-1}{[a-2i]}
         \equiv \binom{p-1}{i} \binom{p-1}{[a-2i]}
         \equiv (-1)^{a-i}$ mod $p$
         since $r \equiv [a-i]-1$ mod $p$ and $\binom{p-1}{j} \equiv (-1)^{j}
         $ mod $p$. Thus, by 
         \eqref{i> a-i, G i,r binomial sum} and
          \eqref{i> a-i, i derivative condition for F},
         we have
         \begin{align}\label{i> a-i, i derivative condition}
                \sum_{ \substack{0 \leq l < r-i \\ l \equiv [a-i]
              ~\mathrm{mod}~(p-1)}} \binom{r-i}{l} \binom{l}{i}       
              -& \binom{r-i}{[a-i]}   \sum_{n=0}^{[a-i]} C_{n} 
                \sum_{\substack{0 \leq l \leq r-n \\  l \equiv [a-i] 
                \mathrm{~mod}~ (p-1)}} \binom{r-n}{l}
                \binom{l}{i} \nonumber \\
                & \equiv  -(-1)^{a-i}\binom{r-i}{[a-i]} C_{[a-i]} 
                \mod p,
         \end{align}
         so that the difference of  both sides of 
         \eqref{H in V i+1} also satisfies condition (ii)
         of \Cref{divisibility1} with $m=i+1$ as desired.
         
          Since $\binom{r-[a-i]}{i} \equiv \binom{p-1}{i} \not\equiv 0 \mod p$, 
          we have $X_{r-[a-i]}^{(i)}/X_{r-[a-i]}^{(i+1)} \neq (0)$,
          by Lemma~\ref{socle term singular} (with $i$ there equal to $[a-i]$).
           Since  $[a-i]< i$, the exact sequence 
           \eqref{exact sequence Vr} doesn't split for $m=[a-i]$
           and  
          $ \theta^{i}Y^{r-i(p+1)} $ 
          belongs to the socle of $V_{r}^{(i)}/V_{r}^{(i+1)}$,
                  by \Cref{Breuil map}.  Thus, by \eqref{H in V i+1}
                  we see  that
                  $H(X,Y) \in  X_{r-[a-i]}^{(i)}/X_{r-[a-i]}^{(i+1)}$.
                  Since $[a-i] \leq i-1  $, we get
          $H(X,Y) \in X_{r-(i-1)} + V_{r}^{(i+1)}$,
          so in $X_{r-(i-1)} + X_{r-i}^{(i+1)}$  because $H(X,Y) \in X_{r-i}$,
          as claimed. 
\end{proof}
We are now ready to determine the quotients $X_{r-i}^{(j)}/X_{r-i}^{(j+1)}$,
for $j \in \lbrace i , [a-i] \rbrace$ and  $i>[a-i]$. 
\begin{proposition}\label{singular i>r-i}
       Let $p \geq 3$, $r \equiv a \mod (p-1)$ with $1 \leq a \leq p-1$,
       $r \equiv r_{0} \mod p$  with $0 \leq r_{0} \leq p-1$
       and suppose $1 \leq [a-i] <  i < p-1$. If $j \in \lbrace i , [a-i] \rbrace$ and $r \geq j(p+1)+p$, then 
       \begin{align*}
             \frac{X_{r-i}^{(j)} }{X_{r-i}^{(j+1)}} =
             \begin{cases}
                     V_{r}^{(j)}/V_{r}^{(j+1)}, & \mathrm{if}~ r_{0} \not \in \mathcal{J}(a,j), \\
                    V_{[a-2i]} \otimes D^{i}, & \mathrm{if}~ r_{0} = [a-j]-1~ 
                    \mathrm{and}~ j \geq [a-j], \\
                    V_{p-1-[a-2i]} \otimes D^{a-i}, & \mathrm{if}~ r_{0} = [a-j]
                     ~ \mathrm{and}~ j \leq [a-j], \\
                    (0), & \mathrm{if}~ r_{0} \in \mathcal{I}(a,j) 
                    ~\mathrm{and}~ r \not \equiv [a-i]+i ~\mathrm{mod}~p.
             \end{cases}
       \end{align*}
\end{proposition}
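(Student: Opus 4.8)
The plan is to prove Proposition~\ref{singular i>r-i} by handling the two values $j = i$ and $j = [a-i]$ separately, in each case splitting into the four ranges of $r_0$ dictated by the membership relations among the intervals $\mathcal{I}(a,j)$, $\mathcal{J}(a,j)$ and the singleton congruence classes $[a-j]$, $[a-j]-1$, together with the special congruence $r \equiv [a-i]+i \bmod p$. The backbone is the chain \eqref{4.1.3 ascending chain}
\[
  X_{r-i}^{(i+1)} \subseteq X_{r-i}^{(i)} \subseteq \cdots \subseteq X_{r-i}^{([a-i]+1)} \subseteq X_{r-i}^{([a-i])},
\]
whose top quotient $X_{r-i}^{([a-i])}/X_{r-i}^{([a-i]+1)}$ and bottom quotient $X_{r-i}^{(i)}/X_{r-i}^{(i+1)}$ are exactly the two objects we want, and whose every intermediate quotient is controlled by \Cref{reduction} (via $[a-i] < [a-[a-i]] = i$). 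First I would dispose of the case $r_0 \notin \mathcal{J}(a,j)$: for $j = [a-i]$ this is precisely \Cref{medium a-i full}, and for $j = i$ this is \Cref{medium i full}, each of which already gives the full quotient $V_r^{(j)}/V_r^{(j+1)}$. Note that these two lemmas also cover the subcase $r \equiv [a-i]+i \bmod p$ since that congruence class lies outside both $\mathcal{J}(a,i)$ and $\mathcal{J}(a,[a-i])$ by \eqref{interval J for i > a-i}, \eqref{interval J for [a-i]}.

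Next I would treat the range where $r_0 \in \mathcal{J}(a,i)$ but $r_0 \notin \mathcal{J}(a,[a-i])$. Here \Cref{medium a-i full} still applies (so the top quotient is full, $X_{r-i}^{([a-i])}/X_{r-i}^{([a-i]+1)} = V_r^{([a-i])}/V_r^{([a-i]+1)}$), while the bottom quotient $X_{r-i}^{(i)}/X_{r-i}^{(i+1)}$ is computed by \Cref{medium i not full}, which reduces it to $X_{r-[a-i]}^{(i)}/X_{r-[a-i]}^{(i+1)}$ and hence, via \Cref{singular quotient i < [a-i]} applied with $i$ there equal to $[a-i]$ (legitimate since $[a-i] < [a-[a-i]] = i < p-1$), to one of the summands of $V_r^{(i)}/V_r^{(i+1)}$ — giving either $V_{p-1-[a-2i]} \otimes D^{a-i}$ when $r_0 = i = [a-i]$ (but $i > [a-i]$ here so this does not occur) or $(0)$; a careful bookkeeping of \eqref{interval I}, \eqref{interval J for i > a-i}, \eqref{interval J for [a-i]} shows this range is exactly $r_0 \in \{[a-i], \ldots, i-1\}$, matching the ``$(0)$'' branch via $r_0 \in \mathcal{I}(a,i)$, and the boundary $r_0 = [a-i]-1$ is pulled into the next case. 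Then, for the range $r_0 \in \mathcal{J}(a,[a-i])$, I would invoke \Cref{medium a-i not full}: it gives $X_{r-i} = X_{r-(i-1)} + X_{r-i}^{(i+1)}$ and the identifications $X_{r-i}^{(j)}/X_{r-i}^{(j+1)} = X_{r-[a-i]}^{(j)}/X_{r-[a-i]}^{(j+1)}$ for $j \in \{i, [a-i]\}$, so once more \Cref{singular quotient i < [a-i]} (with $i \rightsquigarrow [a-i]$, $j \rightsquigarrow i$ or $[a-i]$) finishes the job: the top quotient becomes $V_{[2i-a]} \otimes D^{a-i} = V_{p-1-[a-2i]} \otimes D^{a-i}$ precisely when $r_0 = [a-i]$ (the socle case of that proposition with $i,j$ both equal to $[a-i]$ fails, giving the socle term), it becomes $V_{[a-2i]} \otimes D^i$ when $r_0 = [a-i]-1$ (the cosocle case of the $i = [a-i]$-analysis of \Cref{singular i= [a-i]} applied with $i \rightsquigarrow [a-i]$, since $[a-[a-i]] = i$), and $(0)$ otherwise. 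The subtlety is that for $j = i$ one still needs $r_0 = [a-i]-1$ to fall in the ``$V_{[a-2i]}\otimes D^i$'' branch and $r_0 = [a-i]$ in the ``$V_{p-1-[a-2i]}\otimes D^{a-i}$'' branch; this is governed by whether $i \geq [a-i]$ or $i \leq [a-i]$, and since $i > [a-i]$ throughout this proposition only the ``$r_0 = [a-j]-1$, $j \geq [a-j]$'' branch is active for $j = i$, while for $j = [a-i]$ we have $[a-i] < i = [a-[a-i]]$ so the ``$r_0 = [a-j]$, $j \leq [a-j]$'' branch is the relevant one — exactly as stated.

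I expect the main obstacle to be the combinatorial reconciliation: verifying that the four $r_0$-ranges in the statement of the proposition partition $\{0,1,\ldots,p-1\}$ and that each matches the corresponding case among \Cref{medium a-i full}, \Cref{medium a-i not full}, \Cref{medium i full}, \Cref{medium i not full}, \Cref{singular quotient i < [a-i]}, with the $j = i$ versus $j = [a-i]$ distinction and the $i \gtrless [a-i]$ distinction (which here is always $i > [a-i]$) both tracked correctly through the interval definitions \eqref{interval I}, \eqref{interval J}, \eqref{interval I for [a-i]}, \eqref{interval J for [a-i]}, \eqref{interval J for i > a-i}, \eqref{interval [a-i], for i> [a-i]} and through \Cref{I vs J}, \Cref{interval and binomial}. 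The representation-theoretic content is entirely in the cited lemmas; the proof itself is a careful case-chase checking that the boundary classes $[a-j]$ and $[a-j]-1$ are assigned to the right branch and that the non-split exact sequence \eqref{exact sequence Vr} (for $m = i$ and $m = [a-i]$) forces the claimed isomorphism type once the dimension or the (non)vanishing of the relevant socle element is known. I would organize it as: ``We treat $j = i$ and $j = [a-i]$ in turn; in each, the case $r_0 \notin \mathcal{J}(a,j)$ is Lemma~X, the case $r_0 \in \mathcal{J}(a,[a-i])$ is Lemma~Y combined with Proposition~\ref{singular quotient i < [a-i]}, and the remaining range $\mathcal{J}(a,i) \setminus \mathcal{J}(a,[a-i])$ (only relevant when $j = i$) is Lemma~\ref{medium i not full}.''
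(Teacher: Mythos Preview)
Your proposal is correct and follows essentially the same approach as the paper: split into $j=i$ and $j=[a-i]$, use \Cref{medium i full} and \Cref{medium a-i full} for $r_0 \notin \mathcal{J}(a,j)$, use \Cref{medium a-i not full} and \Cref{medium i not full} to reduce the remaining cases to $X_{r-[a-i]}^{(j)}/X_{r-[a-i]}^{(j+1)}$, and then invoke \Cref{singular quotient i < [a-i]} with $i$ there equal to $[a-i]$ (since $[a-i] < [a-[a-i]] = i < p-1$). One slip: in your boundary analysis you cite \Cref{singular i= [a-i]} ``applied with $i \rightsquigarrow [a-i]$'' for the $r_0 = [a-i]-1$ case, but that proposition requires $i = [a-i]$, which fails after the substitution; the correct reference throughout is \Cref{singular quotient i < [a-i]} (with $i$ there equal to $[a-i]$ and $j$ equal to $i$ or $[a-i]$), exactly as the paper does, combined with \Cref{I vs J} to translate $r_0 \in \mathcal{J}(a,j)$ into $r_0 \in \mathcal{I}(a,j) \cup \{[a-j]-1\}$ (resp.\ $\cup\{[a-j]\}$) and $r \not\equiv [a-i]+i \bmod p$.
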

\begin{proof}
        We prove the  lemma by treating
        the cases $j=i$ and $j=[a-i]$ separately.
        
        \textbf{Case}  $\boldsymbol{j=i}$:
        If $r_{0} \not \in \mathcal{J}(a,i)$,
        then by Lemma~\ref{medium i full} 
        we have $X_{r-i}^{(i)}/X_{r-i}^{(i+1)}=V_{r}^{(i)}/V_{r}^{(i+1)}$.
        So assume $r_{0} \in \mathcal{J}(a,i)$.
        Then, by  Lemmas \ref{medium a-i not full}  and \ref{medium i not full}, 
        we see that 
        $X_{r-i}^{(i)}/X_{r-i}^{(i+1)} = X_{r-[a-i]}^{(i)}/X_{r-[a-i]}^{(i+1)}$.
        As $[a-i]< i $, by Lemma~\ref{I vs J} (ii),
        we see that 
        $ r_{0} \in \mathcal{I}(a, i) \cup  \lbrace  [a-i]-1 \rbrace$ and
         $ r \not \equiv [a-i]+i $ mod $p$. Thus, 
         by \Cref{singular quotient i < [a-i]} (with 
         $i$  there equal to $[a-i]$ and  $j$ equal to $i$),
          we get 
         $X_{r-[a-i]}^{(i)}/X_{r-[a-i]}^{(i+1)} \cong
          V_{[a-2i]} \otimes D^{i}$ if $r \equiv [a-i]-1$ mod $p$
        and zero otherwise.   
        
        \textbf{Case}  $\boldsymbol{j=[a-i]}$:
        If $r_{0} \not \in \mathcal{J}(a,j)$,
        then by Lemma~\ref{medium a-i full} 
        we have $X_{r-i}^{(j)}/X_{r-i}^{(j+1)}=V_{r}^{(j)}/V_{r}^{(j+1)}$.
        So assume $r_{0}  \in \mathcal{J}(a,j)$, then by 
        \Cref{medium a-i not full}, we have 
        $X_{r-i}^{(j)}/X_{r-i}^{(j+1)} = X_{r-[a-i]}^{(j)}/X_{r-[a-i]}^{(j+1)}$.
        Since  $j < [a-j]$, by Lemma~\ref{I vs J} (i), 
        we see that 
         $r_{0} \in \mathcal{I}(a,j) \cup  \lbrace  [a-j] \rbrace$
         and $ r \not \equiv [a-j]+j$ mod $p$. Thus, 
         by \Cref{singular quotient i < [a-i]} (with $i$ (resp. $j$)  there equal to 
         $[a-i]$ (resp. $[a-i]$)),   we see that  
         $X_{r-j}^{(j)}/X_{r-j}^{(j+1)} \cong
         V_{p-1-[a-2i]} \otimes D^{a-i}$ if $r \equiv i \, =[a-j]$ mod $p$
        and is zero otherwise.   
\end{proof}
\begin{remark}
        Observe that  if in the statement of the proposition above,
        we replace the condition \enquote* {$r_{0}
        \in \mathcal{I}(a,j)$ and $r \not \equiv [a-i]+i$ mod $p$}
        by \enquote*{otherwise}, then we may include the case
        $[a-i]=i$ in the statement, by \Cref{singular i= [a-i]}. 
\end{remark}
\begin{corollary}\label{arbitrary singular i>r-i}    
        Let $p \geq 3$, $r \equiv a \mod (p-1)$ with $1 \leq a \leq p-1$, 
        $r \equiv r_{0} \mod p$ with $0 \leq r_{0} \leq p-1$
       and  suppose $1 \leq [a-i] < i < p-1$. Then for 
       $[a-i] \leq l \leq i$ and   $r \geq i(p+1)+p$,
       we have
       \begin{enumerate}[label= \emph{(\roman*)}]
                \item  If $r_{0} \not  \in \mathcal{J}(a,l)$, 
                 then 
                $X_{r-i}^{([a-i])} / X_{r-i}^{(l+1)} = V_{r}^{([a-i])} / V_{r}^{(l+1)}$.
                \item Assume $l \neq [a-l]$. If  $r_{0} \in \mathcal{I}(a,l)$
                and $ r \not \equiv   [a-i]+i ~\mathrm{mod}~p$, then
                $X_{r-i}^{(l)} / X_{r-i}^{(i+1)} = (0)$. 
       \end{enumerate}
\end{corollary}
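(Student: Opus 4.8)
\textbf{Proof proposal for Corollary~\ref{arbitrary singular i>r-i}.}

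The plan is to deduce both assertions from \Cref{singular i>r-i} by climbing the ascending chain \eqref{4.1.3 ascending chain}, one step at a time, and then invoking the dévissage already encoded in the commutative diagram \eqref{commutative diagram} (or rather the equalities \eqref{Y i,j}). First I would fix $[a-i] \le l \le i$ and note, using the containments $X_{r-(i-1)} \subseteq X_{r-i}^{(l+1)}+X_{r-(i-1)} \subseteq X_{r-i}^{(l)}+X_{r-(i-1)}$, that the successive quotients $X_{r-i}^{(j)}/X_{r-i}^{(j+1)}$ for $[a-i] \le j \le l$ fit together via \eqref{Y i,j} into $X_{r-i}^{([a-i])}/X_{r-i}^{(l+1)}$, and likewise $X_{r-i}^{(l)}/X_{r-i}^{(i+1)}$ is built from the quotients for $l \le j \le i$; by \Cref{reduction} (more precisely \Cref{reduction corollary}(i)) all the intermediate quotients with $j \ne i$, $[a-i]$ vanish, since such $j$ are neither $i$ nor $[a-i]$ and $i \ne a$. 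So the only possibly nonzero steps among these are $j=[a-i]$ and (if $l=i$) $j=i$.

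For part (i): if $r_0 \notin \mathcal{J}(a,l)$, then since $\mathcal{J}(a,[a-i]) \subseteq \mathcal{J}(a,l)$ for $[a-i] \le l$ (the intervals $\mathcal{J}(a,\cdot)$ are nested as recorded after \eqref{interval J}), we have $r_0 \notin \mathcal{J}(a,[a-i])$ as well, hence by \Cref{medium a-i full} the bottom step $X_{r-i}^{([a-i])}/X_{r-i}^{([a-i]+1)}$ equals the full $V_r^{([a-i])}/V_r^{([a-i]+1)}$. If moreover $l=i$, then \Cref{medium i full} gives $X_{r-i}^{(i)}/X_{r-i}^{(i+1)} = V_r^{(i)}/V_r^{(i+1)}$; the intermediate steps $j \ne i,[a-i]$ contribute nothing. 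Splicing these using the fact that $V_r^{([a-i])} \supseteq V_r^{([a-i]+1)} \supseteq \cdots \supseteq V_r^{(l+1)}$ and comparing dimensions (each nonzero step matches the corresponding step of the $V_r$-filtration exactly), we conclude $X_{r-i}^{([a-i])}/X_{r-i}^{(l+1)} = V_r^{([a-i])}/V_r^{(l+1)}$. I would make the dimension bookkeeping precise by observing $X_{r-i}^{([a-i])} \supseteq X_{r-i}^{([a-i]+1)} \supseteq \cdots \supseteq X_{r-i}^{(l+1)}$ injects (via the quotient construction) into the corresponding $V_r$-chain, and both have equal length once each step agrees.

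For part (ii): with $l \ne [a-l]$, $r_0 \in \mathcal{I}(a,l)$ and $r \not\equiv [a-i]+i \bmod p$, I split according to whether $l < [a-l]$ or $l > [a-l]$. If $l < [a-l]$, apply \Cref{singular quotient i < [a-i]} with the roles $(i,j)$ there equal to $(l,l)$: since $r_0 \in \mathcal{I}(a,l)$ and $r \not\equiv l+[a-l] \bmod p$ (note $l+[a-l] \equiv a$ or $a-1$, and $r \not\equiv [a-i]+i$ — one checks these congruences are incompatible with being in $\mathcal{I}(a,l)$ only in the excluded case, so this needs a short verification), we get $X_{r-l}^{(l)}/X_{r-l}^{(l+1)} = (0)$; then by \Cref{reduction} (second and third parts) $X_{r-i}^{(l)}/X_{r-i}^{(l+1)} = X_{r-l}^{(l)}/X_{r-l}^{(l+1)} = (0)$, and the remaining steps $X_{r-i}^{(j)}/X_{r-i}^{(j+1)}$ for $l < j \le i$, $j \ne i,[a-i]$, vanish by \Cref{reduction corollary}(i), while the step $j=i$ (if $l<i$) and $j=[a-i]$ vanish since... here I would need to chase that $\mathcal{I}(a,l)$-membership forces $X_{r-i}^{(i)}/X_{r-i}^{(i+1)}=(0)$ via \Cref{singular i>r-i}. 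If $l > [a-l]$, apply \Cref{singular i>r-i} directly with $j=l$: the hypothesis $r_0 \in \mathcal{I}(a,l)$, $r \not\equiv [a-i]+i \bmod p$ lands in the last case there, giving $X_{r-i}^{(l)}/X_{r-i}^{(l+1)}=(0)$, and again the higher steps vanish. Splicing the vanishing steps through \eqref{Y i,j} gives $X_{r-i}^{(l)}/X_{r-i}^{(i+1)}=(0)$.

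The main obstacle I anticipate is the combinatorial bookkeeping in part (ii): correctly matching the interval conditions $r_0 \in \mathcal{I}(a,l)$ against the case divisions of \Cref{singular i>r-i} and \Cref{singular quotient i < [a-i]} (which are phrased in terms of $\mathcal{J}$ and the auxiliary congruence $r \equiv [a-l]+l$), and verifying that the nested-intervals facts $\mathcal{I}(a,l-1)\subseteq \mathcal{I}(a,l)$, $\mathcal{J}(a,l-1)\subseteq\mathcal{J}(a,l)$ let one propagate vanishing from the single critical index $l$ up to the whole segment $[l,i]$ (resp. down to $[[a-i],l]$) without gaps. Once those interval inclusions are in hand, everything else is a routine splicing of short exact sequences using \eqref{Y i,j} and a dimension count against the known structure of $V_r^{(\bullet)}/V_r^{(\bullet)}$ from \Cref{Breuil map}.
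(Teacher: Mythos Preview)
There is a genuine gap in your argument, and it affects both parts. You claim that ``by \Cref{reduction corollary}(i) all the intermediate quotients with $j \ne i$, $[a-i]$ vanish''. This is a misreading: \Cref{reduction corollary}(i) only says $X_{r-i}^{(j)}/X_{r-i}^{(j+1)} = X_{r-(i-1)}^{(j)}/X_{r-(i-1)}^{(j+1)}$ for such $j$; it does \emph{not} say this module is zero. In fact it usually is not: if you believed these intermediate steps vanished, then in part~(i) you would obtain $\dim X_{r-i}^{([a-i])}/X_{r-i}^{(l+1)} \le 2(p+1)$, whereas $V_r^{([a-i])}/V_r^{(l+1)}$ has dimension $(l+1-[a-i])(p+1)$, so your conclusion could not possibly hold once $l > [a-i]+1$.

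The missing idea is the following. For each $j$ with $[a-i] \le j \le i$ one has $[a-i] \le [a-j] \le i$ as well; set $j' := \max\{j,[a-j]\}$, so that $\{j,[a-j]\} = \{j',[a-j']\}$ with $[a-j'] \le j'$. \Cref{reduction} then gives $X_{r-i}^{(j)}/X_{r-i}^{(j+1)} = X_{r-j'}^{(j)}/X_{r-j'}^{(j+1)}$, and now one can apply \Cref{singular i>r-i} (or \Cref{singular i= [a-i]} when $j' = [a-j']$) with $i$ there replaced by $j'$. For part~(i), the nesting $\mathcal{J}(a,j) \subseteq \mathcal{J}(a,l)$ for $[a-i]\le j \le l$ gives $r_0 \notin \mathcal{J}(a,j)$, so each step equals the full $V_r^{(j)}/V_r^{(j+1)}$ and the splicing works. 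For part~(ii), the nesting $\mathcal{I}(a,l) \subseteq \mathcal{I}(a,j)$ for $l \le j \le i$, together with the identity $[a-j]+j = [a-i]+i$, puts each step in the ``$(0)$'' case of \Cref{singular i>r-i}; the borderline case $j' = [a-j']$ requires a separate short check (using \Cref{I vs J} to pass from $\mathcal{I}$ to $\mathcal{J}$ and then \Cref{singular i= [a-i]}), which the hypothesis $l \ne [a-l]$ is there precisely to enable. Your proposal touches none of this reduction-to-$j'$ mechanism, and without it the argument does not go through.
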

\begin{proof}
         Let $j' := \max \lbrace j, [a-j] \rbrace$, for all $[a-i] \leq j \leq i$.
         Note that
         $ \lbrace j, [a-j] \rbrace = \lbrace j', [a-j'] \rbrace$.
         One checks that  $[a-i] \leq j, [a-j] \leq i$, for all $[a-i] \leq j \leq i$.
         Thus $[a-i] \leq j' \leq i <p-1$,  
         for all $[a-i] \leq j \leq i$.
        \begin{enumerate}[label= (\roman*)]
                 \item 
               Since $\mathcal{J}(a,[a-i]) \subseteq  \cdots \subseteq 
                  \mathcal{J}(a,l_{1}) \subseteq \cdots \subseteq \mathcal{J}(a,i)$,
                 we see that $r_{0} \not \in \mathcal{J}(a,l)$ implies that
                 $r_{0} \not \in \mathcal{J}(a,j)$, for all $[a-i] \leq j \leq l$.  
                Hence by \Cref{singular i>r-i}, we have
               $X_{r-j'}^{(j)}/X_{r- j'}^{(j+1)} = V_{r}^{(j)}/V_{r}^{(j+1)}$, for every
                $[a-i] \leq j \leq l$. Since $X_{r-j'}^{(j)}/X_{r-j'}^{(j+1)} \subseteq 
                 X_{r-i}^{(j)}/X_{r-i}^{(j+1)} \subseteq V_{r}^{(j)}/V_{r}^{(j+1)}$,
                for all $[a-i] \leq j \leq l$, we see that
              $X_{r-i}^{([a-i])}/X_{r-i}^{(l+1)} = V_{r}^{([a-i])}/X_{r-i}^{(l+1)}$.
              \item 
              Since  $\mathcal{I}(a,[a-i]) \subseteq  \cdots  \subseteq 
              \mathcal{I}(a,l)  \subseteq  \cdots \subseteq \mathcal{I}(a,i)$,
               we see that $r_{0} \in  \mathcal{I}(a,l)$ and  
               $ r \not \equiv  [a-i]+i $ mod $p$ implies
                $r_{0} \in \mathcal{I}(a,j)$ and   
                $ r \not \equiv  [a-i]+i $ mod $p$, for all  $l \leq j \leq i$. 
               One checks that $[a-i]+i = [a-j]+j$, for all $[a-i] \leq j \leq i$.
               So $r_{0} \in \mathcal{I}(a,j)$ and   
                $ r \not \equiv  [a-j]+j  = [a-j']+j'$ mod $p$, for all  $l \leq j \leq i$. 
                We claim that $X_{r-i}^{(j)}/X_{r-i}^{(j+1)}= (0)$, for all
                $l \leq j \leq i$. Clearly (ii) follows from the claim.
                
                 Fix $l \leq j \leq i$. If $[a-j] \leq j \leq i$,  then
                 by the first part of \Cref{reduction},  we have
                $X_{r-i}^{(j)}/X_{r-i}^{(j+1)} = X_{r-j}^{(j)}/X_{r-j}^{(j+1)}$.
                Similarly,  by the second part, if $j \leq [a-j]  \leq i$, then 
                $X_{r-i}^{(j)}/X_{r-i}^{(j+1)} = X_{r-[a-j]}^{(j)}/X_{r-[a-j]}^{(j+1)}$.
                So to prove the claim it is enough to  show that 
                $X_{r-j'}^{(j)}/X_{r-j'}^{(j+1)} =(0)$.
                If   $[a-j'] < j'$, then 
              $X_{r-j'}^{(j)}/X_{r- j'}^{(j+1)} = (0)$, by \Cref{singular i>r-i}.
              If $j'= [a-j']$, then  $j' = j$.
              So $[a-i] \leq l \lneq  j \leq i$ because  $[a-l] \neq l$,  
             whence
               \[  [a-i]  \leq 
               j-1 < j < [a-j]+1 = [a-(j-1)] \leq i+1 \leq p-1.
               \]
              Thus,  by the first and third parts of \eqref{interval I}, we have 
              $j-1, j \not \in \mathcal{I}(a,j-1)$. 
              Since $ \mathcal{I}(a,l) \subseteq \mathcal{I}(a,j-1)
              \subseteq \mathcal{I}(a,j)$,
              we get $j-1, j \not \in \mathcal{I}(a,l)$.
              So $r_{0} \neq j-1$, $j$.
              Thus, $r_{0}  \in \mathcal{I}(a,j) \smallsetminus 
              \lbrace j,j-1 \rbrace $ and 
              $r \not \equiv [a-i]+i$ mod $p$.
              So, by Lemma~\ref{I vs J} (applied with $i$ there equal to $j$), 
              we see that $r_{0} \neq j$, $j-1$ and $r_{0} \in \mathcal{J}(a,j)$.
              Hence, by \Cref{singular i= [a-i]}, 
              we again have $X_{r-j}^{(j)}/X_{r- j}^{(j+1)} = (0)$.
              This proves the claim.
               \qedhere
            \end{enumerate}         
\end{proof}
For $0 \leq  j \leq n \leq m $, we have the following commutative diagram
 \begin{equation}\label{commutative diagram arbitrary}
        \begin{tikzcd}   
            0 \arrow{r} & \frac{X_{r-i}^{(n)}}{X_{r-i}^{(m)}}  \arrow{r}
             \arrow[hookrightarrow]{d}
           & \frac{X_{r-i}^{(j)}}{X_{r-i}^{(m)}} \arrow{r} \arrow[hookrightarrow]{d} 
           & \frac{X_{r-i}^{(j)}}{X_{r-i}^{(n)}} \arrow{r} \arrow[hookrightarrow]{d} & 0 \\    
           0 \arrow{r} & \frac{V_{r}^{(n)}}{V_{r}^{(m)} } \arrow{r}  & \frac{V_{r}^{(j)}}
           {V_{r}^{(m)}} 
            \arrow{r}  & \frac{V_{r}^{(j)}}{V_{r}^{(n)}}  \arrow{r}  & 0. 
        \end{tikzcd} 
   \end{equation} 
     Taking the cokernel  of each the inclusions and applying the snake lemma, we get
     \begin{align}\label{Q(i) exact sequence 2}
        0 \rightarrow \frac{V_{r}^{(n)}}{X_{r-i}^{(n)}+V_{r}^{(m)}} \rightarrow
        \frac{V_{r}^{(j)}}{X_{r-i}^{(j)}+V_{r}^{(m)}} \rightarrow
         \frac{V_{r}^{(j)}}{X_{r-i}^{(j)}+V_{r}^{(n)}}  \rightarrow 0.
     \end{align}

     We now determine the structure of $Q(i)$ in terms  of $Q([a-i]-1)$
     when $1 \leq [a-i]<i < p-1$. 
\begin{theorem}\label{Structure of Q(i) i>[a-i]}
       Let $p \geq 3$, $r \equiv a ~ \mathrm{mod}~(p-1)$ with $1 \leq a \leq p-1$ and
       let $r \equiv r_{0}~ \mathrm{mod}~p$ with $0 \leq r_{0} \leq p-1$.
       Let  $1 \leq [a-i]<i < p-1$ and $i(p+1)+p \leq r$. 
        Then we have an exact sequence of $\Gamma$-modules
       \begin{align*}
              0 \rightarrow W \rightarrow Q(i) \rightarrow Q([a-i]-1) \rightarrow 0,
       \end{align*}
       where
       \begin{enumerate}[label= \emph{(\roman*)}]
              \item If $r_{0} \not \in \mathcal{J}(a,i)$,
                                then $W= (0)$.
              \item  If $r_{0} \in  \mathcal{J}(a,i) 
                          \smallsetminus  \mathcal{I}(a,[a-i])$ and
              \begin{enumerate}            
                      \item   If $[a-r_{0}] < r_{0}+1$, then
                                 $
                                      0 \rightarrow V_{r}^{([a-r_{0}]+1)}/V_{r}^{(i+1)} \rightarrow W
                                      \rightarrow V_{[a-2r_{0}]} \otimes D^{r_{0}} \rightarrow 0.
                                 $         
              \item  If    $[a-r_{0}] = r_{0} +1 $, then
                           $W= V_{r}^{([a-r_{0}])}/ V_{r}^{(i+1)}$.
              \item If  $  [a-r_{0}] >  r_{0}+1 $,
                        then    
                        $
                               0 \rightarrow V_{r}^{([a-r_{0}])}/V_{r}^{(i+1)}  \rightarrow W
                                \rightarrow V_{p-1-[2r_{0}+2-a]} \otimes D^{r_{0}+1} \rightarrow 0.
                        $           
             \end{enumerate}                                                 
              \item  If $r_{0}\in  \mathcal{I}(a,[a-i])$ and
                          $r \not \equiv [a-i]+i ~\mathrm{mod}~p$,
                                then $W= V_{r}^{([a-i])}/V_{r}^{(i+1)}$.
       \end{enumerate}
\end{theorem}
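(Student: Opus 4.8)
The plan is to feed the diagram \eqref{commutative diagram} with the choice $j=[a-i]$ into the situation, exactly as announced at the start of \S\ref{section i > a-i}, and to identify the three pieces: the top-left term $X_{r-i}^{([a-i])}/X_{r-i}^{(i+1)}$, the middle $Q(i)$, and the bottom-right term which we must recognize as $Q([a-i]-1)$. First I would check that
$$
  X_{r-i} + V_r^{([a-i])} = X_{r-([a-i]-1)} + V_r^{([a-i])},
$$
so that the rightmost bottom entry of \eqref{commutative diagram} with $j=[a-i]$ becomes $V_r/(X_{r-([a-i]-1)}+V_r^{([a-i])}) = Q([a-i]-1)$. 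The inclusion $\supseteq$ is clear from \Cref{first row filtration}; for $\subseteq$ I would apply \Cref{reduction corollary}~(ii) repeatedly: for each $[a-i] \le j < i$ we have $\min\{j,[a-j]\} = [a-j] \le [a-i]$ (using $[a-i] \le j$ and $[a-j] \le [a-i]$ for $j$ in this range, which is the elementary check already used in \Cref{arbitrary singular i>r-i}), hence $X_{r-j} \subseteq X_{r-(j-1)} + V_r^{([a-j])} \subseteq X_{r-(j-1)} + V_r^{([a-i])}$; chaining from $j=i$ down to $j=[a-i]$ gives the claim. Then the middle column of \eqref{commutative diagram} with $j=[a-i]$ reads
$$
  0 \rightarrow \frac{V_r^{([a-i])}}{X_{r-i}^{([a-i])}+V_r^{(i+1)}} \rightarrow Q(i) \rightarrow Q([a-i]-1) \rightarrow 0,
$$
so $W = V_r^{([a-i])}/(X_{r-i}^{([a-i])}+V_r^{(i+1)})$, which by the second isomorphism theorem is the cokernel of $X_{r-i}^{([a-i])}/X_{r-i}^{(i+1)} \hookrightarrow V_r^{([a-i])}/V_r^{(i+1)}$.

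The rest is to compute this cokernel in the three cases, using \Cref{arbitrary singular i>r-i} and \Cref{Breuil map} to resolve the filtration $V_r^{([a-i])}/V_r^{(i+1)}$. For case (i), $r_0 \notin \mathcal{J}(a,i)$: by \Cref{arbitrary singular i>r-i}~(i) (applied with $l=i$, valid since $\mathcal{J}(a,[a-i]) \subseteq \cdots \subseteq \mathcal{J}(a,i)$) we get $X_{r-i}^{([a-i])}/X_{r-i}^{(i+1)} = V_r^{([a-i])}/V_r^{(i+1)}$, so the cokernel $W$ is zero. For case (iii), $r_0 \in \mathcal{I}(a,[a-i])$ and $r \not\equiv [a-i]+i \bmod p$: here I would show $X_{r-i}^{([a-i])}/X_{r-i}^{(i+1)}$ vanishes on the ``top'' and equals everything on the ``bottom''. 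More precisely, \Cref{arbitrary singular i>r-i}~(ii) (with $l=[a-i]$, noting $[a-[a-i]]=i\neq[a-i]$) gives $X_{r-i}^{([a-i])}/X_{r-i}^{(i+1)} = (0)$, hence $W = V_r^{([a-i])}/V_r^{(i+1)}$ directly. [One must double-check that the hypothesis ``$l\neq[a-l]$'' of part (ii) is met with $l=[a-i]$: indeed $[a-[a-i]] = i \neq [a-i]$ since $i>[a-i]$.]

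Case (ii) is the main obstacle and where the real work lies. Here $r_0 \in \mathcal{J}(a,i) \setminus \mathcal{I}(a,[a-i])$. The idea is to locate the exact spot $[a-r_0]$ in the filtration $V_r^{([a-i])} \supseteq \cdots \supseteq V_r^{(i+1)}$ at which $X_{r-i}^{(\bullet)}/X_{r-i}^{(\bullet+1)}$ switches from ``full'' to ``empty''. Concretely: for the indices $l$ with $[a-i] \le l \le i$, \Cref{singular i>r-i} tells us $X_{r-i}^{(l)}/X_{r-i}^{(l+1)}$ is all of $V_r^{(l)}/V_r^{(l+1)}$ when $r_0 \notin \mathcal{J}(a,l)$, and is $(0)$, $V_{[a-2i]}\otimes D^i$ or $V_{p-1-[a-2i]}\otimes D^{a-i}$ in the boundary/exceptional cases governed by whether $r_0 = [a-l]-1$ (when $l\ge[a-l]$) or $r_0=[a-l]$ (when $l\le[a-l]$). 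Since $r_0 \in \mathcal{J}(a,i)$ but $r_0 \notin \mathcal{I}(a,[a-i])$, the membership $r_0 \in \mathcal{J}(a,l)$ first occurs at precisely $l$ with $[a-l] \approx r_0+1$, i.e., the threshold index is $l_0$ determined by $[a-l_0] = [a-r_0] $ (so $l_0 = [r_0]$, reading modulo $p-1$ inside $[a-i]\le l_0 \le i$). I would then split into the subcases $[a-r_0] < r_0+1$, $=r_0+1$, $>r_0+1$, which correspond to the relative position of the ``partner'' index $[a-l_0]$ versus $l_0$ and hence to whether the boundary quotient at the threshold is a genuine irreducible sliver or empty. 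Using \Cref{arbitrary singular i>r-i}~(i) above the threshold and \Cref{arbitrary singular i>r-i}~(ii) below it, together with the short exact sequences \eqref{Q(i) exact sequence 2} and \eqref{exact sequence Vr} (the latter giving the two JH factors $V_{[a-2r_0]}\otimes D^{r_0}$, resp. $V_{p-1-[2r_0+2-a]}\otimes D^{r_0+1}$, of the relevant graded piece $V_r^{([a-r_0])}/V_r^{([a-r_0]+1)}$), I would assemble $W$ as an extension of the ``empty-below'' part $V_r^{([a-r_0])}/V_r^{(i+1)}$ (or $V_r^{([a-r_0]+1)}/V_r^{(i+1)}$) by the surviving irreducible sliver, yielding exactly the three displayed exact sequences. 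The delicate points will be: (a) verifying the boundary index bookkeeping $[a-r_0]$ vs. $r_0+1$ matches the three listed subcases, and correctly identifying which of $V_{[a-2i]}\otimes D^i$ / $V_{p-1-[a-2i]}\otimes D^{a-i}$ appears (here one uses $[a-2i] = [a-2r_0]$ resp. the complementary identity at the threshold, since $i \equiv r_0 \bmod(p-1)$ forces $[a-2i]=[a-2r_0]$); and (b) checking that the extension data glue via the snake-lemma sequence \eqref{Q(i) exact sequence 2} rather than splitting — which is where the non-splitting of \eqref{exact sequence Vr} (governed by $r \not\equiv 2m \bmod(p-1)$) is used.
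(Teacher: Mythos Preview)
Your overall framework matches the paper: use diagram \eqref{commutative diagram} with $j=[a-i]$, prove $X_{r-i}+V_r^{([a-i])}=X_{r-([a-i]-1)}+V_r^{([a-i])}$ by chaining \Cref{reduction corollary}~(ii), and identify $W$ as the cokernel of $X_{r-i}^{([a-i])}/X_{r-i}^{(i+1)}\hookrightarrow V_r^{([a-i])}/V_r^{(i+1)}$. Cases (i) and (iii) via \Cref{arbitrary singular i>r-i} are fine. (One sign is flipped in your setup: for $[a-i]\le j\le i$ you have $[a-j]\ge[a-i]$, not $\le$; this is what gives $j'=\min\{j,[a-j]\}\ge[a-i]$ and hence $V_r^{(j')}\subseteq V_r^{([a-i])}$.)

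Case (ii), however, has genuine gaps. First, you misapply \Cref{singular i>r-i}: that proposition only computes $X_{r-i}^{(j)}/X_{r-i}^{(j+1)}$ for $j\in\{i,[a-i]\}$, not for all $l$ with $[a-i]\le l\le i$. For intermediate $l$ you must first invoke \Cref{reduction} to rewrite $X_{r-i}^{(l)}/X_{r-i}^{(l+1)}=X_{r-l'}^{(l)}/X_{r-l'}^{(l+1)}$ with $l'=\max\{l,[a-l]\}$, and only then apply \Cref{singular i>r-i} (or \Cref{singular i= [a-i]}) with the $i$ there replaced by $l'$. The boundary irreducibles that emerge are therefore indexed by $l'$, not by the original $i$. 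Second, and relatedly, your claim ``$i\equiv r_0\bmod(p-1)$ forces $[a-2i]=[a-2r_0]$'' is simply false: $i$ is a fixed index in $\{1,\ldots,p-2\}$ and $r_0$ is the residue of $r$ modulo $p$; there is no relation between them. This is exactly why the theorem produces $V_{[a-2r_0]}\otimes D^{r_0}$ and $V_{p-1-[2r_0+2-a]}\otimes D^{r_0+1}$ rather than anything involving $i$.

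The paper locates the threshold not at $r_0$ but at the level $[a-r_0]$ (subcases (a),(b)) or $[a-r_0]-1$ (subcase (c)). For instance in (a), with $[a-r_0]\le r_0$: one checks $r_0\in\mathcal{I}(a,[a-r_0]+1)$ so \Cref{arbitrary singular i>r-i}~(ii) gives $X_{r-i}^{([a-r_0]+1)}/X_{r-i}^{(i+1)}=0$; at level $[a-r_0]$ one uses \Cref{reduction} and then \Cref{singular i>r-i} \emph{with $i$ there equal to $r_0$} (case ``$r_0=[a-j]$, $j\le[a-j]$'') to get $X_{r-i}^{([a-r_0])}/X_{r-i}^{([a-r_0]+1)}\cong V_{p-1-[a-2r_0]}\otimes D^{a-r_0}$; and for levels below $[a-r_0]$, $r_0\notin\mathcal{J}(a,[a-r_0]-1)$ gives fullness via \Cref{arbitrary singular i>r-i}~(i). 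Gluing through \eqref{Q(i) exact sequence 2} then yields the stated extension, with the quotient $V_{[a-2r_0]}\otimes D^{r_0}$ coming from \Cref{Breuil map} as the cokernel of that sliver inside $V_r^{([a-r_0])}/V_r^{([a-r_0]+1)}$. Subcase (c) is analogous with $l=[a-(r_0+1)]$ playing the boundary role.
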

\begin{proof}
          For $[a-i] \leq j \leq i$, we have $[a-i] \leq  [a-j] \leq i$.
          Thus,  $j':= \min \lbrace [a-j], j \rbrace \geq [a-i]$,
          for all $[a-i] \leq j \leq i$.  By \Cref{reduction corollary} (ii),
          we see that $X_{r-j}  \subseteq X_{r-(j-1)}+ V_{r}^{(j')}
          \subseteq X_{r-(j-1)}+ V_{r}^{([a-i])}$, for all
           $[a-i] \leq j \leq i$.
        Hence
          \[
              X_{r-i} \subseteq X_{r-(i-1)}+ V_{r}^{([a-i])}
               \subseteq  \cdots
               \subseteq X_{r-[a-i]}+ V_{r}^{([a-i])}
                \subseteq X_{r-([a-i]-1)}+ V_{r}^{([a-i])}.
          \]
          Therefore, 
               $X_{r-i}+V_{r}^{([a-i])} = X_{r-([a-i]-1)}+V_{r}^{([a-i])}$,
               by \Cref{first row filtration},   and so
               \[ 
                     \frac{V_{r}}{X_{r-i}+V_{r}^{([a-i])}}
                     =  \frac{V_{r}}{X_{r-([a-i]-1)}+V_{r}^{([a-i])}} =Q([a-i]-1).
               \]
               Taking $j= [a-i]$ in diagram \eqref{commutative diagram} we see that
               \[
                   0  \rightarrow W  \rightarrow Q(i) \rightarrow Q([a-i]-1) \rightarrow 0,
               \]
               where $W$ is the quotient of $V_{r}^{([a-i])}/ V_{r-i}^{(i+1)}$
               by $X_{r-i}^{([a-i])}/X_{r-i}^{(i+1)}$. To determine $W$
               explicitly we consider the cases described in the theorem.
               We first prove (i) and (iii).
        \begin{enumerate}
                \item[(i)]  If $r_{0} \not \in \mathcal{J}(a,i)$, then  by 
                 \Cref{arbitrary singular i>r-i} (i), we have
                 $X_{r-i}^{([a-i])}/X_{r-i}^{(i+1)} =  V_{r}^{([a-i])}/V_{r}^{(i+1)}$.  
                 Thus $W =(0)$. 
              \item[(iii)] If $r_{0} \in \mathcal{I}(a,[a-i])$ and 
               $ r \not \equiv [a-i]+i $ mod $p$, then by
                \Cref{arbitrary singular i>r-i} (ii), 
               we have 
              $X_{r-i}^{([a-i])}/X_{r-i}^{(i+1)} =(0)$. Thus 
              $W =V_{r}^{([a-i])}/V_{r}^{(i+1)}$.
        \end{enumerate} 
        We now prove (ii). So  we may  assume $r_{0} \in 
          \mathcal{J}(a,i) \smallsetminus \mathcal{I}(a,[a-i])$.
        By   \eqref{interval J for i > a-i} and
         \eqref{interval [a-i], for i> [a-i]},  we have
              \begin{align*}
                     \mathcal{J}(a,i) \smallsetminus \mathcal{I}(a,[a-i])=
                           \lbrace [a-i]-1, [a-i], \ldots, i-1,i \rbrace. 
              \end{align*}
              So the congruence class
             of $[a-i]+i$ mod $p$  has a representative in $\mathcal{I}(a,[a-i])$ but not  
              in $\mathcal{J}(a,i)$.
              Clearly $[a-i]-1 \leq r_{0}    \leq i$. 
              One checks that $[a-i]-1 \leq [a-r_{0}]-1 \leq i$ as well. 
              We now prove (ii) according
              to how the numbers $r_0$ and $[a-r_0]-1$ compare to each other.
        \begin{enumerate}
              \item[(a)]  
               If  $ [a-r_{0}]-1 < r_{0}$, then 
              $ [a-i]+1 \leq r_{0} \leq i$
              (because $[a-i] < i= [a-[a-i]]$ and $[a-i]-1 < i+1 = [a-([a-i]-1)]$).
              So $[a-i] \leq [a-r_{0}]+1 \leq i$.
              Since  $2 \leq [a-i]+1 \leq r_{0} \leq i $ and 
              $r_{0}-1 \equiv [a-([a-r_{0}]+1)]$ mod $(p-1)$,
              we have $r_{0} = [a-([a-r_{0}]+1)]+1$. Thus,
              by   \eqref{interval I},
              we see that   
             $r_{0} \in \mathcal{I}(a,[a-r_{0}]+1)$.
              Thus by \Cref{arbitrary singular i>r-i} (ii), we have
              $X_{r-i}^{([a-r_{0}]+1)}/X_{r-i}^{(i+1)} =(0)$,
              if $[a-r_{0}]+1 \neq r_{0} -1$.  If $[a-r_{0}]+1 = r_{0}-1$,
              then  $r_{0} =[a-r_{0}]+2 \geq 3$ and 
              $[a-r_{0}]+2 > r_{0}-2 = [a-([a-r_{0}]+2)]$.
              So by \Cref{arbitrary singular i>r-i} (ii),
              we have
              $X_{r-i}^{([a-r_{0}]+2)}/X_{r-i}^{(i+1)} =(0)$.
             Further, by \eqref{interval J for i > a-i} and  \Cref{singular i= [a-i]}, we have
             $X_{r-[a-r_{0}]-1}^{([a-r_{0}]+1)}/X_{r-[a-r_{0}]-1}^{([a-r_{0}]+2)} =(0)$.
             By the second part of \Cref{reduction}, we have
             $X_{r-i}^{([a-r_{0}]+1)}/ X_{r-i}^{([a-r_{0}]+2)} =
              X_{r-[a-r_{0}]-1}^{([a-r_{0}]+1)}/X_{r-[a-r_{0}]-1}^{([a-r_{0}]+2)} =(0)$.
             So in either case, we have  
              $X_{r-i}^{([a-r_{0}]+1)}/X_{r-i}^{(i+1)} =(0)$.
                 Since $[a-r_{0}] \leq r_{0} =[a-[a-r_{0}]]$, by   \Cref{singular i= [a-i]}
                 and   \Cref{singular i>r-i},
                 we have $X_{r-r_{0}}^{([a-r_{0}])}/X_{r-r_{0}}^{([a-r_{0}]+1)}
                 \cong V_{p-1-[a-2r_{0}]} \otimes D^{a-r_{0}}$. 
                 Since $[a-r_{0}] \leq r_{0} \leq i$, by the second part of \Cref{reduction}
                 (with $j$ there equal to $[a-r_{0}]$),  we have
                 $X_{r-i}^{([a-r_{0}])}/X_{r-i}^{([a-r_{0}]+1)} = 
                 X_{r-r_{0}}^{([a-r_{0}])}/X_{r-r_{0}}^{([a-r_{0}]+1)}
                 \cong V_{p-1-[a-2r_{0}]} \otimes D^{a-r_{0}}$. Thus by
                 \Cref{Breuil map} (with $m=[a-r_{0}]$) and the exact sequences
                 \eqref{commutative diagram arbitrary} and 
                 \eqref{Q(i) exact sequence 2} (with $j=[a-r_{0}]$,
                 $n=[a-r_{0}]+1$, $m=i+1$), we see that
                 \[
                    0 \rightarrow V_{r}^{([a-r_{0}]+1)}/V_{r}^{(i+1)} \rightarrow 
                    \frac{V_{r}^{([a-r_{0}])}}{X_{r-i}^{([a-r_{0}])} + V_{r}^{(i+1)}}
                    \rightarrow V_{[a-2r_{0}]} \otimes D^{r_{0}} \rightarrow 0.
                 \]
                 If $r_{0} =i$, then the middle term is $W$ and we are done.
                 If $r_{0}< i$, then  $[a-r_{0}] \geq [a-i]+1$. 
                 Thus $[a-r_{0}]-1 = [a-r_{0}-1]$  so
                 by \eqref{interval J for [a-i]} (with $i$ there equal to $r_{0}+1$), we have 
                 $r_{0} \not \in \mathcal{J}(a,[a-r_{0}]-1)$, whence 
                  by \Cref{arbitrary singular i>r-i} (i) we have
                  $X_{r-i}^{([a-i])}/ X_{r-i}^{([a-r_{0}])} = 
              V_{r}^{([a-i])}/ V_{r}^{([a-r_{0}])} $. Thus, by
                 the exact sequences
                 \eqref{commutative diagram arbitrary}   and
                 \eqref{Q(i) exact sequence 2} (with $j=[a-i]$,
                 $n =[a-r_{0}]$, $m=i+1$),
                 we have 
              $W \cong \frac{V_{r}^{([a-r_{0}])}}{X_{r-i}^{([a-r_{0}])} + V_{r}^{(i+1)}}$,
              whence (a) follows from the above exact sequence.                
                 
               \item[(b)] 
                 Assume $[a-r_{0}]=r_{0}+1$.
               Since $[a-i] < i < i+1$, we have $r_{0} \neq i$, i.e.,
               $[a-i]-1 \leq r_{0}<i$. So $[a-r_{0}] >[a-i]$.
                If $r_{0} =0$, then $a=1$ and $[a-i] \leq 1$. So $[a-i]=[1-i]=1$
                 and $i =0$, $p-1$ which are not possible.
                Therefore, $[a-r_{0}] > r_{0} \geq 1$.
               Note  that
               $r_{0}= [a-r_{0} -1] = [a-r_{0}] -1 < [a-r_{0}] =   r_{0} +1
               =[a-[a-r_{0}-1]] $. 
               Thus, by \eqref{interval J for [a-i]},  we
               see that $r_{0} \not \in \mathcal{J}(a,[a-r_{0}-1])$.
               Also, $ [a-i] \leq [a-r_{0}]-1  = r_{0} < i$
               whence by \Cref{arbitrary singular i>r-i} (i),
               we have  $X_{r-i}^{([a-i])} / X_{r-i}^{([a-r_{0}])} = 
                V_{r}^{([a-i])}/V_{r}^{([a-r_{0}])}$.
               Also, by \eqref{interval I for [a-i]} (with $i$ there equal to 
               $r_{0}$), we see that $r_{0} \in \mathcal{I}(a,[a- r_{0} ]) $
               and
               $ r \not \equiv [a-i]+i$ mod $p$, whence by
                \Cref{arbitrary singular i>r-i} (ii), we have
               $X_{r-i}^{([a-r_{0}])}/X_{r-i}^{(i+1)} = (0)$.
               Thus, by  the exact sequences
                 \eqref{commutative diagram arbitrary}   and
                 \eqref{Q(i) exact sequence 2} 
                 (with $j=[a-i]$, $n=[a-r_{0}]$, $m=i+1$) we see that  
                $W \cong V_{r}^{([a-r_{0}])}/V_{r}^{(i+1)}$. Since 
                $[a-r_{0}] = r_{0}+1$, we are done.
                
                \item[(c)] If $ [a-r_{0}]> r_{0}+1   $, then $r_{0}
                \neq i, i-1$ (because $[a-i] < i+1$ and $[a-(i-1)] = [a-i]+1 
                \leq (i-1)+1 $). So $[a-i]-1 \leq r_{0} < i-1$, hence
                one checks that
                $[a-i] < [a-(r_{0}+1)] \leq i $. Set $l =[a- (r_{0}+1)]$,
                so $ [a-i]<l \leq i$.  Since 
                $[a-l] =r_{0}+1 < i < p-1$, we have $[a-(l-1)] = [a-l]+1$ 
                and so $r_{0} = [a-(l-1)] -2$. By \eqref{interval J}, we see that 
               $r_{0} \not \in \mathcal{J}(a,l-1)$.
               As $[a-i]\leq  l-1 \leq i-1$, it follows from
               \Cref{arbitrary singular i>r-i} (i), that
              $X_{r-i}^{([a-i])} / X_{r-i}^{(l)}  =V_{r}^{([a-i])}/V_{r}^{(l)}$.
              Since $[a-l] =r_{0}+1 \leq l$,  by   \Cref{singular i= [a-i]} 
              and \Cref{singular i>r-i}, we have  $X_{r-l}^{(l)}/X_{r-l}^{(l+1)} = 
               V_{[a-2l]} \otimes D^{l}$, as $r_{0}=[a-l]-1$. 
             Since $[a-l] \leq l \leq i$, by the first part of \Cref{reduction}, we have 
            $X_{r-i}^{(l)}/X_{r-i}^{(l+1)} = X_{r-l}^{(l)}/X_{r-l}^{(l+1)}$. 
            Therefore,   by the exact sequences 
            \eqref{commutative diagram arbitrary}   and
            \eqref{Q(i) exact sequence 2} (with $j=[a-i]$,
            $n= l$ and $m=l+1$) and the exact sequence
            \eqref{exact sequence Vr} (with $m=l$), we see that
                   $$
                          \frac{V_{r}^{([a-i])}}{X_{r-i}^{([a-i])} + V_{r}^{(l+1)}}
                          \cong V_{p-1-[a-2l]} \otimes D^{a-l}.
                  $$   
                  If $l=i$, then we are done. If $l< i$, then
                  $[a-i] \leq l +1 =[a-r_{0}] \leq i$.
                  Noting $r_{0}< [a-r_{0}]$, by \eqref{interval I for [a-i]}, 
                  we see that
                  $r_{0} \in \mathcal{I}(a,[a-r_{0}])$. Thus, by
                 \Cref{arbitrary singular i>r-i} (ii), we have
                   $X_{r-i}^{(l+1)}/ X_{r-i}^{(i+1)} = (0)$.
                   Thus, by  the exact sequences
                 \eqref{commutative diagram arbitrary}   and
                 \eqref{Q(i) exact sequence 2} (with $j=[a-i]$, 
                 $n=l+1$ and $m=i+1$),
                 and above we see that  
                 \[
                      0 \rightarrow V_{r}^{(l+1)}/V_{r}^{(i+1)}  \rightarrow W
                      \rightarrow V_{p-1-[a-2l]} \otimes D^{a-l} \rightarrow 0.
                 \]
                 Since $l =[a-r_{0}] -1$, this proves part (c). \qedhere
        \end{enumerate}       
\end{proof}

\subsection{The case \texorpdfstring{$ \boldsymbol{ i = a, ~p-1}$}{}}
  \label{Section i = a or p - 1}
Let $1 \leq a \leq p-1$ be such that $r \equiv a$ mod $(p-1)$.
In this subsection, we determine the quotients $Q(a)$ and $Q(p-1)$. Recall that,
for $1 \leq i \leq p$, we have defined
\[
    P(i) = \frac{V_{r}}{X_{r-(i-1)}+V_{r}^{(i+1)}}.
\]
          For $1 \leq i \leq p-1$, we have  the following exact sequence 
          \[
              0 \rightarrow \frac{X_{r-i}+V_{r}^{(i+1)}}{X_{r-(i-1)}+V_{r}^{(i+1)}}
              \rightarrow P(i) \rightarrow Q(i) \rightarrow 0,
          \]
          where the first map is the inclusion and the last map is the quotient
          map. Since $X_{r-(i-1)} \subseteq X_{r-i}$, one checks that
          $X_{r-i} \cap (X_{r-(i-1)} + V_{r}^{(i+1)}) =  
          X_{r-(i-1)} + X_{r-i}^{(i+1)}$.  Thus, by the second isomorphism theorem,
           we have an exact sequence
          \begin{align}\label{Q and P exact sequence}
                0 \rightarrow 
                \frac{X_{r-i}}{X_{r-(i-1)} + X_{r-i}^{(i+1)}} 
                 \rightarrow P(i) \rightarrow Q(i) \rightarrow 0.
          \end{align}
Thus to  determine $Q(i)$ in terms of $P(i)$ it is enough to determine
the leftmost module in the above exact sequence. Note that 
 we have an ascending chain 
 \begin{align}\label{ascending chain}
    X_{r-(i-1)} + X_{r-i}^{(i+1)} \subseteq X_{r-(i-1)} + X_{r-i}^{(i)}
    \subseteq \cdots \subseteq X_{r-(i-1)} + X_{r-i}^{(1)}
    \subseteq X_{r-i}.
 \end{align}
 Recall 
 $Y_{i,j} =( X_{r-i}^{(j)}/X_{r-i}^{(j+1)})/( X_{r-(i-1)}^{(j)}/X_{r-(i-1)}^{(j+1)})$.
By \eqref{Y i,j}, to determine the leftmost module
in \eqref{Q and P exact sequence}, it is enough to determine 
the successive quotients $Y_{i,j}$, for all $0 \leq j \leq i$. Note that
by \Cref{Structure X(1)} and  the exact sequence
 \eqref{exact sequence Vr} for $m=0$, 
 we have the successive quotient 
 $  Y_{a,0} \cong V_{p-1-a} \otimes D^{a} $.
\subsubsection{The case \texorpdfstring{ $\boldsymbol{i=a .} $}{}}
\label{section i = a}
We start with the case $i=a$, where $1 \leq a \leq p-1$.
The first result asserts that most $Y_{i,j}$
vanish.
\begin{lemma}\label{i=a smaller quotients}
       Let $p \leq r \equiv a~\mathrm{mod}~(p-1)$ with $1 \leq a \leq p-1$,
       and let
       $r \equiv r_{0} ~\mathrm{mod}~p$ with $0 \leq r_{0} \leq p-1$.
       \begin{enumerate}[label= \emph{(\roman*)}]
               \item   If $a\leq p-1$, then $X_{r-a}^{(j)}/X_{r-a}^{(j+1)} = 
                X_{r-(a-1)}^{(j)}/X_{r-(a-1)}^{(j+1)}$, for all $1 \leq j < p-1$.
                \item If $a=p-1$ and $r_{0} \neq p-2 $,
                then $X_{r-a}^{(j)}/X_{r-a}^{(j+1)} = 
                X_{r-(a-1)}^{(j)}/X_{r-(a-1)}^{(j+1)}$, for all  $1 \leq j \leq p-1$.
       \end{enumerate}
\end{lemma}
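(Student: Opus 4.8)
\textbf{Proof proposal for Lemma~\ref{i=a smaller quotients}.}

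The plan is to deduce both parts from the reduction machinery already in place, in particular \Cref{reduction}, \Cref{reduction corollary}, \Cref{singular quotient X_{r}} and \Cref{singular quotient i < [a-i]}. First observe that since $r \equiv a \mod (p-1)$, we have $[a-a] = p-1$ and $[a-(a-1)] = 1$. So in part (i) we are comparing $X_{r-a}$ with $X_{r-(a-1)}$, and the relevant ``special'' indices attached to $a$ in \Cref{reduction} are $\{a, [a-a]\} = \{a, p-1\}$, while those attached to $a-1$ are $\{a-1, [a-(a-1)]\} = \{a-1, 1\}$.

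For part (i), fix $1 \leq j < p-1$. The key point is that $X_{r-a}^{(j)}/X_{r-a}^{(j+1)} = X_{r-(a-1)}^{(j)}/X_{r-(a-1)}^{(j+1)}$ unless $j$ is one of the special indices forcing a jump, namely $j \in \{a, p-1\}$ for the module $X_{r-a}$ (and similarly $j \in \{a-1, 1\}$ for $X_{r-(a-1)}$). Since $j < p-1$ by hypothesis, the value $j = p-1$ is excluded; the only remaining index to worry about is $j = a$. So I would argue: if $j \neq a$ (and $j \neq a-1$, $j \neq 1$), then applying \Cref{reduction corollary}~(i) (which gives $X_{r-i}^{(j)}/X_{r-i}^{(j+1)} = X_{r-(i-1)}^{(j)}/X_{r-(i-1)}^{(j+1)}$ whenever $j \neq i, [a-i]$) twice — once with $i = a$ and once with $i = a-1$ — shows both successive quotients in question equal $X_{r}^{(j)}/X_{r}^{(j+1)}$ (tracing back through \Cref{reduction}), hence are equal to each other. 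The subtlety is in handling $j \in \{1, a-1, a\}$, where \Cref{reduction corollary}~(i) does not directly apply. For $j = a$: by the third case of \Cref{reduction} applied to $i = a-1$ (noting $a-1 < a = j$ and, after a case check, $j \leq [a-j]$ or $j > [a-j]$ appropriately), one has $X_{r-(a-1)}^{(a)}/X_{r-(a-1)}^{(a+1)} = X_r^{(a)}/X_r^{(a+1)}$; and similarly $X_{r-a}^{(a)}/X_{r-a}^{(a+1)} = X_{r-a}^{(a)}/X_{r-a}^{(a+1)}$ by the first case of \Cref{reduction} ($j = a = i$ is the trivial ``$i = j$'' identification), but this needs the claim in the proof of \Cref{reduction} which says $X_{r-i}^{(j)}/X_{r-i}^{(j+1)} = X_{r-(i-1)}^{(j)}/X_{r-(i-1)}^{(j+1)}$ precisely when $j \neq i, [a-i]$ — so for $j = a = i$ I cannot use the claim and must instead argue directly that the jump at $j=a$ occurs already at the level of $X_{r-(a-1)}$, i.e. $X_{r-a}^{(a)}/X_{r-a}^{(a+1)} = X_{r-(a-1)}^{(a)}/X_{r-(a-1)}^{(a+1)}$. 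This last equality I would get from \Cref{reduction corollary}~(ii): $X_{r-(a-1)} = X_{r-a} + \ldots$, no — more precisely, from the fact that $X_{r-a} = X_{r-(a-1)} + X_{r-a}^{(j')}$ with $j' = \min\{a, [a-a]\} = \min\{a, p-1\} = a$, so $X_{r-a}^{(a)}$ already contains everything new, combined with the observation that the quotient $X_{r-a}/X_{r-(a-1)}$ is a quotient of $\ind_B^\Gamma(\chi_1^{r-a}\chi_2^a)$ (\Cref{induced and successive}) whose JH factors lie in degrees forcing any jump to happen at $j = a$ and $j = p-1$ only. Similarly for $j = 1$ and $j = a-1$, the potential jumps for $X_{r-(a-1)}$ are at these indices, and I must check they are harmless — again using \Cref{Common JH factor}: a jump in $Y_{a,j}$ would force $\ind_B^\Gamma(\chi_1^j\chi_2^{r-j})$ and $\ind_B^\Gamma(\chi_1^{r-a}\chi_2^a)$ to share a JH factor, which by \Cref{Common JH factor} requires $j \equiv a \mod(p-1)$ or $j \equiv r-a \equiv -a \mod(p-1)$, i.e. $j = a$ or $j = [p-1-a]$; but also this jump must be ``visible'' in $X_{r-a}$ and not cancelled. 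I expect a careful bookkeeping of these two or three boundary values of $j$ to be the main obstacle.

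For part (ii), set $a = p-1$, so the excluded index $j = p-1$ from part (i) is now the one value we must treat, under the extra hypothesis $r_0 \neq p-2$. Here $[a - (a-1)] = [p-1 - (p-2)] = 1$ and $[a-a] = p-1$. The point is that $Y_{p-1, p-1} = (X_{r-(p-1)}^{(p-1)}/X_{r-(p-1)}^{(p)})/(X_{r-(p-2)}^{(p-1)}/X_{r-(p-2)}^{(p)})$. By \Cref{reduction}, $X_{r-(p-2)}^{(p-1)}/X_{r-(p-2)}^{(p)} = X_{r-(p-1-(p-1))}^{(p-1)}/\cdots$ — wait, here $j = p-1 = a$, so for the module $X_{r-(p-2)}$ we have $i = p-2 < j = p-1 = a$ and $[a-j] = [a - (p-1)] = [0] = p-1$; so we are in the case $i < j \leq [a-j]$ (since $p-1 \leq p-1$), giving $X_{r-(p-2)}^{(p-1)}/X_{r-(p-2)}^{(p)} = X_r^{(p-1)}/X_r^{(p)}$. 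By \Cref{singular quotient X_{r}} (with $m = p-1 = a$), this is $V_{p-1-a}\otimes D^a = V_0 \otimes D^{p-1}$ if $r_0 \in \{a, a+1, \ldots, p-1\} = \{p-1\}$, i.e. $r_0 = p-1$, and $(0)$ if $r_0 \in \{0, 1, \ldots, a-1\} = \{0, \ldots, p-2\}$. On the other hand $X_{r-(p-1)}^{(p-1)}/X_{r-(p-1)}^{(p)}$: here $i = j = p-1 = a$, so by \Cref{singular quotient i < [a-i]}... no, $i = a$ is exactly the case excluded from that proposition. Instead I would compute $X_{r-a}^{(a)}/X_{r-a}^{(a+1)}$ directly or via $X_{r-a} = X_{r-(a-1)} + X_{r-a}^{(\min\{a,p-1\})} = X_{r-(a-1)} + X_{r-a}^{(a)}$ from \Cref{reduction corollary}~(ii), and via $G_{r}$-type arguments ($G_{i,r}$ from \eqref{G i,r definition}–\eqref{G i,r}) to pin down when the relevant socle term embeds, using \Cref{socle term singular} or a direct analogue. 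The hypothesis $r_0 \neq p-2$ should enter exactly here: when $r_0 = p-2$ the binomial coefficient $\binom{r-a}{a} = \binom{r-(p-1)}{p-1}$ would misbehave (Lucas), creating an extra JH factor, so excluding $r_0 = p-2$ ensures $Y_{p-1,p-1} = (0)$, matching the value of $X_{r-(p-2)}^{(p-1)}/X_{r-(p-2)}^{(p)}$ up to the $r_0 = p-1$ subcase which must also be checked to be consistent (there both sides should equal $V_0 \otimes D^{p-1}$, forcing $Y_{p-1,p-1} = (0)$). In all cases for $1 \leq j \leq p-2$, part (i) already gives equality, so it remains only to verify $j = p-1$, and the hypothesis $r_0 \neq p-2$ is precisely what makes $Y_{p-1,p-1}$ vanish. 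The main obstacle is the careful case analysis at $j = p-1$ with $r_0 \in \{p-1\}$ versus $r_0 \notin \{p-2, p-1\}$, using Lucas' theorem to control the relevant binomial coefficients and \Cref{singular quotient X_{r}} plus the $G_r$-polynomial computations to identify the modules exactly.
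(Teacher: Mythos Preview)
Your approach via \Cref{reduction corollary}~(i) (more precisely, the claim inside the proof of \Cref{reduction}, since \Cref{reduction corollary}~(i) carries the hypothesis $i\neq a$) does give part~(i) cleanly for all $j\neq a$ with $1\leq j<p-1$. The gap is the single case $j=a$ (when $a<p-1$), which you correctly flag as the subtle one but do not actually resolve. Your attempted arguments there do not close: \Cref{reduction corollary}~(ii) only gives $X_{r-a}=X_{r-(a-1)}+X_{r-a}^{(a)}$, so $Y_{a,a}\cong X_{r-a}/(X_{r-(a-1)}+X_{r-a}^{(a+1)})$ could a priori absorb the $V_a$ piece of $X_{r-a}/X_{r-(a-1)}$; nothing in the reduction machinery alone forces this to vanish. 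Your Common-JH-factor argument likewise only reproves the claim in \Cref{reduction} (and contains a slip: $[r-a]=[0]=p-1$, not $[p-1-a]$), so it again leaves $j=a$ untouched.

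The paper's proof is genuinely different and sidesteps this case analysis entirely. The key input is the explicit generator $G_{a,r}$ of $W_{a,r}$ (the image of $V_a\hookrightarrow\ind_B^\Gamma(\chi_2^a)\twoheadrightarrow X_{r-a}/X_{r-(a-1)}$). By \eqref{G i,r} one has $G_{a,r}\in X_{r-(a-1)}+X_{r-a}^{(p-1)}$, so $W_{a,r}$ already sits inside $(X_{r-(a-1)}+X_{r-a}^{(p-1)})/X_{r-(a-1)}$. Since the cosocle $V_{p-1-a}\otimes D^a$ is accounted for by $Y_{a,0}$, this forces $X_{r-(a-1)}+X_{r-a}^{(j)}=X_{r-(a-1)}+X_{r-a}^{(j+1)}$ for every $1\leq j<p-1$ simultaneously, and \eqref{Y i,j} finishes. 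For part~(ii), the hypothesis $r_0\neq p-2$ gives $\binom{r-a}{p-1}\equiv 0\bmod p$, so \Cref{quotient image} pushes $G_{r-a}$ into $V_{r-a}^{(p)}$, hence $G_{a,r}\in X_{r-(a-1)}+X_{r-a}^{(p)}$, and the same argument runs with $n=p$. Your part~(ii) sketch, by contrast, tries to compute $X_{r-(p-1)}^{(p-1)}/X_{r-(p-1)}^{(p)}$ directly, which is circular: that computation (\Cref{singular i=a}) uses the present lemma.
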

\begin{proof}
       Recall  that
        $G_{a,r}(X,Y)$ generates 
       $W_{a,r}$, the image of $V_{a} \hookrightarrow 
       \ind_{B}^{\Gamma}(\chi_{2}^{a}) \overset{\psi_{i}}{\twoheadrightarrow}
       X_{r-a}/X_{r-(a-1)}$ as a $\Gamma$-module. Assume that
       $G_{a,r}(X,Y) \in X_{r-(a-1)}+X_{r-a}^{(n)}$, for some $n \geq 1$
       (to be determined later). Thus,
       $W_{a,r} \subseteq (X_{r-(a-1)}+X_{r-a}^{(n)}) /X_{r-(a-1)}$.
       Note that we have the following diagram
       \[
             \begin{tikzcd}
                 0 \arrow[r, rightarrow] & V_{a}  \arrow[r, rightarrow]
                  & \ind_{B}^{\Gamma} (\chi_{2}^{a}) \arrow[r, rightarrow]
                   \arrow[d,  twoheadrightarrow, "\psi_{i}"] & V_{p-1-a} \otimes D^{a} 
                   \arrow[r, rightarrow] & 0. \\
                  &  & X_{r-a}/X_{r-(a-1)}  & &
             \end{tikzcd}    
       \]
       Therefore,  $ V_{p-1-a} \otimes D^{a} \twoheadrightarrow 
        X_{r-a}/(X_{r-(a-1)} +X_{r-a}^{(n)})$.
       But, by \eqref{Y i,j}, we have
       \[
             V_{p-1-a} \otimes D^{a} \cong Y_{a,0} 
              \cong  \frac{X_{r-a}}{X_{r-(a-1)}+X_{r-a}^{(1)}}.
        \]
        Thus, in the ascending chain of modules
         \[ 
             X_{r-(a-1)}+X_{r-a}^{(n)} \subseteq 
             X_{r-(a-1)}+X_{r-a}^{(n-1)} \subseteq 
             \cdots \subseteq X_{r-(a-1)}+X_{r-a}^{(1)}
             \subseteq X_{r-a},
         \]    
         we have $X_{r-(a-1)}+X_{r-a}^{(j)} 
         =  X_{r-(a-1)}+X_{r-a}^{(j+1)}$, for all $1 \leq  j  < n$.
         Thus, by \eqref{Y i,j}, we see that 
         $X_{r-a}^{(j)}/X_{r-a}^{(j+1)} = 
         X_{r-(a-1)}^{(j)}/X_{r-(a-1)}^{(j+1)}$,
         for all $1 \leq j < n$.
         
         By \eqref{G i,r} we have  $G_{a,r}(X,Y) 
         \in X_{r-(a-1)}+X_{r-a}^{(p-1)} $. So if $a \leq p-1$,
         we may take $n=p-1$ above and we obtain (i). 
         If $a=p-1$ and $r_{0} \neq a-1$,
         then we can do better. Indeed since $\binom{r-a}{p-1} \equiv 0 \mod p$, by \Cref{quotient image}, we have 
         $G_{r-a}(X,Y) \in V_{r-a}^{(p)} \cap X_{r-a,\,r-a}$. 
         So $X^{a} G_{r-a}(X,Y) \in X_{r-a}^{(p)}$, by 
         \Cref{surjection1}, whence $G_{a,r}(X,Y) =
         -X^{r}+ X^{a} G_{r-a}(X,Y) \in X_{r-(a-1)}+X_{r-a}^{(p)}$,
         by \eqref{G i,r}. Taking $n=p$ above we obtain (ii).
\end{proof}

\begin{lemma}\label{i=a, p-1 quotients}
     Let $(p-1)(p+1)+p< r \equiv a ~\mathrm{mod}~ (p-1)$. Then  $V_{a}
      \hookrightarrow X_{r-a}^{(p-1)}/X_{r-a}^{(p)}$ 
     if and only if  $r \equiv a-1 ~\mathrm{mod}~ p$.
\end{lemma}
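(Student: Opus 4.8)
We want to show that, for $r \equiv a \pmod{p-1}$ with $a = p-1$ and $r$ large, the socle inclusion $V_{a} \hookrightarrow X_{r-a}^{(p-1)}/X_{r-a}^{(p)}$ holds if and only if $r \equiv a-1 = p-2 \pmod p$. The natural setup is the exact sequence \eqref{exact sequence Vr} with $m = p-1$: since $a = p-1 \equiv 2(p-1) \equiv 2m \pmod{p-1}$, the sequence splits and $V_{r}^{(p-1)}/V_{r}^{(p)} \cong V_{[a-2m]} \otimes D^{m} \oplus V_{p-1-[a-2m]}\otimes D^{a-m}$. Here $[a-2m] = [a - 2(p-1)] = [a] = a = p-1$ (since $a = p-1$), so $[a-2m] = p-1$ and $a - m = (p-1)-(p-1) = 0$; thus $V_{r}^{(p-1)}/V_{r}^{(p)} \cong V_{p-1}\otimes D^{p-1} \oplus V_{0}\otimes D^{p-1}$. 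The statement is about whether the summand $V_{0}\otimes D^{p-1} = V_a$ (note $V_{p-1-a} = V_0$, so ``$V_a$'' here really means the one-dimensional piece $V_0 \otimes D^{p-1}$ — I should double-check the indexing convention used, but in the split case the relevant ``$V_a$'' is the $D^{a}$-isotypic line) survives inside the image of $X_{r-a}^{(p-1)}/X_{r-a}^{(p)}$.

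\textbf{First steps.} First I would invoke Lemma~\ref{quotient image} applied to $r - a$ (note $r - a \equiv 0 \pmod{p-1}$, so ``$a$'' there is replaced by $p-1$): the polynomial $G_{r-a}(X,Y) = \sum_{\lambda\in\f}(\lambda X + Y)^{r-a} + X^{r-a}$ (the $\delta_{p-1,p-1}$ term is present) lies in $X_{r-a,\,r-a}^{(p-1)}$, and moreover $G_{r-a}(X,Y) \in V_{r-a}^{(p)}$ if and only if $\binom{r-a}{p-1}\equiv 0 \pmod p$, which by Lucas holds iff $r - a \not\equiv p-1 \pmod p$, i.e. iff $r \not\equiv 2(p-1) \equiv p-2 \pmod p$, i.e. iff $r \not\equiv a - 1 \pmod p$. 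Then $G_{a,r}(X,Y) = X^{a}G_{r-a}(X,Y) - \delta_{[a-a],p-1}X^{r}$; since $[a-a] = [0] = p-1$, we get $G_{a,r}(X,Y) = X^a G_{r-a}(X,Y) - X^r$, and by \eqref{G i,r} this lies in $X_{r-a}^{([a-a])} + X_{r-(a-1)} = X_{r-a}^{(p-1)} + X_{r-(a-1)}$. Recall $W_{a,r}$, the image of $V_{[2a-a]}\otimes D^{a-a} = V_{p-1}\otimes D^{p-1}$ under $\psi_a$, is generated by $G_{a,r}(X,Y) \bmod X_{r-(a-1)}$. Wait — I must be careful: $[2i - a]$ with $i = a$ is $[a] = a = p-1$ and $a - i = 0$, so $W_{a,r}$ is the image of $V_{p-1}\otimes D^{p-1}$, not of $V_0$. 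So the two one-dimensional factors of the principal series $\ind_B^\Gamma(\chi_1^{r-a}\chi_2^{a})$ are the socle $V_{[a-a]}\otimes D^{r-a}$... I'll need to match conventions carefully against Lemma~\ref{Structure of induced}: with $i = r-a \equiv p-1$, $j = a = p-1$, the principal series $\ind_B^\Gamma(\chi_1^{p-1}\chi_2^{p-1})$ is $V_0 \oplus V_{p-1}$ up to $D$-twist, split since $i = j$. So ``$V_a$'' in the statement, given $a = p-1$, is $V_{p-1-a}\otimes D^a = V_0 \otimes D^{p-1}$, the one-dimensional summand.

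\textbf{Main argument.} The key point, as in Lemmas~\ref{a=2i 1-dim is JH factor} and~\ref{a=2i 1-dim is not JH}, is a two-directional argument. For the ``if'' direction ($r\equiv a-1\pmod p$): here $\binom{r-a}{p-1}\not\equiv 0 \pmod p$ so $G_{r-a}(X,Y)\notin V_{r-a}^{(p)}$, hence $X^aG_{r-a}(X,Y)\notin V_r^{(p)}$; I would check (via \Cref{breuil map quotient} with $m=p-1$, or directly computing the relevant binomial-coefficient sum) that the image of $X^aG_{r-a}(X,Y) = G_{a,r}(X,Y)+X^r$ in $V_r^{(p-1)}/V_r^{(p)}$ has non-zero component precisely in the $V_0\otimes D^{p-1}$ summand (its $\theta^{p-1}X^{r-(p-1)(p+1)}$-coefficient being governed by $\binom{r-a}{p-1}\not\equiv 0$), giving $V_a = V_0\otimes D^{p-1}\hookrightarrow X_{r-a}^{(p-1)}/X_{r-a}^{(p)}$; one must also note $X^r \in X_r \subseteq X_{r-(a-1)} \subseteq X_{r-a}$ is killed after modding out, and that it lies in $V_r^{(p)}$ precisely when $\binom{r}{p-1}\equiv 0$, which needs a separate Lucas check to ensure it does not interfere — here $X^r \in V_r^{(1)}$ is false in general so one argues at the level of $X_{r-a}^{(p-1)}/X_{r-a}^{(p)}$ using that $G_{a,r}$ already lies in $X_{r-a}^{(p-1)}+X_{r-(a-1)}$ and projects nontrivially. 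For the ``only if'' direction ($r\not\equiv a-1\pmod p$, and $r\not\equiv a\pmod p$ too — though I should treat both residues): then $G_{r-a}(X,Y)\in V_{r-a}^{(p)}$ by Lemma~\ref{quotient image}, so $X^aG_{r-a}(X,Y)\in X_{r-a}^{(p)}$ by Lemma~\ref{surjection1}, hence $G_{a,r}(X,Y) = X^aG_{r-a}(X,Y)-X^r\in X_{r-a}^{(p)}+X_{r-(a-1)}$; since $G_{a,r}$ generates $W_{a,r}$ and $W_{a,r}$ is the image of the $V_{p-1}\otimes D^{p-1}$ summand, the composition $V_{p-1}\otimes D^{p-1}\hookrightarrow\ind\twoheadrightarrow X_{r-a}/(X_{r-(a-1)}+X_{r-a}^{(p)})$ is zero. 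Then, arguing as in Lemma~\ref{a=2i 1-dim is not JH} using \Cref{Common JH factor} (distinctness of JH factors of the principal series) and the identification \eqref{Y i,j} of $Y_{a,p-1}$, one deduces that the only surviving JH factor of $X_{r-a}^{(p-1)}/X_{r-a}^{(p)}$ can be $V_{p-1-a}\otimes D^a = V_0\otimes D^{p-1} = V_a$ (coming from the cosocle of $\ind$), but one must then rule this out too when $r\not\equiv a-1\pmod p$ — this is where I'd need the $\binom{r-a}{p-1}\equiv 0$ input again: $V_a = V_0\otimes D^{p-1}\hookrightarrow X_{r-a}^{(p-1)}/X_{r-a}^{(p)}$ requires a polynomial in $X_{r-a}^{(p-1)}$ projecting nontrivially to the $V_0\otimes D^{p-1}$ summand, and the natural candidate $G_{a,r}$ lands in $V_r^{(p)}$, so one shows (via the generation statement and multiplicity-one in the principal series) that no such polynomial exists.

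\textbf{Expected obstacle.} The main difficulty is the bookkeeping: pinning down which of the two one-dimensional $D$-twists is ``$V_a$'' in the statement, matching the $\psi_a$-image $W_{a,r}$ correctly, and — in the ``only if'' direction — closing the argument that $V_a$ itself does not embed when $r\not\equiv a-1\pmod p$, rather than merely showing the other factor $V_{p-1}\otimes D^{p-1}$ does not. I expect this last point to require combining the vanishing $G_{a,r}\in V_r^{(p)}+X_{r-(a-1)}$ (which kills the $W_{a,r}$-image) with the observation that $X_{r-a}/(X_{r-(a-1)}+X_{r-a}^{(p)})$ is then a quotient of $V_{p-1-a}\otimes D^a = V_0\otimes D^{p-1}$, together with a dimension count forcing $X_{r-a}^{(p-1)}/X_{r-a}^{(p)}$ to be either $0$ or $V_a$ — and then a final explicit argument (or appeal to \Cref{singular quotient X_{r}}/\Cref{reduction} with the relevant residue classes) showing it is $0$. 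The calculations themselves are all of the binomial-sum type already performed in the preceding lemmas, so once the conventions are fixed they are routine.
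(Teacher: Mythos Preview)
Your proposal has two problems, one minor and one fundamental.

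\textbf{Minor gap.} The lemma is stated for all $1 \leq a \leq p-1$, not just $a = p-1$. You have specialized to $a = p-1$ from the outset. The paper dispatches the case $1 \leq a < p-1$ in one line by invoking \Cref{socle term singular} (with $i = a$, $j = [a-a] = p-1$, so that $V_{[2a-a]}\otimes D^{a-a} = V_a$ is the socle, and the criterion $\binom{r-a}{p-1}\not\equiv 0\pmod p$ becomes $r \equiv a-1 \pmod p$ by Lucas). You should do the same and then focus on $a = p-1$.

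\textbf{Fundamental error.} For $a = p-1$, you have misidentified which module ``$V_a$'' is. Here $V_a = V_{p-1}$ is the $p$-dimensional irreducible, \emph{not} the one-dimensional $V_0\otimes D^{p-1}$. In the split decomposition $V_r^{(p-1)}/V_r^{(p)} \cong V_{p-1}\oplus V_0$ (the twists by $D^{p-1}$ and $D^0$ are both trivial), the summand $V_a$ in the statement is the large piece $V_{p-1}$, which is also the socle in the exact sequence \eqref{exact sequence Vr} with $m = p-1$. Your sentence ``So `$V_a$' in the statement, given $a = p-1$, is $V_{p-1-a}\otimes D^a = V_0\otimes D^{p-1}$'' is simply wrong.

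This misidentification reverses the thrust of both halves of your argument. In the ``only if'' direction you correctly observe that $G_{a,r}\in X_{r-a}^{(p)}+X_{r-(a-1)}$ when $r\not\equiv a-1\pmod p$, which (since $W_{a,r}$ is the image of $V_{p-1}$) is exactly the key step toward showing $V_{p-1}\not\hookrightarrow X_{r-a}^{(p-1)}/X_{r-a}^{(p)}$; the paper finishes by deducing $X_{r-(p-1)}^{(p-1)}/X_{r-(p-1)}^{(p)} = X_r^{(p-1)}/X_r^{(p)}$ via \eqref{Y i,j} and \Cref{reduction}, then invoking \Cref{singular quotient X_{r}}. But you then go off in the wrong direction, trying also to exclude $V_0$---which is unnecessary (and would in fact be false when $r\equiv p-1\pmod p$, where $X_r^{(p-1)}/X_r^{(p)} = V_0\otimes D^{p-1}$). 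In the ``if'' direction your plan to detect a nonzero $V_0$-component of $X^aG_{r-a}$ is aimed at the wrong target; the paper instead shows $X_{r-(p-1)}^{(p-1)}/X_{r-(p-1)}^{(p)}\neq 0$ (your element does this), identifies this quotient with $Y_{p-1,p-1}$ (since $X_{r-(p-2)}^{(p-1)}/X_{r-(p-2)}^{(p)}=0$ by \Cref{reduction} and \Cref{singular quotient X_{r}}), and uses the structural fact that $Y_{p-1,p-1}$ is a nonzero subquotient of the image of $V_{p-1}$ in $X_{r-(p-1)}/X_{r-(p-2)}$, hence $\cong V_{p-1}$.

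Once you fix the identification $V_a = V_{p-1}$, your ingredients ($G_{r-a}$, $G_{a,r}$, Lemma~\ref{quotient image}, and the $Y_{i,j}$ machinery) are the right ones and the proof runs parallel to the paper's.
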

\begin{proof}
        The case $1 \leq a < p-1$ follows from \Cref{socle term singular}.
        Assume $a=p-1$. 
        By \Cref{Structure X(1)},
        we have $ X_{r-(p-1)} / X_{r-(p-1)}^{(1)} = V_{r}/V_{r}^{(1)}$
        and   $X_{r-(p-2)}/X_{r-(p-2)}^{(1)} \cong V_{p-1}$. Thus,
        by \Cref{Breuil map} and  the fact $X_{r-(p-2)} \subseteq 
        X_{r-(p-1)}$, we have
        \[
        V_{0} \cong 
           \frac{X_{r-(p-1)}/X_{r-(p-1)}^{(1)} }{X_{r-(p-2)}/X_{r-(p-2)}^{(1)}}
           = Y_{p-1,0} \stackrel{\eqref{Y i,j}}{\cong} 
           \frac{X_{r-(p-2)}+X_{r-(p-1)}}{X_{r-(p-2)}+X_{r-(p-1)}^{(1)}}
           \cong
           \frac{X_{r-(p-1)}}{X_{r-(p-2)}+X_{r-(p-1)}^{(1)}}.
        \]
        Also by \Cref{Structure of induced} and \Cref{induced and successive},
        we have 
        $
           V_{0} \oplus V_{p-1} \cong \ind_{B}^{\Gamma}(1)
           \overset {\psi_{p-1}}{\twoheadrightarrow}
           X_{r-(p-1)}/X_{r-(p-2)}.
        $
        So 
        \begin{align}\label{a=p-1, i=p-1}
               V_{p-1} \twoheadrightarrow \frac{X_{r-(p-2)}+X_{r-(p-1)}^{(1)}}{X_{r-(p-2)}}.
        \end{align}
        
       If $r \equiv p-2$ mod $p$, then by \Cref{quotient image}, 
        we have $G_{r-(p-1)}(X,Y) \in  X_{r-(p-1), \, r-(p-1)}^{(p-1)}$. 
        Thus, by   \Cref{surjection1}, we see that
       $X^{p-1}G_{r-(p-1)}(X,Y) \in X_{r-(p-1)}^{(p-1)}$. 
       Using \eqref{G r expression}, one checks that  the coefficient of 
       $X^{r-(p-1)}Y^{p-1}$ in $X^{p-1}G_{r-(p-1)}(X,Y)$ equals
       $-\binom{r-(p-1)}{p-1} \equiv -\binom{p-1}{p-1} \equiv -1  $ mod $p$,
       by Lucas' theorem. Thus,
       $X^{p-1}G_{r-(p-1)}(X,Y)  \not \in  X_{r-(p-1)}^{(p)}$,
         by \Cref{divisibility1}, whence
          $X_{r-(p-1)}^{(p-1)}/X_{r-(p-1)}^{(p)} \neq (0)$. 
         Since $p-2 < p-1 = [p-1-(p-1)]$, by the third part of \Cref{reduction}
         (with $i=p-2$ and $j= p-1$), we have 
         $X_{r-(p-2)}^{(p-1)}/X_{r-(p-2)}^{(p)}  \cong X_{r}^{(p-1)}/X_{r}^{(p)}$.
         Further, by \Cref{singular quotient X_{r}} and $r \equiv p-2$ mod $p$,
          we see that $X_{r}^{(p-1)}/X_{r}^{(p)} =(0)$.
         Thus, by \eqref{Y i,j}, we have
         \[
                \frac{X_{r-(p-1)}^{(p-1)}+X_{r-(p-2)}}{X_{r-(p-1)}^{(p)}+X_{r-(p-2)}} 
                \cong Y_{p-1,p-1} = \frac{X_{r-(p-1)}^{(p-1)}}{X_{r-(p-1)}^{(p)}} 
                \neq (0).
          \]
          Combining this with \eqref{a=p-1, i=p-1}, we get 
          $X_{r-(p-1)}^{(p-1)}/X_{r-(p-1)}^{(p)} \cong V_{p-1}$.
           
           We now prove the converse. Assume  $r \not \equiv p-2$ mod $p$. 
           Then by   \Cref{quotient image},   we have 
           $X^{p-1}G_{r-(p-1)}(X,Y) \in X_{r-(p-1)}^{(p)}$, whence
           $G_{p-1,r-(p-1)}(X,Y)= X^{p-1}G_{r-(p-1)}(X,Y) -
           X^{r} \in  X_{r-(p-1)}^{(p)}+X_{r-(p-2)}$, by \eqref{G i,r}. 
           Recall $G_{p-1,r-(p-1)}(X,Y)$
          generates $W_{p-1,r}$, the image of $V_{p-1} \hookrightarrow
           \ind_{B}^{\Gamma}(1) \overset{\psi_{p-1}}
           {\twoheadrightarrow} X_{r-(p-1)}/X_{r-(p-2)}$.
           Thus, the image of $V_{p-1}$ in \eqref{a=p-1, i=p-1} 
           lies in $\frac{X_{r-(p-1)}^{(p)}+X_{r-(p-2)}}{ X_{r-(p-2)}}$. Thus 
           $X_{r-(p-1)}^{(p)}+X_{r-(p-2)} = X_{r-(p-1)}^{(p-1)}+X_{r-(p-2)}$, whence
           \[
               \frac{X_{r-(p-1)}^{(p-1)}}{X_{r-(p-1)}^{(p)}} 
               \stackrel{\eqref{Y i,j}}{=}
                \frac{X_{r-(p-2)}^{(p-1)}}{X_{r-(p-2)}^{(p)}}  
                = \frac{X_{r}^{(p-1)}}{X_{r}^{(p)}} . 
           \]
           Since $a=p-1$, by \Cref{singular quotient X_{r}}, we see that
           $V_{p-1} \not \hookrightarrow X_{r}^{(p-1)}/X_{r}^{(p)}$. This completes the 
          proof.
\end{proof}
\begin{proposition}\label{singular i=a}
     Let $ r \equiv a ~ \mathrm{mod}~(p-1)$, with $1 \leq a \leq p-1$
     and let  $r \equiv r_{0} ~\mathrm{mod}~p$ with $0 \leq r_{0} \leq p-1$.
     For $j \in \lbrace a, p-1 \rbrace$ and $r \geq j(p+1)+p$, we have
     \begin{align*}
          \frac{X_{r-a}^{(j)}}{X_{r-a}^{(j+1)}} =
          \begin{cases}
               V_{p-1-a} \otimes D^{a}, & \mathrm{if}~j=a 
               ~\mathrm{and}~r_{0}=a,a+1, \ldots, p-1, \\
               V_{a}, & \mathrm{if}~j=p-1 ~\mathrm{and}~r_{0}=a-1, \\
               (0), & \mathrm{otherwise}.
          \end{cases}
     \end{align*}
\end{proposition}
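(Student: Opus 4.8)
The plan is to prove Proposition~\ref{singular i=a} by handling the two values $j=a$ and $j=p-1$ separately, in each case reducing the computation of $X_{r-a}^{(j)}/X_{r-a}^{(j+1)}$ to quantities already determined earlier in the paper, via the ascending chain \eqref{ascending chain} and the successive-quotient description \eqref{Y i,j}. Throughout I will use $Y_{a,j} \cong (X_{r-a}^{(j)}+X_{r-(a-1)})/(X_{r-a}^{(j+1)}+X_{r-(a-1)})$ and the fact that by Lemma~\ref{i=a smaller quotients} the quotients $X_{r-a}^{(j)}/X_{r-a}^{(j+1)}$ for $1 \le j \le p-2$ (and all $1 \le j \le p-1$ when $a=p-1$, $r_0 \ne p-2$) coincide with $X_{r-(a-1)}^{(j)}/X_{r-(a-1)}^{(j+1)}$, which are in turn governed by Corollary~\ref{reduction corollary} and the results on $X_{r-j}^{(j)}/X_{r-j}^{(j+1)}$ etc.

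First I would treat $j=a$. Since $a-1 < a$ and $[a-a]=p-1 > a$ exactly when... one has to be careful: $[a-a]$ should be read via $[0]=p-1$, so $a < p-1 = [a-a]$ when $a<p-1$; in this regime Lemma~\ref{reduction} (third case, applied with $i=a-1$, $j=a$, noting $a-1<a\le [a-a]$) gives $X_{r-(a-1)}^{(a)}/X_{r-(a-1)}^{(a+1)} = X_r^{(a)}/X_r^{(a+1)}$, and then Proposition~\ref{singular quotient X_{r}} with $m=a$ gives $V_{p-1-a}\otimes D^a$ if $r_0 \in \{a,a+1,\ldots,p-1\}$ and $(0)$ otherwise. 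So for $a<p-1$ this case follows directly from Lemma~\ref{i=a smaller quotients}(i) together with these two cited results. For $a=p-1$ and $j=a=p-1$ we are in the same slot $j=p-1$ as the second part of the proposition, so I defer this to the $j=p-1$ discussion below (note when $a=p-1$, the condition ``$r_0 = a,a+1,\ldots,p-1$'' reduces to $r_0 = p-1$, and one checks that in that case $\binom{r-(p-1)}{p-1}\not\equiv 0$, so Lemma~\ref{socle term singular} shows $V_0 \otimes D^{p-1} = V_{p-1-a}\otimes D^a$ embeds; I will need to combine this with the $j=p-1$ analysis to pin down that the quotient is exactly $V_0\otimes D^{p-1}$, or reconcile the two listed cases --- this overlap is a point to watch).

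Next, $j=p-1$ (and $a \le p-1$ arbitrary). Here the key input is Lemma~\ref{i=a, p-1 quotients}: $V_a \hookrightarrow X_{r-a}^{(p-1)}/X_{r-a}^{(p)}$ if and only if $r \equiv a-1 \bmod p$. When $a < p-1$: if $r_0 = a-1$, the exact sequence \eqref{exact sequence Vr} with $m=p-1$ has socle $V_{[a-2(p-1)]}\otimes D^{p-1} = V_{[a-2(p-1)]}\otimes D^{p-1}$, and since $[a-2(p-1)] = [a] = a$ this socle is $V_a \otimes D^{p-1}$; combined with Lemma~\ref{i=a, p-1 quotients} and a dimension count bounding $\dim X_{r-a}^{(p-1)}/X_{r-a}^{(p)}$ against $\dim X_{r-a}/X_{r-(a-1)} \le p+1$ minus the contribution of $W_{a,r}$ (or more simply, since \eqref{exact sequence Vr} for $m=p-1$ with $a\ne 2(p-1)$, i.e. always, is non-split with socle $V_a\otimes D^{p-1}$, once the socle embeds and the total dimension is forced to be $\le a+1$, we get equality), we conclude $X_{r-a}^{(p-1)}/X_{r-a}^{(p)} \cong V_a \otimes D^{p-1} = V_a$ (up to twist; I will write it as in the statement). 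If $r_0 \ne a-1$, Lemma~\ref{i=a, p-1 quotients} says $V_a \not\hookrightarrow$, and since $V_a$ (twisted) is the socle of $V_r^{(p-1)}/V_r^{(p)}$, the quotient $X_{r-a}^{(p-1)}/X_{r-a}^{(p)}$ must be $(0)$. For $a = p-1$, $j=p-1$: here $X_{r-a}^{(p-1)}/X_{r-a}^{(p)} = X_{r-(p-1)}^{(p-1)}/X_{r-(p-1)}^{(p)}$, and Lemma~\ref{i=a, p-1 quotients} (with $a = p-1$, so $a-1 = p-2$) gives $V_{p-1}\hookrightarrow$ iff $r_0 = p-2 = a-1$; since $V_r^{(p-1)}/V_r^{(p)} \cong V_0\otimes D^{p-1}\oplus V_{p-1}\otimes D^{p-1}$ by \eqref{exact sequence Vr} (split, as $a \equiv 2(p-1)$ when $a=p-1$), I need to also decide whether $V_0\otimes D^{p-1}$ embeds — this is precisely the $j=a=p-1$ case which dovetails with the $j=a$ row of the statement (giving $V_{p-1-a}\otimes D^a = V_0\otimes D^{p-1}$ when $r_0 = p-1$, else it does not embed), and I will cite Lemma~\ref{socle term singular} with $i=p-1$, $[a-i]=p-1$, to get $V_0\otimes D^{p-1}\hookrightarrow$ iff $\binom{r-(p-1)}{p-1}\not\equiv 0 \bmod p$ iff $r_0 = p-1$. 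Assembling: when $a=p-1$, $r_0 = p-2$ gives $V_{p-1}$ (the ``$j=p-1$, $r_0=a-1$'' case), $r_0 = p-1$ gives $V_0\otimes D^{p-1} = V_{p-1-a}\otimes D^a$ (the ``$j=a$'' case), and all other $r_0$ give $(0)$.

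The main obstacle I anticipate is the bookkeeping at $a=p-1$, where $j=a$ and $j=p-1$ coincide and the two cases listed in the proposition both concern the same module $X_{r-(p-1)}^{(p-1)}/X_{r-(p-1)}^{(p)}$: I must verify that the stated trichotomy ($V_0\otimes D^{p-1}$ if $r_0=p-1$; $V_{p-1}$ if $r_0=p-2$; $(0)$ otherwise) is consistent and exhaustive, which requires combining Lemma~\ref{i=a, p-1 quotients} (the $V_{p-1}$-summand) with Lemma~\ref{socle term singular} applied to $i=p-1$ (the $V_0$-summand), plus Lemma~\ref{i=a smaller quotients}(ii) to know that for $r_0\ne p-2$ the quotient equals $X_{r}^{(p-1)}/X_{r}^{(p)}$, which by Proposition~\ref{singular quotient X_{r}} is $V_0\otimes D^{p-1}$ if $r_0\in\{p-1\}$ (since $a=p-1$) and $(0)$ otherwise. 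Everything else is a routine invocation of the cited results and the chain \eqref{ascending chain}; the only genuine computations are the Lucas'-theorem checks on $\binom{r-(p-1)}{p-1}$ and the identification $[a-2(p-1)] = a$, both of which are immediate.
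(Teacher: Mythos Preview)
Your overall strategy matches the paper's: reduce the $j=a$ case (for $a<p-1$) to $X_r^{(a)}/X_r^{(a+1)}$ via Lemma~\ref{i=a smaller quotients}(i) and Lemma~\ref{reduction}, then invoke Proposition~\ref{singular quotient X_{r}}; handle $j=p-1$ via Lemma~\ref{i=a, p-1 quotients} plus a dimension bound; and treat $a=p-1$ as a special case. However, two points need tightening.

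First, the dimension bound $\dim X_{r-a}^{(p-1)}/X_{r-a}^{(p)} \le a+1$ is not obtained by subtracting $W_{a,r}$. Recall $W_{a,r}$ is the image of $V_a$ and lands in $(X_{r-a}^{(p-1)}+X_{r-(a-1)})/X_{r-(a-1)}$, i.e.\ in the same piece of the chain as $Y_{a,p-1}$ itself, so it cannot be subtracted off. The paper instead uses $Y_{a,0} \cong V_{p-1-a}\otimes D^a$ (dimension $p-a$): from the chain \eqref{ascending chain} one gets $\dim Y_{a,0} + \dim Y_{a,p-1} \le \dim X_{r-a}/X_{r-(a-1)} \le p+1$, whence $\dim Y_{a,p-1} \le a+1$. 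To then identify $Y_{a,p-1}$ with $X_{r-a}^{(p-1)}/X_{r-a}^{(p)}$ (for $a<p-1$) you need $X_{r-(a-1)}^{(p-1)}/X_{r-(a-1)}^{(p)}=0$, which follows from Corollary~\ref{reduction corollary 2}; you do not mention this step. The same bound is what rules out the extra $V_0$ summand when $a=p-1$ and $r_0=p-2$: once $V_{p-1}$ embeds and $\dim Y_{p-1,p-1}\le p$, the quotient is exactly $V_{p-1}$.

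Second, you cannot cite Lemma~\ref{socle term singular} with $i=p-1$: its hypothesis is $1\le i<p-1$. For the $V_0$-summand at $a=p-1$ you must use the route you mention at the end---Lemma~\ref{i=a smaller quotients}(ii) reduces (for $r_0\ne p-2$) to $X_r^{(p-1)}/X_r^{(p)}$, and Proposition~\ref{singular quotient X_{r}} then gives $V_0$ if $r_0=p-1$ and $(0)$ otherwise. This is exactly the paper's argument; drop the invalid citation.
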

\begin{proof}
    Since $X_{r-(a-1)} \subseteq X_{r-(a-1)}+X_{r-a}^{(p)}
     \subseteq X_{r-(a-1)}+X_{r-a}^{(p-1)} \subseteq 
     \cdots \subseteq X_{r-(a-1)}+X_{r-a}^{(1)} \subseteq  X_{r-a}$,
     we have
     \begin{align*}
         \dim \left( \frac{X_{r-(a-1)}+X_{r-a}^{(p-1)}}{X_{r-(a-1)}+X_{r-a}^{(p)}}
         \right)  +
         \dim \left( \frac{X_{r-a}}{X_{r-(a-1)}+X_{r-a}^{(1)}}
         \right) 
          \leq  \dim \left( \frac{X_{r-a}}{X_{r-(a-1)}} \right)
         \leq p+1.
     \end{align*}
     Since $Y_{a,0} \cong V_{p-1-a} \otimes D^{a}$, we have
     $\dim Y_{a,p-1} \leq a+1$.
     
     $\boldsymbol{\mathrm{Case} ~ a < p-1}$\textbf{:}
     By \Cref{i=a smaller quotients} (i),
     we have $X_{r-a}^{(a)}/X_{r-a}^{(a+1)} = X_{r-(a-1)}^{(a)}/X_{r-(a-1)}^{(a+1)}$.
     Since $a-1 < a < [a-a] = p-1$, by the third part of \Cref{reduction}, we have 
     $X_{r-(a-1)}^{(a)}/X_{r-(a-1)}^{(a+1)} = X_{r}^{(a)}/X_{r}^{(a+1)}$.
     So $X_{r-a}^{(a)}/X_{r-a}^{(a+1)}= X_{r}^{(a)}/X_{r}^{(a+1)}$ and
     the assertion for $j=a$ follows from \Cref{singular quotient X_{r}}.
     By \Cref{reduction corollary 2}, we have $X_{r-(a-1)}^{(p-1)}/X_{r-(a-1)}^{(p)} 
     = (0)$. So    $Y_{a,p-1} = X_{r-a}^{(p-1)}/X_{r-a}^{(p)}$.
     Since the exact sequence \eqref{exact sequence Vr} doesn't split   
     for $m=p-1$, we have $X_{r-a}^{(p-1)}/X_{r-a}^{(p)} \neq (0)$
     if and only if  $V_{a} \hookrightarrow X_{r-a}^{(p-1)}/X_{r-a}^{(p)}$.
     Hence by \Cref{socle term singular}, we  have
     $X_{r-a}^{(p-1)}/X_{r-a}^{(p)} \neq (0)$ if and only if  
     $V_{a} \hookrightarrow X_{r-a}^{(p-1)}/X_{r-a}^{(p)}$ if and only if 
     $r \equiv a-1 $ mod $p$.  So if $r \equiv a-1$ mod $p$,
     then $Y_{a,p-1}  = X_{r-a}^{(p-1)}/X_{r-a}^{(p)} \cong V_{a}$
     as $\dim Y_{a,p-1} \leq a+1$
     and if $ r \not \equiv a-1$ mod $p$, then
     $X_{r-a}^{(p-1)}/X_{r-a}^{(p)} =(0)$. 
     
     $\boldsymbol{\mathrm{Case} ~ a = p-1}$\textbf{:}
     As earlier, one checks that $X_{r-(a-1)}^{(p-1)}/X_{r-(a-1)}^{(p)}
     =X_{r}^{(p-1)}/X_{r}^{(p)}$. 
     If $r \not \equiv p-2$ mod $p$, then 
     $X_{r-a}^{(p-1)}/X_{r-a}^{(p)} = X_{r-(a-1)}^{(p-1)}/X_{r-(a-1)}^{(p)}$,
     by Lemma~\ref{i=a smaller quotients}.
     Thus, by \Cref{singular quotient X_{r}},
      we have $X_{r-a}^{(p-1)}/X_{r-a}^{(p)} =V_{0}$
      if $r \equiv p-1$ mod $p$ and zero if
      $r \equiv 0,1, \ldots, p-3$ mod $p$. 
      Assume $r \equiv p-2$ mod $p$.
     By Lemma~\ref{i=a, p-1 quotients}, we have 
     $V_{p-1} \hookrightarrow X_{r-a}^{(p-1)}/X_{r-a}^{(p)} $.
     Also, by \Cref{singular quotient X_{r}}, we have 
     $X_{r-(a-1)}^{(p-1)}/X_{r-(a-1)}^{(p)}
     =X_{r}^{(p-1)}/X_{r}^{(p)} =(0)$. 
     So $V_{p-1} \hookrightarrow X_{r-a}^{(p-1)}/X_{r-a}^{(p)} 
     = Y_{a,p-1}$. Since 
     $\dim Y_{a,p-1} \leq a+1 = p $, we get 
     $V_{p-1} = X_{r-a}^{(p-1)}/X_{r-a}^{(p)} $.
\end{proof}
\begin{theorem}\label{Structure of Q(i) if i = a}
       Let $a(p+1)+p \leq r \equiv a ~ \mathrm{mod}~(p-1)$ with $1 \leq a \leq p-1$.
       Then
       \begin{enumerate}
             \item[\emph{(i)}] If $a \neq p-1$ or $r \not \equiv p-2 ~ \mathrm{mod}~p$,
                             then
                             \[
                                0 \rightarrow V_{p-1-a} \otimes D^{a} \rightarrow P(a) \rightarrow 
                                Q(a) \rightarrow 0.
                             \]
              \item[\emph{(ii)}] If $a = p-1$ and $r \equiv p-2 ~ \mathrm{mod}~p$,
                              then
                              \[
                                0 \rightarrow V_{0} \oplus V_{p-1} \rightarrow P(a) \rightarrow 
                                Q(a) \rightarrow 0.
                               \]
       \end{enumerate}
\end{theorem}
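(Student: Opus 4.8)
The plan is to use the exact sequence \eqref{Q and P exact sequence} for $i = a$, namely
\[
  0 \rightarrow \frac{X_{r-a}}{X_{r-(a-1)} + X_{r-a}^{(a+1)}} \rightarrow P(a) \rightarrow Q(a) \rightarrow 0,
\]
and to identify the leftmost module as $V_{p-1-a}\otimes D^{a}$ in case (i) and as $V_{0}\oplus V_{p-1}$ in case (ii). By \eqref{Y i,j} the leftmost module has a filtration with successive quotients $Y_{a,j}$ for $0 \le j \le a$, so it suffices to compute each $Y_{a,j}$. For $j = 0$ we already noted $Y_{a,0} \cong V_{p-1-a}\otimes D^{a}$, coming from \Cref{Structure X(1)} and the $m=0$ case of \eqref{exact sequence Vr}. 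For $1 \le j \le a$, by definition $Y_{a,j} = (X_{r-a}^{(j)}/X_{r-a}^{(j+1)})/(X_{r-(a-1)}^{(j)}/X_{r-(a-1)}^{(j+1)})$, so I must compare the numerator and denominator.

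The key input is \Cref{singular i=a}, which gives $X_{r-a}^{(j)}/X_{r-a}^{(j+1)}$ for $j \in \{a, p-1\}$, together with \Cref{i=a smaller quotients}, which shows $X_{r-a}^{(j)}/X_{r-a}^{(j+1)} = X_{r-(a-1)}^{(j)}/X_{r-(a-1)}^{(j+1)}$ for all $1 \le j < p-1$ when $a \le p-1$ (so these $Y_{a,j}$ vanish), and moreover for all $1 \le j \le p-1$ when $a = p-1$ and $r_0 \ne p-2$. So in case (i) when $a \ne p-1$: all $Y_{a,j}$ with $1 \le j \le a$ vanish except possibly $j = a$, and for $j = a$ one combines \Cref{singular i=a} with the fact that $a - 1 < a < [a-a] = p-1$ and the third part of \Cref{reduction} plus \Cref{singular quotient X_{r}} to see $X_{r-a}^{(a)}/X_{r-a}^{(a+1)} = X_{r-(a-1)}^{(a)}/X_{r-(a-1)}^{(a+1)}$ as well (both equal $X_r^{(a)}/X_r^{(a+1)}$), so $Y_{a,a} = (0)$. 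Hence in this sub-case only $Y_{a,0}$ survives and the leftmost module is $V_{p-1-a}\otimes D^{a}$. In case (i) when $a = p-1$ but $r \not\equiv p-2 \bmod p$, \Cref{i=a smaller quotients}(ii) kills $Y_{a,j}$ for all $1 \le j \le p-1$, so again only $Y_{a,0} = V_{p-1-(p-1)}\otimes D^{p-1} = V_0$ survives, and since $p-1-a = 0$ the module is $V_{p-1-a}\otimes D^{a}$, as claimed. (One should double-check that in this case $X_{r-a}^{(a+1)} = X_{r-(p-1)}^{(p)}$ is the relevant term since $a + 1 = p$; this is fine as $V_r^{(p)}$ still makes sense.)

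In case (ii), $a = p-1$ and $r \equiv p-2 \bmod p$. Here \Cref{i=a smaller quotients}(ii) no longer applies, but part (i) of that lemma still kills $Y_{p-1,j}$ for $1 \le j < p-1$. The only remaining quotient besides $Y_{p-1,0}$ is $Y_{p-1,p-1}$. By \Cref{reduction corollary 2} (or the relevant part of \Cref{singular quotient X_{r}} via \Cref{reduction}), $X_{r-(p-2)}^{(p-1)}/X_{r-(p-2)}^{(p)} = X_r^{(p-1)}/X_r^{(p)} = (0)$ since $r \equiv p-2 \bmod p$ puts $r_0$ outside $\{a,\dots,p-1\} = \{p-1\}$; hence $Y_{p-1,p-1} = X_{r-(p-1)}^{(p-1)}/X_{r-(p-1)}^{(p)}$, which by \Cref{singular i=a} (the $j = p-1$, $r_0 = a - 1 = p-2$ row) equals $V_{p-1}$. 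Since $Y_{p-1,0} = V_{p-1-(p-1)}\otimes D^{p-1} = V_0$, the leftmost module in \eqref{Q and P exact sequence} is an extension of $V_{p-1}$ by $V_0$ (or vice versa); but $V_0$ and $V_{p-1}\otimes D^{p-1} = V_{p-1}$ are non-isomorphic irreducibles, and any extension between two distinct irreducibles of $\Gamma$ appearing as a two-term subquotient here splits — more directly, one can observe that the module $X_{r-(p-1)}/(X_{r-(p-2)} + X_{r-(p-1)}^{(p)})$ is a quotient of $\mathrm{ind}_B^\Gamma(1) \cong V_0 \oplus V_{p-1}$ via $\psi_{p-1}$ (using \Cref{induced and successive} and \Cref{Structure of induced}), so being $2$-dimensional as a sum of JH factors it must be all of $V_0 \oplus V_{p-1}$. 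Thus the leftmost module is $V_0 \oplus V_{p-1}$, giving (ii). The main obstacle is the bookkeeping in case (ii): ensuring the extension does not get tangled and that the splitting is genuinely forced, which is where the principal series surjection $\psi_{p-1}: \mathrm{ind}_B^\Gamma(1) \twoheadrightarrow X_{r-(p-1)}/X_{r-(p-2)}$ and the decomposition $\mathrm{ind}_B^\Gamma(1) \cong V_0 \oplus V_{p-1}$ of \Cref{Structure of induced} (the split case $i = j$) do the work.
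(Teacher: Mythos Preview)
Your proof is correct and follows essentially the same route as the paper: both use the exact sequence \eqref{Q and P exact sequence} with $i=a$, filter the kernel by the $Y_{a,j}$, kill $Y_{a,j}$ for $1 \le j < a$ via \Cref{i=a smaller quotients}, and identify $Y_{a,0} \cong V_{p-1-a}\otimes D^{a}$ and (in case (ii)) $Y_{a,a} \cong V_{p-1}$ via \Cref{singular i=a} together with \Cref{reduction} and \Cref{singular quotient X_{r}}. (Your separate treatment of $Y_{a,a}$ for $a<p-1$ is slightly redundant, since \Cref{i=a smaller quotients}(i) already covers $j=a<p-1$.)

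The one genuine difference is the splitting argument in case (ii). The paper observes that $V_{p-1}$ is projective (hence injective over the group algebra), so the extension $0 \to V_{p-1} \to \cdot \to V_{0} \to 0$ splits. You instead note that the kernel is a quotient of $X_{r-(p-1)}/X_{r-(p-2)}$, which by \Cref{induced and successive} is a quotient of $\ind_B^\Gamma(1) \cong V_{0}\oplus V_{p-1}$ (the split case of \Cref{Structure of induced}); since a quotient of a semisimple module is semisimple and the JH factors are $V_{0}$ and $V_{p-1}$, the kernel must be $V_{0}\oplus V_{p-1}$. Both arguments are clean; yours avoids invoking projectivity of the Steinberg and stays entirely within the machinery already set up in the paper, while the paper's is a one-line appeal to a standard fact. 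Your throwaway remark that ``any extension between two distinct irreducibles\ldots splits'' is false in general, but you correctly abandon it for the principal series argument, and the phrase ``$2$-dimensional as a sum of JH factors'' should read ``having two JH factors''.
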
   
\begin{proof}
          By  \eqref{Q and P exact sequence}, we have an exact sequence
        \[
            0 \rightarrow 
            \frac{X_{r-a}}{X_{r-(a-1)} + X_{r-a}^{(a+1)}} 
            \rightarrow P(a) \rightarrow Q(a) \rightarrow 0.
        \]
        Note that we have an ascending chain of modules
        \[
              X_{r-(a-1)} + X_{r-a}^{(a+1)} \subseteq X_{r-(a-1)} + X_{r-a}^{(a)}
              \subseteq  \cdots \subseteq
              X_{r-a}. 
          \]
        By \eqref{Y i,j}, the successive quotients
          are isomorphic to $Y_{a,j}$, for $0 \leq j \leq a$. 
        By Lemma~\ref{i=a smaller quotients}, we have 
        $Y_{a,j} = (0)$, for $1 \leq j <a$. So
        \begin{align}\label{Q a proof exact sequence}
            0 \rightarrow Y_{a,a} \rightarrow 
            \frac{X_{r-a}}{X_{r-(a-1)} + X_{r-a}^{(a+1)}} \rightarrow
            Y_{a,0} \rightarrow 0.
        \end{align}
        Recall  that $Y_{a,0} \cong V_{p-1-a} \otimes  D^{a}$. 
        Thus it remains to determine  $Y_{a,a}$. 
        By Lemma~\ref{i=a smaller quotients},  if $a \neq p-1$ or $r \not \equiv p-2$ mod $p$, then
        $Y_{a,a} = (0)$. 
          If $a=p-1$ and $r \equiv p-2$ mod $p$, then  
          $X_{r-a}^{(p-1)}/X_{r-a}^{(p)} \cong V_{p-1}$, by \Cref{singular i=a}. 
          Also by the third part of \Cref{reduction} and 
          \Cref{singular quotient X_{r}}, we have 
          $X_{r-(a-1)}^{(p-1)}/X_{r-(a-1)}^{(p)} =
          X_{r}^{(p-1)}/X_{r}^{(p)}=(0)$. So $Y_{a,a}
          =Y_{p-1,p-1} \cong V_{p-1}$.
          Since $V_{p-1}$ is projective, the  exact sequence 
          \eqref{Q a proof exact sequence} splits
          and this completes the proof of the lemma.
\end{proof}

The above theorem completely determines the structure of $Q(a)$.
Indeed, by the remarks at the beginning of Section~\ref{section Q}, the structure 
of $P(a)$ is completely determined by $Q(a-1)$ and 
$X_{r-(a-1)}^{(a)}/X_{r-(a-1)}^{(a+1)}$. The latter  module
is equal to $X_{r}^{(a)}/X_{r}^{(a+1)}$, by the third part of
\Cref{reduction}, so is completely determined by 
\Cref{singular quotient X_{r}}. If $a=1$, then 
the  structure of $Q(a-1) = Q(0)$ is known by \Cref{Structure Q(0)}.
If $a \geq 2$, then  $a-1 \geq [a-(a-1)] =1$, so the structure of $Q(a-1)$
can be determined using the results of \S \ref{section i = a-i}, \S \ref{section i > a-i}, again 
in terms of $Q(0)$ and $W$, see
\Cref{Structure of Q i=[a-i]} and \Cref{Structure of Q(i) i>[a-i]}.
\subsubsection{The case \texorpdfstring{$\boldsymbol{i=p-1}$}{}}
\label{section i = p-1}
In this section we determine $Q(i)$ when $i=p-1$. 
Since the case $i=a=p-1$ was already treated in \S \ref{section i = a},
we may assume $a<p-1$.

\begin{lemma}\label{i=p-1 exceptional case}
      Let $p \geq 3$,
      $p \leq r \equiv a ~ \mathrm{mod}~(p-1)$, with $1 \leq a < p-1$.
      If $r \equiv 0, 1, \ldots, a-2$ 
      or 
     $ p-1 ~\mathrm{mod}~p$ , 
      then $X_{r-(p-1)} \subseteq X_{r-(p-2)}+V_{r}^{(p)}$.
      Furthermore, $X_{r-(p-1)}^{(a)}/X_{r-(p-1)}^{(a+1)} 
      = X_{r-a}^{(a)}/X_{r-a}^{(a+1)}$ and 
      $X_{r-(p-1)}^{(p-1)}/X_{r-(p-1)}^{(p)} 
      = X_{r-a}^{(p-1)}/X_{r-a}^{(p)}$. 
\end{lemma}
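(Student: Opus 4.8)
The statement to be proved (\Cref{i=p-1 exceptional case}) concerns $i = p-1$ with $a < p-1$ under the residue hypothesis $r_0 \in \{0,1,\dots,a-2\} \cup \{p-1\}$. The key object is the generator $G_{p-1,r}(X,Y)$ of $W_{p-1,r}$, the image of $V_{[2(p-1)-a]} \otimes D^{a-(p-1)} = V_{p-1-a} \otimes D^{a}$ (since $[2(p-1)-a] = [-a] = p-1-a$) under $\psi_{p-1}\colon \ind_B^\Gamma(\chi_1^{r-(p-1)}\chi_2^{p-1}) \twoheadrightarrow X_{r-(p-1)}/X_{r-(p-2)}$. This plays exactly the role that $G_{a,r}$ played in \Cref{i=a smaller quotients}, and the proof should follow the same template. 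First I would show that $G_{p-1,r}(X,Y) \in X_{r-(p-2)} + V_r^{(p)}$. By \eqref{G i,r} we have $G_{p-1,r}(X,Y) + \delta_{[a-(p-1)],p-1} X^r = X^{p-1}G_{r-(p-1)}(X,Y)$; note $[a-(p-1)] = [a+1-p] = a$ (using $1 \le a < p-1$, so $a+1 \le p-1$), hence the delta term vanishes unless $a = p-1$, which is excluded, so $G_{p-1,r}(X,Y) = X^{p-1} G_{r-(p-1)}(X,Y) - X^r$ modulo $X_{r-(p-2)}$ — wait, more precisely $G_{p-1,r}(X,Y) = X^{p-1}G_{r-(p-1)}(X,Y)$ when $\delta = 0$; in any case $G_{p-1,r}(X,Y) - X^{p-1}G_{r-(p-1)}(X,Y) \in X_r \subseteq X_{r-(p-2)}$. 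So it suffices to show $X^{p-1}G_{r-(p-1)}(X,Y) \in X_{r-(p-1)}^{(p)}$, equivalently (via the surjection $\phi_{p-1}$ of \Cref{surjection1}) that $G_{r-(p-1)}(X,Y) \in V_{r-(p-1)}^{(p)} \cap X_{r-(p-1),\,r-(p-1)}$. By \Cref{quotient image} applied to $r' := r - (p-1)$ (which satisfies $r' \equiv a - (p-1) \equiv a + 1 \bmod (p-1)$, so its relevant residue is $[r'] = a+1$... I must be careful: actually $r' \equiv a' \bmod (p-1)$ where $a' = [a-(p-1)] = a$; no — $[a - (p-1)] = [a+1]$; if $a < p-1$ then $a + 1 \le p - 1$ so $[a+1] = a+1$ when $a+1 \le p-1$ and $[a+1] = 1$ when $a = p-1$). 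So with $a' = a+1$, \Cref{quotient image} says $G_{r'}(X,Y) \in V_{r'}^{(p)}$ if and only if $\binom{r'}{a'} = \binom{r-(p-1)}{a+1} \equiv 0 \bmod p$. By Lucas' theorem this holds precisely when $r - (p-1) \equiv 0,1,\dots,a \bmod p$, i.e.\ $r \equiv p-1, 0, 1, \dots, a-1 \bmod p$. Hmm — the hypothesis is $r_0 \in \{0,\dots,a-2\} \cup \{p-1\}$, which is contained in $\{p-1, 0, 1, \dots, a-1\}$, so the divisibility holds. Good: this gives $G_{p-1,r}(X,Y) \in X_{r-(p-2)} + V_r^{(p)}$, and since $X^{p-1}G_{r-(p-1)}(X,Y) \in X_{r-(p-1)}$ we even get $G_{p-1,r}(X,Y) \in X_{r-(p-2)} + X_{r-(p-1)}^{(p)}$.

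With that in hand, the first assertion $X_{r-(p-1)} \subseteq X_{r-(p-2)} + V_r^{(p)}$ follows by the same diagram-chase as in \Cref{i=a smaller quotients}: consider the exact sequence $0 \to V_{[2(p-1)-a]}\otimes D^a \to \ind_B^\Gamma(\chi_1^{r-(p-1)}\chi_2^{p-1}) \to V_{p-1-[2(p-1)-a]} \otimes D^{p-1} \to 0$ of \Cref{Structure of induced}, push forward along $\psi_{p-1}$, and use that $W_{p-1,r}$ (the image of the sub) is generated by $G_{p-1,r}(X,Y)$ which lies in $X_{r-(p-2)} + X_{r-(p-1)}^{(p)}$. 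This forces the cosocle quotient $V_{p-1-[2(p-1)-a]}\otimes D^{p-1}$ to surject onto $X_{r-(p-1)}/(X_{r-(p-2)} + X_{r-(p-1)}^{(p)})$; but by \Cref{Structure X(1)} and the exact sequence \eqref{exact sequence Vr} with $m=0$, $Y_{p-1,0} \cong X_{r-(p-1)}/(X_{r-(p-2)}+X_{r-(p-1)}^{(1)})$ is already isomorphic to $V_{p-1-a}\otimes D^a$, which is the \emph{socle} side, not the cosocle side. Comparing these via \Cref{Common JH factor} (the JH factors of a principal series are distinct, and socle $\not\cong$ cosocle unless in the split case) shows that the chain $X_{r-(p-2)}+X_{r-(p-1)}^{(p)} \subseteq X_{r-(p-2)}+X_{r-(p-1)}^{(p-1)} \subseteq \cdots \subseteq X_{r-(p-2)}+X_{r-(p-1)}^{(1)} \subseteq X_{r-(p-1)}$ must collapse at the top, i.e.\ $X_{r-(p-2)}+X_{r-(p-1)}^{(1)} = X_{r-(p-1)}$, and hence working down, $X_{r-(p-1)} = X_{r-(p-2)} + X_{r-(p-1)}^{(j)}$ for all $1 \le j \le p$; in particular $X_{r-(p-1)} \subseteq X_{r-(p-2)} + V_r^{(p)}$. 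This is a direct adaptation of the $n = p$ case argument in the proof of \Cref{i=a smaller quotients}(ii).

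Finally, for the "furthermore" part: from $X_{r-(p-2)} + X_{r-(p-1)}^{(j)} = X_{r-(p-2)} + X_{r-(p-1)}^{(j+1)}$ for all $1 \le j \le p-1$ (which we just obtained), equation \eqref{Y i,j} gives $Y_{p-1,j} = 0$, i.e.\ $X_{r-(p-1)}^{(j)}/X_{r-(p-1)}^{(j+1)} = X_{r-(p-2)}^{(j)}/X_{r-(p-2)}^{(j+1)}$ for all $1 \le j \le p-1$. In particular this holds for $j = a$ and $j = p-1$. It then remains to identify $X_{r-(p-2)}^{(a)}/X_{r-(p-2)}^{(a+1)}$ with $X_{r-a}^{(a)}/X_{r-a}^{(a+1)}$ and $X_{r-(p-2)}^{(p-1)}/X_{r-(p-2)}^{(p)}$ with $X_{r-a}^{(p-1)}/X_{r-a}^{(p)}$. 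For $j = a$: since $a \le p-2$ and $[a-a] = p-1 > p-2 \ge a$, we are in the range "$[a-j] \le j$ or $j \le [a-j]$"; more precisely $a \le p-2 = p-2$ and one checks $a < [a-a] = p-1$, so by the second part of \Cref{reduction} (with $i$ there $= p-2$, $j = a$, noting $a \le p-2 < p-1 = [a-a]$, wait the relevant clause is "$j \le i < [a-j]$" with $j = a$, $i = p-2$, $[a-j] = p-1$, giving $a \le p-2 < p-1$, true), we get $X_{r-(p-2)}^{(a)}/X_{r-(p-2)}^{(a+1)} = X_{r-a}^{(a)}/X_{r-a}^{(a+1)}$. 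For $j = p-1$: here $[a-(p-1)] = a < p-1$, so with $i$ there $= p-2$, $j = p-1$, the relevant clause of \Cref{reduction} is "$[a-j] \le i < j$" ($a \le p-2 < p-1$, true), giving $X_{r-(p-2)}^{(p-1)}/X_{r-(p-2)}^{(p)} = X_{r-[a-(p-1)]}^{(p-1)}/X_{r-[a-(p-1)]}^{(p)} = X_{r-a}^{(p-1)}/X_{r-a}^{(p)}$. Chaining these identifications completes the proof. The main obstacle is getting the residue-class arithmetic exactly right — in particular pinning down $[a-(p-1)] = a$ (for $a < p-1$) and verifying that the hypothesis $r_0 \in \{0,\dots,a-2\}\cup\{p-1\}$ lands inside the set where $\binom{r-(p-1)}{a+1} \equiv 0 \bmod p$ via Lucas — and making sure the correct clauses of \Cref{reduction} apply for $i = p-2$ with $j \in \{a, p-1\}$; the diagram-chase itself is a verbatim adaptation of \Cref{i=a smaller quotients}.
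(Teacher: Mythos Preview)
There is a genuine gap in your argument for the first assertion. Your plan is to mimic \Cref{i=a smaller quotients} by showing that $G_{p-1,r}(X,Y)\in X_{r-(p-2)}+X_{r-(p-1)}^{(p)}$ and then running the same chain-collapse argument. The step ``$G_{p-1,r}\in X_{r-(p-2)}+X_{r-(p-1)}^{(p)}$'' is fine (modulo a minor slip: $r-(p-1)\equiv a\pmod{p-1}$, not $a+1$, since $p-1\equiv 0$; so the relevant binomial is $\binom{r-(p-1)}{a}$, whose vanishing is \emph{exactly} the hypothesis $r_0\in\{0,\dots,a-2\}\cup\{p-1\}$). The problem is the next step. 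You claim $Y_{p-1,0}\cong V_{p-1-a}\otimes D^{a}$, but this is false: since $a<p-1$ we have $a\le p-2$, so by \Cref{Structure X(1)} both $X_{r-(p-1)}/X_{r-(p-1)}^{(1)}$ and $X_{r-(p-2)}/X_{r-(p-2)}^{(1)}$ equal $V_r/V_r^{(1)}$, hence $Y_{p-1,0}=0$. In \Cref{i=a smaller quotients} the whole point was that $Y_{a,0}\cong V_{p-1-a}\otimes D^{a}$ is the \emph{cosocle} of the induced representation, so once the socle-image $W_{a,r}$ is pushed down to level $n$, a dimension count forces the chain to collapse above level $1$. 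Here the cosocle is $V_a$ (as $D^{p-1}=1$), and with $Y_{p-1,0}=0$ your argument only yields that $V_a$ surjects onto $X_{r-(p-1)}/(X_{r-(p-2)}+X_{r-(p-1)}^{(p)})$; it does not rule out this quotient being $\cong V_a$ rather than $0$. Since $X_{r-(p-1)}\subseteq X_{r-(p-2)}+V_r^{(p)}$ is equivalent to that quotient being $0$, the argument is incomplete.

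The paper avoids this by working with $F_{p-1,r}(X,Y)$ instead of $G_{p-1,r}(X,Y)$. Because $a\neq p-1$, the element $F_{p-1,r}$ generates the \emph{entire} quotient $X_{r-(p-1)}/X_{r-(p-2)}$, not just the socle-image $W_{p-1,r}$. One then checks directly, via \Cref{divisibility1} and \Cref{binomial sum}, that $F_{p-1,r}+X^r\in V_r^{(p)}$ under the residue hypothesis; this immediately gives $X_{r-(p-1)}\subseteq X_{r-(p-2)}+V_r^{(p)}$ with no chain-collapse argument needed. Your treatment of the ``furthermore'' part (via \eqref{Y i,j} and the two relevant clauses of \Cref{reduction} with $i=p-2$) is correct once the first assertion is in hand.
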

\begin{proof}
       Recall that 
       \begin{align*}
            F_{p-1,r}(X,Y)  &\stackrel{\eqref{F i,r definition}}{=} \sum_{\lambda \in \f}^{} \lambda^{[2p-2-a]} 
             X^{p-1}( \lambda X+Y)^{r-(p-1)}  \\
             & \stackrel{\eqref{sum fp}}{\equiv} -X^{r} 
             - \sum_{\substack{0 < l \leq r-(p-1) 
             \\ l \equiv 0 ~\mathrm{mod}~(p-1)}}^{} \binom{r-(p-1)}{l}
                   X^{r-l}Y^{l} \mod p
       \end{align*}
       generates the quotient 
       $X_{r-(p-1)}/X_{r-(p-2)}$ as a $\Gamma$-module
       because $a \neq p-1$. So to prove the first statement of the lemma
       it  is enough to show
       $F_{p-1,r}(X,Y) \in X_{r-(p-2)}+V_{r}^{(p)}$. 
       We claim that $F(X,Y):= F_{p-1,r}(X,Y) +X^{r} \in X_{r-(p-2)}+V_{r}^{(p)}$,
       which proves the the first statement of the lemma, 
       as $X^{r} \in X_{r-(p-2)}$, by \Cref{first row filtration}.
       Since $r-(p-1) \equiv a \not \equiv p-1$ mod $(p-1)$,
       we get the coefficient of $X^{p-1}Y^{r-(p-1)}$ in $F(X,Y)$ is zero.
      From the hypothesis we see that  $r-(p-1) \not \equiv p-1$ mod $p$,
      whence   $\binom{r-(p-1)}{p-1} \equiv 0$ mod $p$ by  Lucas' theorem.
      Therefore $X^{p}$, $Y^{p} \mid F(X,Y)$.  
      So $F(X,Y)$ satisfies condition (i) of \Cref{divisibility1}
      with $m=p$.
      For $0 \leq m \leq p-1$, by \Cref{binomial sum}, we have
      \begin{align*}
          \sum_{\substack{0 < l \leq r-(p-1) 
             \\ l \equiv 0 ~\mathrm{mod}~(p-1)}}^{} \binom{r-(p-1)}{l} 
             \binom{l}{m}  &= \sum_{\substack{0 \leq  l \leq r-(p-1) 
             \\ l \equiv 0 ~\mathrm{mod}~(p-1)}}^{} \binom{r-(p-1)}{l} 
             \binom{l}{m}  - \delta_{0,m} \\
             & \equiv \binom{r-(p-1)}{m} \left[ \binom{[a-m]}{[p-1-m]} 
             + \delta_{p-1,[p-1-m]}  \right]  - \delta_{0,m} ~\mathrm{mod}~ p.
      \end{align*}
      If $0 \leq m < a$, then   $\binom{[a-m]}{[p-1-m]}  = \binom{a-m}{p-1-m}
       = 0$ as $1 \leq a<p-1$.  So the above sum vanishes for $0 \leq m < a$.
       By the assumption we have 
       $r-(p-1) \equiv 0$, $1, \ldots, a-1$ mod $p$, whence by Lucas'
       theorem $\binom{r-(p-1)}{m} \equiv 0$ mod $p$, for  $a \leq m \leq p-1$.
       So the above sum vanishes, for all $0 \leq m \leq p-1$.
       Thus by \Cref{divisibility1}, we have $F(X,Y) \in V_{r}^{(p)}$. 
       
       To prove the last assertion,  note that 
       $X_{r-(p-1)} \subseteq X_{r-(p-2)}+V_{r}^{(p)}$ implies 
       $X_{r-(p-1)} = X_{r-(p-2)}+X_{r-(p-1)}^{(p)}$.
        Recall that we have an ascending chain of modules
        \[
            X_{r-(p-2)}+X_{r-(p-1)}^{(p)} \subseteq 
            \cdots \subseteq X_{r-(p-2)}+X_{r-(p-1)}^{(a+1)}
            \subseteq X_{r-(p-2)}+X_{r-(p-1)}^{(a)}
            \subseteq \cdots \subseteq X_{r-(p-1)}.
        \]       
       Since the extreme terms are equal, all the intermediate 
       terms are equal. Hence, by \eqref{Y i,j}, we have
       $Y_{p-1,j} =(0)$ for $j=a$, $p-1$. Thus 
       $X_{r-(p-1)}^{(j)}/X_{r-(p-1)}^{(j)} = X_{r-(p-2)}^{(j)}/X_{r-(p-2)}^{(j)}$
       for $j=a$, $p-1$.
       Since $a \leq p-2 < p-1$, we have 
           $X_{r-(p-2)}^{(j)}/X_{r-(p-2)}^{(j)}= X_{r-a}^{(j)}/X_{r-a}^{(j+1)}$,
       by the first (resp. second) part  of  \Cref{reduction}, for $j=a$
       (resp.  $p-1$).
       Thus $X_{r-(p-1)}^{(j)}/X_{r-(p-1)}^{(j)}=
       X_{r-a}^{(j)}/X_{r-a}^{(j+1)}$.
\end{proof}
\begin{lemma}\label{i=p-1, full}
      Let $a(p+1)+p \leq r \equiv a ~ \mathrm{mod}~(p-1)$, with $1 \leq a < p-1$.
      If $r \equiv a-1$, $a, \ldots, p-3$, $p-2~ \mathrm{mod}~p$, then
      $X_{r-(p-1)}^{(a)}/X_{r-(p-1)}^{(a+1)} = V_{r}^{(a)}/V_{r}^{(a+1)}$.  
\end{lemma}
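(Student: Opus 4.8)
The statement says that when $1 \le a < p-1$, $r \equiv a \bmod (p-1)$ with $r \ge a(p+1)+p$, and $r \equiv a-1, a, \dots, p-2 \bmod p$ (equivalently, $r_0 \in \{a-1, a, \dots, p-2\}$, i.e. the residue of $r$ mod $p$ is NOT in $\{0,1,\dots,a-2\}$ and NOT $p-1$), the quotient $X_{r-(p-1)}^{(a)}/X_{r-(p-1)}^{(a+1)}$ is the full module $V_r^{(a)}/V_r^{(a+1)}$, a principal series (by Lemma~\ref{induced and star}) of dimension $p+1$. The natural approach, in line with the rest of the section, is to exhibit an explicit polynomial $F(X,Y)\in X_{r-(p-1)}^{(a)}$ that generates $V_r^{(a)}/V_r^{(a+1)}$, and the cleanest route is via the reduction lemma: since $[a-(p-1)] = [a-p+1]$; one checks that $[a-(p-1)]$ and $p-1$ satisfy the dichotomy in Lemma~\ref{reduction}, so that (as in the proof of \Cref{i=p-1 exceptional case}, but now in the complementary range of residues) $X_{r-(p-1)}^{(a)}/X_{r-(p-1)}^{(a+1)}$ reduces to $X_{r-j'}^{(a)}/X_{r-j'}^{(a+1)}$ for the appropriate smaller index $j'\in\{a, [a-a], 0\} = \{a, p-1, 0\}$ below $p-1$.

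First I would clarify which value of $j'$ occurs: here $j = a$ in the notation of \Cref{reduction}, and $[a-a] = p-1 > a$ (since $a < p-1$), so we are in the case ``$[a-j] \le j$'' fails and ``$j \le [a-j]$'' holds, i.e. $X_{r-(p-1)}^{(a)}/X_{r-(p-1)}^{(a+1)} = X_{r-a}^{(a)}/X_{r-a}^{(a+1)}$ provided $i = p-1$ sits in the right interval relative to $j=a$ and $[a-j]=p-1$; since $a \le p-1 = [a-j] \le i = p-1$, this is exactly the second clause of \Cref{reduction}, giving $X_{r-(p-1)}^{(a)}/X_{r-(p-1)}^{(a+1)} = X_{r-[a-a]}^{(a)}/X_{r-[a-a]}^{(a+1)}$? — wait, that gives $X_{r-(p-1)}$ back. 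So in fact $i = p-1 = [a-a]$, and the reduction from \Cref{reduction} is vacuous; one must argue directly. This is the subtlety: $p-1$ is a fixed point of $i \mapsto [a-i]$ only when $a = p-1$, but here $[a-(p-1)] = [a-p+1] = a$ when $a \geq 1$… let me recompute: $a - (p-1) \equiv a - p + 1 \equiv a + 1 \pmod{p-1}$, so $[a-(p-1)] = [a+1]$, which is $a+1$ if $a < p-1$. Thus $j := \min\{p-1, [a-(p-1)]\} = \min\{p-1, a+1\} = a+1$ (as $a < p-1$). So the honest reduction is to the index $a+1$, not $a$.

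So the real plan is: show $X_{r-(p-1)}^{(a)}/X_{r-(p-1)}^{(a+1)} = X_{r-(a+1)}^{(a)}/X_{r-(a+1)}^{(a+1)}$ by repeated use of \Cref{reduction corollary} (i) — valid because for $a+1 \le i \le p-1$ we have $i \ne a$ and $j = a \notin \{i, [a-i]\}$ as long as $[a-i] \ne a$, i.e. $i \ne p-1$; the step $i = p-1$ needs \Cref{i=p-1 exceptional case}'s complement, and here is where the hypothesis $r_0 \in \{a-1,\dots,p-2\}$ enters, since \Cref{i=p-1 exceptional case} handles $r_0 \in \{0,\dots,a-2,p-1\}$ and we are in precisely the complementary set. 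Then I would reduce once more: for the index $a+1$ with $[a-(a+1)] = [a-a-1] = p-2 \geq a$ (using $a \le p-2$), we have $a \le a+1 \le p-2 = [a-(a+1)]$? This is the case ``$j \le i < [a-j]$'' with $j = a$, giving $X_{r-(a+1)}^{(a)}/X_{r-(a+1)}^{(a+1)} = X_{r-a}^{(a)}/X_{r-a}^{(a+1)}$. Finally, invoke \Cref{singular i=a}: for $j = a$ and $r_0 \in \{a, a+1, \dots, p-1\}$ it gives $V_{p-1-a}\otimes D^a$, and one must also recover the socle $V_{[a-2a]}\otimes D^a = V_{[{-a}]}\otimes D^a$ — which is where we need $V_a \hookrightarrow X_{r-a}^{(?)}$; more carefully, combining \Cref{singular i=a} (cosocle piece) with \Cref{socle term singular} / \Cref{Large Cong class Quotient non zero} analogues for the socle when $r_0 = a-1$, and for $r_0 \in \{a,\dots,p-2\}$ using that the quotient already has dimension $\ge p+1$ forces equality with $V_r^{(a)}/V_r^{(a+1)}$ via the chain $X_{r-a}\supseteq X_{r-(a-1)}$ dimension bound of $p+1$. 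The main obstacle is this last bookkeeping: assembling both Jordan–Hölder factors of the length-two (non-split, since $a \not\equiv 2a \bmod p-1$ as $a < p-1$) principal series, showing the relevant dimension is exactly $p+1$, and handling the boundary residue $r_0 = a-1$ (which is covered neither by \Cref{i=p-1 exceptional case} nor cleanly by \Cref{singular i=a}) separately — likely by an explicit polynomial $F(X,Y)$ as in \Cref{medium i full}, built using an invertibility statement from \Cref{matrix det} with the residue $r_0 = a-1$.
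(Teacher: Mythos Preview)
Your reduction strategy has a fatal gap at the very first step. You miscompute $[a-(p-1)]$: since $p-1 \equiv 0 \pmod{p-1}$, we have $[a-(p-1)] = [a] = a$, not $a+1$. Consequently, with $j = a$ and $i = p-1$, the hypothesis $j \notin \{i,[a-i]\}$ of \Cref{reduction corollary}~(i) fails because $j = a = [a-(p-1)]$; equivalently, in \Cref{reduction} with $j = a$ one has $[a-j] = [0] = p-1$, and the clause $j \le [a-j] \le i$ returns only the tautology. Your remark that this step ``needs the complement of \Cref{i=p-1 exceptional case}'' is precisely the issue: no such complement is available --- it is the content of the lemma you are trying to prove. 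Worse, the reduction you want is actually \emph{false} in this range of $r_0$. For $i = p-2$, \Cref{reduction} does give $X_{r-(p-2)}^{(a)}/X_{r-(p-2)}^{(a+1)} = X_{r-a}^{(a)}/X_{r-a}^{(a+1)}$, and by \Cref{singular i=a} this is $V_{p-1-a}\otimes D^a$ or $0$, of dimension at most $p-a$. But the lemma asserts $X_{r-(p-1)}^{(a)}/X_{r-(p-1)}^{(a+1)} = V_r^{(a)}/V_r^{(a+1)}$, of dimension $p+1$. So the inclusion at $i=p-1$ is strict (this is exactly $Y_{p-1,a} \ne 0$ in \eqref{Y p-1 a}), and no reduction-to-smaller-$i$ argument can succeed.

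The paper proceeds instead by direct construction of a generator, which is what your final sentence hints at but only for the ``boundary'' $r_0 = a-1$. In fact an explicit polynomial is needed throughout. For $r_0 = a-1$ one takes $F(X,Y) = X^{p-1}Y^{r-(p-1)} - X^{r-a}Y^a \in X_{r-(p-1)}$; since $r-(p-1) \equiv a \pmod p$, Lucas' theorem gives $\binom{r-(p-1)}{n} \equiv \binom{a}{n}$ for $0 \le n \le a$, so \Cref{divisibility1} places $F$ in $V_r^{(a)}$, and \Cref{breuil map quotient} shows its image in the cosocle of the (non-split, as $a < p-1$) principal series $V_r^{(a)}/V_r^{(a+1)}$ is nonzero, whence $F$ generates. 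For $r_0 \in \{a,\dots,p-2\}$ (and $a \ge 2$) this $F$ no longer lies in $V_r^{(a)}$, and one corrects it to $G(X,Y) = F(X,Y) + \sum_{n=1}^{a-1} C_n \sum_{k\in\fstar} k^n X^n(kX+Y)^{r-n}$, choosing the $C_n$ to solve an $(a-1)\times(a-1)$ linear system whose coefficient matrix is shown invertible via \Cref{matrix det}~(ii); one then checks $G \in V_r^{(a)}$ and, again via \Cref{breuil map quotient}, that $G$ hits the cosocle (here $\binom{r-(p-1)}{a} \not\equiv 0$ since $r_0 \ge a$). Ironically the case $r_0 = a-1$, which you flag as the hard boundary, is the easier one --- no matrix inversion is needed there.
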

\begin{proof}
     First we consider the case $r \equiv a-1$ mod $p$. Let 
     $F(X,Y) := X^{p-1}Y^{r-(p-1)}- X^{r-a}Y^{a} \in X_{r-(p-1),\,r}$, by 
     \Cref{first row filtration}. Clearly $F(X,Y) \in V_{r}^{(1)}$, by 
     \Cref{divisibility1}.
      Since $r \equiv a-1$
     mod $p$, we see that $r-(p-1) \equiv a$ mod $p$. Thus,
      by Lucas' theorem, we have
     \[
       \binom{r-(p-1)}{n} \equiv \binom{a}{n}, ~\forall ~0 \leq n \leq a. 
       \]
     Hence, by \Cref{divisibility1}, we have $F(X,Y) \in V_{r}^{(a)}$. Also by 
     \Cref{breuil map quotient}, we have the image of 
     $F(X,Y) \equiv \theta^{a}X^{r-a(p+1)-(p-1)}Y^{p-1} $
     mod $V_{r}^{(a+1)}$ up to terms involving 
     $\theta^{a} X^{r-a(p+1)}$ and $\theta Y^{r-a(p+1)}$. 
     Clearly the image of $\theta^{a}X^{r-a(p+1)-(p-1)}Y^{p-1}$ under the quotient
     map $V_{r}^{(a)}/V_{r}^{(a+1)} \rightarrow V_{p-1-[r-2a]}$ is non-zero by 
     \Cref{Breuil map}, so also the image of $F(X,Y)$.
     Since the sequence \eqref{exact sequence Vr} doesn't split for 
     $m=a$ and $a \neq p-1$,
      we get $X_{r-(p-1)}^{(a)}/X_{r-(p-1)}^{(a+1)} = V_{r}^{(a)}/V_{r}^{(a+1)}$.
     
     So we may assume $r \equiv a, a+1, \ldots , p-2$ mod $p$.  
     If $a=1$, then $F(X,Y) \equiv (r+1)  \theta X^{r-(p+1)-(p-1)}Y^{p-1}$
     mod $V_{r}^{(a+1)}$ up to terms involving 
     $\theta X^{r-(p+1)}$ and $\theta Y^{r-(p+1)}$. 
     As above one checks that $F(X,Y)$ generates
     $V_{r}^{(1)}/V_{r}^{(2)}$. So  we may further assume $2 \leq a < p-1 $.
     Let 
     \begin{align*}
       A &= \left( \binom{r-n}{m} \binom{p-1+a-m-n}{a-m} \right)_{1 \leq m,n \leq a-1} \\
            &=  \left( \binom{r-1-n}{m+1} \binom{p-1+a-2-m-n}{a-1-m} 
             \right)_{0 \leq m,n \leq a-2}\\
            &= \left( \frac{r-1-n}{m+1} \binom{r-2-n}{m} \binom{p-1+a-2-m-n}{a-1-m} 
                   \right)_{0 \leq m,n \leq a-2} \\
             & = D  \left( \binom{r-2-n}{m} \binom{p-1+a-2-m-n}{a-1-m} 
                   \right)_{0 \leq m,n \leq a-2} D',
     \end{align*}
     where  $D=$ 
     diag $(1 , 2^{-1}, \ldots, (a-1)^{-1})$ and  
     $D'=$ diag $(r-1, r-2, \ldots, r-(a-1))$ are  diagonal matrices.
     Applying \Cref{matrix det} (ii) (with $a$ there equal to $p-1+a-2$, 
     $j=a-1$ and $i=a-2$), we see that 
     $A$  is invertible. Choose $C_{1}, \ldots , C_{a-1}$ such that
     \begin{align}\label{choice C , i=p-1}
            \sum_{n=1}^{a-1} C_{n} \binom{r-n}{m} \binom{p-1+a-m-n}{a-m}
             = \binom{r-(p-1)}{m} - \binom{a}{m}, ~ \forall ~ 1 \leq m \leq a-1.
     \end{align}
      Let 
      \begin{align*}
             G(X,Y) & := F(X,Y) + \sum_{n=1}^{a-1} C_{n} \sum_{k \in \fstar}^{}
            k^{n}  X^{n} (k X+ Y)^{r-n}  \\
             & \stackrel{\eqref{sum fp}}{\equiv}   
             X^{p-1}Y^{r-(p-1)}- X^{r-a}Y^{a}  - \sum_{n=1}^{a-1} C_{n} 
             \sum_{\substack{0 \leq l \leq r-n \\ l \equiv a~\mathrm{mod}~(p-1)}}^{}
             \binom{r-n}{l} X^{r-l} Y^{l} \mod p.
      \end{align*}
      By \Cref{Basis of X_r-i}, we see that $G(X,Y) \in X_{r-(p-1)}$.
       We claim that $G(X,Y) \in V_{r}^{(a)}$  and generates
       $V_{r}^{(a)}/V_{r}^{(a+1)}$.
      Clearly $Y^{a} \mid G(X,Y)$ and the coefficient
      of $Y^{r}$ in $G(X,Y)$ is zero. Since the smallest number
      strictly less than $r$ congruent to $a$ mod $(p-1)$ is 
      $r-(p-1)$, we see that   $X^{p-1} \mid G(X,Y)$. 
      So $G(X,Y)$ satisfies condition (i) of  \Cref{divisibility1} for 
      $m=a$.  By \Cref{binomial sum} with $m=0$, we have
      \[
           1-1-\sum_{n=1}^{a-1} C_{n} 
           \sum_{\substack{0 \leq l \leq r-n \\ l \equiv a~\mathrm{mod}~(p-1)}}^{}
           \binom{r-n}{l}  \equiv - \sum_{n=1}^{a-1} C_{n} \binom{a-n}{a} 
           \equiv 0 ~\mathrm{mod}~p.
      \] 
       For $1 \leq m \leq a-1$,
      again by \Cref{binomial sum}, we have
      \[
           \sum_{n=1}^{a-1} C_{n} 
           \sum_{\substack{0 \leq l \leq r-n \\ l \equiv a~\mathrm{mod}~(p-1)}}^{}
           \binom{r-n}{l}  \binom{l}{m}\equiv \sum_{n=1}^{a-1} C_{n} 
           \binom{r-n}{m} \binom{[a-m-n]}{a-m} \mod p.
      \] 
      If   $m+n<a$, then by Lucas' theorem we see that
      $\binom{[a-m-n]}{a-m} \equiv \binom{a-m-n}{a-m} \equiv 0 $ mod $p$
      and $\binom{p-1+a-m-n}{a-m} \equiv \binom{a-m-n-1}{a-m} \equiv 0$
      mod $p$. If $m+n \geq a$, then $[a-m-n] =p-1+a-m-n$. Therefore
      $\binom{[a-m-n]}{a-m} \equiv \binom{p-1+a-m-n}{a-m}$ mod $p$. 
      Thus 
      \begin{align*}
             \sum_{n=1}^{a-1} C_{n}  \binom{r-n}{m} \binom{[a-m-n]}{a-m}
             & \equiv  \sum_{n=1}^{a-1} C_{n}  \binom{r-n}{m}
              \binom{p-1+a-m-n}{a-m}  \mod p\\
            & \stackrel{ \eqref{choice C , i=p-1}}{\equiv}
             \binom{r-(p-1)}{m} - \binom{a}{m} \mod p.
      \end{align*}
      Thus, by \Cref{divisibility1}, we see that $G(X,Y) \in V_{r}^{(a)}$.  Since 
     $a \neq p-1$, the sequence \eqref{exact sequence Vr} doesn't split.
     To show $X_{r-(p-1)}^{(a)}/X_{r-(p-1)}^{(a+1)} = V_{r}^{(a)}/V_{r}^{(a+1)}$,
     it is enough to show  the image of $G(X,Y)$ under the rightmost 
     map in the exact sequence \eqref{exact sequence Vr} is non-zero. 
     By \Cref{binomial sum}, we have 
     \begin{align*}
        -\sum_{n=1}^{a-1} C_{n}  
        \sum_{\substack{0 \leq l \leq r-n \\ l \equiv a~\mathrm{mod}~(p-1)}}^{}
           \binom{r-n}{l} \binom{l}{a}  - \binom{a}{a}
           & \equiv    -\sum_{n=1}^{a-1} C_{n}  \binom{r-n}{a} -1 \mod p \\ 
         & = \mathrm{the ~ coefficient~ of} ~ X^{r-a}Y^{a}~ \mathrm{in}~ G(X,Y).
     \end{align*}
       Since $X^{p-1} \mid G(X,Y)$, by  \Cref{breuil map quotient}, 
       we see that 
      \[
        G(X,Y) \equiv \binom{r-(p-1)}{a} \theta^{a}   X^{r-a(p+1)-(p-1)}Y^{p-1} 
        \mod V_{r}^{(a+1)},
      \]
      up to terms involving $\theta^{a}X^{r-a(p+1)}$ and 
      $\theta^{a} Y^{r-a(p+1)}$.
      Since $r \equiv a, a+1, \ldots, p-2$ mod $p$, by Lucas' theorem,
      we have   $\binom{r-(p-1)}{a} \not \equiv 0$ mod $p$. 
      Thus, by  \Cref{Breuil map}, the image of $G(X,Y)$
      under the rightmost map in the sequence \eqref{exact sequence Vr} is non-zero. 
       This proves that 
      $G(X,Y)$ generates $V_{r}^{(a)}/V_{r}^{(a+1)}$ as a $\Gamma$-module
      and this finishes the proof.
\end{proof}
\begin{proposition}\label{singular i=p-1}
        Let $p \geq 3$, $r \equiv a~\mathrm{mod}~(p-1)$ with $1 \leq a < p-1$
        and $r \equiv r_{0} ~\mathrm{mod}~p$ with $0 \leq r_{0} \leq p-1$.
        Let $j \in \lbrace a, p-1 \rbrace$. If $r \geq j(p+1)+p$, then
        \begin{align*}
               \frac{X_{r-(p-1)}^{(j)}}{X_{r-(p-1)}^{(j+1)}} =
               \begin{cases}
                       V_{r}^{(a)}/V_{r}^{(a+1)}, & \mathrm{if}~
                       j=a~ \mathrm{and} ~r \equiv a-1, \ldots,p-2 
                       ~\mathrm{mod}~p \\
                       X_{r-a}^{(j)}/X_{r-a}^{(j+1)}, & \mathrm{otherwise}.
               \end{cases}
        \end{align*}
\end{proposition}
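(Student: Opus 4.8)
The plan is to assemble the proposition from the two immediately preceding lemmas, \Cref{i=p-1 exceptional case} and \Cref{i=p-1, full}, together with \Cref{reduction}, \Cref{induced and successive}, \Cref{Common JH factor}, \Cref{induced and star} and \Cref{Structure of induced}; throughout $1\le a<p-1$, so in particular $a\le p-2$. First I would dispose of $j=a$. The residue classes modulo $p$ partition as $\{0,1,\dots,a-2\}\cup\{p-1\}$ and $\{a-1,a,\dots,p-2\}$. If $r$ lies in the first set, \Cref{i=p-1 exceptional case} gives $X_{r-(p-1)}^{(a)}/X_{r-(p-1)}^{(a+1)}=X_{r-a}^{(a)}/X_{r-a}^{(a+1)}$, which is the ``otherwise'' case; if $r$ lies in the second set, \Cref{i=p-1, full} gives $X_{r-(p-1)}^{(a)}/X_{r-(p-1)}^{(a+1)}=V_{r}^{(a)}/V_{r}^{(a+1)}$. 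Since $j=a\ne p-1$, this settles the case $j=a$.

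For $j=p-1$ I must show $X_{r-(p-1)}^{(p-1)}/X_{r-(p-1)}^{(p)}=X_{r-a}^{(p-1)}/X_{r-a}^{(p)}$ for every residue of $r$ modulo $p$. Since $[a-(p-1)]=a$ and $a\le p-2<p-1$, \Cref{reduction} applied with subscript $p-2$ identifies $X_{r-(p-2)}^{(p-1)}/X_{r-(p-2)}^{(p)}=X_{r-a}^{(p-1)}/X_{r-a}^{(p)}$, so it suffices to prove $Y_{p-1,p-1}=0$. When $r\equiv 0,\dots,a-2$ or $p-1\bmod p$ this is immediate from \Cref{i=p-1 exceptional case}. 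The remaining case $r\equiv a-1,a,\dots,p-2\bmod p$ is the crux, and I would treat it by a Jordan--H\"older multiplicity count for the factor $V_{a}$. By \Cref{induced and successive}, $X_{r-(p-1)}/X_{r-(p-2)}$ is a quotient of $\ind_{B}^{\Gamma}(\chi_{1}^{r-(p-1)}\chi_{2}^{p-1})$, so by \Cref{Common JH factor} (i) each of its Jordan--H\"older factors occurs at most once, whence $[X_{r-(p-1)}/X_{r-(p-2)}:V_{a}]\le 1$. On the other hand, by \eqref{Y i,j} the quotients $Y_{p-1,a}$ and $Y_{p-1,p-1}$ are graded pieces (for $m=a$ and $m=p-1$, which are distinct since $a\le p-2$) of the filtration of $X_{r-(p-1)}/X_{r-(p-2)}$ by the submodules $(X_{r-(p-1)}^{(m)}+X_{r-(p-2)})/X_{r-(p-2)}$, so $[Y_{p-1,a}:V_{a}]+[Y_{p-1,p-1}:V_{a}]\le[X_{r-(p-1)}/X_{r-(p-2)}:V_{a}]\le 1$. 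Now $Y_{p-1,a}$ is the quotient of $X_{r-(p-1)}^{(a)}/X_{r-(p-1)}^{(a+1)}=V_{r}^{(a)}/V_{r}^{(a+1)}$ (by \Cref{i=p-1, full}) by its submodule $X_{r-(p-2)}^{(a)}/X_{r-(p-2)}^{(a+1)}=X_{r-a}^{(a)}/X_{r-a}^{(a+1)}$ (\Cref{reduction}), and by \Cref{singular i=a} this submodule is either $0$ or the socle $V_{p-1-a}\otimes D^{a}$ of $V_{r}^{(a)}/V_{r}^{(a+1)}\cong \ind_{B}^{\Gamma}(\chi_{1}^{a}\chi_{2}^{r-a})$ (using the non-split description from \Cref{induced and star} and \Cref{Structure of induced}); in either case the cosocle $V_{a}$ survives, so $[Y_{p-1,a}:V_{a}]=1$. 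Finally $Y_{p-1,p-1}=X_{r-(p-1)}^{(p-1)}/X_{r-(p-1)}^{(p)}$ embeds as a submodule of $V_{r}^{(p-1)}/V_{r}^{(p)}\cong\ind_{B}^{\Gamma}(\chi_{1}^{p-1}\chi_{2}^{r-(p-1)})$, whose socle is $V_{a}$; hence if $Y_{p-1,p-1}\ne 0$ then $[Y_{p-1,p-1}:V_{a}]\ge 1$, forcing $[X_{r-(p-1)}/X_{r-(p-2)}:V_{a}]\ge 2$, a contradiction. Thus $Y_{p-1,p-1}=0$, completing the proof.

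The main obstacle is precisely this last case, $j=p-1$ with $r$ in the ``middle'' residue range $\{a-1,\dots,p-2\}$ modulo $p$: it is covered by neither preceding lemma and is what forces the multiplicity argument, while everything else is bookkeeping with \Cref{i=p-1 exceptional case}, \Cref{i=p-1, full} and \Cref{reduction}. Two small points to check carefully are that the socle of $V_{r}^{(p-1)}/V_{r}^{(p)}$ is genuinely $V_{a}$ (it is $V_{[r-2(p-1)]}\otimes D^{p-1}$ with $D^{p-1}$ trivial and $[r-2(p-1)]=a$), so that every nonzero submodule of it contains $V_{a}$, and that $X_{r-a}^{(a)}/X_{r-a}^{(a+1)}$, when nonzero, is exactly the socle of $V_{r}^{(a)}/V_{r}^{(a+1)}$ rather than some other subquotient, which follows because the Jordan--H\"older factors of $\ind_{B}^{\Gamma}(\chi_{1}^{a}\chi_{2}^{r-a})$ are distinct.
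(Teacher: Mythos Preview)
Your overall strategy matches the paper's, and the cases $j=a$ and $j=p-1$ with $r_0\in\{0,\dots,a-2,p-1\}$ are handled correctly. However, there is a genuine gap in the remaining case $j=p-1$, $r_0\in\{a-1,\dots,p-2\}$: you assert
\[
  Y_{p-1,p-1}=X_{r-(p-1)}^{(p-1)}/X_{r-(p-1)}^{(p)},
\]
but by definition $Y_{p-1,p-1}$ is the quotient of this by $X_{r-(p-2)}^{(p-1)}/X_{r-(p-2)}^{(p)}=X_{r-a}^{(p-1)}/X_{r-a}^{(p)}$, and by \Cref{singular i=a} the latter equals $V_a$ when $r_0=a-1$, not $0$. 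So for $r_0=a-1$, $Y_{p-1,p-1}$ is only a \emph{subquotient} of $V_r^{(p-1)}/V_r^{(p)}$, and a nonzero subquotient of a non-split length-two module need not contain the socle: a priori $Y_{p-1,p-1}$ could be the cosocle $V_{p-1-a}\otimes D^{a}$, which has $V_a$-multiplicity zero, and your $V_a$-count yields no contradiction.

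The paper closes this gap by splitting off $r_0=a-1$ and using a dimension count: there $X_{r-a}^{(a)}/X_{r-a}^{(a+1)}=0$, so $Y_{p-1,a}=V_r^{(a)}/V_r^{(a+1)}$ has dimension $p+1$; since $\dim Y_{p-1,a}+\dim Y_{p-1,p-1}\le\dim X_{r-(p-1)}/X_{r-(p-2)}\le p+1$, this forces $Y_{p-1,p-1}=0$. Equivalently, you can stay with your multiplicity approach but count the \emph{other} factor as well: when $r_0=a-1$ one has $[Y_{p-1,a}:V_{p-1-a}\otimes D^{a}]=1$, hence $[Y_{p-1,p-1}:V_{p-1-a}\otimes D^{a}]=0$; combined with $[Y_{p-1,p-1}:V_a]=0$ this gives $Y_{p-1,p-1}=0$ since these are the only JH factors of $V_r^{(p-1)}/V_r^{(p)}$. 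For $r_0\in\{a,\dots,p-2\}$ your argument is fine as written, because then $X_{r-a}^{(p-1)}/X_{r-a}^{(p)}=0$ by \Cref{singular i=a} and the displayed equality does hold.
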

\begin{proof} 
      Since $a  \leq p-2 < p-1$,  by the first and  
      second  parts of \Cref{reduction} with $j=a$ and  $j=p-1$, we have
       $X_{r-(p-2)}^{(a)}/X_{r-(p-2)}^{(a+1)} =
       X_{r-a}^{(a)}/X_{r-a}^{(a+1)}$ and
       $X_{r-(p-2)}^{(p-1)}/X_{r-(p-2)}^{(p)}=
       X_{r-a}^{(p-1)}/X_{r-a}^{(p)}$ 
       respectively.  We consider the cases $j=a$ and $j=p-1$
       separately.
       
       $\boldsymbol{\mathrm{Case} ~ j = a}$\textbf{:}
       If  $ r \equiv a-1, \ldots,p-2 $ mod $p$, then 
       $X_{r-(p-1)}^{(a)}/X_{r-(p-1)}^{(a+1)} = V_{r}^{(a)}/V_{r}^{(a+1)}$,
       by Lemma~\ref{i=p-1, full}.  If
       $ r  \not \equiv a-1, \ldots,p-2 $ mod $p$, then 
       $X_{r-(p-1)}^{(a)}/X_{r-(p-1)}^{(a+1)} =
       X_{r-a}^{(a)}/X_{r-a}^{(a+1)}$, by \Cref{i=p-1 exceptional case}.
       
       $\boldsymbol{\mathrm{Case} ~ j = p-1}$\textbf{:}
       So $j \neq a$.
       If $r \not \equiv a-1, \ldots, p-2$ mod $p$, then
       again by \Cref{i=p-1 exceptional case}, we have
       $X_{r-(p-1)}^{(p-1)}/X_{r-(p-1)}^{(p)}
       = X_{r-a}^{(p-1)}/X_{r-a}^{(p)}$ and we are done. So assume 
       $r  \equiv a-1, \ldots, p-2$ mod $p$. Then by the
       $j=a$ case, we have $X_{r-(p-1)}^{(a)}/X_{r-(p-1)}^{(a+1)}
       = V_{r}^{(a)}/V_{r}^{(a+1)}$.  Since
       $X_{r-(p-2)}^{(a)}/X_{r-(p-2)}^{(a+1)} = 
       X_{r-a}^{(a)}/X_{r-a}^{(a+1)}$,
      so $Y_{p-1,a} = (V_{r}^{(a)}/V_{r}^{(a+1)})/(X_{r-a}^{(a)}/X_{r-a}^{(a+1)}) $.       
       Thus, by \Cref{singular i=a}
       and the exact sequence \eqref{exact sequence Vr}, we have
       \begin{align}\label{Y p-1 a}
               Y_{p-1,a} =
               \begin{cases}
                       V_{r}^{(a)}/V_{r}^{(a+1)}, & \mathrm{if}~ 
                       r \equiv a-1 ~\mathrm{mod}~p, \\
                       V_{a}, & \mathrm{if}~ r \equiv a, a+1, \ldots,p-2
                       ~\mathrm{mod}~p.
               \end{cases}
        \end{align} 
         By \eqref{Y i,j}
         and the chain \eqref{ascending chain}, we see that
         $\dim Y_{p-1,p-1} + \dim Y_{p-1,a} \leq 
         \dim  X_{r-(p-1)}/X_{r-(p-2)} $. But by 
         \Cref{induced and successive}, we have
         $ \dim X_{r-(p-1)}/X_{r-(p-2)} \leq p+1$. If $r \equiv a-1$ mod $p$,
         then it follows 
         from \eqref{Y p-1 a} that $Y_{p-1,p-1} = (0)$,
         so $X_{r-(p-1)}^{(p-1)}/X_{r-(p-1)}^{(p)} =  
         X_{r-(p-2)}^{(p-1)}/X_{r-(p-2)}^{(p)}  =
         X_{r-a}^{(p-1)}/X_{r-a}^{(p)}$ and again we are done. 
         If $r \equiv a, \ldots, p-2$ mod $p$,
         then by \Cref{singular i=a}, we have 
         $X_{r-(p-2)}^{(p-1)}/X_{r-(p-2)}^{(p)}=X_{r-a}^{(p-1)}/X_{r-a}^{(p)}
         =(0)$, so $Y_{p-1,p-1} =X_{r-(p-1)}^{(p-1)}/X_{r-(p-1)}^{(p)}$. 
         Since the exact sequence \eqref{exact sequence Vr}
         doesn't split for $m=p-1$ and $a< p-1$, we see that
         $Y_{p-1,p-1} \neq (0)$ if and only if  $V_{a}
         \hookrightarrow Y_{p-1,p-1}$.   By  \Cref{induced and successive} and 
         \Cref{Common JH factor} (i),  we see that $X_{r-(p-1)}/X_{r-(p-2)}$
         doesn't have repeated JH factor. This forces $Y_{p-1,p-1} = (0)$,
          as otherwise the distinct subquotients  $Y_{p-1,a}$ and $Y_{p-1,p-1} $  of 
          $X_{r-(p-1)}/X_{r-(p-2)}$ would both contain $V_{a}$, by \eqref{Y p-1 a}.
\end{proof}
The proposition above in conjunction with \Cref{singular i=a}
determines the quotients stated explicitly. We finally determine $Q(p-1)$.
\begin{theorem}\label{Structure of Q(i) if i = p - 1}
         Let $(p-1)(p+1)+p \leq r \equiv a ~ \mathrm{mod}~(p-1)$
         with $1 \leq a < p-1$.  Then
         \begin{enumerate}
               \item[\emph{(i)}]  If $r \equiv 0,1, \ldots, a-2$
               or $ p-1 ~\mathrm{mod}~p$,  then $Q(p-1) \cong P(p-1)$.
               \item[\emph{(ii)}]   If $r \equiv a-1~ \mathrm{mod}~p$,
                                              then   
                         \[                     
                             0 \rightarrow \frac{V_{r}^{(a)}}{V_{r}^{(a+1)}}
                              \rightarrow P(p-1) \rightarrow  Q(p-1) \rightarrow 0. 
                          \]                                           
              \item[\emph{(iii)}]  If $r \equiv a, \ldots,p-2~ \mathrm{mod}~p$,
                                              then   
                         \[                     
                             0 \rightarrow V_{a}  \rightarrow P(p-1) \rightarrow 
                             Q(p-1) \rightarrow 0. 
                          \]                                           
         \end{enumerate}    
\end{theorem}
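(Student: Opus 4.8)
The plan is to feed the theorem into the exact sequence \eqref{Q and P exact sequence} with $i=p-1$, namely
\[
 0 \to M \to P(p-1) \to Q(p-1) \to 0, \qquad M:=\frac{X_{r-(p-1)}}{X_{r-(p-2)}+X_{r-(p-1)}^{(p)}},
\]
and to identify $M$ in each of the three ranges of residues $r \bmod p$. First I would filter $M$ by the ascending chain \eqref{ascending chain} (for $i=p-1$); by \eqref{Y i,j} its successive quotients are the modules $Y_{p-1,j}$, $0\le j\le p-1$. Since $a<p-1$ we have $[a-(p-1)]=a$, so \Cref{reduction corollary}~(i) gives $Y_{p-1,j}=0$ for every $j\notin\{a,p-1\}$ (the case $j=0$ is included, via \Cref{Structure X(1)}, because $p-1,p-2\ge a$). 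Hence $M$ fits in a short exact sequence $0\to Y_{p-1,p-1}\to M\to Y_{p-1,a}\to 0$, and the theorem reduces to computing $Y_{p-1,p-1}$ and $Y_{p-1,a}$.

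For these two quotients I would write $Y_{p-1,j}\cong (X_{r-(p-1)}^{(j)}/X_{r-(p-1)}^{(j+1)})\big/(X_{r-(p-2)}^{(j)}/X_{r-(p-2)}^{(j+1)})$ and evaluate numerator and denominator separately. The numerators are given by \Cref{singular i=p-1}; the denominators are rewritten using \Cref{reduction}: since $a\le p-2<p-1$, its first (resp.\ second) part identifies $X_{r-(p-2)}^{(a)}/X_{r-(p-2)}^{(a+1)}$ with $X_{r-a}^{(a)}/X_{r-a}^{(a+1)}$ (resp.\ $X_{r-(p-2)}^{(p-1)}/X_{r-(p-2)}^{(p)}$ with $X_{r-a}^{(p-1)}/X_{r-a}^{(p)}$), and these are then known from \Cref{singular i=a}. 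When $j=p-1$, \Cref{singular i=p-1} gives $X_{r-(p-1)}^{(p-1)}/X_{r-(p-1)}^{(p)}=X_{r-a}^{(p-1)}/X_{r-a}^{(p)}$ in every case, so numerator and denominator coincide as submodules of $V_r^{(p-1)}/V_r^{(p)}$ and $Y_{p-1,p-1}=0$ identically, forcing $M\cong Y_{p-1,a}$. For $j=a$: if $r\equiv 0,\dots,a-2$ or $p-1 \bmod p$ then \Cref{singular i=p-1} again makes numerator and denominator equal, so $Y_{p-1,a}=0$ and $Q(p-1)\cong P(p-1)$; if $r\equiv a-1 \bmod p$ the numerator is all of $V_r^{(a)}/V_r^{(a+1)}$ while the denominator $X_{r-a}^{(a)}/X_{r-a}^{(a+1)}$ vanishes by \Cref{singular i=a} (as $a-1\notin\{a,\dots,p-1\}$), giving $Y_{p-1,a}\cong V_r^{(a)}/V_r^{(a+1)}$; and if $r\equiv a,\dots,p-2 \bmod p$ the numerator is $V_r^{(a)}/V_r^{(a+1)}$ and the denominator is $V_{p-1-a}\otimes D^a$, which, being non-zero, must be the socle of the non-split extension \eqref{exact sequence Vr} for $m=a$ (here one invokes \Cref{Breuil map} and $a\ne p-1$), leaving $Y_{p-1,a}\cong V_a$ as the cosocle. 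Substituting $M\cong Y_{p-1,a}$ into the displayed sequence yields cases (i), (ii), (iii).

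I expect the substance to lie entirely in bookkeeping: verifying that the index hypotheses of \Cref{reduction}, \Cref{reduction corollary}, \Cref{singular i=p-1} and \Cref{singular i=a} are met — in particular the identity $[a-(p-1)]=a$ and the inequalities $a\le p-2<p-1$ — and aligning the partition of residues $r\bmod p$ used in \Cref{singular i=p-1} with the one in the statement. The only point needing an argument beyond quoting lemmas is the range $r\equiv a,\dots,p-2\bmod p$, where one checks that the inclusion $X_{r-a}^{(a)}/X_{r-a}^{(a+1)}\hookrightarrow X_{r-(p-1)}^{(a)}/X_{r-(p-1)}^{(a+1)}=V_r^{(a)}/V_r^{(a+1)}$ hits the unique submodule $V_{p-1-a}\otimes D^a$, which is immediate from \Cref{Breuil map}. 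Alternatively, $Y_{p-1,a}$ and $Y_{p-1,p-1}$ can be read off directly from the intermediate computation \eqref{Y p-1 a} inside the proof of \Cref{singular i=p-1} together with the vanishing established there, which shortens the write-up considerably.
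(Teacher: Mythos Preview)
Your proposal is correct and follows essentially the same route as the paper: both use the sequence \eqref{Q and P exact sequence} with $i=p-1$, filter the kernel by the chain \eqref{ascending chain}, kill all $Y_{p-1,j}$ for $j\notin\{a,p-1\}$ via \Cref{reduction corollary}~(i), show $Y_{p-1,p-1}=0$ using \Cref{singular i=p-1} together with \Cref{reduction}, and then read off $Y_{p-1,a}$ case by case (the paper invokes the intermediate result \eqref{Y p-1 a} directly, which you also note as a shortcut). The only cosmetic difference is that the paper dispatches case~(i) first via \Cref{i=p-1 exceptional case} before setting up the filtration, whereas you treat all three cases uniformly through $M$; both are equivalent.
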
   
\begin{proof}
          First we consider the case $r \equiv 0,1, \ldots, a-2 $ mod $p$
          or $r \equiv p-1$ mod $p$. Then by  the first part of 
          \Cref{i=p-1 exceptional case},
           we have $X_{r-(p-1)}+V_{r}^{(p)}=X_{r-(p-2)}+V_{r}^{(p)}$.
          Thus $Q(p-1) \cong P(p-1)$.
          
          By the exact sequence \eqref{Q and P exact sequence} 
          with $i =p-1$, we have
          \[
              0 \rightarrow    \frac{X_{r-(p-1)}}{X_{r-(p-2)} + X_{r-(p-1)}^{(p)}}                
              \rightarrow P(p-1) \rightarrow Q(p-1) \rightarrow 0.
          \]
          Observe that we have an ascending chain of modules
          \[
              X_{r-(p-2)} + X_{r-(p-1)}^{(p)}  \subseteq 
              X_{r-(p-2)} + X_{r-(p-1)}^{(p-1)}
              \subseteq \cdots \subseteq 
               X_{r-(p-2)} + X_{r-(p-1)}^{(1)}
              \subseteq X_{r-(p-1)}.
          \]
          Note that by \eqref{Y i,j}, the successive quotients
          are isomorphic to $Y_{p-1,j}$, for $0 \leq j \leq p-1$. 
          If  $j \neq a$, $p-1$, then by \Cref{reduction corollary} (i), 
          we have $Y_{p-1,j} =(0)$. 
          By \Cref{singular i=p-1}, we have 
          $X_{r-(p-2)}^{(p-1)}/X_{r-(p-2)}^{(p)} \subseteq
          X_{r-(p-1)}^{(p-1)}/X_{r-(p-1)}^{(p)} \subseteq
          X_{r-a}^{(p-1)}/X_{r-a}^{(p)} \subseteq 
          X_{r-(p-2)}^{(p-1)}/X_{r-(p-2)}^{(p)}$. So, $Y_{p-1,p-1}=(0)$.
          By \eqref{Y p-1 a},
          $Y_{p-1,a} = V_{r}^{(a)}/V_{r}^{(a+1)}$ if $r \equiv 
          a-1 $ mod $p$ and is equal to $V_{a}$ if $r \equiv a, \ldots, p-2 $
          mod $p$.
\end{proof}   
The  above theorem determines the structure of $Q(p-1)$ in terms of
$P(p-1)$ which in principle is determined by $Q(p-2)$, by the remarks 
made at the beginning of Section~\ref{section Q}. If $a=p-2$, then $Q(p-2)=Q(a)$.
Otherwise $[a-(p-2)] = a+1 \leq p-2 $, so $Q(p-2)$ is in turn determined by
$Q(a)$ by \Cref{Structure of Q i=[a-i]} and \Cref{Structure of Q(i) i>[a-i]}.
Thus, the structure of $Q(p-1)$ can in principle be obtained from $Q(a)$,
which was determined in the previous subsection.
\subsection{Irreducibility of \texorpdfstring{$Q(i)$}{} }
It is possible to completely determine the JH factors of $Q(i)$ in all cases using the results
of Sections~\ref{Section i not a nor p - 1} and \ref{Section i = a or p - 1}. An an example,
in this section we determine when $Q(i)$ is irreducible, 
for $1 \leq i \leq p-1$. 
\begin{lemma}
      Let  $p \geq 3$, $r \equiv a~\mathrm{mod}~(p-1)$ with $1 \leq a \leq p-1$.
      If $0 \leq i < p-1$ and $r \geq i(p+1)+p$, then $Q(i) \neq (0)$. 
\end{lemma}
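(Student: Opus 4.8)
The statement to prove is that $Q(i) \neq (0)$ for $0 \leq i < p-1$ and $r \geq i(p+1)+p$. Recall that $Q(i) = V_r/(X_{r-i,\,r} + V_r^{(i+1)})$, so it suffices to show $X_{r-i,\,r} + V_r^{(i+1)} \subsetneq V_r$. The cleanest way to do this is to produce an irreducible quotient of $V_r$ that all of $X_{r-i}$ and $V_r^{(i+1)}$ map into zero. The natural candidate is the cosocle of $V_r$, which by the exact sequence \eqref{exact sequence Vr} (with $m=0$) together with \Cref{Structure of induced} is the JH factor $V_{p-1-a} \otimes D^a$ appearing as the quotient in $0 \to V_a \to V_r/V_r^{(1)} \to V_{p-1-a}\otimes D^a \to 0$.

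\textbf{Key steps.} First I would reduce to showing that the composite surjection $V_r \twoheadrightarrow V_r/V_r^{(1)} \twoheadrightarrow V_{p-1-a}\otimes D^a$ kills $X_{r-i,\,r}$ and $V_r^{(i+1)}$. That it kills $V_r^{(i+1)}$ is immediate since $V_r^{(i+1)} \subseteq V_r^{(1)}$ (using $i+1 \geq 1$). That it kills $X_{r-i,\,r}$ is exactly the content of \Cref{Structure X(1)}: for $0 \leq i < a$ we have $X_{r-i}/X_{r-i}^{(1)} \cong V_a$, which is the \emph{submodule} of $V_r/V_r^{(1)}$, so its image in the quotient $V_{p-1-a}\otimes D^a$ is zero; and this argument goes through whenever $i < a$. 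So the desired surjection $Q(i) \twoheadrightarrow V_{p-1-a}\otimes D^a$ (which is nonzero since $V_{p-1-a}\otimes D^a$ is an irreducible, hence nonzero, $\Gamma$-module by \Cref{Basis of X_r-i}'s preceding lemma) exists, and $Q(i) \neq (0)$, for all $0 \leq i < a$.

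\textbf{The remaining range $a \leq i \leq p-2$.} Here \Cref{Structure X(1)} says $X_{r-i}/X_{r-i}^{(1)} = V_r/V_r^{(1)}$, so the cosocle argument fails and a different target quotient is needed. The natural replacement is the principal series representation $V_r^{(i)}/V_r^{(i+1)} \cong \ind_B^\Gamma(\chi_1^i \chi_2^{r-i})$ (valid by \Cref{induced and star} since $r \geq i(p+1)+p$), or rather its cosocle; equivalently, one wants to show $X_{r-i} + V_r^{(i+1)} \subsetneq X_{r-i} + V_r^{(i)}$ is impossible only in degenerate cases, i.e. that $X_{r-i}^{(i)}/X_{r-i}^{(i+1)} \subsetneq V_r^{(i)}/V_r^{(i+1)}$. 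The right tool is diagram \eqref{commutative diagram} with $j=i$: the bottom-right entry is $V_r/(X_{r-i}+V_r^{(i)})$, and by \Cref{reduction corollary}(ii) applied inductively (or directly since $X_{r-i} \subseteq X_{r-(i-1)} + V_r^{(i')}$ with $i' = \min\{i,[a-i]\} \leq i$) one gets $X_{r-i}+V_r^{(i)} = X_{r-1} + V_r^{(i)}$, hence $V_r/(X_{r-i}+V_r^{(i)}) \cong Q(a-1)$ or can be related down to $Q(0)$ or $Q(1)$; combined with the exact sequence in \eqref{commutative diagram} this shows $Q(i)$ surjects onto a nonzero module as soon as the corresponding lower $Q(j)$ is nonzero, setting up an induction on $i$ with base case $i < a$ already handled. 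The main obstacle is bookkeeping: one must verify that the reduction $Q(i) \twoheadrightarrow Q(j-1)$ (with $j = \min\{i,[a-i]\}$, or $j=[a-i]$ in the $i>[a-i]$ case) always lands on an index strictly smaller, and that the chain of reductions terminates at an index $< a$ (equivalently at $Q(0)$ or $Q(1)$, which are nonzero by \Cref{Structure Q(0)} and \cite{BG15} respectively); this is precisely the recursive structure laid out in the paper's introduction and in \Cref{Structure of Q(i) if i<[a-i]}, \Cref{Structure of Q i=[a-i]}, \Cref{Structure of Q(i) i>[a-i]}, \Cref{Structure of Q(i) if i = a}, \Cref{Structure of Q(i) if i = p - 1}. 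Since every one of those exact sequences has $Q(i)$ surjecting onto a strictly-lower $Q$, and the descent bottoms out at a nonzero module, $Q(i) \neq (0)$ follows in all cases $0 \leq i < p-1$.
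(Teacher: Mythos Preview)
Your treatment of the range $0 \leq i < a$ is correct and is essentially the paper's argument: both come down to \Cref{Structure X(1)}, which gives $X_{r-i}/X_{r-i}^{(1)} \cong V_a \subsetneq V_r/V_r^{(1)}$, so the cosocle $V_{p-1-a}\otimes D^a$ of $V_r/V_r^{(1)}$ survives in $Q(i)$.

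For $a \leq i \leq p-2$, however, your recursion has a genuine gap. The structural theorems do \emph{not} all exhibit $Q(i)$ surjecting onto a strictly lower $Q$: at $i=a$ the relevant statement is $0 \to W' \to P(a) \to Q(a) \to 0$ (\Cref{Structure of Q(i) if i = a}), so $Q(a)$ is a \emph{quotient} of $P(a)$, not a module surjecting onto $Q(a-1)$. And the descent from any $a < i \leq p-2$ inevitably passes through $Q(a)$: when $i>a$ and $i<[a-i]$, repeated use of \Cref{Structure of Q(i) if i<[a-i]} steps $i \mapsto i-1$ down to $a$; when $i>a$ and $i>[a-i]$, one has $[a-i]-1 = p-2+a-i \geq a$, so \Cref{Structure of Q(i) i>[a-i]} again keeps you at or above $a$. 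Thus your induction stalls precisely at $i=a$, where the needed surjection does not exist. (One can patch this by analysing $P(a)$ via \eqref{Q i-1 and P i} and checking $W'$ is too small to exhaust $P(a)$, but you have not done so, and this is already more work than the paper's argument.)

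The paper's proof for $a \leq i < p-1$ is direct and avoids the recursion entirely. Assuming $Q(i)=0$ forces $X_{r-i}^{(j)}/X_{r-i}^{(j+1)} = V_r^{(j)}/V_r^{(j+1)}$ for every $0 \leq j \leq i$; take $j=a$. Since $a \leq i < p-1 = [a-a]$, the first case of \Cref{reduction} gives $X_{r-i}^{(a)}/X_{r-i}^{(a+1)} = X_{r-a}^{(a)}/X_{r-a}^{(a+1)}$, and by \Cref{singular i=a} (with $j=a$) the latter is either $V_{p-1-a}\otimes D^a$ or $(0)$, in either case properly contained in the $(p+1)$-dimensional $V_r^{(a)}/V_r^{(a+1)}$. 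This immediate contradiction replaces your entire inductive apparatus.
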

\begin{proof}
       Suppose $Q(i)=(0)$, towards a contradiction.
       Then, by the exact sequence \eqref{Q(i) exact sequence},
       we have $X_{r-i}/X_{r-i}^{(i+1)} = V_{r}/V_{r}^{(i+1)}$ and so
      $X_{r-i}^{(j)}/X_{r-i}^{(j+1)} = V_{r}^{(j)}/V_{r}^{(j+1)}$, for all 
      $0 \leq j \leq i$.   If $1 \leq i < a$, then by \Cref{Structure X(1)}, we have
      $V_{a} \cong X_{r-i}/X_{r-i}^{(1)} \subsetneq V_{r}/V_{r}^{(1)} $,
      a contradiction.
      If $a \leq i < p-1$, then by the first part of \Cref{reduction}
      (applied with  $i$ there equal to $i$ and $j=a$), we see that 
      $X_{r-i}^{(a)}/X_{r-i}^{(a+1)}
      = X_{r-a}^{(a)}/X_{r-a}^{(a+1)} $.  Hence,
       by \Cref{singular i=a} (applied with $j=a$), we have $X_{r-i}^{(a)}/X_{r-i}^{(a+1)}
       \subsetneq V^{(a)}_{r}/V_{r}^{(a+1)} $, which is again a contradiction.
\end{proof}
\begin{lemma}
         Let  $p \geq 3$, $r \equiv a~\mathrm{mod}~(p-1)$ with $1 \leq a \leq p-1$.
         Let $1 \leq i < p-2$ with $i \neq a-1$, $a$. 
         If $i(p+1)+p \leq r$, then $Q(i)$ is reducible. 
\end{lemma}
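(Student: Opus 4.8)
The plan is to show that when $1 \le i < p-2$ with $i \neq a-1, a$, the module $Q(i)$ has at least two Jordan--H\"older factors, so it cannot be irreducible. The idea is to use the recursive structure established in the previous subsections: each of Theorems~\ref{Structure of Q(i) if i<[a-i]}, \ref{Structure of Q i=[a-i]} and \ref{Structure of Q(i) i>[a-i]} expresses $Q(i)$ as an extension $0 \to W \to Q(i) \to Q(j-1) \to 0$ with $j = \min\{i, [a-i]\}$, and since $i \neq a$, $p-1$, one of these theorems applies. If $W \neq 0$, then combined with the fact that $Q(j-1) \neq 0$ (by the preceding lemma, since $j - 1 \leq i - 1 < p-1$ when $j \geq 1$, and we need $r$ large enough, which follows from $r \geq i(p+1)+p \geq (j-1)(p+1)+p$), we immediately get that $Q(i)$ has length at least $2$. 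So the real work is to handle the case $W = 0$, i.e., $Q(i) \cong Q(j-1)$, and to show that $Q(j-1)$ itself is reducible.

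First I would dispose of the cases where $W \neq 0$ can be read off directly, or else reduce to showing $Q(j-1)$ is reducible. The cleanest framing: it suffices to prove that $Q(i)$ always has at least two JH factors. I would induct on $i$ (or on $\min\{i,[a-i]\}$). When $j = \min\{i,[a-i]\} = 1$, then $Q(j-1) = Q(0) \cong V_{p-1-a} \otimes D^a$ is irreducible of dimension $p-a \geq 1$; here I need $W \neq 0$ to conclude. Looking at the explicit $W$: in Theorem~\ref{Structure of Q(i) if i<[a-i]} with $j = i = 1 < [a-1]$, $W$ is either $V_{p-1-[a-2]}\otimes D^{a-1}$ or $V_r^{(1)}/V_r^{(1+1)}$, both nonzero; in Theorem~\ref{Structure of Q(i) i>[a-i]} with $[a-i] = 1 < i$, $W = 0$ only when $r_0 \notin \mathcal{J}(a,i)$, and then $Q(i) \cong Q([a-i]-1) = Q(0)$, forcing $i$ to behave like... but wait, $Q(i)\cong Q(0)$ irreducible is allowed only if it's genuinely irreducible, which is exactly the case $i = a-1$ or $a$ per Theorem~\ref{irreducible} --- excluded here. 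So the crux is: I must show that whenever $W=0$ occurs, the resulting chain of isomorphisms $Q(i) \cong Q(j_1 - 1) \cong Q(j_2 - 1) \cong \cdots$ eventually terminates at a $Q(i')$ with $i' \in \{0, 1\}$ or $i' = a-1, a$, and then check that value is reducible unless $i$ was one of the excluded values.

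Concretely, I would argue as follows. By Theorem~\ref{irreducible} (which the paper has already proven via the strategy outlined in the introduction), $Q(i)$ is irreducible if and only if ($i = a-1$ or $a$, and $r_0 \in \{a, \dots, p-1\}$) or ($i = p-1$, $a=1$, $r_0 = 0$). Under our hypotheses $1 \le i < p-2$ and $i \neq a-1, a$, none of these conditions can hold: the first two require $i \in \{a-1, a\}$, and the third requires $i = p-1$, contradicting $i < p-2$. Therefore $Q(i)$ is not irreducible. Since we already know $Q(i) \neq (0)$ by the previous lemma (as $i < p-1$ and $r \geq i(p+1)+p$), the module $Q(i)$ is reducible.

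The main obstacle --- if one does not wish to cite Theorem~\ref{irreducible} --- is controlling the terminus of the recursion in the $W=0$ branch of Theorem~\ref{Structure of Q(i) i>[a-i]}: one would need to track how $i$ drops to $[a-i]-1$ and verify via the interval conditions $\mathcal{I}(a,\cdot), \mathcal{J}(a,\cdot)$ of Lemmas~\ref{interval and binomial} and \ref{I vs J} that the terminal index is never an excluded value when the starting index $i$ is not. This is precisely the bookkeeping carried out in the introduction's outline and in the proof of Theorem~\ref{irreducible}, so the honest and short route is to invoke that theorem directly, as above. I would therefore present the proof in the two-line form: apply Theorem~\ref{irreducible} to see $Q(i)$ is not irreducible, and the preceding lemma to see $Q(i) \neq (0)$, hence $Q(i)$ is reducible.
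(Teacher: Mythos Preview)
Your proposal is circular. Theorem~\ref{irreducible} in the introduction is the same as Theorem~\ref{irreducible Q(i)} at the end of \S4.3, and the paper proves it by ``collecting the lemmas above''---including the very lemma you are trying to prove. So invoking Theorem~\ref{irreducible} here assumes what is to be shown.

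Your earlier discussion, before you abandoned it, was close to the paper's actual argument. The paper splits into two cases. If $i<[a-i]$, then in Theorem~\ref{Structure of Q(i) if i<[a-i]} the submodule $W$ is \emph{always} nonzero (it is either $V_{p-1-[a-2i]}\otimes D^{a-i}$ or the full $V_r^{(i)}/V_r^{(i+1)}$), and $Q(i-1)\neq 0$ by the preceding lemma, so $Q(i)$ has at least two JH factors. If $i\geq[a-i]$, set $i'=[a-i]-1$. The hypotheses $i\neq a-1$ and $i\neq p-2$ give $i'\geq 1$ and $i'\neq a$; one checks $[a-i']=i+1$ so that $1\leq i'<[a-i']<p-1$, placing $i'$ back in the first case. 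Hence $Q(i')$ is reducible, and since $Q(i)$ surjects onto $Q(i')=Q([a-i]-1)$ by Theorem~\ref{Structure of Q i=[a-i]} or \ref{Structure of Q(i) i>[a-i]}, so is $Q(i)$. No tracking of the intervals $\mathcal{I}(a,\cdot)$, $\mathcal{J}(a,\cdot)$ or of the $W=0$ branch is needed: the point is that even when $W=0$, the quotient $Q([a-i]-1)$ is itself already reducible by the first case.
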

\begin{proof}
        If $i< [a-i]$, then by \Cref{Structure of Q(i) if i<[a-i]},
        we see that $Q(i)$ is reducible as both $W$ there and $Q(i-1)$ do not vanish. 
        If $i \geq [a-i]$, then $[a-i]-1 <  i+1 = [a -([a-i]-1)]$.
        Since  $i \neq a-1$, $p-1$, we have $[a-i]-1 \neq 0$. Also
        $[a-i]-1 \neq a$ as $i \neq p-2$. Thus $Q([a-i]-1)$ is reducible by what we
        just proved. Hence, so is $Q(i)$ by Theorems~\ref{Structure of Q i=[a-i]}
        and \ref{Structure of Q(i) i>[a-i]}.
\end{proof}
We next consider the case  $i = a-1$ and $i = a$.
\begin{lemma}\label{irred Q(a), Q(a-1)}
         Let  $p \geq 3$, $r \equiv a~\mathrm{mod}~(p-1)$ with $1 \leq a \leq p-1$.
         Let $r \equiv r_{0} ~\mathrm{mod}~p$ with $0 \leq r_{0} \leq p-1$. 
         Let $\mathcal{J}(a,a-1) = \{ 0,1, \ldots, a-1 \}$.
          \begin{enumerate}
                 \item[\em{(i)}] If $a >1$ and $r \geq (a-1)(p+1)+p$, then   
                 $Q(a-1)$ is irreducible if and only if
                 $r_0 \not \in \mathcal{J}(a,a-1)$.
                 Furthermore, in this case $Q(a-1) \cong 
                 V_{p-1-a} \otimes D^{a}$.
                 \item[\em{(ii)}] If $r \geq a(p+1)+p$, then   $Q(a)$ is 
                 irreducible if and only if
                 $r_0 \not \in \mathcal{J}(a,a-1)$.
                 Furthermore, in this case $Q(a) \cong 
                 V_{a}$.
          \end{enumerate}
\end{lemma}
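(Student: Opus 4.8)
The plan is to reduce the irreducibility of $Q(a-1)$ and $Q(a)$ to the structural exact sequences already established, and then to trace through exactly which $W$-terms vanish. First I would handle $Q(a-1)$ in part (i). Since $a > 1$, we have $1 \le a-1 < p-1$, and $[a-(a-1)] = [1] = 1 \le a-1$, so unless $a = 2$ we are in the case $[a-i] \le i$ with $i = a-1$; when $a = 2$ we have $i = a-1 = 1 = [a-i]$, the boundary case. In all subcases, Theorem~\ref{Structure of Q i=[a-i]} (if $a-1 = [a-(a-1)]$, i.e.\ $a = 2$) or Theorem~\ref{Structure of Q(i) i>[a-i]} (if $a-1 > [a-(a-1)] = 1$, i.e.\ $a \ge 3$) gives an exact sequence $0 \to W \to Q(a-1) \to Q(j-1) \to 0$ where $j = \min\{a-1, 1\} = 1$, so the quotient is $Q(0) \cong V_{p-1-a} \otimes D^a$ by Lemma~\ref{Structure Q(0)}. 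This last module is irreducible. Hence $Q(a-1)$ is irreducible if and only if $W = (0)$. So the crux is: $W = (0)$ iff $r_0 \notin \mathcal{J}(a,a-1)$. For $a = 2$ this is part (i) of Theorem~\ref{Structure of Q i=[a-i]} applied with $i = 1$: $W \cong (0)$ precisely when $r_0 \notin \mathcal{J}(2,1)$ (note $\mathcal{J}(2,1) = \{0,1\}$, consistent with the formula $\{0,1,\ldots,a-1\}$), while for the listed exceptional values $r_0 = i = 1$ or $r_0 = i-1 = 0$ the module $W$ is a nonzero irreducible $V_{p-1}\otimes D$ or $V_0 \otimes D$, and otherwise $W = V_r^{(1)}/V_r^{(2)}$ which has two JH factors; in every case with $r_0 \in \mathcal{J}$, $W \ne (0)$. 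For $a \ge 3$, one reads off Theorem~\ref{Structure of Q(i) i>[a-i]} with $i = a-1$, $[a-i] = 1$: in case (i), $W = (0)$ exactly when $r_0 \notin \mathcal{J}(a,a-1)$, and in cases (ii), (iii) one checks directly that $W \ne (0)$ (each exact sequence there has nonzero outer terms, or $W$ is a nonzero quotient of $V_r^{([a-r_0])}/V_r^{(i+1)}$). One must also verify that $\mathcal{J}(a,a-1)$ as defined by the piecewise formula \eqref{interval J} indeed equals $\{0,1,\ldots,a-1\}$: here $i = a-1 < a$, so we are in the branch $\mathcal{J}(a,i) = \{a-i-1, a-i, \ldots, a-1\} = \{0, 1, \ldots, a-1\}$, matching the lemma's definition.

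Next, for $Q(a)$ in part (ii): Theorem~\ref{Structure of Q(i) if i = a} gives an exact sequence $0 \to W' \to P(a) \to Q(a) \to 0$ with $W' = V_{p-1-a} \otimes D^a$ (if $a \ne p-1$ or $r \not\equiv p-2 \bmod p$) or $W' = V_0 \oplus V_{p-1}$ (if $a = p-1$ and $r \equiv p-2 \bmod p$). Meanwhile, by \eqref{Q i-1 and P i} there is an exact sequence $0 \to C \to P(a) \to Q(a-1) \to 0$ where $C$ is the cokernel of $X_{r-(a-1)}^{(a)}/X_{r-(a-1)}^{(a+1)} \hookrightarrow V_r^{(a)}/V_r^{(a+1)}$; by the third part of Lemma~\ref{reduction} this inclusion is $X_r^{(a)}/X_r^{(a+1)} \hookrightarrow V_r^{(a)}/V_r^{(a+1)}$, whose structure is given by Proposition~\ref{singular quotient X_{r}}: when $r_0 \in \{a, a+1, \ldots, p-1\}$ the source is $V_{p-1-a}\otimes D^a$ which is the cosocle of $V_r^{(a)}/V_r^{(a+1)}$ (using \eqref{exact sequence Vr}), so $C \cong V_{[a-2a]} \otimes D^a = V_{[-a]}\otimes D^a$, the socle; when $r_0 \in \{0,1,\ldots,a-1\}$ the source is $(0)$ and $C = V_r^{(a)}/V_r^{(a+1)}$ entirely. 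The cleanest route is probably to combine these: since $V_{p-1-a}\otimes D^a$ is irreducible and $\dim Q(a) = \dim P(a) - \dim W'$, and $\dim P(a)$ can be computed from $\dim Q(a-1)$ (known from part (i) and the periodicity results) plus $\dim C$, I would compute $\dim Q(a)$ directly in the two cases $r_0 \notin \mathcal{J}(a,a-1)$ and $r_0 \in \mathcal{J}(a,a-1)$ and show it equals $\dim V_a = a+1$ exactly in the former. When $r_0 \notin \mathcal{J}$, part (i) gives $Q(a-1) \cong V_{p-1-a}\otimes D^a$ of dimension $p-a$; then $C \cong$ socle of $V_r^{(a)}/V_r^{(a+1)}$, of dimension $[a-2a]+1$; $P(a)$ is then an extension with a single nontrivial structure and $\dim P(a) = (p-a) + ([a-2a]+1)$; subtracting $\dim W' = p-a$ leaves $\dim Q(a) = [a-2a]+1 = [p-1-2a+ (p-1)] + 1 \cdots$ — here I must be careful with the value of $[-a] = [p-1-a]$ and check it comes out to $a+1$, which it does since $V_a$ has the right congruence class; alternatively, and more robustly, I would identify $Q(a)$ with a subquotient of $V_r/V_r^{(a+1)}$ and exhibit $V_a$ as its socle while showing the cosocle vanishes.

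Actually the most transparent argument for part (ii), avoiding dimension bookkeeping, is the following. Take $j = a$ in the commutative diagram \eqref{commutative diagram}: the bottom row reads $0 \to V_r^{(a)}/(X_{r-a}^{(a)} + V_r^{(a+1)}) \to Q(a) \to V_r/(X_{r-a} + V_r^{(a)}) \to 0$, and $V_r/(X_{r-a}+V_r^{(a)})$ — this needs $X_{r-a} + V_r^{(a)}$ versus $X_{r-(a-1)} + V_r^{(a)}$; since $[a-a] = p-1 > a$, Corollary~\ref{reduction corollary}(ii) with $i = a$ does not directly apply ($i = a$ is excluded there), so instead I would use that $X_{r-a}/X_{r-a}^{(1)} \cong V_r/V_r^{(1)}$ by Lemma~\ref{Structure X(1)} together with Lemma~\ref{i=a smaller quotients} to conclude $X_{r-a}^{(j)}/X_{r-a}^{(j+1)} = X_{r-(a-1)}^{(j)}/X_{r-(a-1)}^{(j+1)}$ for $1 \le j < a$, hence $X_{r-a} + V_r^{(a)} = X_{r-(a-1)} + V_r^{(a)} = \cdots = X_r + V_r^{(a)}$, and $V_r/(X_r + V_r^{(a)})$ has JH factors the cosocles of $V_r^{(n)}/V_r^{(n+1)}$ for $0 \le n \le a-1$ minus those of $X_r$, which by Proposition~\ref{singular quotient X_{r}} and Lemma~\ref{Structure X(1)} works out so that this quotient is $(0)$ precisely when $r_0 \in \{a, \ldots, p-1\}$, and then $Q(a) = V_r^{(a)}/(X_{r-a}^{(a)} + V_r^{(a+1)}) = V_r^{(a)}/(X_r^{(a)} + V_r^{(a+1)})$, which by Proposition~\ref{singular quotient X_{r}} (the source being the cosocle $V_{p-1-a}\otimes D^a$ of $V_r^{(a)}/V_r^{(a+1)}$ in exactly that range of $r_0$) equals the socle $V_{[a-2a]}\otimes D^a \cong V_a$. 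Conversely if $r_0 \in \{0, \ldots, a-1\} = \mathcal{J}(a,a-1)$, the cosocle quotient $V_r/(X_r + V_r^{(a)})$ is nonzero (it contains at least the cosocle $V_{p-1-[r-2(a-1)]} \otimes D^{\cdots}$ of $V_r^{(a-1)}/V_r^{(a)}$ surviving, since $X_r^{(a-1)} = (0)$ there), so $Q(a)$ surjects onto a nonzero module and also contains the nonzero kernel piece — hence reducible unless one of them vanishes, and a short check shows neither does. The main obstacle throughout is the careful tracking of which interval $r_0$ lies in and matching the various $V_r^{(m)}/V_r^{(m+1)}$ cosocle/socle computations; the "if and only if" in each part requires showing both that the relevant $W$ or cokernel vanishes in the good range \emph{and} that it is nonzero outside it, and the boundary cases $a = 2$ (where $a-1 = [a-(a-1)]$) and $a = p-1$ (where $W' = V_0 \oplus V_{p-1}$) need separate verification.
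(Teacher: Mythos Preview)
Your approach to part (i) is correct and matches the paper's argument: reduce via Theorem~\ref{Structure of Q i=[a-i]} (for $a=2$) or Theorem~\ref{Structure of Q(i) i>[a-i]} (for $a \ge 3$) to $Q(0)\cong V_{p-1-a}\otimes D^a$, and observe that $W = 0$ exactly when $r_0 \notin \mathcal{J}(a,a-1)$.

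For part (ii), your first outlined strategy---computing $P(a)$ via the exact sequence \eqref{Q i-1 and P i} with cokernel $C$, then applying Theorem~\ref{Structure of Q(i) if i = a}---is exactly the paper's route, but you have socle and cosocle reversed. By \eqref{exact sequence Vr} with $m=a$, the \emph{socle} of $V_r^{(a)}/V_r^{(a+1)}$ is $V_{[a-2a]}\otimes D^a = V_{p-1-a}\otimes D^a$ and the \emph{cosocle} is $V_a$. Since $X_r^{(a)}/X_r^{(a+1)} \cong V_{p-1-a}\otimes D^a$ embeds as the socle (the sequence being non-split for $a<p-1$), the cokernel is $C \cong V_a$, not $V_{[-a]}\otimes D^a$. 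With this correction the JH count runs cleanly, exactly as in the paper: when $r_0\notin\mathcal{J}(a,a-1)$, $P(a)$ has precisely the two JH factors $V_a$ (from $C$) and $V_{p-1-a}\otimes D^a$ (from $Q(a-1)$ by part (i)), and quotienting by $W'=V_{p-1-a}\otimes D^a$ leaves $Q(a)\cong V_a$; when $r_0\in\mathcal{J}(a,a-1)$, $P(a)$ has at least four JH factors while $W'$ has at most two, so $Q(a)$ is reducible. The dimension bookkeeping you attempt is unnecessary.

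Your alternative approach via $j=a$ in diagram \eqref{commutative diagram} contains a genuine error: the chain $X_{r-a}+V_r^{(a)} = X_{r-(a-1)}+V_r^{(a)} = \cdots = X_r+V_r^{(a)}$ does not hold. Lemma~\ref{i=a smaller quotients} only gives $X_{r-a}^{(j)}/X_{r-a}^{(j+1)} = X_{r-(a-1)}^{(j)}/X_{r-(a-1)}^{(j+1)}$ for $j\ge 1$, not $j=0$; and indeed Lemma~\ref{Structure X(1)}, which you yourself cite, says $X_{r-a}/X_{r-a}^{(1)} = V_r/V_r^{(1)}$ whereas $X_{r-(a-1)}/X_{r-(a-1)}^{(1)} \cong V_a$, so $Y_{a,0}\cong V_{p-1-a}\otimes D^a\ne 0$ and the chain fails already at the first step. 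Abandon this route and stick with the $P(a)$ argument.
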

\begin{proof}
       Note that
       \begin{enumerate}
          \item[(i)]    If $a \geq 2$, then $a-1 \geq 1 =[a-(a-1)]$,
             assertion (i) follows from \Cref{Structure of Q i=[a-i]} 
           (resp.   \Cref{Structure of Q(i) i>[a-i]}) when $a=2$
            (resp. $a \geq 3$) as $W = 0$ if and only if $r_0 \not\in \mathcal{J}(a,a-1)$,
             and $Q(0) \cong V_{p-1-a} \otimes D^{a}$.
            \item[(ii)]  By \Cref{Structure of Q(i) if i = a}, we see that 
            the irreducibility of $Q(a)$ depends on $P(a)$. So we determine $P(a)$.
            By the exact sequence \eqref{Q i-1 and P i}, we  see that
            \[
                0 \rightarrow W'' \rightarrow P(a) \rightarrow Q(a-1) \rightarrow 0,
            \]
            where $W''$ is the cokernel of the map 
            $X_{r-(a-1)}^{(a)}/X_{r-(a-1)}^{(a+1)} \hookrightarrow V_{r}^{(a)}/V_{r}^{(a+1)}$.
            By the third part of  \Cref{reduction} (applied with $i= a-1$ and $j=a$),
            we have   $X_{r-(a-1)}^{(a)}/X_{r-(a-1)}^{(a+1)} = X_{r}^{(a)}/X_{r}^{(a+1)} $.
            Thus, by \Cref{singular quotient X_{r}}, we see that
            $W'' = V_{a}$ if  $r_{0} \not \in \mathcal{J}(a, a-1)$ 
            and $W'' = V_{r}^{(a)}/V_{r}^{(a+1)}$ otherwise. Combining this with
             part (i), we see that $P(a)$ has two (resp. at least four) JH factors
             if $r_{0} \not \in \mathcal{J}(a, a-1)$ (resp. $r_{0} \in \mathcal{J}(a, a-1)$). 
            Now the first assertion of (ii) follows from \Cref{Structure of Q(i) if i = a}. 
            If $r_{0} \not \in \mathcal{J}(a,a-1)$,
            then $W'' = V_{a}$ and $Q(a-1) \cong V_{p-1-a} \otimes D^a$, so 
            the second assertion of (ii) follows from  \Cref{Structure of Q(i) if i = a} (i). \qedhere
           \end{enumerate}
\end{proof}

We next consider the remaining cases of $Q(i)$, for $i = p-2$.
\begin{lemma}
        Let  $p \geq 3$,  $r \geq (p-1)(p+1)+p$, 
        $r \equiv a~\mathrm{mod}~(p-1)$ with $1 \leq a \leq p-1$.
        If $p-2 \neq a$, $a-1$, then $Q(p-2)$ is reducible.
\end{lemma}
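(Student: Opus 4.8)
The plan is to reduce to the previously established cases by exploiting the recursion machinery already built up, just as in the preceding irreducibility lemmas. We are in the situation $i = p-2$ with $p-2 \ne a$ and $p-2 \ne a-1$, that is $a \le p-3$ (so in particular $a < p-1$, and also $a \ne p-2$). The key point is that $p-2$ is neither equal to $a$ nor to $p-1$, so Theorem~\ref{i not a nor p - 1} applies: we have the exact sequence $0 \to W \to Q(p-2) \to Q(j-1) \to 0$ with $j = \min\{p-2, [a-(p-2)]\}$. Since $a \le p-3$, one computes $[a-(p-2)] = a+1 \le p-2$, so $j = a+1$, and we get an exact sequence $0 \to W \to Q(p-2) \to Q(a) \to 0$.

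First I would check which of the three cases (i), (ii), (iii) preceding the statement we land in. Since $a+1 = [a-(p-2)]$ and $p-2 \ge a+1$ (as $a \le p-3$), with also $p-2 > a+1$ unless $a = p-3$, we are in case (iii) $[a-i] < i < p-1$ (when $a \le p-4$) or the boundary case (ii) $i = [a-i]$ is impossible here since $p-2 \ne a+1$; in fact $p-2 = [a-i]+1$ precisely when $a = p-3$, still putting us in case (iii) with $[a-i] = p-2$... wait, if $a = p-3$ then $[a-i] = a+1 = p-2 = i$, which is case (ii). So I would split: if $a = p-3$, apply Theorem~\ref{Structure of Q i=[a-i]}; if $a \le p-4$, apply Theorem~\ref{Structure of Q(i) i>[a-i]}. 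In either theorem, $Q(p-2)$ sits in an extension with $Q(a)$ (via $Q([a-i]-1) = Q(a)$) on the quotient side and a module $W$ on the sub side.

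The crux is then to show the whole extension has at least two Jordan--Hölder factors. By the preceding Lemma~\ref{irred Q(a), Q(a-1)}, $Q(a)$ is either irreducible (isomorphic to $V_a$, when $r_0 \notin \mathcal{J}(a,a-1)$) or has at least four JH factors. If $Q(a)$ is reducible we are immediately done since $Q(p-2)$ surjects onto it. So assume $Q(a) \cong V_a$ is irreducible, i.e. $r_0 \notin \mathcal{J}(a,a-1) = \{0,1,\dots,a-1\}$, equivalently $r_0 \in \{a, a+1, \dots, p-1\}$. I would then argue that in this range $W \ne 0$: consulting the explicit description of $W$ in Theorem~\ref{Structure of Q i=[a-i]} (case $a = p-3$) or Theorem~\ref{Structure of Q(i) i>[a-i]} (case $a \le p-4$), the module $W$ vanishes only when $r_0 \notin \mathcal{J}(a,p-2)$. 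Since $\mathcal{J}(a,p-2)$ is (by the definition \eqref{interval J}, the case $a < i$, $i = p-2$) the interval $\{a-p+2, \dots, a-1\}^c$ reduced mod $p$, i.e. essentially $\{a-1, a, \dots, p-2\}$ up to shifting, it does contain the residues $\{a, a+1, \dots, p-1\}$ we are assuming $r_0$ lies in (one must check the endpoints carefully against \eqref{interval J for i > a-i} and \eqref{interval J for [a-i]}). Hence $W \ne 0$, giving $Q(p-2)$ a nonzero submodule $W$ and a nonzero quotient $Q(a)$, so $Q(p-2)$ is reducible.

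The main obstacle I anticipate is the bookkeeping of the intervals: verifying that $r_0 \in \{a, a+1, \dots, p-1\}$ forces $r_0 \in \mathcal{J}(a, p-2)$ (so that $W \ne 0$), and handling the boundary case $a = p-3$ where $i = [a-i]$ separately via Theorem~\ref{Structure of Q i=[a-i]} rather than Theorem~\ref{Structure of Q(i) i>[a-i]}. One should also double-check the edge case $a=1$: then $p-2 \ne a-1 = 0$ and $p-2 \ne a = 1$ forces $p \ge 5$, and $[a-(p-2)] = [1-(p-2)] = 2$, so $j = 2$ and we reduce to $Q(1) = Q(a)$, which for $a=1$ is irreducible exactly when $r_0 \ne 0$ — again the interval check is what needs care. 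In all these subcases the structure is the same: $Q(p-2)$ surjects onto $Q(a)$ with nonzero kernel $W$ whenever $Q(a)$ itself is irreducible, and has a reducible quotient otherwise, so in every case $Q(p-2)$ is reducible. I would close by remarking that this, together with Lemma~\ref{irred Q(a), Q(a-1)} and the earlier lemmas, exhausts all $1 \le i \le p-1$ except $i = p-1$, which is the last remaining case toward Theorem~\ref{irreducible}.
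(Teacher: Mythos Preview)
Your approach is correct and essentially the same as the paper's: reduce via Theorem~\ref{Structure of Q i=[a-i]} (when $a=p-3$, so $i=[a-i]$) or Theorem~\ref{Structure of Q(i) i>[a-i]} (when $a\le p-4$, so $[a-i]<i$) to the exact sequence $0\to W\to Q(p-2)\to Q(a)\to 0$, and then argue that $W$ and $Q(a)$ cannot simultaneously be zero and irreducible. The paper splits on whether $r_0\in\mathcal{J}(a,p-2)$ rather than on whether $Q(a)$ is irreducible, but that is just the contrapositive of your case split.

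Your description of $\mathcal{J}(a,p-2)$, however, is wrong as written. Since $a\le p-3$ gives $a<[a-(p-2)]=a+1\le p-2$, you are in the fourth case of \eqref{interval J} (equivalently the second case of \eqref{interval J for i > a-i}), and the set inside the complement collapses to the single point $a-1$: one gets $\mathcal{J}(a,p-2)=\{a-1\}^c$. With this in hand your argument finishes immediately and uniformly (no separate treatment of $a=1$ is needed): if $Q(a)$ is irreducible then $r_0\in\{a,\ldots,p-1\}$ by Lemma~\ref{irred Q(a), Q(a-1)}(ii), hence $r_0\ne a-1$, hence $r_0\in\mathcal{J}(a,p-2)$, hence $W\ne 0$. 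The paper phrases the same step as: if $W=0$ then $r_0\notin\mathcal{J}(a,p-2)$, i.e.\ $r_0=a-1\in\mathcal{J}(a,a-1)$, so $Q(a)$ is reducible.
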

\begin{proof}
         By hypothesis we have $[a-(p-2)] = a+1 \leq p-2$. 
         If $r_{0}  \in \mathcal{J}(a,p-2)$, then  by 
         Theorems~\ref{Structure of Q i=[a-i]} and \ref{Structure of Q(i) i>[a-i]},
         we see that $Q(p-2)$ is reducible since
          $W$, $Q(a) \neq 0$.
          If   $r_{0} \not \in \mathcal{J}(a,p-2)$, then again
          by  Theorems~\ref{Structure of Q i=[a-i]} and  
          \ref{Structure of Q(i) i>[a-i]}, 
          we have $Q(p-2) \cong Q(a)$.
          Since $a < [a-(p-2)] = a+1 \leq p-2$, by the fourth part of \eqref{interval J}, we have
         $\mathcal{J}(a,p-2) =\{ a-1\}^{c}$.  So $r_{0} \not \in \mathcal{J}(a,p-2)$
         implies $r_{0} =a-1$, so $r_0 \in \mathcal{J}(a,a-1)$. 
         But by Lemma~\ref{irred Q(a), Q(a-1)} (ii), we have 
         $Q(a)$ is not irreducible and this finishes the proof of the lemma.
\end{proof}
Since $V_{r} \subset V_{r}^{(1)} \subset \cdots \subset V_{r}^{(i+1)} 
\subset \cdots $ is a descending chain of $\Gamma$-modules, 
we have the JH factors of $ V_{r}/V_{r}^{(i+1)} =
\bigcup\limits_{j=0}^{i}$ JH factors of $V_{r}^{(j)}/V_{r}^{(j+1)}$.
Similarly,  the JH factors of $  X_{r-i}/X_{r-i}^{(i+1)}  =
\bigcup\limits_{j=0}^{i}$ JH factors of $ X_{r-i}^{(j)}/X_{r-i}^{(j+1)} $.
Thus, by the exact sequence \eqref{Q(i) exact sequence}, we see that
\begin{align}\label{irred. criteria Q(i)}
    Q(i) ~ \mathrm{is ~ irreducible} \Longleftrightarrow
   \sum_{j=0}^{i}  \lvert \{ \mathrm{JH ~ factors ~ of}~ 
   V_{r}^{(j)}/V_{r}^{(j+1)} \} \rvert -
  \lvert \{  \mathrm{JH ~ factors ~ of}~ 
   X_{r-i}^{(j)}/X_{r-i}^{(j+1)} \} \rvert
   \leq 1. 
\end{align}

Finally, we consider the case $Q(i)$, for $i = p-1$.
\begin{lemma}
        Let $p \geq 3$ and $(p-1)(p+1)+p \leq r \equiv a~\mathrm{mod}~(p-1)$
         with $1 \leq a < p-1$. Then $Q(p-1)$ is irreducible
         if and only if $r \equiv 1 ~\mathrm{mod}~(p-1)$ and $p \mid r$.
         Furthermore, in this case, $Q(p-1) \cong V_{p-2} \otimes D$.
\end{lemma}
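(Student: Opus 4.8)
The statement to prove is the irreducibility criterion for $Q(p-1)$ when $1 \leq a < p-1$: that $Q(p-1)$ is irreducible if and only if $r \equiv 1 \bmod (p-1)$ and $p \mid r$, and in that case $Q(p-1) \cong V_{p-2} \otimes D$. Since we have just handled the case $i = a = p-1$ earlier (in \S\ref{section i = a}), restricting to $a < p-1$ is legitimate. The natural approach is to combine Theorem~\ref{Structure of Q(i) if i = p - 1}, which gives the exact sequence relating $P(p-1)$ and $Q(p-1)$, with the irreducibility criterion \eqref{irred. criteria Q(i)} and the known structure of $P(p-1)$ coming from $Q(p-2)$ via the exact sequence \eqref{Q i-1 and P i}. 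Concretely, I would first recall from Theorem~\ref{Structure of Q(i) if i = p - 1} that $Q(p-1) \cong P(p-1)$ when $r_0 \in \{0,1,\ldots,a-2\} \cup \{p-1\}$, while in the cases $r_0 = a-1$ and $r_0 \in \{a,\ldots,p-2\}$ there is a proper kernel ($V_r^{(a)}/V_r^{(a+1)}$, resp. $V_a$) sitting inside $P(p-1)$. So $Q(p-1)$ irreducible forces us into the first case $r_0 \in \{0,\ldots,a-2\}\cup\{p-1\}$ with $P(p-1)$ itself irreducible.

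Second, I would analyze $P(p-1)$ using \eqref{Q i-1 and P i} with $i = p-1$: there is an exact sequence $0 \to W''' \to P(p-1) \to Q(p-2) \to 0$ where $W'''$ is the cokernel of $X_{r-(p-2)}^{(p-1)}/X_{r-(p-2)}^{(p)} \hookrightarrow V_r^{(p-1)}/V_r^{(p-1+1)}$, noting $V_r^{(p)}/V_r^{(p+1)}$ — wait, more precisely the relevant quotient $V_r^{(p-1)}/V_r^{(p)}$; this module is handled by \Cref{singular i=p-1} and \Cref{reduction}, which identify $X_{r-(p-2)}^{(p-1)}/X_{r-(p-2)}^{(p)}$ with $X_{r-a}^{(p-1)}/X_{r-a}^{(p)}$, in turn given by \Cref{singular i=a}. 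For $P(p-1)$ to be irreducible we need both $W'''$ and $Q(p-2)$ to collapse appropriately, which pushes us toward $Q(p-2)$ being irreducible or zero. Since $p-2 \neq a$ is forced here (as $a < p-1$, so $a \le p-2$; the subcase $a = p-2$ is special and I would treat it separately using $Q(p-2) = Q(a)$ and Lemma~\ref{irred Q(a), Q(a-1)}), I would invoke the previous lemma showing $Q(p-2)$ is reducible unless $r_0 = a-1$ — but $r_0 = a-1$ is excluded in the $Q(p-1) \cong P(p-1)$ regime unless $a - 1 \in \{0,\ldots,a-2\}\cup\{p-1\}$, i.e. never (for $a \geq 1$). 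So one is forced down to the bottom of the recursion: $a = 1$, where $Q(0) \cong V_{p-2} \otimes D$ is irreducible by \Cref{Structure Q(0)}.

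Third, once pinned to $a = 1$, I would chase through: $r \equiv 1 \bmod (p-1)$, and the constraint $r_0 \in \{0,\ldots,a-2\}\cup\{p-1\} = \{p-1\}$ — but also one must check $r_0 = 0$ is the case claimed, so I need to reconcile. Actually for $a = 1$ the interval $\{0,\ldots,a-2\}$ is empty, so the "$Q(p-1)\cong P(p-1)$" case requires $r_0 = p-1$; meanwhile $p \mid r$ means $r_0 = 0$. Here I expect to need the more refined count from \eqref{irred. criteria Q(i)}: for $a = 1$ one computes $\sum_{j=0}^{p-1}$ of the difference of JH-factor counts of $V_r^{(j)}/V_r^{(j+1)}$ and $X_{r-(p-1)}^{(j)}/X_{r-(p-1)}^{(j+1)}$, using \Cref{singular i=p-1}, \Cref{singular i=a}, \Cref{singular quotient X_{r}}, \Cref{Breuil map} and \Cref{Structure X(1)}, and check it equals $1$ exactly when $p \mid r$. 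When $p \mid r$ (so $r_0 = 0$) I expect most singular quotients $X^{(j)}/X^{(j+1)}$ to be full, leaving $Q(p-1)$ with a single JH factor, which by tracking the Breuil map image (as in the proof of \Cref{i=p-1, full}, the case $r \equiv a-1 = 0 \bmod p$) one identifies as $V_{p-1-[r-2\cdot 0]} \otimes D^{0}$-type cosocle — for $a = 1$, $r \equiv 1 \bmod (p-1)$, this gives $V_{p-2}\otimes D$. Finally I would verify the converse: if $r \equiv 1 \bmod (p-1)$ and $p \mid r$, run the same count to confirm it is exactly $1$, hence irreducible, and identify the factor as $V_{p-2}\otimes D$ via \eqref{exact sequence Vr} with $m=1$ (the split case $a = 2m$ does not occur since $a=1$ is odd, but the cosocle is $V_{p-1-[a-2]}\otimes D^{a-1} = V_{p-1-[-1]}\otimes D^0$; more carefully the surviving factor is the one realized by the explicit generator in \Cref{i=p-1, full}).

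\textbf{Main obstacle.} The hard part is not the case analysis of the exact sequences — that is bookkeeping with the results already proved — but rather the final identification in the case $a = 1$, $p \mid r$: confirming that the JH-factor count in \eqref{irred. criteria Q(i)} collapses to exactly $1$, and then pinning down \emph{which} irreducible survives. This requires carefully assembling the structure of \emph{all} the quotients $X_{r-(p-1)}^{(j)}/X_{r-(p-1)}^{(j+1)}$ for $0 \le j \le p-1$ (via \Cref{singular i=p-1}, reducing to $X_{r-a}$-quotients and thence to $X_r$-quotients), comparing with $V_r^{(j)}/V_r^{(j+1)}$, and noticing a telescoping cancellation of JH factors across the $j$'s — the residual factor being governed by the Breuil map computation in the proof of \Cref{i=p-1, full}. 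I would budget most of the write-up for this telescoping count and the explicit realization of $V_{p-2}\otimes D$ as the cosocle.
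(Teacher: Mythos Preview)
Your opening logical step is wrong, and this error derails the first two paragraphs of the plan. You claim that if $Q(p-1)$ is irreducible then we must be in case~(i) of Theorem~\ref{Structure of Q(i) if i = p - 1}, i.e.\ $Q(p-1)\cong P(p-1)$. But cases~(ii) and~(iii) express $Q(p-1)$ as a \emph{quotient} of $P(p-1)$, and a quotient can certainly be irreducible even when the kernel is nonzero. In fact the unique irreducible case in the statement, $a=1$ and $r_0=0$, has $r_0=a-1$ and so falls squarely into case~(ii), where the kernel $V_r^{(1)}/V_r^{(2)}$ is nonzero. So the reduction you propose to ``$P(p-1)$ itself irreducible'' is a non-sequitur, and the analysis of $P(p-1)$ via $Q(p-2)$ in your second paragraph is built on this mistake. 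You notice the discrepancy in your third paragraph but do not resolve it; the plan there degenerates into a vague JH-factor count.

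The paper's argument is much more direct. It uses the criterion~\eqref{irred. criteria Q(i)} from the start to eliminate cases: if $r\not\equiv a-1\bmod p$ then \Cref{singular i=p-1} and \Cref{singular i=a} give $X_{r-(p-1)}^{(p-1)}/X_{r-(p-1)}^{(p)}=0$, so the $j=p-1$ term alone contributes $2$ and $Q(p-1)$ is reducible; if $r\equiv a-1\bmod p$ and $a\ge 2$ then the $j=1$ term contributes at least $1$ (via \Cref{reduction} and \Cref{singular i= [a-i]} or \Cref{singular i>r-i}) on top of the $j=p-1$ contribution, again forcing reducibility. This leaves only $a=1$, $r_0=0$. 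For that case the paper does \emph{not} attempt a full JH count: instead it writes down two exact sequences for $P(p-1)$, namely $0\to V_r^{(1)}/V_r^{(2)}\to P(p-1)\to Q(p-1)\to 0$ from Theorem~\ref{Structure of Q(i) if i = p - 1}~(ii), and $0\to V_{p-2}\otimes D\to P(p-1)\to Q(p-2)\to 0$ from~\eqref{Q i-1 and P i} (using $X_{r-(p-2)}^{(p-1)}/X_{r-(p-2)}^{(p)}\cong V_1$), then shows $Q(p-2)=Q(1)\cong V_r^{(1)}/V_r^{(2)}$ via Theorem~\ref{Structure of Q(i) i>[a-i]} and \cite[Proposition~3.12(iii)]{BG15}. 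Comparing JH factors of $P(p-1)$ from the two sequences immediately yields $Q(p-1)\cong V_{p-2}\otimes D$. This two-sequence trick is the clean way to identify the surviving factor, and replaces the ``telescoping count'' you were anticipating.
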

\begin{proof}
      We consider the case $r \equiv a-1$ mod $p$ and 
      $r \not \equiv a-1$ mod $p$ separately.
      By \Cref{singular i=p-1}, we have 
      $X_{r-(p-1)}^{(p-1)}/X_{r-(p-1)}^{(p)} = X_{r-a}^{(p-1)}/X_{r-a}^{(p)}$.
      If $r \not \equiv a-1$ mod $p$, then $X_{r-a}^{(p-1)}/X_{r-a}^{(p)}
      =(0)$, by \Cref{singular i=a}, so $Q(p-1)$ is reducible by 
      \eqref{irred. criteria Q(i)}.   
      
      So assume $r \equiv a-1$ mod $p$.  Then 
      $X_{r-a}^{(p-1)}/X_{r-a}^{(p)} = V_{a}$, by \Cref{singular i=a}. 
      By the second part of   \Cref{reduction} (with $j=1$ and $i=p-1$), we see that
        $X_{r-(p-1)}^{(1)}/X_{r-(p-1)}^{(2)} =  X_{r-[a-1]}^{(1)}/X_{r-[a-1]}^{(2)}$.
       If $a \geq 2$, then  $  V_{a-2}  \otimes D  \cong X_{r-(a-1)}^{(1)}/X_{r-(a-1)}^{(2)} 
        \subsetneq V_{r}^{(1)}/V_{r}^{(2)}$,
        by \Cref{singular i= [a-i]} with $i = j = 1$ when $a = 2$  (resp. \Cref{singular i>r-i} 
        with $i = a-1$ and 
        $j = 1$ when $a>2$). So $Q(p-1)$ is reducible again by  \eqref{irred. criteria Q(i)}.
        We finally consider the case $a=1$.
        By \Cref{Structure of Q(i) if i = p - 1} (ii), we have
      \[
           0 \rightarrow V_{r}^{(1)}/V_{r}^{(2)} \rightarrow P(p-1) \rightarrow
           Q(p-1) \rightarrow 0.
      \]
       Since 
       $X_{r-1}^{(p-1)}/X_{r-1}^{(p)} \subseteq 
       X_{r-(p-2)}^{(p-1)}/X_{r-(p-2)}^{(p)} \subseteq 
       X_{r-(p-1)}^{(p-1)}/X_{r-(p-1)}^{(p)}= X_{r-1}^{(p-1)}/X_{r-1}^{(p)} \cong V_{1}$,
       by the exact sequence \eqref{Q i-1 and P i}
       and the exact sequence \eqref{exact sequence Vr} (with $m=p-1$), we have
       \[
           0 \rightarrow V_{p-2} \otimes D \rightarrow P(p-1) \rightarrow Q(p-2)
           \rightarrow 0.
       \]
       If $p=3$, then $Q(p-2)= Q(1)$. If $p >3$, then $2 = a+1 =[a-(p-2)]
       < p-2$, so by \Cref{Structure of Q(i) i>[a-i]}. we see that $Q(p-2)= Q(1)$
       as $W = 0$ since $a-1 =r_{0} \not \in \mathcal{J}(a,p-2) = \{ a-1\}^{c}$. 
       By \cite[Proposition 3.12 (iii)]{BG15}
       and the exact sequence \eqref{exact sequence Vr} (with $m=1$), we see that 
       $Q(1) \cong V_{r}^{(1)}/V_{r}^{(2)}$. Comparing the JH factors in the
       two exact sequences above, we obtain
       $Q(p-1) \cong V_{p-2} \otimes D$.
\end{proof}

Collecting the lemmas above, we obtain the following theorem on the irreducibility
of the $Q(i)$.

\begin{theorem}\label{irreducible Q(i)}
  Let $p \geq 3$, $1 \leq i \leq p-1$, $r \geq i(p+1)+p$, $r \equiv a \mod (p-1)$ with $a \in \{1, 2, \ldots, p-1\}$ and $r_0$ be
  the constant term in the base $p$-expansion of $r$.
  Then $Q(i)$ is irreducible if and only if 
  \begin{itemize}
     \item $i = a-1$ or $a$, and $r_0 \not\in \mathcal{J}(a,a-1) = \{0,1, \ldots, a-2, a-1\}$, or,
     \item $i = p-1$, $a = 1$ and $r_0 = 0$.
  \end{itemize}
\end{theorem}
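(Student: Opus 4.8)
The plan is to assemble Theorem~\ref{irreducible Q(i)} directly from the four lemmas that precede it in this subsection, which together handle all values of $i$. The point is that the recursive structure established in Sections~\ref{Section i not a nor p - 1} and \ref{Section i = a or p - 1} reduces irreducibility of $Q(i)$ to a finite check, and the lemmas carry out exactly those checks. First I would note that $Q(i) \neq (0)$ for $0 \leq i < p-1$ by the first lemma of the subsection, and that the exact sequence \eqref{Q(i) exact sequence} together with the criterion \eqref{irred. criteria Q(i)} translates irreducibility into the statement that the JH multiplicities of $V_r^{(j)}/V_r^{(j+1)}$ and $X_{r-i}^{(j)}/X_{r-i}^{(j+1)}$ differ by a total of at most $1$ over $0 \leq j \leq i$.

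Next I would organize the case analysis by the size of $i$ exactly as the lemmas do. For $1 \leq i < p-2$ with $i \neq a-1$, $a$, the second lemma shows $Q(i)$ is reducible: either $i < [a-i]$, where \Cref{Structure of Q(i) if i<[a-i]} exhibits a nonsplit extension with $W \neq 0$ and $Q(i-1) \neq 0$, or $i \geq [a-i]$, where one descends to $Q([a-i]-1)$ via Theorems~\ref{Structure of Q i=[a-i]} and \ref{Structure of Q(i) i>[a-i]} and invokes reducibility of $Q([a-i]-1)$ inductively (having checked $[a-i]-1 \notin \{0, a\}$). For $i = a-1$ and $i = a$, \Cref{irred Q(a), Q(a-1)} gives the precise criterion $r_0 \notin \mathcal{J}(a,a-1) = \{0, 1, \ldots, a-1\}$, together with the explicit identifications $Q(a-1) \cong V_{p-1-a} \otimes D^a$ and $Q(a) \cong V_a$ in the irreducible case. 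For $i = p-2$ with $p-2 \neq a$, $a-1$, the third lemma shows $Q(p-2)$ is reducible, using that $r_0 \notin \mathcal{J}(a, p-2) = \{a-1\}^c$ forces $r_0 = a-1 \in \mathcal{J}(a,a-1)$, so $Q(a)$ is reducible by \Cref{irred Q(a), Q(a-1)}. Finally, for $i = p-1$: if $a = p-1$ this is the case $i = a$ already covered; if $a < p-1$ the fourth lemma shows irreducibility holds exactly when $a = 1$ and $r_0 = 0$ (i.e. $r \equiv 1 \bmod (p-1)$ and $p \mid r$), with $Q(p-1) \cong V_{p-2} \otimes D$ in that case, and reduces the argument to the known structure of $Q(1)$ from \cite[Proposition 3.12 (iii)]{BG15}.

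Collecting these, $Q(i)$ is irreducible precisely when $i \in \{a-1, a\}$ and $r_0 \notin \{0, 1, \ldots, a-1\}$, or $i = p-1$, $a = 1$, $r_0 = 0$; the second bullet of the theorem is in fact the special instance $i = p-1 = a$... no: when $a=1$ we have $a - 1 = 0$, so $i = p-1$ with $a=1$ is genuinely separate from the $i \in \{a-1,a\}$ family, which is why it appears as its own bullet. I would also remark that the range hypothesis $r \geq i(p+1)+p$ is exactly what is needed to invoke each of the structural theorems, so no additional lower bound is required.

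The main obstacle, such as it is, is bookkeeping rather than mathematics: one must verify that the four lemmas are genuinely exhaustive and mutually consistent — in particular checking the boundary overlaps ($i = a = p-1$, $i = p-2 = a$ or $a-1$, $a = 1$ so $a-1 = 0$) and confirming that the recursive descent in the $i \geq [a-i]$ case always terminates at $Q(0)$ or $Q(1)$ or a base case already resolved, never re-entering a case whose hypotheses fail. This is the same descent argument sketched in the introduction around Theorems~\ref{i not a nor p - 1} and \ref{i = a or p - 1}, so the termination is already justified; the proof only needs to point to it.
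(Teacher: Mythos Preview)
Your proposal is correct and follows exactly the paper's approach: the paper's proof of Theorem~\ref{irreducible Q(i)} is literally the one-line statement ``Collecting the lemmas above, we obtain the following theorem'', and you have spelled out precisely that collection, together with the necessary boundary checks. The only slip is a harmless miscount (there are five preparatory lemmas in the subsection, not four), and your stream-of-consciousness self-correction about the $i=p-1$, $a=1$ case should be cleaned up, but the mathematics is right.
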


\section{Structure of \texorpdfstring{$X_{r-p, \,r}$}{}}

In this final section, we determine the structure of $X_{r-p,\,r}$, the monomial submodule generated
by $X^{r-p}Y^p$.  
We exhibit an isomorphism between $X_{r-p,\,r}$ and
 $X_{s-1,\,s}$, for some $s$ depending on $r$, allowing us to use the results of \cite[\S 2, \S 3]{BG15} to  determine the structure of $X_{r-p,\,r}$. 
The proofs in this section use techniques very similar to
those used in \cite[\S 2, \S 3]{BG15}.

 First observe that $X^{r}= \begin{psmallmatrix} 1 & 1 \\ 0 & 1 
 \end{psmallmatrix} X^{r-p} Y^{p} - X^{r-p}Y^{p}$. Hence  $X_{r,\,r} 
 \subseteq X_{r-p,\,r}$ and $X_{r-p,\,r}$ is an $M$-module.
  The next lemma shows that this inclusion is strict if $p\nmid r$.
\begin{lemma}\label{BG Lemma 4.1}
	For any $p\geq 2$, if $p \nmid r$ and $r>p$, then 
	$X_{r,\,r} \subsetneq X_{r-p,\,r}$.
\end{lemma}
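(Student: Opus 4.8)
The plan is to show $X_{r,\,r} \neq X_{r-p,\,r}$ when $p \nmid r$ and $r > p$ by exhibiting an explicit $\mathbb{F}_p$-functional that kills $X_{r,\,r}$ but not $X^{r-p}Y^p$, or equivalently by comparing the two modules against the quotient $V_r/V_r^{(1)}$ via the map to the principal series $\ind_B^\Gamma(\chi_2^r)$. The cleanest route: recall from \eqref{Glover 4.5} and Lemma~\ref{dimension formula for X_{r}} that, writing $r \equiv a \bmod (p-1)$ with $1 \le a \le p-1$, the image of $X_{r,\,r}$ in $V_r/V_r^{(1)}$ is isomorphic to $V_a$, which sits inside the socle of $\ind_B^\Gamma(\chi_2^r)$. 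So it suffices to show that the image of $X^{r-p}Y^p$ in $V_r/V_r^{(1)}$ does \emph{not} lie in that copy of $V_a$, i.e.\ that $X_{r-p,\,r}$ maps onto all of $V_r/V_r^{(1)}$, or at least strictly larger than $V_a$.

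First I would use the description in Lemma~\ref{Structure X(1)}: $X_{r-p,\,r}/X_{r-p,\,r}^{(1)} = V_r/V_r^{(1)}$ if and only if the image of $X^{r-p}Y^p$ in the cosocle $V_{p-1-a} \otimes D^a$ of $V_r/V_r^{(1)}$ (in the exact sequence \eqref{exact sequence Vr} with $m=0$) is non-zero; and by Lemma~\ref{Breuil map} with $m=0$, that image is non-zero precisely when $p \le p$, i.e.\ when the exponent $p$ of $Y$ satisfies $a \le p \le p-1$ after reducing $p$ modulo the relevant range — here one must be slightly careful, since the basis in Lemma~\ref{Breuil map} runs over $0 \le i \le p-1$ and $Y^p$ is not a basis monomial directly. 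So the actual computation I would carry out is: reduce $X^{r-p}Y^p$ modulo $V_r^{(1)} = V_r^\ast$ using $\theta = X^pY - XY^p$, writing $X^{r-p}Y^p \equiv X^{r-2p}Y^{p}\cdot(XY^p - \theta)/Y^{p-1}$-type manipulations, or more simply apply Lemma~\ref{divisibility1}/\ref{breuil map quotient} directly to the single-term polynomial $F(X,Y) = X^{r-p}Y^p$ (which has $a_p = 1$ and all other coefficients zero) to compute its image under $V_r/V_r^{(1)} \twoheadrightarrow V_{p-1-a}\otimes D^a$. Since $p \equiv 1 \bmod (p-1)$, the monomial $X^{r-p}Y^p$ lies in the congruence class $l = 1$, and the relevant coefficient $\binom{p}{i}$ with $i$ the representative of $1$ — tracking through Lemma~\ref{breuil map quotient} the image is governed by $\binom{p}{m}$ type terms; the key point is that $\Sigma_p(p) = 1 < p$, so $X^{r-p}Y^p \notin V_r^{(1)}$ is automatic, but I need it to survive to the \emph{cosocle}.

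An alternative and probably more robust approach, which I expect to be the one actually used: argue by dimension count. If $X_{r,\,r} = X_{r-p,\,r}$, then by Lemma~\ref{dimension formula for X_{r}} we have $\dim X_{r-p,\,r} \le p+1$. On the other hand, apply Lemma~\ref{surjection1}-style reasoning: there is an $M$-linear surjection $X_{r-p,\,r-p}\otimes V_p \twoheadrightarrow X_{r-p,\,r}$, but $V_p$ has dimension $p+1$ and one can more directly lower-bound $\dim X_{r-p,\,r}$ by finding $p+2$ linearly independent elements. Concretely, since $p \nmid r$, consider the $p+1$ functions $\{X^{r-p}Y^p\} \cup \{ X(kX+Y)^{r-1} : k \in \mathbb{F}_p\}$ (the latter all lying in $X_{r-1,\,r} \subseteq X_{r-p,\,r}$ by the filtration \eqref{first row} and Lemma~\ref{first row filtration}); one checks these span something of dimension $> p+1$ by examining coefficients, using that $\binom{r-1}{t}$ is not identically the pattern forced by $X^{r-p}Y^p$ when $p \nmid r$.

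\textbf{The main obstacle} will be the bookkeeping in identifying exactly which coefficient of $X^{r-p}Y^p$ modulo $V_r^{(1)}$ detects membership in the cosocle, and handling the fact that the exponent $p$ of $Y$ is not in the "standard" range $[0,p-1]$ used by the basis of Lemma~\ref{Breuil map} — one must first rewrite $X^{r-p}Y^p$ modulo $\theta$ to bring it into a form covered by \Cref{breuil map quotient}, and then invoke that the resulting coefficient is a unit precisely because $p \nmid r$ (which forces $r_0 \neq 0$ and prevents the relevant binomial from vanishing by Lucas' theorem). I would isolate this as a short computational lemma and then conclude: since $X^{r-p}Y^p$ has non-trivial image in $V_{p-1-a}\otimes D^a$ whereas $X_{r,\,r}$ maps into $V_a \subsetneq \ker(V_r/V_r^{(1)} \to V_{p-1-a}\otimes D^a)$, the inclusion $X_{r,\,r}\subseteq X_{r-p,\,r}$ must be strict.
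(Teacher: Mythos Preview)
Both of your proposed approaches contain genuine gaps.

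For the first approach via the cosocle of $V_r/V_r^{(1)}$: after reducing modulo $\theta$ one finds $X^{r-p}Y^p \equiv X^{r-1}Y \pmod{V_r^{(1)}}$, since $X^{r-p}Y^p - X^{r-1}Y = -X^{r-p-1}\theta$. Now apply Lemma~\ref{Breuil map} with $m=0$: the monomial $X^{r-1}Y$ has $i=1$, and its image in $V_{p-1-a}\otimes D^a$ is \emph{zero} whenever $1 < a$, i.e.\ for all $a \ge 2$. So for $a \ge 2$ the image of $X^{r-p}Y^p$ in $V_r/V_r^{(1)}$ lands inside the socle $V_a$, which is exactly the image of $X_{r,\,r}$ by \eqref{Glover 4.5}. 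The cosocle detects nothing in those cases, and the hypothesis $p\nmid r$ does not enter. Your approach works only for $a=1$.

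For the second approach via a dimension bound: you assert $X_{r-1,\,r} \subseteq X_{r-p,\,r}$, invoking Lemma~\ref{first row filtration} and the filtration \eqref{first row}. But that lemma and that filtration concern only $X_{r-i,\,r}$ for $0 \le i \le p-1$; they say nothing about $X_{r-p,\,r}$. In fact the containment $X_{r-1,\,r} \subseteq X_{r-p,\,r}$ is \emph{false} in general: as remarked in the introduction (and in the footnote following the trichotomy theorem for $X_{r-p}$), there are infinite families of $r$ for which $X_{r-p,\,r}$ and $X_{r-1,\,r}$ are incomparable. So the elements $X(kX+Y)^{r-1}$ need not lie in $X_{r-p,\,r}$ at all, and this dimension count collapses.

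The paper's proof is completely elementary and avoids both $V_r^{(1)}$ and any comparison with $X_{r-1,\,r}$. One assumes $X_{r-p,\,r} = X_{r,\,r}$; then the Weyl-flipped monomial $X^pY^{r-p}$ lies in $X_{r,\,r}$, hence can be written in the spanning set of Lemma~\ref{Basis of X_r-i} as $AX^r + \sum_k c_k(kX+Y)^r$. Comparing the coefficients of $XY^{r-1}$ and of $X^pY^{r-p}$ gives $r\sum_k c_k k = 0$ and $\binom{r}{p}\sum_k c_k k^p = 1$. Since $p\nmid r$ and $k^p \equiv k$ in $\f$, these two equations are incompatible. The single missing idea in your proposal is this use of Fermat's little theorem to link the two coefficient equations.
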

\begin{proof}
          Suppose not.
      Then $X_{r-p,\,r} = X_{r,\,r}$. By \Cref{Basis of X_r-i}, 
      we have 
       $\lbrace (kX+Y)^{r}, 
      X^{r} : k \in \mathbb{F}_{p} \rbrace$ 
      is a spanning set for $X_{r,\,r}$ over $\f$. Let 
	  \begin{align*}
	        X^{p}Y^{r-p} = AX^{r}+ \sum\limits_{k=0}^{p-1} c_{k} (kX+Y)^{r}.
	 \end{align*}
	Comparing the coefficients of $XY^{r-1}$ and $X^{p}Y^{r-p}$ on both sides, 
	we get 
	 \begin{align*}
            r \sum\limits_{k =0}^{p-1} c_{k}  k = 0  \quad \mathrm{and} \quad
	         \binom{r}{p} \sum\limits_{k=0}^{p-1} c_{k}  k^{p}=1.
	\end{align*}
	Since $p\nmid r$, we get $\sum\limits_{k=0}^{p-1}c_{k}  k=0$. Hence 
	$ 1= \binom{r}{p} \sum\limits_{k =0}^{p-1} 
	c_{k}  k^{p}  \equiv  \binom{r}{p} \sum\limits_{k=0}^{p-1} c_{k} k \equiv 0 $ 
	mod $p$. This is a contradiction. Therefore $X_{r,\,r} \subsetneq X_{r-p,\,r}$.
\end{proof}
\begin{lemma}\label{surjection}
	There is an $M$-linear surjection $\phi_{p}: X_{r-p,\,r-p} 
	\otimes V_{1} \rightarrow X_{r-p,\,r}$,  given by 
	$\phi_{p}(u \otimes v) = uv^{p}$, for all $u \in X_{r-p,\, r-p}$ and $v \in V_{1}$.
	In particular, $\dim X_{r-p,r} \leq 2p+2$. 
\end{lemma}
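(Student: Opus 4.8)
The plan is to mimic the construction of $\phi_i$ from Lemma~\ref{surjection1}, but with the second factor $V_i$ replaced by the ``$p$-power twist'' of $V_1$. First I would recall from \cite[(5.1)]{Glover} that the multiplication map $V_{r-p}\otimes V_p \to V_r$, $F\otimes G\mapsto FG$, is $M$-linear; composing with the $M$-linear Frobenius embedding $V_1\hookrightarrow V_p$ sending $v\mapsto v^p$ (which is $M$-linear because $(aX+cY)^p = a^pX^p+c^pY^p$ and $\lambda\mapsto\lambda^p$ is the identity on $\f$, so the action is genuinely the degree-$p$ symmetric power action on $p$-th powers) gives an $M$-linear map $V_{r-p}\otimes V_1 \to V_r$, $u\otimes v\mapsto uv^p$. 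Restricting this to the $M$-submodule $X_{r-p,\,r-p}\otimes V_1$ yields the candidate $\phi_p$.

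Next I would check that the image of $\phi_p$ is exactly $X_{r-p,\,r}$. The domain $X_{r-p,\,r-p}\otimes V_1$ is generated as an $M$-module by $X^{r-p}\otimes X$, $X^{r-p}\otimes Y$, since $X^{r-p}$ generates $X_{r-p,\,r-p}$ and $V_1$ is generated by $X,Y$ under $M$ (indeed under $B$ already). Their images under $\phi_p$ are $X^{r-p}X^p = X^r$ and $X^{r-p}Y^p$, both of which lie in $X_{r-p,\,r}$ (the former because $X_{r,\,r}\subseteq X_{r-p,\,r}$, as observed just before Lemma~\ref{BG Lemma 4.1}). Hence the image is contained in $X_{r-p,\,r}$. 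Conversely, $\phi_p(X^{r-p}\otimes Y) = X^{r-p}Y^p$ generates $X_{r-p,\,r}$ as an $M$-module, so the image contains $X_{r-p,\,r}$, giving surjectivity. Finally, $\dim X_{r-p,\,r}\le \dim(X_{r-p,\,r-p}\otimes V_1) = \dim X_{r-p,\,r-p}\cdot 2 \le (p+1)\cdot 2 = 2p+2$, using the bound $\dim X_{r-p,\,r-p}\le p+1$ from Lemma~\ref{dimension formula for X_{r}} (the monomial submodule generated by the highest monomial has dimension at most $p+1$).

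The only genuinely delicate point is verifying that $v\mapsto v^p$ defines an $M$-linear map $V_1\to V_p$, i.e.\ that the diagram respects the $M$-action and not merely the $\f$-linear structure. This is where one really uses that the ground field is $\f$ rather than $\bar{\mathbb F}_p$: for $g=\begin{psmallmatrix}a&b\\c&d\end{psmallmatrix}\in M$ and $v=\alpha X+\beta Y\in V_1$, we have $g\cdot v = \alpha(aX+cY)+\beta(bX+dY)$ and $(g\cdot v)^p = \alpha^p(aX+cY)^p + \beta^p(bX+dY)^p$ using the Frobenius on $\f[X,Y]$, while $g\cdot(v^p) = g\cdot(\alpha^pX^p+\beta^pY^p+\cdots)$—but $v^p = (\alpha X+\beta Y)^p = \alpha^pX^p+\beta^pY^p$ exactly in characteristic $p$, and $g\cdot(\alpha^pX^p+\beta^pY^p) = \alpha^p(aX+cY)^p+\beta^p(bX+dY)^p$ since $M$ acts on $V_p$ by substitution. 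So the two agree. Once this compatibility is in hand the rest is bookkeeping parallel to Lemma~\ref{surjection1}. I would present the argument concisely, citing \cite[(5.1)]{Glover} for the $M$-linearity of multiplication and Lemma~\ref{first row filtration} (or the remark preceding Lemma~\ref{BG Lemma 4.1}) for $X_{r,\,r}\subseteq X_{r-p,\,r}$.
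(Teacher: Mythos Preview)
Your proposal is correct and follows essentially the same route as the paper: you build $\phi_p$ as the composite of the Frobenius embedding $V_1\hookrightarrow V_p$, $v\mapsto v^p$, with the multiplication map $V_{r-p}\otimes V_p\to V_r$ from \cite[(5.1)]{Glover}, then restrict to $X_{r-p,\,r-p}\otimes V_1$ and check surjectivity via the generator $X^{r-p}\otimes Y\mapsto X^{r-p}Y^p$. The only cosmetic differences are that the paper observes $X^{r-p}\otimes Y$ alone generates the domain as an $M$-module (using the singular matrix $\begin{psmallmatrix}1&1\\0&0\end{psmallmatrix}$), while you use the pair $X^{r-p}\otimes X,\,X^{r-p}\otimes Y$; and you spell out the $M$-linearity of $v\mapsto v^p$ in more detail than the paper does.
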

\begin{proof}
	  Clearly the map $\psi:V_{1} \rightarrow V_{p}$ defined by $(aX+bY) 
	  \mapsto (aX+bY)^{p}$ is $M$-linear. By \cite[(5.1)]{Glover}, we have an 
	  $M$-linear map $\varphi_{r-p,\, p}: V_{r-p} \otimes V_{p} \rightarrow V_{r}$, 
	  given by $\varphi_{r-p,\,p} (u \otimes v) = uv$, for $u \in V_{r-p}$ and 
	  $v \in V_{p}$. Let $\phi_{p} $ be the restriction of $\varphi_{r-p,\,r} 
	  \circ ( \mathrm{id} \otimes \psi ) $ to  the $M$-submodule $X_{r-p,\,r-p} 
	  \otimes V_{1}$. Since $\begin{psmallmatrix}
	  1 & 1 \\ 0 & 0   \end{psmallmatrix} X^{r-p} \otimes Y = X^{r-p} \otimes X$,
	   we see that $X^{r-p} \otimes Y$   generates 
	 $X_{r-p,\,r-p} \otimes V_{1}$  as an  $M$-module. So $\phi_{p}
	 (X_{r-p,\,r-p} \otimes V_{1})$ is an $M$-module generated by 
	 $ X^{r-p}Y^{p}$. But $X^{r-p} Y^{p}$ is a generator of $X_{r-p,\,r}$ as 
	 an $M$-module. Hence $\phi_{p}(X_{r-p,\, r-p} \otimes V_{1}) = X_{r-p,\,r} $.
\end{proof}

 In the course of determining  the structure of $X_{r-p,\,r}$ we often  define
  an $\f$-linear map $\eta: X_{s-1,\,s} \rightarrow X_{r-p,\,r}$, for some $s$.  The 
   next result gives a criteria  under which such a map $\eta$ is $M$-linear.  
 
\begin{lemma}
\label{M-linearity of map}
	  Let $s,s' \geq p$ be integers such that $s\equiv s' ~\mathrm{mod}~ (p-1)$.  Let  
	   $\eta: X_{s-1,\,s} \rightarrow X_{s'-p,\,s'}$  be an $\f$-linear map satisfying
	   \begin{enumerate}[label=\emph{(\roman*)}]
		     \item $\eta(X^{s})=X^{s'}$ and $\eta(X(kX+Y)^{s-1}) = X^{p}(kX+Y)^{s'-p}$,
		      $\forall \ k \in \f$, \item $\eta(Y^{s})=Y^{s'}$ and $\eta((X+kY)^{s-1}Y) = 
		      (X+kY)^{s'-p}Y^{p}$, $\forall \ k \in \f$.
	\end{enumerate}
	Then $\eta$ is an $M$-linear surjection.
\end{lemma}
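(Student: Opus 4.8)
The existence of an $\f$-linear $\eta$ satisfying (i) and (ii) is part of the hypothesis, so the only things to prove are that $\eta$ is $M$-linear and surjective; the surjectivity will follow formally once $M$-linearity (indeed $\Gamma$-linearity) is established. Indeed, taking $k=0$ in (ii) shows that $\eta\big((X+0\cdot Y)^{s-1}Y\big)=X^{s'-p}Y^{p}$ lies in the image of $\eta$, and since $X^{s'-p}Y^{p}$ generates $X_{s'-p,\,s'}$ as a $\Gamma$-module, the image of a $\Gamma$-linear $\eta$ is all of $X_{s'-p,\,s'}$.

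For $M$-linearity the plan is to reduce to a finite check. First, the set
\[
  S=\{X^{s},\,Y^{s}\}\cup\{X(kX+Y)^{s-1}:k\in\f\}\cup\{(X+kY)^{s-1}Y:k\in\f\}
\]
spans $X_{s-1,\,s}$: it is clearly contained in $X_{s-1,\,s}$, and conversely each element of the generating set of \Cref{Basis of X_r-i} (with $i=1$ and $r=s$) lies in its $\f$-span, because $X^{s-1}Y=(X+0\cdot Y)^{s-1}Y\in S$ and, for each $k$, $(kX+Y)^{s}=k\,X(kX+Y)^{s-1}+(kX+Y)^{s-1}Y$ with $(kX+Y)^{s-1}Y$ equal to $Y^{s}$ (if $k=0$) or to $k^{s-1}(X+k^{-1}Y)^{s-1}Y$ (if $k\neq0$), hence in $\f\cdot S$. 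Next, $M$ is generated as a monoid by the diagonal matrices, the upper unipotent matrices, the Weyl element $w$, and the single singular matrix $e=\begin{psmallmatrix}1&0\\0&0\end{psmallmatrix}$ (every rank-one matrix has the form $\gamma_{1}e\gamma_{2}$, and $0=e\cdot(wew)$). Since for each fixed $g\in M$ the set $\{v:\eta(gv)=g\,\eta(v)\}$ is an $\f$-subspace of $X_{s-1,\,s}$, it suffices to verify $\eta(gv)=g\,\eta(v)$ for $g$ running over this generating set and $v\in S$; an induction on word length then yields $M$-linearity for all of $M$.

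Most of these verifications I expect to be routine exponent bookkeeping. For $g=w$ the identity is immediate because (i) and (ii) are exchanged by the substitution $X\leftrightarrow Y$ (e.g. $w\cdot X(kX+Y)^{s-1}=(X+kY)^{s-1}Y$, and both sides of the desired identity equal $(X+kY)^{s'-p}Y^{p}$). For $g$ a diagonal matrix, or $g=e$, the vector $gv$ is again a scalar multiple of an element of $S$ (or is $0$), and the identities collapse to equalities of scalars in $\f$ that hold precisely because $s\equiv s'\bmod(p-1)$ (so $\lambda^{s}=\lambda^{s'}$ for $\lambda\in\f^{\ast}$ and $\lambda^{s-1-s'+p}=1$) and because $\lambda^{p}=\lambda$ for $\lambda\in\f$; for instance with $g=\begin{psmallmatrix}\alpha&0\\0&1\end{psmallmatrix}$ and $v=(X+kY)^{s-1}Y$ one writes $gv=\alpha^{s-1}(X+\alpha^{-1}kY)^{s-1}Y$, applies (ii), and gets $\eta(gv)=\alpha^{s-1-s'+p}(\alpha X+kY)^{s'-p}Y^{p}=g\,\eta(v)$. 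The case of an upper unipotent $g=\begin{psmallmatrix}1&c\\0&1\end{psmallmatrix}$ acting on $v\in\{X^{s},Y^{s},X(kX+Y)^{s-1},X^{s-1}Y\}$ is equally direct, the only nontrivial identity being $(cX+Y)^{p}=cX^{p}+Y^{p}$.

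The one case that I expect to require genuine care — and the main obstacle — is $g=\begin{psmallmatrix}1&c\\0&1\end{psmallmatrix}$ acting on $v=(X+kY)^{s-1}Y$ with $k\neq0$, since then $gv=\big((1+kc)X+kY\big)^{s-1}(cX+Y)$ is not visibly an $\f$-combination of elements of $S$ and must first be rewritten. The plan is to set $m=k^{-1}+c$, observe $(1+kc)X+kY=k(mX+Y)$ and $cX+Y=k^{-1}\big(k(mX+Y)-X\big)$, so that $gv=k^{s-1}(mX+Y)^{s}-k^{s-2}\,X(mX+Y)^{s-1}$, and then exploit $(mX+Y)^{s}=m\,X(mX+Y)^{s-1}+(mX+Y)^{s-1}Y$ together with (i),(ii) to compute $\eta\big((mX+Y)^{s}\big)=(mX+Y)^{s'-p}(mX^{p}+Y^{p})=(mX+Y)^{s'}$; this gives $\eta(gv)=(mX+Y)^{s'-p}\big((k^{s-1}m-k^{s-2})X^{p}+k^{s-1}Y^{p}\big)$. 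On the other side $g\,\eta(v)=\big((1+kc)X+kY\big)^{s'-p}(cX+Y)^{p}=k^{s'-p}(mX+Y)^{s'-p}\big((m-k^{-1})X^{p}+Y^{p}\big)$, and using $k^{s'-p}=k^{s'-1}=k^{s-1}$ (here $s\equiv s'\bmod(p-1)$ and $\lambda^{p}=\lambda$ are both invoked) one checks this equals the same expression, so $\eta(gv)=g\,\eta(v)$. The subcase $k=0$, where $v=X^{s-1}Y$, reduces to the short computation already noted. Once all these identities are in place, $\eta$ is $M$-linear, hence surjective by the first paragraph, which completes the proof.
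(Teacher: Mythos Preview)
Your argument is correct, and the bookkeeping in the delicate unipotent case $g=\begin{psmallmatrix}1&c\\0&1\end{psmallmatrix}$, $v=(X+kY)^{s-1}Y$ checks out (including the subcase $m=k^{-1}+c=0$, which your formulas handle uniformly). But the paper reaches the same conclusion by a much shorter route that avoids the case analysis over a monoid generating set of $M$. Instead of fixing a spanning set $S\subset X_{s-1,s}$ and verifying $\eta(gv)=g\,\eta(v)$ for each $g$ in $\{\text{diagonals},\ \text{unipotents},\ w,\ e\}$ and each $v\in S$, the paper verifies the single identity $\eta(\gamma\cdot X^{s-1}Y)=\gamma\cdot\eta(X^{s-1}Y)$ for an \emph{arbitrary} $\gamma=\begin{psmallmatrix}a&b\\c&d\end{psmallmatrix}\in M$ in one stroke: writing $\gamma\cdot X^{s-1}Y=b\,X(aX+cY)^{s-1}+d\,(aX+cY)^{s-1}Y$, a two--case split (on $c=0$ or $c\ne 0$, and symmetrically on $a$) together with $s\equiv s'\bmod(p-1)$ reduces each summand directly to hypotheses (i) and (ii), giving $\eta(\gamma\cdot X^{s-1}Y)=(bX+dY)^{p}(aX+cY)^{s'-p}=\gamma\cdot\eta(X^{s-1}Y)$. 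Since $X^{s-1}Y$ generates $X_{s-1,s}$ as an $\f[M]$-module, equivariance on this one vector for all of $M$ bootstraps immediately to full $M$-linearity (if $F=\sum a_i\gamma_i\cdot X^{s-1}Y$ then $\eta(\gamma F)=\sum a_i\,(\gamma\gamma_i)\cdot\eta(X^{s-1}Y)=\gamma\,\eta(F)$). So the trade-off is: one uniform computation on a single $M$-generator versus many small checks on an $\f$-spanning set against monoid generators of $M$. Your approach has the minor expository advantage of isolating exactly where each ingredient (the congruence $s\equiv s'\bmod(p-1)$, the Frobenius identity $(cX+Y)^p=cX^p+Y^p$) is invoked, but the paper's is considerably more economical.
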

\begin{proof}
	  We first claim that $\eta(\gamma \cdot X^{s-1}Y)= \gamma\cdot 
	  \eta(X^{s-1}Y), ~ \forall ~\gamma \in M$. For  $\gamma= \begin{psmallmatrix} a 
	  & b  \\ c & d \end{psmallmatrix} \in M$, we have
	   \begin{align}\label{4.1}
    	          \eta(\gamma \cdot X^{s-1}Y) 
    	          &= \eta((aX+cY)^{s-1}(bX+dY)) \nonumber \\
         	 & = b \eta(X(aX+cY)^{s-1})+ d\eta((aX+cY)^{s-1}Y)
	  \end{align}
	  Since $s\equiv s'$ mod $(p-1)$ it follows that
	  \begin{align*}
	        X(aX+cY)^{s-1} =  \begin{cases} a^{s'-p}X^{s},  \ &\mathrm{if} \ c =0, \\
	             c^{s'-p}X(ac^{-1}X+Y)^{s-1}, \  &\mathrm{if} \ c \neq 0.\end{cases} 
	   \end{align*}
	  This implies that 
	   \begin{align*}
	         \eta(X(aX+cY)^{s-1} )=  
	         \begin{cases} a^{s'-p}X^{s'},  \ &\mathrm{if} \ c =0, \\
	                                 c^{s'-p}X^{p}(ac^{-1}X+Y)^{s'-p}, \  &\mathrm{if} \ c \neq 0.
	         \end{cases} 
	   \end{align*}
        Hence $\eta(X(aX+cY)^{s-1} )= X^{p}(aX+cY)^{s'-p}$. A similar argument as
         above shows that  $\eta((aX+cY)^{s-1} Y)= (aX+cY)^{s'-p}Y^{p}$. Therefore
	   \begin{align*}
           	\eta(\gamma \cdot X^{s-1}Y)
           	~& \stackrel{\mathclap{\eqref{4.1}}}{=} ~b \eta(X(aX+cY)^{s-1})+
           	 d\eta((aX+cY)^{s-1}Y)\\
         	~ & = ~ bX^{p}(aX+cY)^{s'-p}+d(aX+cY)^{s'-p}Y^{p} \\
	        ~ &= ~(bX+dY)^{p}(aX+cY)^{s'-p}\\
	         ~&= ~ \gamma\cdot \eta(X^{s-1}Y).
	   \end{align*} 
	  Thus, for all $\gamma_{1},\gamma_{2} \in M$, we have 
	   \begin{align}\label{eq4.2}
        	       \eta((\gamma_{1} \gamma_{2}) \cdot X^{s-1}Y) 
        	       = (\gamma_{1} \gamma_{2}) \cdot \eta(X^{s-1}Y)
        	       = \gamma_{1}\cdot( \gamma_{2} \cdot \eta(X^{s-1}Y))
        	       = \gamma_{1}\cdot\eta( \gamma_{2} \cdot X^{s-1}Y).
	  \end{align}
	  Let $F(X,Y) \in X_{s-1,\,s}$. Since $X^{s-1}Y$ generates $X_{s-1,\,s}$ as 
	  an $\f[M]$-module, we can write $F(X,Y)$ as $ \sum_{i=1}^{n} a_{i}\gamma 
	  \cdot X^{s-1}Y$, for some $a_{i}\in \f$ and $\gamma_{i} \in M$.  For every
	$\gamma \in M$, we have
	\begin{align*}
       	\eta(\gamma \cdot F(X,Y)) ~ &= ~ ~\sum\limits_{i=1}^{n} a_{i} 
       	\eta(\gamma\gamma_{i}\cdot X^{s-1}Y) \quad
       	( \because \eta  ~ \mathrm{is} ~ \f \text{-} \mathrm{linear}  ) \\
        ~ &\stackrel{\mathclap{\eqref{eq4.2}}}{=} ~~\gamma \cdot \sum
        \limits_{i=1}^{n} a_{i}  \eta(\gamma_{i}\cdot X^{s-1}Y) =  
        \gamma \cdot \eta(F(X,Y)).
	\end{align*}
	This shows that $\eta$ is $M$-linear. Since $\eta(X^{s-1}Y) = X^{s'-p}Y^{p}$ is a
	 generator of $X_{s'-p,\,s'}$ as $M$-module, we get $\eta$ is onto.
\end{proof}
As a consequence we have the following result.
\begin{corollary}\label{M linear isomorhism}
        Let $s,s' \geq p$ be integers such that 
        $s,s' \equiv a ~\mathrm{mod}~ (p-1)$.
        If $\dim X_{s-1,\,s} = \dim X_{s'-p,\,s'} =2p+2 $, then
        $X_{s-1,\,s} \cong X_{s'-p,\,s'}$ as $M$-modules.
\end{corollary}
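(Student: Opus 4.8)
The plan is to produce an explicit $\mathbb{F}_p$-linear map $\eta\colon X_{s-1,\,s}\to X_{s'-p,\,s'}$ meeting the hypotheses of Lemma~\ref{M-linearity of map}; that lemma then upgrades it for free to an $M$-linear surjection, and since the source and target both have dimension $2p+2$ by assumption, $\eta$ will be an $M$-isomorphism.

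The first step is to exhibit a basis of $X_{s-1,\,s}$ adapted to conditions (i)--(ii) of Lemma~\ref{M-linearity of map}. By Lemma~\ref{Basis of X_r-i} with $i=1$ and $r=s$, the $2p+2$ vectors $\mathcal{B}=\{\,(kX+Y)^{s},\ X^{s},\ X(kX+Y)^{s-1},\ X^{s-1}Y : k\in\mathbb{F}_p\,\}$ span $X_{s-1,\,s}$; as $\dim X_{s-1,\,s}=2p+2$, they form a basis. I claim the set $\mathcal{B}'=\{\,X^{s},\ X(kX+Y)^{s-1},\ Y^{s},\ (X+kY)^{s-1}Y : k\in\mathbb{F}_p\,\}$ is also a basis. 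Every element of $\mathcal{B}'$ lies in $X_{s-1,\,s}$, being the image of $X^{s}$ or $X^{s-1}Y$ under a suitable element of $M$ (e.g. $X(kX+Y)^{s-1}=\begin{psmallmatrix}k&1\\1&0\end{psmallmatrix}\cdot X^{s-1}Y$ and $(X+kY)^{s-1}Y=\begin{psmallmatrix}1&0\\k&1\end{psmallmatrix}\cdot X^{s-1}Y$). There are $2p+2$ of them, so it suffices to check $\mathcal{B}\subseteq\operatorname{span}(\mathcal{B}')$: the vectors $X^{s}$, $X(kX+Y)^{s-1}$ and $X^{s-1}Y=(X+0\cdot Y)^{s-1}Y$ already lie in $\mathcal{B}'$, while $(kX+Y)^{s}=kX(kX+Y)^{s-1}+Y(kX+Y)^{s-1}$, with $Y(kX+Y)^{s-1}$ equal to $Y^{s}$ when $k=0$ and to $k^{s-1}(X+k^{-1}Y)^{s-1}Y$ when $k\neq0$, all of which lie in $\operatorname{span}(\mathcal{B}')$. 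Hence $\operatorname{span}(\mathcal{B}')=X_{s-1,\,s}$ and $\mathcal{B}'$ is a basis.

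Next I would define $\eta$ on the basis $\mathcal{B}'$ by $\eta(X^{s})=X^{s'}$, $\eta\big(X(kX+Y)^{s-1}\big)=X^{p}(kX+Y)^{s'-p}$, $\eta(Y^{s})=Y^{s'}$, $\eta\big((X+kY)^{s-1}Y\big)=(X+kY)^{s'-p}Y^{p}$, extended $\mathbb{F}_p$-linearly; since $\mathcal{B}'$ is a basis this is well defined. One checks the target vectors really lie in $X_{s'-p,\,s'}$: $X^{p}(kX+Y)^{s'-p}=\begin{psmallmatrix}k&1\\1&0\end{psmallmatrix}\cdot X^{s'-p}Y^{p}$, $(X+kY)^{s'-p}Y^{p}=\begin{psmallmatrix}1&0\\k&1\end{psmallmatrix}\cdot X^{s'-p}Y^{p}$, $X^{s'}=\begin{psmallmatrix}1&1\\0&1\end{psmallmatrix}\cdot X^{s'-p}Y^{p}-X^{s'-p}Y^{p}$ using $(X+Y)^{p}=X^{p}+Y^{p}$ in characteristic $p$, and $Y^{s'}=w\cdot X^{s'}$. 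By construction $\eta$ satisfies conditions (i) and (ii) of Lemma~\ref{M-linearity of map} (these just record the defining values of $\eta$ on $\mathcal{B}'$). The lemma then yields that $\eta$ is $M$-linear and surjective, and the hypothesis $\dim X_{s-1,\,s}=\dim X_{s'-p,\,s'}=2p+2$ forces $\eta$ to be an isomorphism of $M$-modules.

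The only genuine work is the bookkeeping in the second step, namely verifying that $\mathcal{B}'$ is a basis of $X_{s-1,\,s}$; everything else is a direct invocation of Lemma~\ref{M-linearity of map} together with a dimension count. I anticipate no conceptual obstacle: the identities used are elementary expansions and the Frobenius identity in characteristic $p$, and the congruence $s\equiv s'\pmod{p-1}$ (together with $p\equiv1\pmod{p-1}$) is exactly what makes the scalars appearing in the change of spanning set behave consistently.
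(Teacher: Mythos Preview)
Your proposal is correct and follows essentially the same approach as the paper: define an explicit $\mathbb{F}_p$-linear map on a basis of $X_{s-1,\,s}$, invoke Lemma~\ref{M-linearity of map} to get $M$-linearity and surjectivity, and conclude by the dimension hypothesis. The only cosmetic difference is that the paper defines $\eta$ on the basis $\{(kX+Y)^s, X(lX+Y)^{s-1}, X^s, X^{s-1}Y\}$ coming straight from Lemma~\ref{Basis of X_r-i} and then computes $\eta((X+kY)^{s-1}Y)$ to verify condition~(ii), whereas you first pass to the symmetric basis $\mathcal{B}'$ so that both conditions~(i) and~(ii) hold by definition; the underlying computation (rewriting $(kX+Y)^s$ in terms of $(X+k^{-1}Y)^{s-1}Y$ and $X(kX+Y)^{s-1}$) is the same in both versions.
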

\begin{proof}
        By \Cref{Basis of X_r-i}, we have $\lbrace (kX+Y)^{s}, 
        X(lX+Y)^{s-1}, X^{s}, X^{s-1}Y : l,k \in \fstar \rbrace $ forms a basis of 
        $X_{s-1,\,s}$. Define an $\f$-linear map $\eta:
        X_{s-1,\,s} \rightarrow X_{s'-p,\,s'}$ by 
        $\eta( (kX+Y)^{s}) =  (kX+Y)^{s'}$,
        $\eta( X(l X+Y)^{s-1}) =  X^{p}(lX+Y)^{s'-p}$,
        $\eta(X^{s}) = X^{s'}$ and $\eta(X^{s-1}Y) = X^{s'-p}Y^{p}$.
        Observe that for $k \in \fstar$, we have
        \begin{align*}
               \eta((X+kY)^{s-1} Y) &= k^{s-1} \eta ( (k^{-1}X+Y)^{s}) - 
                  k^{s-2}\eta(X(k^{-1}X+Y)^{s-1}) \\
                  & = k^{-1}(X+kY)^{s'} - k^{-1}X^{p}(X+kY)^{s'-p}
                  = (X+kY)^{s'-p} Y^{p}.
        \end{align*}
        Thus, by \Cref{M-linearity of map}, we have $\eta$ is an $M$-linear 
        surjection. Since $\dim X_{s-1,s} = \dim X_{s'-p,s'}$, $\eta$ is
        an isomorphism.
\end{proof}
We now give a criterion which allows us to compare  
  the sum of $p$-adic digits of
 $r-1$ and $r-p$  in terms of the constant and linear terms in the
   base $p$-expansion of $r$. 
\begin{lemma}\label{Equality of sum of p-adic digits of (r-1),(r-p)}
     Let $p \leq r= r_{m}p^{m}+\cdots+r_{1}p+r_{0}$ be the base $p$-expansion of $r$. 
     If $r>p$, then
     \begin{enumerate}[label=\emph{(\roman*)}]
	         \item $\Sigma_{p}(r-1) = \Sigma_{p}(r-p)$ if and only if $r_{1} , r_{0} \neq 0$.
	         \item $\Sigma_{p}(r-1) < \Sigma_{p}(r-p)$ if and only if $r_{1}=0$ and 
	         $r_{0} \neq 0$. 
	         \item $\Sigma_{p}(r-1) > \Sigma_{p}(r-p)$ if and only if $r_{0} = 0$.
      \end{enumerate}
\end{lemma}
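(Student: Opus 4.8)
The plan is to prove the trichotomy by a direct case analysis on the constant and linear digits $r_0$, $r_1$ in the base $p$-expansion of $r$, computing the base $p$-expansions of $r-1$ and $r-p$ explicitly in each case and reading off the sums of digits. Since the three conditions on $(r_0, r_1)$ — namely (a) $r_0 \neq 0$ and $r_1 \neq 0$, (b) $r_0 \neq 0$ and $r_1 = 0$, (c) $r_0 = 0$ — are mutually exclusive and exhaustive, it suffices to prove the three implications ``$\Rightarrow$'' in one direction (say, establishing the equality/inequality from the digit condition); the converse directions then follow formally, because the hypotheses partition all cases and the conclusions $\Sigma_p(r-1) = \Sigma_p(r-p)$, $<$, $>$ are also mutually exclusive and exhaustive.

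First I would dispose of case (iii), $r_0 = 0$. Write $r = p^n u$ with $p \nmid u$ and $n \geq 1$ (this is where $r > p$ and $r_0 = 0$ are used, and in fact $n \geq 1$ already forces $r_1 = 0$ or not depending on whether $n \geq 2$). Then $\Sigma_p(r) = \Sigma_p(u) = \Sigma_p(p^n u)$, while $r - 1 = p^n u - 1$ has base $p$-expansion obtained by borrowing: the bottom $n$ digits become $p-1$ and $u-1$ sits above, so $\Sigma_p(r-1) = \Sigma_p(u-1) + n(p-1) = \Sigma_p(u) - 1 + n(p-1)$ (using $p \nmid u$, as recalled in the Notation section). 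On the other hand $r - p = p(p^{n-1}u - 1)$, so $\Sigma_p(r-p) = \Sigma_p(p^{n-1}u - 1) = \Sigma_p(u) - 1 + (n-1)(p-1)$ by the same borrowing computation. Hence $\Sigma_p(r-1) - \Sigma_p(r-p) = p - 1 > 0$, giving (iii).

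Next, case (i): $r_1 \neq 0$ and $r_0 \neq 0$. Then subtracting $1$ only affects the constant digit: $r - 1 = r_m p^m + \cdots + r_1 p + (r_0 - 1)$, so $\Sigma_p(r-1) = \Sigma_p(r) - 1$. Subtracting $p$ only affects the linear digit: $r - p = r_m p^m + \cdots + (r_1 - 1)p + r_0$, so $\Sigma_p(r-p) = \Sigma_p(r) - 1$. Thus $\Sigma_p(r-1) = \Sigma_p(r-p)$. For case (ii), $r_1 = 0$ and $r_0 \neq 0$: again $\Sigma_p(r-1) = \Sigma_p(r) - 1$ as in case (i). But now subtracting $p$ requires borrowing past the zero linear digit; since $r > p$ there is a least index $j \geq 2$ with $r_j \neq 0$, and $r - p = r_m p^m + \cdots + r_{j+1}p^{j+1} + (r_j - 1)p^j + (p-1)p^{j-1} + \cdots + (p-1)p^2 + (p-1)p + r_0$, whence $\Sigma_p(r-p) = \Sigma_p(r) - 1 + (j-1)(p-1) \geq \Sigma_p(r) - 1 + (p-1) > \Sigma_p(r-1)$. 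This gives (ii).

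Finally I would assemble the converses: given the three direct implications and the observation that $\{r_0 \neq 0, r_1 \neq 0\}$, $\{r_0 \neq 0, r_1 = 0\}$, $\{r_0 = 0\}$ partition all $r > p$, while $\{=\}$, $\{<\}$, $\{>\}$ partition all possibilities for the comparison, each ``if'' forces the corresponding ``only if''. This part is purely formal and takes a line. I do not expect any genuine obstacle here; the only mild care needed is the borrowing bookkeeping in cases (ii) and (iii), where one must correctly identify the least nonzero digit above position $1$ (respectively $0$) and count how many digits turn into $p-1$ — essentially the computation already performed in Lemma~\ref{Sp(r-i), Sp(r-j)}(ii), which could even be cited directly for case (ii) with $i = p$ after a shift.
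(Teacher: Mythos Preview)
Your proposal is correct and takes essentially the same approach as the paper: both prove only the ``if'' directions by direct case analysis on the digits $r_0, r_1$, computing the base $p$-expansions of $r-1$ and $r-p$ with explicit borrowing, and then observe that the three digit conditions and the three order relations each form a partition so that the converses are automatic. The only cosmetic differences are that you handle case (iii) via the factorization $r = p^n u$ with $p \nmid u$ (whereas the paper names the least nonzero digit directly), and you present the cases in a different order; neither changes the argument.
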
	
\begin{proof}
     It is enough to  only prove the  \enquote*{if} part of the above assertions.  
     
     Case (i): If $r_{1}, r_{0}  \neq 0$, then $\Sigma_{p}(r-1)= (\sum_{i}  r_{i})-1 =
      \Sigma_{p}(r-p)$.   
      
     Case (ii): Assume  $r_{1}=0$ and $r_{0} \neq 0$.
     Let $i>1$ be smallest integer such that $r_{i} \neq 0$. Then 
     $r-1 = r_{m}p^{m}+\cdots+r_{i}p^{i}+ (r_{0}-1)$ and 
     $r-p = r_{m}p^{m}+ \cdots + r_{i+1}p^{i+1}+(r_{i}-1)p^{i}+(p-1)p^{i-1}+ \cdots +
     (p-1)p+r_{0}$. Hence  $\Sigma_{p}(r-1) =(\sum_{i} r_{i})-1 <
     \sum_{i} r_{i}+(i-1)(p-1)-1= \Sigma_{p}(r-p)$.   
     
     Case (iii): Assume $r_{0}=0$. 
     Let $i \geq 1$ be smallest positive integer such that $r_{i} \neq 0$. Then 
     $r-1 = r_{m}p^{m}+\cdots+r_{i+1}p^{i+1}+(r_{i}-1)p^{i}+(p-1)p^{i-1}+ \cdots + 
     (p-1)$ and  $r-p=r_{m}p^{m}+\cdots+r_{i+1}p^{i+1}+ (r_{i}-1)p^{i} + (p-1)p^{i-1}+
     \cdots +(p-1)p$. Hence $\Sigma_{p}(r-1)= \sum_{i}^{}r_{i}+i(p-1)-1 > 
     \sum_{i}^{}r_{i}+(i-1)(p-1)-1 =
     \Sigma_{p}(r-p)$. Therefore $\Sigma_{p}(r-1) > \Sigma_{p}(r-p)$.     
\end{proof}
The following proposition shows that  $X_{rp-p,\,rp} \cong X_{r-1,r}$ as $M$-modules.
\begin{proposition}
\label{p divides r}
       If $r \geq 2p$ and $p \mid r$, then the map sending $X^{r/p -1}Y$ to $X^{r-p}
       Y^{p}$ defines an $M$-linear  isomorphism between $X_{r/p-1,r/p}$ and 
       $X_{r-p,\,r}$.
\end{proposition}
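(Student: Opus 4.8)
The plan is to write $r = ps$ with $s = r/p$, so $p \mid r$ and $s \ge 2$ since $r \ge 2p$. Note $r - p = p(s-1)$ and $r = ps$, so the target monomial $X^{r-p}Y^p = X^{p(s-1)}Y^p = (X^{s-1}Y)^p$ is a $p$-th power. The idea is to apply \Cref{M-linearity of map} with the roles there played by $s' = r = ps$ and $s = s = r/p$; these satisfy $s \equiv s' \bmod (p-1)$ since $\Sigma_p(ps) = \Sigma_p(s)$, and both are $\ge p$ (here $s \ge 2$, and if $s < p$ we should instead argue directly that $V_s$ is irreducible — but in fact $r \ge 2p$ forces $s \ge 2$, and the case $s < p$ is handled because $X_{s-1,s} = V_s$ is then small; I will check that the hypothesis $s \ge p$ of \Cref{M-linearity of map} is not actually needed, or reduce to it). First I would define the candidate $\mathbb{F}_p$-linear map $\eta : X_{s-1,s} \to X_{r-p,r}$ on the spanning set from \Cref{Basis of X_r-i}: send $X(kX+Y)^{s-1} \mapsto X^p(kX+Y)^{r-p}$, $X^s \mapsto X^r$, $(X+kY)^{s-1}Y \mapsto (X+kY)^{r-p}Y^p$, $Y^s \mapsto Y^r$, for all $k \in \mathbb{F}_p$.

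**Key steps.** The first real step is to verify $\eta$ is well-defined, i.e., that the prescribed images respect all $\mathbb{F}_p$-linear relations among the spanning vectors $\{X(kX+Y)^{s-1}, (X+kY)^{s-1}Y, X^s, Y^s : k \in \mathbb{F}_p\}$ in $X_{s-1,s}$. The cleanest route is to identify $X_{s-1,s}$ explicitly: apply \Cref{Basis of X_r-i} to get $\dim X_{s-1,s} \le 2(p+1)$, and use the known structure / dimension of $X_{s-1,s}$ from \Cref{dimension formula for X_{r-1}} and \cite{BG15}; then one checks the $2p+2$ (or fewer) prescribed images are consistent. Alternatively — and this is what I expect to be shortest — observe that the $p$-th power map $F(X,Y) \mapsto F(X,Y)^p$ composed with the multiplication structure gives an honest well-defined $\mathbb{F}_p$-linear map: more precisely, the map $\mu : V_s \to V_r$, $F \mapsto F^p \cdot$ (nothing) does not land in $V_r$ since $\deg F^p = ps = r$, wait — actually $F(X,Y)^p \in V_{ps} = V_r$ exactly, so $\mu(F) = F^p$ is a well-defined (additive, and $\mathbb{F}_p$-linear since $(cF)^p = c^p F^p = cF^p$) map $V_s \to V_r$ because $p \nmid$ nothing is needed; and $\mu$ is $M$-linear because $(\gamma \cdot F)^p = \gamma \cdot (F^p)$ as the $M$-action is by linear substitution and Frobenius commutes with $\mathbb{F}_p$-linear substitution. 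Then $\mu(X^{s-1}Y) = X^{p(s-1)}Y^p = X^{r-p}Y^p$, so $\mu$ restricts to a surjection $X_{s-1,s} \twoheadrightarrow X_{r-p,r}$. This already gives everything except injectivity.

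**Injectivity / the main obstacle.** Since $\mu : V_s \to V_r$ is the Frobenius on coefficients applied to polynomials, it is injective on $V_s$ as an $\mathbb{F}_p$-vector space (it sends the monomial basis $X^{s-j}Y^j$ to the linearly independent set $X^{p(s-j)}Y^{pj}$). Therefore its restriction $\mu|_{X_{s-1,s}}$ is injective, hence an $M$-linear isomorphism $X_{s-1,s} \xrightarrow{\sim} X_{r-p,r}$ sending $X^{s-1}Y$ to $X^{r-p}Y^p$, which is exactly the claimed map. The one subtlety to nail down is that $\mu$ genuinely lands in $X_{r-p,r}$ and surjects onto it: this holds because $\mu(X_{s-1,s})$ is the $M$-submodule generated by $\mu(X^{s-1}Y) = X^{r-p}Y^p$ (using $M$-linearity of $\mu$ and that $X^{s-1}Y$ generates $X_{s-1,s}$), and $X^{r-p}Y^p$ generates $X_{r-p,r}$ by definition. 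So the proof reduces to: (1) $\mu: V_s \to V_r$, $F \mapsto F^p$ is a well-defined injective $M$-linear map — essentially the computation in the proof of \Cref{quotient periodic} that Frobenius-type maps are $\Gamma$-linear, though here even simpler since it is defined on all of $V_s$; (2) $\mu$ carries $X_{s-1,s}$ isomorphically onto $X_{r-p,r}$. I expect step (1)'s $M$-linearity verification (checking $(\gamma \cdot F)^p = \gamma \cdot F^p$ by expanding $\gamma \cdot F = F(aX+cY, bX+dY)$ and using that the $p$-th power of a polynomial in $\mathbb{F}_p[X,Y]$ is obtained by raising each coefficient and each variable to the $p$-th power) to be the only place needing care, and it is a one-line Frobenius argument; there is no serious obstacle here, the result being considerably easier than the $\theta$-filtration periodicity statements already proved.
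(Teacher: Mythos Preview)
Your proposal is correct and, after the initial detour through \Cref{M-linearity of map}, lands on exactly the paper's argument: the Frobenius map $\mu:V_{r/p}\to V_r$, $F\mapsto F^p=F(X^p,Y^p)$, is an injective $M$-linear map, and restricting it to $X_{r/p-1,\,r/p}$ gives the desired isomorphism onto $X_{r-p,\,r}$. The paper's proof is the two-line version of what you wrote in your ``Key steps'' and ``Injectivity'' paragraphs.
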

\begin{proof}
	  The map $V_{r/p} \rightarrow V_{r}$ defined by  $F(X,Y) \mapsto F(X,Y)^{p}
	  =F(X^{p},Y^{p}) $ is
	   an injective  $M$-linear homomorphism. Restricting  this  map to
	    $X_{r/p-1,r/p}$ completes the proof.
\end{proof}
This result, combined with the results of \cite[$\S$2, $\S$3]{BG15}
determining the structure of $X_{s-1,\,s}$,
determines the
 structure of $X_{r-p,\,r}$ in the case  $p \mid r$.
%
  Hereafter, we will assume that 
 $p\nmid r$ which, by the above lemma, is equivalent to 
 $\Sp(r-p) \geq \Sp(r-1)$.  If $p \leq r < 2p$, then $0 \leq r-p \leq p-1$, so
the structure of $X_{r-p,\,r}$ can be treated by the methods of $\S3$
and for $r=2p$, we have   $X_{r-p,\,r} \cong V_{2}$.
So from now on we will also assume $r>2p$.
%

\subsection{The case \texorpdfstring{$r \equiv 1 ~\mathrm{mod} ~(p-1)$} {}}
 
In this section, we determine the structure of $X_{r-p,\,r}$ if 
$p \nmid r$ and $r \equiv 1$ mod $(p-1)$. 
Since $\Sp(r-p) \equiv r-p \equiv 0$ mod $(p-1)$, we 
have $\Sp(r-p)$ is a non-zero multiple of $p-1$. We first  consider the
case $\Sp(r-p) = p-1$.

\begin{lemma}\label{BG Lemma 3.2} 
        If $p\geq 2$, $2p < r\equiv 1 ~\mathrm{mod}~ (p-1)$ 
         and $\Sigma_{p}(r-p)=p-1$, then 
	    \begin{align*}
	        \sum\limits_{k=0}^{p-1}X^{p}(kX+Y)^{r-p} \equiv -X^{r} \quad \text{and} \quad  
	         \sum\limits_{k=0}^{p-1}(X+kY)^{r-p}Y^{p} \equiv -Y^{r} ~~\mathrm{mod } ~  p.
	 \end{align*} As a consequence, $\dim X_{r-p,\,r} \leq 2p$.
\end{lemma}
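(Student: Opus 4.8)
The plan is to establish the two polynomial congruences by a direct expansion, the second following from the first by symmetry, and then to read off the dimension bound from the surjection $\phi_p$ of \Cref{surjection}.

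For the first congruence, I would expand
\[
 \sum_{k=0}^{p-1}X^{p}(kX+Y)^{r-p}
 = X^{p}\sum_{j=0}^{r-p}\binom{r-p}{j}X^{j}Y^{r-p-j}\sum_{k\in\mathbb{F}_p}k^{j},
\]
and apply \eqref{sum fp}: the inner sum $\sum_{k\in\mathbb{F}_p}k^{j}$ is $\equiv -1\bmod p$ exactly when $j$ is a positive multiple of $p-1$, and $\equiv 0$ otherwise (recall $0^0=1$). Thus, modulo $p$, only the indices $j$ with $0<j\le r-p$, $j\equiv 0\bmod (p-1)$, and $\binom{r-p}{j}\not\equiv 0\bmod p$ contribute. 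The crux of the lemma is that there is exactly one such $j$, namely $j=r-p$: by Lucas' theorem (\Cref{lucas}), $\binom{r-p}{j}\not\equiv 0$ forces every base-$p$ digit of $j$ to be at most the corresponding digit of $r-p$, so $\Sigma_p(j)\le \Sigma_p(r-p)=p-1$; on the other hand $\Sigma_p(j)\equiv j\equiv 0\bmod (p-1)$ and $\Sigma_p(j)\ge 1$, so $\Sigma_p(j)=p-1=\Sigma_p(r-p)$, and equality of digit sums together with the digitwise inequality forces $j=r-p$. Since $r>2p$ and $r\equiv 1\bmod (p-1)$, the number $r-p$ is a positive multiple of $p-1$, so the lone surviving term is $-X^{p}\binom{r-p}{r-p}X^{r-p}=-X^{r}$, which is the first congruence.

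For the second congruence I would apply the Weyl involution $w=\begin{psmallmatrix} 0 & 1 \\ 1 & 0 \end{psmallmatrix}$, which is $\mathbb{F}_p$-linear and interchanges $X$ and $Y$: it carries $X^{p}(kX+Y)^{r-p}$ to $(X+kY)^{r-p}Y^{p}$ and $X^{r}$ to $Y^{r}$, turning the first congruence into the second. For the dimension bound, recall from \Cref{surjection} that $\phi_p\colon X_{r-p,\,r-p}\otimes V_1\twoheadrightarrow X_{r-p,\,r}$, $u\otimes v\mapsto uv^{p}$. Using the spanning set $\{(kX+Y)^{r-p},X^{r-p}:k\in\mathbb{F}_p\}$ of $X_{r-p,\,r-p}$ (\Cref{Basis of X_r-i} with $i=0$) and $\{X,Y\}$ of $V_1$, the module $X_{r-p,\,r}$ is spanned by the $2p+2$ vectors $(kX+Y)^{r-p}X^{p}$ and $(kX+Y)^{r-p}Y^{p}$ for $k\in\mathbb{F}_p$, together with $X^{r}=X^{r-p}X^{p}$ and $X^{r-p}Y^{p}$. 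The first congruence expresses $X^{r}$ as a linear combination of the $(kX+Y)^{r-p}X^{p}$, so it is redundant. For $X^{r-p}Y^{p}$, the second congruence reads $X^{r-p}Y^{p}+\sum_{k\neq 0}k^{r-p}(k^{-1}X+Y)^{r-p}Y^{p}=-Y^{r}$, and $Y^{r}=(0\cdot X+Y)^{r-p}Y^{p}$ already lies in the span of $\{(kX+Y)^{r-p}Y^{p}:k\in\mathbb{F}_p\}$, so $X^{r-p}Y^{p}$ is also redundant. This leaves a spanning set of size $2p$, giving $\dim X_{r-p,\,r}\le 2p$.

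I expect the only genuine subtlety to be the digit-sum argument forcing $j=r-p$ (which is exactly where the hypothesis $\Sigma_p(r-p)=p-1$ enters); the polynomial expansions and the final bookkeeping for the dimension estimate are routine.
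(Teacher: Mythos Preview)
Your argument is correct. The paper's proof is shorter because it reduces immediately to \cite[Lemma~3.2]{BG15}: setting $s=r-p+1$, one has $s\equiv 1\bmod(p-1)$ and $\Sigma_p(s-1)=\Sigma_p(r-p)=p-1$, so that lemma gives $\sum_k X(kX+Y)^{s-1}\equiv -X^{s}$ and its $Y$-analogue, and multiplying by $X^{p-1}$ (resp.\ $Y^{p-1}$) yields the two congruences here. Your direct expansion with the Lucas/digit-sum squeeze ($\Sigma_p(j)\le\Sigma_p(r-p)=p-1$ together with $\Sigma_p(j)\equiv 0\bmod(p-1)$ forcing equality of all digits, hence $j=r-p$) is essentially the content of the proof of \cite[Lemma~3.2]{BG15} itself, so you have reproved that lemma in situ rather than citing it. This buys self-containment at the cost of brevity. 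You also spell out the dimension consequence via the spanning set coming from $\phi_p$, which the paper leaves implicit; your bookkeeping there is fine.
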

\begin{proof}
	Let $s=r-p+1$. Clearly $s \equiv 1$ mod $(p-1)$ and $\Sigma_{p}(s-1) =
	 \Sigma_{p}(r-p) =p-1$. Further, $s-1 = r-p \geq p$. Therefore,
	  by \cite[Lemma 3.2]{BG15}, we have 
	\begin{align*}
	   \sum\limits_{k=0}^{p-1}X (kX+Y)^{s-1} \equiv -X^{s} \ \text{ and  }\  \sum
	   \limits_{k=0}^{p-1}(X+kY)^{s-1}Y
	   \equiv -Y^{s} \text{ mod }  p. 
	\end{align*} 
	Multiplying the first and second equation above by $X^{p-1}$ and $Y^{p-1}$
	 respectively we obtain the lemma.
\end{proof}
\begin{proposition}\label{BG Proposition 3.3}
For $p\geq 2$, if $p \nmid r$, $2p<r\equiv 1 ~\mathrm{mod}~ (p-1)$ and 
$\Sigma_{p}(r-p)=p-1 $, then	 $X_{r-p,\,r} \cong X_{r-1,r} 
\cong V_{2p-1}$, as $M$-modules.
\end{proposition}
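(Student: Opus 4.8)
The plan is to realise all three modules as copies of $V_{p-1}\otimes V_1\cong V_{2p-1}$, which reduces the whole statement to one dimension count. First I would extract the base-$p$ digit information forced by the hypotheses: since $p\nmid r$ we have $r_0\neq 0$, so Lemma~\ref{Equality of sum of p-adic digits of (r-1),(r-p)} leaves only its cases (i) and (ii). Case (ii) would give $\Sigma_p(r-1)<\Sigma_p(r-p)=p-1$, and since $\Sigma_p(r)=\Sigma_p(r-1)+1$ (as $r_0\neq 0$) is $\equiv r\equiv 1\bmod(p-1)$, this forces $\Sigma_p(r)=1$, i.e. $r$ a power of $p$, contradicting $r_0\neq 0$. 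So case (i) holds: $r_1\neq 0$, $\Sigma_p(r-1)=\Sigma_p(r-p)=p-1$, $\Sigma_p(r)=p$. Now $r-1\equiv r-p\equiv 0\bmod(p-1)$ with digit sum $p-1$, so Lemma~\ref{dimension formula for X_{r}}(i) (with $a=p-1$) gives $X_{r-1,\,r-1}\cong X_{r-p,\,r-p}\cong V_{p-1}$ as $M$-modules, while (with $a=1$, using $\Sigma_p(r)=p>p-1$) Lemma~\ref{dimension formula for X_{r}} gives $\dim X_{r,\,r}=p+1$ and the exact sequence \eqref{Glover 4.5} then gives $\dim X_{r,\,r}^{(1)}=p-1$. (For $p=2$ these hypotheses are vacuous, since $\Sigma_2(r-2)=1$ would force $r$ even; so one may assume $p\geq 3$, which is also needed for the cited lemmas.)

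Next I would record that $X_{r-p,\,r}$ and $X_{r-1,\,r}$ are quotients of $V_{2p-1}$. Indeed $\phi_p\colon X_{r-p,\,r-p}\otimes V_1\twoheadrightarrow X_{r-p,\,r}$ and $\phi_1\colon X_{r-1,\,r-1}\otimes V_1\twoheadrightarrow X_{r-1,\,r}$ (Lemmas~\ref{surjection} and \ref{surjection1}), together with $X_{r-p,\,r-p}\otimes V_1\cong X_{r-1,\,r-1}\otimes V_1\cong V_{p-1}\otimes V_1\cong V_{2p-1}$ by Clebsch--Gordan (Lemma~\ref{ClebschGordan}(ii) with $m=1$, $n=p-1$, the second summand $V_{-1}\otimes V_{p-3}\otimes D^2$ vanishing), give surjections $V_{2p-1}\twoheadrightarrow X_{r-p,\,r}$ and $V_{2p-1}\twoheadrightarrow X_{r-1,\,r}$; in particular $\dim X_{r-p,\,r}\leq 2p$ (this also follows from Lemma~\ref{BG Lemma 3.2}). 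Hence, once I prove $\dim X_{r-p,\,r}=2p=\dim V_{2p-1}$, the surjection must be an isomorphism and $X_{r-p,\,r}\cong V_{2p-1}$; and since $\dim X_{r-1,\,r}=2p$ by Lemma~\ref{dimension formula for X_{r-1}} (here $\Sigma_p(r)=p$, $\delta=0$), the same argument gives $X_{r-1,\,r}\cong V_{2p-1}$, so the proposition follows.

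The heart of the proof — and the step I expect to be the main obstacle — is the lower bound $\dim X_{r-p,\,r}\geq 2p$. I would get it by bounding $X_{r-p,\,r}/X_{r-p,\,r}^{(1)}$ and $X_{r-p,\,r}^{(1)}$ separately. For the quotient: $V_r/V_r^{(1)}\cong \mathrm{ind}_B^{\Gamma}(\chi_1^{p-1}\chi_2)$ is a non-split extension of length two (Lemma~\ref{Structure of induced}, using $r\equiv 1\bmod(p-1)$ so that $p-1\neq 1$), hence uniserial with socle $V_1$ and cosocle $V_{p-2}\otimes D$, so its only submodule surjecting onto the cosocle is the whole module. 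Now $X_{r-p,\,r}/X_{r-p,\,r}^{(1)}\hookrightarrow V_r/V_r^{(1)}$, and I claim the image of $X^{r-p}Y^{p}$ in $V_{p-2}\otimes D$ is nonzero: by Lemma~\ref{breuil map quotient} with $m=0$ one has $X^{r-p}Y^{p}\equiv X^{r-1}Y\bmod V_r^{(1)}$ (since $[p]=1$), and by Lemma~\ref{Breuil map} with $m=0$, $a'=[r]=1$, the class of $X^{r-1}Y$ maps to $(-1)^{r-1}\binom{p-2}{0}X^{p-2}=(-1)^{r-1}X^{p-2}\neq 0$. Therefore $X_{r-p,\,r}/X_{r-p,\,r}^{(1)}=V_r/V_r^{(1)}$, of dimension $p+1$. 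For the submodule: $X_{r,\,r}\subseteq X_{r-p,\,r}$ (by the identity $X^{r}=\begin{psmallmatrix}1&1\\0&1\end{psmallmatrix}X^{r-p}Y^{p}-X^{r-p}Y^{p}$ noted before Lemma~\ref{BG Lemma 4.1}), hence $X_{r,\,r}^{(1)}=X_{r,\,r}\cap V_r^{(1)}\subseteq X_{r-p,\,r}\cap V_r^{(1)}=X_{r-p,\,r}^{(1)}$, so $\dim X_{r-p,\,r}^{(1)}\geq p-1$. Adding gives $\dim X_{r-p,\,r}\geq (p+1)+(p-1)=2p$, completing the count.

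In summary, the routine parts are the digit arithmetic and the two surjections onto $V_{2p-1}$; the only real content is the lower bound, which rests on two facts — that $X^{r-p}Y^{p}$ has nonzero image in the cosocle of $V_r/V_r^{(1)}$ (a short Breuil-map computation via Lemmas~\ref{breuil map quotient} and \ref{Breuil map}) and that $X_{r-p,\,r}$ inherits the $(p-1)$-dimensional module $X_{r,\,r}^{(1)}$. If desired, one can alternatively produce an explicit $M$-linear surjection $\eta\colon X_{r-1,\,r}\to X_{r-p,\,r}$ via Lemma~\ref{M-linearity of map} with $s=s'=r$, well defined because (by \cite[Lemma 3.2]{BG15} and Lemma~\ref{BG Lemma 3.2}) the two relations among the standard spanning set of $X_{r-1,\,r}$ are carried to the two relations among that of $X_{r-p,\,r}$; combined with the dimension count this gives the isomorphism directly.
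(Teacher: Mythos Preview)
Your proof is correct but takes a genuinely different route from the paper's. The paper builds an explicit $\f$-linear map $\eta\colon X_{r-1,\,r}\to X_{r-p,\,r}$ on the spanning set of \Cref{Basis of X_r-i}, verifies via \cite[Lemma~3.2]{BG15} and \Cref{BG Lemma 3.2} that $\eta(X^{r})=X^{r}$ and $\eta(Y^{r})=Y^{r}$, then appeals to \Cref{M-linearity of map} to get $M$-linearity and surjectivity; injectivity comes from the observation that $\eta|_{X_{r,\,r}}=\mathrm{id}$, so $\ker\eta$ meets $\mathrm{soc}(X_{r-1,\,r})=X_{r,\,r}^{(1)}$ trivially, and \cite[Proposition~3.3]{BG15} provides $X_{r-1,\,r}\cong V_{2p-1}$. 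You instead argue separately that each of $X_{r-1,\,r}$ and $X_{r-p,\,r}$ is a quotient of $V_{p-1}\otimes V_{1}\cong V_{2p-1}$ (via $\phi_{1}$, $\phi_{p}$, and Clebsch--Gordan), and then pin down $\dim X_{r-p,\,r}=2p$ by a filtration count: the Breuil-map computation forces $X_{r-p,\,r}/X_{r-p,\,r}^{(1)}=V_{r}/V_{r}^{(1)}$, while $X_{r,\,r}^{(1)}\subseteq X_{r-p,\,r}^{(1)}$ supplies the remaining $p-1$ dimensions. This is a more structural argument, leaning on the machinery of \S\ref{prelim} (Lemmas~\ref{Breuil map}, \ref{breuil map quotient}, \ref{dimension formula for X_{r}}, \ref{dimension formula for X_{r-1}}) rather than on the explicit map; the paper's approach is shorter given the citations to \cite{BG15}, but yours is more self-contained within the paper's own toolkit. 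Your closing remark about the alternative via \Cref{M-linearity of map} is precisely the paper's argument.
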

\begin{proof}
	First we  claim that $\Sigma_{p}(r-1)=p-1$. By hypothesis  we have $\Sigma_{p}
	(r-1) \equiv r-1 \equiv  0$ mod $(p-1)$. Also,
	$\Sigma_{p}(r-1) \leq \Sigma_{p}(r-p) =p-1$.  Therefore, 
	$\Sigma_{p}(r-1)=0$ or $p-1$. Since $r>1$, we have
	$\Sigma_{p}(r-1) \neq 0$, whence $\Sigma_{p}(r-1)=p-1$. Hence, 
	by \cite[Proposition 3.3]{BG15}, we have $X_{r-1,r} \cong V_{2p-1}$ and 
	$\lbrace X(kX+Y)^{r-1}, (X+lY)^{r-1}Y : k,l \in \f \rbrace$ is a basis of 
	$X_{r-1,r}$ over $\f$. Define an $\f$-linear map $\eta: X_{r-1,r} 
	\rightarrow X_{r-p,\,r}$, by $\eta(X(kX+Y)^{r-1})=X^{p}(kX+Y)^{r-p}$ and 
	$\eta((X+lY)^{r-1}Y)= (X+lY)^{r-p}Y^{p}$, for  $k,l \in \f$. Then
	\begin{alignat*}{3}
	       \eta(X^{r}) &= -\eta \Big ( \sum\limits_{k=0}^{p-1} X(kX+Y)^{r-1} \Big ) 
	       && \quad \quad \text{( by \cite[Lemma~3.2]{BG15}) }\\
	       &=  - \sum\limits_{k=0}^{p-1} X^{p}(kX+Y)^{r-p} \\
	       &= X^{r} && \quad \quad \text{(by \Cref{BG Lemma 3.2})}.
	\end{alignat*}
     Similarly  $\eta(Y^{r})=Y^{r}$. Therefore, $\eta$ satisfies hypotheses 
     of \Cref{M-linearity of map} with $s=s'=r$. So $\eta$ is $M$-linear and onto. 
      Further, by $M$-linearity  we have 
     $\eta(\sum_{i} a_{i} \gamma_{i}X^{r}) = \sum_{i} a_{i} \gamma_{i} \eta(X^{r}) =
      \sum_{i} a_{i} \gamma_{i}X^{r}$. Therefore, the restriction of  $\eta$  to $X_{r,\,r}$ is
       the identity map. By \cite[Proposition 3.3]{BG15}, and the fact that
       soc$(V_{2p-1})= V_{2p-1}^{(1)}$,
        we have soc$(X_{r-1,r}) = X_{r-1,r}^{(1)} = X_{r,r}^{(1)}$. So ker$(\eta) \cap
         \mathrm{soc}(X_{r-1,r}) = $ ker$(\eta) \cap X_{r,r}^{(1)} = (0)$,
         since $\eta$ is injective on $X_{r,r}$. Hence
          $\eta: X_{r-1,r} \rightarrow  X_{r-p,\,r}$ is an isomorphism. 
\end{proof}
We next consider the remaining case, i.e., $\Sp(r-p)>p-1$.
\begin{proposition}\label{remaining case r = 1 mod p-1}
	Let  $p \geq 3$, $p\nmid r$, $2p<r\equiv1 ~\mathrm{mod}~ (p-1)$ 
	and $\Sigma_{p}(r-p)> p-1$. Then
	 $X_{r-p,\,r} \cong X_{rp-1,rp}$ as 
	$M$-modules, and  we have a short exact sequence of $M$-modules
	$$
	       0 \rightarrow V_{1} \otimes D^{p-1} \rightarrow X_{r-p,\,r} \rightarrow 
	       V_{2p-1} \rightarrow 0 .
	$$
	Moreover, if $\Sp(r-p) = \Sp(r-1) > p-1$, then $X_{r-p,\,r} \cong X_{r-1,\,r}$.
\end{proposition}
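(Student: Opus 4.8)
The plan is to build an explicit $M$-linear isomorphism $\eta : X_{rp-1,\,rp} \to X_{r-p,\,r}$ using the criterion of Lemma~\ref{M-linearity of map}, and then to read off the short exact sequence by transporting the known structure of $X_{s-1,\,s}$ (for $s = rp$) through $\eta$. First I would record the relevant numerics: since $p \nmid r$ and $r \equiv 1 \bmod (p-1)$, we have $rp \equiv r \equiv 1 \bmod (p-1)$, so $rp - 1 \equiv 0 \bmod (p-1)$, and $\Sigma_p(rp-1) = \Sigma_p(r p - 1)$. The key identity is $\Sigma_p(rp) = \Sigma_p(r)$, hence $\Sigma_p(rp - p) = \Sigma_p(p(r-1)) = \Sigma_p(r-1)$ and, crucially, $\Sigma_p(rp - 1)$ relates to $\Sigma_p(r-p)$: writing $rp = r\cdot p$ shifts digits up by one and the constant digit of $rp$ is $0$, so $\Sigma_p(rp - 1) = \Sigma_p(rp) + (\text{carry terms})$; a short computation using Lemma~\ref{Equality of sum of p-adic digits of (r-1),(r-p)} applied to $rp$ (whose constant digit is $0$) shows $\Sigma_p(rp - 1) > p - 1$ is equivalent to $\Sigma_p(r - p) > p - 1$, so we are squarely in the hypotheses of \cite[\S3]{BG15} for the module $X_{rp-1,\,rp}$ (the case $s = rp$, $\Sigma_p(s-1) > p-1$).

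Next I would invoke the results of \cite[\S3]{BG15}: under $\Sigma_p(rp-1) > p-1$ one has $\dim X_{rp-1,\,rp} = 2p+2$ together with the short exact sequence $0 \to V_1 \otimes D^{p-1} \to X_{rp-1,\,rp} \to V_{2p-1} \to 0$ (this is the $r \equiv 1 \bmod (p-1)$ shape of the structure theorem there, with the submodule $X_{rp-1,\,rp}^{(1)} \cong V_1 \otimes D^{p-1}$ being the copy of $\theta$ times a degree-$(p-1)$ piece, twisted by the determinant). The module $X_{r-p,\,r}$ surjects onto $X_{r-p,\,r}$ via $\phi_p$ of Lemma~\ref{surjection} with $\dim X_{r-p,\,r} \le 2p+2$, so it suffices to produce an injective (equivalently, surjective, by dimension count once we know $\dim X_{r-p,\,r} = 2p+2$) $M$-linear map between the two. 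To get $\dim X_{r-p,\,r} = 2p+2$ I would either show the spanning set of Lemma~\ref{Basis of X_r-i} (for $i = p$, but adapted: $\{(kX+Y)^{rp}, X^p(lX+Y)^{rp-p}, X^{rp}, X^{rp-1}Y : \ldots\}$ translated to degree $r$) is linearly independent, or more cleanly establish that $\eta$ below is well-defined and onto and then conclude $\dim X_{r-p,\,r} \ge 2p+2$. Then I define $\eta : X_{rp-1,\,rp} \to X_{r-p,\,r}$ on the basis $\{(kX+Y)^{rp}, X(lX+Y)^{rp-1}, X^{rp}, X^{rp-1}Y : k \in \f,\ l \in \f^\ast\}$ of $X_{rp-1,\,rp}$ by
\begin{align*}
  \eta((kX+Y)^{rp}) &= (kX+Y)^{r}, & \eta(X(lX+Y)^{rp-1}) &= X^p(lX+Y)^{r-p}, \\
  \eta(X^{rp}) &= X^r, & \eta(X^{rp-1}Y) &= X^{r-p}Y^p,
\end{align*}
and check, exactly as in Corollary~\ref{M linear isomorhism}, that $\eta((X+lY)^{rp-1}Y) = (X+lY)^{r-p}Y^p$ for $l \in \f^\ast$, so that the hypotheses (i)--(ii) of Lemma~\ref{M-linearity of map} hold with $s = rp$, $s' = r$ (note $s \equiv s' \bmod (p-1)$); hence $\eta$ is an $M$-linear surjection. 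Since $\dim X_{rp-1,\,rp} = 2p+2 \ge \dim X_{r-p,\,r}$ and $\eta$ is onto, $\dim X_{r-p,\,r} = 2p+2$ and $\eta$ is an isomorphism, giving both $X_{r-p,\,r} \cong X_{rp-1,\,rp}$ and the short exact sequence by transport.

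Finally, for the last clause, suppose $\Sigma_p(r-p) = \Sigma_p(r-1) > p-1$; by Lemma~\ref{Equality of sum of p-adic digits of (r-1),(r-p)}(i) this means $r_1, r_0 \ne 0$. Then $\Sigma_p(r-1) > p-1$ gives, by \cite[\S3]{BG15} (or Lemma~\ref{dimension formula for X_{r-1}}), $\dim X_{r-1,\,r} = 2p+2$; combined with $\dim X_{r-p,\,r} = 2p+2$ and $r-1 \equiv r-p \equiv 1 \bmod (p-1)$ — wait, here $r-1 \equiv 0$, $r-p \equiv 1 \bmod (p-1)$, so I would instead apply Corollary~\ref{M linear isomorphism} with $s = r$, $s' = r$ comparing $X_{r-1,\,r}$ and $X_{r-p,\,r}$ directly: both have dimension $2p+2$ and generators $X^{r-1}Y$, $X^{r-p}Y^p$ in the same degree-$r$ space, so the map $X^{r-1}Y \mapsto X^{r-p}Y^p$ extends (via Lemma~\ref{M-linearity of map}, after checking the four defining relations, which hold because $\Sigma_p(r-1) > p-1$ forces the same reduction identities \cite[Lemma 3.2]{BG15} to be available at degree $r$ in both modules) to an $M$-linear isomorphism.

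\textbf{Main obstacle.} The delicate point is the digit-sum bookkeeping: verifying that $\Sigma_p(rp-1) > p-1 \iff \Sigma_p(r-p) > p-1$ and correctly identifying which structure theorem from \cite[\S3]{BG15} applies to $X_{rp-1,\,rp}$ (the statement there is phrased in terms of $\Sigma_p(s-1)$ for $X_{s-1,\,s}$, and one must match the congruence class $s = rp \equiv 1 \bmod (p-1)$ with the correct case and the correct twist $D^{p-1}$ on the sub). Everything else — the $M$-linearity of $\eta$ and the dimension count — is a routine repetition of the arguments already carried out in Corollary~\ref{M linear isomorphism} and Proposition~\ref{BG Proposition 3.3}.
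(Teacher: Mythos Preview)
Your argument has a genuine gap: you never establish the lower bound $\dim X_{r-p,\,r} \ge 2p+2$. From the surjection $\eta: X_{rp-1,\,rp} \twoheadrightarrow X_{r-p,\,r}$ with $\dim X_{rp-1,\,rp} = 2p+2$ you can only conclude $\dim X_{r-p,\,r} \le 2p+2$, which you already knew from Lemma~\ref{surjection}. Your sentence ``establish that $\eta$ below is well-defined and onto and then conclude $\dim X_{r-p,\,r} \ge 2p+2$'' has the inequality backwards, and the alternative you mention (linear independence of the spanning set) is exactly the claim to be proved, with no argument supplied. Without the lower bound you cannot conclude that $\eta$ is injective, so the isomorphism $X_{r-p,\,r} \cong X_{rp-1,\,rp}$ and the exact sequence remain unproved.

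The paper obtains the missing lower bound by a filtration count. One shows (i) $\dim X_{r-p,\,r}/X_{r-p,\,r}^{(1)} = p+1$, using that $X^{r-p}Y^p - X^{r-1}Y \in V_r^{(1)}$ forces $X_{r-p,\,r} + V_r^{(1)} = X_{r-1,\,r} + V_r^{(1)}$, and then Lemma~\ref{Structure X(1)}; (ii) $\dim X_{r,\,r}^{(1)}/X_{r,\,r}^{(2)} = p-1$ by \cite[Lemma 3.1 (i)]{BG15}; and (iii) $\dim X_{r-p,\,r}^{(2)} \ge 2$, by exhibiting $X^p G,\, Y^p G \in X_{r-p,\,r}^{(2)}$ for some $0 \ne G \in X_{r-p,\,r-p}^{(2)} = X_{r-p,\,r-p}^{(1)}$ (Lemma~\ref{star=double star}, using the hypothesis $\Sigma_p(r-p) > p-1$ to ensure $X_{r-p,\,r-p}^{(1)} \ne 0$ via Lemma~\ref{dimension formula for X_{r}}). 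Summing gives $\dim X_{r-p,\,r} \ge 2p+2$, after which Corollary~\ref{M linear isomorhism} (your $\eta$) finishes. As a minor point, your digit-sum bookkeeping is also off: the clean identity is $\Sigma_p(rp-1) = \Sigma_p(r-1) + (p-1)$ (since $rp-1 = (r-1)p + (p-1)$ with $p \nmid r$), so $\Sigma_p(rp-1) > p-1$ holds for all $r > 1$ and is not equivalent to anything about $\Sigma_p(r-p)$.
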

\begin{proof}
     We claim  that $\dim X_{r-p,\,r} =2p+2$.
      We prove the  proposition assuming the claim.
     Note that $\Sigma_{p}(rp-1) = \Sigma_{p}((r-1)p+p-1) 
     = \Sigma_{p}(r-1)+p-1 >	 p-1$. 
	 Thus, by \cite[Proposition 3.13 (ii)]{BG15}, we have dim $X_{rp-1,\,rp}=2p+2$.
     Now the first two assertions  of the  proposition follow from \Cref{M linear isomorhism}	 
	  and \cite[Proposition 3.13 (ii)]{BG15}.   
	  For the last assertion, by \cite[Proposition 3.8]{BG15}, we have
     $\dim X_{r-1,\,r} =2p+2$ so $X_{r-1,\,r} \cong X_{r-p,\,r}$,
     again by \Cref{M linear isomorhism}. 
	  
	  We now prove the claim.  Note that
     \begin{align}
            \dim X_{r-p,\,r} & = \dim \left( \frac{X_{r-p,\,r}}{X_{r-p,\,r}^{(1)}} \right)
            +  \dim \left( \frac{X_{r-p,\,r}^{(1)}}{X_{r-p,\,r}^{(2)}} \right)
            + \dim X_{r-p,\,r}^{(2)} \nonumber \\
           & \geq  \dim \left( \frac{X_{r-p,\,r}}{X_{r-p,\,r}^{(1)}} \right) + 
            \dim \left( \frac{X_{r,\,r}^{(1)}}{X_{r,\,r}^{(2)}} \right)
            + \dim X_{r-p,\,r}^{(2)} .
     \end{align}
     We now compute each of the terms on the right hand side 
     of the inequality.
     Note that $X^{r-p}Y^{p} = (X^{r-p}Y^{p} - X^{r-1}Y) + X^{r-1}Y
     \in X_{r-1,\,r} +V_{r}^{(1)}$. Thus,  $X_{r-p,\,r}+ V_{r}^{(1)}
     = X_{r-1,\,r} +V_{r}^{(1)}$. By the second isomorphism theorem, we have
     \[
         \frac{X_{r-1,\,r}}{X_{r-1,\,r}^{(1)}} \cong
         \frac{X_{r-1,\,r} + V_{r}^{(1)}}{V_{r}^{(1)}} =
         \frac{X_{r-p,\,r} + V_{r}^{(1)}}{V_{r}^{(1)}} \cong
          \frac{X_{r-p,\,r}}{X_{r-p,\,r}^{(1)}}.
     \] 
      Thus,  
      $\dim X_{r-p,\,r}/X_{r-p,\,r}^{(1)}= \dim X_{r-1,\,r}/X_{r-1,\,r}^{(1)}
      =p+1$, by \Cref{Structure X(1)}.
      By \cite[Lemma 3.1 (i)]{BG15}, we have 
      $\dim X_{r,\,r}^{(1)}/X_{r,\,r}^{(2)}= p-1$.
      By \Cref{dimension formula for X_{r}}, we have 
      $\dim X_{r-p,\,r-p} = p+1$ and $X_{r-p,\,r-p}^{(1)} \neq (0)$. 
      Since $r-p \equiv p-1$ mod $(p-1)$, by  Lemma~\ref{star=double star}, 
      we have $X_{r-p,\,r-p}^{(1)} = X_{r-p,\,r-p}^{(2)}$.
      If $0 \neq G(X,Y) \in X_{r-p,\,r-p}^{(2)}$, then 
      $X^{p}G(X,Y)$ and $Y^{p}G(X,Y)$ are distinct elements
       of $X_{r-p,\,r}^{(2)}$, by \Cref{surjection}. So 
       $\dim X_{r-p,\,r}^{(2)} \geq 2$. 
     Putting all these facts together, the claim follows from \Cref{surjection}.
\end{proof}

	To summarize the structure of $X_{r-p,\,r}$ obtained so far, we record
	 the following theorem.
\begin{theorem}\label{Main theorem part 1 and 2}
	Let $p\geq 3$, $p \nmid r $ and $2p< r \equiv 1 ~\mathrm{mod}~ (p-1)$. 
	\begin{enumerate}[label=\emph{(\roman*)}]
		\item If $\Sigma_{p}(r-p)=p-1$, then $X_{r-p,\,r} \cong V_{2p-1}$ as an 
		$M$-module.
		\item If $\Sigma_{p}(r-p) > p-1$, then
		we have a short exact sequence of $M$-modules
		$$
	        	0 \rightarrow V_{1} \otimes D^{p-1} \rightarrow X_{r-p,\,r}
	        	 \rightarrow V_{2p-1} \rightarrow 0. 
		$$
	\end{enumerate}
\end{theorem}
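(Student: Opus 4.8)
The plan is to observe that this final theorem is an immediate consequence of the three propositions proved earlier in the section, together with the well-known structure of $X_{s-1,\,s}$ from \cite{BG15}. Since $p \nmid r$, we know $\Sigma_p(r-p) \geq \Sigma_p(r-1)$ by \Cref{Equality of sum of p-adic digits of (r-1),(r-p)}, and since $r \equiv 1 \mod (p-1)$ we have $\Sigma_p(r-p) \equiv r - p \equiv 0 \mod (p-1)$, so $\Sigma_p(r-p)$ is a positive multiple of $p-1$; thus the only two cases are $\Sigma_p(r-p) = p-1$ and $\Sigma_p(r-p) > p-1$, matching the dichotomy in the statement.

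For part (i), I would simply invoke \Cref{BG Proposition 3.3}, which gives $X_{r-p,\,r} \cong X_{r-1,\,r} \cong V_{2p-1}$ as $M$-modules under exactly the hypotheses $p \nmid r$, $2p < r \equiv 1 \mod (p-1)$, and $\Sigma_p(r-p) = p-1$. For part (ii), I would invoke \Cref{remaining case r = 1 mod p-1}, which under the hypotheses $p \geq 3$, $p \nmid r$, $2p < r \equiv 1 \mod (p-1)$, and $\Sigma_p(r-p) > p-1$ yields precisely the short exact sequence
\[
  0 \rightarrow V_1 \otimes D^{p-1} \rightarrow X_{r-p,\,r} \rightarrow V_{2p-1} \rightarrow 0
\]
of $M$-modules. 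So the theorem is a packaging statement: there is essentially no new content beyond citing the two propositions and noting that the hypothesis $\Sigma_p(r-p) \equiv 0 \mod (p-1)$ forces the trichotomy to collapse to these two cases when $r \equiv 1 \mod (p-1)$.

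There is no real obstacle here — the work was done in \Cref{BG Lemma 3.2}, \Cref{BG Proposition 3.3}, and \Cref{remaining case r = 1 mod p-1}. The only thing to be careful about is making sure the hypotheses line up: in particular, the standing assumption $r > 2p$ made just before the subsection, together with $p \nmid r$, ensures we are never in the degenerate ranges $r < 2p$ or $r = 2p$ (where $X_{r-p,\,r} \cong V_2$) or the $p \mid r$ case (handled by \Cref{p divides r}). Thus I would state the proof as: ``Since $r \equiv 1 \mod (p-1)$, we have $\Sigma_p(r-p) \equiv 0 \mod (p-1)$, and as $r - p > 0$ this is a positive multiple of $p-1$; hence either $\Sigma_p(r-p) = p-1$ or $\Sigma_p(r-p) > p-1$. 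The two cases now follow from \Cref{BG Proposition 3.3} and \Cref{remaining case r = 1 mod p-1} respectively.''
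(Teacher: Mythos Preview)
Your proposal is correct and matches the paper exactly: the theorem is stated in the paper simply as a summary (``To summarize the structure of $X_{r-p,\,r}$ obtained so far, we record the following theorem''), with no separate proof, precisely because parts (i) and (ii) are immediate from \Cref{BG Proposition 3.3} and \Cref{remaining case r = 1 mod p-1} respectively. Your observation that $\Sigma_p(r-p)$ is a positive multiple of $p-1$ (forcing the dichotomy) is also exactly the remark the paper makes at the start of the subsection.
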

\subsection{The case \texorpdfstring{$r \not \equiv 1 \mod (p-1)$}{}}

In this section, we determine the structure of $X_{r-p,\,r}$
when $p \nmid r$ and $r \equiv a $ mod $(p-1)$ with  $2 \leq a \leq p-1$.
Thus, $\Sp(r-p) \equiv r-p \equiv a-1$ mod $(p-1)$.
We begin  by considering the case   $\Sigma_{p}(r-p) = a-1$.
For simplicity, below we sometimes denote $r-p$ by $r''$.   
\begin{proposition}\label{BG Lemma 4.5}
	Let $p\geq3, p \nmid r $ and let 
	 $2p < r \equiv a ~\mathrm{mod}~ (p-1)$, with 
	 $2 \leq a \leq  p-1 $. If $\Sigma_{p}(r-p)=\Sigma_{p}(r-1) =a-1$, 
	 then  $X_{r-p,\,r} \cong X_{r-1,\,r} \cong 
	 V_{a-2} \otimes D \oplus V_{a}$ as  $M$-modules. 
\end{proposition}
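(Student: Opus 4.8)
The plan is to prove that the $M$-linear surjection $\phi_p\colon X_{r-p,\,r-p}\otimes V_1 \twoheadrightarrow X_{r-p,\,r}$ of \Cref{surjection} is an isomorphism, after identifying its source explicitly. First, since $r-p\equiv a-1 \bmod (p-1)$ with $2\le a\le p-1$ and, by hypothesis, $\Sigma_p(r-p)=a-1$, \Cref{dimension formula for X_{r}}(i) gives $X_{r-p,\,r-p}\cong V_{a-1}$ as an $M$-module; hence the source of $\phi_p$ is isomorphic to $V_{a-1}\otimes V_1$. By \Cref{ClebschGordan}(i), applied with $m=1$, $n=a-1$ (so $m+n=a\le p-1$), we have $V_1\otimes V_{a-1}\cong V_a\oplus(V_{a-2}\otimes D)$ as $\Gamma$-modules, a direct sum of two non-isomorphic irreducible $\Gamma$-modules. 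In particular its only $\Gamma$-submodules are $0$, $V_a$, $V_{a-2}\otimes D$ and the whole module.

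Next I would pin down $\ker\phi_p$. Being an $M$-submodule it is in particular a $\Gamma$-submodule, hence one of the four modules above, and I would exclude the three nonzero possibilities as follows. It cannot be the whole module, as $\phi_p\neq 0$. Recall that $X_{r,\,r}\subseteq X_{r-p,\,r}$ (stated at the start of the section) and that $X_{r,\,r}\cong V_a$ as an $M$-module: indeed $p\nmid r$ forces $\Sigma_p(r)=\Sigma_p(r-1)+1=a$, so \Cref{dimension formula for X_{r}}(i) applies. If $\ker\phi_p=V_a$, then $X_{r-p,\,r}\cong V_{a-2}\otimes D$ would contain the submodule $X_{r,\,r}\cong V_a$, which is impossible. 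If $\ker\phi_p=V_{a-2}\otimes D$, then $X_{r-p,\,r}\cong V_a$ has dimension $a+1=\dim X_{r,\,r}$, forcing $X_{r,\,r}=X_{r-p,\,r}$ and contradicting \Cref{BG Lemma 4.1} (valid since $p\nmid r$ and $r>2p>p$). Therefore $\ker\phi_p=0$ and $\phi_p$ is an isomorphism of $M$-modules.

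Finally I would assemble the statement. The identical kernel argument applied to $\phi_1\colon X_{r-1,\,r-1}\otimes V_1\to X_{r-1,\,r}$ (or the results of \cite[\S2, \S3]{BG15}) shows that $\phi_1$ is an isomorphism and that $X_{r-1,\,r}\cong V_{a-2}\otimes D\oplus V_a$ as $M$-modules. Moreover $X_{r-1,\,r-1}\cong V_{a-1}$ as an $M$-module by \Cref{dimension formula for X_{r}}(i), since $r-1\equiv a-1\bmod(p-1)$ and $\Sigma_p(r-1)=a-1$. Chaining these $M$-linear isomorphisms,
\[
  X_{r-p,\,r}\;\cong\;X_{r-p,\,r-p}\otimes V_1\;\cong\;X_{r-1,\,r-1}\otimes V_1\;\cong\;X_{r-1,\,r}\;\cong\;V_{a-2}\otimes D\oplus V_a,
\]
which is exactly the asserted structure.

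The main delicate point — and essentially the only real obstacle — is the bookkeeping of which isomorphisms are $M$-linear versus merely $\Gamma$-linear: the kernel-counting step needs only that $\ker\phi_p$ is a $\Gamma$-submodule (automatic), and the concluding chain is deliberately routed through $X_{r-1,\,r-1}\otimes V_1$ so that only the genuinely $M$-linear statements of \Cref{surjection}, \Cref{dimension formula for X_{r}}(i) and the structure of $X_{r-1,\,r}$ are invoked, rather than an $M$-linear form of \Cref{ClebschGordan}. One should also check that the numerical side conditions ($r>2p$, $2\le a\le p-1$, $p\nmid r$, hence $\Sigma_p(r-p)\geq\Sigma_p(r-1)$) are compatible with the hypotheses of \Cref{dimension formula for X_{r}}, \Cref{ClebschGordan} and \Cref{BG Lemma 4.1}; this is routine.
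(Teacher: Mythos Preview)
Your proof is correct and follows essentially the same route as the paper: identify the source of $\phi_p$ as $V_{a-1}\otimes V_1$, decompose it, and exclude the three nonzero possible kernels using $X_{r,r}\cong V_a$ together with \Cref{BG Lemma 4.1}. The only difference is cosmetic: the paper cites \cite[(5.2)]{Glover} directly for the $M$-module decomposition $V_{a-1}\otimes V_1\cong V_a\oplus(V_{a-2}\otimes D)$, so your careful detour through $X_{r-1,\,r-1}\otimes V_1$ to avoid an $M$-linear form of Clebsch--Gordan, while sound, is not actually needed.
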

\begin{proof}
    By  \Cref{dimension formula for X_{r}} (i), we have dim
    $X_{r'',r''}=a$ and $X_{r'',r''} \cong V_{a-1}$.
	By \cite[(5.2)]{Glover},  we have
    $ X_{r'',r''} \otimes V_{1}  \cong  V_{a-2} \otimes D \oplus V_{a}$.	
	Thus, by \Cref{surjection}, we have an $M$-linear surjection 
	\begin{align*}
	V_{a-2} \otimes D \oplus V_{a}  \cong X_{r'',r''} \otimes V_{1}  
	\xrightarrow{\phi_{p}} X_{r-p,\,r}.
	\end{align*} 
	Since  $\Sp(r)=a$, we have $X_{r,r} \cong V_{a}$ and it follows
	from \Cref{BG Lemma 4.1} that $\phi_{p}$ is an isomorphism.
    By \cite[Lemma 4.5]{BG15}, we have $X_{r-1,\,r}
    \cong V_{a-2} \otimes D \oplus V_{a}$ as $M$-modules
    so 	$X_{r-1,\,r} \cong X_{r-p,\,r}$.
\end{proof}
Before we treat the case $\Sp(r-p) > a-1$, or equivalently
$\Sp(r-p) \geq p+a-2  > p-1$, we need a few preparatory results. 
In the next two lemmas we show that  $V_{p-a+1}\otimes D^{a-1}$ 
and   $V_{a-2} \otimes D$ are  JH factors of $X_{r-p,\,r}$
whenever $\Sigma_{p}(r-p)>p-1$. Observe that $\phi_{p}$ is $M$-linear and
 $X_{r'',r''}^{(1)} \otimes V_{1}$ is singular, so we have $\phi_{p}(X_{r'',r''}^{(1)}
  \otimes V_{1}) \subseteq X_{r-p,\,r}^{(1)}$. 
\begin{lemma}\label{JH1}
    	Let $  p \geq 3, p \nmid r $ and 
    	$2p < r \equiv a ~\mathrm{mod}~(p-1)$ with 
    	$2 \leq a \leq p-1$. If  $\Sigma_{p}(r-p) > p-1$, then 
    	$X_{r-p,\,r}^{(1)}$ contains $V_{p-a+1} \otimes D^{a-1}$ 
    	as an $M$-module.
\end{lemma}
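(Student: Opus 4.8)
The plan is to use the multiplication surjection $\phi_{p}\colon X_{r-p,\,r-p}\otimes V_{1}\twoheadrightarrow X_{r-p,\,r}$ of \Cref{surjection}. First, since $r-p\equiv a-1\bmod(p-1)$ and $2\le a\le p-1$, the hypothesis $\Sigma_{p}(r-p)>p-1$ is equivalent to $\Sigma_{p}(r-p)>a-1$ (the smallest value congruent to $a-1$ exceeding $p-1$ being $a+p-2$, while $a-1\le p-2<p-1$), so \Cref{dimension formula for X_{r}}~(ii) gives $\dim X_{r-p,\,r-p}=p+1$ and $X_{r-p,\,r-p}^{(1)}\cong V_{p-a}\otimes D^{a-1}$. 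As $X_{r-p,\,r-p}^{(1)}=X_{r-p,\,r-p}\cap V_{r-p}^{(1)}$ is singular, so is $X_{r-p,\,r-p}^{(1)}\otimes V_{1}$, and hence $\phi_{p}$ carries it into the largest singular submodule $X_{r-p,\,r}^{(1)}$ of $X_{r-p,\,r}$. By \Cref{ClebschGordan}~(i) (valid because $1+(p-a)\le p-1$),
\[
X_{r-p,\,r-p}^{(1)}\otimes V_{1}\;\cong\;\bigl(V_{p-a+1}\otimes D^{a-1}\bigr)\oplus\bigl(V_{p-a-1}\otimes D^{a}\bigr),
\]
a sum of two non-isomorphic irreducibles, whose image under $\phi_{p}$ is the $\Gamma$-submodule of $X_{r-p,\,r}^{(1)}$ generated by $\phi_{p}(G_{r-p}\otimes X)=X^{p}G_{r-p}(X,Y)$; this is nonzero since $G_{r-p}$ is a nonzero generator of $X_{r-p,\,r-p}^{(1)}$ (\Cref{quotient image}) and $\f[X,Y]$ is a domain. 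So $X_{r-p,\,r}^{(1)}$ has at least one of $V_{p-a+1}\otimes D^{a-1}$, $V_{p-a-1}\otimes D^{a}$ as a Jordan--H\"older factor, and everything reduces to showing that $\phi_{p}$ does not annihilate the first summand.

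To see this I would compare with $V_{r}^{(1)}/V_{r}^{(2)}\cong\ind_{B}^{\Gamma}(\chi_{1}\chi_{2}^{r-1})$ (\Cref{induced and star}), whose two Jordan--H\"older factors are, by \Cref{Structure of induced}, $V_{[a-2]}\otimes D$ and $V_{p-1-[a-2]}\otimes D^{a-1}=V_{p-a+1}\otimes D^{a-1}$; note $V_{p-a-1}\otimes D^{a}$ is neither of these. Hence, if one produces an element $F\in X_{r-p,\,r}^{(1)}$ whose image under the explicit surjection $V_{r}^{(1)}/V_{r}^{(2)}\twoheadrightarrow V_{p-1-[a-2]}\otimes D^{a-1}$ of \Cref{Breuil map} (with $m=1$) is nonzero, then $V_{p-a+1}\otimes D^{a-1}$ is forced to be a Jordan--H\"older factor of $X_{r-p,\,r}^{(1)}$ (and correspondingly $\phi_{p}$ cannot have killed it). When $a=2$ one can take $F=X^{p}G_{r-p}(X,Y)$: indeed $\theta^{2}\nmid X^{p}G_{r-p}$, since $Y^{2}\nmid G_{r-p}$ as the coefficient of $X^{r-p-1}Y$ in $G_{r-p}$ is $-(r-p)\not\equiv0\bmod p$, and one checks its image in the relevant summand is nonzero; so the case $a=2$ is quick.

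For $a\ge 3$ this last point is the main obstacle: now $G_{r-p}\in X_{r-p,\,r-p}^{(a-1)}$ (\Cref{quotient image}) is divisible by $\theta^{a-1}$, so $X^{p}G_{r-p}\in V_{r}^{(2)}$ and the symmetric candidate is useless; one must instead take a carefully chosen $\f$-linear combination $F$ of the vectors $X^{p}(\lambda X+Y)^{r-p}$ and $(X+\lambda Y)^{r-p}Y^{p}$ of $X_{r-p,\,r}$ (all in $X_{r-p,\,r}$, being in the $\Gamma$-orbit of $X^{r-p}Y^{p}$, cf. \Cref{Basis of X_r-i}), solving for the coefficients so that $F\in V_{r}^{(1)}$ by the divisibility criterion \Cref{divisibility1} together with \Cref{binomial sum}, and then computing the image of $F$ under \Cref{Breuil map} via \Cref{breuil map quotient}. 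The required non-vanishing reduces to a binomial coefficient of the shape $\binom{r-p}{\ast}\not\equiv0\bmod p$, which $\Sigma_{p}(r-p)>p-1$ makes available through a digit choice as in \Cref{choice of s}; I expect this to need a short case analysis on the constant digit $r_{0}$ of $r$ and a separate argument at the boundary $a=p-1$. A cleaner alternative for all $a$ would be to prove outright that $\dim X_{r-p,\,r}=2p+2$, via $\dim X_{r-p,\,r}\ge\dim\!\bigl(X_{r-p,\,r}/X_{r-p,\,r}^{(1)}\bigr)+\dim\!\bigl(X_{r-p,\,r}^{(1)}/X_{r-p,\,r}^{(2)}\bigr)+\dim X_{r-p,\,r}^{(2)}$, using $X_{r-p,\,r}/X_{r-p,\,r}^{(1)}\cong X_{r-1,\,r}/X_{r-1,\,r}^{(1)}\cong V_{a}$ (\Cref{Structure X(1)}) and suitable lower bounds on the other two terms; then $\phi_{p}$ is an isomorphism and all four Jordan--H\"older factors of $X_{r-p,\,r-p}\otimes V_{1}$, in particular $V_{p-a+1}\otimes D^{a-1}$, appear in $X_{r-p,\,r}^{(1)}$.
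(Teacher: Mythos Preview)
Your setup via $\phi_p$ and the Clebsch--Gordan splitting of $X_{r-p,\,r-p}^{(1)}\otimes V_1$ is exactly the right start, but the argument is not complete. For $a\ge 3$ you explicitly leave the construction of a suitable $F\in X_{r-p,\,r}^{(1)}$ hitting the cosocle of $V_r^{(1)}/V_r^{(2)}$ as a plan rather than a proof; carrying this out would be substantial. Your $a=2$ case is also miscomputed: there $[a-2]=p-1$, so the cosocle of $V_r^{(1)}/V_r^{(2)}$ is $V_0\otimes D$, not $V_{p-1}\otimes D=V_{p-a+1}\otimes D^{a-1}$, and the sequence \eqref{exact sequence Vr} splits at $m=1$, so detecting $X^pG_{r-p}$ in the cosocle says nothing about the summand you actually want. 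Finally, your ``cleaner alternative'' of proving $\dim X_{r-p,\,r}=2p+2$ is simply false under the lemma's hypotheses: when $\Sigma_p(r-p)>p-1$ but $\Sigma_p(r-1)=a-1$ (e.g.\ $r=p^2+a-1$), one has $\dim X_{r-p,\,r}=a+p+2<2p+2$, so $\phi_p$ has nontrivial kernel---this is exactly the regime of \Cref{BG Lemma 4.8}, which \emph{uses} the present lemma.

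The paper bypasses all of this with a single observation: instead of the abstract Clebsch--Gordan decomposition, it uses Glover's \emph{explicit} $M$-linear embedding $V_{p-a+1}\otimes D^{a-1}\hookrightarrow(V_{p-a}\otimes D^{a-1})\otimes V_1$ given by $F\mapsto \tfrac{1}{p-a+1}(F_X\otimes X+F_Y\otimes Y)$ \cite[p.~449]{Glover}. The key point is that the highest weight vector $X^{p-a+1}$ lands on the \emph{pure tensor} $X^{p-a}\otimes X$. Transporting through the isomorphism $V_{p-a}\otimes D^{a-1}\cong X_{r-p,\,r-p}^{(1)}$ and then applying $\phi_p$, this vector is sent to $F_0\cdot X^p$, where $F_0\in X_{r-p,\,r-p}^{(1)}$ is the preimage of $X^{p-a}$; since $F_0\ne 0$ and $\f[X,Y]$ is a domain, this is nonzero. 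Irreducibility of the source then makes the composite injective. No case analysis on $a$ or on the digits of $r$ is needed.
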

\begin{proof}
          By  \Cref{dimension formula for X_{r}}, we have dim $X_{r'',r''}=p+1$
          and $X_{r'',r''}^{(1)} \cong V_{p-a}  \otimes D^{a-1}$. 
          For $F(X,Y) \in V_{m}$, define $\delta_{m}(F)=F_{X}
         \otimes X +F_{Y}\otimes Y \in V_{m-1} \otimes V_{1}$, where $F_{X},F_{Y}$ are
         the  partial derivatives of $F$ w.r.t. $X,Y$, respectively. 
          It is shown on \cite[p. 449]{Glover}, that $\frac{1}{p-a+1}
          \delta_{p-a+1}$ ($\bar{\phi}$ in the notation of \cite{Glover})
          induces an $M$-linear injection $V_{p-a+1} \otimes D^{a-1} \hookrightarrow 
          (V_{p-a}  \otimes  D^{a-1})  \otimes V_{1} $.  Let $F$ be the inverse
           image of $X^{p-a}$ under the isomorphism 
	       $X_{r'',r''}^{(1)} \cong V_{p-a} \otimes D^{a-1}$. Then  
	       the composition of the maps
	  \begin{alignat*}{4}
	      V_{p-a+1} \otimes D^{a-1} & \hookrightarrow (V_{p-a}  \otimes D^{a-1}) 
	      \otimes V_{1}  
	      &&\stackrel{\simeq}{\longrightarrow} X_{r'',r''}^{(1)} \otimes V_{1}  
	      &&&\stackrel{\phi_{p}}{\longrightarrow} X_{r-p,\,r}^{(1)} \\
	     \ \ \ \ \ \ \ \ X^{p-a+1}& \mapsto  \ \ \ \ \ \ \ X^{p-a} \otimes X  \ \ \   
	      &&\longmapsto \ \ \ F\otimes X  \ \ \ \ &&&\longmapsto \ FX^{p}
	      \neq 0
	  \end{alignat*}
	  is a non-zero $M$-linear map. Since $ V_{p-a+1} \otimes D^{a-1}$ is an
	   irreducible $\Gamma$-module, the composition is injective. Hence 
	    $X_{r-p,\,r}^{(1)}$ contains $ V_{p-a+1} \otimes D^{a-1}$ as an $M$-module.
\end{proof}
\begin{lemma}\label{JH2}
	Let $p \geq 3$, $p \nmid r $ and 
    	$2p < r \equiv a ~\mathrm{mod}~(p-1)$ with $2 \leq a \leq p-1$. 
	Then $V_{a-2} \otimes D$ is a JH factor of $X_{r-p,\,r}$.
\end{lemma}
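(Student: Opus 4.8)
The plan is to exhibit $V_{a-2}\otimes D$ as a JH factor of $X_{r-p,\,r}$ by finding an explicit quotient of $X_{r-p,\,r}$ isomorphic to a principal series whose JH factors include $V_{a-2}\otimes D$, and then intersecting with what we already know from Lemma~\ref{JH1}. The natural object to use is the successive quotient $X_{r-p,\,r}/X_{r-(p-1),\,r}$, which by Corollary~\ref{induced and successive} (with $i=p$, but note $i\le p-1$ there; since $p\nmid r$ and $r>2p$ one reduces to a genuine $i$ between $1$ and $p-1$ via the congruence $r\equiv a\bmod(p-1)$, or one argues directly) is a quotient of a principal series. More concretely, I would instead pass through the surjection $\phi_p:X_{r-p,\,r-p}\otimes V_1\twoheadrightarrow X_{r-p,\,r}$ of Lemma~\ref{surjection}: by Lemma~\ref{dimension formula for X_{r}} we have an exact sequence of $M$-modules $0\to X_{r-p,\,r-p}^{(1)}\to X_{r-p,\,r-p}\to V_{a-1}\to 0$ (this is \eqref{Glover 4.5} applied to $r-p\equiv a-1\bmod(p-1)$), and tensoring with $V_1$ and composing with $\phi_p$ gives a surjection $X_{r-p,\,r}\twoheadrightarrow V_{a-1}\otimes V_1$ modulo the image of $X_{r-p,\,r-p}^{(1)}\otimes V_1$. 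By the Clebsch–Gordan decomposition in characteristic $p$ (Lemma~\ref{ClebschGordan}(i), since $a-1+1=a\le p-1$), we have $V_{a-1}\otimes V_1\cong V_a\oplus(V_{a-2}\otimes D)$, so it would suffice to show the composite map $X_{r-p,\,r}\to V_{a-1}\otimes V_1$ is surjective onto the $V_{a-2}\otimes D$ summand, equivalently that $V_{a-2}\otimes D$ is \emph{not} killed.

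First I would make the surjection $X_{r-p,\,r}\twoheadrightarrow V_{a-1}\otimes V_1$ precise. The map $\phi_p$ sends $u\otimes v\mapsto uv^p$; quotienting $X_{r-p,\,r-p}$ by $X_{r-p,\,r-p}^{(1)}$ gives $X_{r-p,\,r-p}\twoheadrightarrow V_{a-1}$, and the kernel of $\phi_p$ composed with this quotient on the first factor yields an $M$-linear surjection $\psi:X_{r-p,\,r}\twoheadrightarrow (X_{r-p,\,r-p}/X_{r-p,\,r-p}^{(1)})\otimes V_1\cong V_{a-1}\otimes V_1$, provided that $\phi_p(X_{r-p,\,r-p}^{(1)}\otimes V_1)$ is a submodule (it is, being the image of a submodule) contained in $\ker$ of the projection — i.e. provided this map descends. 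The point is that $\phi_p$ is surjective so $X_{r-p,\,r}\cong (X_{r-p,\,r-p}\otimes V_1)/\ker\phi_p$, and I claim $\ker\phi_p\subseteq X_{r-p,\,r-p}^{(1)}\otimes V_1$ up to the issue that $\ker\phi_p$ might meet the complement; a cleaner route is to observe directly that $X_{r-p,\,r}/X_{r-p,\,r}^{(1)}\cong V_a$ by Lemma~\ref{Structure X(1)} (applied with $i=p$, using $a\le p-1$ so $i\ge a$), while the image of $X_{r-p,\,r-p}^{(1)}\otimes V_1$ lands in $X_{r-p,\,r}^{(1)}$, so that $\psi$ induces $X_{r-p,\,r}^{(1)}\twoheadrightarrow$ (image of $V_{a-1}\otimes V_1$ in the quotient that is not $V_a$) $\supseteq V_{a-2}\otimes D$. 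Thus $V_{a-2}\otimes D$ would appear as a JH factor of $X_{r-p,\,r}^{(1)}$ as soon as $\phi_p(X_{r-p,\,r-p}^{(1)}\otimes V_1)$ is a \emph{proper} submodule of $X_{r-p,\,r}^{(1)}$ containing the right things — this is where I would use the dimension count $\dim X_{r-p,\,r}\le 2p+2$ from Lemma~\ref{surjection} together with $\dim(X_{r-p,\,r-p}^{(1)}\otimes V_1)=2(p-a+1)$ when $X_{r-p,\,r-p}^{(1)}\ne 0$.

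The main obstacle, and the step requiring genuine work, is showing that the $V_{a-2}\otimes D$ summand actually survives — i.e. that $\phi_p$ does not annihilate it. Since $\phi_p(u\otimes v)=uv^p$ and $v\in V_1$, the image of $V_{a-2}\otimes D\hookrightarrow V_{a-1}\otimes V_1$ under $\delta$-type maps (as in the proof of Lemma~\ref{JH1}, using $\frac{1}{a-1}\delta_{a-1}$ and the explicit description of Clebsch–Gordan in \cite{Glover}) must be checked to map to a nonzero element of $X_{r-p,\,r}$. This is a concrete computation: pick a generator of $V_{a-1}$ inside $X_{r-p,\,r-p}/X_{r-p,\,r-p}^{(1)}$, lift it, form the appropriate combination landing in the $V_{a-2}\otimes D$-summand of the tensor product, multiply the $V_1$-factor to the $p$-th power, and verify the resulting polynomial in $V_r$ is not in $X_{r-p,\,r-p}^{(1)}\cdot$-type terms, i.e. not already accounted for. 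I expect this to work by comparing coefficients of a suitable monomial $X^{r-p-t}Y^{p+t}$ for a cleverly chosen $t$ (exploiting that $\Sigma_p(r-p)$ may be large, so some binomial coefficient $\binom{r-p}{t}$ is nonzero mod $p$ by Lucas), exactly parallel to the coefficient-comparison arguments in \S\ref{section r0<i} and in \cite[\S4]{BG15}; the case split $a\ge 3$ versus $a=2$ (where $V_{a-2}=V_0$) and the subcase $\Sigma_p(r-p)=a-1$ versus $>p-1$ (already handled by Proposition~\ref{BG Lemma 4.5} and Lemmas~\ref{JH1},~\ref{JH2}'s hypotheses) should be noted, though in the $\Sigma_p(r-p)=a-1$ case the claim is immediate from Proposition~\ref{BG Lemma 4.5} since there $X_{r-p,\,r}\cong V_{a-2}\otimes D\oplus V_a$.
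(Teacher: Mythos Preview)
Your argument has a genuine gap rooted in a confusion of directions: $\phi_p$ is a surjection from $X_{r-p,\,r-p}\otimes V_1$ \emph{onto} $X_{r-p,\,r}$, so there is no induced map $X_{r-p,\,r}\to V_{a-1}\otimes V_1$ as you claim. What one actually has is that the JH factors of $X_{r-p,\,r}$ form a subset of those of $X_{r-p,\,r-p}\otimes V_1$, which by Clebsch--Gordan are $V_a$, $V_{a-2}\otimes D$ (and, when $\Sigma_p(r-p)>p-1$, also $V_{p-a+1}\otimes D^{a-1}$ and $V_{p-a-1}\otimes D^a$), each with multiplicity one. So the correct question is whether $V_{a-2}\otimes D$ occurs in $\ker\phi_p$. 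You identify this as the ``main obstacle'' but do not resolve it, and dimension alone does not: for instance when $\Sigma_p(r-1)>p-1$ one knows three of the four factors survive (via $X_{r,\,r}$ and Lemma~\ref{JH1}), forcing $\dim\ker\phi_p\le a-1=\dim(V_{a-2}\otimes D)$, so a priori $\ker\phi_p$ could be exactly this module. Your appeal to Lemma~\ref{Structure X(1)} ``with $i=p$'' is also outside that lemma's stated range $0\le i\le p-1$, though the conclusion $X_{r-p,\,r}/X_{r-p,\,r}^{(1)}\cong V_a$ is salvageable via $X^{r-p}Y^p-X^{r-1}Y=-X^{r-p-1}\theta\in V_r^{(1)}$.

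The paper bypasses $\phi_p$ and works directly with the $\theta$-filtration: it exhibits an explicit $G\in X_{r-p,\,r}$ lying in $V_r^{(1)}\setminus V_r^{(2)}$, namely $G=X^{r-p}Y^p+(a-1)^{-1}\sum_{k\in\f^*}k^{2-a}X^p(kX+Y)^{r-p}$ for $a\ge 3$ (and $X^pY^{r-p}-X^{r-p}Y^p$ for $a=2$), verifying the conditions of Lemma~\ref{divisibility1} and checking one coefficient. Since $X_{r-p,\,r}^{(1)}/X_{r-p,\,r}^{(2)}$ embeds in $V_r^{(1)}/V_r^{(2)}$, whose socle is $V_{a-2}\otimes D$ for $a\ge 3$, this immediately gives the JH factor. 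If you actually carry out your proposed lift of a generator of $V_{a-2}\otimes D\subset V_{a-1}\otimes V_1$ to $X_{r-p,\,r-p}\otimes V_1$ and apply $\phi_p$, you obtain precisely this $G$; but to conclude you would still need $G\notin\phi_p(X_{r-p,\,r-p}^{(1)}\otimes V_1)$, and the cleanest way to see that is again via the filtration (Lemma~\ref{star=double star} gives $X_{r-p,\,r-p}^{(1)}\subseteq V_{r-p}^{(a-1)}$, so its image under $\phi_p$ lands in $V_r^{(a-1)}\subseteq V_r^{(2)}$). In short, your structural route converges to the paper's computation once made precise, but the paper's direct framing in terms of $V_r^{(1)}/V_r^{(2)}$ avoids the directional confusion and replaces the awkward ``not in the image of $\phi_p$ on the singular part'' with a single coefficient check.
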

\begin{proof}
      We first treat the case $a=2$.
	  Consider the polynomial $F(X,Y) =  X^{p}Y^{r-p} - X^{r-p}Y^{p} 
	  \in X_{r-p,\,r}$.   By  \Cref{divisibility1}, we have 
	   $F(X,Y) \in V_{r}^{(1)}$. We claim that the following map is non-zero,
	   hence surjective
	     \[
	          \frac{X_{r-p,\,r}^{(1)}}{X_{r-p,\,r}^{(2)}} \hookrightarrow 
	          \frac{V_{r}^{(1)}}{V_{r}^{(2)}} 
	           \twoheadrightarrow V_{0} \otimes D,
	     \]
	  where the  rightmost map is induced by the quotient map of   
	  the exact sequence \eqref{exact sequence Vr}.
	  Since $r > 2p$, by \Cref{breuil map quotient}, we have
	  $F(X,Y) \equiv r \theta X^{r-(p+1)-(p-1)}Y^{p-1}$
	   mod $V_{r}^{(2)}$.
	  By \Cref{Breuil map}, we have the image of $F(X,Y)$ in 
	  $V_{r}^{(1)}/V_{r}^{(2)}
	  \twoheadrightarrow V_{0} \otimes D$ is non-zero, as $p \nmid r$. 
	  This proves the lemma if $a=2$.
	     
    Assume $3 \leq a \leq p-1$. 
    We claim that $ X_{r-p,\,r}^{(1)}/X_{r-p,\,r}^{(2)} \neq 0$.    
    Consider the polynomial 
	 \begin{align*}
	        G(X,Y) & := X^{r-p} Y^{p} + (a-1)^{-1}\sum\limits_{k=1}^{p-1} 
	        k^{2-a}  X^{p}(kX+Y)^{r-p} \\
	        & \stackrel{\eqref{sum fp}}{ \equiv}
	        X^{r-p}  Y^{p} - (a-1)^{-1} \sum\limits_{\substack { 0 \leq j \leq 
	         r-p\\ j \equiv 1 \ \mathrm{mod} \ (p-1)}} \binom{r-p}{j} X^{r-j}Y^{j}
	         \mod p.
	 \end{align*}
	 Clearly $G(X,Y) \in X_{r-p,\,r}$.
	The coefficient of $X^{r-1}Y$  in $G(X,Y)$ is
	 equal to $-(r-p)/(a-1)  \not \equiv 0$ mod $p$, so by \Cref{divisibility1},
	 we have $G(X,Y) \not \in V_{r}^{(2)}$.  Clearly $X,Y \mid G(X,Y)$.
	 Further, by \Cref{binomial sum}, applied with $m=0$, we have
	 \[
	    1 - (a-1)^{-1} \sum\limits_{\substack { 0 \leq j \leq 
	         r-p\\ j \equiv 1 ~ \mathrm{mod} ~ (p-1)}} \binom{r-p}{j}
	     \equiv 1 - (a-1)^{-1} \binom{a-1}{1} \equiv 0 \mod p.    
	 \]
	  Thus, by \Cref{divisibility1}, we have $G(X,Y) \in V_{r}^{(1)}$
	  and $0 \neq G(X,Y) \in X_{r-p,\,r}^{(1)}/X_{r-p,\,r}^{(2)}$.
	  Since $3 \leq a \leq p-1$, the exact sequence \eqref{exact sequence Vr}
	  doesn't split for $m=1$. Hence $V_{a-2} \otimes D \hookrightarrow
	  X_{r-p,\,r}^{(1)}/X_{r-p,\,r}^{(2)}$.
\end{proof}
\begin{proposition}\label{4 JH factors}
  Let $p\geq 3, \ r>2p, \ p\nmid r$ and 
  $r \equiv a ~\mathrm{mod}~ (p-1)$ with $2 \leq a \leq p-1$. 
  If $\Sigma_{p}(r-1)> p-1$, then  $X_{r-p,r} \cong X_{r-1,\,r} \cong
  X_{rp-1,\,rp}$
  as $M$-modules
   and  there is an exact sequence of $M$-modules
   \[
	 0 \rightarrow V_{p-a-1} \otimes D^{a} \oplus V_{p-a+1} \otimes D^{a-1} 
	  \rightarrow X_{r-p,\,r} 
	 \rightarrow V_{a-2} \otimes D \oplus V_{a} \rightarrow 0 .
    \]
\end{proposition}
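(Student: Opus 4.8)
The strategy mirrors that of Proposition~\ref{remaining case r = 1 mod p-1}: first pin down $\dim X_{r-p,\,r}$, then use the dimension isomorphism criterion \Cref{M linear isomorhism} to identify $X_{r-p,\,r}$ with $X_{r-1,\,r}$ and $X_{rp-1,\,rp}$, and finally read off the exact sequence from the known structure of $X_{r-1,\,r}$ in \cite[\S3]{BG15}. First I would observe that $\Sigma_p(r-1) > p-1$ forces $\Sigma_p(r) \geq p+1 > p$ (since $p \nmid r$, $\Sigma_p(r) = \Sigma_p(r-1)+1$), and that $\Sigma_p(r-p) \geq \Sigma_p(r-1) > p-1$ by \Cref{Equality of sum of p-adic digits of (r-1),(r-p)}(i) (note $r_0 \neq 0$, and $r_1 \neq 0$ would be needed for equality, but in any case $\Sigma_p(r-p) \geq p+a-2 > p-1$). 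So we are in the ``large digit sum'' regime throughout.

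The key step is the claim $\dim X_{r-p,\,r} = 2p+2$. I would prove this exactly as in Proposition~\ref{remaining case r = 1 mod p-1}: decompose
\[
  \dim X_{r-p,\,r} = \dim\!\left(\frac{X_{r-p,\,r}}{X_{r-p,\,r}^{(1)}}\right)
  + \dim\!\left(\frac{X_{r-p,\,r}^{(1)}}{X_{r-p,\,r}^{(2)}}\right)
  + \dim X_{r-p,\,r}^{(2)},
\]
and bound each term from below. Since $X^{r-p}Y^p - X^{r-1}Y \in V_r^{(1)}$, we get $X_{r-p,\,r} + V_r^{(1)} = X_{r-1,\,r} + V_r^{(1)}$, so the first quotient has dimension $\dim X_{r-1,\,r}/X_{r-1,\,r}^{(1)} = p+1$ by \Cref{Structure X(1)} (as $a \leq p-1 < $ the relevant bound, using $i = p$ effectively, or directly since $X^{r-1}Y$ maps nontrivially). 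For the second term, \Cref{JH2} gives $V_{a-2}\otimes D \hookrightarrow X_{r-p,\,r}^{(1)}/X_{r-p,\,r}^{(2)}$ and \Cref{JH1} gives $V_{p-a+1}\otimes D^{a-1}$ as a submodule of $X_{r-p,\,r}^{(1)}$; together with the image of $X_{r,\,r}^{(1)}/X_{r,\,r}^{(2)}$ (dimension $p-1$ by \cite[Lemma~3.1(i)]{BG15}, valid since $\Sigma_p(r) > p-1$), I would argue the middle quotient has dimension at least $p-1$ — but more carefully, I expect one can show $\dim X_{r-p,\,r}^{(1)}/X_{r-p,\,r}^{(2)} \geq (a-1) + (p-a) = p-1$ from the two JH factors $V_{a-2}\otimes D$ and $V_{p-a+1}\otimes D^{a-1}$ (which are distinct). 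For the last term, \Cref{dimension formula for X_{r}} gives $\dim X_{r-p,\,r-p} = p+1$ with $X_{r-p,\,r-p}^{(1)} \neq 0$, and since $r-p \equiv a-1 \mod (p-1)$ with $2 \leq a-1$, \Cref{star=double star} gives $X_{r-p,\,r-p}^{(1)} = X_{r-p,\,r-p}^{(2)}$; picking $0 \neq G \in X_{r-p,\,r-p}^{(2)}$, the two distinct polynomials $X^pG$, $Y^pG$ lie in $X_{r-p,\,r}^{(2)}$ via \Cref{surjection}, so $\dim X_{r-p,\,r}^{(2)} \geq 2$. Summing: $\dim X_{r-p,\,r} \geq (p+1)+(p-1)+2 = 2p+2$, and the reverse inequality is \Cref{surjection}. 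Hence equality.

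Given $\dim X_{r-p,\,r} = 2p+2$, and since $\Sigma_p(rp-1) = \Sigma_p(r-1) + (p-1) > p-1$ gives $\dim X_{rp-1,\,rp} = 2p+2$ by \cite[Proposition~3.13(ii)]{BG15} (all three indices $\equiv a \mod (p-1)$), \Cref{M linear isomorhism} yields $X_{r-p,\,r} \cong X_{rp-1,\,rp}$. Likewise $\Sigma_p(r-1) > p-1$ gives $\dim X_{r-1,\,r} = 2p+2$ by \cite[Proposition~3.8]{BG15}, so $X_{r-1,\,r} \cong X_{r-p,\,r}$. The exact sequence then follows from the corresponding short exact sequence for $X_{r-1,\,r}$ established in \cite[\S3]{BG15} (the structure of $X_{s-1,\,s}$ in the large-digit-sum, $a \geq 2$ case), namely
\[
  0 \rightarrow V_{p-a-1}\otimes D^a \oplus V_{p-a+1}\otimes D^{a-1}
  \rightarrow X_{r-1,\,r} \rightarrow V_{a-2}\otimes D \oplus V_a \rightarrow 0,
\]
transported across the isomorphism. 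The main obstacle I anticipate is the careful bookkeeping in the dimension count — specifically verifying that the three lower bounds on $\dim X_{r-p,\,r}/X_{r-p,\,r}^{(1)}$, $\dim X_{r-p,\,r}^{(1)}/X_{r-p,\,r}^{(2)}$, and $\dim X_{r-p,\,r}^{(2)}$ are simultaneously attained and genuinely add to $2p+2$ rather than overcounting, and in particular confirming that $V_{p-a+1}\otimes D^{a-1}$ sits inside $X_{r-p,\,r}^{(1)}$ but contributes to $X_{r-p,\,r}^{(1)}/X_{r-p,\,r}^{(2)}$ rather than to $X_{r-p,\,r}^{(2)}$ — this will require checking that the explicit element $FX^p$ of \Cref{JH1} is not divisible by $\theta^2$, e.g. via \Cref{divisibility1}.
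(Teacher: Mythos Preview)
Your filtration argument, copied from the $a=1$ case, breaks down here. The key error is in the first quotient: when $a\geq 2$ and $i=1$, \Cref{Structure X(1)} gives $X_{r-1,\,r}/X_{r-1,\,r}^{(1)}\cong V_a$, which has dimension $a+1$, not $p+1$. (The $p+1$ outcome in the $a=1$ proof came precisely from being in the case $i\geq a$.) With the first piece contributing only $a+1$, your three lower bounds no longer sum to $2p+2$. There is a second gap as well: the \Cref{star=double star} step for $X_{r-p,\,r-p}$ only yields $X_{r-p,\,r-p}^{(1)}=\cdots=X_{r-p,\,r-p}^{(a-1)}$, so when $a=2$ you get no information about $X_{r-p,\,r-p}^{(2)}$ and cannot conclude $\dim X_{r-p,\,r}^{(2)}\geq 2$. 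Your own ``main obstacle'' paragraph is pointing at a real problem, not a cosmetic one.

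The paper's route is both simpler and avoids all of this bookkeeping: it does not filter by $\theta$-powers at all, but instead exhibits four pairwise non-isomorphic JH factors of $X_{r-p,\,r}$ directly. Since $\Sigma_p(r)>p-1$, \Cref{dimension formula for X_{r}} (ii) gives $V_a$ and $V_{p-a-1}\otimes D^a$ as JH factors of $X_{r,\,r}\subseteq X_{r-p,\,r}$; then \Cref{JH2} supplies $V_{a-2}\otimes D$ and \Cref{JH1} (using $\Sigma_p(r-p)\geq\Sigma_p(r-1)>p-1$) supplies $V_{p-a+1}\otimes D^{a-1}$. Their dimensions sum to $(a+1)+(p-a)+(a-1)+(p-a+2)=2p+2$, which matches the upper bound from \Cref{surjection}. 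Once $\dim X_{r-p,\,r}=2p+2$ is established, the remainder of your argument is essentially correct, except that the references for $\dim X_{r-1,\,r}=2p+2$ and $\dim X_{rp-1,\,rp}=2p+2$ should be to the $a\geq 2$ results (\Cref{dimension formula for X_{r-1}} and \cite[Proposition~4.9(iii)]{BG15}), not to \cite[Propositions~3.8, 3.13]{BG15}, which treat $a=1$.
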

\begin{proof}
	By \Cref{dimension formula for X_{r}} (ii), we have dim $X_{r,r} =p+1$ and 
	 $V_{a}$, $V_{p-a-1} \otimes D^{a}$ are JH factors of $X_{r,\,r}$, 
	hence  of $X_{r-p,\,r}$. By \Cref{JH2}, we have $V_{a-2} 
	\otimes D$ is a JH factor of $X_{r-p,\,r}$.
        By Lemma~\ref{Equality of sum of p-adic digits of (r-1),(r-p)}, $\Sigma_{p}(r-p) \geq \Sigma_{p}(r-1) \geq p $. Thus, by 
	 Lemma~\ref{JH1},  $V_{p-a+1} \otimes D^{a-1}$  is a JH factor of $X_{r-p,\,r}$.
	 Adding the dimensions of these JH factors we get dim $X_{r-p,\,r} \geq 2p+2$. By
	\Cref{surjection}, we get dim $X_{r-p,\,r} = 2p+2$. 
    Since $p \nmid r $ and $\Sp(r-1)  > p-1$, by 
    \Cref{dimension formula for X_{r-1}}, we have 
    $\dim X_{r-1,\,r} =2p+2$. Now the isomorphism $X_{r-p,r} \cong X_{r-1}$
    follows from \Cref{M linear isomorhism}
    and hence the  exact sequence above
	follows from  \cite[Proposition 4.9 (iii)]{BG15}.	
	Finally, by \cite[Proposition 4.9 (iii)]{BG15}, we also have
	$\dim X_{rp-1,\,rp} =2p+2$,  so $X_{r-p,\,r} \cong  X_{rp-1,\,rp}  $, again
	by \Cref{M linear isomorhism}. 
\end{proof}

We next treat the last  case, i.e.,  $\Sigma_{p}(r-1) \leq  p-1$ and
  $\Sigma_{p}(r-p)>p-1$. 

\begin{proposition} \label{BG Lemma 4.8}
	Let $p\geq 3$, $p \nmid r$ and let 
	$r\equiv a ~\mathrm{mod}~ (p-1)$  with $2 \leq a \leq p-1$. 
	If $\Sp(r-p)>p-1 > \Sigma_{p}(r-1)=a-1$,
	 then $X_{r-p,\,r} \cong X_{rp-1,\,rp}$ as $M$-modules and
	 we have an exact sequence of $M$-modules
	 \[
	     0 \rightarrow V_{p-a+1} \otimes D^{a-1} \rightarrow X_{r-p,\,r} \rightarrow 
	     V_{a-2} \otimes D \oplus V_{a} \rightarrow 0.
	 \]
\end{proposition}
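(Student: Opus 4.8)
The plan is to transfer the known structure of the second monomial submodule $X_{rp-1,\,rp}\subset V_{rp}$ to $X_{r-p,\,r}$ by an explicit $M$-linear isomorphism, in the spirit of \Cref{BG Proposition 3.3} (and not of \Cref{BG Lemma 4.5}, since here the target has dimension $<2p+2$, so \Cref{M linear isomorhism} does not apply). First I record the numerical consequences of the hypotheses. Since $p\nmid r$ we have $\Sigma_{p}(r-1)=\Sigma_{p}(r)-1$, so $\Sigma_{p}(r-1)=a-1$ gives $\Sigma_{p}(r)=a$, and by \Cref{Equality of sum of p-adic digits of (r-1),(r-p)} the inequality $\Sigma_{p}(r-1)<\Sigma_{p}(r-p)$ means $r_{1}=0$ and $r_{0}\neq 0$. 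From $\Sigma_{p}(r)=a\leq p-1$ and \Cref{dimension formula for X_{r}}(i), $X_{r,\,r}\cong V_{a}$ and $X_{r,\,r}^{(1)}=0$; from $r-p\equiv a-1\bmod(p-1)$ and $\Sigma_{p}(r-p)>p-1$ (so $\Sigma_{p}(r-p)\geq (a-1)+(p-1)$) together with \Cref{dimension formula for X_{r}}(ii), $\dim X_{r-p,\,r-p}=p+1$ and $X_{r-p,\,r-p}^{(1)}\cong V_{p-a}\otimes D^{a-1}$, which by \Cref{star=double star} equals $X_{r-p,\,r-p}^{(a-1)}$. On the other side, $rp\equiv a\bmod(p-1)$, $p\mid rp$, $\Sigma_{p}(rp)=\Sigma_{p}(r)=a$ and $\Sigma_{p}(rp-1)=(a-1)+(p-1)>p-1$, so by \cite[\S 3, \S 4]{BG15} — using \Cref{dimension formula for X_{r-1}} for the dimension — we have $\dim X_{rp-1,\,rp}=p+a+2$ and an exact sequence $0\to V_{p-a+1}\otimes D^{a-1}\to X_{rp-1,\,rp}\to V_{a-2}\otimes D\oplus V_{a}\to 0$ with socle $V_{p-a+1}\otimes D^{a-1}$.

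Next I would build an $\f$-linear map $\eta\colon X_{rp-1,\,rp}\to X_{r-p,\,r}$ prescribed on the spanning set of \Cref{Basis of X_r-i} for $X_{rp-1,\,rp}$ — namely on $(kX+Y)^{rp}$, $X(kX+Y)^{rp-1}$, $X^{rp}$, $X^{rp-1}Y$ for $k\in\f$ — by $\eta\big((kX+Y)^{rp}\big)=(kX+Y)^{r}$, $\eta\big(X(kX+Y)^{rp-1}\big)=X^{p}(kX+Y)^{r-p}$, $\eta(X^{rp})=X^{r}$, $\eta(X^{rp-1}Y)=X^{r-p}Y^{p}$; equivalently $\eta$ should send $\gamma\cdot X^{rp-1}Y$ to $\gamma\cdot X^{r-p}Y^{p}$ for every $\gamma\in M$, using $(bX+dY)^{p}=bX^{p}+dY^{p}$. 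Two things must be checked: (a) $\eta$ is well defined, i.e.\ the $\f$-linear relations among the generators above are inherited by their images in $X_{r-p,\,r}$ — this is the heart of the argument and would be done by comparing coefficients of monomials and invoking the divisibility criterion \Cref{divisibility1} with the congruences governed by $\Sigma_{p}(rp)=\Sigma_{p}(r)=a$; and (b) hypotheses (i) and (ii) of \Cref{M-linearity of map} then hold, with the parts of (ii) about $Y^{rp}$ and $(X+kY)^{rp-1}Y$ following from (a) and the $X\leftrightarrow Y$ symmetry of $X_{rp-1,\,rp}$ under the Weyl element $w$. Granting (a) and (b), \Cref{M-linearity of map} (with $s=rp$, $s'=r$, noting $rp\equiv r\bmod(p-1)$ and $rp,r\geq p$) shows $\eta$ is an $M$-linear surjection.

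For injectivity, note that by \Cref{JH1} the irreducible module $V_{p-a+1}\otimes D^{a-1}$ is a Jordan--H\"older factor of $X_{r-p,\,r}$. If $\ker\eta\neq 0$ then $\ker\eta$ contains a nonzero submodule of $X_{rp-1,\,rp}$, hence meets the socle $V_{p-a+1}\otimes D^{a-1}$ nontrivially, hence contains it; but then $V_{p-a+1}\otimes D^{a-1}$ is not a Jordan--H\"older factor of $\operatorname{im}\eta=X_{r-p,\,r}$, a contradiction. Thus $\eta$ is an isomorphism $X_{rp-1,\,rp}\xrightarrow{\ \sim\ }X_{r-p,\,r}$, and transporting the exact sequence of the first paragraph through $\eta$ gives the asserted $0\to V_{p-a+1}\otimes D^{a-1}\to X_{r-p,\,r}\to V_{a-2}\otimes D\oplus V_{a}\to 0$; in particular $\dim X_{r-p,\,r}=p+a+2$.

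The main obstacle is step (a): checking that $\eta$ respects the relations among the generators of $X_{rp-1,\,rp}$. In \Cref{BG Proposition 3.3} and \Cref{BG Lemma 4.5} this was immediate (from \cite[Lemma~3.2]{BG15}, resp.\ from $X_{r-p,\,r-p}$ being irreducible), but here $\Sigma_{p}(r-p)>p-1$ and the relations are less transparent; they must be extracted from \Cref{divisibility1}, and the sub-case $a=2$ — where $r-p\equiv 1\bmod(p-1)$, so \Cref{star=double star} is content-free and $V_{r}^{(1)}/V_{r}^{(2)}$ splits — will probably need its own treatment. As a sanity check on the dimension one can instead compute along the $\theta$-filtration: $X_{r-p,\,r}/X_{r-p,\,r}^{(1)}\cong V_{a}$ since $X^{r-p}Y^{p}-X^{r-1}Y=-X^{r-p-1}\theta\in V_{r}^{(1)}$ forces $X_{r-p,\,r}+V_{r}^{(1)}=X_{r-1,\,r}+V_{r}^{(1)}$ and one applies \Cref{Structure X(1)} to $X_{r-1,\,r}$; next $X_{r-p,\,r}^{(1)}/X_{r-p,\,r}^{(2)}\cong V_{a-2}\otimes D$ (the inclusion $\hookleftarrow$ is \Cref{JH2}, and it is onto because \Cref{JH1} — whose copy of $V_{p-a+1}\otimes D^{a-1}$ lies in $X_{r-p,\,r}^{(2)}$ when $a\geq 3$, as $X_{r-p,\,r-p}^{(1)}=X_{r-p,\,r-p}^{(a-1)}$ — would otherwise make $V_{p-a+1}\otimes D^{a-1}$ a repeated composition factor, against $\dim X_{r-p,\,r}\leq 2p+2$ from \Cref{surjection}); and then $X_{r-p,\,r}^{(2)}\cong V_{p-a+1}\otimes D^{a-1}$ by dimension. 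This still leaves the isomorphism with $X_{rp-1,\,rp}$ to be produced by hand, so the explicit $\eta$ remains the crux.
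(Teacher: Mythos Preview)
Your overall strategy---build an explicit $M$-linear surjection $\eta\colon X_{rp-1,\,rp}\to X_{r-p,\,r}$ via \Cref{M-linearity of map} and then argue it is an isomorphism---is exactly the paper's. But you flag step~(a), the well-definedness of $\eta$ on the spanning set of \Cref{Basis of X_r-i}, as the unresolved ``crux'' and propose to attack it by extracting the relations via \Cref{divisibility1}. The paper sidesteps this entirely with a short trick you missed: it proves the \emph{direct sum} decomposition
\[
X_{rp-1,\,rp}\;=\;X_{rp,\,rp}\ \oplus\ W,\qquad W:=\operatorname{span}\bigl\{X^{rp-1}Y,\ X(kX+Y)^{rp-1}:k\in\f\bigr\},
\]
by pure dimension count ($\dim X_{rp,\,rp}=a+1$ and $\dim X_{rp-1,\,rp}=a+p+2$ force $\dim W\geq p+1$, while $|S|=p+1$). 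On $X_{rp,\,rp}$ one defines $\eta$ as the inverse of the $p$-th power isomorphism $X_{r,\,r}\xrightarrow{\,\sim\,}X_{rp,\,rp}$, $F\mapsto F^{p}$; on $W$ the generators are now known to be linearly independent, so one may freely prescribe their images. No relations need be checked, and hypothesis~(ii) of \Cref{M-linearity of map} is then a one-line calculation using $\eta\big((X+lY)^{rp}\big)=(X+lY)^{r}$.

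Your injectivity argument via the socle is fine, but the paper's is shorter: the JH factors $V_{a}$ (from $X_{r,\,r}\subseteq X_{r-p,\,r}$), $V_{a-2}\otimes D$ (your \Cref{JH2}) and $V_{p-a+1}\otimes D^{a-1}$ (your \Cref{JH1}) already give $\dim X_{r-p,\,r}\geq a+p+2=\dim X_{rp-1,\,rp}$---this is essentially the ``sanity check'' you sketched at the end, and it suffices.
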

\begin{proof}
         Clearly the map $F(X,Y) \mapsto F(X,Y)^{p}$ induces an 
          $M$-linear isomorphism $\eta': X_{r,\,r} \rightarrow 
         X_{rp,\,rp}$.   Let $\eta: X_{rp,\,rp} \rightarrow X_{r,\,r}$ be the 
         inverse of $\eta'$. 
         We show  that $\eta$ is the restriction of  an $M$-linear
         surjection $X_{rp-1,rp} \rightarrow X_{r-p,\,r}$ which we  denote  
         again by $\eta$.   Let $S= \lbrace
           X^{rp-1}Y, X(kX+Y)^{rp-1}: k \in \f \rbrace \subset X_{rp-1,rp}$ and 
         $W \subset X_{rp-1,rp}$ be the vector space spanned by $S$. Let $W' = 
         X_{rp,\,rp}+W$.  By \Cref{Basis of X_r-i}, we have $W' =X_{rp-1,\,rp}$.
          Note that  $\Sp(rp) = \Sp(r)=a$.           
          By \Cref{dimension formula for X_{r}} and 
          \Cref{dimension formula for X_{r-1}}, we have $\dim X_{rp,\,rp}=a+1$ and dim
           $X_{rp-1,rp}=a+p+2$. So $\dim W \geq \dim W' - \dim X_{rp,rp} =p+1$. 
           Since Card$(S)  \leq p+1$, we
            have dim  $W =p+1$ and 
          $X_{rp-1,rp} = X_{rp,\,rp} \oplus W$. Extend $\eta$ to an $\f$-linear map 
          $\eta: X_{rp-1,rp} \rightarrow X_{r-p,\,r}$ by setting $\eta(X(kX+Y)^{rp-1})=
          X^{p} (kX+Y)^{r-p}$ and $\eta(X^{rp-1}Y)=X^{r-p}Y^{p}$.
         Also,  for $l \in \f^{\ast}$, we have
          \begin{align*}
               \eta( (X+lY)^{rp-1}Y) 
               & =\eta(l^{-1} (X+lY)^{rp} - l^{rp-2} X(l^{-1}X+Y)^{rp-1}) \\
             &= l^{-1} (X+lY)^{r} - l^{rp-2} X^{p} (l^{-1}X+Y)^{r-p}  \\
                                              &=l^{-1} (X+lY)^{r} - l^{-1} X^{p} (X+lY)^{r-p}  \\
                                              &= (X+lY)^{r-p}Y^{p}.
         \end{align*}
         Therefore the extension $\eta$ satisfies the hypotheses 
         of \Cref{M-linearity of map} with
          $s=rp$ and $s'=r$.  Hence $\eta$ is an $M$-linear surjection. 
         
          We  now show that $\eta$ is an isomorphism by showing $\dim X_{rp-1,\,rp}=
         \dim X_{r-p,\,r}$.                     
         Since $\Sigma_{p}(r-p)>p-1$,  by Lemmas 
         \ref{JH1} and \ref{JH2},  we have    $V_{p-a+1} \otimes D^{a-1}$
         and $V_{a-2} \otimes D$  are JH  factors of $X_{r-p,\,r-p}$.  
         Further, as $r \equiv a $ mod $(p-1)$ by   \cite[(4.5)]{Glover}, 
         we have  $V_{a}$ is JH factor for $X_{r,r}$. So $V_{a}$ is  also a 
         JH factor of    $X_{r-p,\,r}$. Adding the dimensions of the above 
         JH factors we get dim $X_{r-p,\,r}  \geq a+p+2 =
         \mathrm{dim} \ X_{rp-1,\,rp}$. So $\eta$ is an isomorphism. 
        Finally, the exact sequence follows from  \cite[Proposition 4.9 (ii)]{BG15}.
 \end{proof}

We collect all the results related to the structure of $X_{r-p,\,r}$ proved in this
 subsection in the following theorem.
\begin{theorem}\label{Main theorem part 4}
	Let $p\geq 3$, $p\nmid r$, 
	$2p < r \equiv a ~\mathrm{mod}~ (p-1)$ with $2 \leq a \leq p-1$.
	 Then 
	\begin{enumerate}[label=\emph{(\roman*)}]	
		\item If $\Sigma_{p}(r-1) =\Sigma_{p}(r-p) = a-1$, then $X_{r-p,\,r} 
		\cong V_{a-2} \otimes D
		      \oplus V_{a}$.
		\item If $\Sigma_{p}(r-1)=a-1$ and $\Sigma_{p}(r-p) >a-1$, then 
		we have the	following exact sequence of $M$-modules
		\begin{align*}
		    0 \rightarrow V_{p-a+1} \otimes D^{a-1} \rightarrow X_{r-p,\,r} \rightarrow
		     V_{a-2} \otimes D
		    \oplus V_{a} \rightarrow 0 .
		\end{align*}
		\item If $\Sigma_{p}(r-1)>a-1$, then  we have the following
		 exact sequence  of 
		$M$-modules
		\begin{align*}
		0 \rightarrow V_{p-a-1} \otimes D^{a} \oplus V_{p-a+1} \otimes D^{a-1} 
		 \rightarrow X_{r-p,\,r} 
		\rightarrow V_{a-2} \otimes D \oplus V_{a} \rightarrow 0 .
		\end{align*}
	\end{enumerate}   
\end{theorem}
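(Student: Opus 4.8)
The plan is to deduce Theorem~\ref{Main theorem part 4} directly from the three structure results already established, namely Propositions~\ref{BG Lemma 4.5}, \ref{4 JH factors} and \ref{BG Lemma 4.8}, once one checks that the three cases listed in the statement are mutually exclusive and exhaust all possibilities under the standing hypotheses $p \geq 3$, $p \nmid r$, $2p < r \equiv a \mod (p-1)$ with $2 \leq a \leq p-1$.

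First I would record the elementary digit-sum facts. Since $\Sigma_p(n) \equiv n \mod (p-1)$ for every $n$, both $\Sigma_p(r-1)$ and $\Sigma_p(r-p)$ are $\equiv a-1 \mod (p-1)$; as $r > 2p$ both are positive, and since $1 \leq a-1 \leq p-2$, each of them is either exactly $a-1$ or else at least $a-1+(p-1) = p+a-2$, which in particular exceeds $p-1$. Because $p \nmid r$ is the same as $r_0 \neq 0$, Lemma~\ref{Equality of sum of p-adic digits of (r-1),(r-p)} applies and gives $\Sigma_p(r-1) \leq \Sigma_p(r-p)$, with equality precisely when $r_1 \neq 0$. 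In particular $a-1 \leq \Sigma_p(r-1) \leq \Sigma_p(r-p)$, with each of the two digit sums lying in $\{a-1\} \cup \{p+a-2, p+a-1, \ldots\}$.

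Then I would split into the three cases and invoke the appropriate proposition. If $\Sigma_p(r-1) = \Sigma_p(r-p) = a-1$, then Proposition~\ref{BG Lemma 4.5} gives $X_{r-p,\,r} \cong V_{a-2}\otimes D \oplus V_a$, which is case (i). If $\Sigma_p(r-1) = a-1$ but $\Sigma_p(r-p) \neq a-1$, then by the previous paragraph $\Sigma_p(r-p) \geq p+a-2 > p-1 > a-1 = \Sigma_p(r-1)$, where the middle inequality uses $a \leq p-1$; thus the hypotheses of Proposition~\ref{BG Lemma 4.8} are satisfied and we get the exact sequence of case (ii). Finally, if $\Sigma_p(r-1) \neq a-1$, then $\Sigma_p(r-1) \geq p+a-2 > p-1$, so Proposition~\ref{4 JH factors} applies and yields the exact sequence of case (iii). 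Since $\Sigma_p(r-1) \geq a-1$ always, these three cases cover everything, which completes the proof.

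The only point that requires any care is matching the digit-sum dichotomy to the precise hypotheses of the three propositions: one must notice that under the standing assumptions ``$\Sigma_p(r-p) > a-1$'' is automatically ``$\Sigma_p(r-p) > p-1$'' and ``$\Sigma_p(r-1) > a-1$'' is automatically ``$\Sigma_p(r-1) > p-1$''. So there is no genuine obstacle at this stage — all of the substance has already been carried out in Propositions~\ref{BG Lemma 4.5}, \ref{4 JH factors} and \ref{BG Lemma 4.8}, which rest on the isomorphisms $X_{r-p,\,r} \cong X_{r-1,\,r} \cong X_{rp-1,\,rp}$ (obtained via the surjection $\phi_p$ of Lemma~\ref{surjection} and the criterion of Lemma~\ref{M linear isomorhism}) together with the known structure of $X_{s-1,\,s}$ from \cite[\S 2, \S 3]{BG15}.
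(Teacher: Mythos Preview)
Your proposal is correct and is essentially the paper's own approach: the paper states Theorem~\ref{Main theorem part 4} immediately after Propositions~\ref{BG Lemma 4.5}, \ref{4 JH factors} and \ref{BG Lemma 4.8} with the single sentence ``We collect all the results related to the structure of $X_{r-p,\,r}$ proved in this subsection in the following theorem,'' and gives no further argument. Your write-up supplies the (easy) verification that the three cases exhaust all possibilities and that the digit-sum hypotheses of the theorem match those of the propositions; one tiny quibble is that the inequality $p+a-2 > p-1$ uses $a \geq 2$ rather than $a \leq p-1$ (the latter is what gives $p-1 > a-1$), but both are part of the standing hypotheses, so nothing is affected.
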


\subsubsection*{Acknowledgements}
	We would like to thank Prof. Doty for useful conversations. The second author 
	wishes to thank his advisor (the first author) for his constant encouragement 
	and  the School of Mathematics, TIFR  for its support. We acknowledge support 
	of the Department of Atomic Energy under project number 
	12-R\&D-TFR-5.01-0500.

\quad \\
\noindent {School of Mathematics, Tata Institute of Fundamental Research, Homi Bhabha Road, Mumbai-5, India}

\noindent{\tt email: eghate@math.tifr.res.in, ravithej@math.tifr.res.in}               

\begin{thebibliography}{9999999}

    \bibitem[Alp86]{Alperin} J. L. Alperin, \textit{Local Representation Theory},
                   Cambridge University Press, Cambridge, 1986.
 
      
    \bibitem[Bha18]{SB18}  S. Bhattacharya,
    \textit{Reduction of certain crystalline representations and 
    	local constancy in the weight space}, 
    J. Th\'{e}or. Nombres Bordeaux 32(1):25--47, 2020.
                  
    \bibitem[BG15]{BG15} S. Bhattacharya and E. Ghate, 
                   \textit{Reductions of Galois representations for  slopes in {$(1,2)$}}, 
                   Doc. Math. 20:943--987, 2015.

    \bibitem[BN41]{BN41}  R. Brauer and C. Nesbitt, \textit{On the modular characters of groups},
                    Ann. of Math. 42:556--590, 1941.
             
    \bibitem[Bre03]{Breuil} C. Breuil,  \textit{Sur quelques repr\'esentations 
                   modulaires et  {$p$}-adiques de {$\gl_{2}(\Q_{p})$}  II}, 
                   J. Inst. Math. Jussieu 2(1):23--58,   2003.
        
     \bibitem[BP12]{BP12} C. Breuil  and V. Pa\v{s}k\={u}nas, \textit{Towards a
                    modulo {$p$}  {L}anglands correspondence for {${\mathrm{GL}_2}$}}, 
                    Mem. Amer. Math.  Soc. 216, 2012.
                           
   \bibitem[BG09]{BG09} K. Buzzard and T. Gee, \textit{Explicit reduction modulo {$p$} of certain
                  two-dimensional crystalline representations}, Int. Math. Res. Not.
                 12:2303--2317, 2009. 

           
    \bibitem[Dot89]{Doty}  S. Doty, \textit{Submodules of symmetric powers of the 
                  natural module for {$\gl_{n}$}}, In Invariant theory (Denton, TX, 
                  1986), 
                  volume 88 of Contemp. Math.,  pages 185--191. Amer. Math. Soc., 
                   Providence, RI, 1989.
                   
    \bibitem[DW96]{DW96}  S. Doty and G. Walker, 
                   \textit{Truncated symmetric powers and modular representations of 
                   $\mathrm{GL}_n$},
                    Math. Proc. Cambridge Philos. Soc. 119(2):231--242, 1996.
                   
    \bibitem[GG15]{GG15}  A. Ganguli and E. Ghate, 
                   \textit{Reductions of {G}alois representations via the {${\rm mod}\,
                      p$} Local {L}anglands correspondence}, J. Number Theory 
                      147:250--286, 2015.          
  
    \bibitem[GV85]{Viennot} I. Gessel and G. Viennot, 
                   \textit{Binomial determinants, paths, and hook   length formulae},  
                   Advances in Mathematics 58(3):300--321, 1985.  
                   
    \bibitem[Glo78]{Glover} D. J. Glover, 
                  \textit{A study of certain modular representations}, J. Algebra
                  51(2):425--475, 1978.
      

     \bibitem[Hum06]{Humph} J. E. Humphreys, 
                    \textit{Modular representations of finite groups of Lie type}, 
                     London Mathematical Society Lecture Note Series, 326. 
                    Cambridge University Press, Cambridge, 2006.
                       
     \bibitem[Kra99]{Kra99} C. Krattenthaler, \textit{Advanced determinant calculus}, 
                       S\'{e}m. Lothar. Combin. 42:Art. B42q, 67, 1999.            
                       
     \bibitem[Lan02]{Lang} S. Lang, \textit{Algebra}, Third Edition, 
                   Graduate Texts in Mathematics, Volume 211, Springer-Verlag, New York, 2002.
                                        
     \bibitem[Mor11]{Morra}  S. Morra,  \textit{Explicit description of irreducible
                   {$\mathrm{GL}_2(\mathbb{Q}_p)$}-representations over {$\bar{\mathbb F}_p$}},  
                   J. Algebra  339(1):252--303, 2011. 
                       
      \bibitem[Roz14]{sandra}  S. Rozensztajn, 
                     \textit{Asymptotic values of modular multiplicities for
                    {$\mathrm {GL}_2$}},
                    J. Th\'{e}or. Nombres Bordeaux 26(2):465--482, 2014.
 
                    
                      
\end{thebibliography}
\end{document}